\def\newline{\hfill\break}
\def\scong{{\scriptstyle\|}\lower.2ex\hbox{$\wr$}}
\def\Z{{\Bbb Z}}
\def\Q{{\Bbb Q}}
\def\tr{\mathop{\rm tr}\nolimits}
\def\End{\mathop{\rm End}\nolimits}
\def\Hom{\mathop{\rm Hom}\nolimits}
\def\Ind{\mathop{\rm Ind}\nolimits}
\def\Pic{\mathop{\rm Pic}\nolimits}
\def\Gal{\mathop{\rm Gal}\nolimits}
\def\Br{\mathop{\rm Br}\nolimits}
\def\Spec{\mathop{\rm Spec}\nolimits}
\def\Aut{\mathop{\rm Aut}\nolimits}
\def\Cor{\mathop{\rm Cor}\nolimits}
\def\adj{\mathop{\rm adj}\nolimits}
\def\rtimes{\mathop{\times\!\!{\raise.2ex\hbox{$\scriptscriptstyle|$}}}
	\nolimits} 
\outer\def\Demo #1. #2\par{\medbreak\noindent {\it#1.\enspace}
	{\rm#2}\par\ifdim\lastskip<\medskipamount\removelastskip
	\penalty55\medskip\fi}
\def\Br{\mathop{\rm Br}\nolimits}
\def\Cor{\mathop{\rm Cor}\nolimits}
\def\tr{\mathop{\rm tr}\nolimits}
\def\hangbox to #1 #2{\vskip1pt\hangindent #1\noindent \hbox to #1{#2}$\!\!$}
\title{Mixed Characterstic Cyclic Matters}
\author{David J. Saltman\\  
Center for Communications Research\\
805 Bunn Drive\\
Princeton, NJ 08540}
\newtheorem{theorem}{Theorem}[section]
\newtheorem{corollary}[theorem]{Corollary}
\newtheorem{lemma}[theorem]{Lemma}
\newtheorem{proposition}[theorem]{Proposition}
\begin{document}
\maketitle
\large

\begin{abstract}
The Artin-Schreier polynomial $Z^p - Z - a$ 
is very well known. Polynomials of this type describe 
all degree $p$ (cyclic) Galois extensions over 
any commutative ring of characteristic $p$. 
Equally attractive is the associated Galois action. 
If $\theta$ is a root then $\sigma(\theta) = \theta 
+ 1$ generates the Galois group. Less well known, 
but equally general, is the so called ``differential 
crossed product" Azumaya algebra generated 
by $x,y$ subject to the relations $xy - yx = 1$ and $a = x^p$, 
$b = y^p$ are central. In characteristic $p$ these 
algebras are always Azumaya and algebras of this sort 
generate the $p$ torsion subgroup of the Brauer 
group of any commutative ring (of characteristic 
$p$). 

It is not possible for there to be descriptions 
this general in mixed characteristic $0,p$ 
but we can come close. In Galois theory 
we define degree $p$ Galois extensions 
with Galois action $\sigma(\theta) = \rho\theta + 1$, 
where $\rho$ is a primitive $p$ root of one. 
We define an associated polynomial $Z^p + g(Z) - a$ 
and observe that the extension is Galois when 
$1 + a\eta^p$ is invertible, where 
$\eta = \rho - 1$. The Azumaya algebra analog is generated by $x,y$ subject to the relations 
$xy - \rho{y}x = 1$ 
and $x^p = a$, $y^b = b$ are central. 
These algebras are Azumaya when $1 + ab\eta^p$ 
is invertible. 

The strength of the above constructions can be 
codified by lifting results. We get 
characteristic $0$ to characteristic $p$ 
surjectivity for degree $p$ Galois extensions 
and exponent $p$ Brauer group elements in quite 
general circumstances. Obviously we want to 
get similar results for degree $p^n$ cyclic extensions 
and exponent $p^n$ Brauer group elements, 
and mostly we accomplish this though $p = 2$ 
is a special case. We also give results without 
assumptions about $p$ roots of one. Along the way 
we we introduce some new results and machinery. 
We prove a generalization to commutative rings 
of the classical Albert criterion for extension 
of cyclic Galois extensions. We analyze 
the degree $p$ Azumaya algebras described above 
in a new way and define generalizations called 
almost cyclic algebras. A recurring theme in this 
project is the additive Galois module structure 
of our extensions, and the use we make of a 
specific fiber product associated to cyclic 
group algebras of dimension $p$.    

\end{abstract}

\vfill\eject
\section{Introduction} 
 
This is an extended paper on two related themes. First of all, 
we are interested in a description of cyclic Galois extensions 
of commutative rings in mixed characteristic. More specifically, 
we are interested in cyclic Galois extensions of degree $p^n$ 
in characteristic $0$ and $p$. Now in characteristic $p$ 
there is such a theory which, starting from the Artin-Schreier 
polynomial $Z^p - Z - a$, gives a description of all cyclic Galois 
$S/R$ of degree $p^n$ where $pR = 0$. In fact there is such a theory 
for arbitrary $p$ group Galois extensions \cite{S1981} but here we restrict 
ourselves to cyclic $p$ groups. For reasons that will become clear 
in this paper, there really is no such theory possible in 
arbitrary characteristic, without restricting $S/R$ somehow. 
Our goal here is a general enough theory in mixed characteristic 
so that we can prove lifting. That is, we will prove: 
\bigskip

\noindent{\bf Theorem \ref{liftwithrho}} 
Let $p > 2$ be prime. 
Suppose $R$ is a commutative ring with $1 + pR \subset R^*$. 
Let $\hat R = R/pR$ and assume $\hat S/\hat R$ is a cyclic Galois 
extension of degree $p^n$. Then there is a cyclic Galois 
extension $S/R$ such that $S \otimes_R \hat R \cong \hat S$.

\bigskip

Note that there must be some restriction on $R$. Let 
$F_p = \Z/p\Z$ be the field of $p$ elements. 
If we consider the epimorphism $\Z[x] \to F_p[x]$, 
then 
$F_p[x]$ has plenty of cyclic Galois extensions 
and $\Z[x]$ has none. Also note that $p \not= 2$ 
is necessary. The key fact in \cite{W} (see also \cite{S1981}) 
was that 
if $R$ is the localization of $\Z$ at $2$, and 
$R \to F_2$ the canonical quotient map, then  
any cyclic Galois extension $\hat S/F_2$ of degree 
$2^n$, for $n \geq 3$, has no lift to $R$. 
The underlying fact is that adjoining the 
$2^n$ roots of one is not a cyclic extension. 

There is, however, a version of the above result for 
$p = 2$. 

\bigskip

\noindent{\bf Theorem \ref{2lift}}
Suppose $R$ is a commutative ring with $1 + 2R \subset R^*$. 
Let $\hat R = R/2R$ and assume $\hat S/\hat R$ is a cyclic Galois 
extension of degree $2^n$. Let 
$\hat T = \Ind_H^G(\hat S/\hat R)$ be the induced cyclic Galois 
extension of degree $2^{n+1}$ defined in \ref{induced} 
below. Then there is a cyclic Galois 
extension $T/R$ such that $T \otimes_R \hat R \cong \hat T$.

\bigskip

In \cite{S2022} we developed a theory of cyclic Galois extensions 
of degree $p$, which will be repeated here, but with a 
different slant. Here we will emphasize the Galois 
module structure. More specifically, if $S/R$ is $G$ Galois, 
we observe next that $S$ is a rank one projective module over 
over the group ring $R[G]$, which we will express as 
$S \in \Pic(R[G])$. The $R[G]$ structure of $S$ becomes 
crucial, and the description in \cite{S2022} is based on an assumption 
of trivial $R[G]$ structure. 

\begin{lemma}\label{proj}
Let $S/R$ be a $G$ Galois extension of commutative 
rings, for $G$ any finite group. 
Then $S$ is projective as an $R[G]$ module. 
\end{lemma}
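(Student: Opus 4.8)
The key structural fact about a $G$ Galois extension $S/R$ is that the natural map $S \otimes_R S \to \prod_{\sigma \in G} S$ (sending $x \otimes y \mapsto (x\sigma(y))_\sigma$) is an isomorphism of $S$-algebras. I would exploit this by base changing the $R[G]$-module $S$ along $R \to S$. After base change, $S \otimes_R S$ becomes an $S[G]$-module, and the Galois condition identifies it with $\prod_{\sigma} S \cong S[G]$ as an $S[G]$-module (the $G$-action permuting the factors is exactly the regular representation, once one checks the action of $\tau \in G$ on the $\sigma$-component lands in the $\tau\sigma$-component). Hence $S \otimes_R S$ is free of rank one over $S[G]$, in particular projective.

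Next I would descend this projectivity back to $R[G]$. The point is that $S$ is a faithfully flat $R$-module (this is part of the definition of a Galois extension, or follows from it), so $S[G] = S \otimes_R R[G]$ is faithfully flat over $R[G]$. A finitely presented module over $R[G]$ whose base change to a faithfully flat extension is projective is itself projective — this is the standard faithfully flat descent of the projectivity property. Here $S$ is finitely presented over $R[G]$ because it is finitely generated projective, hence finitely presented, as an $R$-module (Galois extensions are finite and projective over the base), and $R[G]$ is finite free over $R$.

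Assembling: $S \otimes_R S \cong S[G]$ is projective over $S[G]$; $S[G]/R[G]$ is faithfully flat; $S$ is finitely presented over $R[G]$; therefore $S$ is projective over $R[G]$. The main obstacle — really the only nontrivial input — is the identification of the $G$-action on $S \otimes_R S$ under the Galois isomorphism with the regular representation of $G$ on $\prod_\sigma S$; one has to be careful about whether $G$ acts through the left or right tensor factor and track how that interacts with the indexing by $G$. Everything else is formal. An alternative that avoids descent entirely: show directly that the trace form or the standard idempotent arguments exhibit $S$ as an $R[G]$-module direct summand of a free $R[G]$-module, but the faithfully flat descent argument above is cleaner and I would present that.
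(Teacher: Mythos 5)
Your argument is correct, and it takes a genuinely different route from the paper's. The paper reduces to the case $R$ local (projectivity being a local property), then invokes Nakayama's lemma to reduce to $R$ a field, and finishes with the normal basis theorem (plus the observation that a Galois extension of a field is induced from a field extension of a subgroup). You instead base change along $R \to S$, use the defining isomorphism $S \otimes_R S \cong \prod_{\sigma \in G} S$ of Galois theory, observe that the induced $G$-action on $\prod_\sigma S$ is the regular representation so that $S \otimes_R S \cong S[G]$ is free of rank one, and then descend projectivity along the faithfully flat extension $R[G] \to S[G]$. The two approaches buy different things: the paper's is elementary in the sense that it only quotes the normal basis theorem, at the cost of a two-step reduction and a case split on whether the residual Galois extension is a field; yours is more structural and avoids the normal basis theorem entirely, at the cost of needing the (standard but not completely trivial) descent of projectivity for finitely presented modules along a faithfully flat ring map. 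Your finite presentation argument is sound and worth making explicit: in any surjection $R[G]^n \twoheadrightarrow S$, the kernel is an $R$-direct summand of $R[G]^n$ because $S$ is $R$-projective, hence finitely generated over $R$ and a fortiori over $R[G]$.

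One small point to make airtight in a write-up: when identifying the $G$-action on $\prod_\sigma S$ with the regular representation, the action is $(\tau \cdot a)_\sigma = a_{\sigma\tau}$, which matches the left regular module $S[G]$ under the indexing $(a_\sigma)_\sigma \leftrightarrow \sum_\sigma a_\sigma \sigma^{-1}$; this is exactly the bookkeeping you flagged as the only nontrivial input, and it does work out. You should also note that $S[G]$ here means the untwisted group ring $S \otimes_R R[G]$ (not the crossed product $\Delta(S/R,G)$ which also appears in the paper), so that $R[G] \to S[G]$ really is a base change of rings in the naive sense.
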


\begin{proof} 
By faithful flatness we may assume $R$ is local. 
Thus it suffices to show $S \cong R[G]$ as a module when $R$ is local 
with maximal ideal $M$. By Nakayama's lemma it suffices to 
find an $s \in S$ such that $s + MS$ generates $S/MS$ 
as a module over $R/M[G]$. That is, we may assume 
$R = F$ is a field and we want $S \cong F[G]$. 
If $S$ were a field this is standard (e.g., \cite[p.~294]{J}) and usually 
called the normal basis theorem. If $S$ is not a field, it is well known that 
$S \cong \Ind_H^G(K)$ (\ref{induced}) where $H \subset G$ is a subgroup 
and $K$ is a field. The normal basis for $K$ then induces one for 
$S$. 
\end{proof}

Turning this around, the question of $R[G]$ module structure 
seems a key reason that characteristic $p$ is better behaved. 
The following lemma and theorem are easy and one would assume well known 
to many. 
Let $I \subset R[G]$ be the augmentation ideal. 
That is, $I$ is generated as an $R[G]$ ideal by all 
$g - 1$ for $g \in G$. We start with: 

\begin{lemma}
Let $G$ be a finite $p$ group and assume $pR = 0$. 
If $S/R$ is $G$ Galois then $S/IS \cong R$. 
$I$ is a nilpotent ideal. 
\end{lemma}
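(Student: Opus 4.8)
\medskip
\noindent\textbf{Proof plan.}
The two assertions are essentially independent, and I would prove them separately. Since $pR=0$, the ring $R$ is an $F_p$-algebra and $R[G]\cong R\otimes_{F_p}F_p[G]$; under this identification the augmentation ideal $I$ of $R[G]$ is $R\otimes_{F_p}J$, where $J$ is the augmentation ideal of $F_p[G]$ (both are the kernel of the augmentation, and $R$ is free, hence flat, over $F_p$). It then suffices to recall that $J$ is nilpotent for a finite $p$-group $G$: every simple $F_p[G]$-module is trivial, because a nonzero finite $F_p[G]$-module $V$ satisfies $|V^G|\equiv|V|\equiv 0\pmod p$ and hence has $V^G\neq 0$, which forces $\dim_{F_p}V=1$ when $V$ is simple; so $J$ is the Jacobson radical of the Artinian ring $F_p[G]$ and is nilpotent, say $J^m=0$, whence $I^m=R\otimes_{F_p}J^m=0$.

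For the isomorphism, write $S/IS=S\otimes_{R[G]}R$, the module of $G$-coinvariants of $S$. It is a finitely generated projective $R$-module: by Lemma~\ref{proj} $S$ is a direct summand of a free $R[G]$-module, so $S/IS$ is a direct summand of a free $R$-module. It has rank $1$ at every prime $\mathfrak p\subset R$: there $\kappa(\mathfrak p)$ has characteristic $p$, the base change $S\otimes_R\kappa(\mathfrak p)$ is a $G$-Galois extension of the field $\kappa(\mathfrak p)$ and so, by the normal basis theorem (exactly as in the proof of Lemma~\ref{proj}), is isomorphic to $\kappa(\mathfrak p)[G]$ as a module; hence $(S/IS)\otimes_R\kappa(\mathfrak p)\cong\bigl(\kappa(\mathfrak p)[G]\bigr)\big/I\cong\kappa(\mathfrak p)$. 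Thus $S/IS$ is an invertible $R$-module.

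It remains to trivialize this line bundle. Reducing to $R$ local is tempting — then $R[G]$ is local (units lift modulo the nilpotent $I$), so the projective module $S$ is free over $R[G]$ and $S/IS\cong R$ — but triviality of a line bundle does not descend along faithfully flat base change, so I would instead argue globally, and the right tool is the trace, not the tautological map $R\to S/IS$ (the latter is typically zero, as $\Tr$ restricted to $R$ is multiplication by $|G|=0$, and indeed $1\in IS$ already for Artin--Schreier extensions). The trace $\Tr\colon S\to R$, $s\mapsto\sum_{g\in G}g(s)$, kills every $(g-1)s$, so it factors as $S\twoheadrightarrow S/IS\xrightarrow{\,\overline{\Tr}\,}R$. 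Since $S/R$ is Galois it is separable, so its trace form is nondegenerate; hence at each prime $\mathfrak p$ the trace form of the separable $\kappa(\mathfrak p)$-algebra $S\otimes_R\kappa(\mathfrak p)$ is nondegenerate, so $\Tr\otimes_R\kappa(\mathfrak p)$ is a nonzero functional and is onto $\kappa(\mathfrak p)$; thus $\Tr$, and with it $\overline{\Tr}$, is surjective. Finally a surjection from the invertible module $S/IS$ onto the free module $R$ splits, and its kernel is a finitely generated projective $R$-module of rank $0$, hence zero; so $\overline{\Tr}\colon S/IS\xrightarrow{\,\sim\,}R$.

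The only genuinely non-formal ingredient is the separability of Galois extensions (equivalently, the nondegeneracy of their trace form), which I would quote from the standard theory. I expect that fact, together with the observation that it is the trace and not the structure map that trivializes $S/IS$, to be the real content; the remainder is bookkeeping with the nilpotent ideal $I$ and with ranks of finitely generated projective modules.
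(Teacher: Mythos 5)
Your nilpotency argument matches the paper's. Your isomorphism argument is genuinely different — and it repairs a gap. The paper claims the natural composite $R \to S \to S/IS$ is an isomorphism and that this becomes clear after descending to the local case where $S \cong R[G]$; but, as you correctly point out, this natural map is zero ($\overline{\Tr}$ composed with it is multiplication by $|G| = 0$; concretely $1 = (\sigma - 1)\theta \in IS$ already in the Artin--Schreier case), because the module isomorphism $S \cong R[G]$ need not carry $1_S$ to $1 \in R[G]$. The local argument therefore only establishes that $S/IS$ is invertible, and, as you also note, triviality of an invertible module is not a faithfully-flat-local property, so descent alone cannot finish. Your global trivialization via the trace — $\overline{\Tr}\colon S/IS \to R$ is a surjection from a rank-one projective onto $R$, hence has projective rank-zero kernel, hence is an isomorphism — is the correct way to conclude, and the surjectivity of $\Tr$ is the one nontrivial input, drawn from separability of Galois extensions exactly as you say. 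In passing, the same subtlety affects the phrase ``thus $1 + IS$ generates $S$'' in the Theorem that follows the Lemma: what actually generates, by Nakayama, is any preimage in $S$ of a generator of $S/IS$, for instance an element of trace $1$, and not $1_S$ itself.
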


\begin{proof} 
There is a natural map $R \to S \to S/IS$ we need to prove 
an isomorphism. By descent we may assume $R$ is local 
and hence $S \cong R[G]$, and now it is clear. 
As for the second statement, it suffices to prove that there is a 
$N$ such that any product of $N$ elements of the form 
$g - 1$ is $0$. Thus it suffices to prove this 
when $R = \Z/p\Z$ a field and here it is extremely well known. 
To review the standard proof, $I$ must be the Jacobson radical 
and $F_p[G]$ is a finite dimensional algebra. 
\end{proof}

Now we turn to: 

\begin{theorem} 
Let $G$ be a finite $p$ group and $R$ a commutative 
ring with $pR = 0$. Suppose $S/R$ is Galois with group 
$G$. Then $S \cong R[G]$. 
\end{theorem}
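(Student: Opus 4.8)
The plan is to show that the Galois extension $S/R$ is trivial, i.e.\ $S\cong R[G]$ as $R$-algebras, not merely as $R[G]$-modules. The key point is that when $G$ is a $p$-group and $pR=0$, the augmentation ideal $I\subset R[G]$ is nilpotent (the preceding lemma), and this rigidity forces the idempotents separating the ``sheets'' of $S$ to be rational over $R$. First I would reduce to the case where $R$ has no nontrivial idempotents: since $\Spec R$ is covered by basic opens on which things are connected, and the statement is local in this sense, we may assume $\Spec R$ is connected; equivalently $R$ has only the idempotents $0,1$.

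Next I would analyze the structure of $S$ as an $R$-algebra. Since $S/R$ is $G$ Galois, the map $S\otimes_R S\to \prod_{g\in G}S$, $s\otimes t\mapsto (s\,\sigma_g(t))_g$, is an isomorphism. The orthogonal idempotents $e_g\in S\otimes_R S$ pulled back along the two structure maps give a system of idempotents that, after base change by the faithfully flat $R\to S$, trivialize $G$ acting on $S\otimes_R S$. The crucial step is to descend enough of this information: I claim the set of idempotents of $S$ forms a finite Boolean algebra on which $G$ acts, and the corresponding partition of $\Spec S$ into connected components is permuted transitively by $G$ (because $S/R$ is Galois and $R$ is connected — otherwise $S$ would decompose $G$-equivariantly over a decomposition of $R$). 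Let $H\subset G$ be the stabilizer of one component, so $S\cong \Ind_H^G(S_0/R)$ where $S_0/R$ is a \emph{connected} $H$ Galois extension. Now I must show $S_0=R$, i.e.\ $H=1$.

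For the connected case: $S_0/R$ is $H$ Galois with $H$ a $p$-group, $pR=0$, and $S_0$ has no nontrivial idempotents. By Lemma \ref{proj} (or rather the preceding lemma applied to $S_0/R$) we know $S_0\cong R[H]$ as an $R[H]$-module and the augmentation ideal $I$ of $R[H]$ is nilpotent. Consider the trace map, or better, the element $\epsilon=\frac{1}{|H|}\sum_{h\in H}h$ — this is \emph{not} available since $|H|$ is a power of $p$ and $pR=0$. Instead I would argue: pick $s\in S_0$ with $s+IS_0$ generating $S_0/IS_0\cong R$; since $I$ is nilpotent, $S_0 = R[s']$ where $s'$ is a lift and in fact $S_0$ is generated as an $R$-algebra by the $H$-conjugates of any normal basis generator. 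Because $S_0$ is connected, any idempotent of $S_0\otimes_R S_0\cong\prod_H S_0$ that is fixed by the diagonal $H$-action must come from $S_0$, but connectedness of $S_0$ then collapses the product, forcing $|H|=1$. More concretely: the separability idempotent forces $S_0\otimes_R S_0$ to have exactly $|H|$ connected components over each component of $S_0$; combined with $S_0$ connected and $S_0$ faithfully flat over $R$ with $R$ connected, and the nilpotence of $I$ forcing $\Spec S_0\to\Spec R$ to be a homeomorphism (radicial), we get that $S_0\to R\to S_0$ exhibits $S_0$ as a finite $R$-algebra that is both separable and radicial over $R$, hence $S_0=R$.

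\textbf{The main obstacle} will be the last step: ruling out connected $H$ Galois extensions cleanly. The tempting shortcut via the averaging idempotent $\frac{1}{|H|}\sum h$ is blocked precisely because $pR=0$, so I must instead exploit that $I$ is nilpotent to show $\Spec S_0 \to \Spec R$ is a universal homeomorphism (radicial, since the geometric fibers are single points — this is where the $p$-group hypothesis in characteristic $p$ does its work, as a nontrivial $p$-group extension of a field $F$ with $pF=0$ is purely inseparable on the level of spectra when viewed correctly, yet Galois, forcing it to be split). A separable morphism that is also a universal homeomorphism is an isomorphism (étale $+$ radicial $+$ surjective $\Rightarrow$ iso, or in the finite flat setting, separable $+$ connected fibers $\Rightarrow$ degree one). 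Care is needed because $S$ itself need not be connected, which is exactly why the induced-algebra reduction in the second paragraph must be carried out first and with attention to the $G$-action on $\pi_0(\Spec S)$.
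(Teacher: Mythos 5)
You have misread what the theorem asserts, and as a result the proposal sets out to prove a false statement. The isomorphism $S \cong R[G]$ here is an isomorphism of $R[G]$\emph{-modules} (trivial Galois module structure, i.e.\ a normal basis with trivial class in $\Pic(R[G])$), not of $R$-algebras; indeed $R[G]$ with the translation action is not even a Galois extension of $R$ (for $G$ cyclic of order $p$ and $pR=0$ it is $R[u]/(u^p)$, which is inseparable), and the split Galois extension is $\Map(G,R)$, a different algebra. The algebra-level statement you aim for — that every connected $H$ Galois piece $S_0$ must equal $R$, so that $S$ is split — is false: $F_{p^p}/F_p$ is a connected, nontrivial $C_p$ Galois extension with $pF_p=0$, and more to the point the entire paper is about lifting \emph{nontrivial} Artin--Schreier extensions $\hat R[Z]/(Z^p-Z-a)$, which would not exist if your claim held. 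The specific step that fails is the assertion that the geometric fibers of $\Spec S_0 \to \Spec R$ are single points, making the map radicial: a $G$ Galois extension has geometric fibers with exactly $|G|$ points (that is what $S\otimes_R S\cong \oplus_G S$ says after base change to an algebraically closed field), so ``separable $+$ radicial'' never applies unless $S_0=R$ already. Nilpotence of the augmentation ideal gives $S/IS\cong R$ (the $G$-coinvariants are $R$), which says nothing about $\Spec S\to\Spec R$ being a homeomorphism.

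The intended proof is the two-line Nakayama argument: by the preceding lemma $S/IS\cong R$ and $I\subset R[G]$ is nilpotent, so the element $1\in S$ generates $S$ as an $R[G]$-module modulo $IS$ and hence generates $S$ outright; the resulting surjection $R[G]\to S$ is between projective $R[G]$-modules of the same rank (Lemma \ref{proj}) and is therefore an isomorphism. Your reduction to connected components and the induced-extension decomposition $S\cong\Ind_H^G(S_0)$ is correct as far as it goes, but it is machinery aimed at the wrong target; once the statement is read as a module isomorphism, none of it is needed.
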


\begin{proof} 
Since $I$ is nilpotent, we can apply Nakayama's lemma. 
We know $S/IS \cong R$. Thus $1 + IS$ generates $S$ 
over $R[G]$. Any $R[G]$ module surjection $R[G] \to S$ must be 
an isomorphism because the domain and range are projective 
of the same rank. 
\end{proof} 

After considering mixed characteristic cyclic Galois 
extensions, it is natural (for this author) to consider 
mixed characteristic cyclic Azumaya algebras. Once again, 
a beginning was accomplished in \cite{S2022}. In characteristic $p > 0$, 
the analogue (perhaps) of classical cyclic algebras are the 
so called differential crossed product algebras of \cite[p.~44]{KOS}. 
These are degree $p$ Azumaya algebras over, say, $R$ generated 
by $x,y$ with $x^p = a \in R$, $y^p = b \in R$, and $xy - yx = 1$ 
(remember $pR = 0$). We write this algebra as $(a,b)$. 
In contrast to the characteristic $0$ case, 
these algebras are Azumaya for any choice of $a$ and $b$. 
Furthermore, it was shown in \cite[p.~46]{KOS} that the $p$ torsion part of the 
Brauer group $\Br(R)$ is generated by Brauer classes $[(a,b)]$. 
It thus seems that these are the algebras to generalize to mixed 
characteristic. 

The appropriate ``generalized" algebra is easy to write down 
in a special case, which we do so in full in \ref{almostrho} but we 
summarize here. The special case is that $R$ is a commutative ring containing 
a primitive $p$ root of one. By this we mean the following. 
Let $\rho$ be a primitive $p$ root of one over the rational field 
$\Q$. Then $\Z[\rho]$ is the ring of integers of $\Q(\rho)$. 
To say ``$R$ contains a primitive $p$ root of one" really means 
that $R$ is an algebra over $\Z[\rho]$ or that there is a fixed 
homomorphism $\Z[\rho] \to R$. Note that if $pR = 0$ then $R$ 
is automatically a $\Z[\rho]$ algebra through the map 
$\Z[\rho] \to F_p \subset R$. 
Let $R$ contain $\rho$. Define the algebra $(a,b)_{\rho}$ 
as the algebra over $R$ generated by $x,y$ subject to $x^p = a$, 
$y^p = b$ and $xy - \rho{yx} = 1$. Note that we think of 
$\rho$ as an element of $R$ even though, if say $pR = 0$, then 
$\rho$ is $1$. Finally note that $\eta = \rho - 1$ is a prime 
element of $\Z[\rho]$ and $\Z[\rho]/\eta\Z[\rho] \cong F_p$. 
Given $a \in R$, write $\hat a$ to be the image of $a$ in 
$\hat R = R/\eta{R}$. 

We observed in \cite{S2022} that (see also \ref{almostrho}): 

\begin{theorem} 
$A = (x,y)_{\rho}$ is free as a module over $R$ of rank 
$p^2$. $A/R$ is Azumaya if and only if  $1 + ab\eta^p \in R^*$. 
$A/\eta{A} \cong (\hat a, \hat b)$ as an Azumaya algebra 
over $\hat R$. Suppose $1 + \eta{R} \subset R^*$. 
Then the natural map $\Br(R) \to \Br(\hat R)$ is surjective 
when restricted to elements of order $p$. 
\end{theorem}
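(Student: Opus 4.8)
The plan is to establish the four assertions in sequence, treating the Azumaya criterion as the substantive one and deducing the Brauer‑group statement from it together with the reduction mod $\eta$. For freeness I would work with the ``quantized Weyl algebra'' $B = R\langle x,y\rangle/(xy-\rho yx-1)$: the single rewriting rule $yx\mapsto\rho^{-1}xy-\rho^{-1}$ has no overlap ambiguities, so the diamond lemma gives that $\{x^iy^j:i,j\ge0\}$ is an $R$-basis of $B$. A short induction, using $1+\rho+\cdots+\rho^{p-1}=0$ (valid since $R$ is a $\Z[\rho]$-algebra, which also forces $\rho^p=1$), shows $x^py=yx^p$ and $xy^p=y^px$, so $x^p$ and $y^p$ are central; hence $B$ is free over the central polynomial subring $R[x^p,y^p]$ on $\{x^iy^j:0\le i,j<p\}$, and $A=B\otimes_{R[x^p,y^p]}R$ (via $x^p\mapsto a$, $y^p\mapsto b$) is $R$-free of rank $p^2$. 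Since $\eta\mid p$ in $\Z[\rho]$, the ring $\hat R=R/\eta R$ has characteristic $p$ and $\rho\equiv1$ there, so the relations of $A$ collapse to $xy-yx=1$, $x^p=\hat a$, $y^p=\hat b$; thus $A/\eta A\cong(\hat a,\hat b)$, which is Azumaya over $\hat R$ by \cite{KOS}.

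The heart of the proof is the Azumaya criterion, and here I would argue fibrewise: $A$ is faithfully $R$-free, so it is Azumaya over $R$ iff $A_{\mathfrak m}$ is Azumaya over $R_{\mathfrak m}$ (equivalently, iff the residue-field fibre is central simple) for every maximal ideal $\mathfrak m$. Fix $\mathfrak m$. If $\eta\in\mathfrak m$, the fibre is a differential crossed product, hence central simple, and $1+ab\eta^p\equiv1\pmod{\mathfrak m}$, so both sides of the equivalence hold. If $\eta\notin\mathfrak m$, then $\rho$ is a primitive $p$-th root of unity and $\eta\in R_{\mathfrak m}^*$; the key move, when $a\notin\mathfrak m$, is the substitution $y'=y+(\eta a)^{-1}x^{p-1}=y+\eta^{-1}x^{-1}$, which trades the inhomogeneous relation for the symbol relation $xy'=\rho y'x$ while keeping $x^p=a$. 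Writing $v=\eta^{-1}x^{-1}$ (so $v^p=(\eta^pa)^{-1}$, $y'=y+v$, $y'v=\rho vy'$) and using that the Gaussian binomials $\binom{p}{i}$ vanish for $0<i<p$ when $\rho$ is a primitive $p$-th root of unity — so $\rho$-commuting elements satisfy $(u+w)^p=u^p+w^p$ — one gets, from $y=y'+(-v)$ and $p$ odd, that $b=y^p=y'^p-v^p$, i.e.\ $y'^p=(1+ab\eta^p)/(\eta^pa)$. A rank count then identifies $A_{\mathfrak m}$ with the symbol algebra $\bigl((1+ab\eta^p)/(\eta^pa),\,(\eta^pa)^{-1}\bigr)_\rho$, which is Azumaya iff both parameters are units, i.e.\ iff $1+ab\eta^p\notin\mathfrak m$. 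If instead $a\in\mathfrak m$ but $b\notin\mathfrak m$, the symmetric substitution $x'=x+(\eta b)^{-1}y^{p-1}$ gives the same; and if $a,b\in\mathfrak m$, then $1+ab\eta^p\equiv1\pmod{\mathfrak m}$ while the fibre is $(0,0)_\rho$ over $k=k(\mathfrak m)$, for which I would exhibit the explicit representation on $k^p$ with $x$ a regular nilpotent Jordan block and $y=\sum_i\mu_ie_{i,i-1}$ determined by $\mu_{p-1}=1$, $\mu_i=1+\rho\mu_{i+1}$ (the remaining relation $-\rho\mu_1=1$ holds because $\mu_1=1+\rho+\cdots+\rho^{p-2}=-\rho^{-1}$), check it is irreducible with commutant $k$, and conclude via Jacobson density that the fibre is $M_p(k)$. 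Assembling the cases, $A$ is Azumaya iff $1+ab\eta^p$ is a unit at every maximal ideal, i.e.\ iff $1+ab\eta^p\in R^*$.

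For the last assertion, assume $1+\eta R\subset R^*$. Given $\xi\in\Br(\hat R)$ of order dividing $p$, write $\xi=\sum_i[(\hat a_{0,i},\hat b_{0,i})]$ using the generators of $\Br(\hat R)[p]$ from \cite{KOS}, lift the parameters to $a_i,b_i\in R$, and set $A_i=(a_i,b_i)_\rho$; then $1+a_ib_i\eta^p\in1+\eta R\subset R^*$, so $A_i$ is Azumaya over $R$ by the criterion above, and $A_i/\eta A_i\cong(\hat a_{0,i},\hat b_{0,i})$. Since $\deg A_i=p$, each $[A_i]\in\Br(R)$ has order dividing $p$, so $[A]:=\sum_i[A_i]$ lies in $\Br(R)[p]$, and $A\otimes_R\hat R\cong\bigotimes_i(\hat a_{0,i},\hat b_{0,i})$ shows $[A]\mapsto\xi$; hence $\Br(R)[p]\to\Br(\hat R)[p]$ is onto.

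The main obstacle is the third part: producing the substitution that converts $xy-\rho yx=1$ into a genuine symbol relation and tracking the resulting $p$-th power, and then disposing of the case where neither $a$ nor $b$ is a unit modulo $\mathfrak m$. I should also flag that the sign in $1+ab\eta^p$ is the one produced by $p$ odd; for $p=2$ the same computation yields the symbol parameter $(4ab-1)/(4a)$, so the criterion there reads $1-ab\eta^2\in R^*$ — one more manifestation of the exceptional role of $p=2$.
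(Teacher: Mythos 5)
Your proof is correct and takes a genuinely different route from the paper. The paper (see Lemma~\ref{almostrho} and the construction preceding it in the section on almost cyclic algebras) defines $(a,b)_\rho$ from the outset as an $R$-order inside the Kummer symbol $(1+ab\eta^p,a)$ over the generic ring $\Z[\rho][a,b](1/(1+ab\eta^p))$, with $\alpha=xy$ generating a degree-$p$ cyclic Galois subring $S=R[\alpha]$, and proves Azumaya-ness via the almost-cyclic criterion $J_\sigma J_{\sigma^{-1}}=S$ with $J_\sigma=Sx+Sy^{p-1}$; the invertibility of $1+ab\eta^p$ is built into the domain of definition rather than derived. You instead argue fibrewise from the bare presentation: the substitution $y'=y+\eta^{-1}x^{-1}$ (available when $a$ and $\eta$ are invertible) turns $xy-\rho yx=1$ into the homogeneous relation $xy'=\rho y'x$, the quantum binomial identity for a primitive $p$-th root gives $y'^p=(1+ab\eta^p)/(\eta^pa)$, and a rank count identifies the localization with a Kummer symbol; the degenerate fibre $(0,0)_\rho$ is handled by an explicit $p$-dimensional irreducible representation. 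Your route is elementary, self-contained, and avoids the bimodule machinery; the paper's route is less direct here but builds exactly the almost-cyclic framework it needs later for the higher-degree analogues.

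Two repairable issues. First, ``$\deg A_i=p$ implies $[A_i]$ has order dividing $p$'' is not a general fact about Azumaya algebras over an arbitrary commutative ring; you need a reason. The cleanest is the one the paper's setup supplies: $(a_i,b_i)_\rho$ is pulled back along a homomorphism from $\Z[\rho][a,b](1/(1+ab\eta^p))$, which is a regular domain, so over the source $\Br$ injects into the Brauer group of the fraction field where exponent divides degree, and the torsion bound is inherited by the specialization. Second, your $p=2$ flag is in fact a genuine discrepancy in the paper: with $\rho=-1$, $\eta=-2$, the order constructed inside the symbol $(1+4ab,a)$ via $\alpha=(\beta-1)/\eta$ and $y=x^{-1}\alpha$ satisfies $y^2=-b$ rather than $y^2=b$, so the two definitions of $(a,b)_\rho$ differ by a sign when $p=2$ and the criterion reads $4ab-1\in R^*$ for one and $1+4ab\in R^*$ for the other. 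This is local to the parenthetical ``this is true in that case also'' and does not propagate (the paper handles $p=2$ through Case~C with separate normalizations), but it is worth recording.
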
 

One of the motivating questions for this paper was to 
generalize the above result to all elements of order 
$p^r$ and to remove the root of unity assumption. 
This we accomplished. 

\bigskip

\noindent {\bf Theorem \ref{braueronto}} 
{\emph Suppose $R$ is a commutative ring and $1 + pR \subset R^*$. 
Then the natural map $\Br(R) \to \Br(R/pR)$ is surjective 
on elements of $p$ power order.} 

\bigskip

To prove the above result requires that we generalize 
the $(a,b)_{\rho}$ construction. To start with, 
one needs to generalize the characteristic $p$ 
$(a,b)$ construction. More specifically, 
we need the following. In \cite[p.~37]{KOS} it is proved that if 
$pR = 0$ then $\Br(R)$ is $p$ divisible. 
To lift arbitrary $p$ primary Brauer group 
elements in mixed characteristic we, first of all, 
need to make the above fact more concrete. 
That is, we need to construct, for any $p^r$, 
Azumaya algebras  $A/R$ such that $p^r[A] = [(a,b)]$ 
for any $a,b$. We then argue that to show 
$\Br(R) \to \Br(R/pR)$ surjective it suffices to 
lift the algebras $(a,b)$ and then the algebras 
$A$ as above for any $p^r$. 

In order to accomplish the construction of such $A$, we first 
had to understand the algebras $B = (a,b)$ and $B = (a,b)_{\rho}$ 
in a new way. 
Let $x,y \in B$ be as in the definition. As part of 
\ref{almostrho} we show that 

\bigskip

\noindent{\bf Lemma} {\emph If $x,y,B$ are as above, then $\alpha = xy$ 
is such that $S = R[\alpha]$ is cyclic over $R$. 
Moreover, $R[\alpha]$ splits $B$.}

\bigskip

The above result suggests there is a general category 
of ``almost cyclic" Azumaya algebras and one goal of this work 
is to define such algebras, construct useful examples 
of such algebras, and then use these algebras to prove the 
promised surjectivity theorem. 

It may be helpful to point out one more aspect of the arguments 
in this paper. Frequently we will perform our constructions 
over localized polynomials rings $R$, and then make our 
constructions general via specialization. This will provide 
us the convenience of working with regular rings $R$ 
whose Picard group we can get a handle on. 

We begin, in the next section, with some 
elementary facts about roots of unity. 
In this paper we denote by $R^*$ the group of units 
of a ring $R$ and $R[Z]/(f(Z)) = R[Z]/f(Z)R[Z]$ 
the formal adjunction of a root of the polynomial 
$f(Z)$ to $R$. We let $\Pic(R)$ be the Picard group, i.e. 
the group  of rank one projective $R$ modules up to isomorphism. 
We will write $J \in \Pic(R)$ to mean $J$ is a rank one 
projective $R$ module. If $A/R$ is Azumaya and 
of rank $n^2$ over $R$ we say $A/R$ has degree $n$. 

In this whole paper $p$ will be a prime, including the 
case $p = 2$. However, the $p = 2$ case is different 
from the others and this will be reflected in 
our choice of notation for roots of unity. 
We are trying to avoid separate (but not really different) 
arguments in the $p = 2$ case but this makes for 
other complications. In all this paper, when $p > 2$, 
we define $\rho = \mu$ to be a fixed choice of primitive 
$p$ root of one. 
In some sections we need the base ring in 
the $p = 2$ case to be $\Z[i]$ where $i^2 = -1$ 
and so when $p = 2$ we set $\mu = i$ . 
In other sections in the $p = 2$ case we need 
deal with the roots of unity group of order 2 and 
then $\rho = -1$. However, we feel that, without 
confusion, we can always set $\eta$ to be a choice of prime 
totally ramified over $p$ in $\Z[\rho]$, $\Z[i]$, or $\Z$. 
Specifically, $\eta = \rho - 1$, $i - 1$, and $-2$ respectively. 

We summarize the above cases and notation choices as: 

\bigskip 

{\bf Case A:}  The prime $p > 2$ and $\rho = \mu$ is a fixed choice of a primitive 
$p$ root of one. $\eta = \rho - 1$ is a prime of $\Z[\rho]$ 
totally ramified over $p$. 

\bigskip 

{\bf Case B:}  The prime $p = 2$ and $\mu$ is a square root of $-1$. 
When we are only talking about this case we will use $\mu = i$. 
Once again $\eta = i - 1$ is a choice of a prime in $\Z[i]$ totally ramified 
over 2. 

\bigskip 

{\bf Case C:} The prime $p = 2$ and $\rho$ is a primitive square root of one 
or $\rho = -1$. Obviously $\Z[\rho] = \Z$. Now $\eta = \rho - 1 = -2$. 

\bigskip

In all cases $R$ will be an flat algebra over the three base rings, 
$\Z[\rho]$, $\Z[i]$ or $\Z$. The point of these notation choices 
is that we can combine cases. In case A and B, $\mu$ is the 
root of unity and in case A and C $\rho$ is the root of unity.

\section{Computations} 

In this section we perform some computations 
involving roots of unity in commutative rings $R$. 
In this section we will be considering cases A and B, 
so $\mu$ is a primitive $p$ root of one when 
$p > 2$ and $\mu = i$ is a square root of $-1$ 
when $p = 2$.   
Of course $\Z[\mu] = \Z[Z]/(Z^{p-1} + \cdots + 1)$ 
or $\Z[\mu] = \Z[i] = \Z[Z]/(Z^2 + 1)$.

We next observe, concretely, that 
$\eta$ is totally ramified over $p$ or $2$ respectively. 

\begin{lemma}\label{total}
In case B, $\eta^2 = (i - 1)^2 = -2i$. In case A 
$\eta^{p-1} = up$ where $u \in \Z[\mu]^*$ and 
$u$ is congruent to $-1$ modulo $\eta$. If 
$\delta$ generates the Galois group of $\Q(\mu)/\Q$ 
or $\Q(i)/\Q$, 
then the norm $N_{\delta}(\eta) = p$ or $-2$. 
The map $\Z[\mu]^* \to F_p^*$ is surjective. 
\end{lemma}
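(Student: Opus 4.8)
The plan is to establish the four assertions essentially by direct computation in the cyclotomic ring, treating Case A ($p>2$, $\rho=\mu$ a primitive $p$-th root of one) and Case B ($p=2$, $\mu=i$) in parallel where possible. For Case B the first claim $\eta^2=(i-1)^2=i^2-2i+1=-2i$ is immediate, and since $i$ is a unit this exhibits $\eta^2$ as a unit multiple of $2$, so $\eta$ is totally ramified over $2$. For the norm statement in Case B, $\delta$ is complex conjugation, $N_\delta(\eta)=(i-1)(-i-1)=-(i-1)(i+1)=-(i^2-1)=2$; a sign convention aside (the excerpt writes $-2$, matching $\eta=i-1$ up to the choice of which generator one calls $\eta$), this is a one-line check. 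The surjectivity of $\Z[i]^*\to F_2^*$ is trivial since $F_2^*$ is the trivial group.

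The substantive case is A. First I would recall the factorization $Z^{p-1}+\cdots+1=\prod_{j=1}^{p-1}(Z-\mu^j)$ and evaluate at $Z=1$ to get $p=\prod_{j=1}^{p-1}(1-\mu^j)$. Each factor $1-\mu^j$ is an associate of $\eta=\rho-1=-( 1-\mu)$ in $\Z[\mu]$: indeed $1-\mu^j=(1-\mu)(1+\mu+\cdots+\mu^{j-1})$, and the second factor is a unit because, working modulo $\eta$ (where $\mu\equiv 1$), it reduces to $j$, which is invertible in $F_p$ for $1\le j\le p-1$; hence it is a unit in the local ring $\Z[\mu]_{(\eta)}$, and being an algebraic integer whose reciprocal lies in that local ring it is in fact a global unit (one can also invoke that the $(1-\mu^j)$ are all associates by the standard Galois-conjugacy argument). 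Multiplying these $p-1$ relations gives $p=v\,\eta^{p-1}$ with $v$ a unit, so $\eta^{p-1}=up$ with $u=v^{-1}\in\Z[\mu]^*$. For the congruence $u\equiv -1\pmod\eta$: from $p=\prod_{j=1}^{p-1}(1-\mu^j)$ and $1-\mu^j=-\eta(1+\cdots+\mu^{j-1})$ one gets $p=(-\eta)^{p-1}\prod_{j=1}^{p-1}(1+\cdots+\mu^{j-1})=(-1)^{p-1}\eta^{p-1}\prod_{j}c_j$ with $c_j\equiv j\pmod\eta$; since $p$ is even$-1$... rather, $(-1)^{p-1}=1$ as $p$ is odd, so $v=\prod_j c_j\equiv\prod_{j=1}^{p-1}j=(p-1)!\equiv -1\pmod p$ by Wilson, hence also $\equiv -1\pmod\eta$, and thus $u=v^{-1}\equiv -1\pmod\eta$ as well.

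Next, the norm: $N_\delta(\eta)=\prod_{j=1}^{p-1}\delta^j(\eta)=\prod_{j=1}^{p-1}(\mu^j-1)=(-1)^{p-1}\prod_{j=1}^{p-1}(1-\mu^j)=p$ since $p$ is odd, which is the asserted value (in Case B, $-2$, the sign coming from $\eta=i-1$ versus $-2$). Finally, surjectivity of $\Z[\mu]^*\to F_p^*$: I would exhibit explicit units. For any $g$ with $1\le g\le p-1$, the element $u_g=(1-\mu^g)/(1-\mu)=1+\mu+\cdots+\mu^{g-1}$ is a unit in $\Z[\mu]$ (shown above), and its image in $\Z[\mu]/\eta\Z[\mu]\cong F_p$ is $g$; letting $g$ be a primitive root mod $p$ shows the map is onto. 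The main obstacle is none of the individual steps but rather keeping the bookkeeping of signs and the choice of generator $\eta$ consistent across the two cases so that the stated normalizations ($\eta=\rho-1$, $N_\delta(\eta)=p$ or $-2$, $u\equiv -1\bmod\eta$) come out exactly; the ramification and surjectivity facts themselves are classical.
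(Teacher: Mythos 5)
Your proof is correct in substance and arrives at the same conclusions by a route that is classical but not quite the paper's. The paper establishes $\eta^{p-1}=up$ by expanding $1=\mu^p=(1+\eta)^p$ via the binomial theorem and isolating the linear term in $\eta$, whereas you start from the evaluation $p=\Phi_p(1)=\prod_{j=1}^{p-1}(1-\mu^j)$ and factor each $1-\mu^j$ as a unit times $\eta$. Both then use the $\epsilon_j=1+\mu+\cdots+\mu^{j-1}$ (which you call $c_j$) together with Wilson's theorem to pin down $u\equiv -1\pmod\eta$, to get the norm, and to get surjectivity of $\Z[\mu]^*\to F_p^*$, so the back half of the two arguments is essentially identical. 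Your version is, if anything, a bit cleaner since it never needs to untangle binomial coefficients.

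Two small points worth flagging. First, your primary justification that $c_j=1+\mu+\cdots+\mu^{j-1}$ is a global unit does not go through as written: that $c_j$ is an algebraic integer which is invertible in the localization $\Z[\mu]_{(\eta)}$ only tells you it is a unit at the prime $\eta$, not at the other primes of $\Z[\mu]$. The parenthetical fallback you give --- writing $1/c_j=(1-\mu)/(1-\mu^j)=1+\mu^j+\cdots+\mu^{j(j'-1)}$ with $jj'\equiv1\pmod p$ and observing this is again an algebraic integer, so $c_j$ and $1/c_j$ are both in $\Z[\mu]$ --- is the correct argument and should be promoted to the main line. (The paper's own justification is also terse, arguing that $u$ is divisible by no primes because $\eta$ is the unique prime above $p$, but that at least addresses all the primes at once.) Second, your computation $N_\delta(i-1)=(i-1)(-i-1)=2$ is correct, and the discrepancy with the lemma's stated value $-2$ is not a matter of the choice of generator: the norm of any nonzero element of $\Z[i]$ is positive, so $N_\delta(1-i)=2$ as well. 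The $-2$ in the lemma statement appears to be a sign slip in the paper rather than a convention you failed to match; your hedge attributing it to the choice of $\eta$ is itself slightly off, but your arithmetic is the correct one.
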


\begin{proof} 
When $p = 2$ all these facts are trivial. 
If $p > 2$ then 
$1 = \mu^p = (\eta + 1)^p = \eta^p + p\eta(-u) + 1$ 
for $u \in \Z[\mu]$ so $\eta^{p-1} = pu$. 
We compute $u$ in more detail using the binomial 
theorem and get $-u =  1 + \eta{z}$ for some $z$. 
It follows that $\eta$ is the unique prime over $p$, 
and hence $u$ is divisible by no primes, implying 
$u \in \Z[\mu]^*$. Clearly 
$N_{\delta}(\eta) = \pm p$. Also, $\mu^i - 1 = \eta(1 + \cdots + \mu^{i-1})$ 
which we write as $\eta\epsilon_i$ 
and note that $\epsilon_i$ is also a unit and maps to 
$i$ modulo $\eta$. Of course $N_{\delta}(\eta) = 
\eta^{p-1}\prod_i \epsilon_i$ so we are done 
once we observe that $\prod_i \epsilon_i$ is congruent 
to $(p-1)!$ modulo $\eta$ which is congruent to 
$-1$ by Wilson's Theorem. 
\end{proof} 

We need to perform some computations involving traces 
and norms in algebras over $\Z[\mu]$. 
We choose $\mu_m$ such that $\mu_m^{p^m} = 
\mu$ and $\Q(\mu_m)/\Q(\mu)$ has degree 
$p^m$. Note that this means $\mu_m$ is a primitive 
$p^{m+1}$ root of one when $p > 2$ and a primitive 
$2^{m+2}$ root of one when $p = 2$. We have set $\eta = \mu - 1$ and 
we now set $\eta_m = \mu_m - 1$. 

In case A, the Galois group 
of $\Q(\mu_m)/\Q(\mu)$ is generated by 
$\tau$ where $\tau(\mu_m) = \mu_m^r$ and $r = p+1$. 
In case B, the Galois group of $\Q(\mu_m)/\Q(\mu) =
\Q(\mu_m)/\Q(i)$ is generated by $\tau$ where 
$\tau(\mu_m) = \mu_m^r$ and $r = 5$. 

In both cases, let $R$ be flat $\Z[\mu]$ algebra 
(so $\Z[\mu] \to R$ is injective) and a regular domain. 
We assume $R/\eta{R}$ is a domain and so $\eta$ 
is a prime element of $R$. Set 
$R_m = R \otimes_{\Z[\mu]} \Z[\mu_m] \supset R$ 
and assume $R_m$ is a regular domain. 
Then $\eta_mR_m = R \otimes_{\Z[\mu]} \eta_m\Z[\mu_m]$. 
By flatness 
$R_m/\eta_mR_m = R \otimes_{Z[\mu]} \Z[\mu_m]/\eta_m\Z[\mu_m] = 
R \otimes \Z[\mu]/\eta\Z[\mu] = R/\eta{R}$. 
Thus $\mu_m$ is a prime element of $R_m$. 
Since $R \to R_m \to R_m/\eta_mR_m \cong R/\eta{R}$ 
is the standard map, $\eta_mR_m \cap R = \eta{R}$. 

The automorphism $\tau$ acts 
on $R_m = R \otimes_{\Z[\mu]} \Z[\mu_m]$ by 
acting on $\Z[\mu_m]$. Clearly the fixed ring 
$R_m^{\tau} = R$. 
We set $N_{\tau}:R_m \to R$ to be the associated norm.  
Note that we have arranged our notation so that 
$\tau$ has order $p^m$ in both cases. 
Also, $\Z[\mu_m]$ is the ring of integers of $\Q(\mu_m)$ 
and $1,\mu_m,\ldots,\mu_m^{p^m - 1}$ is a basis 
of $\Z[\mu_m]/\Z[\mu]$.

For any $X$, $N_{\tau}(1 + X) = \sum_{i=0}^n s_i(X)$ 
where in the expansion of $N_{\tau}(1 + X) = 
\prod_i (1 + \tau^i(X))$, $s_i(X)$ is the sum of the degree 
$i$ monomials in $X$ and its $\tau^i$ conjugates. 
Thus $s_n(X) = N_{\tau}(X)$, $s_1 = \tr_{\tau}(X)$ etc. 
We will also make use of Newton's identity, which 
in the form we need is: 
$$ks_k(x) = \tr_{\tau}(x^k) + \sum_{i=1}^{k-1} (-1)^{i-1}s_i(x)\tr_{\tau}(x^{k-i})$$  
for $1 \leq k \leq n$. 
Since $Z^{p^m} - \mu$ is the minimal polynomial of $\mu_m$ over $\Z[\mu]$, it follows that $s_k(\mu_m) = 0$ for 
$1 \leq k \leq p^m-1$ and so $\tr_{\tau}(\mu_m^k) = 0$ 
for $1 \leq k \leq p^m-1$. From this we conclude that: 

\begin{lemma}\label{etaandp} 
$\tr_{\tau}(R_m) = p^mR$. 
Furthermore, $N_{\tau}(\eta_m) = \eta$ in case A 
and $N_{\tau}(\eta_m) = -i + 1 = -\eta$ in case B. 
For $p$ odd, $\eta_m^{p^m} = \eta(1 + \eta^{p-2}\eta_mv)$ for some 
$v \in \Z[\mu_m]$. When $p = 2$, $\eta_m^{2^m} = 
(i + 1)(1 + (i+1)\mu_mv)$. 
\end{lemma}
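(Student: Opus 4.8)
The plan is to extract all four assertions from the trace/norm formalism set up just above, together with Lemma \ref{total}. For the trace statement, I would argue that $\tr_\tau(R_m) \subseteq p^m R$ is automatic: since $1,\mu_m,\ldots,\mu_m^{p^m-1}$ is an $R$-basis of $R_m$ and $\tr_\tau(\mu_m^k)=0$ for $1\le k\le p^m-1$ while $\tr_\tau(1)=p^m$, every element of $R_m$ has trace in $p^m R$. For the reverse inclusion it suffices to exhibit one element of trace exactly $p^m$, namely $1$. (One should note $R_m/R$ is free, so this identification of the trace form is legitimate, using the flatness hypotheses already in force.) So the trace claim is essentially bookkeeping.

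For $N_\tau(\eta_m)$: the minimal polynomial of $\mu_m$ over $\Z[\mu]$ (hence of $\mu_m$ over $R$, by flatness) is $Z^{p^m}-\mu$, so the minimal polynomial of $\eta_m = \mu_m - 1$ is $(Z+1)^{p^m}-\mu$, and $N_\tau(\eta_m) = (-1)^{p^m}\bigl((1)^{p^m}-\mu\bigr) = (-1)^{p^m}(1-\mu)$. When $p$ is odd, $p^m$ is odd, giving $N_\tau(\eta_m) = -(1-\mu) = \mu - 1 = \eta$. When $p=2$, $p^m$ is even (for $m\ge 1$), giving $N_\tau(\eta_m) = 1-\mu = 1 - i = -\eta$, matching the stated $-i+1$. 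This is the cleanest of the four parts — it only uses the explicit minimal polynomial.

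The last two assertions, the formulas for $\eta_m^{p^m}$, are where the real computation lives, and this is the step I expect to be the main obstacle. Here $\eta_m^{p^m} = (\mu_m - 1)^{p^m}$, and I would expand this by the binomial theorem, separating out the extreme terms: $\eta_m^{p^m} = \mu_m^{p^m} - 1 + \bigl(\text{middle binomial terms}\bigr) = \mu - 1 + \sum_{k=1}^{p^m-1}\binom{p^m}{k}(-1)^{p^m-k}\mu_m^k = \eta + \sum_{k=1}^{p^m-1}\binom{p^m}{k}(-1)^{p^m-k}\mu_m^k$. Every middle binomial coefficient $\binom{p^m}{k}$ is divisible by $p$, and by Lemma \ref{total} $p$ is an associate of $\eta^{p-1}$ in $\Z[\mu]$ (with $\eta^{p-1} = pu$, $u$ a unit), so each middle term is divisible by $\eta^{p-1}$ in $\Z[\mu_m]$. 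Thus the sum is $\eta \cdot \eta^{p-2}\cdot(\text{something})$, and one needs to check that the ``something'' has the stated shape — an element of the form $\eta_m v$ for $p$ odd (i.e.\ the leading correction vanishes modulo $\eta_m$ after pulling out $\eta^{p-1}$, because the smallest power of $\mu_m$ appearing is $\mu_m^1$), and $(i+1)(1+(i+1)\mu_m v)$ in the $p=2$ case, where one computes $(\mu_m-1)^{2^m}$ more directly using $\eta^{p-1}=\eta^1 = \eta$ and keeping track of the $2$-adic valuation of the intermediate binomial coefficients. The delicate points are: (i) confirming the exact power $\eta^{p-2}$ (not more, not less) comes out, which forces a look at the $k=1$ term $\binom{p^m}{1}(-1)^{p^m-1}\mu_m = \pm p^m \mu_m$ and its $\eta$-valuation relative to $\eta^{p-1}$; and (ii) in the $p=2$ case, correctly identifying the unit $(i+1)$ (note $i+1 = -\eta$ times a unit, or directly $\eta^2 = -2i$ from Lemma \ref{total}) and the precise residual factor. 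I would handle $p$ odd and $p=2$ as genuinely separate binomial computations, since the valuations of $\binom{p^m}{k}$ behave differently, and in each case track everything modulo a high enough power of $\eta_m$ to pin down $v$ up to the claimed ambiguity.
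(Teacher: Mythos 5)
Your treatment of the trace claim and of $N_\tau(\eta_m)$ is correct and essentially matches the paper; for the norm you in fact use a slightly cleaner route (plugging into the shifted minimal polynomial $(Z+1)^{p^m}-\mu$) than the paper, which expands $N_\tau(\mu_m-1)$ through the $s_i$, but both give the same thing. However, there is a genuine gap in your argument for the last two formulas, and it is precisely in the place you flag as ``the main obstacle.''

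After the binomial expansion you correctly observe that each middle term is divisible by $p$, hence (up to a unit) by $\eta^{p-1}$. The remaining claim is that, after factoring out $p$, what is left is divisible by $\eta_m$. Your proposed justification --- ``the leading correction vanishes modulo $\eta_m$ after pulling out $\eta^{p-1}$, because the smallest power of $\mu_m$ appearing is $\mu_m^1$'' --- does not work: $\mu_m$ itself is a unit, so a $\Z[\mu_m]$-linear combination of powers $\mu_m^k$ with $k\geq 1$ is in no way automatically divisible by $\eta_m=\mu_m-1$. The correct mechanism (and the one the paper uses) is a pairing: group the index $i$ with $p^m-i$ using $\binom{p^m}{i}=\binom{p^m}{p^m-i}$. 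For $p$ odd, $p^m$ is odd, so $(-1)^{p^m-i}=-(-1)^i$ and each pair contributes $\binom{p^m}{i}(-1)^i\bigl(\mu_m^{p^m-i}-\mu_m^{i}\bigr)$, whose last factor is visibly divisible by $\eta_m$. That supplies the extra factor of $\eta_m$ and, combined with $p=u^{-1}\eta^{p-1}$, gives exactly $\eta(1+\eta^{p-2}\eta_m v)$. Without some such pairing (or an equivalent device) the divisibility by $\eta_m$ is simply not established by your argument. The $p=2$ case needs the analogous care; there is no middle unpaired term when $p$ is odd, but for $p=2$ the index $i=p^m/2$ is self-paired, which is one reason the $p=2$ computation must be done separately as you anticipate.
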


\begin{proof} 
The first two facts are above. 
From our description of the minimal polynomials of $\mu_m$ 
in either case, we have $N_{\tau}(\mu_m) = \mu$ or 
$N_{\tau}(\mu_m) = -i$ respectively. 
Thus $N_{\tau}(\eta_m) = N_{\tau}(\mu_m - 1) = 
N_{\tau}(\mu_m) + \sum_{i=1}^{n-1} (-1)^is_i(\mu_m) + (-1)^n = 
\mu - 1$ when $p$ is odd and $-i + 1$ when $p$ is even. For odd 
$p$, 
$(\mu_m - 1)^{p^m} = \mu  - 1 + \sum_1^{p^m-1} {p^m \choose i}(-1)^i\mu_m^{p^m-i} 
= \eta + pv'$. In the summation if we pair the $i$ and $p^m - i$ terms 
we see that $v'$ is divisible by $\eta_m$ and we have 
$\eta_m^{p^m} = \eta(1 + \eta^{p-2}\eta_mv)$ for some $v$. The $p = 2$ 
case is similar.
\end{proof} 

We require more information about the traces of 
the powers of $\eta_m$. 

\begin{lemma} 
For any $p$, and $0 \leq r < p^m$, then 
$\tr_{\tau}(\eta_m^{r + sp^m})$ 
is exactly divisible by $\eta^sp^m$. 
If $p$ is odd this is $\eta^{s+m(p-1)}$ and if 
$p = 2$ this is $(i+1)^{s + 2m}$. 
\end{lemma}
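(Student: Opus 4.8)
The plan is to compute $p_k:=\tr_\tau(\eta_m^k)$ by induction on $k$ from the Newton identities recorded above, tracking $\eta$-adic valuations; write $v_\eta$ for the $\eta$-adic valuation on $\Z[\mu]$ and $v_p$ for the $p$-adic valuation on $\Z$. First, since $R_m=R\otimes_{\Z[\mu]}\Z[\mu_m]$ and traces commute with flat base change, $p_k$ lies in $\Z[\mu]$ and equals $\Tr_{\Z[\mu_m]/\Z[\mu]}(\eta_m^k)$; because $R$ is flat over $\Z[\mu]$ with $R/\eta R$ a domain, it suffices to prove the exact divisibility inside $\Z[\mu]$. The minimal polynomial of $\eta_m=\mu_m-1$ over $\Z[\mu]$ is $(Z+1)^{p^m}-\mu$, so the elementary symmetric functions of its conjugates are $s_i(\eta_m)=(-1)^i\binom{p^m}{i}$ for $1\le i<p^m$ and $s_{p^m}(\eta_m)=N_\tau(\eta_m)$, equal by \ref{etaandp} to $\eta$ in case A and to $-\eta$ in case B (for $m\ge 1$; when $m=0$ the lemma is immediate since $\tr_\tau(\eta^k)=\eta^k$). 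Feeding these into Newton's identity the signs collapse (using $(-1)^{p^m}=-1$ in case A and $(-1)^{p^m}=1$ in case B), giving, for $1\le k\le p^m-1$,
$$p_k=-\,k\binom{p^m}{k}-\sum_{i=1}^{k-1}\binom{p^m}{i}\,p_{k-i},$$
and, for $k\ge p^m$, the linear recurrence
$$p_k=\eta\,p_{k-p^m}-\sum_{i=1}^{p^m-1}\binom{p^m}{i}\,p_{k-i}.$$

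The valuation inputs are: $v_\eta(p)=p-1$ in case A (by \ref{total}) and $v_\eta(2)=2$ in case B (from $\eta^2=-2i$); and, by Kummer's theorem, $v_p\binom{p^m}{i}=m-v_p(i)$ for $1\le i\le p^m-1$, so $v_\eta\binom{p^m}{i}\ge v_\eta(p)\ge 2$ (equality exactly when $v_p(i)=m-1$) and $v_p\!\bigl(k\binom{p^m}{k}\bigr)=m$. The induction begins with $p_0=p^m$, whose valuation $m(p-1)$ (case A), resp.\ $2m$ (case B), matches $s=r=0$. For $1\le k\le p^m-1$ we have $s=0$: the term $k\binom{p^m}{k}$ has $\eta$-valuation exactly $m(p-1)$, resp.\ $2m$, while each $\binom{p^m}{i}p_{k-i}$ with $1\le i\le k-1$ has strictly larger valuation, since $v_\eta\binom{p^m}{i}\ge 2$ and $v_\eta(p_{k-i})=m(p-1)$ (resp.\ $2m$) by induction (here $0<k-i<p^m$); hence $v_\eta(p_k)=m(p-1)$ (resp.\ $2m$). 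For $k=r+sp^m\ge p^m$ we use the linear recurrence: the term $\eta\,p_{k-p^m}$ has valuation $1+\bigl((s-1)+m(p-1)\bigr)=s+m(p-1)$ by induction, while every $\binom{p^m}{i}p_{k-i}$ with $1\le i\le p^m-1$ has valuation at least $v_\eta(p)+\bigl((s-1)+m(p-1)\bigr)=s+m(p-1)+\bigl(v_\eta(p)-1\bigr)$, strictly larger because $v_\eta(p)\ge 2$. Thus $\eta\,p_{k-p^m}$ strictly dominates and $v_\eta(p_k)=s+m(p-1)$, i.e.\ $p_k$ is exactly divisible by $\eta^s p^m$; in case B one replaces $m(p-1)$ by $2m$ throughout, and $\eta^sp^m=(i+1)^{s+2m}$ up to a unit since $i+1$ and $\eta=i-1$ are associates and $2=-i(i+1)^2$.

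The only delicate point, and the one place where restricting to cases A and B is essential, is the strict inequality $v_\eta(p)>1$: it holds because $v_\eta(p)=p-1\ge 2$ in case A and $v_\eta(2)=2$ in case B, but it fails in case C, where $\eta=-2$ and $v_\eta(2)=1$, so the term $\binom{p^m}{p^{m-1}}p_{k-p^{m-1}}$ could tie with $\eta\,p_{k-p^m}$ and the exact divisibility would break down. The main obstacle is therefore purely this bookkeeping — verifying that in cases A and B exactly one term of each Newton recurrence attains the minimal $\eta$-valuation, with all others strictly larger, which then pins down $v_\eta(p_k)$ and propagates the induction.
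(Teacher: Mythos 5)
Your proof is correct but takes a genuinely different route from the paper's. The paper's proof is a one-step computation that leans directly on the identity $\eta_m^{p^m} = \eta(1 + \eta^{p-2}\eta_m v)$ already established in \ref{etaandp}: raising it to the $s$-th power gives $\eta_m^{r+sp^m} = \eta^s\bigl(\eta_m^r + \eta^{p-2}\eta_m^{r+1}v'\bigr)$, after which $\tr_{\tau}(\eta_m^r) = \pm p^m$ exactly (from $\tr_{\tau}(\mu_m^j) = 0$ for $1 \le j < p^m$) while the trace of the error term is a multiple of $\eta^{p-2}p^m$ (from $\tr_{\tau}(R_m) = p^m R$), and exact divisibility drops out without any recursion. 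You instead rederive everything from the characteristic polynomial $(Z+1)^{p^m}-\mu$ of $\eta_m$, setting up Newton's identities and then the genuine linear recurrence $p_k = \eta\,p_{k-p^m} - \sum_i \binom{p^m}{i}p_{k-i}$, and carry out a full induction with Kummer's theorem supplying the $p$-adic valuations of the binomial coefficients. At bottom both proofs rest on the same binomial data, but packaged differently: the paper hides it inside the unit $1 + \eta^{p-2}\eta_m v$ while you keep it explicit. Your version is more self-contained (it does not presuppose \ref{etaandp}), is uniformly valid for all $m$ at once, and isolates precisely where $v_\eta(p) \ge 2$ (hence case A or B rather than C) is used, which is a worthwhile observation; the paper's version is shorter because it reuses the earlier lemma rather than reproving the underlying congruence for $\eta_m^{p^m}$ inside the recurrence.
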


\begin{proof} 
$\eta_m^{r + sp^m} = \eta_m^r(\eta(1 + \eta^{p-2}\eta_mv))^s =   
\eta^sZ$ where $Z \in R_m$. This proves the divisbility. 
More precisely, $Z = \eta_m^r + \eta_m^{r+1}\eta^{p-2}v$ 
and $p^m$ divides $\tr_{\tau}(\eta_m^r)$ exactly while 
$\eta^{p-2}p^m$ divides $\tr_{\tau}(\eta^{p-2}\eta_m^{r+1}v)$. The $p = 2$ 
case is very similar. 
\end{proof} 

Another way to state the above result 
is the following. For any $b \in R$, let 
$v(b)$ be the highest power of $\eta$ 
dividing $b$. Note that $\eta$ is a prime element of 
$R$ and hence $v(b)$ is the valuation of $b$ in the 
the localization of $R$ at $\eta{R}$ which is a discrete 
valuation domain. Let $c$ be $v(p^m)$.   
Then $c = m(p-1)$ when $p$ is odd and 
and $c = 2m$ when $p$ is even. 
The above lemma says that $v(\tr_{\tau}(\eta_m^N)) = \lfloor N/n \rfloor + c$ 
where $\lfloor N/n \rfloor$ is the greatest integer 
less than or equal to $N/n$. 
Since $\lfloor a \rfloor + \lfloor b \rfloor \geq \lfloor a+b \rfloor - 1$, 
we have $(\lfloor a \rfloor + c) + (\lfloor b \rfloor + c) \geq (\lfloor a+b \rfloor+ c) + c - 1$. 
It follows that: 

\begin{lemma}\label{summingvs}
If  
$C = m(p-1) - 1$ for $p$ odd and $C = 2m - 1$ for 
$p = 2$ we claim,   
$$v(\tr_{\tau}(\eta_m^a)\tr_{\tau}(\eta_m^b)) = 
v(\tr_{\tau}(\eta_m^a)) + v(\tr_{\tau}(\eta_m^b)) 
\geq v(\tr_{\tau}(\eta_m^{a+b})) + C$$ 
\end{lemma}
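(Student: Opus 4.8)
The plan is to derive everything from the previous lemma, which identifies $v(\tr_\tau(\eta_m^N)) = \lfloor N/n \rfloor + c$, where $n = p^m$ and $c = v(p^m)$ equals $m(p-1)$ for $p$ odd and $2m$ for $p=2$. First I would record the valuation of the left side: since $v$ is a discrete valuation on the localization of $R$ at $\eta R$, it is additive on products, so $v(\tr_\tau(\eta_m^a)\tr_\tau(\eta_m^b)) = v(\tr_\tau(\eta_m^a)) + v(\tr_\tau(\eta_m^b))$, which is the first displayed equality in the claim. Substituting the formula from the previous lemma, this equals $(\lfloor a/n \rfloor + c) + (\lfloor b/n \rfloor + c)$.

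Next I would compare this with $v(\tr_\tau(\eta_m^{a+b})) + C = \lfloor (a+b)/n \rfloor + c + C$. So the inequality to be proved reduces to the elementary statement
$$\lfloor a/n \rfloor + \lfloor b/n \rfloor + 2c \geq \lfloor (a+b)/n \rfloor + c + C,$$
i.e. $\lfloor a/n \rfloor + \lfloor b/n \rfloor + c \geq \lfloor (a+b)/n \rfloor + C$. Since $C = c - 1$ in both parities by the stated definitions ($C = m(p-1) - 1 = c - 1$ for $p$ odd, $C = 2m - 1 = c-1$ for $p = 2$), this is just $\lfloor a/n \rfloor + \lfloor b/n \rfloor \geq \lfloor (a+b)/n \rfloor - 1$, which is the standard floor-subadditivity inequality already quoted in the paragraph preceding the lemma statement. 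A one-line justification (write $a/n = \lfloor a/n\rfloor + \{a/n\}$, likewise for $b$, and note the fractional parts sum to something less than $2$) suffices.

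There is essentially no obstacle here; the work is all in the preceding lemmas. The only thing to be careful about is bookkeeping: making sure $n$ really denotes $p^m$ (the order of $\tau$, as fixed earlier), that $c = v(p^m)$ matches the case distinction, and that the constant $C$ in the statement is exactly $c - 1$ so that the two $c$'s on the left absorb the $c$ on the right with one unit to spare. I would also note explicitly that the identity $v(\tr_\tau(\eta_m^N)) = \lfloor N/n\rfloor + c$ requires $a, b \geq 0$ (or at least $\geq 0$ in the relevant range), which is the implicit hypothesis, and that the localization of $R$ at $\eta R$ is a discrete valuation ring because $R$ is a regular domain with $\eta$ prime, as assumed throughout the section.
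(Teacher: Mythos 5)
Your argument is correct and follows the same route the paper itself takes in the paragraph immediately preceding the lemma: additivity of the valuation on products for the equality, the formula $v(\tr_\tau(\eta_m^N)) = \lfloor N/p^m\rfloor + c$ from the previous lemma, and the floor inequality $\lfloor x\rfloor + \lfloor y\rfloor \geq \lfloor x+y\rfloor - 1$ combined with $C = c - 1$. Your write-up is if anything a bit more careful than the paper (which elides the $a/n$ vs.\ $a$ notation), but it is substantively identical.
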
 

We use Newton's identities to compute 
$v(ks_k(\eta_m^r))$ and note that only $v(\tr_{\tau}(\eta_m^{ri}))$ matters. 

\begin{lemma}\label{leading} 
We have, 
$$v(ks_k(\eta_m^r)) = v(\tr_{\tau}(\eta_m^{kr}))$$ 
and 
$$v\left(\sum_{i=1}^{k-1} (-1)^{i-1}s_i\left(\eta_m^r\right)\tr_{\tau}\left(x^{k-i}\right)\right) > v\left(ks_k\left(\eta_m^r\right)\right).$$ 
\end{lemma}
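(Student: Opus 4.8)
The plan is to argue by induction on $k$ using Newton's identity
$$ks_k(x) = \tr_{\tau}(x^k) + \sum_{i=1}^{k-1} (-1)^{i-1}s_i(x)\tr_{\tau}(x^{k-i}),$$
applied with $x = \eta_m^r$, so that $x^j = \eta_m^{rj}$ and every quantity appearing is a trace of a power of $\eta_m$ or an $s_i$ of such a power. The base case $k=1$ is immediate: $s_1(\eta_m^r) = \tr_{\tau}(\eta_m^r)$, so $v(1\cdot s_1(\eta_m^r)) = v(\tr_{\tau}(\eta_m^{r}))$ and the second sum is empty. For the inductive step I assume the claim $v(is_i(\eta_m^r)) = v(\tr_{\tau}(\eta_m^{ri}))$ for all $i < k$, and I estimate the two pieces on the right of Newton's identity separately.

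First I would pin down $v(\tr_{\tau}(\eta_m^{kr}))$, the candidate leading term: by the lemma preceding \ref{summingvs}, writing $kr = r' + sp^m$ with $0 \le r' < p^m = n$, this valuation is exactly $\lfloor kr/n\rfloor + c$ where $c = m(p-1)$ for $p$ odd and $c = 2m$ for $p=2$. Next, for each $i$ with $1 \le i \le k-1$ I bound $v(s_i(\eta_m^r)\tr_{\tau}(\eta_m^{r(k-i)}))$. By the inductive hypothesis (using that $i$ is a unit at $\eta$, since $i < k \le p^m$ — wait, $i$ could be divisible by $p$; this is the point to be careful about) one has $v(s_i(\eta_m^r)) = v(\tr_{\tau}(\eta_m^{ri})) - v(i)$, so
$$v\!\left(s_i(\eta_m^r)\tr_{\tau}(\eta_m^{r(k-i)})\right) = v(\tr_{\tau}(\eta_m^{ri})) + v(\tr_{\tau}(\eta_m^{r(k-i)})) - v(i).$$
Now I apply \ref{summingvs} with $a = ri$, $b = r(k-i)$: the sum of the two trace valuations is at least $v(\tr_{\tau}(\eta_m^{rk})) + C$ with $C = m(p-1) - 1$ (resp. $2m-1$). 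Since $v(i) \le v(k) \le v(k!) $ and in any case $v(i)$ is small compared to $C$ once one checks $C - v(i) \ge 1$, each term in the sum has valuation strictly larger than $v(\tr_{\tau}(\eta_m^{rk}))$; hence so does the whole sum, and $v$ of the right side of Newton's identity equals $v(\tr_{\tau}(\eta_m^{rk}))$. Since the left side is $v(ks_k(\eta_m^r))$, both displayed claims follow at once.

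The main obstacle will be the bookkeeping around the factor $v(i)$ (and $v(k)$): \ref{summingvs} gives a cushion of exactly $C = m(p-1)-1$ extra powers of $\eta$, and I must verify that this cushion survives subtracting off $v(i)$, i.e. that $m(p-1) - 1 - v(i) \ge 1$ for all relevant $i$, equivalently $v(i) \le m(p-1) - 2$. Since $i \le k-1$ and $k \le n = p^m$ in the intended application, $v(i) = (p-1)\cdot(\text{$\eta$-adic valuation of }i\text{ as integer}) \le (p-1)\cdot v_p(i) \le (p-1)(m-1)$ when $i < p^m$, which is comfortably below $m(p-1)-2$ for $m \ge 1$ and $p \ge 3$; the $p=2$ case is handled the same way with $c = 2m$, $C = 2m-1$. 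If one wants the lemma for $k$ up to $p^m$ exactly (where $v(i)$ could reach $(p-1)(m-1)$ or so) the inequality is still fine, but this edge is where I would be most careful. The remaining details — that $i$ being possibly non-invertible at $\eta$ does not break the inductive formula, since the formula is stated for $is_i$ rather than $s_i$ — are exactly why the lemma is phrased with the factor $k$ on the left, and I would remark on that.
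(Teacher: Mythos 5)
Your proof is correct and follows exactly the paper's argument: induction on $k$ via Newton's identity, using Lemma~\ref{summingvs} to get the cushion $C$, and checking that $C - v(i) > 0$ (giving $p-2$ for $p$ odd and $1$ for $p=2$). Your observation that the lemma is deliberately phrased in terms of $ks_k$ rather than $s_k$ precisely so that the inductive hypothesis supplies $v(s_i) = v(\tr_\tau(\eta_m^{ri})) - v(i)$ even when $p \mid i$ is the same device the paper relies on.
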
 

\begin{proof} 
We prove this by induction. The $k = 1$ case is trivial. 
It suffices to show that 
for all $i < k$, $v(s_i(\eta_m^r)) + v(\tr_{\tau}(\eta_m^{r(k-i)}) > 
v(\tr_{\tau}(\eta_m^{kr}))$. Let $d = v(p)$ 
and let $C$ be as in \ref{summingvs}. Since 
$i < k < p^m$, $v(k) \leq d(m-1)$ and so 
by induction 
$$v(s_i(\eta_m^r)) + v(\tr_{\tau}(\eta_m^{r(k-i)}) = 
v(\tr_{\tau}(\eta_m^{ri})) - v(i) + 
v(\tr_{\tau}(\eta_m^{r(k-i)}))$$ 
which by the above lemma is greater than or equal to 
$v(\tr_{\tau}(\eta_m^{rk})) + C - v(i)$. 
When $p = 2$, $C - v(i) \geq 2m - 1 - 2(m-1) = 1 > 0$ 
and when $p > 2$ $C - v(i) \geq m(p-1) - 1 - (p-1)(m-1) = 
p - 2 > 0$ as needed. 
\end{proof} 

Combining \ref{leading} and \ref{etaandp} 
we have: 

\begin{proposition}\label{formula} 
Let $k < p^m$ and write $k = ap^s$ where $p$ does not divide $a$. 
Assume $b \in R$ and $v(b) = 0$. Then 
$v(s_k(b\eta_m^r)) = \lfloor ra/p^{m-s} \rfloor + (m - s)c$ 
where $c = p - 1$ when $p > 2$ and $c = 2$ when $p = 2$. 
If $n = v(s_k(\eta_m^r))$, then 
$s_k(b\eta_m^r)$ is congruent to $\tr_{\tau}(\eta^{rk})/k$ 
modulo $\eta^{n + 1}$. 
\end{proposition}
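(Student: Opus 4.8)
The plan is to derive both assertions from Newton's identity combined with Lemmas \ref{leading} and \ref{etaandp}, the factorization in Lemma \ref{total}, and the trace-valuation formula $v(\tr_\tau(\eta_m^N)) = \lfloor N/p^m\rfloor + v(p^m)$ recorded after Lemma \ref{etaandp} --- after first reducing to the case $b=1$. Since $b\in R = R_m^{\tau}$, we have $\tau^i(b\eta_m^r) = b\,\tau^i(\eta_m)^r$ for every $i$, so each degree-$k$ monomial in the expansion of $N_{\tau}(1+b\eta_m^r) = \prod_i(1+\tau^i(b\eta_m^r))$ contributes exactly $k$ factors of $b$; hence $s_k(b\eta_m^r) = b^k s_k(\eta_m^r)$. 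As $v(b)=0$ also $v(b^k)=0$, so $v(s_k(b\eta_m^r)) = v(s_k(\eta_m^r)) = n$, and it suffices to prove the displayed formula for $b=1$ and then multiply the resulting congruence through by $b^k$.

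For the valuation, the first half of Lemma \ref{leading} gives $v(k\,s_k(\eta_m^r)) = v(\tr_\tau(\eta_m^{rk}))$, so $n = v(\tr_\tau(\eta_m^{rk})) - v(k)$. Now $v(\tr_\tau(\eta_m^{rk})) = \lfloor rk/p^m\rfloor + v(p^m)$, while $v(p^m) = m\,v(p)$, and, writing $k = ap^s$ with $p\nmid a$, $v(k) = s\,v(p)$ (the integer $a$ being an $\eta$-adic unit); here $v(p) = p-1$ for $p$ odd by Lemma \ref{total}, and $v(p) = v(2) = 2$ for $p=2$ since $\eta^2 = -2i$. Since $rk/p^m = ra/p^{m-s}$ (and $s<m$), this yields $n = \lfloor ra/p^{m-s}\rfloor + (m-s)\,v(p)$, which is exactly the claimed formula with $c = v(p)$.

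For the congruence, rearrange Newton's identity as
$$\tr_\tau(\eta_m^{rk}) = k\,s_k(\eta_m^r) - \sum_{i=1}^{k-1}(-1)^{i-1}s_i(\eta_m^r)\,\tr_\tau(\eta_m^{r(k-i)}),$$
and note that by the second half of Lemma \ref{leading} the error term $E$ on the right satisfies $v(E) > v(k\,s_k(\eta_m^r)) = v(k)+n$. Passing to the localization of $R$ at $\eta R$, which is a discrete valuation ring, the inequality $v(k) \le v(k)+n = v(\tr_\tau(\eta_m^{rk}))$ shows $k \mid \tr_\tau(\eta_m^{rk})$ there, so dividing by $k$ gives $s_k(\eta_m^r) = \tr_\tau(\eta_m^{rk})/k + E/k$ with $v(E/k) > n$; hence $s_k(\eta_m^r) \equiv \tr_\tau(\eta_m^{rk})/k \pmod{\eta^{n+1}}$, and multiplying by $b^k$ gives the asserted congruence for $s_k(b\eta_m^r)$. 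The $p=2$ case runs identically, working in Case B with $v(2)=2$.

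I do not expect a genuine obstacle here: the substantive estimates are already packaged in Lemmas \ref{leading}--\ref{etaandp}, and what remains is arithmetic bookkeeping. The one point that needs mild care is the division by the non-unit $k$ in the congruence, which is only legitimate after localizing at $\eta R$ and invoking the divisibility $v(k)\le v(\tr_\tau(\eta_m^{rk}))$; a secondary nuisance is keeping the $p>2$ and $p=2$ normalizations ($v(p)=p-1$ versus $v(2)=2$) aligned so that the single formula in terms of $c$ covers both.
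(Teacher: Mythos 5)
Your proof is correct and fills in the same route the paper only sketches (Newton's identity together with the trace-valuation formula following Lemma~\ref{etaandp} and the two halves of Lemma~\ref{leading}); the reduction $s_k(b\eta_m^r)=b^k s_k(\eta_m^r)$, the computation $n=v(\tr_\tau(\eta_m^{rk}))-v(k)=\lfloor ra/p^{m-s}\rfloor+(m-s)v(p)$, and the divide-by-$k$ step in the DVR are all sound. One remark worth making explicit: the statement as printed has two apparent misprints that your argument silently corrects --- the trace should be of $\eta_m^{rk}$ rather than $\eta^{rk}$, and the congruence should carry the factor $b^k$ that your last line produces (both consistent with the way the result is applied in Proposition~\ref{valuesmodeta}).
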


\begin{proof}
The first fact is just the result of \ref{etaandp} 
and the second is by \ref{leading}.
\end{proof}

The purpose of the above calculations was to 
enable us to compute norms of the form 
$N_{\tau}(1 + b\eta_m^r)$ for $b \in R_m$ not divisible by $\eta_m$. 
We achieve this by writing 
$$N_{\tau}(1 + b\eta_m^r) = 1 + 
\sum_i^{n-1} s_i(b\eta_m^r) + N_{\tau}(b\eta_m^r)$$ 
and try and analyze the lowest 
values of $v(s_i(b\eta_m^r))$ and $v(N_{\tau}(b\eta_m^r))$. 
More precisely, if $v(bN_{\tau}(\eta_m^r))$ or $v(s_i(b\eta_m^r))$ is minimal among all the others we call $s_i(b\eta_m^r)$ 
or $N_{\tau}(b\eta_m^r)$ a low order 
term. We need to compute which are these low order terms. 
The first step is to compute some $v$ values, so we know that 
the low order terms, $X$, have $v(X)$ less or equal to that these values. For convenience, if $b \in R_m$ define $v_m(b)$ 
to be the highest power of $\eta_m$ dividing $b$. 
As in the previous case, $v_m(b)$ is the valuation of 
$b$ in the discrete valuation domain which is the localization 
of $R_m$ at $\eta_mR_m$. 

The norm is the easiest part. It is obvious 
that: 

\begin{lemma}\label{normvalue}
Suppose $b \in R_m$ satisfies $v_m(b) = s$. 
$N_{\tau}(b\eta_m^r) = N_{\tau}(b)\eta^r$ and 
$v(N_{\tau}(b\eta_m^r)) = r + s$. 
\end{lemma}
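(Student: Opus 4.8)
The plan is to prove \ref{normvalue} by a direct computation using the structure of $R_m$ as a free $R$-module and the multiplicativity of the norm. First I would observe that $N_{\tau}$ is multiplicative, so $N_{\tau}(b\eta_m^r) = N_{\tau}(b)N_{\tau}(\eta_m^r) = N_{\tau}(b)N_{\tau}(\eta_m)^r$. By \ref{etaandp} we know $N_{\tau}(\eta_m) = \eta$ in case A and $N_{\tau}(\eta_m) = -\eta$ in case B; in either case $N_{\tau}(\eta_m)$ equals $\eta$ up to a unit of $R$, so $N_{\tau}(\eta_m)^r = \eta^r$ up to a unit, which gives the first assertion $N_{\tau}(b\eta_m^r) = N_{\tau}(b)\eta^r$ once we absorb that sign into $N_{\tau}(b)$ (or equivalently interpret the identity up to units, which is all that is needed for the valuation claim).

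Next I would compute the valuation. Since $\eta_m$ is a prime element of $R_m$ and $v_m(b) = s$, write $b = \eta_m^s b_0$ with $v_m(b_0) = 0$, i.e.\ $b_0 \in R_m^*$ after localizing at $\eta_m R_m$. Then $N_{\tau}(b) = N_{\tau}(\eta_m)^s N_{\tau}(b_0) = \eta^s N_{\tau}(b_0)$ up to a unit. The key point is that $N_{\tau}(b_0)$ is a unit in the localization of $R$ at $\eta R$: indeed the norm of a unit in the discrete valuation domain $(R_m)_{\eta_m R_m}$ lies in $(R)_{\eta R}$ and is again a unit, because $N_{\tau}$ restricts to a map between these DVRs (recall $\eta_m R_m \cap R = \eta R$ from the setup) and a unit of the extension DVR has norm a unit of the base DVR. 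Hence $v(N_{\tau}(b)) = s$, and therefore $v(N_{\tau}(b\eta_m^r)) = v(N_{\tau}(b)\eta^r) = r + s$.

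The only mild subtlety—and the step I would be most careful about—is justifying that $v(N_{\tau}(b_0)) = 0$ when $v_m(b_0) = 0$, since a priori one might worry the norm of a unit could fail to be a unit. But this follows because $N_{\tau}$ is precisely the field-theoretic norm (or product of Galois conjugates) extended to $R_m$, the conjugates $\tau^i(\eta_m R_m)$ all lie over $\eta R$, and in the completed or localized picture $N_{\tau}(b_0) \equiv \prod_i \tau^i(b_0) \pmod{\eta}$ with each $\tau^i(b_0)$ a unit mod $\eta_m$; reducing mod $\eta_m$ (equivalently mod $\eta$, since $R_m/\eta_m R_m \cong R/\eta R$) shows $N_{\tau}(b_0)$ is nonzero mod $\eta$, hence a unit in $(R)_{\eta R}$. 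With that in hand the lemma is immediate, and since the statement is labeled ``obvious'' in the text, a one- or two-line proof along these lines suffices.

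\begin{proof}
Since $N_{\tau}$ is multiplicative, $N_{\tau}(b\eta_m^r) = N_{\tau}(b)N_{\tau}(\eta_m)^r$, and by \ref{etaandp}, $N_{\tau}(\eta_m) = \pm\eta$, so $N_{\tau}(b\eta_m^r) = N_{\tau}(b)\eta^r$ up to a unit of $R$. Writing $b = \eta_m^s b_0$ with $v_m(b_0) = 0$, we get $N_{\tau}(b) = \eta^s N_{\tau}(b_0)$ up to a unit; since reducing modulo $\eta_m$ (equivalently modulo $\eta$, as $R_m/\eta_m R_m \cong R/\eta R$) sends $N_{\tau}(b_0) = \prod_i \tau^i(b_0)$ to a product of units, $N_{\tau}(b_0)$ is a unit in the localization of $R$ at $\eta R$, so $v(N_{\tau}(b)) = s$ and $v(N_{\tau}(b\eta_m^r)) = r + s$.
\end{proof}
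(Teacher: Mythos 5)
Your proof is correct, and since the paper labels this lemma as ``obvious'' and offers no proof, your argument supplies exactly the justification the author had in mind: multiplicativity of $N_\tau$, the value $N_\tau(\eta_m) = \pm\eta$ from Lemma \ref{etaandp}, and the fact that the norm of a unit of the DVR $(R_m)_{\eta_m R_m}$ is a unit of $(R)_{\eta R}$ (since $\tau$ preserves $\eta_m R_m$ and the extension is totally ramified, so conjugates of units are units and the norm lies in $R$ outside $\eta R$).

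One small remark on precision: the displayed identity $N_\tau(b\eta_m^r) = N_\tau(b)\eta^r$ is literally true only in case A; in case B you get an extra $(-1)^r$, as you observed. Your parenthetical ``absorb the sign into $N_\tau(b)$'' is not quite a legitimate move since $N_\tau(b)$ is a specific element, but the more honest reading you also give — that the identity holds up to a unit, which is all that is needed for $v(N_\tau(b\eta_m^r)) = r+s$ and for all downstream uses — is the right one, and it matches how the paper uses the lemma. The valuation computation itself is airtight.
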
 

One  consequence of the above result is that 
any low order term $X$ when $v_m(b) = 0$ has to have $v(X) \leq r$. 

We need to understand the difference between 
$v(s_k(\eta_m^r))$, about which we have exact information, 
and the more general $v(s_k(b\eta_m^r))$ where $b \in R_m$. 
Of course if $b \in R$ and $v(b) = 0$  
then $s_k(b\eta_m^r) = b^ks_k(\eta_m^r)$ so 
$v(s_k(b\eta_m^r)) = v(s_k(\eta_m^r))$. 
We claim: 

\begin{proposition}
For any $b \in R_m$, $v(\tr_{\tau}(b\eta_m^a)) \geq 
v(\tr_{\tau}(\eta_m^a))$ and $v(s_k(b\eta_m^r)) \geq 
v(s_k(\eta_m^r))$. 
\end{proposition}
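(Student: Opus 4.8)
The plan is to prove the trace inequality first, by an $R$-basis argument, and then derive the $s_k$ inequality from it by induction through Newton's identity. Since $R_m=R\otimes_{\Z[\mu]}\Z[\mu_m]$ is $R$-free on $1,\mu_m,\dots,\mu_m^{p^m-1}$, it is also $R$-free on $1,\eta_m,\dots,\eta_m^{p^m-1}$ (the change of basis matrix over $\Z$ is lower-unitriangular), so I would write $b=\sum_{j=0}^{n-1}c_j\eta_m^j$ with $c_j\in R$ and $n=p^m$. As $\tr_{\tau}$ is $R$-linear and $v(c_j)\geq 0$ for $c_j\in R$, we get $v(\tr_{\tau}(b\eta_m^a))\geq\min_j v(\tr_{\tau}(\eta_m^{a+j}))$. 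But $v(\tr_{\tau}(\eta_m^N))=\lfloor N/n\rfloor+v(p^m)$ by the computation recorded just before \ref{summingvs}, and this is non-decreasing in $N$; hence $v(\tr_{\tau}(\eta_m^{a+j}))\geq v(\tr_{\tau}(\eta_m^a))$ for every $j\geq 0$, which gives the first inequality.

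For the second inequality I would induct on $k$ in the range $1\leq k<p^m$; when $k=p^m$ (so $s_k=N_{\tau}$) the claim is immediate from \ref{normvalue}, since $N_{\tau}(b)\in R$ forces $v(N_{\tau}(b\eta_m^r))=v(N_{\tau}(b))+r\geq r=v(N_{\tau}(\eta_m^r))$. The base case $k=1$ is exactly the first inequality with $a=r$. For the inductive step, apply Newton's identity to $X=b\eta_m^r$, so that $X^j=b^j\eta_m^{rj}$:
$$k\,s_k(b\eta_m^r)=\tr_{\tau}(b^k\eta_m^{rk})+\sum_{i=1}^{k-1}(-1)^{i-1}s_i(b\eta_m^r)\,\tr_{\tau}(b^{k-i}\eta_m^{r(k-i)}).$$
The leading term satisfies $v(\tr_{\tau}(b^k\eta_m^{rk}))\geq v(\tr_{\tau}(\eta_m^{rk}))=v(k\,s_k(\eta_m^r))$, using the first inequality (applied to $b^k\in R_m$) and \ref{leading}. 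For $1\leq i\leq k-1$ the inductive hypothesis gives $v(s_i(b\eta_m^r))\geq v(s_i(\eta_m^r))$, and the first inequality (applied to $b^{k-i}\in R_m$) gives $v(\tr_{\tau}(b^{k-i}\eta_m^{r(k-i)}))\geq v(\tr_{\tau}(\eta_m^{r(k-i)}))$, so that summand has valuation at least $v(s_i(\eta_m^r))+v(\tr_{\tau}(\eta_m^{r(k-i)}))$, which is strictly larger than $v(\tr_{\tau}(\eta_m^{rk}))=v(k\,s_k(\eta_m^r))$ by the estimate inside the proof of \ref{leading}. Therefore $v(k\,s_k(b\eta_m^r))\geq v(k\,s_k(\eta_m^r))$, and cancelling the finite $v(k)$ yields $v(s_k(b\eta_m^r))\geq v(s_k(\eta_m^r))$.

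I do not expect a genuine obstacle here: the role of the trace inequality together with the inductive hypothesis is to convert, with no loss, the term-by-term valuation estimates needed for $X=b\eta_m^r$ into precisely the ones already established for $X=\eta_m^r$, after which \ref{leading} and its proof supply the strict domination of the leading Newton term. The only things to watch are the routine $R$-linearity of $\tr_{\tau}$ (so that the $R$-basis reduction is legitimate) and the fact, clear from \ref{formula}, that $s_k(\eta_m^r)\neq 0$ so the compared valuations are finite.
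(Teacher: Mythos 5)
Your proof is correct, and the second half of it (the $s_k$ inequality by induction on $k$ through Newton's identity, dominating the error terms via \ref{leading}) is essentially the same argument as the paper's. The first half differs: for $v(\tr_{\tau}(b\eta_m^a)) \geq v(\tr_{\tau}(\eta_m^a))$ you expand $b$ in the $R$-basis $1,\eta_m,\dots,\eta_m^{p^m-1}$ of $R_m$, use $R$-linearity of $\tr_{\tau}$ to reduce to the pure powers $\eta_m^{a+j}$, and then invoke monotonicity of $N \mapsto v(\tr_{\tau}(\eta_m^N)) = \lfloor N/p^m\rfloor + v(p^m)$. The paper instead writes $a = r + sp^m$ with $r < p^m$, factors $\eta_m^a = \eta^s Z$ with $Z\in R_m$ using $\eta_m^{p^m} = \eta(1+\eta^{p-2}\eta_m v)$, and observes that $\tr_{\tau}(bZ)\in p^m R$, giving $v(\tr_{\tau}(b\eta_m^a)) \geq s + v(p^m) = v(\tr_{\tau}(\eta_m^a))$. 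Both are valid and about the same length; the paper's version avoids the basis expansion (and hence the appeal to the integral-basis fact and the unitriangular change of basis), while yours is more self-contained, relying only on the explicit valuation formula and its monotonicity rather than re-deriving the leading power of $\eta$. One small remark: your separate treatment of $k = p^m$ via \ref{normvalue} is unnecessary but harmless, since the Newton induction already covers $k = 1,\dots,p^m$; the paper only considers $k < p^m$ implicitly.
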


\begin{proof}
We use the first statement to prove the second. 
Starting with the first, write $a = r + sp^m$ where $r < p^m$, 
so $\eta_m^a = \eta^sZ$ for $Z \in R_m$. Thus 
$b\eta_m^a = \eta^s(bZ)$ and $v(\tr_{\tau}(bZ)) \geq v(p^m)$ 
which proves the first statement. 

As for the second, assume by induction the result for 
$i < k$. It suffices to show that 
$v(ks_k(b\eta_m^r)) \geq v(ks_k(\eta_m^r))$.  
Now:  
$$ks_k(b\eta_m^r) = \tr_{\tau}(b^k\eta_m^{kr}) + 
\Sigma_{i=1}^{k-1} (-1)^{i-1}s_i(b\eta_m^r)\tr_{\tau}(b^{k-i}\eta_m^{r(k-i)})$$ 
and $v(\tr_{\tau}(b^k\eta_m^{kr})) \geq v(\tr_{\tau}(\eta_m^{kr})) = 
v(ks_k(\eta_m^r))$ while $v(s_i(b\eta_m^r)) + 
v(\tr_{\tau})b^{k-i}\eta_m^{r(k-i)} \geq v(s_i(\eta_m^r)) + 
v(\tr_{\tau}(\eta_m^{k-i}))$ which we showed in \ref{leading} was strictly 
greater than $v(ks_k(\eta_m^r))$. 
\end{proof}

When $r$ is a bit bigger than $p$ we get a different 
low $v(X)$ when $b \in R$. 

\begin{lemma}\label{tracevalue}
Suppose $p < r < 2p$, $p > 2$, $k = p^{m-1}$ and $b \in R$. 
Then 
$v(s_k(b\eta_m^r)) = p + (p^{m-1})v(b)$. If $p = 2$ then 
$v(s_{2^{m-1}}(b\eta_m^r)) = (2^{m-1})v(b) + \lfloor r/2 \rfloor + 2 > r$ 
if $r < 3$ and equals $r$ if $r = 3$. 
\end{lemma}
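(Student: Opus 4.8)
The plan is to reduce everything to the exact formula already established in Proposition \ref{formula} together with the divisibility statement of the preceding proposition. Recall that for $k < p^m$ written as $k = ap^s$ with $p \nmid a$, Proposition \ref{formula} gives, for $b \in R$ with $v(b) = 0$, the value $v(s_k(\eta_m^r)) = \lfloor ra/p^{m-s}\rfloor + (m-s)c$ with $c = p-1$ (resp.\ $c = 2$). For $b \in R$ with arbitrary $v(b)$ we have $s_k(b\eta_m^r) = b^k s_k(\eta_m^r)$, so $v(s_k(b\eta_m^r)) = k\,v(b) + v(s_k(\eta_m^r))$. So the only real content is to plug in $k = p^{m-1}$, i.e.\ $a = 1$, $s = m-1$, and the given range of $r$, and simplify.

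Concretely, for $p > 2$ and $k = p^{m-1}$ we have $m - s = 1$, so $v(s_k(\eta_m^r)) = \lfloor r/p \rfloor + (p-1)$. Since $p < r < 2p$ we get $\lfloor r/p\rfloor = 1$, hence $v(s_k(\eta_m^r)) = 1 + (p-1) = p$, and therefore $v(s_k(b\eta_m^r)) = p^{m-1}v(b) + p$, which is exactly the claimed value. For $p = 2$ and $k = 2^{m-1}$ we again have $m - s = 1$ and $c = 2$, so $v(s_k(\eta_m^r)) = \lfloor r/2\rfloor + 2$; multiplying by the contribution of $b$ gives $v(s_{2^{m-1}}(b\eta_m^r)) = 2^{m-1}v(b) + \lfloor r/2\rfloor + 2$, matching the stated formula. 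The final inequalities in the $p=2$ case are then just arithmetic: for $r < 3$, i.e.\ $r = 2$ in the relevant range $p < r < 2p = 4$, we have $\lfloor r/2\rfloor + 2 = 1 + 2 = 3 > 2 = r$ (and more generally $> r$ once we allow $v(b) \geq 0$); for $r = 3$ we have $\lfloor 3/2\rfloor + 2 = 1 + 2 = 3 = r$.

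I would write the proof as: apply \ref{formula} with $b = 1$ to compute $v(s_k(\eta_m^r))$ in each case, then use $s_k(b\eta_m^r) = b^k s_k(\eta_m^r)$ for $b \in R$ to insert the $k\,v(b)$ term, and finally record the two comparisons with $r$ by inspection. There is essentially no obstacle here; the lemma is a bookkeeping corollary of \ref{formula}. The one point that needs a moment's care is confirming that the hypothesis of \ref{formula}, namely $k < p^m$, holds for $k = p^{m-1}$ — it does, trivially — and that in the $p = 2$ statement the phrase ``$r < 3$'' is read within the standing constraint $p < r < 2p$, so that the only case being compared is $r = 2$ versus the general claim, with the strict inequality holding even at $v(b) = 0$.
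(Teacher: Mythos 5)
Your proof is correct and follows essentially the same route as the paper's: apply Proposition~\ref{formula} with $a=1$, $s=m-1$ to get $v(s_{p^{m-1}}(\eta_m^r)) = \lfloor r/p\rfloor + c$, and then use $s_k(b\eta_m^r) = b^k s_k(\eta_m^r)$ for $\tau$-fixed $b$ to supply the $kv(b)$ contribution. You are slightly more explicit than the paper's one-line proof in separating the $v(b)=0$ case of \ref{formula} from the general $b\in R$ reduction, which is a useful clarification since \ref{formula} as stated only covers $v(b)=0$. The only minor quibble is in the $p=2$ parenthetical: you describe $r=2$ as lying ``in the relevant range $p < r < 2p = 4$,'' but $2 < 2$ is false; the $p=2$ clause of the lemma should simply be read as dropping the $p<r<2p$ hypothesis (which is tied to the $p>2$ clause), with the two subcases $r<3$ and $r=3$ listed directly. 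This does not affect your computations, which are all correct.
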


\begin{proof}
By \ref{formula}, 
$v(s_{p^{m-1}}(b\eta_m^r)) = p^{m-1}v(b) + \lfloor r/p \rfloor + (p-1)$ 
which equals $p$ if $v(b) = 0$. If $p = 2$, 
$v(s_{2^{m-1}}(b\eta_m^r)) = \lfloor r/2 \rfloor + 2 + 2^{m-1}v(b)$. 
\end{proof} 

Now we can pare down the possible low order terms. 

\begin{proposition}\label{highp}
Let $b \in R_m$. 
Suppose $k = ap^s$, $a$ is prime to $p$, 
and $s < m - 1$.    
Then $v(s_k(b\eta_m^r)) \geq 2(p-1)$ when $p > 2$ 
and $4$ when $p = 2$. If $v_m(b) = 0$ and $r \leq p$ 
for $p > 2$ and $r < 4$ when $p = 2$ 
then $v(s_k(b\eta_m^r))$ is not a low order term.  
If $b \in R$, $p < r < 2p$ and $p > 2$, then 
$s_k(b\eta_m^r) \geq kv(b) + 2(p-1)$ and if 
$v(b) = 0$ this is not a low order term. If 
$p = 2$, $b \in R_m$ satisfies $v_m(b) = 0$, 
and $r < 4$  
then $s_k(b\eta_m^r)$ is not a low order term.
\end{proposition}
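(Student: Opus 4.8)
The plan is to reduce all four assertions to three facts already in hand --- the exact valuation formula of \ref{formula}, the monotonicity bound $v(s_k(b\eta_m^r)) \geq v(s_k(\eta_m^r))$ proved in the (unlabelled) proposition just before \ref{tracevalue}, and the norm value $v(N_\tau(b\eta_m^r)) = r + v_m(b)$ of \ref{normvalue} --- together with the single arithmetic remark that the hypothesis $s < m-1$ forces $m - s \geq 2$.

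First I would settle the lower bound. Specialising \ref{formula} to $b = 1$ (which lies in $R$ with $v(1) = 0$) gives $v(s_k(\eta_m^r)) = \lfloor ra/p^{m-s} \rfloor + (m-s)c$, where $c = p-1$ for $p > 2$ and $c = 2$ for $p = 2$; since $m - s \geq 2$ and the floor term is nonnegative, $v(s_k(\eta_m^r)) \geq 2c$, which is $2(p-1)$ when $p > 2$ and $4$ when $p = 2$. Monotonicity then propagates this lower bound to $v(s_k(b\eta_m^r))$ for every $b \in R_m$, which is the first assertion.

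Next, for the ``not a low order term'' statements I would use that a term of the expansion $N_\tau(1 + b\eta_m^r) = 1 + \sum_{i=1}^{n-1} s_i(b\eta_m^r) + N_\tau(b\eta_m^r)$ fails to be of lowest order the moment some other term has strictly smaller $v$. When $v_m(b) = 0$, \ref{normvalue} gives $v(N_\tau(b\eta_m^r)) = r$; if $p > 2$ and $r \leq p$ then $v(s_k(b\eta_m^r)) \geq 2(p-1) > p \geq r$ (this is where $p > 2$ is used), and if $p = 2$ and $r < 4$ then $v(s_k(b\eta_m^r)) \geq 4 > 3 \geq r$, so in both cases $s_k(b\eta_m^r)$ is beaten by $N_\tau(b\eta_m^r)$. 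That is the second assertion, and the fourth is simply its $p = 2$ instance restated. For the third assertion I would first write $b = \eta^{v(b)} b'$ with $v(b') = 0$; since $\eta \in R$ is central and fixed by $\tau$ and $s_k$ is homogeneous of degree $k$ in its argument, $s_k(b\eta_m^r) = \eta^{k v(b)} s_k(b'\eta_m^r)$, so $v(s_k(b\eta_m^r)) = k v(b) + v(s_k(b'\eta_m^r)) \geq k v(b) + 2(p-1)$ by the first assertion applied to $b'$. When $v(b) = 0$ the term to compare against is no longer the norm but $s_{p^{m-1}}(b\eta_m^r)$, for which \ref{tracevalue} gives $v(s_{p^{m-1}}(b\eta_m^r)) = p$ (and $p^{m-1} \neq k$, since $s \leq m-2$, so this really is another term of the expansion); since $v(s_k(b\eta_m^r)) \geq 2(p-1) > p$, again $s_k(b\eta_m^r)$ is not a low order term.

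The step I expect to be the main obstacle is this last comparison in the third assertion: one must notice that the two valuation regimes behave differently --- for $r \leq p$ the minimal $v$ among the expansion terms is realised by $N_\tau(b\eta_m^r)$, but for $p < r < 2p$ one has $v(N_\tau(b\eta_m^r)) = r$, which can now exceed $2(p-1)$, so the minimum is realised instead by $s_{p^{m-1}}(b\eta_m^r)$, and it is against that term --- supplied by \ref{tracevalue} --- that the bound on $v(s_k(b\eta_m^r))$ must be tested. Everything else is routine bookkeeping with the discrete valuation $v$ at $\eta R$, all of it resting on $m - s \geq 2$.
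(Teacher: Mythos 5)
Your proposal is correct and follows essentially the same route as the paper's own argument: the lower bound comes from specialising \ref{formula} with $m - s \geq 2$ and invoking the monotonicity bound $v(s_k(b\eta_m^r)) \geq v(s_k(\eta_m^r))$, the ``not low order'' claims for $v_m(b) = 0$ come from comparing against $v(N_\tau(b\eta_m^r)) = r$ via \ref{normvalue}, and the third assertion for $b \in R$ uses homogeneity to pull out $kv(b)$ and then compares against $s_{p^{m-1}}$ via \ref{tracevalue}. Your explicit remark that once $r > p$ the norm term no longer supplies a smaller valuation and one must switch to $s_{p^{m-1}}$ makes this last step slightly clearer than the paper's terse citation, but it is the same reasoning.
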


\begin{proof} 
We have $v(s_k(b\eta_m^r)) \geq v(s_k(\eta_m^r)) = 
\lfloor kr/p^m \rfloor + 2(p - 1) \geq 2(p-1)$ when 
$p > 2$ and greater than or equal to $2(m-2) \geq 4$ 
when $p = 2$. Now \ref{normvalue} shows 
that $s_k(b\eta_m^r)$ is not a low order term when 
$v_m(b) = 0$. 

If $b \in R$, $p < r < 2p$ and $p > 2$ 
then $v(s_k(b\eta_m^r)) \geq kv(b) + v(s_k(\eta_m^r)) \geq 
kv(b) + \lfloor kr/p^m \rfloor + 2(p - 1) 
\geq kv(b) + 2(p - 1) > kv(b) + p$.  
If $v(b) = 0$ then by \ref{tracevalue}
$s_k(b\eta_m^r)$ is not a low order term.  
\end{proof} 

\begin{proposition}\label{higha}
Suppose $p > 2$ and $k = ap^{m-1}$ 
where $a > 1$ and $b \in R_m$. Then $v(s_k(b\eta_m^r)) 
\geq p - 1$. If $r \geq p/2$, 
$v(s_k(b\eta_m^r)) \geq p$. 
If $r \geq p$, $v(s_k(b\eta_m^r)) \geq p + 1$. 
Suppose $v_m(b) = 0$. 
If $r \leq p$, then  
$s_k(b\eta_m^r)$ is not a low order term. 
If $b \in R$ has $v(b) = 0$, then  
$s_k(b\eta_m^r)$ is not a low order term. 
\end{proposition}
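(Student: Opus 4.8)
The plan is to reduce every estimate to the exact formula \ref{formula} together with the monotonicity $v(s_k(b\eta_m^r)) \ge v(s_k(\eta_m^r))$ proved just above, so the first step is to normalize $k$. Since $s_k$ requires $k \le p^m$ and $a > 1$, we have $1 < a \le p$; the value $a = p$ gives $k = p^m$ and $s_{p^m} = N_\tau$, which is covered by \ref{normvalue}, so we may assume $1 < a < p$, whence $p \nmid a$. Thus $k = a p^{m-1}$ is already in the shape $k = a p^s$ required by \ref{formula}, with $s = m-1$, and that proposition (here $m-s = 1$ and $c = p-1$) gives
$$v(s_k(\eta_m^r)) = \lfloor ra/p \rfloor + (p-1).$$

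Next I would read off the three lower bounds by estimating $\lfloor ra/p\rfloor$ with $a \ge 2$: the trivial bound $\lfloor ra/p\rfloor \ge 0$ gives $v(s_k(b\eta_m^r)) \ge p-1$; when $r \ge p/2$ (so, $p$ being odd, $r \ge (p+1)/2$ and $ra \ge p$) we get $\lfloor ra/p\rfloor \ge 1$ and hence $v(s_k(b\eta_m^r)) \ge p$; and when $r \ge p$ we get $ra \ge 2p$, so $\lfloor ra/p\rfloor \ge 2$ and hence $v(s_k(b\eta_m^r)) \ge p+1$. Here $b \in R_m$ is arbitrary, since the monotonicity estimate holds for all such $b$.

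For the two ``not a low order term'' assertions the point is simply to exhibit, in each regime, another term of strictly smaller $v$. When $v_m(b) = 0$, \ref{normvalue} says the norm term $N_\tau(b\eta_m^r)$ has $v = r$, so it is enough to check $v(s_k(b\eta_m^r)) > r$ for $r \le p$: if $r = p$ use the bound $\ge p+1$; if $p/2 \le r < p$ use $\ge p$; and if $1 \le r < p/2$, then $r \le (p-1)/2 < p-1$ (here $p > 2$ is used), so $\ge p-1$ suffices. For the last assertion, where $b \in R$ with $v(b) = 0$, I would work in the range $p < r < 2p$ (as in the corresponding clause of \ref{highp}): there $v(s_k(b\eta_m^r)) = \lfloor ra/p\rfloor + (p-1)$ exactly, and $r \ge p+1$, $a \ge 2$ give $ra \ge 2p+2$, so $\lfloor ra/p\rfloor \ge 2$ and $v(s_k(b\eta_m^r)) \ge p+1$, whereas by \ref{tracevalue} the term $s_{p^{m-1}}(b\eta_m^r)$ has $v = p$; hence $s_k(b\eta_m^r)$ is not a low order term. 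For $r \le p$, $b \in R$ with $v(b) = 0$ forces $v_m(b) = 0$, so this case is already handled.

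The whole argument is essentially bookkeeping with the floor function. The one place where a little care is needed is that the relevant ``comparison minimum'' changes with the regime — one compares $v(s_k(b\eta_m^r))$ against the norm's value $r$ when $r \le p$, but against $s_{p^{m-1}}$'s value $p$ when $p < r < 2p$ — together with the boundary case $r = p-1$, where the crude estimate $p-1$ only ties with $r$ and one is forced to use the sharper bound valid for $r \ge p/2$. I expect this balancing of bounds to be the only (mild) obstacle.
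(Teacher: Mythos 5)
Your proposal is correct and follows essentially the same route as the paper: apply the monotonicity bound $v(s_k(b\eta_m^r)) \geq v(s_k(\eta_m^r))$, compute the latter from \ref{formula} with $k = ap^{m-1}$, $a \geq 2$, and then compare against the norm term (for $v_m(b)=0$, $r \leq p$) and against $s_{p^{m-1}}$ via \ref{tracevalue} (for $b\in R$, $v(b)=0$, $p<r<2p$). The paper simply writes the lower bound $\lfloor 2r/p\rfloor + p-1$ and argues by contradiction, while you carry the exact value $\lfloor ra/p\rfloor + (p-1)$ and check the regimes directly; you also make explicit the (in-scope) restriction $a < p$, which the paper leaves implicit since the sum in the norm expansion runs only to $k = p^m - 1$. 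These are cosmetic differences, not a different argument.
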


\begin{proof} 
$v(s_k(b\eta_m^r)) \geq v(s_k(\eta_m^r)) 
\geq \lfloor 2r/p \rfloor + p - 1$. This 
shows the inequalities. Assume $v_m(b) = 0$. 
By \ref{normvalue} if $s_k(b\eta_m^r)$ is a low order 
term, then $\lfloor 2r/p \rfloor + (p - 1) \leq r$. 
In particular $r \geq p - 1$. But since $p > 2$, 
$p - 1 > p/2$ so $r = p - 1$ is impossible. 
If $r = p$,  $2 + (p - 1) = p + 1 > r = p$ 
making this impossible also.  
If $b \in R$, $v(b) = 0$, $r > p$, and 
$s_k(b\eta_m^r)$ is a low order term then  
then $\lfloor 2r/p \rfloor + p - 1 \leq p$ by \ref{tracevalue}, 
and this is impossible also. 
\end{proof}

We need a specific norm computation enabled 
by the above machinery. 
But before that we state the following result. 

\begin{proposition}\label{valuesmodeta} 
Let $b \in R_m$ and $b_0 \in R$ 
be such that $v_m(b - b_0) > 0$. 
\begin{enumerate}
\item $N_{\tau}(b) - b_0^{p^m}$ is divisible by 
$\eta$. \label{part:1}

\item Assume $b_0 \notin \eta{R}$. Let $r = p - 1$, 
or allow $r = 2$ when $p = 2$. 
$v(s_{p^{m-1}}(b_0\eta_m^r)) = v(s_{p^{m-1}}(\eta_m^r)) = 
r$. Modulo $\eta^{r+1}$, $s_{p^{m-1}}(b\eta_m^r)$ 
is congruent to $-b_0^{p^{m-1}}\eta^r$ 
modulo $\eta^{r + 1}$ when $p > 2$, and is divisible by $\eta^{r + 1}$ when 
$p = 2$. \label{part:2}
\end{enumerate}
\end{proposition}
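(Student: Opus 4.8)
The plan is to reduce both parts to the one-variable estimates already in place — Newton's identity, Proposition~\ref{formula}, and the Proposition giving $v(\tr_{\tau}(b\eta_m^a))\ge v(\tr_{\tau}(\eta_m^a))$ and $v(s_k(b\eta_m^r))\ge v(s_k(\eta_m^r))$ for all $b\in R_m$ — by systematically comparing $b$ with the constant $b_0\in R$. Throughout I use that, $R_m/\eta_mR_m\cong R/\eta R$ being a domain, $\eta_m^jR_m$ is exactly $\{x\in R_m:v_m(x)\ge j\}$, and that $\eta_mR_m\cap R=\eta R$.

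For part~(\ref{part:1}), write $b=b_0+c$ with $c\in\eta_mR_m$, say $c=\eta_md$. Since $b_0$ is $\tau$-fixed, $N_{\tau}(b)=\prod_i(b_0+\tau^i(c))=\sum_{j=0}^{p^m}b_0^{p^m-j}s_j(\eta_md)$, so $N_{\tau}(b)-b_0^{p^m}=\sum_{j\ge1}b_0^{p^m-j}s_j(\eta_md)$. For each $j\ge1$ the monotonicity estimate gives $v(s_j(\eta_md))\ge v(s_j(\eta_m))\ge1$ (by Proposition~\ref{formula} for $1\le j<p^m$, and $s_{p^m}(\eta_m)=N_{\tau}(\eta_m)=\pm\eta$); hence every summand lies in $\eta R$ and $\eta\mid N_{\tau}(b)-b_0^{p^m}$.

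For part~(\ref{part:2}) put $k=p^{m-1}$. First dispose of $b_0$: since $b_0\in R$ with $v(b_0)=0$ we have $s_k(b_0\eta_m^r)=b_0^ks_k(\eta_m^r)$, so $v(s_k(b_0\eta_m^r))=v(s_k(\eta_m^r))$, which Proposition~\ref{formula} (with $a=1$, $s=m-1$) evaluates to $\lfloor r/p\rfloor+c$ — equal to $r$ when $p>2$, and to $r+1$ when $p=2$ and $r\in\{1,2\}$. When $p>2$, the congruence clause of Proposition~\ref{formula} gives $s_k(\eta_m^r)\equiv\tr_{\tau}(\eta_m^{(p-1)p^{m-1}})/p^{m-1}\pmod{\eta^{r+1}}$; expanding $(\mu_m-1)^{(p-1)p^{m-1}}$ and using $\tr_{\tau}(\mu_m^j)=0$ for $0<j<p^m$ and $\tr_{\tau}(1)=p^m$ shows $\tr_{\tau}(\eta_m^{(p-1)p^{m-1}})=p^m$ exactly, so $s_k(\eta_m^r)\equiv p\equiv-\eta^{p-1}\pmod{\eta^p}$ by Lemma~\ref{total}. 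Multiplying by $b_0^k$ gives $s_k(b_0\eta_m^r)\equiv-b_0^{p^{m-1}}\eta^r\pmod{\eta^{r+1}}$; for $p=2$ the valuation count just made already shows $\eta^{r+1}\mid s_k(b_0\eta_m^r)$.

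It remains to pass from $b_0$ to a general $b$ with $v_m(b-b_0)>0$, i.e.\ to show $v\bigl(s_k(b\eta_m^r)-s_k(b_0\eta_m^r)\bigr)\ge r+1$. Apply Newton's identity to $s_k(b\eta_m^r)$ and to $s_k(b_0\eta_m^r)$ and subtract. The resulting difference of cross-sums is, term by term, at least $v(s_i(\eta_m^r))+v(\tr_{\tau}(\eta_m^{r(k-i)}))$ by the monotonicity estimates, which Lemma~\ref{leading} pushes strictly above $v(ks_k(\eta_m^r))$, hence to $\ge v(k)+r+1$. The remaining difference is $\tr_{\tau}\bigl((b^k-b_0^k)\eta_m^{kr}\bigr)$; since $b\equiv b_0\pmod{\eta_m}$ forces $b^{p^{m-1}}\equiv b_0^{p^{m-1}}\pmod{\eta_m^{p^{m-1}}}$, we have $b^k-b_0^k\in\eta_m^kR_m$, so this equals $\tr_{\tau}(w\eta_m^{k(r+1)})$ for some $w\in R_m$; as $k(r+1)\ge p^m$, the monotonicity estimate together with $v(\tr_{\tau}(\eta_m^N))=\lfloor N/p^m\rfloor+v(p^m)$ gives it $v$-value $\ge v(p^m)+1\ge v(k)+r+1$. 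Dividing out $v(k)$ yields $v(s_k(b\eta_m^r)-s_k(b_0\eta_m^r))\ge r+1$, and with the $b_0$ computation this proves~(\ref{part:2}). The one delicate point is this last step: routing the leading difference through $\tr_{\tau}(\eta_m^{kr+1})$ falls short by exactly one power of $\eta$, and it is precisely the sharpened congruence $b^{p^{m-1}}\equiv b_0^{p^{m-1}}$ modulo the $p^{m-1}$-st power of $\eta_m$ — rather than merely the first — that supplies the missing factor.
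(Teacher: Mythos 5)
Your proof is correct, and it uses the same underlying machinery the paper does (Newton's identity, Proposition~\ref{formula}, the monotonicity estimates, and the binomial-coefficient observation that $b^{p^{m-1}}\equiv b_0^{p^{m-1}}\pmod{\eta_m^{p^{m-1}}}$), but it is organized differently. The paper works directly with $s_{p^{m-1}}(b\eta_m^{r})$: it isolates the leading Newton term $\tr_{\tau}\bigl((b\eta_m^{r})^{p^{m-1}}\bigr)/p^{m-1}$, expands $(b_0\eta_m^r+X\eta_m^{r+1})^{p^{m-1}}$ binomially \emph{inside} the trace, argues that the binomial cross terms are negligible because their coefficients are divisible by $p$ (hence have $v_m\ge p^m v(p)$), and is then left with $b_0^{p^{m-1}}\tr_{\tau}(\eta_m^{rp^{m-1}})/p^{m-1}$, which it evaluates. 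You instead first evaluate $s_{p^{m-1}}(b_0\eta_m^{r})=b_0^{p^{m-1}}s_{p^{m-1}}(\eta_m^{r})$ in one stroke by invoking the already-proved congruence clause of Proposition~\ref{formula}, and then bound $s_{p^{m-1}}(b\eta_m^{r})-s_{p^{m-1}}(b_0\eta_m^{r})$ by subtracting the two Newton identities termwise. The differencing reorganization is a genuine simplification: it avoids redoing the binomial analysis inside the trace, delegates the exact computation to a lemma already in hand, and isolates the single arithmetic fact that carries the day — your closing remark, that replacing $b\equiv b_0\pmod{\eta_m}$ by $b^{p^{m-1}}\equiv b_0^{p^{m-1}}\pmod{\eta_m^{p^{m-1}}}$ supplies exactly the one missing power of $\eta$, is precisely the content that the paper's proof packages less transparently as ``$pX$ is a sum of terms $b'\eta_m^t$ with $t\ge 2p^m$.'' For part~(\ref{part:1}) you also take a more computational route than the paper (which simply notes $N_{\tau}(b)-b_0^{p^m}\in\eta_mR_m\cap R=\eta R$); both are fine. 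One last remark: you correctly observe that for $p=2$ the formula gives $v(s_{p^{m-1}}(\eta_m^{r}))=r+1$, not $r$, so the displayed equality in the proposition's statement holds only for $p>2$; this does not affect the operative conclusion for $p=2$ (divisibility by $\eta^{r+1}$), but it is worth recording that the statement as written is slightly off there.
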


\begin{proof}
To prove part \ref{part:1}, write $b = b_0 + z\eta_m$ for $z \in R_m$. 
Writing $N_{\tau}(b_0 + z\eta_m)$ out as a product, it is clear that 
it equals $b_0^{p^{m-1}} + \eta_mX$ for some $X$. 
But $\eta_mX \in \eta_mR_m \cap R = \eta{R}$ proving \ref{part:1}. 

The argument for \ref{part:2} will be more involved. 
$v(s_{p^{m-1}})(\eta_m^r) = \break 
\lfloor p^{m-1}r/p^m \rfloor + p - 1 = 
\lfloor r/p \rfloor + p - 1 = r$. 
If $i < p^{m-1}$ then $p^{m-1}$ does not divide 
$i$ and $v(s_i(b\eta_m^{p-1})) \geq v(s_i(\eta_m^{p-1}))$ 
which is strictly greater that $p$ by \ref{highp}. 
Also $v(\tr_{\tau}((b\eta_m^{p - 1})^{k - i})) \geq v(p^m)$. 
Using Newton's formula we have 
\begin{multline*}
s_{p^{m-1}}(b\eta_m^{p-1}) = \tr_{\tau}((b\eta_m^{p-1})^{p^{m-1}})/p^{m-1} \\ 
+ \sum_{i = 1}^{k-1} s_i(b\eta_m^{p-1})\tr_{\tau}((b\eta_m^{p - 1})^{k-i}))/p^{m-1}.
\end{multline*}

If $X = s_i(b\eta_m^{p-1})\tr_{\tau}(b\eta_m^{p - 1}))/p^{m-1}$ 
then $v(X) > p + v(p^m) - v(p^{m-1}) > p$. 
Thus $s_{p^{m-1}}(b\eta_m^{p-1})$ is congruent to 
$\tr_{\tau}((b\eta_m^{p-1})^{p^{m-1}})$ modulo $\eta^{r+1}$. 
Now writing $b = b_0 + \eta_mX$ 
we observe that $((b_0 + \eta_mX)\eta_m^r)^{p^{m-1}} = 
(b_0\eta_m^r + X\eta_m^{r+1})^{p^{m-1}}$ and using the binomial
expansion this has the form $b_0^{p^{m-1}}\eta_m^{rp^{m-1}} + 
X^{p^{m-1}}\eta^{(r + 1)p^{m-1}} + pX$. 
Since $v_m(p) = p^mv(p)$, when $p > 2$, $pX$ is a sum of terms 
of the form $b'\eta_m^t$ where $t \geq 2p^m$. 
When $p = 2$, $2^mv(2)$ and $pX$ 
is again the sum of terms $b\eta_m^t$ for $t \geq 2(2^m)$. 
But $v(\tr_{\tau}(b'\eta_m^t)) > p$ for such $t$. 
If $r = p - 1$, 
$v(\tr_{\tau}(X^{p^{m-1}}\eta_m^{p^m})) \geq p$ 
and so $s_{p^{m-1}}(b\eta_m^{p-1})$ is congruent to 
$\tr_{\tau}(b_0\eta_m^{p-1})^{p^{m-1}} = b_0^{p^{m-1}}\tr_{\tau}(\eta_m^{p-1})/p^{m-1}$ modulo $\eta^p$. 
Since $\eta_m^{p-1} = (-1)^{p - 1} + X$ where 
$X$ is sums of $\mu_m^r$ for $r < p$ we have 
$\tr_{\tau}(\eta_m^{p - 1})/p^{m-1} = (-1)^{p-1}p$. 
When $p$ is odd this is $(-1)^p\eta^{p - 1}$ modulo $\eta^p$  
and when $p = 2$, $v(p) = 2$ so this is divisible 
by $\eta^2$. Finally if $p = 2$ and $r = 2$, 
$v(\tr_{\tau}(X^{p^{m-1}}\eta_m^{32^{m-1}})) \geq 
\lfloor 3/2 \rfloor + 2 = 3$.  
\end{proof}

The next theorem gives norm computations 
of $1 + b\eta_m^r$ for the 
the values of $r$ we require. 

\begin{theorem}\label{normcomputations} 
Suppose $b \in R$ and write $b = b_0 + \eta_mX$ 
for some $b_0 \in R$ and $X \in R_m$. 
\begin{enumerate}
\item $N_{\tau}(1 + b\eta_m^p)$ is congruent to $1$ modulo 
$\eta^p$. When $p = 2$ $N_{\tau}(1 + b\eta_m^2)$ is congruent 
to $1 + b_0^{2^m}\eta^2$ modulo $\eta^3$. \label{itm:1}

\item $N_{\tau}(1 + b\eta_m^{p-1})$ is congruent 
to $1 + (b_0^{p^m} - b_0^{p^{m-1}})\eta^{p-1}$ modulo $\eta^p$ 
when $p > 2$ and is congruent to $1 + b_0^{2^m}\eta$ 
modulo $\eta^2$ when $p = 2$. \label{itm:2}

\item If $b$ is not divisible by $\eta_m$ and 
$r < p - 1$, then $N_{\tau}(1 + b\eta_m^r)$ is congruent to 
$1 + b_0^r\eta^r$ modulo $\eta^{r+1}$. \label{itm:3}

\item Suppose $s \leq p - 1$, $v_m(b) = 0$ and 
$v(a) = 0$. If $N_{\tau}(1 + b\eta_m^r) = 
1 + a\eta^s$ then $s = r$. \label{itm:4}

\item Suppose $p > 2$ and $r = p + 1$ and $b \in R$ with $v(b) = 0$. 
Then $N_{\tau}(1 + b\eta_m^{p+1})$ 
is congruent to $1 + -b^{p^{m-1}}\eta^{p}$ modulo $\eta^{p+1}$. \label{itm:5}
\end{enumerate}
\end{theorem}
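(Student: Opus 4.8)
All five parts flow from the single identity
$N_{\tau}(1 + b\eta_m^r) = 1 + \sum_{i=1}^{p^m-1} s_i(b\eta_m^r) + N_{\tau}(b\eta_m^r)$,
so the plan is first to use the valuation estimates \ref{normvalue}, \ref{formula}, \ref{highp}, \ref{higha} and \ref{tracevalue} to decide, for each $r$, which summands carry minimal $v$ (the ``low order terms'') and which are negligible modulo the required power of $\eta$, and then to evaluate the survivors: the norm term by the first clause of \ref{valuesmodeta}, and the term $s_{p^{m-1}}(b\eta_m^r)$ by the second clause of \ref{valuesmodeta} or, when that is not available, directly from Newton's identity together with \ref{etaandp}. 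By \ref{normvalue} the norm term has $v$ exactly $r$ (up to the Case B sign $N_{\tau}(\eta_m) = -\eta$ from \ref{etaandp}, which is harmless below because $v(2) = 2$), and by \ref{formula} the term $s_{p^{m-1}}(b\eta_m^r)$ has $v = \lfloor r/p\rfloor + (p-1)$, equal to $p-1$ for $r < p$ and $p$ for $p \le r < 2p$, while \ref{highp} and \ref{higha} push every other $s_i(b\eta_m^r)$ to $v \ge 2(p-1)$ (resp.\ $\ge 4$ for $p=2$) provided $r$ stays below about $2p$. Thus for every $r$ in the statement only two candidates, $N_{\tau}(b\eta_m^r)$ and $s_{p^{m-1}}(b\eta_m^r)$, ever compete.

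\textbf{Parts (1)--(3).} For $r = p$, $p>2$, both candidates have $v \ge p$, so $N_{\tau}(1+b\eta_m^p) \equiv 1$ modulo $\eta^p$; for $p=2$ the $s$-term drops out by the second clause of \ref{valuesmodeta}, the norm term survives, and the first clause of \ref{valuesmodeta} evaluates it to $1 + b_0^{2^m}\eta^2$ modulo $\eta^3$ (the sign of $N_{\tau}(\eta_m)$ being invisible here since $2\eta$ has $v=3$). For $r = p-1$, $p>2$, both candidates have $v=p-1$: the first clause of \ref{valuesmodeta} gives $N_{\tau}(b\eta_m^{p-1}) \equiv b_0^{p^m}\eta^{p-1}$ and the second gives $s_{p^{m-1}}(b\eta_m^{p-1}) \equiv -b_0^{p^{m-1}}\eta^{p-1}$, and adding these yields the stated $(b_0^{p^m} - b_0^{p^{m-1}})\eta^{p-1}$; for $p=2$ the $s$-term is again killed and only the norm term contributes. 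For $r < p-1$ the $s$-term has $v = p-1 > r$, so only $N_{\tau}(b\eta_m^r)$ survives and gives the asserted congruence modulo $\eta^{r+1}$.

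\textbf{Part (4).} Package the above into a statement about $v(N_{\tau}(1+b\eta_m^r) - 1)$. For $r \le p-1$ the unique term of value $r$ has a unit leading coefficient --- automatically when $r \le p-2$ or $p=2$, since $v_m(b)=0$ makes $N_{\tau}(b)$ a unit, and in the one remaining case $r = p-1 > 2$ either $b_0^{p^m} - b_0^{p^{m-1}}$ is a unit (so $v=r$) or it is divisible by $\eta$ (so $v \ge p$). For $r \ge p$ one has $v(N_{\tau}(b\eta_m^r)) = r \ge p$ and, by \ref{formula}, \ref{highp} and \ref{higha}, $v(s_i(b\eta_m^r)) \ge p$ for every $i$, so $v(N_{\tau}(1+b\eta_m^r)-1) \ge p$. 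Hence if $N_{\tau}(1+b\eta_m^r) = 1 + a\eta^s$ with $v(a)=0$ then $s = v(N_{\tau}(1+b\eta_m^r)-1)$, and the hypothesis $s \le p-1$ excludes both $r \ge p$ and the degenerate $r=p-1$ case, forcing $r \le p-1$ and $s = r$.

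\textbf{Part (5), and the main obstacle.} Now $r = p+1 > p$, so the norm term has $v = p+1$; \ref{highp} and \ref{higha} put every $s_i$ with $i \ne p^{m-1}$ at $v \ge 2(p-1) \ge p+1$ (using $p \ge 3$), and \ref{tracevalue} leaves $s_{p^{m-1}}(b\eta_m^{p+1})$ as the unique low order term, of value $p$. Since $b \in R$ is $\tau$-fixed, $s_{p^{m-1}}(b\eta_m^{p+1}) = b^{p^{m-1}} s_{p^{m-1}}(\eta_m^{p+1})$, and \ref{leading} reduces $s_{p^{m-1}}(\eta_m^{p+1})$ modulo $\eta^{p+1}$ to $\tr_{\tau}(\eta_m^{(p+1)p^{m-1}})/p^{m-1}$. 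The remaining calculation, which I expect to be the one genuinely delicate step, is to evaluate this trace to sufficient precision: writing $(p+1)p^{m-1} = p^m + p^{m-1}$ and expanding $\eta_m^{p^m+p^{m-1}} = (\mu_m-1)^{p^m+p^{m-1}}$, only the $\mu_m^0$ and $\mu_m^{p^m}$ terms have nonzero trace (every intermediate power of $\mu_m$ being killed because $Z^{p^m}-\mu$ is the minimal polynomial), so $\tr_{\tau}(\eta_m^{p^m+p^{m-1}}) = p^m\bigl(1 - \tbinom{p^m+p^{m-1}}{p^m}\mu\bigr)$; since $\tbinom{p^m+p^{m-1}}{p^m} \equiv 1 \bmod p$ by Lucas and $\mu = 1+\eta$, this equals $-p^m\eta$ up to terms of higher $v$, and finally \ref{total} (namely $\eta^{p-1} = up$ with $u \equiv -1 \bmod \eta$) converts the remaining $p$ into a power of $\eta$ to give the leading coefficient $-b^{p^{m-1}}$. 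Making the several signs in this last step line up --- the parity of the exponent in the binomial expansion, the sign $u \equiv -1$, and the sign convention in Newton's identity --- is the only real subtlety; everything preceding it is bookkeeping with the valuation lemmas already proved.
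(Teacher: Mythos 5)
Your parts (1)--(4) reproduce the paper's argument: the same decomposition $N_\tau(1+b\eta_m^r) = 1 + \sum_i s_i(b\eta_m^r) + N_\tau(b\eta_m^r)$, the same pruning of the $s_i$ to the single survivor $i = p^{m-1}$ via \ref{normvalue}, \ref{highp}, \ref{higha}, and the same evaluation of the survivors via \ref{valuesmodeta}. Part (5) diverges only at the trace evaluation. You expand $(\mu_m - 1)^{p^m+p^{m-1}}$ directly, keep only the $\mu_m^0$ and $\mu_m^{p^m}$ terms (the two exponents in range with nonzero trace), and invoke Lucas for the surviving binomial coefficient; the paper instead writes $\eta_m^{(p+1)p^{m-1}} = \eta_m^{p^m}\cdot\eta_m^{p^{m-1}}$ and substitutes the factorization $\eta_m^{p^m} = \eta(1 + \eta^{p-2}\eta_m v)$ from \ref{etaandp}. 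Your route is more self-contained at the cost of the Lucas input; the paper's is shorter because it recycles \ref{etaandp}.

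The genuine gap is the step you yourself flag as ``the only real subtlety'' and then leave undone. Carrying it out along your own route: $\tr_\tau(\eta_m^{p^m+p^{m-1}}) = p^m(1 - c\mu)$ with $c \equiv 1$ modulo $p$; writing $\mu = 1+\eta$ gives $1 - c\mu = (1-c) - c\eta \equiv -\eta$ modulo $(p,\eta^2)$, so after dividing by $p^{m-1}$ and discarding all terms of value at least $p+1$ the trace becomes $-p\eta$; and $\eta^{p-1} = up$ with $u \equiv -1$ modulo $\eta$ gives $p \equiv -\eta^{p-1}$, hence $-p\eta \equiv +\eta^p$. That yields $s_{p^{m-1}}(b\eta_m^{p+1}) \equiv +b^{p^{m-1}}\eta^p$, the opposite of the $-b^{p^{m-1}}\eta^p$ you assert. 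A direct numerical check at $p=3$, $m=1$, $b=1$ gives $N_\tau(1+\eta_1^4) \equiv 1 + \eta^3$ modulo $\eta^4$, confirming the plus sign. So you have either dropped a sign silently or copied the printed statement: the paper's proof of (5) claims $\tr_\tau(\eta_m^{p^{m-1}})/p^{m-1} = p$, whereas the constant term of $(\mu_m - 1)^{p^{m-1}}$ is $(-1)^{p^{m-1}} = -1$ for $p$ odd, so that quotient is $-p$, and the printed sign in (5) does not survive. Your more explicit route actually makes this slip visible --- but only if one finishes the bookkeeping you waved at, and as written the proposal asserts a sign it does not establish.
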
 

\begin{proof} 
We start with \ref{itm:1}. 
$N_{\tau}(1 + b\eta_m^p) = 1 + \sum_i s_i(b\eta_m^p) + 
N_{\tau}(b\eta_m^p)$. Thus to prove the first statement 
we need to show all terms $X$ except $X = 1$ satisfy $v(X) \geq p$.  It is clear from \ref{normvalue} that $v(N_{\tau}(b\eta_m^p)) 
\geq p$. Suppose $p > 2$. 
If $v(s_k(b\eta_m^p)) < p$ then by \ref{highp} $k$ 
is divisible by $p^{m-1}$. By \ref{higha} $k = p^{m-1}$. 
Finally, $v(s_{p^{m-1}}(b\eta_m^p)) \geq 
v(s_{p^{m-1}}(\eta_m^p)) = 1 + ( p - 1) = p$ finishing this 
case. 

In \ref{itm:1}, if $p = 2$ we need $v(s_k(b\eta_m^2)) > 2$ 
when $k < 2^m$.  
By \ref{highp} we can assume $k = 2^{m-1}$. 
We are done by \ref{valuesmodeta}. 

In \ref{itm:2}, we can obviously assume $v_m(b) = 0$ 
and note that\linebreak $v(N_{\tau}(b\eta_m^{p-1})) = p - 1$. 
First we show no nontrivial terms $X$ have 
$v(X) < p - 1$ and the only other term with 
$v(s_k(b\eta_m^{p-1})) = p - 1$ is $k = p^{m-1}$. 
Now $v(s_k(b\eta_m^{p-1})) \geq  v(s_k(\eta_m^{p-1})$. 
From \ref{highp} we see that $v(s_k(\eta_m^{p-1})) > p - 1$ 
unless $p^{m-1}$ divides $k$. If $k = ap^{m-1}$ 
for $a > 1$ we see that $v(s_k(b\eta_m^{p-1})) \geq p$ 
if $a > 1$. Thus $k = p^{m-1}$ is the only remaining 
case and we are done by \ref{valuesmodeta}. 

Turning to \ref{itm:3}, we need that no $s_k(b\eta_m^r)$ 
is a low order term when $r < p - 1$ and $v_m(b) = 0$. 
Note that $v(N_{\tau}(b\eta_m^r)) = r$. 
By \ref{highp} we can assume $p^{m-1}$ divides $k$. 
By \ref{higha} we can assume $k = p^{m-1}$. 
But $v(s_{p^{m-1}}(b\eta_m^r)) \geq v(s_{p^{m-1}}(\eta_m^r)) = 
\lfloor r/p \rfloor  + p - 1 > r$.  

As for \ref{itm:4}, this is clear if $s < p - 1$ or $p = 2$. If 
$s = p - 1$ then $r \geq p - 1$ by 2) but $r \geq p$ 
is excluded by 1). 

For \ref{itm:5}, $v(N_{\tau}(b\eta_m^{p+1})) = p + 1$ of course. 
However, $v(s_{p^{m-1}}(b\eta_m^{p+1})) = v(s_{p^{m-1}}(\eta_m^{p+1}) = 
\lfloor (p+1)p^{m-1}/p^m \rfloor + p - 1 = 
\lfloor 1 + 1/p \rfloor + p - 1 = p$ 
so the 
norm term is not low order. By \ref{highp} 
any low order term $s_k(b\eta_m^{r+1})$ must 
have $k$ divisible by $p^{m-1}$. 
By \ref{higha} $s_{p^{m-1}}(b\eta_m^{p+1})$ 
is the only low order term. 
Now $s_{p^{m-1}}(b\eta_m^{p+1}) = 
b^{p^{m-1}}s_{p^{m-1}}(\eta_m^{p+1})$. 
By \ref{leading}, 
$s_{p^{m-1}}(\eta_m^{p+1})$ is congruent to 
$\tr_{\tau}(\eta_m^{(p+1)p^{m-1}})/p^{m-1}$ modulo 
$\eta^{p+1}$. 
But $\eta_m^{(p+1)p^{m-1}} = \eta_m^{p^m}\eta_m^{p^{m-1}} = 
\eta(1 + \eta^{p-2}\eta_mv)\eta_m^{p^{m-1}} = 
\eta(\eta_m^{p^{m-1}} + \eta^{p-2}\eta_m^{p^{m-1} + 1}$ and so 
$\tr_{\tau}(\eta_m^{(p+1)p^{m-1}})/p^{m-1} = \eta\tr_{\tau}(\eta_m^{p^{m-1}}) + 
\eta^{p-1}\tr(X)/p^{m-1}$ for some $X$. 
This is congruent modulo $\eta^{p+1}$ to 
$\eta\tr_{\tau}(\eta_m^{p^{m-1}})/p^{m-1} = \eta{p}$ which is 
congruent to $-1$ modulo $\eta^{p+1}$. 
\end{proof}

There is a small detail we need to consider: 

\begin{lemma} 
Suppose $M \subset R$ is a prime ideal not containing 
$\eta$.  
If $b \in M$ and $N(1 + b\eta_m^r) = 
1 + z\eta^s$ where $\eta$ does not divide $z$ then 
$z \in M$. 
\end{lemma}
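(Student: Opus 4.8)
The plan is to use that $b$ lies in $R$ --- not merely in $R_m$ --- so that $\tau$ fixes $b$ and $N_\tau(1+b\eta_m^r)$ becomes a polynomial in $b$ with coefficients in $R$ and constant term $1$.

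First I would expand the norm exactly as in the discussion preceding \ref{normvalue}. Writing $n = p^m$ for the order of $\tau$,
$$N_\tau(1 + b\eta_m^r) = \prod_{i=0}^{n-1}\bigl(1 + \tau^i(b\eta_m^r)\bigr) = 1 + \sum_{k=1}^{n} s_k(b\eta_m^r).$$
Since $b \in R = R_m^\tau$ we have $\tau^i(b\eta_m^r) = b\,\tau^i(\eta_m^r)$, and $s_k$ is homogeneous of degree $k$ in the $\tau$-conjugates, so $s_k(b\eta_m^r) = b^k\, s_k(\eta_m^r)$. Moreover each $s_k(\eta_m^r)$ is a symmetric function of the $\tau$-conjugates of $\eta_m^r$, hence $\tau$-invariant, hence lies in $R_m^\tau = R$. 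Therefore
$$z\eta^s \;=\; N_\tau(1 + b\eta_m^r) - 1 \;=\; \sum_{k=1}^{n} b^k\, s_k(\eta_m^r) \;\in\; bR \;\subseteq\; M,$$
because every term of the sum is divisible by $b \in M$.

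It then remains only to cancel $\eta^s$. As $M$ is prime and, by hypothesis, $\eta \notin M$, we get $\eta^s \notin M$; combined with $z\eta^s \in M$ this forces $z \in M$, which is the claim.

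There is no genuine obstacle in this argument; the single point worth noting is that the identity $s_k(b\eta_m^r) = b^k s_k(\eta_m^r)$, and with it the whole conclusion, relies on the hypothesis $b \in R$ (so that $b$ is $\tau$-fixed) --- for $b$ only in $R_m$ the reasoning would fail. The role of the lemma is to ensure that the norm formulas of \ref{normcomputations} behave well under reduction modulo a prime $M$ avoiding $\eta$: if the ``$b$-contribution'' vanishes mod $M$, so does the output $z$.
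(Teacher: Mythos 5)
Your proof is correct, and it takes a slightly different route than the paper's. The paper argues functorially: since $R$ is a direct summand of $R_m$ and $MR_m \cap R = M$, the norm $N_\tau$ descends to $R_m/MR_m$ and commutes with reduction; reducing $b \equiv 0$ gives $N_\tau(1 + b\eta_m^r) \equiv 1 \pmod{MR_m}$, hence $z\eta^s \in M$. Your argument instead makes the polynomial structure explicit: using $\tau$-invariance of $b$, you factor out $b^k$ from each $s_k(b\eta_m^r)$, observe $s_k(\eta_m^r) \in R_m^\tau = R$, and conclude every term of $N_\tau(1+b\eta_m^r) - 1$ lies in $bR \subseteq M$. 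Both then finish the same way using primeness of $M$ and $\eta \notin M$. The paper's version is marginally more robust (it would apply to any $b$ with $b \in MR_m$, not just $b \in M$), while yours makes the dependence on $b$ being $\tau$-fixed transparent and connects directly to the $s_k$-machinery the section is built around. Either is fine here.
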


\begin{proof} 
Since $R_m = R[Z]/(Z^{p^m} - \mu)$, $R$ is a direct summand 
of $R_m$ and $MR_m \cap R = M$. Clearly $\tau: R_m \to R_m$ 
induces $\tau: R_m/MR_m \to R_m/MR_m$. Since 
$N_{\tau} = \prod_i \tau(X)$, we have the commutative diagram 
$$\begin{matrix}
R_m&\buildrel{N_{\tau}}\over\longrightarrow&R_m\cr
\downarrow&&\downarrow\cr
R_m/MR_m&\buildrel{N_{\tau}}\over\longrightarrow&R_m/MR_m.\cr
\end{matrix}$$ 
This implies that $N_{\tau}(1 + b\eta_m^r) \in 1 + MR_m$ 
and so $z\eta^s \in MR_m \cap R = M$. Since $M$ is prime 
and does not contain $\eta$, $z \in M$.
\end{proof}

\section{Operations}

In the rest of this paper, we will need to perform operations on cyclic 
extensions. That is, we need to operate in the group 
of degree $n$ cyclic extensions of a commutative ring. 
This is isomorphic to a cohomology group of the etale fundamental group. 
It will be more useful, however, to represent the elements of this 
group as explicit ring extensions. In this group  
are elements whose order is not $n$. 
In that case, we will be dealing with induced Galois extensions. 
If we are considering extensions of degree $n$ and 
$S/R$ is cyclic of degree $m < n$ with $m$ dividing 
$n$, then our ``group" element is an induced extension 
(defined below) 
$\Ind_H^G(S/R)$ and the distinction is crucial. 
One cannot, for example, lift degree $8$ cyclics but 
viewed as an element of the degree 16 cyclic extensions 
(via $\Ind_H^G$) they can be lifted. 
Thus we conceive 
of our operations as directly concerning cyclic extensions 
presented as algebras. This approach will also allow us to keep 
track of the Galois module structure of our extensions. 

We start with some basic observations about Galois extensions 
of commutative rings. 
Recall (e.g., \cite{S1999} or \cite{DI}) that if $T/R$ is an extension of commutative 
rings where $T$ is finitely generated projective over $R$, 
and $G$ is a finite group acting on $T$ with $R \subset T^G$, 
let $\Delta(T/R,G)$ be the twisted group ring 
$T_*[G]$ where $g \in G$ and $t \in T$ satisfy 
$gt = g(t)g$. Define $\phi: \Delta(T/R,G) \to \End_R(T)$ 
by setting $\phi(t)$ to be left multiplication by $t$ and 
$\phi(g)$ to be the given action of $g$ on $T$. 
Then $T/R$ is Galois with group $G$ if and only if $\phi$ is an 
isomorphism. We refer to \cite{S1999} and \cite{DI} 
for the basic theory of these extensions. 
One result we will need is that $T/R$ is $G$ Galois 
if and only if $T/R$ is finitely generated projective as a module 
and $T \otimes_R T \cong \oplus_{g \in G} T$ where the 
$g$ component of the map is $t \otimes t' \to tg(t')$. 
It is also easily seen to be true that if $R'/R$ 
is faithfully flat, and if $G$ acts on $T$, then 
$T/R$ is $G$ Galois if and only if $(T \otimes_R R')/R'$ 
is $G$ Galois.

We record the well known: 

\begin{lemma}\label{galoistower}
Let $G'$ be a finite group with normal subgroup $N'$. 
Suppose $G'$ acts on $T \supset S \supset R$ such that $T/S$ is $N'$ 
Galois and $S/R$ is $G'/N'$ Galois. Then $T/R$ is $G'$ Galois. 
\end{lemma}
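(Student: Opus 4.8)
The plan is to verify the Galois condition using the characterization recalled just above, namely that $T/R$ is $G'$ Galois if and only if $T$ is finitely generated projective over $R$ and the natural map $T \otimes_R T \to \bigoplus_{g \in G'} T$, sending $t \otimes t'$ to $(tg(t'))_{g \in G'}$, is an isomorphism. Finite generation and projectivity of $T$ over $R$ are immediate: $T$ is finitely generated projective over $S$ (since $T/S$ is $N'$ Galois) and $S$ is finitely generated projective over $R$ (since $S/R$ is $G'/N'$ Galois), so $T$ is finitely generated projective over $R$ by transitivity. Alternatively, by Lemma~\ref{proj} each layer is even free-locally, but we only need projectivity here. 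So the whole content is the tensor-product isomorphism.

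First I would reduce to a local (indeed faithfully flat descent) situation if convenient, but it is cleaner to argue directly. Choose a transversal: let $g_1, \dots, g_k \in G'$ be coset representatives for $G'/N'$, so every $g \in G'$ is uniquely $g = g_i h$ with $h \in N'$. The idea is to factor the big comparison map through an intermediate tensor product over $S$. Consider
$$
T \otimes_R T \;=\; T \otimes_R S \otimes_S T,
$$
and first apply the $S/R$ Galois isomorphism $S \otimes_R S \cong \bigoplus_{\bar g \in G'/N'} S$ after extending scalars along $T \otimes_S (-)$ on the left and $(-) \otimes_S T$ on the right; concretely one shows $T \otimes_R T \cong \bigoplus_{i=1}^{k} T \otimes_S {}^{g_i}T$, where ${}^{g_i}T$ denotes $T$ with $S$-module structure twisted by $g_i$ (this makes sense since $g_i$ normalizes $N'$ and hence descends to an automorphism of $S$ over $R$). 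Then for each $i$, apply the $T/S$ Galois isomorphism $T \otimes_S T \cong \bigoplus_{h \in N'} T$ — after the appropriate $g_i$-twist, which just permutes/relabels the $N'$-indexed summands — to get $T \otimes_S {}^{g_i}T \cong \bigoplus_{h \in N'} T$. Concatenating over $i$ and $h$ yields $T \otimes_R T \cong \bigoplus_{(i,h)} T = \bigoplus_{g \in G'} T$, and one checks the composite is exactly the map $t \otimes t' \mapsto (t\,g(t'))_g$ by tracking a pure tensor through both stages.

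The main obstacle is the bookkeeping in the middle step: one must be careful that the twisting automorphism $g_i$ of $S$ genuinely lifts to (or is induced by) the automorphism $g_i$ of $T$, and that normality of $N'$ is what makes the twisted $N'$-action on $T$ over $S$ again an $N'$-Galois action so that the second isomorphism applies with the indices matching up correctly. This is where $N' \triangleleft G'$ is essential — conjugation by $g_i$ carries $N'$ to $N'$, so ${}^{g_i}T / S$ is again $N'$ Galois. Everything else is formal manipulation of tensor products and direct sums, so I would present the factorization carefully and then remark that the identification of the composite with the standard map is a routine check on generators.
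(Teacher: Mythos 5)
Your proposal takes essentially the same route as the paper: decompose $T \otimes_R T$ via the idempotents coming from $S \otimes_R S \cong \bigoplus_{\bar g} S$ (equivalently, factor through the middle $S$), identify each piece with a $g$-twisted $T \otimes_S T$, and then apply the $N'$-Galois decomposition to each twist, with normality of $N'$ making the twisted action well-defined. The paper phrases this with the idempotents $e_g$ annihilating $g(s)\otimes 1 - 1\otimes s$ rather than with the scalar-extension factorization, but the underlying argument is identical.
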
 

\begin{proof} 
$T/R$ is etale and $T$ is a projective as an $R$ module. 
For each $\bar g \in G/N$, choose a preimage $g \in G$. 
There is an idempotent 
$e_g$ such that $(g(s) \otimes 1 - 1 \otimes s)e_{g} = 
0$ for all $s \in S$. Then $(T \otimes_R T)e_{g} \cong T \otimes_{g,S} T$ 
where the $S$ action on the right $T$ is via $g$. 
As rings $T \otimes_{g,S} T \cong \oplus_N T$ where the 
projection on the $n \in N$ component is $t \otimes t' \to tgn(t')$. 
Since $T \otimes_R T$ is the direct sum of the $(T \otimes_R T)e_g$ 
we are done.
\end{proof}

If $T'/R$ is $H$ Galois and $H \subset G$ is a subgroup, define 
$\Ind_H^G(T'/S) = \Hom_{\Z[H]}(\Z[G],T')$ where the $\Hom$ 
refers to left module homomorphisms. Note that if $T = \Ind_H^G(T'/S)$, 
there is a 
``coinduced" action of $G$ on $T$. That is, if $t \in T$ we define 
$gt$ by setting $gt(x) = t(xg)$. The product 
in $T'$ makes $T$ a commutative $R$ algebra. 

\begin{proposition}\label{induced}
Suppose $T/R$ is Galois with group $G$ and $e \in T$ 
is an idempotent for which $eg(e) = 0$ when $g(e) \not= e$ and $re = 0$ implies $r = 0$ 
for all $r \in R$. Let $H = \{g \in G | g(e) = e \}$. 
Let $eT$ be the ring with $e$ as the identity. 
Then $eT/R$ is Galois with group $H$. 

Conversely, suppose $T'/R$ is $H$ Galois and 
$H \subset G$. Then\linebreak $\Ind_H^G(T')/R$ is $G$ Galois. 

Finally, if $R$ is a field then there is an $H \subset G$ such that 
$T'$ is a field, $T'/R$ is $H$ Galois, and $T \cong \Ind_H^G(T')$. 
\end{proposition}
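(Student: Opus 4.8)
The plan is to prove the three assertions of Proposition~\ref{induced} in turn, treating the first two as the substantive ones and the third as an easy consequence of the classical theory plus the second assertion.

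For the first assertion, I would start with the idempotent $e \in T$ with the stated property and set $H = \{g \in G \mid g(e) = e\}$. The ring $eT$ has identity $e$, and $H$ acts on $eT$ since $g(e) = e$ forces $g$ to carry $eT$ to itself. The goal is to verify the tensor-product criterion recalled just before Lemma~\ref{galoistower}: $eT$ is finitely generated projective over $R$ (it is a direct summand of the projective module $T$, once we know $eT \neq 0$, which follows from $re = 0 \Rightarrow r = 0$ applied to $r = 1$), and one must check $(eT) \otimes_R (eT) \cong \bigoplus_{h \in H} eT$. I would obtain this by starting from $T \otimes_R T \cong \bigoplus_{g \in G} T$ (the $g$-component map being $t \otimes t' \mapsto t\,g(t')$), multiplying by $e \otimes e$, and using the hypothesis $e\,g(e) = 0$ whenever $g(e) \neq e$ to see that the $e\otimes e$-multiple of the $g$-component vanishes unless $g \in H$, leaving exactly $\bigoplus_{h \in H}(eT)$ with the correct component maps. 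Care is needed to identify $(eT)\otimes_R(eT)$ with $(e \otimes e)(T \otimes_R T)$, but that is routine since $e$ is idempotent and $R$-linear tensor respects the splitting.

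For the second assertion, I would let $T' / R$ be $H$ Galois with $H \subset G$ and form $T = \Ind_H^G(T') = \Hom_{\Z[H]}(\Z[G], T')$ with the coinduced $G$-action $gt(x) = t(xg)$ and pointwise product. Choosing left coset representatives $g_1, \dots, g_k$ of $H$ in $G$, evaluation at the $g_i$ gives an $R$-module isomorphism $T \cong \bigoplus_{i} T'$, so $T$ is finitely generated projective over $R$ of the right rank. The cleanest route is to check the tensor criterion $T \otimes_R T \cong \bigoplus_{g \in G} T$, or alternatively to use the fact (again from the remarks before Lemma~\ref{galoistower}, plus faithful flatness) that it suffices to check the Galois condition after a faithfully flat base change; after passing to a cover that splits $T'$, the extension $\Ind_H^G(T')$ becomes $\Ind_H^G$ of a split $H$-algebra, which is visibly $\bigoplus_{G/H}$ copies made $G$-Galois, and one pulls the conclusion back down. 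I would present the base-change version since it avoids a direct manipulation of $\Hom_{\Z[H]}(\Z[G], T' \otimes T')$.

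For the third assertion, assume $R$ is a field. Then $T$ is a finite étale $R$-algebra, hence a product of finite separable field extensions of $R$, and since $G$ acts transitively on the primitive idempotents (standard for connected-components of a Galois algebra over a field), all factors are isomorphic; pick one primitive idempotent $e$, let $T' = eT$, which is a field, and let $H$ be its stabilizer. The first assertion (whose idempotent hypothesis is automatic here: $e\,g(e) = 0$ for $g(e) \neq e$ because distinct primitive idempotents are orthogonal, and $re = 0 \Rightarrow r = 0$ because $R$ is a field and $e \neq 0$) shows $T'/R$ is $H$ Galois, and then comparing $\Ind_H^G(T')$ with $T$ — both are $G$-Galois of the same $R$-dimension $|G|$, with a natural $G$-equivariant $R$-algebra map $T \to \Ind_H^G(T')$ given by $t \mapsto (x \mapsto (x \cdot t)e$ suitably interpreted — forces an isomorphism. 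I expect the main obstacle to be the bookkeeping in the second assertion: making the identification $T \otimes_R T \cong \bigoplus_{g \in G} T$ explicit for $T = \Ind_H^G(T')$ is fiddly, which is exactly why I would route it through a faithfully flat splitting of $T'$ rather than computing directly.
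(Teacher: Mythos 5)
Your proof is correct but takes a genuinely different route from the paper's. The paper works entirely through the twisted group ring: it identifies $\Delta(T/R,G)$ with $\End_R(T)$, cuts down by the idempotent to get $eAe \cong \End_R(eT)$, and shows the pieces $eT$ and the $h$'s for $h \in H$ span, which is exactly the statement $\Delta(eT/R,H) \cong \End_R(eT)$; in the converse it runs the same Morita-style argument, using $e_i = g_i^{-1}(e)$ and $e_i A e_j \cong \Hom_R(Te_j, Te_i)$ to see $A \to \End_R(T)$ is bijective. You instead verify the tensor-product criterion $S \otimes_R S \cong \bigoplus S$ directly: in the first assertion you multiply $T \otimes_R T \cong \bigoplus_{g\in G} T$ by $e \otimes e$ and observe only the $H$-components survive (correctly noting that $eT \otimes_R eT$ really is the $(e\otimes e)$-summand because $eT$ is a direct summand of $T$), and in the converse you pass to a faithfully flat cover splitting $T'$, reduce to $\Ind_H^G$ of the split algebra being split, and descend. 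The paper's route is more module-theoretic and yields the auxiliary identification $eAe = \End_R(eT)$, which the paper reuses elsewhere; yours is more self-contained and avoids the Azumaya machinery, at the cost of the base-change bookkeeping and the need to check the component maps restrict correctly (which you flag). On the third assertion your treatment is actually more complete than the paper's, which simply identifies $e$, $T' = eT$ and $H$ and leaves the isomorphism $T \cong \Ind_H^G(T')$ unstated; you supply the explicit $G$-equivariant algebra map $t \mapsto (g \mapsto e\,g(t))$ and the dimension count, which closes the gap.
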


\begin{proof} 
Let $g_1 = 1,g_2,\ldots,g_r$ be a set of right 
coset representatives of $H$ in $G$, so 
$G$ is the disjoint union of the $Hg_i$. 
Then $\Z[G] \cong \oplus \Z[H]g_i$ as 
$\Z[H]$ modules. 

$eT$ is faithfully projective as an $R$ module. 
Let $A = \Delta(T/R,G)$ which is Azumaya over $R$. 
Then $B = eAe$ is Azumaya over $R$ and equals 
$\End_R(eT)$. Clearly 
$eT \subset eAe$. If $h \in H$, $he = eh$ and so 
$eh \in B$. On the other hand $ege = eg(e)g = 0$ if 
$g \notin H$ and $e(1-e)T = 0$. Thus 
$B$ is spanned by $eT$ and the $h$'s and the result is clear. 

We consider the converse. The $1 \in T$ is 
defined by $g \to 1$ for all $g$, and so 
$R \subset T$ via left multiplication on $T'$. 
$T$ is clearly 
faithfully projective. Let $e \in T$ be the idempotent 
such that $e(H) = 1$ and $e(Hg) = 0$ if $Hg \not= H$ 
where $1 \in T$ is the identity. 
Clearly $Te \cong T'$. The idempotent $e_i = g_i^{-1}(e)$ satisfies 
$e_i(Hg_i) = 1$ 
and $e_i(Hg) = 0$ if $Hg \not= Hg_i$. 
Thus $1 = \sum e_i$ in $T$. We can consider $A = \Delta(T/R,G)$ 
and note that $eAe \cong \Delta(T'/R,H) \cong \End_R(T')$. 
The action given above makes $T$ a module over $A$. 
Now $T = \oplus_i Te_i$ and it is clear that $e_iAe_j \cong 
\Hom_R(Te_j,Te_i)$ and so $A \to \End_R(T)$ is surjective. 
Similarly if $a \in A$ maps to 0 then $e_iae_j = 0$ and 
$a = (\sum_i e_i)a(\sum e_j) = 0$. 

As for the last statement, since $T$ is a separable algebra 
over the field $R$, $T$ is a direct sum of fields. 
Let $e \in T$ be an idempotent such that $eT$ is a field 
and set $H = \{g \in G | g(e) = e\}$. 
\end{proof}

We will need to work out how the induced Galois 
operation combines with the Galois correspondence. 
Let $H \subset G$ be groups and $N \subset G$ 
a normal subgroup. Assume $S/R$ is $H$ Galois and form 
the $G$ Galois extension $T = \Ind_H^G(S/R)$. 
We require a result about the structure of the 
$G/N$ Galois extension $T^N/R$. 

\begin{proposition}\label{inducedfixed}
$T^N$ is 
$\Ind_{H/N \cap H}^{G/N}(S^{N \cap H})$.  
\end{proposition}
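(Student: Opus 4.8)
The plan is to unwind both sides using the explicit description of $\Ind_H^G$ as a $\Hom$-space and the Galois correspondence, and to produce a natural ring isomorphism that is visibly $G/N$-equivariant. Recall that $T = \Ind_H^G(S/R) = \Hom_{\Z[H]}(\Z[G],S)$, with $G$ acting by $(gt)(x) = t(xg)$. The first step is to identify $T^N$ as a subring: an element $t \in T$ is fixed by $N$ iff $t(xn) = t(x)$ for all $n \in N$ and all $x \in G$, i.e. iff $t$ is constant on right $N$-cosets, i.e. $t$ factors as $\Z[G] \twoheadrightarrow \Z[N\backslash G] \to S$. Here I must be slightly careful: $t$ is a left $\Z[H]$-module map, so being constant on right $N$-cosets forces, for $h \in H \cap N$, that $t(x) = t(xh) = h^{-1}t(hx) $ applied appropriately — unwinding this, the values of $t$ actually land in $S^{H\cap N}$. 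So $T^N$ consists of the left $\Z[H]$-maps $\Z[G] \to S^{H \cap N}$ that are constant on right $N$-cosets.

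The second step is to reinterpret that set as $\Hom_{\Z[H/(H\cap N)]}(\Z[G/N], S^{H\cap N})$. Since $N$ is normal in $G$, $H \cap N$ is normal in $H$, and $H/(H\cap N) \hookrightarrow G/N$ is the natural inclusion of the image of $H$. A left $\Z[H]$-map $\Z[G] \to S^{H\cap N}$ that is constant on right $N$-cosets and whose source module's $N$-action is trivialized on the target descends exactly to a left $\Z[H/(H\cap N)]$-map $\Z[G/N] \to S^{H\cap N}$; this is a straightforward check using that $\Z[G/N] = \Z[G]\otimes_{\Z[N]}\Z$ and the universal property, compatible with the residual $H$-action. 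Under this identification the coinduced $G$-action on $T$ (which factors through $G/N$ on $T^N$) matches the coinduced $G/N$-action on $\Hom_{\Z[H/(H\cap N)]}(\Z[G/N], S^{H\cap N})$, and the ring structures match because both are computed pointwise from the product in $S$ (restricted to $S^{H\cap N}$). By Proposition \ref{induced}, $S^{H\cap N}/R$ is $H/(H\cap N)$ Galois (this is the Galois correspondence for $S/R$), so the right-hand side is precisely $\Ind_{H/(H\cap N)}^{G/N}(S^{H\cap N})$, and Lemma \ref{galoistower} (or Proposition \ref{induced} directly) confirms it is $G/N$ Galois.

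The main obstacle will be the bookkeeping in the first step — correctly seeing that $N$-invariance of a left $\Z[H]$-linear functional forces the values into $S^{H\cap N}$ rather than just into $S$, and that nothing more is imposed. A clean way around the index-chasing is to reduce to the local/field case: by faithfully flat descent one may assume $R$ is a field, and then $S \cong \Ind_{H'}^{H}(K)$ for a field $K$ and a subgroup $H' \subset H$, so $T \cong \Ind_{H'}^G(K)$, and both $T^N$ and the claimed answer become $\Ind$ of a fixed field of $K$ along composed subgroup inclusions; the transitivity $\Ind_{H'}^{G}=\Ind_{H'}^{H}\circ\Ind_{H}^{G}$ of induction and the case $S=K$ a field then reduce everything to the classical Galois-theory statement that $(\Ind_{H'}^G K)^N = \Ind_{H'N/N}^{G/N}(K^{H'\cap N})$, which is elementary. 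I would present the descent reduction first and then give the explicit field-case verification, remarking that the $\Hom$-description makes the $G/N$-equivariance and ring structure transparent.
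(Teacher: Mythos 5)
Your proposal is correct but follows a genuinely different route from the paper. The paper works with the idempotent picture: $T$ has a distinguished $H$-invariant idempotent $e$, the $G$-orbit $E$ of $e$ sums to $1$, and the action of $N$ partitions $E$ into orbits indexed by the double cosets $NgH = gNH$ (normality of $N$ is used here to show these coincide with cosets of $NH$). Grouping each $N$-orbit into an $N$-fixed idempotent $e_i$ gives $T = \oplus_i Te_i$ as an $R[N]$-module, and the paper then identifies each $(Te_i)^N$ with $S^{H\cap N}$ by an explicit trace $\sum_j n_j(s)$, where the $n_j$ run over left cosets of $H\cap N$ in $N$. You instead compute directly in the $\Hom$-description $T = \Hom_{\Z[H]}(\Z[G],S)$, showing $T^N$ consists of the $\Z[H]$-linear maps that are constant on $N$-cosets with values forced into $S^{H\cap N}$, and then descending to $\Hom_{\Z[H/(H\cap N)]}(\Z[G/N], S^{H\cap N})$. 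Both approaches are valid; yours is more functorial and makes the $G/N$-equivariance and the ring structure transparent, while the paper's stays in the idempotent/ring-theoretic picture it uses throughout for Galois extensions of commutative rings.

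One small point worth tightening in your first step: the chain $t(x) = t(xh) = h^{-1}t(hx)$ as written does not by itself yield $h\cdot t(x) = t(x)$. The clean version of the bookkeeping is $h\cdot t(x) = t(hx) = t\bigl(x(x^{-1}hx)\bigr) = t(x)$, the last equality because $x^{-1}hx \in N$ by normality and $t$ is constant on right $N$-cosets. That is precisely where normality enters, exactly parallel to the paper's observation that $NgH = gNH$. Your fallback of reducing to the field case by descent and invoking transitivity of induction is also sound, but the direct manipulation above is short enough that the reduction is not really needed.
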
 

\begin{proof}
$T$ has an $H$ invariant idempotent $e$ and the $G$ orbit 
of $e$ is a set of idempotents, $E$, such that 
$\sum_{f \in E} = \sum_{gH \in G/H} g(e) = 1$. 
Under the action by $N$, $E$ is the union of orbits of $N$, 
one for each double coset $NgH$. Since $N$ is normal, 
$NgH = gNH$. More concretely, let $g_i$, with $g_1 = 1$, be a set of 
left coset representatives of $NH$ in $G$ so $G$ is the disjoint 
union $\cup_i g_iNH$.  If $E_i \subset E$ is the set 
$g_iNH(e) = g_iN(e)$, then the $E_i$ are the orbits of $N$ in $E$ 
because $g_iN(e) = Ng_i(e)$. 

Let $e_i = \sum_{f \in E_i} f$. Then $e_i$ is $N$ fixed, and 
$T_i = Te_i = \sum_{f \in E_i} Tf$ satisfies 
$N(T_i) = T_i$ and $T_i = g_i(T_1)$. As an $R[N]$ module $T$ is the direct 
sum of the $T_i$. It follows that $T^N$ 
is the direct sum of the 
$T_i^N$. Let $\{n_j\}$ be a set of left coset representatives 
of $H \cap N$ in $N$ and hence of $H$ in $NH$. 
Then $E_1 = \{ n_j(e) \}$. We identify $S$ with $Te$, 
and it is clear that $T_1^N = \{ \sum_j n_j(s) | s \in S^{H \cap N} \}$. 
This implies $T_1^N \cong S^{H \cap N}$. Since $T^N$ 
is the direct sum of the $T_i = g_i(T_1)$, the result is clear. 
\end{proof}

We will have need of some basic operations that can be applied to cyclic 
extension of commutative rings. 

We begin with the easiest case. Suppose 
$S/R$ and $T/R$ are Galois with cyclic groups 
$G$,$H$,  
of order $n$ generated by $\sigma$ and $\tau$ 
respectively. Then 
$S \otimes T$ is Galois over $R$ with group 
$G \oplus H$ and we can set $N = <(\sigma,\tau^{-1})>$ 
to be the subgroup generated by that pair. Of course,  $(S \otimes_R T)^N/R$ 
is cyclic Galois of degree $n$ and this is the product of $S/R$ and 
$T/R$. It is clear that $N$ is not 
unique but unique if we require that $\sigma$ and $\tau$ are the specified generators. 
To be more precise, then, we have defined above an operation 
on pairs $(S,\sigma)$ where $S/R$ is Galois of degree $n$ and 
$\sigma$ generates the Galois group. We usually ignore 
this. 

We observe that isomorphism classes of degree $n$ cyclic Galois extensions 
of degree $n$ form a group. The identity element is 
the split extension $\Ind_{\{1\}}^G (R)$ because: 

\begin{lemma}
The product of $S/R$ and $\Ind_{\{1\}}^G(R)/R$ is $S$ as Galois extensions of $R$.
\end{lemma}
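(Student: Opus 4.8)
The plan is to compute the product completely explicitly, using that $\Ind_{\{1\}}^G(R)$ is nothing but the ring of functions on $G$. Write $G=\langle\sigma\rangle$, $|G|=n$, and set $T=\Ind_{\{1\}}^G(R)$. Since $\Z[G]$ is free of finite rank over $\Z$ we have $T=\Hom_\Z(\Z[G],R)=\Map(G,R)=\bigoplus_{g\in G}R\,e_g$, a product of copies of $R$ (with $e_g$ the indicator of $g$), and the coinduced $G$-action is $(\sigma\cdot f)(x)=f(x\sigma)$, equivalently $\sigma\cdot e_g=e_{g\sigma^{-1}}$. Tensoring with $S$ gives $S\otimes_R T\cong\Map(G,S)$, and I would then track, through this identification, the action of the group $G\oplus G$ for which $S\otimes_R T$ is Galois: the first factor (acting on the left tensor factor $S$) acts by $(\sigma\cdot f)(x)=\sigma(f(x))$, and the second factor (the coinduced action on $T$; in the notation of the product operation this is the copy $H=\langle\tau\rangle$ identified with $G$ via $\tau\mapsto\sigma$) acts by $(\sigma\cdot f)(x)=f(x\sigma)$. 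Hence the distinguished generator $(\sigma,\sigma^{-1})=(\sigma,\tau^{-1})$ of $N$ acts on $f\in\Map(G,S)$ by $f\mapsto\bigl[x\mapsto\sigma\bigl(f(x\sigma^{-1})\bigr)\bigr]$.

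Then I would read off the $N$-fixed ring. Enumerating $G=\{1,\sigma,\dots,\sigma^{n-1}\}$, a function $f$ is fixed by $(\sigma,\sigma^{-1})$ exactly when $f(\sigma^i)=\sigma\bigl(f(\sigma^{i-1})\bigr)$ for all $i$, i.e. $f(\sigma^i)=\sigma^i\bigl(f(1)\bigr)$; this is consistent at $i=n$ because $\sigma^n=1$, and it imposes no condition on the value $f(1)\in S$. So evaluation at $1\in G$ is a well-defined $R$-algebra isomorphism $(S\otimes_R T)^N\xrightarrow{\sim}S$, with inverse $s\mapsto\bigl(\sigma^i\mapsto\sigma^i(s)\bigr)$.

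Finally I would check equivariance. The residual cyclic group $(G\oplus G)/N$ of order $n$ acting on $(S\otimes_R T)^N$ is generated by the class of $(\sigma,1)$, which coincides with the class of $(1,\sigma)$ since they differ by $(\sigma,\sigma^{-1})\in N$ (and $N$ acts trivially on the invariants). Applying $(1,\sigma)$ to a fixed $f$ and evaluating at $1$ gives $f(\sigma)=\sigma(f(1))$, so under the isomorphism above the distinguished generator of the residual group acts as $\sigma$. Thus $(S\otimes_R T)^N\cong S$ as cyclic $G$-Galois extensions respecting the chosen generators, which is precisely the assertion that the product of $(S,\sigma)$ with the split extension is $(S,\sigma)$.

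I do not anticipate a real obstacle: the content is just the base-change identity $S\otimes_R\Ind_{\{1\}}^G(R)\cong\Ind_{\{1\}}^G(S)=\Map(G,S)$ together with a one-line fixed-point computation. The only point that needs care is the bookkeeping of conventions — matching the coinduced action to the correct coordinate shift and tracking the generator through $(G\oplus G)/N$ — so that the final isomorphism intertwines $\sigma$ with $\sigma$, and not merely with some automorphic image of $\sigma$.
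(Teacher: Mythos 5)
Your proof is correct and follows essentially the same route as the paper: identify $S\otimes_R\Ind_{\{1\}}^G(R)$ with a $G$-indexed sum of copies of $S$ (idempotents $e_g$), note that $(\sigma,\sigma^{-1})$ acts by combining the coordinate shift with the Galois action, and read off the fixed subring as the tuples $(s,\sigma(s),\dots)\cong S$ with the residual generator acting as $\sigma$. Your version is simply more careful about conventions (writing things as $\Map(G,S)$ and tracking the coinduced action explicitly), but it is the same argument.
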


\begin{proof}
If $T = S \otimes_R \Ind_{\{1\}}^G(R)$ then 
$T/R$ is Galois with group $<\sigma> \oplus <\sigma>$ 
and $T$ contains idempotents $e_i$ such that 
$e_iT \cong S$ and $(1,\sigma)(e_i) = e_{i+1}$. 
Writing $T = \oplus_i e_iT \cong \oplus S$ we have that 
$T^{(\sigma,\sigma^{-1})} = (s,\sigma(s), \ldots) \cong 
S$ preserving the action of $(\sigma,1)$.
\end{proof}
 
Now we can define
the inverse operation. If $T$ is the pair 
$(S,\sigma)$, let $S'$ be $(S,\sigma^{-1})$. 
Then $SS'$ is the fixed ring $(S \otimes_R S)^{\tau}$ 
where $\tau = \sigma \otimes \sigma$.  

\begin{lemma}
$TT'$ above is the split extension of $R$. 
\end{lemma}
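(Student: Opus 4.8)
The plan is to show that $TT'$, which by definition is $(S \otimes_R S)^{\tau}$ with $\tau = \sigma \otimes \sigma$, is isomorphic as a Galois extension of $R$ to the split extension $\Ind_{\{1\}}^G(R)$. The key structural fact I would invoke is the defining characterization of Galois extensions recalled earlier in the excerpt: since $S/R$ is $G$ Galois, we have $S \otimes_R S \cong \bigoplus_{g \in G} S$, where the $g$-component of the isomorphism sends $s \otimes s' \mapsto s\,g(s')$. Write $e_g$ for the idempotent of $S \otimes_R S$ corresponding to the $g$-summand, so that $\sum_{g \in G} e_g = 1$.

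First I would compute how $\tau = \sigma \otimes \sigma$ acts on the idempotents $e_g$. On the summand indexed by $g$, an element $s \otimes s'$ of $e_g(S \otimes_R S)$ is recorded as $s\,g(s')$; applying $\tau$ gives $\sigma(s) \otimes \sigma(s')$, whose value in the $g$-summand would be $\sigma(s)\,g(\sigma(s'))$. Since $\sigma$ commutes with $g$ in the abelian group $G$, this equals $\sigma\bigl(s\,g(s')\bigr)$, which says $\tau$ permutes the $e_g$ trivially — i.e. $\tau(e_g) = e_g$ for every $g$ — and acts on each summand $e_g(S \otimes_R S) \cong S$ as $\sigma$ itself. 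Hence $(S \otimes_R S)^{\tau} = \bigoplus_{g \in G} \bigl(e_g(S \otimes_R S)\bigr)^{\tau} \cong \bigoplus_{g \in G} S^{\sigma} = \bigoplus_{g \in G} R$, since $S^{\sigma} = R$ as $S/R$ is $G$ Galois.

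Next I would identify the residual $G$-action, which on $(S \otimes_R S)^{\tau}$ is the one induced by $\sigma \otimes 1$ (equivalently $1 \otimes \sigma$, since these agree on the $\tau$-fixed ring). Tracking $\sigma \otimes 1$ through the isomorphism $S \otimes_R S \cong \bigoplus_g S$: the element $s \otimes s'$ sits in the $g$-summand as $s\,g(s')$, and $\sigma(s) \otimes s'$ sits there as $\sigma(s)\,g(s') = \sigma\bigl(s\,g(s')\bigr)\cdot\sigma(s)/s$ — cleaner is to note that $\sigma\otimes 1$ carries the $g$-summand to the $g$-summand but applies $\sigma$ to the entry, so on $R = S^\sigma$ inside the $g$-summand it is the identity; meanwhile $1 \otimes \sigma$ carries the $g$-summand isomorphically to the $(g\sigma^{-1})$- or $(g\sigma)$-summand (depending on the convention for the component map) and acts as the identity on the $R$-entries. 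This exhibits $(S \otimes_R S)^{\tau} \cong \bigoplus_{g \in G} R$ with $G$ acting by the regular permutation on the index set, which is precisely $\Ind_{\{1\}}^G(R)$ by the construction in \ref{induced}. I would then conclude by the preceding lemma that $TT'$ is the identity of the group of degree $n$ cyclic extensions, as claimed.

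The main obstacle I anticipate is bookkeeping with the two available actions $\sigma\otimes 1$ and $1\otimes\sigma$ and the direction in which the component map $s\otimes s'\mapsto s\,g(s')$ twists things: one has to be careful that the action surviving on the fixed ring is genuinely the regular representation and not, say, trivial, and that the conventions for $\Ind_{\{1\}}^G(R)$ match. A clean way to sidestep index-chasing is to reduce to $R$ a field by faithful flatness (using that being $G$ Galois descends and ascends along faithfully flat base change, as recalled in the excerpt), where $S\otimes_R S \cong \prod_G S$ is transparent; but even over a general ring the idempotent argument above is essentially formal once the commutation $\sigma g = g\sigma$ is used, so I expect the whole proof to be short.
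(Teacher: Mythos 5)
Your proof is correct and takes essentially the same route as the paper's: both decompose $S\otimes_R S$ into the idempotent summands $e_g(S\otimes_R S)\cong S$, observe that $\tau=\sigma\otimes\sigma$ fixes each $e_g$ because $G$ is abelian, act by $\sigma$ on each summand, and conclude the $\tau$-invariants are $\bigoplus_g Re_g$, which is the split extension. The one small slip is the parenthetical ``equivalently $1\otimes\sigma$'' --- on the $\tau$-fixed subring $\sigma\otimes 1$ agrees with $1\otimes\sigma^{-1}$, not $1\otimes\sigma$ --- but you hedge this yourself and it does not affect the conclusion that $G$ permutes the idempotents regularly.
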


\begin{proof}
Of course $T \otimes T \cong \oplus_{g \in G} T$
where the $g$ projection is $t \otimes t' \to tg(t')$. 
In other language, $T \otimes T$ has idempotents $e_g$ 
defined by the relations $e_g(T \otimes T) = T$ and 
 $(t \otimes 1 - 1 \otimes g^{-1}(t))e_g  
= 0$. Acting by $\sigma \otimes \sigma$ preserves the relation 
and so $\sigma \otimes \sigma$ fixes $e_g$. Thus 
$(T \otimes T)^{\sigma \otimes \sigma} = \sum_g Re_g$ 
is split.
\end{proof} 

Finally the group of cyclic extensions $T/R$ of degree $n$ has exponent 
dividing $n$. 

\begin{proposition}\label{torsion}
If $T/R$ has degree $n$ and $m$ divides $n$ then $T^m$ 
has the form $\Ind_K^G(S/R)$ where $S/R$ has degree $n/m$. 
As a Galois extension of $R$, $S$ can be identified with 
$T^{\sigma^{n/m}}$.  
In particular, $T^n$ is split. 
\end{proposition}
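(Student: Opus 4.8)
The plan is to compute $T^m$ directly from the tensor–product description of the group operation and then recognize the answer as an induced extension via Proposition \ref{induced}. (Cohomologically this reflects the fact that multiplication by $m$ on $H^1(R,\Z/n)$ is reduction $H^1(R,\Z/n)\to H^1(R,\Z/(n/m))$ followed by the inclusion $\Z/(n/m)\hookrightarrow\Z/n$ sending $1$ to $m$: the first arrow is ``pass to the degree $n/m$ quotient extension $T^{\sigma^{n/m}}$'' and the second is ``induce''. But, in keeping with the rest of the paper, we argue with rings.) First I would prove, by induction on $m$ using the binary product together with flatness of $T/R$, that $T^m$ is the fixed ring $(T^{\otimes m})^{N_m}$, where $T^{\otimes m}$ is the $m$-fold tensor power over $R$ with $G^m$ acting coordinatewise, $N_m=\{(\sigma^{a_1},\dots,\sigma^{a_m}): a_1+\cdots+a_m\equiv 0\ (\mathrm{mod}\ n)\}$, and $G^m/N_m\cong G$ is generated by the image $\bar\sigma$ of $(\sigma,1,\dots,1)$. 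The inductive step rewrites $(T^{\otimes(m-1)})^{N_{m-1}}\otimes_R T$ as $(T^{\otimes m})^{N_{m-1}\times\{1\}}$ and then takes the additional invariants under the cyclic group generated by $(\bar\sigma,\sigma^{-1})$; this subgroup together with $N_{m-1}\times\{1\}$ pulls back to exactly $N_m$.

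Next I would decompose $T^{\otimes m}$. Iterating $T\otimes_R T\cong\prod_{g\in G}T$, $t\otimes t'\mapsto (tg(t'))_g$, gives $T^{\otimes m}\cong\prod_{\vec g\in G^{m-1}}T$ on which $G^m$ acts by a twisted permutation: $(h_1,\dots,h_m)$ moves the component indexed by $\vec g=(g_2,\dots,g_m)$ to the one indexed by $(g_2h_1h_2^{-1},\dots,g_mh_1h_m^{-1})$, with an extra application of $h_1$. Restricting to $N_m$, the induced permutation action on the index set $G^{m-1}$ has orbits the cosets of the subgroup of index–shifts, which is $\{(\sigma^{c_2},\dots,\sigma^{c_m}): c_2+\cdots+c_m\equiv 0\ (\mathrm{mod}\ m)\}$; here the hypothesis $m\mid n$ enters, and it forces exactly $m$ orbits, labelled by sending $\vec g$ to the sum of the exponents of its coordinates modulo $m$. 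Since $N_m$ is abelian the stabilizer of any index is the same, namely $\langle(\sigma^{n/m},\dots,\sigma^{n/m})\rangle$, a group of order $m$ that acts on the corresponding copy of $T$ (through the first coordinate) precisely as $\langle\sigma^{n/m}\rangle\subset G$. By the standard identification of the invariants of a coinduced module, each orbit contributes $T^{\langle\sigma^{n/m}\rangle}=T^{\sigma^{n/m}}=:S$, a cyclic extension of $R$ of degree $n/m$, so as an $R$-algebra $(T^{\otimes m})^{N_m}$ is a product of $m$ copies of $S$.

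Finally I would read off the $G$-action and invoke Proposition \ref{induced}. The $m$ orbit–factors give orthogonal idempotents summing to $1$, and $G^m/N_m\cong G$ permutes them cyclically: $\bar\sigma$ acts on the set of $m$ labels as a generator of $\Z/m$, so the action is transitive and the stabilizer of one idempotent $e$ is $\langle\sigma^m\rangle=:K$, of order $n/m$. Tracing the action shows that the generator $\sigma^m$ of $K$ acts on $eT^m\cong S$ as the canonical generator $\bar\sigma$ of $\Gal(S/R)=G/\langle\sigma^{n/m}\rangle$, so $e$ satisfies the hypotheses of Proposition \ref{induced}. That proposition makes $eT^m\cong S$ a $K$-Galois extension, and the evident $G$-equivariant $R$-algebra map $T^m\to\Hom_{\Z[K]}(\Z[G],eT^m)=\Ind_K^G(S)$, $v\mapsto(g\mapsto e\,g(v))$, is a homomorphism of $G$-Galois extensions of $R$ of equal rank, hence an isomorphism. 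Taking $m=n$ gives $n/m=1$, so $S=T^{\langle\sigma\rangle}=T^G=R$ and $K=\{1\}$, whence $T^n\cong\Ind_{\{1\}}^G(R)$ is the split extension.

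The main obstacle is the bookkeeping in the last two steps: checking that the coordinatewise $G^m$-action really becomes the twisted permutation action under $T^{\otimes m}\cong\prod_{G^{m-1}}T$, and then chasing orbit labels to see both that the stabilizer of a factor inside $G^m/N_m\cong G$ is $\langle\sigma^m\rangle$ and that this stabilizer acts on the factor as $\bar\sigma$ — this last point is what makes the identification $S\cong T^{\sigma^{n/m}}$ one of Galois extensions and not merely of rings. A minor subsidiary point is that Proposition \ref{induced} as stated produces the $K$-Galois ring $eT^m$ but not literally the isomorphism $T^m\cong\Ind_K^G(eT^m)$; the explicit evaluation map above supplies it, using that a $G$-equivariant $R$-algebra map between $G$-Galois extensions of $R$ of the same rank is automatically an isomorphism.
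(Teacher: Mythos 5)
Your proof is correct and takes essentially the same route as the paper: express $T^m$ as the $N_m$-invariants of the $m$-fold tensor power, decompose the tensor power into a product of copies of $T$ indexed by $G^{m-1}$, group the idempotents by the sum-mod-$m$ invariant, and read off the stabilizer $\langle(\sigma^{n/m},\dots,\sigma^{n/m})\rangle$ to identify each factor with $T^{\sigma^{n/m}}$. The only substantive additions are your inductive justification of $T^m=(T^{\otimes m})^{N_m}$ (which the paper asserts as "pretty clear") and the explicit evaluation map at the end supplying the isomorphism with $\Ind_K^G(S)$, both of which are welcome but do not change the underlying argument.
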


\begin{proof} 
It is pretty clear that $T^m$ can be described as follows. 
Let $G = <\sigma>$ be the Galois group of $T/R$ and form 
the $m$ fold tensor power $W = T \otimes_R \ldots \otimes_R T$ 
which has Galois group the $m$ fold direct sum $H = G \oplus \cdots \oplus G$. 
$T^m = W^N$ where $N$ is generated by all elements of $H$ of the 
form $(g_1,\ldots,g_m)$ where $\prod_i g_i = 1$. 

It is also clear that $W$ is the direct sum of $n(m-1)$ copies 
of $T$ with idempotents $e(i_2,\ldots,i_m)$ defined as follows. 
If $t \in T$ let 
$t(i) = 1 \otimes \cdots \otimes 1 \otimes t \otimes 1 \cdots \otimes 1$ 
where the $t$ appears in the $i$ place. Then $e(i_2,\ldots,i_m)$ 
is the idempotent where $(t(1) - (\sigma^{i_j}(t))(j))e(i_2,\ldots,i_n) = 0$ 
for all $t \in T$ and $2 \leq j \leq m$. If $\tau \in G$ 
let $\tau_i = (1,\ldots,1,\tau,1,\ldots,1)$ where $\tau$ appears 
in the $i$ place. Then $\sigma_1(e(i_2,\ldots,i_m)) = e(i_2-1,\ldots,i_m-1)$ 
and 
$\sigma_j(e(i_2,\ldots,i_m)) = e(i_2,\ldots,i_{j-1},i_j+1,i_{j+1},\ldots,i_m)$. 
Now $N$ is generated by elements of the form 
$\epsilon_i = \sigma_1^{-1}\sigma_i$ and if $\epsilon_i(e(i_2,\ldots,i_m)) = 
e(j_2,\ldots,j_m)$ then $\sum_k i_k = \sum_k j_k$ modulo $m$. 
On the other hand $\sigma_1(e(i_2,\ldots,i_m)) = 
e(j_2,\ldots, j_m)$ where $\sum_j j_k = (\sum_k i_k) + 1$ 
modulo $m$. 
Thus if we set $e(i) = \sum e(i_2,\ldots,i_m)$, the sum being over 
all $(i_2,\ldots,i_m)$ with $\sum_k i_k = i$ modulo $m$, 
then $e(i)$ is an idempotent, 
is $N$ invariant, and $e(0) + \cdots + e(m-1)$ is the sum of all 
the $e(i_2,\ldots,i_m)$ and hence equals $1$. Moreover, 
$H/N$ permutes the $e(i)$'s cyclically. 

$e(0)W$ is the direct sum of the $e(i_2,\ldots,i_m)W$'s 
where the $i_j$'s sum to $0$ modulo $m$. Thus there is a projection $e(0)W \to 
e(0,\ldots,0)W$. However, if $T_1 \subset W$ is 
$T \otimes 1 \otimes \cdots \otimes 1$ clearly $e(0,\ldots, 0)W = 
e(0,\ldots,0)T_1 \cong T$. The stabilizer in $H$ 
of $e(0,\ldots,0)$ is generated by $\sigma_1\ldots\sigma_m$ 
and so 
the stabilizer in $N$ is generated 
by $\sigma_1^{n/m}\ldots\sigma_m^{n/m}$. 
The composition $S = e(0)W^N \to e(0)W \to T$ then identifies 
$S$ with the the $\sigma^{n/m}$ invariant subring of $T$. 
\end{proof}  

Next suppose $S/R$ and $T/R$ are as above except that 
$G = <\sigma>$ has order $n$ a multiple of $m$, the 
order of $H = <\tau>$. Then $S \otimes_R T$ 
has group $G \oplus H$. The largest cyclic image 
of this group has order $n$ and we can set 
$N = <(\sigma^{n/m},\tau^{-1})>$ so that 
$(G \oplus H)/N$ is cyclic of order $n$ 
generated by the image of $\sigma$. 
This time we have chosen $N$ so that in the 
cyclic image, $\tau$ is the $n/m$ power of $\sigma$. 
We write $ST = (S \otimes_R T)^N$ again. 
Note that we are defining an operation
of degree $m$ cyclics on the group of degree 
$n$ cyclics. 

Of course we can view $H \subset G$ so that $\tau = \sigma^{n/m}$, 
and let $T' = \Ind_{H}^G(T)$ be the induced $G$ 
Galois extension with specified generator $\sigma$. 
Then $T'/R$ is cyclic of degree $n$ and we can form 
$ST'/R$. This is the same operation as detailed 
above because: 

\begin{proposition}
$ST$ defined above is $ST'$. 
\end{proposition}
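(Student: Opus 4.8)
The plan is to unwind both constructions and produce an explicit $G$-equivariant isomorphism between $ST$ and $ST'$. Recall the setup: $S/R$ is cyclic $G = \langle\sigma\rangle$ Galois of degree $n$, while $T/R$ is cyclic $H = \langle\tau\rangle$ Galois of degree $m \mid n$. On one side, $ST = (S \otimes_R T)^{N}$ where $N = \langle(\sigma^{n/m},\tau^{-1})\rangle \subset G \oplus H$, with the residual cyclic group of order $n$ generated by the image of $(\sigma,1)$. On the other side, we first form $T' = \Ind_{H}^{G}(T)$, viewing $H \subset G$ via $\tau = \sigma^{n/m}$, and then $ST' = (S \otimes_R T')^{N'}$ where $N' = \langle(\sigma, \sigma^{-1})\rangle \subset G \oplus G$ (the standard product-of-degree-$n$-cyclics subgroup, since $T'$ also has degree $n$). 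So the goal is $(S \otimes_R T)^{N} \cong (S \otimes_R \Ind_{H}^{G}(T))^{N'}$ as $G$-Galois extensions of $R$.

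First I would use the explicit description of $\Ind_{H}^{G}(T) = \Hom_{\Z[H]}(\Z[G], T)$ together with a choice of right coset representatives $g_1 = 1, g_2, \ldots, g_{n/m}$ of $H$ in $G$; concretely $T' \cong \bigoplus_{i} T'e_i$ where $e_i = g_i^{-1}(e)$ and $e$ is the canonical $H$-fixed idempotent of $T'$ with $e_i T' \cong T$, exactly as in the proof of \ref{induced}. Then $S \otimes_R T'$ decomposes as $\bigoplus_i (S \otimes_R T' e_i)$, and since $N' = \langle(\sigma,\sigma^{-1})\rangle$ acts by permuting the $e_i$ cyclically (through the $G$-action on $T'$) combined with the $\sigma$-action on $S$, taking $N'$-invariants amounts to projecting onto a single summand, say $i = 1$: the map $ST' = (S \otimes_R T')^{N'} \to S \otimes_R T'e_1 \cong S \otimes_R T$ is an isomorphism of $R$-modules. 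This is the same kind of "project to one coordinate of an induced extension" argument used repeatedly in \ref{torsion} and in the two inverse-operation lemmas above.

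Next I would track what happens to $G$-structure under this projection, and match it against the $N$-invariants defining $ST$. The subtlety is that, having projected $S \otimes_R T'$ to the $e_1$-summand $S \otimes_R T$, the residual $G$-action no longer acts naively on $T$: the generator $\sigma$ moves the $e_i$'s, so the induced action on the projected copy identifies $\sigma^{n/m}$ (the first index-fixing power) with $\tau$ acting on $T$ and with $\sigma^{n/m}$ on $S$. That is precisely the twist recorded by the subgroup $N = \langle(\sigma^{n/m},\tau^{-1})\rangle$ in the definition of $ST$. So the composite $ST' \xrightarrow{\sim} S \otimes_R T \twoheadleftarrow (S\otimes_R T)^N = ST$ — with the second arrow being the inclusion — should be shown to land on $ST$ and be a bijection, compatibly with $\sigma$; I would verify this by chasing the idempotents $e(i_2,\dots)$-style bookkeeping through the two constructions, checking that an element of $S \otimes_R T$ lies in the image of $(S \otimes_R T')^{N'}$ under projection iff it is $N$-invariant. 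Because both $ST$ and $ST'$ are cyclic $G$-Galois of degree $n$ over $R$ by \ref{galoistower} and \ref{induced}, it suffices to exhibit a $G$-equivariant $R$-algebra bijection, and one can check this after the faithfully flat base change to the localizations where $S$, $T$ become $\cong R[G]$, $R[H]$, reducing everything to a group-ring bookkeeping identity.

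The main obstacle I expect is the bookkeeping of the two different "twist" conventions — keeping straight which power of $\sigma$ is being identified with $\tau$, and confirming that the $N'$-invariants of $S \otimes_R \Ind_H^G(T)$, once projected to a coset summand, carry exactly the $N$-twisted $G$-action and not some conjugate of it. This is purely combinatorial but error-prone; the cleanest route is probably to do it "officially" with the Galois-theoretic characterization $T \otimes_R T \cong \bigoplus_{g} T$ rather than with chosen coset representatives, or alternatively to observe that both $ST$ and $ST'$ represent the same element of the group of degree-$n$ cyclic extensions (the image of $[T]$ under the transfer-type operation "degree $m$ cyclics act on degree $n$ cyclics"), invoking that this group operation is well defined independent of the presentation. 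I would likely present the coset-representative computation since it is the most concrete and matches the style of \ref{torsion}.
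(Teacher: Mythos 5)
Your plan is essentially the paper's own proof: decompose $T'$ via the orthogonal idempotents $\sigma^i(e)$ (equivalently the $e_i$), write $S\otimes T' = \bigoplus_i (S\otimes T)\sigma^i(e)$, and observe that the $\sigma\otimes\sigma^{-1}$-invariants are exactly the elements $\sum_{i=0}^{n/m-1}\sigma^{-i}\otimes\sigma^i(\beta e)$ with $\beta$ a $\sigma^{n/m}\otimes\tau^{-1}$-fixed element of $S\otimes T$, whence projection onto the $e$-summand gives the isomorphism. The paper carries out this idempotent bookkeeping explicitly rather than merely promising to ``chase it,'' but the decomposition, the projection, and the invariance-matching are identical.
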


\begin{proof}
We are claiming that 
 $$(S \otimes T')^{(\sigma \otimes \sigma^{-1})} \cong  
 (S \otimes T)^{\sigma^{n/m} \otimes \tau^{-1}}.$$
$T'$ has an idempotent $e$ such that $\tau(e) = e$, $T = eT'$ 
and the induced action of $H$ on $eT' = T$ is the given 
$H$ action. Moreover, $\sum_{i=0}^{n/m - 1}\sigma^i(e) = 1$ 
and $\sigma^i(e)\sigma^j(e) = 0$ if 
$0 \leq i < j < n/m$. Thus $S \otimes T' = 
\sum_i (S \otimes T)\sigma^i(e)$ for $i$ in the same 
range. It follows that the $\sigma \otimes \sigma^{-1}$ 
fixed elements of $S \otimes T'$ have the form 
$\sum_0^{n/m-1} \sigma^{-i} \otimes \sigma^i(\beta{e})$ 
where $\beta \in S \otimes T$ is $\sigma^{n/m} \otimes \tau^{-1}$ 
fixed. Projecting on the $e$ component proves the result.
\end{proof} 

It will be important to understand how this operation 
on unequal degree cyclic extensions affects the 
rings. Suppose $S/R$ is cyclic of degree $n$ and 
$T/R$ is cyclic of degree $m$ dividing $n$. 
Let $U \subset S$ be such that  
$U \subset S$ is the fixed ring of the 
subgroup of order $m$ so $U/R$ is cyclic of degree $n/m$. 
In the rest of this 
paper we will describe this situation by saying $S/U/R$ is cyclic 
Galois. 

\begin{lemma}\label{topproduct}
The extension $ST/R$ decomposes as $ST/U/R$ 
where, as extensions of $U$, 
$ST$ is the product of $S/U$ and $(T \otimes_R U)/U$.  
\end{lemma}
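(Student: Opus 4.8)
The plan is to unwind the definition of $ST$ from the paragraph immediately preceding the statement, using the alternative description $ST' = (S \otimes_R T')^{\sigma \otimes \sigma^{-1}}$ where $T' = \Ind_H^G(T)$, together with the structural result \ref{inducedfixed} applied to the subgroup $N \subset G$ of order $m$. First I would observe that $U = S^N$ where $N = \langle \sigma^{n/m} \rangle$ is the unique subgroup of order $m$, so that $ST/U/R$ being cyclic Galois amounts to exhibiting $U$ inside $ST$ as the fixed ring of the appropriate subgroup of the Galois group of $ST/R$. Since $ST = (S \otimes_R T')^{\langle \sigma \otimes \sigma^{-1}\rangle}$ with Galois group generated by the image of $\sigma \otimes 1$, and since the image of $N \oplus 1$ (equivalently $1 \oplus \tau$, as these coincide modulo the relevant subgroup) has order $m$, the fixed ring $(ST)^{\text{that subgroup}}$ is computed by restricting the $\sigma \otimes \sigma^{-1}$-fixed-point construction to $S^N \otimes T'$. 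This should give $U \otimes_R T'$ modulo the diagonal-type relation, i.e. $(ST)^{\text{subgroup of order }m} \cong U$ as a cyclic degree-$n/m$ extension, and more precisely $U$ sits inside $ST$ as a copy identified via $S^N \hookrightarrow S$.

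Next I would address the top-layer claim: as extensions of $U$, $ST$ is the product of $S/U$ and $(T \otimes_R U)/U$. Here $S/U$ is cyclic of degree $m$ (Galois group $N = \langle \sigma^{n/m}\rangle$) and $T \otimes_R U$ is cyclic of degree $m$ over $U$ by base change. The product of two degree-$m$ cyclics over $U$ is $(S \otimes_U (T \otimes_R U))^{\langle \sigma^{n/m} \otimes \tau^{-1}\rangle}$. I would compare this with $ST = (S \otimes_R T)^{\langle \sigma^{n/m} \otimes \tau^{-1}\rangle}$ (using the \emph{original} definition of $ST$ before passing to $T'$), noting the canonical isomorphism $S \otimes_R T \cong S \otimes_U (T \otimes_R U)$ of $U$-algebras, and checking that it intertwines the actions of $\sigma^{n/m} \otimes \tau^{-1}$ on both sides and carries $U = S^N \subset S$ to the base ring $U$ on the product side. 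Taking fixed rings then gives the claimed identification, and one must verify the Galois group of $ST$ as an extension of $U$ is the image of $N \oplus H$ modulo the relation, which is cyclic of order $m$ with the product cyclic structure.

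The main obstacle I anticipate is bookkeeping: making the two presentations of $ST$ (via $T'$ and via the direct product on degree-$n$ cyclics) talk to each other cleanly, and tracking exactly \emph{which} subgroup of the Galois group of $ST/R$ has fixed ring $U$. The Proposition just before the statement shows $ST = ST'$, so I would consistently use whichever presentation is convenient for each half — the $T'$ presentation for locating $U$ as a fixed subring of $ST$, and the original $S \otimes_R T$ presentation for the top-layer product decomposition — and then reconcile. A secondary subtlety is that these operations are really defined on pairs $(S, \sigma)$ with a chosen generator, so I would be careful that all identifications respect the specified generators (the text flags this), which is what makes the phrase ``the product of $S/U$ and $(T\otimes_R U)/U$'' unambiguous. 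The verifications of flatness/projectivity and the Galois criterion for $ST/U$ follow from \ref{galoistower} and base change, so those are routine once the algebra isomorphisms are set up correctly.
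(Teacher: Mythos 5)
Your second paragraph is essentially the paper's proof: identify $ST = (S \otimes_R T)^{\langle \sigma^{n/m}\otimes\tau^{-1}\rangle}$, rewrite $S\otimes_R T \cong S\otimes_U(U\otimes_R T)$ as $U$-algebras, and take $\langle\sigma^{n/m}\otimes\tau^{-1}\rangle$-invariants to land on the product $(S/U)\bigl((T\otimes_R U)/U\bigr)$. Where you diverge is the first paragraph: the paper does \emph{not} pass to $T' = \Ind_H^G(T)$ nor invoke \ref{inducedfixed} to locate $U$ inside $ST$. Instead it works directly in $G\oplus H$: the stabilizer of the degree-$(n/m)$ subextension of $ST/R$ pulls back to the subgroup $K\subset G\oplus H$ generated by $(\sigma^{n/m},1)$ and $(\sigma^{n/m},\tau^{-1})$, and the key one-line observation is that $K$ decouples, $K = \langle\sigma^{n/m}\rangle \oplus \langle\tau\rangle$ (since $(\sigma^{n/m},1)^{-1}(\sigma^{n/m},\tau^{-1}) = (1,\tau^{-1})$), whence $(S\otimes_R T)^K = S^{\sigma^{n/m}}\otimes T^{\tau} = U\otimes 1 \cong U$. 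Your route via $T'$ and \ref{inducedfixed} can be made to work, but it is a detour that replaces a two-line group computation with bookkeeping between two presentations of $ST$ — exactly the obstacle you flag — without buying anything. If you simply run your second paragraph after establishing the $K$-computation directly on $S\otimes_R T$, you recover the paper's argument cleanly and can drop the first paragraph entirely.
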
 

\begin{proof} 
Let $\sigma$ generate the Galois group, $G$, of $S/R$ 
and let $H = <\tau>$ be the Galois group of $T/R$. Then $U$ is the fixed ring 
$S^{\sigma^{n/m}}$. Recall that 
the image, $\bar \sigma$, of $\sigma$ in 
$(G \oplus H)/(\sigma^{n/m},\tau^{-1})$ 
is the designated generator. If $R \subset U' \subset ST$ 
is such that $U'/R$ has degree $n/m$, then 
$U' = (ST)^{\bar \sigma^{n/m}}$. If $K \subset G \oplus H$ 
is generated by $(\sigma^{n/m},1)$ and $(\sigma^{n/m},\tau^{-1})$, 
then $U' = (S \otimes_R T)^K$. But $K = <\sigma^{n/m}> \oplus <\tau>$ 
so $U' = U \otimes 1 \cong U$. Thus $K$ is the Galois group of $ST/U$.  
Identifying $S \otimes_R T$ with $S \otimes_{U} (U \otimes_R T)$ 
we have $ST = (S \otimes_{U} (U \otimes_R T))^{(\sigma^{n/m},\tau^{-1})} = 
(S/U)((T \otimes_R U)/U)$.
\end{proof}
 
The way we like to think about the above result is that 
when $m$ divides $n$ but $m < n$ then $T/R$ multiples 
at the ``top" of $S/R$. 

We will be studying the extension problem 
for cyclic extensions and in that context we will need the 
reverse of the above remark. 

\begin{proposition}\label{difference}
Suppose $T/S/R$ and $T'/S/R$ are cyclic of the same degree 
and $d$ is the degree of $T/S$ and (of course) $T'/S$. 
Then $T' = DT$ where $D/R$ is cyclic of degree $d$. 
\end{proposition}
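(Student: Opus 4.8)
The plan is to use the cyclic extension group structure and the product operation developed above to isolate $D$ as a quotient of $T'$. First I would form the product $T'\cdot T^{-1}$ in the group of degree-$n$ cyclic extensions of $R$ (where $n$ is the common degree), using the inverse operation on the pair $(T,\sigma)$ from the earlier discussion and the fact, just proved, that this group has exponent dividing $n$. Call the result $E = T'\cdot T^{-1}$. Since $T/S/R$ and $T'/S/R$ both have the property that the unique sub-extension of degree $n/d$ over $R$ is $S$ (the extension $S/R$ being the fixed ring of $\sigma^{d}$ in each), the product $E$ should have split sub-extension $E^{\bar\sigma^{d}} = S\cdot S^{-1}$, which is $\Ind_{\{1\}}^{G/N}(R)$ by the lemma that $SS'$ is split. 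Concretely: applying \ref{topproduct}-style reasoning, $E$ decomposes as $E/S/R$ with the bottom piece split, i.e. $E = \Ind_K^G(D)$ where $K$ has order $n/d$ and $D = E^{\sigma^{n/d}}$ is cyclic of degree $d$ over $R$. That $D$ is the object we want.

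Next I would verify that $D\cdot T = T'$, i.e. that multiplying $T/R$ by this degree-$d$ extension $D$ recovers $T'$. Here is where \ref{topproduct} is used in the forward direction: $D/R$ has degree $d$ dividing $n$, so $DT$ decomposes as $DT/S/R$ with $DT$ the product over $S$ of $T/S$ and $(D\otimes_R S)/S$. But $(D\otimes_R S)/S$ is the base change of $E^{\sigma^{n/d}}$ to $S$, which — since $E = T'T^{-1}$ and both $T',T$ have fixed ring $S$ in degree $n/d$ — is exactly the "difference" of $T'/S$ and $T/S$ as cyclic extensions of $S$ of degree $d$. Thus the product over $S$ of $T/S$ with that difference is $T'/S$, and assembling over $R$ gives $DT = T'$. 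Equivalently, one can run the computation purely at the level of the group of cyclic extensions: $D = E^{\sigma^{n/d}}$ satisfies $DT = (T'T^{-1})^{\text{(induced up)}}T = T'$, where the induced-up operation is the $\Ind_K^G$ that reappears because $D$ has degree $d < n$, matching \ref{difference}'s phrasing $T' = DT$.

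The main obstacle I expect is bookkeeping rather than any deep point: one must check carefully that forming $E = T'T^{-1}$ and then taking the $\sigma^{n/d}$-fixed ring really does produce a degree-$d$ cyclic extension of $R$ (not merely an induced extension of a smaller-degree one), and that is exactly where the hypothesis "$T/S/R$ and $T'/S/R$ are cyclic of the same degree with the same intermediate field $S$" gets used — it forces the bottom $n/d$ layers of $T'$ and $T$ to coincide, so they cancel in $E$, leaving $E$ genuinely induced from its top degree-$d$ piece. I would make this precise by tracking idempotents exactly as in the proofs of \ref{torsion} and \ref{topproduct}: identify $T'\otimes_R T$ with $T'\otimes_S(S\otimes_R T)$, note the relevant fixed-ring computation splits off the $S$-part, and read off $D$ from the remaining factor. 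Once that structural claim is in hand, the identity $T' = DT$ is immediate from \ref{topproduct} applied to $D$ and $T$ over the common intermediate ring $S$.
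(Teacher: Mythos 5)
Your overall strategy---form $E = T'T^{-1}$ in the group of degree-$n$ cyclic extensions, observe that the common subextension $S$ forces the bottom of $E$ to be split, and read off $D$---is sound in spirit and closely related to the paper's argument (the paper restricts to $T\otimes_S T'$ via the separating idempotent of $S\otimes_R S$ and then takes diagonal invariants, whereas you take diagonal invariants first to form $E$ and propose to extract $D$ afterward; after correction these commute and yield the same $D$). However, the concrete identification of $D$ is wrong. You set $D = E^{\sigma^{n/d}}$, but since $\bar\sigma$ has order $n$ on $E$, the element $\bar\sigma^{n/d}$ has order $d$, so $E^{\sigma^{n/d}}$ has rank $n/d$ over $R$, not $d$. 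Worse, $E^{\sigma^{n/d}}$ is exactly the split extension $S\cdot S^{-1}$: writing $E = (T'\otimes_R T)^{\mathrm{diag}}$, one computes $E^{\sigma^{n/d}} = (T'\otimes_R T)^{H}$ for $H = \{(g,g')\,:\,\phi(g)=\phi(g')\}$, and since $H \supset N_S\oplus N_S$ this descends to $(S\otimes_R S)^{\mathrm{diag}(\bar G)} = SS^{-1}$, which by the lemma following \ref{torsion} is the split extension. So your $D$ is the split bottom layer, not the desired degree-$d$ extension.

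What you actually need is the piece from which $E$ is induced, not a fixed ring. Since $E^{\sigma^{n/d}}$ is split, by \ref{induced} one has $E = \Ind_H^G(D)$ with $H = \langle\sigma^{n/d}\rangle$ of order $d$, and $D = eE$ for $e$ a primitive idempotent of $E^{\sigma^{n/d}}$ whose stabilizer is $H$; this $D$ is a direct summand of $E$, is $H$-Galois of degree $d$ over $R$, and corresponds exactly to the paper's $(T\otimes_S T')^{N}$ under the separating idempotent. Your proposal also has an internal inconsistency on this very point: you write $E = \Ind_K^G(D)$ with $K$ of order $n/d$ but $D$ of degree $d$, whereas $\Ind_K^G(D)$ requires $D$ to be $K$-Galois, hence of degree $|K|$, and produces a degree-$|G| = n$ extension only under that compatibility. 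With $D$ correctly identified as a component $eE$, the rest of your sketch ($\Ind_H^G(D) = E$ gives $DT = ET = T'T^{-1}T = T'$) does go through, so the fix is local but essential.
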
 

Of course $T'/S$ and $T/S$ differ by a degree $d$ 
cyclic. The point of the above remark is that 
the cyclic comes from $R$. This will be repeatedly 
important because we will be in situations where 
we know something about the arithmetic of $R$ 
and much less about the arithmetic of $S$. 

\begin{proof}
For convenience Let $G$ denote the 
Galois group of $T$ and $T'$, so 
$T \otimes_R T'$ has Galois group $G \oplus G$. 
If $\bar G$ is the Galois group of $S/R$, 
we can make identifications (note $\Aut(G) \to \Aut(\bar G)$ 
is surjective) such that 
$\phi: G \to \bar G$ is defined by $T/S/R$ and $T'/S/R$. 
$S \otimes_R S \subset T \otimes_R T'$ 
and $S \otimes_R S$ has an idempotent $e$ 
where $e(S \otimes_R S) = S$ and $(s \otimes 1 - 1 \otimes s)e = 0$. 
Then $e(T \otimes_R T') = T \otimes_S T'$ is Galois 
over $R$ with group $H = \{g,g' \in G \oplus G | \phi(g) = \phi(g')\}$. 
In $H$ we have $N = \{(g,g) | g \in G\}$ and $D = (T \otimes_S T')^N$ 
with the appropriate generator. 
\end{proof} 

We will have need to understand cyclic $S/R$ 
which are split modulo an ideal. To this end 
let $N \subset R$ be an ideal. Let $T/S/R$ and $T'/S/R$ 
be cyclic Galois of degree $n$ as above. 
Write $T' = TD$ where $D/R$ is cyclic of the appropriate 
degree. 

\begin{lemma}\label{splitdifference}
If $T/NT$ and $T'/NT'$ are both split over $R/N$, 
then $D/ND$ is split over $R/N$.
\end{lemma}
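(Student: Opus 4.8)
The plan is to reduce the statement to a computation about the difference cyclic $D$ over the quotient ring $R/N$, using the construction of $D$ from Proposition \ref{difference} together with the behaviour of induced extensions under base change. First I would base change everything along $R \to R/N$. Writing $\bar R = R/N$, $\bar S = S/NS$, $\bar T = T/NT$, $\bar{T'} = T'/NT'$ and $\bar D = D/ND$, I claim that $\bar T \cong \bar D \cdot \bar{T'}$ (equivalently $\bar{T'} = \bar D \cdot \bar T$) as cyclic Galois extensions of $\bar R$: the operation $D,T \mapsto DT$ is defined by a tensor product over $R$ followed by taking the fixed ring of an explicit finite group action, and both tensor product and fixed ring of a finite group acting on a finitely generated projective module commute with the flat (indeed arbitrary, since it is a quotient) base change $R \to \bar R$ — this is exactly the kind of compatibility already used throughout Section 2. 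So the identity $T' = DT$ reduces mod $N$ to $\bar{T'} = \bar D\,\bar T$.

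Now by hypothesis $\bar T$ and $\bar{T'}$ are both split over $\bar R$, i.e. each is isomorphic as a cyclic Galois extension to $\Ind_{\{1\}}^G(\bar R)$. Using Lemma stating that the product with the split extension returns the original extension, and the fact (established just above in the excerpt) that the isomorphism classes of degree $n$ cyclic extensions of $\bar R$ form a group, I can write in that group $[\bar D] = [\bar{T'}] \cdot [\bar T]^{-1} = [\text{split}] \cdot [\text{split}]^{-1} = [\text{split}]$. Hence $\bar D = D/ND$ is split over $R/N$, which is what we want. In slightly more hands-on terms: since $\bar T$ is split it has a full set of orthogonal idempotents permuted simply transitively by $G$; since $\bar{T'} = \bar D \, \bar T$ is obtained from $\bar D \otimes_{\bar R} \bar T$ by taking the fixed ring of $N' = \langle(\delta, \tau^{-1})\rangle$, and $\bar T$ already carries such idempotents, one projects onto one idempotent component of $\bar D \otimes_{\bar R} \bar T$ to identify $\bar D$ with that component, showing $\bar D$ inherits the split structure; alternatively split $\bar{T'}$ forces $\bar D$ to split by cancelling the split $\bar T$.

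The main obstacle is purely bookkeeping: making the base-change compatibility $\overline{DT} \cong \bar D\,\bar T$ precise. One has to check that the idempotent $e$ (or the relevant group $N'$ of Proposition \ref{difference} / the product construction) used to cut out $DT$ inside $D \otimes_R T$ is carried to the corresponding idempotent after tensoring with $R/N$, and that the fixed subring under the finite group action is preserved — which holds because $D \otimes_R T$ is finitely generated projective over $R$ and the group is finite, so $(-)^{G}$ is a finite limit and commutes with $-\otimes_R R/N$. None of this is deep; it is the same style of argument as in Lemma \ref{splitdifference}'s neighbours. Once that compatibility is in hand, the conclusion is immediate from the group structure on cyclic extensions together with the characterization of the split extension as the identity element.
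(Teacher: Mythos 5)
Your proposal is correct and reaches the same conclusion, but it routes through a slightly different mechanism than the paper. The paper's proof never invokes the group of cyclic extensions or a cancellation argument: it simply recalls that $D$ was constructed as $(T\otimes_S T')^N$ for a normal subgroup $N$ of an explicit $H \subset G\oplus G$, observes that this construction commutes with reduction mod the ideal, and then quotes Proposition~\ref{inducedfixed} to say that a fixed ring of a split Galois extension by a normal subgroup is again split (so $\overline T\otimes_{\overline S}\overline T'$ split implies $\overline D$ split). Your argument instead passes through Proposition~\ref{difference} as a black box to get $\overline{T'}=\overline D\,\overline T$ and then cancels $\overline T$ using the group structure on isomorphism classes. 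Both routes rely on the same base-change compatibility that you correctly identify as the only real bookkeeping. Two small remarks: (1) in your cancellation step you write $[\overline D]=[\overline{T'}]\cdot[\overline T]^{-1}$, but $\overline D$ has degree $d$ dividing $n$, so strictly the identity takes place with $[\Ind_H^G(\overline D)]$ in the group of degree-$n$ cyclics; one then needs the (easy) extra step that $\Ind_H^G(\overline D)$ split forces $\overline D$ split, which is exactly what \ref{inducedfixed} supplies — so you end up needing that proposition anyway; (2) the ``hands-on'' paragraph is somewhat garbled (you project onto an idempotent component of $\overline D\otimes\overline T$ rather than identifying $\overline D$ as a fixed ring), but the main line of reasoning is sound. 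Overall: the paper's route is shorter and avoids the degree mismatch, while yours makes visible the group-theoretic formalism the section sets up; both are valid.
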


\begin{proof}
Given the proof of the existence of $D$, it suffices to observe 
and if $T/R$ is $G$ Galois and split, and $N \subset G$ 
is a normal subgroup, then $T^N/R$ is split. This is clear from 
\ref{inducedfixed}. 
\end{proof}

Next we make a simple observation about 
how the operation above relates to the 
$R[G]$ structure of the cyclic extensions. 

\begin{proposition}\label{moduleproduct}
Suppose $G$ is the cyclic group
$S/R$ is a $G$ Galois extension, and $S'/R$ is an 
$H$ Galois extension where $H \subset G$. 
Let $T = SS'/R$. Then as $R[H]$ modules 
$T \cong S \otimes_{R[H]} S'$. 
\end{proposition}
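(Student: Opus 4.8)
The plan is to unwind the definitions of both sides and exhibit an explicit isomorphism of $R[H]$-modules. Recall that $T = SS'$ is defined as $(S \otimes_R S')^N$ where, writing $G = \langle\sigma\rangle$ of order $n$ and $H = \langle\tau\rangle$ of order $m \mid n$, the subgroup $N \subset G \oplus H$ is generated by $(\sigma^{n/m}, \tau^{-1})$. On the other side, $S \otimes_{R[H]} S'$ makes sense because $S$ is an $R[H]$-module (via the action of $H \subset G$ on $S$, using Lemma \ref{proj} to know $S$ is even $R[G]$-projective) and $S'$ is an $R[H]$-module via its Galois action. The residual $H$-action on $S \otimes_{R[H]} S'$ that we compare against is the one inherited from the $(\sigma,1)$-action on the first factor (which descends to $T$ and generates its degree-$n$ cyclic group, whose order-$m$ subgroup is what we call $H$ here).

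**The map.** First I would reduce to the case $R$ local by faithful flatness, since both $T$ and $S \otimes_{R[H]} S'$ are finitely generated and formation of tensor products and fixed rings commutes with the flat base change $R \to R'$; being an isomorphism of $R[H]$-modules can be checked after such a change. With $R$ local, $S' \cong R[H]$ as an $R[H]$-module by Lemma \ref{proj} (applied to the group $H$), so $S \otimes_{R[H]} S' \cong S$ as $R[H]$-modules. So the content is to produce an $R[H]$-module isomorphism $T \xrightarrow{\sim} S$ when $R$ is local — equivalently, to show $T$ is a free rank-one $R[H]$-module and pin down the action. I would define a map $\theta: S \otimes_R S' \to S$ by $s \otimes s' \mapsto s \cdot \pi(s')$ where $\pi: S' \to S$ is $R[H]$-linear (picking $\pi$ to send a normal basis generator of $S' = R[H]$ to $1$, so $\pi$ is essentially "evaluate the group-ring element and push into $S$"); more intrinsically, use the trace-type projection coming from $S' \to S' / I_H S' \cong R$ composed with the $R[H]$-structure map $R[H] \to S$. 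The key point is that $\theta$ is constant on $N$-orbits: $(\sigma^{n/m}, \tau^{-1})$ acts by $\sigma^{n/m}$ on $S$ and $\tau^{-1}$ on $S'$, and since $\sigma^{n/m}$ restricted to $S$ is exactly the generator of the $H$-action on $S$, we have $\theta(\sigma^{n/m}(s) \otimes \tau^{-1}(s')) = \sigma^{n/m}(s)\,\pi(\tau^{-1}(s')) = \tau\bigl(s\,\pi(s')\bigr)\cdot(\text{correction})$ — here I must be careful that $\pi$ intertwines $\tau^{-1}$ on $S'$ with $\tau$ on $S$ up to the $R[H]$-linearity, which is exactly why the construction of $\pi$ via the augmentation quotient is the right one. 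This forces $\theta$ to factor through $T = (S \otimes_R S')^N$.

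**Checking it is an isomorphism.** Once $\theta$ induces $\bar\theta: T \to S$, I would check it is $H$-equivariant: the residual $H$-action on $T$ is induced by $(\sigma,1)$, and $\theta(\sigma(s) \otimes s') = \sigma(s)\pi(s')$, while the $H$-action on $S$ (as the target, i.e.\ via $\sigma^{n/m}$) — wait, here one must match conventions: the residual generator of $T$'s degree-$n$ group is $(\sigma,1)$, its $m$-th-from-top-power $(\sigma^{n/m},1)$ generates the copy of $H$ we care about, and $\theta((\sigma^{n/m},1)(s\otimes s')) = \sigma^{n/m}(s)\pi(s')$ which is the $H$-action on the target $S$. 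So $\bar\theta$ is $R[H]$-linear. For bijectivity, with $R$ local both sides are finitely generated projective $R[H]$-modules of the same rank (rank one, since $T$ is rank $n$ over $R$, $R[H]$ is rank $m$, $n = m \cdot n/m$... actually rank $n/m$ over $R[H]$? — no: $T$ is rank $n$ over $R$ and free of rank one over its own group ring of order $n$, but as an $R[H]$-module with $H$ of order $m$ it is rank $n/m$; correspondingly $S \otimes_{R[H]} S'$ with $S$ free rank one over $R[G]$ gives, after $\otimes_{R[H]} R[H]$, rank $n/m$ over $R[H]$ — consistent). So it suffices to show $\bar\theta$ is surjective, then invoke the argument used in the theorem after Lemma \ref{proj}: a surjection between finitely generated projectives of the same rank over $R[H]$ is an isomorphism (using that $I_H$ is handled by Nakayama in the split/local case, or directly that surjective endomorphisms of f.g.\ projectives are bijective). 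Surjectivity of $\bar\theta$ reduces mod the maximal ideal to the field case, where it is the classical statement that $(S \otimes S')^N$ maps onto $S$, visible from the idempotent decomposition of $S \otimes S'$ exactly as in the proof of Proposition \ref{torsion}.

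**Main obstacle.** The genuinely delicate step is getting the bookkeeping of the two different $H$-actions to line up: $H$ appears as a subgroup of $G$ acting on $S$, as the full Galois group acting on $S'$, and as a subquotient acting on $T$, and the claimed isomorphism must intertwine the "via $\sigma^{n/m}$" action on one side with the "residual $(\sigma,1)$" action on the other. I expect this compatibility — essentially checking that the projection $\pi: S' \to S$ is $N$-antiequivariant in the precise sense needed — to be where all the care goes; the projectivity/rank/Nakayama endgame is routine given the machinery already set up in the paper.
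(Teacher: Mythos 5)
Your proposal goes in the opposite direction from the paper's proof, which turns out to matter. The paper defines $\Psi \colon S \otimes_R S' \to T$ directly as the trace $\tr_\gamma$ into the fixed ring (which is automatically surjective onto $T$ since $T$ is the $\gamma$-fixed subring of a Galois extension), observes that $\tr_\gamma(\gamma - 1) = 0$ makes $\Psi$ balanced over $R[H]$ so that it factors through the quotient $S \otimes_{R[H]} S'$, and finishes by rank-counting. You instead aim for a map $T \to S$, to be identified with $T \to S \otimes_{R[H]} S'$ after localizing.

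The gap is in the sentence ``This forces $\theta$ to factor through $T = (S \otimes_R S')^N$.'' Constancy of $\theta$ on $N$-orbits shows that $\theta$ factors through the \emph{coinvariants} $(S\otimes_R S')_N$ --- a quotient --- not through the \emph{invariants} $T = (S \otimes_R S')^N$, which is a submodule. There is no ``factoring through'' a subobject; what you can do is restrict $\theta$ to $T$, but $N$-invariance of $\theta$ gives you no information about $\theta\vert_T$ and certainly does not produce a well-defined nontrivial map. (The invariants and coinvariants are isomorphic here via the norm, since $S\otimes_R S'$ is projective over $R[N]$, but you neither invoke this nor can silently use it --- and once you do invoke it you have essentially reconstructed the paper's trace map $\tr_\gamma$, at which point the whole localization-and-normal-basis apparatus becomes unnecessary.) A secondary issue: you reduce to $R$ local ``by faithful flatness'' before having a natural $R[H]$-linear map in hand, but isomorphism of two modules is not a property that descends along faithfully flat maps without a specified morphism. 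You could repair this by first writing down the natural composite $T \hookrightarrow S\otimes_R S' \twoheadrightarrow S\otimes_{R[H]}S'$ and descending \emph{that}, but as written the reduction is not justified. The paper avoids both problems at once: the trace is a globally defined natural map with image exactly $T$, its $\gamma$-invariance is precisely what exhibits $R[H]$-middle-linearity, and the rank count over $R$ closes the argument with no localization needed.
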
 

\begin{proof} 
As usual, let $G$ be generated by $\sigma$, 
and $H$ generated by $\sigma^{n/m} = \tau$.  
$SS'$ is defined as $(S \otimes_R S')^{\gamma}$ 
where $\gamma = \sigma^{n/m} \otimes \tau^{-1}$ and where we have 
identified $R[G \oplus H] = R[G] \otimes_R R[H]$. 
Define the surjection $\Psi: S \otimes_R S' \to SS'$ 
by $\Psi(s \otimes s') = \tr_{\gamma}(s \otimes s')$. 
Since $\tr_{\gamma}(\gamma - 1) = 0$ we have 
$\Psi(\sigma^{n/m}(s) \otimes \tau^{-1}(s')) = \Psi(s \otimes s')$, 
or changing variables, $\Psi(\sigma^{n/m}(s) \otimes s') = s \otimes \tau(s')$. 
Thus $\Psi$ induces a surjection $\Psi: S \otimes_{R[H]} S' \to SS'$ 
which is an isomorphism as the domain and range are 
projective of the same rank over $R$.
\end{proof} 

We require a corestriction operation on cyclic extensions 
in order to eliminate the assumption about $p$ roots 
of unity and understand $p^n$ degree cyclic extensions. 
In this discussion we confine ourselves to cases A and B, so  
$\mu$ will be a primitive $p$ roots of one and 
when $p = 2$ our base ring will be $\Z[i]$. 
Since $\Z[\mu]/\Z$ or $\Z[i]/\Z$ are not etale extensions, 
we have to frame our construction more generally and then 
apply special arguments when we need to corestrict in those cases. 
With this in mind, we consider the 
following set up. 
Suppose $D/R$ is an extension 
of rings and $C$ is a group acting on $D$ with fixed ring $D^C = R$. Let $C$ have order $m$. 
We also suppose $D$ is a finite generated projective 
as a 
module over $R$. Let $T/D$ be a cyclic $G$ Galois extension 
of degree $p^n$. For $\eta \in C$ let $\eta(T)$ be $T/D$ twisted by 
$\eta$ and $W = \otimes_{D,\eta \in C} \eta(T)$. 
Then the action of $C$ on $D$ extends to an 
action on $W$ by permuting the $\eta(T)$'s using the  
regular action of $C$. 
Clearly $W/D$ has Galois group the $m$-fold direct 
sum $H = G \oplus \l]cdots \oplus G$. There is an induced 
action of $C$ on $H$ which is also a permutation action on the 
$G$'s. 
Moreover $C$ and 
$H$ generate, in the $R$ automorphism group of $W$, 
the wreath product $$\hat H = H \rtimes C.$$ Let $N \subset H$ 
be the kernel of the product map $G \oplus \cdots \oplus G 
\to G$. Clearly $C$ preserves $N$ and so we have a homomorphism 
$\hat H \to G$ with kernel $\hat N = N \rtimes C$. 
If $W^N = T'$, then $T'/D$ 
is $G$ Galois. Set 
$S = \Cor_{D/R}(T/D) = (T')^C$, so $S$ is an extension 
of $R$. If $D/R$ is not Galois 
we can make no general statement about $S$ 
but with extra assumptions we have: 

\begin{theorem}\label{corestriction}
Suppose $S$ is a projective module over $R$ of rank 
$|G|$ and $SD = T'$. Then $S/R$ is $G$ Galois. 
In particular, if $D/R$ is $C$ Galois then $S/R$ 
is $G$ Galois. 
\end{theorem}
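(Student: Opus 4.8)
The plan is to exploit the extra symmetry already built into the construction. Since $N$ is normal in $\hat H = H \rtimes C$ and $C$ acts trivially on $H/N \cong G$ (here one uses that $G$ is abelian, so permuting the factors of $H = G^{\oplus m}$ does not change their product), the quotient $\hat H/N$ is the \emph{direct} product $G \times C$, and it acts on $W^N = T'$. Under this action the factor $G \times \{1\}$ is the Galois group of $T'/D$, the subring $D$ equals $(T')^{G}$ (so $G$ acts trivially on $D$), and $(T')^{C} = S$. In particular the $G$- and $C$-actions on $T'$ commute, so $G$ preserves $S = (T')^{C}$ and hence acts on $S$, and $D/R$ is stable under $G \times C$.

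For the main statement I would descend along $D/R$. First, $D$ is finitely generated projective over $R$ and contains $R = D^{C}$, so localizing at any prime shows $D$ has positive rank everywhere; hence $D/R$ is faithfully flat. Next, the multiplication map $\psi\colon S \otimes_R D \to T'$, $s \otimes d \mapsto sd$, is a homomorphism of $D$-algebras (everything here is commutative), it is surjective precisely because $SD = T'$, and its source and target are finitely generated projective $D$-modules of constant rank $|G|$ — the source because $S$ has rank $|G|$ over $R$, the target because $T'/D$ is $G$-Galois. A surjection of finitely generated projective modules of the same finite rank is an isomorphism, so $\psi$ is an isomorphism; and it is $G$-equivariant because $G$ fixes $D$ pointwise. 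Therefore $(S \otimes_R D)/D \cong T'/D$ is $G$-Galois, and by faithfully flat descent (the equivalence recorded just before Lemma~\ref{galoistower}) $S/R$ is $G$-Galois.

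For the ``in particular'' clause I would instead invoke Lemma~\ref{galoistower} directly, with $G' = G \times C$ and normal subgroup $N' = G \times \{1\}$, applied to the tower $T' \supset D \supset R$: we have just seen $G \times C$ acts on this tower, $T'/D$ is $(G \times \{1\})$-Galois, and $D/R$ is $((G\times C)/(G \times \{1\}))$-Galois $\cong C$-Galois by hypothesis. Hence $T'/R$ is $(G \times C)$-Galois, and since $\{1\} \times C$ is normal in $G \times C$, the standard theory of Galois extensions of commutative rings (\cite{DI}) shows that $S = (T')^{\{1\}\times C}$ is $G$-Galois over $R$. A posteriori this shows the hypotheses of the first part are automatic in this case.

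The genuinely delicate point, and the one I would write out carefully, is the group-theoretic identification $\hat H / N \cong G \times C$ together with the resulting compatibilities: that the $G$- and $C$-actions on $T'$ commute, that $(T')^{G} = D$ with $G$ acting trivially on $D$, and that $(T')^{\{1\}\times C} = S$. Everything afterwards (faithful flatness of $D/R$, the rank count forcing $\psi$ to be an isomorphism, $G$-equivariance, and descent) is routine. One should also pin down the intended reading of the hypothesis ``$SD = T'$'' as surjectivity of $\psi$, which is exactly what makes the rank argument go through.
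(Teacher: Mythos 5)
Your proposal is correct and takes essentially the same route as the paper: the core of the argument is that the multiplication map $\phi\colon S\otimes_R D\to T'$ is surjective by the hypothesis $SD=T'$, is a map of finitely generated projective modules of the same rank (hence an isomorphism), and then faithfully flat descent along $D/R$ transfers the $G$-Galois property of $T'/D$ down to $S/R$; for the ``in particular'' clause you, like the paper, invoke Lemma~\ref{galoistower} and then pass to invariants under a normal subgroup. The only cosmetic differences are that you make explicit the identification $\hat H/N\cong G\times C$ (used implicitly in the paper via the homomorphism $\hat H\to G$ with kernel $\hat N=N\rtimes C$) in order to see that $G$ preserves $S=(T')^C$, and that you apply the tower lemma to $T'\supset D\supset R$ rather than to $W\supset D\supset R$ as the paper does; both of these are valid and lead to the same conclusion.
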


\begin{proof}
Clearly $T'/D$ is $G$ Galois and there is a natural 
map $\phi: S \otimes_R D \to T'$. By assumption $\phi$ 
is surjective and its domain and range are projective 
over $R$ of equal ranks. Thus $\phi$ is an isomorphism. 
Since $D/R$ is faithfully flat, $S/R$ is $G$ Galois. 

When $D/R$ is $C$ Galois we have by \ref{galoistower} 
that $W/R$ is $\hat H$ Galois and $S = W^{\hat N}$ 
so $S/R$ is $G$ Galois. 
\end{proof}

In the case $D/R$ is $C$ Galois, we can easily 
observe that the corestriction is functorial. 
The proof is trivial. 

\begin{proposition}\label{functorial}
Let $f: R \to R'$ be a homomorphism of commutative 
rings and $D/R$ $C$ Galois. Set 
$D' = D \otimes_f R'$ and let $f$ also denote 
the induced $f: D \to D'$. If $T/D$ 
is cyclic Galois, then $\Cor_{D/R}(T/D) \otimes_f R' 
\cong \Cor_{D'/R'}((T \otimes_f D')/R')$. 
\end{proposition}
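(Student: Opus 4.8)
The plan is to unwind the definition of the corestriction in the situation at hand --- where $D/R$ is $C$ Galois --- and observe that it is assembled purely from tensor products and from passage to invariant subrings of Galois extensions, each of which is compatible with the base change $-\otimes_f R'$.

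First I would record what $\Cor_{D/R}(T/D)$ is when $D/R$ is $C$ Galois. By the proof of Theorem~\ref{corestriction} it equals $W^{\hat N}$, where $W=\bigotimes_{D,\,\eta\in C}\eta(T)$ is the $\hat H$ Galois extension of $R$ with $\hat H=H\rtimes C$, $H=G\oplus\cdots\oplus G$ ($|C|$ summands), $N=\ker(H\to G)$, and $\hat N=N\rtimes C$. Now base--change along $f$. Since $D'=D\otimes_f R'$ we have $W\otimes_f R'=W\otimes_D D'=\bigotimes_{D',\,\eta\in C}\bigl(\eta(T)\otimes_D D'\bigr)$; twisting commutes with base change, so $\eta(T)\otimes_D D'\cong\eta(T\otimes_f D')$, where $C$ acts on $D'$ with $(D')^C=R'$ (this last point is again base--change compatibility of invariants, applied to the $C$ Galois extension $D/R$). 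The finite groups $G$, $C$, $H$, $\hat H$, $N$, $\hat N$ are literally unchanged by $f$. Hence $W\otimes_f R'$ is, verbatim, the extension $W'$ produced by the same construction from $T\otimes_f D'$ over $D'$, and it is $\hat H$ Galois over $R'$ because Galois extensions base--change to Galois extensions. So $\Cor_{D'/R'}\bigl((T\otimes_f D')/D'\bigr)=(W')^{\hat N}=(W\otimes_f R')^{\hat N}$, and it remains only to identify this with $W^{\hat N}\otimes_f R'$.

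That last point is the standard fact that, for a Galois extension $W/R$ with group $\hat H$ and normal subgroup $\hat N$, the formation of $W^{\hat N}$ commutes with arbitrary base change; one may simply cite \cite{DI} or \cite{S1999}. For completeness: $W/W^{\hat N}$ is $\hat N$ Galois, so its trace map is surjective (visible after the faithfully flat base change $W^{\hat N}\to W$, where the extension becomes a product of copies of the base), hence $W^{\hat N}$ is a $W^{\hat N}$--module --- and so an $R$--module --- direct summand of $W$, and $W/W^{\hat N}$ is $R$--projective; writing $W^{\hat N}$ as the equalizer of the constant and action maps $W\rightrightarrows\prod_{\sigma\in\hat N}W$, the vanishing of $\Tor_1^R(W/W^{\hat N},R')$ lets this equalizer survive $-\otimes_f R'$, and the result is exactly $(W\otimes_f R')^{\hat N}$. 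Putting everything together, $\Cor_{D/R}(T/D)\otimes_f R'=W^{\hat N}\otimes_f R'=(W\otimes_f R')^{\hat N}=\Cor_{D'/R'}\bigl((T\otimes_f D')/D'\bigr)$. As the author remarks there is no genuinely hard step; the one thing to watch is the bookkeeping in the second paragraph --- that $-\otimes_R R'$ really does turn each $\otimes_D$ into an $\otimes_{D'}$ and that the group data feeding the corestriction over $D'$ coincides with that over $D$ --- together with the (standard) appeal to base--change compatibility of invariants.
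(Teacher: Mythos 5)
The paper gives no proof here---it asserts the result is trivial---so there is nothing to compare your argument against; what you have written is a correct expansion of that assertion, tracing the construction of Theorem~\ref{corestriction} through base change. The one step worth tightening is the last one: the $\Tor$-vanishing argument shows that $W^{\hat N}\otimes_f R'\hookrightarrow W\otimes_f R'$ is injective, but to get that its image is all of $(W\otimes_f R')^{\hat N}$ you should invoke the trace element (there is $c\in W$ with $\tr_{\hat N}(c)=1$, so any $\hat N$--fixed $x\in W\otimes_f R'$ equals $\tr_{\hat N}((c\otimes1)x)\in W^{\hat N}\otimes_f R'$), or simply observe that $W\otimes_f R'=W\otimes_{W^{\hat N}}(W^{\hat N}\otimes_R R')$ is $\hat N$ Galois over $W^{\hat N}\otimes_R R'$ by base change, whence its fixed ring is the base. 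Either version closes the small gap; otherwise the bookkeeping is right.
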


We need a version of the restriction-corestriction result 
for the construction $\Cor_{D/R}(T/D)$ detailed above. 
Suppose $T = T' \otimes_R D$ where $T'/R$ is $G$ Galois. 
Then $W = W' \otimes_R D$ where $W' = 
T' \otimes_R \ldots \otimes_R T'$ the $m$ fold tensor power. 
Of course, $W'/R$ also has Galois group $H$. Since 
$T'D = T$, we also have $W'D = W$. Since $W'/R$ is projective, 
$W \cong W' \otimes_R D$ and $W^{\tau} = W'$. 
Furthermore, $W'^N$ is just $T'^m$ because 
$\tau$ acts trivially on $T'$. 

\begin{proposition}\label{rescor} 
Let $T'/R$ be cyclic Galois with Galois group 
$G$. Let $m$ be the degree of $D/R$ and $T = T' \otimes_R D$. Then 
$\Cor_{D/R}(T/D)$ is defined and  $\Cor_{D/R}(T/D)\cong T'^m/R$. If $m$ divides 
$|G|$, then $\Cor_{D/R}(T'D/D) \cong \Ind_H^G(T'')$ 
where $T'/T''/R$ is cyclic and $T''/R$ has degree $|G|/m$. 
\end{proposition}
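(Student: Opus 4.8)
The plan is to leverage the concrete description of the corestriction construction together with the already-established identifications, so that the proof is essentially bookkeeping. First I would record the setup in the special case $T = T' \otimes_R D$: the $m$-fold tensor power $W = \otimes_{D,\eta \in C}\eta(T)$ is just $W' \otimes_R D$ where $W' = T' \otimes_R \cdots \otimes_R T'$ is the $m$-fold tensor power over $R$, because $T'$ is defined over $R$ and is therefore fixed by the twisting action of $C$. Since $W'/R$ is faithfully flat and projective, $W^{\tau} = W'$ for the diagonal-type action, and the subgroup $N \subset H = G \oplus \cdots \oplus G$ (the kernel of the product map to $G$) acts on $W'$ through its action on $W'/R$. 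Hence $T'_{\mathrm{cor}} := W^N = W'^N$, and this is exactly the $m$-fold product construction $T'^m$ from \ref{torsion}. Then $\Cor_{D/R}(T/D)$, which is defined as $(W^N)^C$, equals $(W'^m)^C = W'^m$ since $C$ acts trivially on everything defined over $R$. So the first assertion, $\Cor_{D/R}(T/D) \cong T'^m$, follows once I check that the hypotheses of \ref{corestriction} are met — namely that $S = W'^m$ is projective over $R$ of rank $|G|$ and that $SD \cong W^N = T'$; both are immediate from $W^N = W'^N \otimes_R D$ and the fact that $W'^m/R$ is cyclic $G$ Galois by \ref{torsion}.

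For the second assertion, suppose $m \mid |G|$. By \ref{torsion} applied to $T'/R$ (cyclic of degree $|G|$, with $m$ dividing $|G|$), the $m$-th power $T'^m$ has the form $\Ind_K^G(T'')$ where $T''/R$ is cyclic of degree $|G|/m$, and moreover \ref{torsion} identifies $T''$ as the fixed ring $T'^{\sigma^{|G|/m}}$ for $\sigma$ a generator of $G$. In other words $T'/T''/R$ is cyclic with $T''/R$ of degree $|G|/m$. Combining this with the first part gives $\Cor_{D/R}(T'D/D) = T'^m \cong \Ind_K^G(T'')$, which is the claimed form (writing $H$ for the subgroup $K$ in the proposition's notation).

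The only genuinely substantive step is verifying that the abstract corestriction $W^N \to W^N{}^C$ really does coincide with the concrete power construction in the split case, i.e. that $W = W' \otimes_R D$ with $C$ acting only on the $D$ factor, so that taking $C$-invariants simply strips off $D$ and returns $W'^m$. This is where one must be careful that the wreath-product action $\hat H = H \rtimes C$ restricts correctly: the $C$-action permutes the tensor factors $\eta(T) = \eta(T') \otimes_R D = T' \otimes_R D$, but since each $\eta(T')$ is canonically $T'$ (the twist is trivial on $R$-algebras), the permutation action of $C$ on $W' = T'^{\otimes m}$ is the honest permutation action, and this is exactly the action used to build $T'^m = W'^N$ in \ref{torsion}; its $C$-invariants do not change $W'^N$ because $C$ already acts within each $G$-orbit structure used there. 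I expect the main obstacle, such as it is, to be stating this compatibility cleanly rather than any hard computation — once the identification $W^N = W'^N \otimes_R D$ is in hand, everything else is an appeal to \ref{corestriction} and \ref{torsion}.
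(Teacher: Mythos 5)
Your outline follows the paper's own route: identify $W = W' \otimes_R D$, use the $C$-fixed ring relation, and appeal to \ref{torsion} for the second claim. But there is a concrete error in the middle that needs fixing. You write ``$T'_{\mathrm{cor}} := W^N = W'^N$.'' This is false: $W^N$ is rank $|G|$ over $D$, hence rank $|G|\,m$ over $R$, while $W'^N = T'^m$ is rank $|G|$ over $R$. The correct identity is $W^N \cong W'^N \otimes_R D$, which you in fact write two sentences later when checking the hypotheses of \ref{corestriction} --- so your proof contradicts itself on this point. The step also isn't harmless, because your justification of the next line (``$(W^N)^C = (W'^m)^C = W'^m$ since $C$ acts trivially on everything defined over $R$'') only gives $T'^m$ if you have correctly recorded that $W^N$ still carries a nontrivial $D$-factor on which $C$ acts. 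The clean argument is the paper's: $C$ normalizes $N$, so $(W^N)^C$ and $(W^{\tau})^N$ are both $W^{N\rtimes C}$, and $(W^{\tau})^N = W'^N = T'^m$.

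Separately, your closing ``substantive step'' paragraph claims that ``the permutation action of $C$ on $W' = T'^{\otimes m}$ is the honest permutation action'' and that taking $C$-invariants ``does not change $W'^N$.'' That is inconsistent with the identity $W^{\tau} = W'$ you relied on at the start: if $C$ genuinely permuted the factors of $W'$, $W^C$ would be a proper subring of $W'$, not all of it. Under the identification $W \cong W' \otimes_R D$, the $C$-action is trivial on $W'$ (which is defined over $R = D^C$) and acts through $D$; the apparent permutation of the $\eta(T)$'s is absorbed in the $D$-twists. The second half of your proposal (applying \ref{torsion} to deduce $T'^m \cong \Ind_H^G(T'')$ with $T'' = T'^{\sigma^{|G|/m}}$) is correct and matches the paper.
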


\begin{proof}
Since $W^{\tau} = W'$, we take $N$ invariants of both sides 
and get $\Cor_{D/R}(T/D) = T'^m$. 
Thus $\Cor_{D/R}(T/D)$ is projective. Since the trace 
$\tr_N: W \to T$ is surjective, 
$T = \tr_N(W) = \tr_N(DW')= D\tr_N(W') = DT'^m$ 
so the Theorem \ref{corestriction} applies. 
The second statement follows from \ref{torsion}. 
\end{proof}

We want to apply \ref{corestriction} to the following  
two special cases. 
For the first, let $R$ be a regular 
dimension 2 domain of characteristic 0 and 
$D = R[Z]/(Z^2 + 1) = R \otimes_{\Z} \Z[i]$. 
Assume $T/D$ is $G$ Galois where $G$ is 
a cyclic group of order $2^m$. Let $i \in D$ 
be the image of $Z$. Assume $2 \in R$ is a prime element. 

\begin{theorem}\label{degree2cor}
$\Cor_{D/R}(T/D)$ is defined and is a $G$ Galois 
extension of $R$. 
\end{theorem}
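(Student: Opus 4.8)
The plan is to verify the hypotheses of Theorem~\ref{corestriction}. Apply the general construction there to $C = \{1,\bar\tau\} = \Gal(\Q(i)/\Q)$ acting on $D = R[i]$, with $\bar\tau$ complex conjugation: so $W = T\otimes_D \bar\tau(T)$, $N = \ker(G\oplus G \to G)$ is the kernel of the product map, $T' = W^N$ is a $G$ Galois extension of $D$, and $S = \Cor_{D/R}(T/D) = (T')^C$. Since $D$ is $R$-free of rank $2$ and $D^C = R$, it will suffice to prove (a) $S$ is $R$-projective of rank $|G| = 2^m$, and (b) $SD = T'$: then the multiplication map $S\otimes_R D \to T'$ is a surjection of $R$-projective modules of the same rank $2^{m+1}$, hence an isomorphism, and $S/R$ is $G$ Galois by \ref{corestriction}. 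The whole point is that $D/R$ is ramified at the prime $(2)$ --- in fact wildly ramified --- so the ``$D/R$ Galois'' clause of \ref{corestriction} does not apply and (a), (b) must be checked directly; the real work is at $(2)$.

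For (a): $S = \ker(1-\bar\tau\colon T'\to T')$ is a finitely generated submodule of the $R$-projective module $T'$, and I would show it is locally free. At a prime $\mathfrak p$ of height $\le 1$, $R_{\mathfrak p}$ is a field or a discrete valuation ring and $S_{\mathfrak p}$, being torsion free, is free. At a prime $\mathfrak p$ of height $2$, $R_{\mathfrak p}$ is regular local of dimension $2$; from $0 \to S \to T' \xrightarrow{1-\bar\tau} T'$ one gets $T'/S \hookrightarrow T'$, so $T'/S$ is torsion free, and the depth inequality $\operatorname{depth} S_{\mathfrak p} \ge \min\bigl(\operatorname{depth} T'_{\mathfrak p}, \operatorname{depth}(T'/S)_{\mathfrak p} + 1\bigr) \ge \min(2, 1+1) = 2$, together with Auslander--Buchsbaum, forces $\operatorname{pd}_{R_{\mathfrak p}} S_{\mathfrak p} = 0$, i.e.\ $S_{\mathfrak p}$ free. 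This is the only place regularity and $\dim R = 2$ are used. For the rank, invert $2$: $D[1/2]/R[1/2]$ is $C$ Galois, and the corestriction construction --- tensor products followed by invariants under the finite group $N\rtimes C$ --- commutes with the flat localization $R \to R[1/2]$, so $S[1/2] \cong \Cor_{D[1/2]/R[1/2]}(T[1/2]/D[1/2])$ is $G$ Galois over $R[1/2]$ by \ref{corestriction}; hence $S$ has rank $|G| = 2^m$.

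For (b), first a simplification. Since $\bar\tau(i) = -i$, for $s\in S$ we have $\bar\tau(is) = -is$, and conversely if $t\in T'$ has $\bar\tau(t) = -t$ then $-it\in S$ with $t = i(-it)$; so $(T')^- := \{t\in T' : \bar\tau t = -t\}$ equals $iS$. As $D = R\oplus Ri$, this gives $SD = S + iS = S\oplus(T')^-$ (the sum is direct by torsion freeness), an $R$-projective module of the same rank $2^{m+1}$ as $T'$, and since $2T' \subseteq S\oplus(T')^-$ the inclusion $SD\hookrightarrow T'$ is an isomorphism after inverting $2$. Thus $Q := T'/SD$ is killed by $2$ and supported on $V\bigl((2)\bigr)$; moreover $0 \to SD \to T' \to Q \to 0$ exhibits $\operatorname{pd}_R Q \le 1$, so by Auslander--Buchsbaum every associated prime of $Q$ has height $\le 1$, and since $Q$ is supported on $V\bigl((2)\bigr)$ with $(2)$ the unique (height one) minimal prime over $2$, $Q \ne 0$ would force $(2)\in\operatorname{Ass}(Q)$. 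Hence it is enough to prove $Q_{(2)} = 0$, i.e.\ that $S_{(2)}$ generates $T'_{(2)}$ over $D_{(2)}$.

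I expect this last, local statement to be the main obstacle. Here $R_{(2)}$ is a discrete valuation ring, $D_{(2)} = R_{(2)}[i]$ is its totally (wildly) ramified quadratic extension with uniformizer $\eta = i-1$ and $\eta^2 = -2i$, and $T'_{(2)}/D_{(2)}$ is cyclic Galois of degree $2^m$ carrying the $C$-action inherited from the construction; one must show the corestriction $S$ is nonetheless unramified over $R_{(2)}$, equivalently that $S_{(2)}$ reduces modulo $2$ to a separable algebra over the residue field $R_{(2)}/2R_{(2)}$, which yields $S_{(2)}$ étale of rank $2^m$ over $R_{(2)}$ and then $S_{(2)}D_{(2)} = T'_{(2)}$ for rank reasons. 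The approach I would take is to feed in the explicit description of degree $2^m$ cyclic Galois extensions of the local ring $D_{(2)}$ together with the Case~B norm and trace computations of Section~1 --- in particular the formulas for $N_\tau(1 + b\eta_m^r)$ in \ref{normcomputations} and the valuations of $\tr_\tau(\eta_m^N)$ --- to carry out this discriminant computation.
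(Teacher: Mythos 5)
Your overall skeleton matches the paper's: reduce to checking the two hypotheses of Theorem~\ref{corestriction}, handle projectivity, reduce the $SD = T'$ statement to a local question at the prime $(2)$. Your projectivity argument (depth + Auslander--Buchsbaum) is a valid alternative to the paper's route (integral closure, hence reflexive, hence projective over a regular 2-dimensional ring), and your Auslander--Buchsbaum reduction of (b) to the statement $Q_{(2)} = 0$ is sound and parallels the paper's determinant-of-$\phi$ argument.

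The gap is in the final local step, which you yourself flag as ``the main obstacle'' and then only sketch. The paper does \emph{not} attack it by a discriminant/norm computation; that would be hard, and the machinery in Section~1 is not aimed at proving a corestriction is unramified. Instead the paper first works out the split case $T/D$ explicitly (it writes down idempotents $e_i$, forms $f_k = \sum_i e_i \otimes e_{k-i}$, checks these are $N$- and $\sigma$-fixed and give an $R$-basis of $S$, and reads off $SD = S'$ directly). It then handles the general case by a faithfully flat base change: let $R_2$ be the localization at $(2)$, $\hat R$ the maximal unramified extension of the completion of $R_2$, and $\hat D = D\otimes_R \hat R$. Then $\hat D$ is a complete DVR with separably closed residue field, so $\hat T/\hat D$ is \emph{split}, and the split case already proved shows $\phi\otimes 1$ is an isomorphism; since this base change is faithfully flat, the determinant $a = u2^n$ of $\phi$ must have $n = 0$. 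Without this trivialization move, your local step at $(2)$ is genuinely unfinished: you would need an independent argument that the corestriction of a (possibly ramified) cyclic extension of $D_{(2)}$ is unramified over $R_{(2)}$, and the norm formulas of \ref{normcomputations} do not supply that by themselves. So you have the right target but not the key idea that closes it.
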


\begin{proof}
Let $\sigma$ be the automorphism of 
$D$ such $\sigma(i) = -i$. Clearly $D^{\sigma} = R$. 
Proceeding with the definition of the corestriction, 
let $W = T \otimes_D \sigma(T)$ which is 
Galois with group $G \oplus G$ over $D$. 
Let $N = \{(g,g^{-1}) | g \in G\} \subset G \oplus G$. 
Then $W$ and $W^N$ have an 
extension of $\sigma$ of order 2. Let 
$S' = W^N$ and $S = (S')^{\sigma}$. 
In order to apply \ref{corestriction} 
we need to show $S/R$ projective and $SD = S'$. 

Since $D(1/2)/R(1/2)$ is $<\sigma>$ Galois, 
$S(1/2)/R(1/2)$ is projective. If 
$s \in S(1/2)$ is integral over $S$ it is 
integral over $R$ and hence $R'$, implying 
$s \in S'$. Since $s$ is $\sigma$ fixed, $s \in S$. 
Thus $S$ is integrally closed, hence reflexive, 
hence projective by e.g., \cite[p.~73]{OS}. 

Next we consider the case $T/D$ is split. 
That is, if $\delta$ generates $G$, then 
$T = \oplus_{i=0}^{2^m-1} De_i$ where 
$e_i$ is idempotent and $\delta(e_i) = e_{i+1}$ 
where we interpret the indices modulo $2^m$. 
Thus 
$W$ has as a $D$ basis the elements 
$e_i \otimes e_j$. Let $f_k = \sum_i e_i \otimes e_{k-i}$ 
so $(\delta,\delta^{-1})(f_k) = f_k$. it is 
clear that the $f_k$ for $k = 0,\ldots,2^m - 1$ 
form a $D$ basis of $W^N = S'$. Moreover, 
$\sigma(f_k) = f_k$ so $S'^{\sigma} = 
\sum_k f_kR$ and $S/R$ is split $G$ Galois. 
The fact that $DS = S'$ is clear. 

Returning to the general case, 
note that we need to prove that the canonical 
map $\phi: S \otimes_R D \to S'$ is an isomorphism. 
Thus by localization we may assume $R$ is regular local 
of dimension $2$. Since $S \otimes_R D$ and $S'$ are both 
free over $R$ of the same rank, 
the determinant of $\phi$ as an $R$ morphism is an element 
$a \in R$. 
Since $D(1/2)/R(1/2)$ is Galois, $\phi$ becomes 
an isomorphism after inverting $2$. 
Thus up to a unit $a = u2^n$ where $u$ is a unit of $R$.

Let $R_2$ be the localization of $R$ at the prime ideal $2R$, 
and $\hat R$ the maximal unramified extension of the completion 
of $R_2$. Since $2$ ramifies in $\Z[i]$, $D \otimes_R \hat R = 
\hat D$ is totally ramified of degree $2$ over $\hat R$ 
(with prime $\eta = i - 1$). Let $\hat T = T \otimes_D \hat D$ 
which is 
cyclic Galois over $\hat R$. Since the residue field 
of $\hat D$ is separably closed, and $\hat D$ is complete, 
$\hat T/\hat D$ is split. 

Let $\hat S' = (\hat T \otimes \hat T)^N = S' \otimes_D \hat D$, 
and $\hat S = (\hat S')^{\sigma}$. Since $\hat S' = S' \otimes_D \hat D = 
S' \otimes_R \hat R$, it follows that $(\hat S')^{\sigma} = 
S'^{\sigma} \otimes_R \hat R$. Since $\hat T$ is split, 
$\phi \otimes 1: (S \otimes_R D) \otimes_D \hat D \to 
S' \otimes_D \hat D$ is an isomorphism. 
Thus $a = u2^n$ must have $n = 0$. 
\end{proof}

Our next application of \ref{corestriction} is the following. 
Let $p$ be an odd prime (case A) and $\eta = \rho - 1$ the prime element 
of $\Z[\rho]$. Suppose $R$ is a dimension $2$ 
regular faithful domain over 
$\Z$ and $D = R \otimes_{\Z} \Z[\rho]$. 

$D$ is clearly free over $R$. 
Let $\tau$ be a generator of the Galois group 
of $\Q(\rho)/\Q$. Clearly $\tau$ induces 
an action on $D$ by letting $\tau$ act as the 
identity on $R$ and $D^{\tau} = R$.

Now suppose $T_1/D$ is cyclic Galois of degree 
$p^n$. We want to show that the conditions of 
\ref{corestriction} are met and $\Cor_{D/R}(T_1/D)$ 
is defined. To that end, let $W$ be $T_1 \otimes \tau(T_1) 
\otimes_D \cdots \otimes_D \tau^{m-1}(T_1)$, Then $W/D$ is Galois with group 
$H = G \oplus \cdots \oplus G$ and we can let 
$N$ be the kernel of the ``product" map $H \to G$. 
Let $T' = W^N$ and $T = T'^{\tau}$. Clearly $T'/D$ 
is Galois with group $G$.   

\begin{proposition}\label{corrho} 
\hfill\break
\begin{enumerate}
\item  $T$ is projective of rank $p^n$ as a module over $R$. \label{corrho:1}

\item $\tr(T') = T$. \label{corrho:2}

\item $T \otimes_R D = T'$.  \label{corrho:3}

\item If $T = \Cor_{D/R}(T_1)$ then $
T/R$ is Galois with group $G$.  \label{corrho:4}
\end{enumerate}
\end{proposition}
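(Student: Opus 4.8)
The plan is to prove the four assertions in the order \ref{corrho:2}, \ref{corrho:1}, \ref{corrho:3}, \ref{corrho:4}, since \ref{corrho:2} is elementary and feeds \ref{corrho:1}, while \ref{corrho:4} is immediate from \ref{corestriction} once \ref{corrho:1} and \ref{corrho:3} are in hand. For \ref{corrho:2}, first observe that $\tr_{\tau}\colon D\to R$ is surjective: $\tau$ ranges over $\Gal(\Q(\rho)/\Q)$ acting on $\Z[\rho]\subseteq D$ and extended $R$-linearly, so $\tr_{\tau}(-\rho)=-\sum_i\tau^i(\rho)$ is minus the sum of all primitive $p$-th roots of unity, namely $1$. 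Then for $t\in T=T'^{\tau}$ we have $-\rho\,t\in T'$ and $\tr_{\tau}(-\rho\,t)=\sum_i\tau^i(-\rho)\tau^i(t)=\bigl(\sum_i\tau^i(-\rho)\bigr)t=\tr_{\tau}(-\rho)\,t=t$, so $t\in\tr_{\tau}(T')$ and $\tr_{\tau}(T')=T$. For \ref{corrho:1}: since $T'/D$ is $G$ Galois, $T'$ is finitely generated projective over $D$, hence over the free $R$-algebra $D$, hence over $R$; the map $r\colon T'\to T$, $r(x)=\tr_{\tau}(-\rho\,x)$, is $R$-linear (as $-\rho\in D$ and $\tau$ fixes $R$) and restricts to the identity on $T$ by the computation just made, so $T$ is an $R$-module direct summand of $T'$ and hence projective over $R$. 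Its rank is its generic rank, read off after inverting $p$: there $D[1/p]/R[1/p]$ is $C=\langle\tau\rangle$ Galois (as $\Z[\rho,1/p]/\Z[1/p]$ is étale Galois with group $C$) and $W[1/p]/D[1/p]$ is $H$ Galois, so $W[1/p]/R[1/p]$ is $\hat H=H\rtimes C$ Galois by \ref{galoistower}, whence $T[1/p]=W[1/p]^{\hat N}$ is $G$ Galois over $R[1/p]$ of rank $p^n$. So $T$ is projective of rank $p^n=|G|$ over $R$.

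Assertion \ref{corrho:3} is the substantive step, and here I would follow the template of the proof of \ref{degree2cor}. Consider the canonical multiplication $\phi\colon T\otimes_R D\to T'$; both sides are projective $R$-modules of rank $p^n(p-1)$, so $\ker\phi=0$ (equal ranks, torsion-free over the domain $R$), and since $\mathrm{coker}\,\phi$ then has projective dimension at most $1$ over $R$, it suffices to show $\mathrm{coker}\,\phi$ vanishes in codimension one (localize to assume $R$ regular local of dimension $2$ and apply the Auslander--Buchsbaum formula to $\mathrm{coker}\,\phi$). Away from $p$ the map $\phi[1/p]$ is an isomorphism: as in Part \ref{corrho:1}, $W[1/p]/R[1/p]$ is $\hat H=H\rtimes C$ Galois with $D[1/p]=W[1/p]^{H}$ and $T[1/p]=W[1/p]^{\hat N}$, and the Galois correspondence gives $T[1/p]\cdot D[1/p]=W[1/p]^{H\cap\hat N}=W[1/p]^{N}=T'[1/p]$, using $H\cap\hat N=N$ in $\hat H$. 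For a height one prime $\mathfrak p$ of $R$ containing $p$, I would pass to the maximal unramified extension $\hat R$ of the completion of $R_{\mathfrak p}$ --- a complete discrete valuation ring with separably closed residue field of characteristic $p$ --- so that $\hat D=D\otimes_R\hat R$ is a finite semilocal $\hat R$-algebra with separably closed residue fields and the finite étale extension $\hat T_1=T_1\otimes_D\hat D$ of $\hat D$ is split. The split case is then handled exactly as in the proof of \ref{degree2cor}: writing out the idempotents of $\hat T_1$, forming $\hat W$ from the $C$-conjugates, and averaging over $N$ and then over $C$, one sees directly that $\hat T'=\hat W^{N}$ has a $\hat D$-basis of $N$-averaged idempotents permuted by $C$, so $\hat T=\hat T'^{\tau}$ is split $G$ Galois over $\hat R$ with $\hat T\cdot\hat D=\hat T'$. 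Hence $\phi\otimes_R\hat R$ is surjective, thus an isomorphism, and by faithful flatness $\phi_{\mathfrak p}$ is an isomorphism. Therefore $\mathrm{coker}\,\phi$ vanishes in codimension one, $\phi$ is an isomorphism, and $T\otimes_R D\cong T'$.

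Finally, \ref{corrho:4} is immediate: by \ref{corrho:1} and \ref{corrho:3}, $S=T=\Cor_{D/R}(T_1)=(T')^{C}$ is projective over $R$ of rank $|G|$ with $SD=T'$, so \ref{corestriction} gives that $T/R$ is $G$ Galois. The main obstacle is \ref{corrho:3}, and within it the reduction to a complete unramified base together with the split computation there; everything else reduces to the trace identity $\tr_{\tau}(-\rho)=1$, to \ref{galoistower}, and to \ref{corestriction}. The one point to watch beyond \ref{degree2cor} is that here $m=p-1$ rather than $2$, but since $p$ remains totally ramified in $\Z[\rho]$ with uniformizer $\eta$ (Lemma \ref{total}), the argument is structurally identical.
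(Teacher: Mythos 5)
Your proof is correct, and all four parts are established, but the route diverges substantially from the paper's, especially for parts~\ref{corrho:2} and~\ref{corrho:3}. For~\ref{corrho:2}, the paper lets $V = T/\tr(T')$ and kills it by showing it is annihilated both by a power of $p$ (after inverting $p$, $D/R$ is Galois) and by $p-1$ (since $\tr$ acts on $T$ as multiplication by $p-1$); you instead produce an explicit trace preimage via the identity $\tr_\tau(-\rho)=1$, which is a cleaner normal-basis-type argument. For~\ref{corrho:1}, the paper identifies $T$ with the integral closure of $R$ in $T(1/p)$ and invokes reflexive-implies-projective on the two-dimensional regular $R$; your observation that $x\mapsto \tr_\tau(-\rho x)$ is an $R$-linear retraction $T'\to T$, so $T$ is a direct $R$-summand of the $R$-projective $T'$, is more direct and does not lean on the regularity of $R$. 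The real divergence is in~\ref{corrho:3}: the paper's argument is purely algebraic and quite short --- modulo $\eta$, $\tau$ is the identity and $\tr$ is multiplication by the unit $p-1$, so $T' = \eta T' + TD$; the quotient $U=T'/TD$ then satisfies $U=\eta^{p-1}U = upU$ and is $p$-power torsion, forcing $U=0$. You instead port the entire machinery of \ref{degree2cor}: projective dimension of $\mathrm{coker}\,\phi$, Auslander--Buchsbaum to reduce to codimension one, passage to the strict henselization, and the explicit split computation with $N$-averaged idempotents. Both are valid, but the paper's route is shorter and avoids any appeal to regularity or to completions in this step, pushing the need for $\dim R=2$ and regularity entirely into~\ref{corrho:1}; yours pushes it into~\ref{corrho:3} instead. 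A small wording nit in your split computation: the $N$-averaged idempotents $f_k$ are \emph{fixed} (not permuted) by $C=\langle\tau\rangle$, which is exactly what makes $T=\sum_k Rf_k$ and $TD=T'$ fall out; the cyclic permutation you likely have in mind is the action of the generator $\delta$ of the $G$-Galois structure, not of $\tau$.
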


\begin{proof} 
Since $D(1/p)/R(1/p)$ is $C$ Galois, 
it is clear that $T(1/p)/R(1/p)$ is cyclic Galois. 
Just as above $T$ is the integral closure of 
$R$ in $T(1/p)$. Just as above, $T$ is reflexive and 
hence projective as an $R$ module (e.g., \cite[p.~73]{OS}) 
and its rank is clearly $p^n$. 

Let $\tr = 1 + \tau + \cdots + \tau^{m-1}$. 
Of course, $\tr(T') \subset T$. Let 
$V$ be the $R$ module quotient $T/\tr(T')$. 
Since $D(1/p)/R(1/p)$ is Galois, 
$p^rV = 0$ for some $r$. 
If $y \in T$, 
$\tr(y) = (p-1)y$ so $(p-1)V = 0$ also 
and $V = 0$ or $\tr(T') = T$.  

We next claim that $TD = T'$. If we 
invert by $p$ then the equality clearly holds as above. 
Thus if $U$ is the $D$ module quotient 
$T'/TD$, then 
$p^rU = 0$ for some $r$. 
However, $T' = \eta{T'} + TD$, because 
modulo $\eta$, $\tau$ is the identity, 
$\tr$ is multiplication by $p-1$, and hence 
everything is a trace. That is, 
$pU = \eta^{p-1}U = U$ proving $U = 0$ which is our claim. 
We are done by \ref{corestriction}
\end{proof} 

Let $D/R$ be as in \ref{corrho} 
or \ref{degree2cor}. In addition, assume 
$\hat R = R/pR$ is a Dedekind domain. 
In both cases $D$ has a prime element $\eta$ 
which is totally ramified over the prime element 
$p$ in $R$. Thus $\hat D = D/\eta{D} = 
\hat R$. 
Let $m$ be the degree of $D/R$ and let $T_1/D$ be cyclic 
of a degree where $T = \Cor_{D/R}(T_1/D)$ is defined. 
We are interested in comparing $\hat T_1 = T_1/\eta{T_1}$ 
and $\hat T = T/pT$ which are both cyclic extensions of $\hat R = R/pR$. 
Let $C$ be the group of automorphisms of $D$ with $D^C = R$ 
as in \ref{corrho} or \ref{degree2cor}. 

\begin{proposition}\label{goodmodp}
Let $T_1/D$ and $T/R$ be as above.   
Then $(\hat T_1)^m \cong \hat T$ as cyclic Galois extensions of 
$\hat R$.
\end{proposition}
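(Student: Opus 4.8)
The plan is to reduce the entire corestriction construction modulo the prime $\eta$ and observe that it collapses to the $m$-th power operation of \ref{torsion}. The key elementary input I would establish first is that $D/\eta D \cong \hat R = R/pR$ as $R$-algebras, with the induced $C$-action trivial: since $D = R\otimes_{\Z}\Z[\rho]$ (resp.\ $R\otimes_{\Z}\Z[i]$) and $R$ is $\Z$-flat, $\eta D = R\otimes_{\Z}\eta\Z[\rho]$ and hence $D/\eta D = R\otimes_{\Z}(\Z[\rho]/\eta) = R\otimes_{\Z}F_p = R/pR$, the structure map from $R$ being the natural quotient; moreover $C$ fixes $\Z$ and $\Z\to\Z[\rho]/\eta = F_p$ is surjective, so $C$ acts trivially on $D/\eta D$.

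Next I would identify $\hat T$ with the reduction of $T'$. By the third part of Proposition~\ref{corrho} (resp.\ its analogue established in the proof of Theorem~\ref{degree2cor}) we have $T' \cong T\otimes_R D$; since $T$ is $R$-projective this gives $T'/\eta T' \cong T\otimes_R(D/\eta D) = T\otimes_R\hat R = T/pT = \hat T$, and the isomorphism respects the $G$-Galois structure because $T'/D$ is $G$-Galois and $D/\eta D = \hat R$ as $R$-algebras. So everything reduces to proving $T'/\eta T' \cong (\hat T_1)^m$ over $\hat R$.

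Then I would run the corestriction recipe modulo $\eta$. Write $W = T_1\otimes_D\tau(T_1)\otimes_D\cdots\otimes_D\tau^{m-1}(T_1)$ and $T' = W^N$, where $N = \ker(H\to G)$ is the kernel of the product map on $H = G\oplus\cdots\oplus G$. Since $W/T'$ is $N$-Galois (Galois correspondence) and Galois extensions are stable under the base change $D\to D/\eta D$, we get $(W/\eta W)^N = T'/\eta T'$. On the other hand base change turns $\otimes_D$ into $\otimes_{D/\eta D}$, so $W/\eta W \cong \bigotimes_{\hat R,\,i}\big(\tau^i(T_1)/\eta\big)$; and because each $\tau^i$ acts trivially on $D/\eta D = \hat R$, the twist $\tau^i(T_1)$ reduces mod $\eta$ to $\hat T_1$ itself as a $G$-Galois extension of $\hat R$. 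Thus $W/\eta W \cong \hat T_1^{\otimes m}$ (the $m$-fold tensor power over $\hat R$) with its $H$-Galois structure, and $T'/\eta T' = (\hat T_1^{\otimes m})^N$ — which is precisely the construction of the $m$-th power $(\hat T_1)^m$ in the group of degree-$p^n$ cyclic extensions given in the proof of \ref{torsion}. Matching distinguished generators is routine: on both sides $G$ is realized as $H/N$ via the product map (for the corestriction, $\hat H/\hat N = H/N$), with chosen generator the image of $\sigma$. Concatenating the three identifications yields $\hat T \cong (\hat T_1)^m$.

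The step I expect to be the real work is not any single computation but keeping the three group identifications consistent: the one used to define $\Cor_{D/R}$ (via $\hat H/\hat N$), the one produced by reducing mod $\eta$, and the one used to define the $m$-th power operation in \ref{torsion}. One also has to be comfortable that ``$(\hat T_1)^m$'' is meaningful in Case~A even though $\gcd(m,p^n)=1$ there, but this is fine: the group of degree-$p^n$ cyclics has exponent $p^n$, and the ``$m$-fold tensor power, then $N$-invariants'' recipe computes the $m$-th power for arbitrary $m$, landing again among degree-$p^n$ cyclic extensions.
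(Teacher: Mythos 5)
Your proof is correct, and it takes a genuinely different route for the crucial identification $\hat T \cong T_2/\eta T_2$ (you call this ring $T'$; the paper calls it $T_2$). The paper works with the inclusion $T = T_2^C \subset T_2$, which only gives a map $T/pT \to T_2/\eta T_2$; to see this is an isomorphism they argue that the kernel is $G$-invariant hence zero, and then invoke the standing hypothesis that $\hat R$ is Dedekind so that $T/pT$ is integrally closed and the injection must be onto. You instead quote Proposition~\ref{corrho}(\ref{corrho:3}) (and the corresponding step in the proof of Theorem~\ref{degree2cor} for Case~B) that $T_2 \cong T\otimes_R D$ as $G$-equivariant $D$-algebras, and then just base-change along $D \to D/\eta D = \hat R$; this yields $T_2/\eta T_2 \cong T\otimes_R\hat R = \hat T$ directly, with no appeal to integral closure or the Dedekind hypothesis at all. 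The rest — that $W/\eta W \cong \hat T_1^{\otimes m}$ with $H$-action, that taking $N$-invariants commutes with this reduction because $W/T_2$ is $N$-Galois and $W$ is flat over $T_2$, and that $(\hat T_1^{\otimes m})^N$ is exactly the $m$-fold product from \ref{torsion} — matches the paper's computation. Your version is cleaner and slightly more general, since it makes the Dedekind assumption on $\hat R$ unnecessary for this proposition (the paper only uses it here); the paper's version has the small virtue of not needing to invoke the more delicate content of \ref{corrho}(\ref{corrho:3}) and the analogous determinant computation in \ref{degree2cor}, replacing it with an elementary (if slightly opaque) Dedekind-domain argument.
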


\begin{proof}
Let $W/D$ be the Galois extension in the 
definition of $T$. Since $\tau^i(T_1)/\eta\tau^i(T_1) 
\cong \hat T_1$, $W/\eta{W} \cong 
\hat T_1 \otimes \cdots \otimes \hat T_1$ with Galois 
group $H = G \oplus \cdots \oplus G$. 
The kernel, $N$, of the summation map $H \to G$ 
is generated by the elements $(1,\ldots,\sigma,1,\ldots,1,\sigma^{-1},1,\ldots,1)$. Set $T_2 = W^N$ so $T = T_2^C$. 
It is clear that $T_2/\eta{T_2} = W/\eta{W}^N = (\hat T_1)\ldots(\hat T_1) = \hat T_1^m$ 
as Galois extensions of $D$. 
Also $T = T_2^{C} \subset T_2$ which induces 
$\phi: T/pT \to T_2/\eta{T_2}$ and $\phi$ extends 
the identity $R/pR = D/\eta{D}$. The map $\phi$ clearly 
preserves the Galois actions, and the domain and range are 
Galois extensions of $\hat R$. Since the kernel of this map 
is $G$ invariant it must be $0$. Since $\hat R$ is a Dedekind domain, 
$T/pT$ must be integrally closed and so 
$T/p{T} = T_{2}/\eta{T_{2}}$. 
\end{proof}

\section{Picard Groups of Fiber Products}

In this paper a prominent role will be played by 
the Galois module structure of the $G$ Galois extension 
$S/R$ where $G$ is cyclic of degree $p$ and $R$ is an algebra 
over $\Z[\rho]$. We will observe that $R[G]$ 
is an iterated fiber product. Happily, it will turn 
out that we really only care about the first fiber product. 
This short section deals with fiber products in general. 

We begin with a result which appeared in \cite[p.~55]{C}. 
\begin{theorem}\label{fiber} 
Suppose we have the fiber product diagram 
$$\begin{matrix}
R&\longrightarrow&R_1\cr
\downarrow&&\downarrow\cr
R_2&\buildrel{f}\over\longrightarrow&R_3\cr
\end{matrix}$$ 
with $f$ surjective. Further assume that for each 
maximal ideal, $M$,of $R$, $R_M \otimes_R R_i$ 
has trivial Picard group. Then there is a long exact 
sequence
$$0 \to R^* \to R_1^* \oplus R_2^* \to R_3^* \to $$
$$\to \Pic(R) \to \Pic(R_1) \oplus \Pic(R_2) \to \Pic(R_3) \to$$
$$\to \Br(R) \to \Br(R_1) \oplus \Br(R_2)$$
\end{theorem}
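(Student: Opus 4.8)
The plan is to build the sequence out of the Mayer--Vietoris machinery for the fiber product square, treating units, Picard groups, and Brauer groups uniformly as the degree $0,1,2$ parts of a single cohomological gadget (or, if one prefers to stay elementary, by glueing modules and algebras over the square). First I would set up the basic glueing dictionary: since $R = R_1 \times_{R_3} R_2$ with $f$ surjective, a finitely generated projective $R$-module is the same data as a triple $(P_1, P_2, \theta)$ where $P_i$ is finitely generated projective over $R_i$ and $\theta: P_1 \otimes_{R_1} R_3 \xrightarrow{\sim} P_2 \otimes_{R_2} R_3$ is an $R_3$-isomorphism — this is Milnor's patching theorem, valid because $f$ is surjective. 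The same statement holds for Azumaya algebras. The local hypothesis that $R_M \otimes_R R_i$ has trivial Picard group is what lets us conclude that a rank-one projective over $R$ becomes free after localizing at any maximal ideal, hence that the relevant sheaf-theoretic obstructions live where we want them.

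The key steps, in order: (1) From the patching description, construct the connecting maps. The map $R_3^* \to \Pic(R)$ sends a unit $u \in R_3^*$ to the rank-one module obtained by glueing $R_1$ and $R_2$ along multiplication by $u$ on $R_3$; one checks this is a homomorphism and that its kernel is exactly the image of $R_1^* \oplus R_2^*$, giving exactness at $R_3^*$ and at $\Pic(R)$ after the obvious checks. (2) Exactness at $R_1^* \oplus R_2^*$: a pair of units mapping to $1 \in R_3^*$ patches to a global unit, which is injectivity of $R^* \to R_1^* \oplus R_2^*$ combined with the description of its image. (3) Exactness at $\Pic(R_1) \oplus \Pic(R_2)$: given $(P_1, P_2)$ with $P_1 \otimes R_3 \cong P_2 \otimes R_3$, any choice of such an isomorphism gives patching data, hence a class in $\Pic(R)$ mapping to it; the ambiguity in the choice of isomorphism is precisely a unit of $R_3$, which accounts for the image of the previous map. (4) The connecting map $\Pic(R_3) \to \Br(R)$: an invertible $R_3$-module $L$, together with the trivializations coming from $\Pic(R_i \otimes R_M) = 0$, produces an Azumaya algebra over $R$ (glue $\End_{R_1}(R_1^n)$ to $\End_{R_2}(R_2^n)$ along an automorphism of $M_n(R_3)$ that is inner up to $L$); here the local triviality hypothesis is essential to even get started, since we need $L$ to become trivial locally on $R$. (5) Exactness at $\Pic(R_3)$ and at $\Br(R)$: these follow from the patching description of Azumaya algebras plus Skolem--Noether over $R_3$ to identify when two patchings give Brauer-equivalent algebras. (6) Finally, exactness at $\Br(R_1) \oplus \Br(R_2)$ is again direct patching: an Azumaya $R$-algebra is determined by its base changes to $R_1, R_2$ together with a glueing isomorphism over $R_3$.

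The main obstacle I expect is the careful handling of the connecting map $\Pic(R_3) \to \Br(R)$ and exactness around it. The subtlety is that an invertible module over $R_3$ does not by itself give an Azumaya algebra over $R$ — one genuinely needs the local triviality of $\Pic(R_M \otimes_R R_i)$ to produce the rank-$n$ trivializations on both sides, and then to check that the resulting glued endomorphism algebra is independent of these choices up to Brauer equivalence. Everything else (patching of modules, patching of algebras, the Skolem--Noether bookkeeping) is standard Milnor-patching, and the elementary degree $\leq 1$ part of the sequence is essentially formal once the glueing dictionary is in place. Since the theorem is quoted from \cite[p.~55]{C}, I would in fact just cite that reference and indicate the structure above rather than reprove it in full; but the local-triviality hypothesis is exactly the point that makes the statement apply in the situations we care about (group rings $\Z[\rho][G]$ with $G$ cyclic of order $p$), so it is worth flagging explicitly where it enters.
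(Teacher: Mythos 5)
The paper does not actually prove this theorem: it is quoted from Childs \cite[p.~55]{C}, and the only original content is the remark immediately following the statement, namely that Childs' hypothesis (that each $R_M \otimes_R R_i$ be semilocal) can be weakened to the stated Zariski-local triviality of $\Pic(R_i)$ over $\Spec R$. Your plan --- sketch the Milnor-patching/Mayer--Vietoris argument, then cite the reference and flag the hypothesis --- therefore matches what the paper does, and your guess that the intended application is to the group rings $\Z[\rho][G]$ is also correct.

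One substantive caution about your step (4). The parenthetical ``glue $\End_{R_1}(R_1^n)$ to $\End_{R_2}(R_2^n)$ along an automorphism of $M_n(R_3)$ that is inner up to $L$'' cannot be read literally: the image of $\Aut_{R_3}(M_n(R_3)) \to \Pic(R_3)$ consists only of those $L$ with $L^{\oplus n} \cong R_3^n$ (for $R_3$ Dedekind this is the $n$-torsion of $\Pic(R_3)$), so for a general $L$ no such automorphism of $M_n(R_3)$ exists, for any $n$. The connecting map $\Pic(R_3) \to \Br(R)$ has to be built Zariski-locally on $\Spec R$: the hypothesis $\Pic(R_M \otimes_R R_i) = 0$ supplies a finite affine cover of $\Spec R$ over which $L$ trivializes, one patches matrix algebras over that cover, and the resulting algebra descends to an Azumaya $R$-algebra. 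You do correctly identify this step as the delicate one and correctly locate where the local hypothesis enters, so I would classify this as an imprecision in the sketch rather than a missing idea --- but the single-global-automorphism picture over $R_3$ would not survive being written out.
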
 

In the paper \cite{C} the author assumes that $R_M \otimes R_i$ 
is semilocal for all $i$, but the proof only needs the assumption that all 
the Picard groups $\Pic(R_i)$ are trivial Zariski locally 
on $\Spec(R)$, which is what we are assuming.  

Fiber diagrams of the above sort are also called pullback diagrams 
or equalizer diagrams. 
We will frequently use the well known fact: 

\begin{proposition}\label{idealfiber}
Let $R$ be a commutative ring with ideals $I,J$. 
Then 
$$\begin{matrix}
R/(I \cap J)&\longrightarrow&R/I\cr
\downarrow&&\downarrow\cr
R/J&\buildrel{f}\over\longrightarrow&R/(I+J)\cr
\end{matrix}$$ 
is a fiber diagram. 
\end{proposition}

It will be useful to understand the above diagram 
in a concrete way. If $P$ is an element of $\Pic(R)$, 
set $P_i = P \otimes_R R_i$. Then $P$ is defined by the induced pull-back diagram:
$$\begin{matrix}
P&\longrightarrow&P_1\cr
\downarrow&&\downarrow\cr
P_2&\buildrel{f}\over\longrightarrow&P_3\cr
\end{matrix}$$ 
If $\bar u \in R_3^*$ we can form the new 
pull-back diagram defining $P^*$:
$$\begin{matrix}
P^*&\longrightarrow&P_1\cr
\downarrow&&\downarrow\cr
P_2&\buildrel{\bar u{f}}\over\longrightarrow&P_3\cr
\end{matrix}$$ 
where we alter $f$ by composing with 
$\bar u: P_3 \cong P_3$. If however $\bar u$ 
is an image $\bar u_1\bar u_2$ where $u_i^* \in R_i^*$ 
are units, then the using these isomorphisms on $P_1$ and $P_2$ makes the above diagram isomorphic to the original one for $P$. In this way we have an action of 
$R_3^*/M$ on $\Pic(R)$ where $M \subset R_3^*$ is generated 
by the images of $R_1^*$ and $R_2^*$. 

We need a piece of the above result expressed in a 
concrete way. Let $\phi_i: R_i \to R_3$, $i = 1,2$ 
be the labels for those maps in the diagram. 

\begin{proposition}
Suppose $P$ is a rank one projective 
$R$ module such that $P \otimes_R R_i \cong R_i$ 
for $i = 1,2,3$. Then there is an 
exact sequence 
$$0 \to P \to R_1 \oplus R_2 \buildrel{h}\over\longrightarrow R_3 \to 0$$ 
and an $u \in R_3^*$ where $h(r_1,r_2) = 
\psi_1(r_1) - u\psi_2(r_2)$. Here $u$ is uniquely 
determined up to multiplication by $\phi_1(R_1^*)\phi_2(R_2^*)$. 
\end{proposition}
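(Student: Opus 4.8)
The plan is to build the short exact sequence directly from the fiber product structure and then extract $u$ from the freeness hypotheses. First I would observe that since the square is a pullback with $f$ (equivalently $\phi_1$, $\phi_2$ jointly) surjective onto $R_3$ in the appropriate sense, there is a tautological exact sequence
$$0 \to P \to P_1 \oplus P_2 \buildrel{h_0}\over\longrightarrow P_3 \to 0$$
where $h_0(x_1,x_2) = \bar\psi_1(x_1) - \bar\psi_2(x_2)$ and $\bar\psi_i\colon P_i \to P_3$ are the maps obtained by tensoring the structure maps with $P$; this is just the definition of $P$ as the pullback, together with surjectivity of $f$ to get exactness on the right. This holds for any $P \in \Pic(R)$, and is essentially the content of the concrete description of the pullback diagram given just before the statement.

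Next I would use the hypothesis $P \otimes_R R_i \cong R_i$ for $i = 1,2,3$ to replace each $P_i$ by $R_i$. Fixing such isomorphisms $\alpha_i\colon P_i \xrightarrow{\sim} R_i$, transport $h_0$ across them: the composite $R_1 \xrightarrow{\alpha_1^{-1}} P_1 \xrightarrow{\bar\psi_1} P_3 \xrightarrow{\alpha_3} R_3$ is an $R_1$-module map $R_1 \to R_3$ lifting along $\phi_1$, i.e. it is multiplication by $\phi_1(1)\cdot v_1$ for a unit-up-to-scalar; more precisely, because $\alpha_1$ is an isomorphism of rank-one modules and $\bar\psi_1$ is base change of $\phi_1$, this composite equals $\phi_1$ followed by multiplication by some $w_1 \in R_3^*$. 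Similarly the $R_2$ side gives $\phi_2$ followed by multiplication by $w_2 \in R_3^*$. Setting $u = w_1 w_2^{-1}$ (or absorbing $w_1$ into the choice of $\alpha_3$ so that the first map becomes exactly $\psi_1$), the sequence becomes
$$0 \to P \to R_1 \oplus R_2 \buildrel{h}\over\longrightarrow R_3 \to 0, \qquad h(r_1,r_2) = \psi_1(r_1) - u\,\psi_2(r_2).$$
Here I am writing $\psi_i$ for $\phi_i$ to match the statement's notation.

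Finally, for the uniqueness claim: changing the chosen isomorphisms $\alpha_i$ amounts to multiplying $\alpha_i$ by a unit of $R_i$ (since $\Aut_{R_i}(R_i) = R_i^*$), which multiplies $w_1$ by $\phi_1$ of a unit of $R_1$ and $w_2$ by $\phi_2$ of a unit of $R_2$, hence multiplies $u$ by an element of $\phi_1(R_1^*)\phi_2(R_2^*)$; changing $\alpha_3$ rescales $h$ by a unit of $R_3^*$, which does not change the isomorphism class of the sequence and can be normalized away. Conversely any such rescaling of $u$ is realized this way, so $u$ is well-defined precisely modulo $\phi_1(R_1^*)\phi_2(R_2^*)$ — this is exactly the $R_3^*/M$ action on $\Pic(R)$ described above, restricted to the class of $P$. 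The main obstacle is the bookkeeping in the middle step: one must check carefully that an isomorphism $P_i \cong R_i$ really does convert the base-changed map $\bar\psi_i$ into $\phi_i$ up to a unit of $R_3$ (not merely up to an $R_3$-linear automorphism, which a priori could fail to lift), and this uses that $\bar\psi_i$ is literally $\phi_i \otimes_{R_i} P_i$ so that the ambiguity is pinned down by $\Aut_{R_i}(P_i) \cong R_i^*$.
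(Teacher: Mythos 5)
Your argument is correct and is essentially the paper's proof: tensor the fiber-product short exact sequence $0 \to R \to R_1 \oplus R_2 \to R_3 \to 0$ with the flat module $P$, transport it across the chosen trivializations $P_i \cong R_i$, and read off $u$ as the discrepancy between the transported map and the canonical $\phi_1 - \phi_2$, with the indeterminacy of $u$ traced precisely to the ambiguity $\Aut_{R_i}(P_i) \cong R_i^*$ in the trivializations. (Only a trivial sign-convention slip: the quotient should be $u = w_2 w_1^{-1}$ rather than $w_1 w_2^{-1}$, which you effectively fix anyway by absorbing $w_1$ into $\alpha_3$.)
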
 

\begin{proof}
Set $P_i = P \otimes_R R_i$. 
Let $\theta_i: P_i \cong R_i$ 
be the assumed isomorphisms. 
Tensoring by $P$ yields the exact sequence 
$0 \to P \to P_1 \oplus P_2 \to P_3 \to 0$. 
Composing with $\theta_1 \oplus \theta_2$ yields 
the injection $P \to R_1 \oplus R_2$ whose image 
is the kernel of $R_1 \oplus R_2 \to P_1 \oplus P_2 
\to P_3$ which we can then follow with $\eta_3$. 
The induced map $R_1 \oplus R_2 \to R_3$ differs 
from the given one by an automorphism of $R_3$ 
which is given by $u$. 
\end{proof} 

\section{Degree \protect{\boldmath{$p$}} Case} 

In this section, we really care that the multiplicative 
groups of our ground rings 
contain a cyclic group of order $p$. That is, in this 
section we confine ourselves to cases A and C.  
In detail, $\rho$ is a primitive $p$ root of one, 
including $\rho = -1$ when $p = 2$ (so $\Z[\rho] = \Z$). 
As usual, 
$\eta$ is a choice of prime element totally ramified over 
$p$, specifically $\eta = \rho - 1$ so $\eta = -2$ when 
$p = 2$. 
We consider degree $p$ cyclic Galois extensions $S/R$ 
where $R$ is an algebra over $\Z[\rho]$. 
This is really the core machinery of this paper. 

Now we consider the polynomial ring $R_0 = 
\Z[\rho][a]$ and the polynomial 
$X^p - (1 + a\eta^p)$. For reasons that will soon be clear, 
we set $R = R_0(1/(1 + a\eta^p))$. If we substitute 
$X = 1 + \eta{Z}$ we get

$$(1 + \eta{Z})^p - (1 + a\eta^p) = 
\eta^p{Z^p} - a\eta^p + 
p\eta\left(\sum_1^{p-1} b_iZ^i\eta^{i-1}\right)$$ 
$$= \eta^p{Z^p} - a\eta^p + 
u\eta^p\left(\sum_1^{p-1} b_iZ^i\eta^{i-1}\right) = 
\eta^p\left(Z^p + u\sum_1^{p-1} b_iZ^i\eta^{i-1} - a\right)$$
and we define $g(Z)$ so that this last expression 
is $$\eta^p(Z^p + g(Z) - a).$$ 
Equivalently, we have defined 
the polynomial $g(Z) \in R_0$ so that 
 
$$(1 + Z\eta)^p = 1 + (g(Z) + Z^p)\eta^{p}$$

where the coefficients of $g(Z)$ are in $\Z[\rho]$. 
Since $b_1 = 1$ we note that $g(Z)$ is congruent
to $-Z$ modulo $\eta$. That is, our new 
polynomial $Z^p + g(Z) - a$ equals the Artin-Schreier 
polynomial $Z^p - Z - a$ modulo $\eta$. 

Suppose $\alpha$ is the image of $X$ 
in $S_0 = R_0[X]/(X^p - (1 + a\eta^p))$. 
Set $S = S_0(1/(1 + a\eta^p)) = R[X]/(X^p - (1 + a\eta^p))$. 
Let $S_1 = S(1/\eta)$. 
Then $\theta = (\alpha - 1)/\eta \in S(1/\eta)$ is a root of $Z^p + g(Z) - a$. Of course, 
$Z^p + g(Z) - a$ has coefficients in $R_0$ and 
so $\theta$ is integral over $R_0$. 
We view $T_0= R_0[Z]/(Z^p + g(Z) - a) \subset S_1$ 
by identifying the image of $Z$ with $\theta$. 
Set $T = T_0(1/(1 + a\eta^p))$.  
Since $\Z[\rho]/(\eta) = F_p$, the field of 
$p$ elements, $T_0/\eta{T_0} = T/\eta{T} = F_p[a][Z]/(Z^p - Z - a)$ 
and so $\eta$ is a prime element of $T_0$ and $T$. 

$S$ has an automorphism $\sigma: S \cong S$ 
defined by $\sigma(\alpha) = \rho\alpha$ and 
$\sigma$ extends to an automorphism of $S_1$. We have 

$$\sigma(\theta) = (\rho\alpha - 1)/\eta = \rho((\alpha - 1)/\eta + \eta/\eta = 
\rho\theta + 1.$$ 

Thus $\sigma$ satisfies $\sigma(T_0) = T_0$ 
and extends to an automorphism of $T$. 
Of course, modulo $\eta$, this is just the 
Artin-Schreier action.  
By induction $\sigma^i(\theta) = \rho^i\theta + 
\rho^{i-1} + \cdots + 1$. Also, 
$Z^p + g(Z) - a = \prod_i (Z - \sigma^i(\theta))$.

\begin{proposition}\label{prop:1.1} Suppose $f: \Z[\rho] \to R'$ defines 
$R'$ as a $\Z[\rho]$ algebra. Let $g: R_0 \to R'$ be any 
$\Z[\rho]$ algebra homomorphism (i.e. extension of $f$).  
Set $b = g(a)$. 
Then $g(Z^p + g(Z) - a) = Z^p + g(Z) - b$ is separable if and only 
if $1 + b\eta^p$ is invertible. In particular, the extension $T/R$ above 
is Galois with group generated by $\sigma$. 
If $1 + b\eta^p$ is invertible 
in $R'$ then $g$ extends to $g: R \to R'$ 
and $T' = T \otimes_f R'$ is $G$ Galois over $R'$.  
If $\eta$ is invertible in $R'$ then 
$T' = R[Z]/(Z^p - (1 + b\eta^p))$. 
\end{proposition}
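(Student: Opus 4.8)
The plan is to derive everything from one explicit discriminant computation. I would begin by recording the standard fact that for a monic polynomial $h$ over a commutative ring $R'$, the algebra $R'[Z]/(h)$ is separable over $R'$ if and only if $\mathrm{disc}(h)\in(R')^*$, so the first assertion amounts to computing $\mathrm{disc}(Z^p+g(Z)-b)$. This discriminant may be computed in the algebra $T' = R'[Z]/(Z^p+g(Z)-b)$, into which $R'$ embeds since $T'$ is free of rank $p$, and there $Z^p+g(Z)-b$ splits: applying the ring map $T_0\to T'$ induced by $g$ to the identity $Z^p+g(Z)-a=\prod_{i=0}^{p-1}(Z-\sigma^i(\theta))$ recorded just before the Proposition gives $Z^p+g(Z)-b=\prod_i(Z-\beta_i)$ with $\beta_i=\rho^i\theta+(1+\rho+\cdots+\rho^{i-1})$ and $\theta$ the image of $Z$. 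Setting $\alpha=1+\eta\theta$ (so $\alpha^p=1+b\eta^p$), a one-line computation using $\rho^m-1=\eta(1+\rho+\cdots+\rho^{m-1})$ gives $\beta_i-\beta_j=\rho^j(1+\rho+\cdots+\rho^{i-j-1})\,\alpha$ for $j<i$; by Lemma~\ref{total} each factor $1+\rho+\cdots+\rho^{m-1}=(\rho^m-1)/\eta$ is a unit of $\Z[\rho]$ (trivially so when $p=2$, $\rho=-1$, where the only relevant case is $m=1$). Hence $\mathrm{disc}(Z^p+g(Z)-b)=\prod_{j<i}(\beta_i-\beta_j)^2=u\,(1+b\eta^p)^{p-1}$ with $u\in\Z[\rho]^*$, and since an element of $R'$ is a unit iff its $(p-1)$-st power is, separability is equivalent to $1+b\eta^p\in(R')^*$.

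For the claim that $T/R$ is Galois I would apply the above with $R'=R$ and $b=a$, where $1+a\eta^p$ is a unit by the definition of $R$: then $T=R[Z]/(Z^p+g(Z)-a)$ is free of rank $p$ and separable over $R$. To upgrade separability to the Galois property I would invoke the criterion from Section~2: $T/R$ is $\langle\sigma\rangle$-Galois iff $T$ is finitely generated projective and $T\otimes_R T\cong\bigoplus_i T$ with $i$-th component $t\otimes t'\mapsto t\,\sigma^i(t')$. Now $T\otimes_R T=T[Z]/(Z^p+g(Z)-a)$, and in $T[Z]$ one has $Z^p+g(Z)-a=\prod_i(Z-\beta_i)$ with the factors $Z-\beta_i$ pairwise comaximal, because $\beta_i-\beta_j=\rho^j(1+\rho+\cdots+\rho^{i-j-1})\alpha$ is a unit of $T$ (as $\alpha^p=1+a\eta^p\in R^*$ forces $\alpha\in T^*$). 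The Chinese Remainder Theorem then identifies $T\otimes_R T$ with $\prod_i T[Z]/(Z-\beta_i)\cong\bigoplus_i T$, and reducing $Z\mapsto\beta_i=\sigma^i(\theta)$ in the $i$-th factor shows its component sends $t\otimes t'\mapsto t\,\sigma^i(t')$. Since $\rho^p=1$ and $1+\rho+\cdots+\rho^{p-1}=0$ we get $\sigma^p=\mathrm{id}$, and $\sigma$ has order exactly $p=\rank_R T$ (as $\sigma^k(\theta)=\rho^k\theta+\cdots$ and $\rho$ has order $p$), so $G=\langle\sigma\rangle$ is the full Galois group.

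For the remaining assertions: if $1+b\eta^p\in(R')^*$ then $g(1+a\eta^p)$ is invertible in $R'$, so $g$ extends uniquely to $g\colon R=R_0(1/(1+a\eta^p))\to R'$, whence $T'=T\otimes_g R'=R'[Z]/(Z^p+g(Z)-b)$; this is $G$-Galois over $R'$ either by stability of Galois extensions under base change or by rerunning the previous paragraph over $R'$ (legitimate since $\alpha^p=1+b\eta^p$ is now a unit of $R'$). Finally, when $\eta\in(R')^*$ the change of variable $X=1+\eta Z$ is an isomorphism $R'[X]\xrightarrow{\sim}R'[Z]$ that, by the defining identity $(1+\eta Z)^p=1+(g(Z)+Z^p)\eta^p$, carries $X^p-(1+b\eta^p)$ to $\eta^p(Z^p+g(Z)-b)$; as $\eta^p$ is a unit this generates the ideal $(Z^p+g(Z)-b)$, so $R'[Z]/(Z^p-(1+b\eta^p))\cong T'$.

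The real content is concentrated in the identity $\beta_i-\beta_j=\rho^j(1+\rho+\cdots+\rho^{i-j-1})\,\alpha$ with the right-hand factor a unit: it is what makes the discriminant a unit multiple of a power of $1+b\eta^p$ and simultaneously makes the factors $Z-\beta_i$ comaximal in $T[Z]$; once this is in hand the two halves of the Proposition are essentially formal. The one thing to stay careful about is that Section~4 treats two cases at once — $p>2$ with $\rho$ a primitive $p$-th root (where the unit claim is Lemma~\ref{total}) and $p=2$ with $\rho=-1$, $\eta=-2$ (where it is trivial) — so the argument should be phrased to cover both uniformly.
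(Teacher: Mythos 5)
Your proof is correct, and the two halves sit differently relative to the paper. The discriminant computation is essentially the paper's, though slightly more careful: your identity $\beta_i-\beta_j=\rho^j(1+\rho+\cdots+\rho^{i-j-1})\alpha$ keeps the $\rho^j$ factor that the paper's displayed chain of equalities drops (harmless there, since the $\rho^j$'s multiply to a unit, but yours is cleaner). For the Galois property, however, you take a genuinely different route. The paper invokes the criterion of DeMeyer--Ingraham: since the differences $\sigma^i(\theta)-\theta$ are units it reduces to verifying $T^\sigma=R$, which it does by passing to the field of fractions, quoting Kummer theory there, and then using that $R$ is integrally closed in $T\otimes_R K$. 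You instead use the tensor-product criterion stated in Section~2 ($T\otimes_R T\cong\bigoplus_{g}T$ with the $g$-component $t\otimes t'\mapsto tg(t')$), realized via the Chinese Remainder Theorem once the factors $Z-\beta_i$ are shown pairwise comaximal in $T[Z]$ --- which is exactly the same unit computation reused. Your route is more self-contained (no appeal to Kummer theory over a field or to integral closure), works uniformly over any $R'$ with $1+b\eta^p$ a unit without a separate base-change step, and makes the Galois group visibly $\{\sigma^0,\dots,\sigma^{p-1}\}$ from the decomposition itself; the paper's route is shorter to state given that the DI lemma is already in its toolkit. Both are sound, and the key observation --- that $\beta_i-\beta_j$ is a unit multiple of $\alpha$ and $\alpha^p=1+b\eta^p$ --- does all the work in either version.
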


\begin{proof} Let $\theta$ be a root of $Z^p + g(Z) - a$ 
and $\Delta$ the discriminant. Then $\Delta$ 
the product of expressions of the form 
$\sigma^j(\sigma^i\theta - \theta) = 
\sigma^j((\rho^i - 1)\theta + \rho^{i-1} + \cdots + 1) = (\rho^{i-1} + \cdots + 1)(\eta\theta + 1)$ 
where $i \not= 0$ and $j = 0, \ldots, p-1$. 
Fixing $i$ and taking the product for all $j$ 
yields $(\rho^{i-1} + \cdots + 1)(1 + a\eta^p)$ 
since $\eta\theta + 1$ is a root of 
$X^p - (1 + a\eta^p)$. We saw in the proof of 
\ref{total} that $\epsilon_i =  \rho^{i-1} + \cdots + 1$ 
is a unit in $\Z[\rho]$ proving the first statement. 

As for the second statement, we use the 
criterion of \cite[p.~81]{DI}. Since 
$\sigma^i(\theta) - \theta$ is a unit, 
we only need to verify that the fixed ring $T^{\sigma} = 
R$. If $K$ is the field of fractions 
of $R$ and $L = T \otimes_{R} K$ 
then $L^{\sigma} = K$ because $L/K$ is the 
Kummer field extension $K((1 + u\eta^p)^{1/p})/K$. 
Since $T/R$ is integral, and 
$R$ is integrally closed, $T^{\sigma} = 
R$ is clear.
\end{proof}

The following discussion is excerpted from [S2022]. 
Since [S2022] will not appear in a journal, we include 
it here. In the generic case $\eta$ is a non--zero divisor and one can pass easily from 
$\theta$ to $\alpha = \eta\theta + 1$. 
In general we cannot, and so we must develop 
relations in the generic case and then 
specialize them. Furthermore, the notation 
we develop will be quite suggestive. In the following discussion 
we will operate in two domains. If we work ``generically'' we will 
be making our definitions and calculations in $R_0$ or $R$ or an extension 
ring $R[x,y]$ etc. The key point will be that $\eta$ is a 
non--zero divisor. When we work generally, we mean we have an arbitrary 
$\Z[\rho]$ algebra $R'$ and a homomorphism $g:R_0 \to R'$ or $R[x,y]\to R'$ etc. 

Let $A$ be the monoid $1 +\eta{R}$ under 
multiplication, and $A^* \subset R^*$ the subgroup of invertible elements.  
Define $\phi: R \to A$ via $\phi(x) = 
1 + x\eta$. Since $\eta$ is a non--zero divisor, 
this is a bijection.  
To make $\phi$ some sort of homomorphism we define 
$x \oplus y$ by writing $(1 + x\eta)(1 + y\eta) = 
 1 + (x \oplus y)\eta$ or $\phi(x \oplus y) = 
\phi(x)\phi(y)$. 
Note that we are just using helpful notation. 
Clearly, $x \oplus y = x + y + xy\eta$. 
If we look at general $g: R \to R'$ (which includes $R \to R[x,y]$) 
we define 
$\eta' = f(\eta)$, $\rho' = f(\rho)$, $A' = 1 + \eta'R'$, $\phi: R' \to A'$, 
$\phi(x \oplus y) = \phi(x)\phi(y)$ etc. In the following 
if we make an observation about $R$ or $R[x,y]$, we will not restate the obvious 
inference about $R'$. 

We can define $x \ominus y = z$ to mean $x = y \oplus z$. 
Of course $z$ 
exists if $\phi(y) \in A'^*$. Note that since 
$\rho = 1 + \eta$, $\phi(1) = \rho$. 
Also, the basic relation between the Kummer 
element $\alpha$ and $\theta$ described above 
is just that $\phi(\theta) = \alpha$. 
Thus $\sigma(\alpha) = \rho\alpha$ translates 
into $\sigma(\phi(\theta)) = \phi(1)\phi(\alpha)$ 
or, suggestively, 
$$\sigma(\theta) = 1 \oplus \theta$$ 
which makes sense because $1 \oplus \theta = 
1 + \theta + \theta\eta = 1 + \rho\theta$. 

We are interested in the case $Z^p + g(Z) - a$ defines a 
Galois extension so let $g: R \to R'$ be as above. 
Let $S' = R'[Z]/(Z^p + g(Z) - a)$ and let $\theta$ be the image of 
$Z$, so $S'/R'$ is $C_p$ Galois and $\sigma(\theta) = \theta \oplus 1$. 
The next result describes how we can change 
the choice of $\theta$ and then track the corresponding 
change in $a$. To describe the result, 
let $A_p$ be the monoid $1 + R'\eta'^p$ 
and $\phi_p: R' \to A_p$ the map 
$r \to 1 + r\eta^p$. In a similar way as before define $x \oplus_p y$ by 
$\phi_p(x)\phi_p(y) = \phi_p(x \oplus_p y)$. 
The point of this and subsequent definitions 
is to distinguish $z$ from it's image 
$1 + z\eta$ or $1 + z\eta^p$. 
This is crucial when $\eta$ is a zero 
divisor.

\begin{proposition}\label{prop:1.2}  If $\theta$ is as 
above and $\theta'$ also satisfies 
$\sigma(\theta') = 1 \oplus \theta'$ then 
$\theta' = \theta \oplus z$ for some $\sigma$ 
fixed $z$. If $\theta$ satisfies $Z^p + g(Z) - a = 0$ then $\theta'$ satisfies 
$Z^p + g(Z) - a' = 0$ 
where $a' = a \oplus_p (g(z) + z^p)$. 
Conversely, if $a' = a \oplus_p (g(z) + z^p)$ for some $z$ with $\phi(z) \in A^*$ then 
$Z^p + g(Z) - a$ and $Z^p + g(Z) - a'$ define
isomorphic $C_p$ Galois extensions. 
\end{proposition}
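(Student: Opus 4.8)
The plan is to reduce to the ``generic'' setting where $\eta$ is a non-zero divisor, carry out the construction explicitly there, and then specialize. Let $\tilde R$ be the localization of the polynomial ring $\Z[\rho][a_0,z_0]$ at $1+a_0\eta^p$ and at $1+z_0\eta=\phi(z_0)$; it is a domain in which $\eta$ is a non-zero divisor and $\phi(z_0)$ is a unit. We may assume $1+a\eta^p\in R'^*$, so that $T=R'[Z]/(Z^p+g(Z)-a)$ is $C_p$ Galois over $R'$ by \ref{prop:1.1}; then $1+a'\eta^p=(1+a\eta^p)\phi(z)^p$ is a unit as well, since $\phi(z)\in A^*\subset R'^*$. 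Because $\phi(z)$ and $1+a\eta^p$ are units of $R'$, there is a $\Z[\rho]$-algebra homomorphism $\tilde R\to R'$ sending $a_0\mapsto a$ and $z_0\mapsto z$, under which $a_0':=a_0\oplus_p\bigl(g(z_0)+z_0^p\bigr)$ maps to $a'$ (the operation $\oplus_p$ and the map $Z\mapsto g(Z)$ have coefficients in $\Z[\rho]$ and so commute with ring maps). Now $R'[Z]/(Z^p+g(Z)-a)$ and $R'[Z]/(Z^p+g(Z)-a')$ are the base changes along $\tilde R\to R'$ of $\tilde R[Z]/(Z^p+g(Z)-a_0)$ and $\tilde R[Z]/(Z^p+g(Z)-a_0')$, compatibly with the $\sigma$-actions (each $\sigma$ is the substitution $\theta\mapsto 1\oplus\theta$), so it is enough to produce a $\sigma$-equivariant isomorphism over $\tilde R$.

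Thus assume $\eta$ is a non-zero divisor, and write $T=\tilde R[Z]/(Z^p+g(Z)-a_0)$ with canonical root $\theta$ and $T'=\tilde R[Z]/(Z^p+g(Z)-a_0')$ with canonical root $\theta'$. Since $z_0\in\tilde R$ is $\sigma$-fixed, $\sigma(\theta\oplus z_0)=(1\oplus\theta)\oplus z_0=1\oplus(\theta\oplus z_0)$, so the second assertion of this proposition, applied with ``$\theta'$'' taken to be $\theta\oplus z_0$, shows that $\theta\oplus z_0\in T$ is a root of $Z^p+g(Z)-a_0'$. Hence there is a well-defined $\tilde R$-algebra homomorphism $\psi:T'\to T$ with $\psi(\theta')=\theta\oplus z_0$, and it is $\sigma$-equivariant because $\psi\sigma$ and $\sigma\psi$ are $\tilde R$-algebra maps both carrying $\theta'$ to $1\oplus(\theta\oplus z_0)$.

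To invert $\psi$, set $w:=\ominus z_0\in\tilde R$ (which exists because $\phi(z_0)\in A^*$), so that $\phi(w)=\phi(z_0)^{-1}$ and $z_0\oplus w=0$. Applying the multiplicative map $\phi_p$ and the identity $\phi_p\bigl(g(x)+x^p\bigr)=\phi(x)^p$ (that is, $(1+Z\eta)^p=1+(g(Z)+Z^p)\eta^p$), one gets $\phi_p\bigl(a_0'\oplus_p(g(w)+w^p)\bigr)=\phi_p(a_0)\,\bigl(\phi(z_0)\phi(w)\bigr)^p=\phi_p(a_0)$; since $\eta^p$ is a non-zero divisor, $\phi_p$ is injective, so $a_0=a_0'\oplus_p\bigl(g(w)+w^p\bigr)$. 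The same construction then yields a $\sigma$-equivariant $\tilde R$-algebra map $\psi':T\to T'$ with $\psi'(\theta)=\theta'\oplus w$, and $\psi\psi'(\theta)=(\theta\oplus z_0)\oplus w=\theta\oplus(z_0\oplus w)=\theta$, while likewise $\psi'\psi(\theta')=\theta'$ (using that $\psi,\psi'$ fix $z_0$ and $w$ and that $\oplus$ is commutative and associative). Hence $\psi$ is an isomorphism of $C_p$ Galois extensions, and base change along $\tilde R\to R'$ completes the argument.

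The one point I expect to require care is the equality $a_0=a_0'\oplus_p(g(w)+w^p)$ in $\tilde R$: this is exactly where the non-zero-divisor property of $\eta$ is genuinely used (through injectivity of $\phi_p$), and it is what makes the passage to the generic ring $\tilde R$ worthwhile rather than attempting the argument directly over $R'$, where $\eta$ may be a zero divisor. Everything else is formal bookkeeping with $\oplus$, $\oplus_p$, $\ominus$, and the already-proved second assertion of the proposition.
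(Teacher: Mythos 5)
Your proof addresses only the converse direction (the third assertion). The first two assertions — that $\theta' = \theta \oplus z$ for some $\sigma$-fixed $z$, and that then $a' = a \oplus_p (g(z)+z^p)$ — are used (you invoke ``the second assertion of this proposition'' to see that $\theta \oplus z_0$ is a root of $Z^p + g(Z) - a_0'$) but never established. For the first assertion one sets $z = \theta' \ominus \theta$; this exists because $\phi(\theta) = 1 + \eta\theta$ is invertible (its $p$-th power is the unit $1 + a\eta^p$), and $z$ is $\sigma$-fixed because $\oplus$ is commutative and associative, so $\sigma(z) = (1 \oplus \theta') \ominus (1 \oplus \theta) = \theta' \ominus \theta = z$. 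For the second assertion the paper passes to the generic case, exactly as you do for the converse, and computes $(1 + \theta'\eta)^p = (1+\theta\eta)^p(1+z\eta)^p = (1 + a\eta^p)\bigl(1 + (g(z)+z^p)\eta^p\bigr) = 1 + \bigl(a \oplus_p (g(z)+z^p)\bigr)\eta^p$. Without that computation the proposition is not proved, and your converse argument has an unverified dependency.

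That said, the converse direction you do carry out is correct and considerably more careful than the paper's one-sentence ``observe that $\tau(\theta')=\theta\oplus z$ defines the isomorphism.'' You make the generic specialization explicit, check that the $\oplus z_0$-translation really gives a well-defined $\sigma$-equivariant $\tilde R$-algebra map, and then construct an honest inverse by redoing the construction with $w = \ominus z_0$, using injectivity of $\phi_p$ in the generic ring to obtain the symmetric relation $a_0 = a_0' \oplus_p (g(w)+w^p)$. This is exactly the right way to make the paper's terse claim rigorous; the only thing missing is the forward direction, which is short but not vacuous.
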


\begin{proof} $(1 + \eta\theta)^p = 1 + a\eta^p$ and so 
$1 + \eta\theta$ is invertible.  We can 
set $z = \theta' \ominus \theta$ and the first 
statement is clear. To show the second statement, 
we can work in the generic case and hence 
assume $\eta$ is a non--zero divisor. We compute that 
$(1 + \theta'\eta)^p = 
(1 + \theta\eta)^p(1 + z\eta)^p = 
(1 + \theta\eta)^p(1 + \eta^p(g(z) + z^p)) = 
(1 + u\eta^p)(1 + (g(z) + z^p)\eta^p) = 1 + (u \oplus_p (g(z) + z^p))\eta^p)$. 
Thus $\theta'$ behaves as claimed. 
For the last statement we observe that 
$\tau(\theta') = \theta \oplus z$ defines the 
isomorphism.
\end{proof} 

Of course, $Z^p - Z - a$ and $Z^p - Z - a^p$ 
define isomorphic Galois extensions in characteristic $p$, 
but $Z^p + g(Z) - a$ and $Z^p + g(Z) - a^p$ 
do not necessarily define isomorphic extensions. 
That is, the extensions defined by $Z^p - Z - a$ and $Z^p - Z - a^p$ 
are the same modulo $p$ but $Z^p + g(Z) - a$ 
and $Z^p + g(Z) - a^p$ define possibly distinct 
lifts. 

For a moment, just assume $G$ is a finite abelian group. 
Recall from \ref{proj} that $S$ is projective over 
$R[G]$, and checking ranks shows that $S \in \Pic(R[G])$. 
We can say a bit more about $S$ as an element of $\Pic(R[G])$. 
Let $n$ be the order of $G$.  

\begin{lemma}\label{picorder}
Let $G$ be a finite abelian group of order $n$. 
As an element of $\Pic(R[G])$, $S$ 
has order dividing $n$. 
\end{lemma}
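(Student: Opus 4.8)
The plan is to use the norm map on Picard groups induced by the multiplication $G \to G$ sending $g \mapsto g^n$, which is the trivial homomorphism since $|G| = n$. More concretely, I would exploit the structure of $S$ as a $G$ Galois extension together with the operations developed in Section 2. Recall that $S \in \Pic(R[G])$ and that the class of $S$ in $\Pic(R[G])$ should be compatible with the group operation on cyclic (or abelian) extensions: tensoring extensions over $R$ and passing to fixed rings corresponds, at the level of $R[G]$ modules, to tensoring over $R[G]$ (this is the content of \ref{moduleproduct} in the cyclic case, and the abelian case is analogous). So the $n$-th power of $[S]$ in $\Pic(R[G])$ should be represented by the module underlying the $n$-th power $S^n$ of $S$ in the group of extensions.

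First I would make precise the claim that if $S/R$ and $S'/R$ are $G$ Galois (for $G$ abelian) then the module underlying their product extension is $S \otimes_{R[G]} S'$; the proof is exactly as in \ref{moduleproduct}, using the surjection given by the twisted trace and comparing ranks over $R$. Granting this, $[S]^n$ in $\Pic(R[G])$ is the class of the module underlying $S^n$. By \ref{torsion} (or its abelian analogue, which here is just the statement that the group of abelian extensions with group $G$ has exponent dividing $n$ — and for the purposes of this lemma one only needs that $S^n$ is split, i.e. $S^n \cong \Ind_{\{1\}}^G(R)$ as Galois extensions), the $n$-th power $S^n$ is the split extension, whose underlying module is $R[G]$ itself. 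Hence $[S]^n = [R[G]] = 1$ in $\Pic(R[G])$, which is exactly the assertion.

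The main obstacle I anticipate is the bookkeeping needed to justify that "product of extensions" corresponds to "$\otimes_{R[G]}$ of the underlying modules" for a general finite abelian $G$, since the excerpt has only spelled this out carefully for cyclic groups; I would either restrict the lemma's proof to the cyclic case (which is all that is used later, via \ref{prop:1.1}) or observe that a finite abelian $G$ is a product of cyclic groups and the extension group and $\Pic(R[G])$ both factor accordingly, reducing to the cyclic case. A secondary point to be careful about is that \ref{torsion} is stated for the degree-$n$ cyclic group acting on a degree-$n$ extension, giving $T^n$ split; one must check the identity element of the extension group, namely $\Ind_{\{1\}}^G(R)$, has underlying $R[G]$-module equal to $R[G]$, which is immediate since $\Ind_{\{1\}}^G(R) = \Hom_{\Z}(\Z[G], R) \cong R[G]$ as $R[G]$-modules. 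With those two points settled the argument is short.
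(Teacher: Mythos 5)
Your proof is correct, but it takes a genuinely different route from the one in the paper. The paper's proof is a one-step restriction--corestriction argument on Picard groups: restrict along the finite extension $R[G] \to S[G]$ (of constant rank $n$), observe that $S \otimes_{R[G]} S[G] = S \otimes_R S \cong S[G]$ as $S[G]$ modules so the restriction of $[S]$ is trivial, and conclude from the norm identity $N_{S[G]/R[G]} \circ \Res = n \cdot \mathrm{id}$ on $\Pic$ that $n[S] = 0$; the paper phrases this equivalently as $\Cor_{S/R}(\Ind_1^G(S)) \cong \Ind_1^G(R)$. You instead go through the group structure on Galois extensions: by \ref{moduleproduct} the map from extensions to $\Pic(R[G])$ is a homomorphism, and by \ref{torsion} the extension group has exponent dividing $n$, so $[S]^n$ is the class of the split extension $\Ind_1^G(R) \cong R[G]$. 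Each approach has something to recommend it. The paper's version is self-contained and works without modification for an arbitrary finite abelian $G$, needing only the standard decomposition $S \otimes_R S \cong \oplus_{g\in G} S$. Yours makes the compatibility with the extension group structure explicit, which is thematically natural here, but it leans on \ref{moduleproduct} and \ref{torsion}, both of which are stated and proved only for cyclic $G$; as you note, for general abelian $G$ you would have to either extend those lemmas or decompose $G$ into a product of cyclic groups and reduce. That reduction is routine but does add overhead the paper avoids.
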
 

\begin{proof} 
Recall that $S \otimes_R S 
\cong \sum_{g \in G} S$ where the map onto the $g$ component is 
$s \otimes t \to sg(t)$. Thus 
$S \otimes_R S \cong S[G]$ as $S[G]$ 
modules, and then restriction-
corestriction on the Picard group shows that $S^n  
\cong R[G]$. Alternatively, 
$\Cor_{S/R}(S \otimes_R S) \cong 
\Cor_{S/R}(\Ind_1^G(S)) \cong \Ind_1^G(R)$ 
and the underlying module of $\Cor_{S/R}(S \otimes_R S)$ is the Picard group norm. 
\end{proof} 

To push our results further, we need to employ 
the pull back results of the previous section. 
We again assume $G$ is the cyclic group of order $p$. 
Assume that $R$ is a $\Z[\rho]$ algebra, and so 
$T^p - 1 = \prod_{i=0}^{p-1} (T - \rho^i)$. 
Also assume $\eta = \rho - 1$ is a non zero divisor. In particular, $\rho^i \not= \rho^j$ 
when $(i-j)$ is not divisible by $p$. 
We view $R[G]$ as $R[T]/(T^p - 1)$. In particular, if 
$G = <\sigma>$  and $f(T) \in R[T]$ it makes sense 
to write $f(\sigma)$. 
We start with: 

\begin{proposition}\label{exact}
Suppose $f(T)$, $g(T) \in R[T]$ are monic polynomials 
such that $f(T)g(T) = T^p - 1$. Then 
$$R[G] \buildrel{f(\sigma)}\over\longrightarrow 
R[G] \buildrel{g(\sigma)}\over\longrightarrow R[G]$$ 
is exact. $f(\sigma)R [G]$ is 
the set of all $v \in R[G]$ with $g(\sigma)(v) = 0$ and 
$R[G]/f(\sigma)R[G] \cong g(\sigma)R[G]$. 
If $f(T)$ and $g(T)$ divide $T^p - 1$ with no common factors, then 
$f(\sigma)R[G] \cap g(\sigma)R[G] = f(\sigma)g(\sigma)R[G]$
\end{proposition}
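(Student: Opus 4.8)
The plan is to work entirely inside the commutative ring $R[G] = R[T]/(T^p-1)$, exploiting the fact that $T^p-1$ factors into distinct linear factors $\prod_{i=0}^{p-1}(T-\rho^i)$ over $R$ (using the standing hypothesis that $\eta=\rho-1$, and hence each $\rho^i-\rho^j$ with $p\nmid(i-j)$, is a non–zero–divisor). First I would establish the exactness of
$$R[G] \buildrel{f(\sigma)}\over\longrightarrow R[G] \buildrel{g(\sigma)}\over\longrightarrow R[G].$$
Since $g(\sigma)f(\sigma) = \sigma^p - 1 = 0$, we have $\Im(f(\sigma)) \subseteq \ker(g(\sigma))$. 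For the reverse inclusion I would argue that both modules are direct summands of $R[G]$ and compare them after a faithfully flat base change. Concretely, adjoin to $R$ enough to make things split: since the $\rho^i$ are already in $R$ and pairwise-differences are non–zero–divisors, one can localize/complete so that $R[G] \cong \prod_{i=0}^{p-1} R$ via the Chinese Remainder Theorem applied to the pairwise coprime ideals $(T-\rho^i)$; the coprimeness is exactly where the non–zero–divisor hypothesis is used, to produce the Bézout relations. Under this decomposition, $f(\sigma)$ and $g(\sigma)$ act coordinatewise as multiplication by $f(\rho^i)$ and $g(\rho^i)$ respectively, and since $f(\rho^i)g(\rho^i)=0$ while exactly one of the two is a unit in each coordinate (as $f,g$ partition the linear factors), the complex is visibly exact coordinatewise. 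Because $R[G]$ is faithfully flat over $R$ and exactness can be checked after such base change, the original sequence is exact. This simultaneously gives $f(\sigma)R[G] = \ker(g(\sigma))$ and, via the first isomorphism theorem applied to $g(\sigma)$, the isomorphism $R[G]/f(\sigma)R[G] \cong g(\sigma)R[G]$.

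Next I would handle the last assertion: if $f(T)$ and $g(T)$ both divide $T^p-1$ and share no common factor, then $f(\sigma)R[G]\cap g(\sigma)R[G] = f(\sigma)g(\sigma)R[G]$. One inclusion is trivial since $f(\sigma)g(\sigma)(v)$ obviously lies in both $f(\sigma)R[G]$ and $g(\sigma)R[G]$. For the other, I would again pass to the split situation $R[G]\cong\prod_i R$: here $f(\sigma)R[G]$ is the product of those coordinates where $f(\rho^i)$ is a unit (equivalently, where $(T-\rho^i)\nmid f$), and similarly for $g$; having no common factor means no coordinate has both $f(\rho^i)$ and $g(\rho^i)$ vanishing, so the ideal $f(\sigma)g(\sigma)R[G]$ is the product of coordinates where \emph{both} are units, which is precisely the intersection. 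Alternatively, and perhaps cleaner, I would argue directly: writing $T^p-1 = f(T)g(T)h(T)$ with $h$ the complementary factor and using that $f$ and $g$ are coprime, Bézout gives $af+bg = $ (a unit times) $\gcd$; combined with $f(\sigma)g(\sigma)h(\sigma)=0$ one manipulates an element $f(\sigma)u = g(\sigma)w$ into the desired form.

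The main obstacle I anticipate is being careful about the faithfully-flat descent step, specifically justifying that the CRT decomposition $R[G]\cong\prod_i R$ becomes available after an appropriate base change and that the hypotheses ($R$ a $\Z[\rho]$-algebra with $\eta$ a non–zero–divisor) are genuinely enough to run it — one does not literally get this splitting over $R$ itself, only over a faithfully flat extension or after localizing, so I must phrase the coprimeness/idempotent argument so that it is stable under such base change. A secondary subtlety is the phrase ``with no common factors'': I would read this as meaning $f$ and $g$ are coprime \emph{as divisors of $T^p-1$}, i.e.\ they involve disjoint sets of the linear factors $(T-\rho^i)$, which is the only reading under which the claim is true, and I would make that interpretation explicit. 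Once the splitting is set up, every remaining step is a routine coordinatewise check.
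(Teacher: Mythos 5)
Your proposed argument has a genuine gap in the base-change step. You want to invoke the Chinese Remainder Theorem to split $R[G] \cong \prod_{i=0}^{p-1} R$ after a ``faithfully flat'' localization or completion, and then check the exactness and intersection claims coordinatewise. But the ideals $(T-\rho^i)$ are \emph{not} pairwise comaximal in $R[G]$ unless $\eta$ is a unit: the sum $(T-\rho^i)+(T-\rho^j)$ contains $\rho^i-\rho^j$, which is a unit of $\Z[\rho]$ times a power of $\eta$. The only localization that produces the CRT splitting is $R\to R[1/\eta]$, and that map is flat but \emph{not} faithfully flat whenever $\eta$ lies in some prime of $R$. In fact there is no faithfully flat $R\to R''$ at all in which $\eta$ becomes a unit: faithful flatness forces $\eta R'' \cap R = \eta R$, so $\eta$ invertible in $R''$ would force it invertible in $R$. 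Since the whole point of this section is the mixed-characteristic regime where $\eta$ is a non-zero-divisor but not a unit, the splitting you want to descend from simply does not exist over any permissible extension, and exactness cannot be ``checked after such base change.'' Your parenthetical caveat (``one does not literally get this splitting over $R$ itself, only over a faithfully flat extension or after localizing'') identifies the right worry, but the worry is fatal rather than a bookkeeping issue.

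The paper's actual argument avoids any splitting and works directly in $R[T]$. For exactness: $f(\sigma)g(\sigma)=0$ is clear, and conversely if $g(\sigma)h(\sigma)=0$ then a lift $h(T)$ satisfies $g(T)h(T)=(T^p-1)k(T)=g(T)f(T)k(T)$; since $g(T)$ is monic it is a non-zero-divisor in $R[T]$, so $h(T)=f(T)k(T)$ and $h(\sigma)\in f(\sigma)R[G]$. This needs nothing about $\eta$ and gives all assertions except the last. For the intersection statement, the paper introduces $c(\sigma)=(\sigma^p-1)/(f(\sigma)g(\sigma))$, rewrites both sides as annihilator conditions, and reduces everything to the lemma that if $f,g$ are coprime monic divisors of $T^p-1$ and $f(\sigma)x=g(\sigma)x=0$, then $x=0$; this lemma is proved by peeling off one linear factor of $f$ at a time and using $g(\rho^i)=u\eta^{\deg g}$ with $u\in\Z[\rho]^*$, so that $\eta^{\deg g}x=0$ forces $x=0$. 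Your ``alternatively'' Bézout-style remark at the end is in the right spirit for the last claim (the resultant-type identity $af+bg=u\eta^d$ is exactly what the paper's lemma exploits), but it is left as a sketch, and your primary argument for the exactness portion rests on the unavailable CRT decomposition. I would suggest abandoning the base-change strategy entirely in favor of the direct lift-to-$R[T]$ argument.
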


Because the lifting to $R[T]$ can get tiresome, we will write 
$f(\sigma)/g(\sigma)$ to mean $h(\sigma)$ where  
$g(T)|f(T)|(T^p - 1)$ and $h(T) = f(T)/g(T)$. In particular, we define 
$g^*(\sigma) = (\sigma^p - 1)/g(\sigma)$.

\begin{proof}
Clearly $f(\sigma)g(\sigma) = 0$. If $g(\sigma)h(\sigma) = 0$, 
then $g(T)h(T) = (T^p - 1)k(T) = g(T)f(T)k(T)$. 
Since $g(T)$ is monic, it is a non zero divisor. All but the 
last statement are now easy. 

As for the last statement, define  
$c(\sigma) = (\sigma^p - 1)/(f(\sigma)g(\sigma)) = f^*(\sigma)/g(\sigma) = g^*(\sigma)/f(\sigma)$ so $g^*(\sigma) = f(\sigma)c(\sigma)$ 
and $f^*(\sigma) = g(\sigma)c(\sigma)$. 
Then $f(\sigma)R[G] \cap g(\sigma)RG] = 
\{x \in R[G] | g^*(\sigma)x = f(\sigma)c(\sigma)x = 0 = 
g(\sigma)c(\sigma)x\}$ while $f(\sigma)g(\sigma)R[G] = 
\{x \in R[G] | c(\sigma)x = 0\}$. Thus we are done once we prove the 
lemma: 

\begin{lemma}
Suppose $f(t)$, $g(t)$ are monic divisors of $t^p - 1$ 
with no common factors. If $x \in R[G]$ and $f(\sigma)x = g(\sigma)x = 0$ then $x = 0$. 
\end{lemma}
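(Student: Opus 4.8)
The plan is to exploit the fact that $f(t)$ and $g(t)$ are monic divisors of $t^p-1$ with no common factors, so that they generate the unit ideal in $R[t]$ after inverting the relevant discriminant-type quantity — or, more carefully, that coprimality over the base forces a B\'ezout-type relation that survives to $R[G]$. Concretely, since $t^p - 1 = \prod_{i=0}^{p-1}(t - \rho^i)$ and $\eta = \rho-1$ is a non-zero divisor (so the $\rho^i$ are pairwise ``distinct'' in the strong sense that $\rho^i - \rho^j$ is a unit times a power of $\eta$ when $p\nmid(i-j)$, by \ref{total}), each monic factor of $t^p-1$ is, up to reindexing, a product $\prod_{i\in I}(t-\rho^i)$ for some subset $I\subseteq\{0,\dots,p-1\}$. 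If $f$ and $g$ have no common factor, the corresponding index sets $I_f, I_g$ are disjoint.

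**First** I would establish that for disjoint index sets, $f(t)$ and $g(t)$ generate an ideal of $R[t]$ containing a unit of $R$, or at least that the resultant $\Res(f,g)$ is a unit of $R$: indeed $\Res(f,g) = \prod_{i\in I_f, j\in I_g}(\rho^i - \rho^j)$, and each factor $\rho^i - \rho^j = \eta\epsilon$ with $\epsilon$ a unit (using the computation $\mu^i - 1 = \eta\epsilon_i$ from the proof of \ref{total}, applied to $\rho^i - \rho^j = \rho^j(\rho^{i-j}-1)$). The subtlety is that $\eta$ itself need not be a unit in $R$ — but here that is fine, because each factor $\rho^i-\rho^j$ IS a unit of $R[\rho]\subseteq R$: it equals $\eta\epsilon$ and also, since $\rho^i \ne \rho^j$ and both are roots of unity, the quotient $\rho^{i-j}$ is a primitive $p$-th root of one whose ``$\eta$'' $= \rho^{i-j}-1$ is $\eta$ times a unit, and crucially $\eta$ divides $\rho^{i-j}-1$ \emph{and} $\rho^{i-j}-1$ divides $\eta$ (by symmetry of the $\epsilon$'s), so $\rho^i-\rho^j$ is a unit. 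Hence $\Res(f,g) \in R^*$.

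**Then** the standard resultant identity gives polynomials $a(t), b(t) \in R[t]$ with $a(t)f(t) + b(t)g(t) = \Res(f,g) \in R^*$. Substituting $t = \sigma$ and applying to $x$: $a(\sigma)f(\sigma)x + b(\sigma)g(\sigma)x = \Res(f,g)\,x$. Since $f(\sigma)x = 0$ and $g(\sigma)x = 0$, the left side vanishes, so $\Res(f,g)\,x = 0$, and as $\Res(f,g)$ is a unit we conclude $x = 0$.

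**The main obstacle** I anticipate is the bookkeeping in the first step: verifying rigorously that every monic factor of $t^p-1$ in $R[t]$ is a product of linear factors $t-\rho^i$ (this uses that $t^p-1$ splits into \emph{pairwise-coprime} linear factors over $R$, which follows from $\eta$ being a non-zero divisor via the unit computations in \ref{total}, but one must be careful that $R$ is not assumed to be a domain or local — factorization of monic polynomials into monic factors over a general commutative ring is not unique, so one should instead argue directly that $\gcd(f,g) = 1$ in the sense that $(f,g) = R[t]$, perhaps by passing to $R[1/\eta]$ where everything is a classical Kummer situation and then observing the resultant is already a unit in $R$ itself). Once the resultant is known to be a unit of $R$, the rest is the one-line B\'ezout argument above.
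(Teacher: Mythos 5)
Your overall strategy (B\'ezout via resultants) is sound and is a legitimate alternative to the paper's degree induction, but there is a genuine error in the middle step: the claim that $\Res(f,g)$ is a unit of $R$ is false. You correctly observe that each factor $\rho^i - \rho^j = \rho^j(\rho^{i-j}-1) = \eta\epsilon$ with $\epsilon$ a unit; but you then argue from ``$\eta$ divides $\rho^{i-j}-1$ and $\rho^{i-j}-1$ divides $\eta$'' that $\rho^i-\rho^j$ is a unit. Mutual divisibility shows the two elements are associates, i.e.\ differ by a unit --- it does \emph{not} make either one a unit. Indeed $\eta = \rho - 1$ is emphatically not a unit in $\Z[\rho]$ (it generates the prime over $p$), and the standing hypothesis here is only that $\eta$ is a non-zero divisor of $R$, not that it is invertible. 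Consequently $\Res(f,g) = u\,\eta^{(\deg f)(\deg g)}$ with $u \in \Z[\rho]^*$, not a unit.

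The fix is small and brings you in line with the paper's mechanism: the B\'ezout identity still gives $\Res(f,g)\,x = 0$, i.e.\ $\eta^N x = 0$ for $N = (\deg f)(\deg g)$ up to a unit, and then $x = 0$ follows because $\eta$ is a non-zero divisor in $R$ (and hence in $R[G]$, which is free over $R$). This is exactly the cancellation the paper uses: its base case produces $g(\rho^i) = u\eta^d$ --- explicitly a unit times $\eta^d$, not a unit --- and concludes $\eta^d x = 0 \Rightarrow x = 0$ from the non-zero-divisor hypothesis, then inducts on $\deg f$ by peeling off one linear factor at a time. The paper's induction also quietly handles your ``main obstacle'' (that a monic divisor of $t^p - 1$ over a non-domain need not visibly be a product of the $t - \rho^i$): by peeling one root at a time it never needs the full factorization, whereas the resultant route requires you to pin down all the roots of $f$ and $g$ at once. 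So: correct strategy, wrong intermediate claim; replace ``resultant is a unit'' with ``resultant is a unit times a power of the non-zero divisor $\eta$,'' and the proof closes.
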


\begin{proof}
We induct on the degree of $f$. If $f$ has degree one, 
it has the form $t - \rho^i$. Note that 
$g(t) = (t - \rho^i)q(t) + g(\rho^i)$ and by assumption  
$t - \rho^i$ does not appear in $g(t)$ so  
$g(\rho^i) = u\eta^d$ where $u$ is a unit of $\Z[\rho]$ 
and $d$ is the degree of $g(t)$. 
If $(\sigma - \rho^i)x = g(\sigma)x = 0$ then 
$\eta^dx = 0$ and $x = 0$ since $\eta$ is a non zero divisor. 

For the general case, write $f(t) = (x - \rho^i)f'(t)$ 
and assume $f(\sigma)x = 0 = g(\sigma)x$. By the above, 
$f'(\sigma)x = 0$ and we are done by induction.
\end{proof} 
\end{proof}

The above results allow us to analyze the group ring 
$R[G]$ as a series of pullbacks. Identify $R[G] = R[T]/(T^p - 1)$. 
Now $T^p - 1 = (T - \rho^{p-1})\ldots(T - 1)$ and note $\tr = 1 + T + \cdots + T^{p-1} = 
(T - 1)^*$. Define
$R[G](p-1) = R[G]$ and $R[G](i-1) = (\sigma - \rho^{i})R[G](i)$. 
Thus $R[G](0) = \tr{R[G]} = R$ and if 
$f_i(t) = (t - \rho^{p-1})(t - \rho^{p-2})\ldots(t - \rho^{i+1})$ then 
$R[G](i) = f_i(\sigma)R[G]$.   
Equivalently, $R[G](i) = \{x \in R[G] | f_i(\sigma)^*x = 0\}$ which 
implies $R[G](i) \cong R[G]/(f_i(\sigma)R[G])$ and this allows us to 
view $R[G](i)$ as a ring.  
Also, 
$R[G](i) = \{x \in R[G] | (\sigma - \rho^i)x \in R[G](i-1)\}$. 
We have particular interest in $R[G](1) = [(\sigma - \rho)(\sigma - 1)]^*R[G] = 
\{x \in R[G] | (\sigma - \rho)(x) \in R[G](0)\} = \{x \in R[G] | 
(\sigma - \rho)(\sigma - 1)(x) = 0\}$.

It will be useful to us to note that the successive 
$R[G](i)$ can be built up using pullback diagrams. 
If $g_i = f_i^* = (\sigma - 1)\ldots(\sigma - \rho^{i})$ and $J_i= R[G]g_i$, 
then $R[G](i) = R[G]/J_i$ and $J_i = (\sigma - \rho^{i})R[G] \cap 
J_{i-1}$. For any $R$ module $M$ we write $M^{(i)}$ 
to mean the $R[G]$ module where $\sigma$ acts via $\rho^i$. 
Thus $R[G]/(\sigma - \rho^{i})R[G] = R^{(i)}$. 
Moreover, if $I = (\sigma - \rho^{i})R[G] + J_{i-1}$ 
then the image of $I$ in $R^{(i)}$ is $\eta^{i}R^{(i)}$ 
and we have the fiber diagram of $R[G]$ modules and commutative 
rings (ignoring the $G$ action):

$$\begin{matrix}
R[G](i)&\longrightarrow&R[G](i-1)\cr
\downarrow&&\downarrow\cr
R^{(i-1)}&\longrightarrow&(R/\eta^{i-1})^{(i-1)}\cr
\end{matrix}$$ 

Using the above diagram and induction we show: 

\begin{proposition}\label{local} 
If $R$ is a local ring, then  every rank one projective 
$R[G]$ module is $R[G]$.
\end{proposition}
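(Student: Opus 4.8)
The plan is to show by induction on $i$ that $\Pic(R[G](i)) = 0$ for $0 \le i \le p-1$; the case $i = p-1$ is then the proposition, since $R[G](p-1) = R[G]$. The base case is immediate: $R[G](0) = \tr{R[G]} \cong R$, and a rank one projective module over a local ring is free, so $\Pic(R) = 0$.

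For the inductive step I would apply Theorem \ref{fiber} to the fiber diagram displayed just before the statement, taking $R[G](i)$ to be the pullback, $R[G](i-1)$, $R^{(i-1)}$, $(R/\eta^{i-1})^{(i-1)}$ the other three corners, and the bottom map the (surjective) reduction $R \to R/\eta^{i-1}$. First one must see that Theorem \ref{fiber} applies. Because the bottom map is surjective and the two bottom corners are reductions of $R$, the two maps out of the pullback $R[G](i)$ are themselves surjective, so all three corner rings are quotients of $R[G](i)$; hence for any maximal ideal $M$ of $R[G](i)$ the base change $R[G](i)_M \otimes_{R[G](i)} C$, for $C$ any one of the corner rings, is either $0$ or the localization of $C$ at a maximal ideal, i.e.\ a local ring, which has trivial Picard group. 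So the hypothesis of Theorem \ref{fiber} holds.

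Theorem \ref{fiber} then gives the exact fragment
$$(R/\eta^{i-1})^* \longrightarrow \Pic(R[G](i)) \longrightarrow \Pic(R[G](i-1)) \oplus \Pic(R) \longrightarrow \Pic(R/\eta^{i-1}).$$
By the inductive hypothesis $\Pic(R[G](i-1)) = 0$, and $R$ and $R/\eta^{i-1}$ are local so the last term also vanishes; thus $\Pic(R[G](i))$ is a quotient of $(R/\eta^{i-1})^*$ modulo the images of the units of $R[G](i-1)$ and of $R$. Finally $R^* \to (R/\eta^{i-1})^*$ is already surjective: if $\eta$ is a unit then $R/\eta^{i-1} = 0$, and otherwise $\eta^{i-1}R$ lies inside the maximal ideal of the local ring $R$, so a lift to $R$ of a unit of $R/\eta^{i-1}$ avoids that ideal and is itself a unit. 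Hence $\Pic(R[G](i)) = 0$, completing the induction.

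The one thing that takes a little thought is checking that Theorem \ref{fiber} genuinely applies, i.e.\ that all three corner rings are quotients of $R[G](i)$ so that the base changes appearing in its hypothesis come out local; once that is in hand the argument is just a formal walk along the long exact sequence, using only that rank one projectives over local rings are free. (One could also bypass the induction entirely, since $R[G]$ is module-finite over the local ring $R$, hence semilocal, hence of trivial Picard group — but the inductive argument is the one the preceding setup points to.)
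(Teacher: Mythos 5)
Your proof is correct and takes essentially the same route as the paper: induct on $i$, use the fiber diagram and Theorem \ref{fiber}, and kill $\Pic(R[G](i))$ using the surjectivity of $R^* \to (R/\eta^{i-1})^*$ (which forces the connecting map to vanish, so $\Pic(R[G](i))$ injects into $\Pic(R[G](i-1)) \oplus \Pic(R) = 0$). The only cosmetic difference is that the paper peels off the case $\eta \notin M$ as a separate splitting $R[G](i) \cong R \oplus R[G](i-1)$ before invoking the fiber sequence, whereas you absorb that case by noting $R/\eta^{i-1} = 0$; you also spell out the verification that Theorem \ref{fiber} applies, which the paper leaves implicit.
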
 

\begin{proof} 
We will show, by induction, that $\Pic(R[G](i)) = 0$. 
If $i = 0$ this is clear. Let $M$ be the maximal ideal of 
$R$. If $\eta \notin M$, then $\eta$ is invertible and 
$R[G](i) \cong R \oplus R[G](i-1)$. If $\eta \in M$, 
then $R(i-1)$ and $(R/\eta^{i-1})$ are local 
and $\Pic(R[G](i-1)) = 0$ by induction. 
Furthermore, since $R$ is local, $R^* \to (R/\eta^{i-1})$ 
is surjective. 
Thus \ref{fiber} implies that $\Pic(R[G](i))$ embeds in 
$\Pic(R[G](i-1)) \oplus \Pic(R) = 0$.  
\end{proof}

As a corollary we have: 

\begin{corollary}
If $V$ is a rank one projective $R[G]$ module then 
there is a faithfully flat extension $R'$ of $R$ 
such that $V \otimes_{R[G]} R'[G] \cong R'[G]$. 
\end{corollary}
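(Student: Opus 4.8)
The plan is to upgrade Proposition \ref{local} from ``local $R$'' to ``Zariski-locally on $\Spec R$'', and then to package the resulting Zariski cover into a single faithfully flat base change $R'/R$, taken as a \emph{finite} product of localizations of $R$.

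First I would record that $V$, being a rank one projective $R[G]$-module, is finitely generated projective over $R[G]$, hence finitely generated over $R$ (since $R[G]$ is finite as an $R$-module), and that for any localization $R \to R_f$ one has $V \otimes_{R[G]} R_f[G] = V \otimes_R R_f$, again rank one projective, now over $R_f[G]$. Then, fixing a maximal ideal $M \subset R$: the ring $R_M$ is local, so Proposition \ref{local} gives $V \otimes_R R_M \cong R_M[G]$. Picking a generator of $V \otimes_R R_M$ over $R_M[G]$, clearing denominators, and absorbing a unit of $R_M$, I obtain $v \in V$ whose image generates $V \otimes_R R_M$; hence the $R[G]$-linear map $R[G] \to V$ sending $1 \mapsto v$ has cokernel $C$ with $C \otimes_R R_M = 0$. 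Since $C$ is a quotient of $V$ it is finitely generated over $R$, so some $f \in R \setminus M$ annihilates all of its (finitely many) generators, i.e. $R_f[G] \to V \otimes_R R_f$ is onto. Both sides being rank one projective over $R_f[G]$, this surjection splits, its kernel is a finitely generated projective $R_f[G]$-module of rank $0$ everywhere, hence zero, and so $V \otimes_R R_f \cong R_f[G]$.

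Next, the distinguished opens $D(f)$ obtained in this way (one for each maximal ideal of $R$) cover $\Spec R$, so by quasi-compactness finitely many suffice: there are $f_1,\dots,f_n$ generating the unit ideal of $R$ with $V \otimes_R R_{f_i} \cong R_{f_i}[G]$ for all $i$. I would then set $R' = R_{f_1} \times \cdots \times R_{f_n}$. This is a flat $R$-algebra, being a finite product of localizations, and $\Spec R' \to \Spec R$ is surjective because the $D(f_i)$ cover $\Spec R$; hence $R'$ is faithfully flat over $R$. Since $G$ is finite, $R'[G] = \prod_i R_{f_i}[G]$, and
$$ V \otimes_{R[G]} R'[G] = \prod_{i=1}^n \bigl(V \otimes_{R[G]} R_{f_i}[G]\bigr) = \prod_{i=1}^n \bigl(V \otimes_R R_{f_i}\bigr) \cong \prod_{i=1}^n R_{f_i}[G] = R'[G]. $$

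I expect the only real friction to be bookkeeping rather than a genuine obstacle: one must take care to extract the trivializing scalars $f$ from $R$ itself rather than from $R[G]$, since that is precisely what realizes the trivialization as base change along $R \to R'$; and, for the same reason, to use a finite product of localizations — legitimate by quasi-compactness of $\Spec R$ — so that flatness of $R'$ over $R$ is automatic, instead of the full product over all maximal ideals, which need not be flat when $R$ is not Noetherian.
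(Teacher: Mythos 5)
Your proof is correct. The paper states this corollary without proof, treating it as an immediate consequence of Proposition \ref{local}; your argument — spreading out the local trivialization to distinguished opens, using quasi-compactness to pass to a finite cover, and taking $R'$ to be the corresponding finite product of localizations — is exactly the standard faithfully-flat descent argument the author evidently has in mind, and your cautionary remarks (extracting the trivializing scalar from $R$ rather than $R[G]$, and using a finite product to guarantee flatness without Noetherian hypotheses) are both apt.
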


We continue with our study of degree $p$ Galois extensions. 
Let $S/R$ be such an extension with Galois group, $G$, generated by 
$\sigma$. We saw that $S$ is a rank one projective $R[G]$ module. 
For this reason we look at the general subject of 
rank one projective $R[G]$ modules. 

\begin{proposition}\label{projexact} 
Let $V \in \Pic(R[G])$. 
Suppose $f(t)$ is a polynomial dividing $t^p - 1$. Then 
$$V \buildrel{f(\sigma)}\over\longrightarrow 
V \buildrel{f(\sigma)^*}\over\longrightarrow V$$ 
is exact. $f(\sigma)V$ is 
the set of all $v \in V$ with $f(\sigma)^*(v) = 0$ and $V/f(\sigma)V \cong f(\sigma)^*V$. 
If $f(t)$ and $g(t)$ divide $x^p - 1$ with no common factors, then 
$f(\sigma)V \cap g(\sigma)V = f(\sigma)g(\sigma)V$
\end{proposition}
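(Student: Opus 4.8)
The plan is to reduce everything to Proposition \ref{exact}, which is exactly this statement for the free module $R[G]$ in place of $V$, by faithfully flat descent. By the corollary following Proposition \ref{local}, choose a faithfully flat extension $R'$ of $R$ for which $V' := V \otimes_{R[G]} R'[G]$ is free, say $V' \cong R'[G]$. Since $R'$ is flat over $R$ the element $\eta = \rho - 1$ remains a non-zero-divisor in $R'$, and $R'$ is again a $\Z[\rho]$-algebra, so all the running hypotheses — and hence Proposition \ref{exact} — are available over $R'$. Moreover $R'[G] = R[G] \otimes_R R'$, so for every $R[G]$-module $M$ one has $M \otimes_{R[G]} R'[G] \cong M \otimes_R R'$ naturally; thus $R'[G]$ is faithfully flat over $R[G]$. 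Because $f(\sigma)$ and $f(\sigma)^*$ lie in the (commutative) ring $R[G]$, multiplication by them on $V$ is $R[G]$-linear and corresponds, under the chosen isomorphism, to multiplication by $f(\sigma)$ and $f(\sigma)^*$ on $R'[G]$.

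For the exactness statement, note first that $f(\sigma)^* f(\sigma) = \sigma^p - 1 = 0$, so the sequence is a complex of $R[G]$-modules; let $H = \ker\bigl(f(\sigma)^*\colon V \to V\bigr)/f(\sigma)V$ be its homology. Flatness of $R'[G]$ over $R[G]$ gives that $H \otimes_{R[G]} R'[G]$ is the homology of the complex $R'[G] \to R'[G] \to R'[G]$, which is $0$ by Proposition \ref{exact} over $R'$; faithful flatness then forces $H = 0$. This is simultaneously the exactness assertion and the statement that $f(\sigma)V = \{v \in V : f(\sigma)^*(v) = 0\}$. Running the same argument with $f$ replaced by $f^*$ (so that $(f^*)^* = f$) yields $f(\sigma)^* V = \ker\bigl(f(\sigma)\colon V \to V\bigr)$, whence $f(\sigma)^*$ induces the isomorphism $V/f(\sigma)V \cong f(\sigma)^* V$.

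For the intersection formula, the inclusion $f(\sigma)g(\sigma)V \subseteq f(\sigma)V \cap g(\sigma)V$ is clear, so it suffices to kill the quotient $Q = \bigl(f(\sigma)V \cap g(\sigma)V\bigr)/f(\sigma)g(\sigma)V$. Writing the intersection as $\ker\bigl(V \to V/f(\sigma)V \oplus V/g(\sigma)V\bigr)$ shows that both it and $f(\sigma)g(\sigma)V$ commute with the flat base change $-\otimes_{R[G]} R'[G]$, so $Q \otimes_{R[G]} R'[G]$ is the analogous quotient computed inside $R'[G]$, which vanishes by the last clause of Proposition \ref{exact}; faithful flatness gives $Q = 0$. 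The one point that wants care is precisely this base-change bookkeeping — that $R'[G]$ is genuinely faithfully flat over $R[G]$, that multiplication by the fixed central element $f(\sigma)$ survives the trivialization $V' \cong R'[G]$, and that kernels, cokernels and finite intersections of submodules commute with flat base change. Granting these, the proposition is formal and involves no new computation with the $\rho^i$ or with $\eta$ beyond what Proposition \ref{exact} already contains.
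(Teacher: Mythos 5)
Your proof is correct and is exactly the paper's approach: the paper disposes of this in one line by invoking faithfully flat descent to the free case $V \cong R[G]$ (the corollary to \ref{local}) and then citing Proposition \ref{exact}. You have simply filled in the bookkeeping — that $R'[G]$ is faithfully flat over $R[G]$, that multiplication by the central element $f(\sigma)$ survives the trivialization, and that kernels and finite intersections commute with flat base change — which the paper leaves implicit.
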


\begin{proof} 
Since $V \cong R[G]$ after a faithfully flat extension of $R$, 
this follows from the facts about $R[G]$. 
\end{proof} 

If $V$ is as above, then it is immediate 
that $V$ has trivial Tate cohomology. That is, 
the fixed module $V^G = \tr{V} = (1 + \sigma + \cdots + \sigma^{p-1})V$ 
and if $v \in V$ satisfies $\tr(v) = 0$ then $v = (\sigma - 1)v'$ 
for some $v'$. Thus the above proposition seems to be a strengthening 
of the the triviality of the cohomology of $V$.  

The above proposition allows us to derive a corollary of 
Lemma~\ref{picorder}. 
Let $S_{\rho} = \{ s \in S | \sigma(s) = \rho{s}\}$. 

\begin{corollary}\label{invertibleu} 
Let $S/R$ be $G$ Galois. 
Then $S_{\rho} \cong S/(\sigma - \rho)S \cong S \otimes_{R[G]} 
R[G]/(\sigma - \rho)R[G]$. Thus $S_{\rho}$ is a rank one projective 
$R$ module of order dividing $p$ in $\Pic(R)$. If 
$S_{\rho} = Ru \subset S$ then $u$ is a unit. 
The multiplication on $S$ induces an isomorphism $S_{\rho}^p \cong R$. 
\end{corollary}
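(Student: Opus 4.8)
\emph{The plan} is to handle the four assertions in order: the first two follow from Proposition~\ref{projexact} and Lemma~\ref{picorder}, while the last two reduce, by a faithfully flat descent, to an explicit computation in the split extension. Since $\rho^p=1$, the linear factor $t-\rho$ divides $t^p-1$, so Proposition~\ref{projexact} applies to $V=S$. Applied with $f(t)=t-\rho$ it gives $S/(\sigma-\rho)S\cong(\sigma-\rho)^*S$, and applied with $f$ taken to be the complementary divisor $(\sigma^p-1)/(\sigma-\rho)$ it gives $\ker(\sigma-\rho)=(\sigma-\rho)^*S$; hence $S_\rho=\ker(\sigma-\rho)\cong S/(\sigma-\rho)S$, and the right-hand side equals $S\otimes_{R[G]}R[G]/(\sigma-\rho)R[G]$ by right exactness of $\otimes$. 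Again because $\rho^p=1$, the rule $\sigma\mapsto\rho$ defines a ring surjection $R[G]=R[T]/(T^p-1)\to R$ with kernel $(\sigma-\rho)R[G]$, so $V\mapsto V/(\sigma-\rho)V$ is the group homomorphism $\Pic(R[G])\to\Pic(R)$ induced by this surjection; since $R[G]/(\sigma-\rho)R[G]\cong R$ it carries rank one projectives to rank one projectives, and $S_\rho$ is its value on $[S]$. (Equivalently, localizing at a prime of $R$ one has $S\cong R[G]$ by Proposition~\ref{local}, hence $S_\rho\cong R$, so $S_\rho$ is a finitely generated, locally free, hence projective, rank one $R$-module.) By Lemma~\ref{picorder}, $[S]$ has order dividing $p$ in $\Pic(R[G])$, so $[S_\rho]$ has order dividing $p$ in $\Pic(R)$.

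\emph{The generator is a unit.} Suppose $S_\rho=Ru$. Base change $S/R$ along the structure map $R\to S$: the extension $(S\otimes_R S)/S$, with $G$ acting on the first tensor factor, is the split extension $\Ind_{\{1\}}^G(S)\cong\prod_{g\in G}S$, via the isomorphism $S\otimes_R S\cong\bigoplus_g S$, $t\otimes t'\mapsto(g(t)t')_g$, recorded in the introduction. Since $S$ is $R$-flat, passing to $\rho$-eigenspaces commutes with this base change, so $S_\rho\otimes_R S=(S\otimes_R S)_\rho$. A direct computation in $\prod_{g\in G}S$ with the translation action identifies the $\rho$-eigenspace as the free rank one $S$-module on the element whose $\sigma^k$-coordinate is $\rho^{-k}$, and this element is a unit of $\prod_g S$ because its coordinates are roots of unity. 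Hence $S_\rho\otimes_R S=S\cdot(u\otimes1)$ is free of rank one on a unit of $S\otimes_R S$, so $u\otimes1\in(S\otimes_R S)^*$; as $S\to S\otimes_R S$, $s\mapsto s\otimes1$, is faithfully flat (a base change of the faithfully flat $R\to S$), we get $u\in S^*$.

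\emph{The multiplication isomorphism.} For $s_1,\dots,s_p\in S_\rho$ one has $\sigma(s_1\cdots s_p)=\rho^p s_1\cdots s_p=s_1\cdots s_p$, so multiplication induces an $R$-linear map $m:S_\rho^{\otimes p}\to S^\sigma=R$ between rank one projective $R$-modules (by the first paragraph); to see $m$ is an isomorphism it suffices to localize at each prime $\mathfrak{p}$ of $R$, the standing hypotheses all passing to $R_\mathfrak{p}$. There $(S_\rho)_\mathfrak{p}=R_\mathfrak{p}u$ is free, $u$ is a unit of $S_\mathfrak{p}$ by the previous paragraph, and $u^p\in(S_\mathfrak{p})^\sigma=R_\mathfrak{p}$ is then a unit of $R_\mathfrak{p}$, so $m_\mathfrak{p}(u^{\otimes p})=u^p$ exhibits $m_\mathfrak{p}$ as an isomorphism.

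\emph{The main obstacle} is the invertibility of a generator of $S_\rho$: everything ultimately rests on it, and since $S/R$ need not be split Zariski-locally (it can be a field), one genuinely has to descend along the étale cover $R'=S$ and carry out the eigenvector computation in $\Ind_{\{1\}}^G(S)$. Granting that, the remaining three assertions are bookkeeping with Proposition~\ref{projexact}, Lemma~\ref{picorder}, and localization.
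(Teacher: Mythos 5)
Your proof is correct, and for the most interesting assertion — that any $R$-basis $u$ of $S_\rho$ is a unit of $S$ — it takes a genuinely different route from the paper's. The paper localizes at an arbitrary maximal ideal $M\subset S$, passes to the residue field extension $\bar S/\bar R$ with $\bar R=R/N'$ (where $N'S=\bigcap_i\sigma^i(M)$), and then splits into cases: when $\eta\notin N'$ it invokes Kummer theory over the field $\bar R$ to see $\bar S_\rho\neq 0$, and when $\eta\in N'$ it notes $\sigma-\rho$ collapses to $\sigma-1$ so $\bar S_\rho=\bar S^\sigma=\bar R\neq 0$; either way $u\notin N'S\supseteq M$. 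Your argument instead makes a single faithfully flat base change along $R\to S$, uses that $(S\otimes_R S)/S$ splits, and reads off an explicit root-of-unity generator of the $\rho$-eigenspace in $\prod_{g\in G}S$; since $u\otimes 1$ and that unit both generate the same free rank-one $S$-module, a one-line dual-basis argument shows $u\otimes 1$ is a unit, and faithful flatness brings this back to $u\in S^*$. Your route avoids the paper's case analysis and its appeal to Kummer theory (the residue characteristic never enters), at the modest cost of an extra tensor-commuting-with-eigenspaces step; the paper's route is closer to the classical ``check at every residue field'' pattern and keeps everything over $R$. The remaining three assertions — the two isomorphisms via Proposition~\ref{projexact}, the order bound via Lemma~\ref{picorder}, and the multiplication isomorphism via localization once $u$ is known to be a unit — you handle essentially as the paper does. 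One cosmetic remark: with the paper's convention $t\otimes t'\mapsto (tg(t'))_g$, the natural base for the split form is the \emph{left} factor and $G$ should act on the \emph{right} one; you transposed both, which is harmless, but worth flagging so a reader tracking the exact exponent on $\rho$ in the eigenvector (which is $\rho^{k}$ rather than $\rho^{-k}$ under the paper's conventions) is not momentarily puzzled — of course only the fact that each coordinate is a root of unity matters.
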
 

\begin{proof}
The isomorphisms are immediate from Proposition~\ref{projexact} 
and the second statement follows from Lemma~\ref{picorder}. 
Suppose $S_{\rho} = Su$. We need to show that 
for any maximal ideal $M$ of $S$, $u \notin S$. 
Since $\sigma^i(u) = \rho^iu$, it suffices to show 
$u \notin N = \bigcap_{i=0}^{p-1} \sigma^i(M)$. Since 
$N$ is $G$ invariant, it has the form $N = N'S$ 
where $N' \subset R$ is a maximal ideal. But $\bar S = S/N'S$ is Galois 
with group $G$ over the field 
$\bar R = R/N'$. If $\eta \notin N'$ we can apply Kummer 
theory  and deduce that $\bar S_{\rho} \not= 0$. 
If $\eta \in N'$ then $\sigma - \rho$ is the same as $\sigma - 1$ 
on $\bar S$ and $\bar S_{\rho} = (\bar S)^{\sigma} = R \not= 0$. 
On the other hand, $\bar S_{\rho}$  is the image of 
$(\sigma - \rho)^*S = Ru$. It follows that $u \notin N'S$. 

To show the multiplication on $S$ induces $S_{\rho}^p \cong R$ 
we can assume $R$ is local and hence $S_{\rho} = Ru$ 
for $u \in R^*$. The result is now clear. 
\end{proof}

Note that in characteristic $p$ these Picard group elements are all trivial, 
since $S \cong R[G]$. For convenience, if $M$ 
is any $R[G]$ module, we simplify $M/f(\sigma)M$ 
and write it as $M/(f(\sigma))$.

The above facts suggest a very general description 
of degree $p$ Galois extensions, and this description 
will be used in the section on the embedding problem. 

If $R$ contains $1/\eta$ then there is a classical way to 
construct cyclic extensions associated to a rank one projective 
$R$ module $P$ with $\phi: P^p \cong R$.  
We form the tensor algebra $T'' = R[P]$ with $G$ action 
by letting $\sigma$ act on $P^i$ as $\rho^i$. 
We identify $P^{rp}$ with $R$ using $\phi$. 
In $T''$ let $\theta = \phi^{-1}(1) \in P^p$ and form 
$T' = T''(1/\theta)$.  
This yields a sort of universal cyclic extension 
but $R$ is not the base ring. 
In fact, $T''^G$ is the localized Laurent polynomial ring 
$R'' = R[\theta](1/\theta)$ and $T''/R''$ 
is Galois with group $G$. 

In the mixed characteristic case we start with something 
a bit more complicated. 
Assume we are given a $R[G](1)$ rank one projective 
$P(1)$, and we set $P_1 = P(1)/(\sigma - \rho)$ and 
$P_0 = P(1)/(\sigma - 1)$. Then $P(1)$ is defined by the 
fiber diagram:
$$\begin{matrix}
P(1)&\longrightarrow&P_0\cr
\downarrow&&\downarrow\cr
P_1&\longrightarrow&P_2\cr
\end{matrix}$$
where 
$P_2 = P_1/(\rho - 1)P_1 = P_0/(1 - \rho)P_0 = P_0/\eta{P_0}$. 

We assume $P(1)$ is {\bf normalized} 
by which we mean that $P_0 \to P_2$ is $R \to R/\eta{R}$ 
and so $P(1)$ is defined by the isomorphism 
$P_1/\eta{P_1} \cong R/\eta{R}$ induced by 
$\phi: P_1 \to R/\eta$. It will be our habit to use 
the same symbol $\phi$ for the map $P_1 \to R/\eta{R}$ 
and the induced isomorphism $\phi:P_1/\eta{P_1} \cong 
R/\eta{R}$. The above fiber diagram becomes, in the normalized case,: 
$$\begin{matrix}
P(1)&\longrightarrow&R\cr
\downarrow&&\downarrow\cr
P_1&\buildrel{\phi}\over\longrightarrow&R/\eta{R}.\cr
\end{matrix}$$ 

Assume $P(1)$ above is normalized. Since $P(1)$ is projective over $R[G](1)$, 
$P(1)^G = (\sigma - \rho)(P(1)) \cong R$ and $P_1 \cong (\sigma - 1)P(1)$ 
and so it can sometimes be convenient to view $P_1$ and $R$ as $R[G](1)$ 
submodules of $P(1)$. We have: 

\begin{lemma}\label{generation}
If $P(1)$ is normalized as above, $P(1)$ is generated 
as an $R$ module by $P_1$, $R$ and any $x \in P(1)$ with 
$\sigma(x) - \rho{x} = 1 \in R$. $P(1)/(R \oplus P_1) \cong 
R/\eta{R}$. 
\end{lemma}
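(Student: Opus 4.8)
The plan is to argue directly from the explicit pull-back description of a normalized $P(1)$. Write $\bar r$ for the image of $r\in R$ in $R/\eta R$. In the normalized case we may take $P(1)$ to be the fiber product
$$P(1) = \{(r,q)\in R\oplus P_1 : \bar r = \phi(q)\ \text{in}\ R/\eta R\},$$
the pull-back of the structure maps $P(1)\to P_0 = R$ (quotient by $(\sigma-1)P(1)$) and $P(1)\to P_1$ (quotient by $(\sigma-\rho)P(1)$) over $P_2 = R/\eta R$, with $\phi\colon P_1\to R/\eta R$ the normalizing map. The point of normalization is that $\phi$ induces an isomorphism $P_1/\eta P_1\cong R/\eta R$; in particular $\phi$ is surjective and $\ker(\phi) = \eta P_1$. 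The induced $G$-action on the pull-back is $\sigma(r,q) = (r,\rho q)$, so since $\eta$ is a non--zero divisor on $R$ and on $P_1$ one computes at once $(\sigma-1)P(1) = 0\oplus\eta P_1$ and $P(1)^G = (\sigma-\rho)P(1) = \eta R\oplus 0$; these are the internal copies of ``$P_1$'' and ``$R$'' inside $P(1)$ referred to in the statement.

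First I would produce the element $x$: since $P(1)^G = (\sigma-\rho)P(1)$, the distinguished generator ``$1$'' of $P(1)^G\cong R$ equals $(\sigma-\rho)x$ for some $x\in P(1)$, and writing $x = (s,q_0)$ the identity $(\sigma-\rho)(s,q_0) = (-\eta s,0)$ forces $s$ to be a unit and $\phi(q_0) = \bar s$. The main step is then a short computation: given any $\xi = (r,q)\in P(1)$, put $c = r/s\in R$, so that $\xi - cx = (0,\,q-cq_0)$, and using $\bar r = \phi(q)$ together with $\phi(q_0) = \bar s$ we get $\phi(q-cq_0) = \phi(q) - \bar c\,\phi(q_0) = \bar r - (\bar r/\bar s)\bar s = 0$. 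Hence $q-cq_0\in\ker(\phi) = \eta P_1$, so $\xi - cx\in 0\oplus\eta P_1 = (\sigma-1)P(1)$, and therefore $\xi\in Rx + (\sigma-1)P(1)$. This already shows $P(1)$ is generated over $R$ by ``$P_1$'' and $x$ — a fortiori by ``$P_1$'', ``$R$'' and $x$ — and in fact yields $P(1) = Rx\oplus(\sigma-1)P(1)$, with ``$R$'' contained in the sum.

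For the last assertion, inside $P(1)$ the submodule ``$R$'' $\oplus$ ``$P_1$'' is $(\sigma-\rho)P(1)\oplus(\sigma-1)P(1) = \eta R\oplus\eta P_1$. The map $P(1)\to R/\eta R$, $(r,q)\mapsto\bar r$ (equivalently $(r,q)\mapsto\phi(q)$), is surjective because $\phi$ is, and its kernel is exactly $\{(r,q)\in P(1): r\in\eta R\}$, which equals $\eta R\oplus\eta P_1$ since $\bar r = 0$ forces $\phi(q) = 0$ and hence $q\in\eta P_1$. Thus $P(1)/(R\oplus P_1)\cong R/\eta R$. I expect no genuine obstacle here: the only thing to watch is the bookkeeping built into ``normalized'' — that $\phi$ is onto with kernel $\eta P_1$ — and keeping straight the identifications of ``$R$'' and ``$P_1$'' with the internal submodules $(\sigma-\rho)P(1)$ and $(\sigma-1)P(1)$; the rest is a direct unwinding of the fiber-product description.
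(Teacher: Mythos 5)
Your argument is correct and follows essentially the paper's route: both identify the internal copies of $R$ and $P_1$ with $(\sigma-\rho)P(1)$ and $(\sigma-1)P(1)$ and then exhibit the decomposition $P(1) = Rx \oplus (\sigma-1)P(1)$, the paper by noting that $P(1)\to P_0$ is surjective (so some $x$ hits $1$) with kernel $(\sigma-1)P(1)$, and you by an explicit coordinate calculation in the fiber-product presentation $P(1)=\{(r,q): \bar r=\phi(q)\}$. One cosmetic remark: under the lemma's normalization $\sigma(x)-\rho x = 1$, the identity $(\sigma-\rho)(s,q_0)=(-\eta s,0)$ actually forces $s=1$ (since $\eta$ is a non-zero divisor), not merely that $s$ is a unit, though this changes nothing in your computation.
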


\begin{proof} 
Let $P' \subset P(1)$ be the $R$ submodule with generators as above. 
$R = P_0 \subset P(1) \to R = P(1)/(\sigma - 1)P(1)$ is multiplication by $\eta$, 
and $P_1 \subset P(1) \to P_1 = P(1)/(\sigma - \rho)P(1)$ is multiplication 
by $-\eta$. Since $P(1) \to P(1)/(\sigma - 1)P(1) = R \subset P(1)$ 
can be identified with multiplication by $\sigma - \rho$, $x \in P(1)$ 
maps to $1 \in R$. Thus $P' \subset P(1) \to R$ is surjective 
and the kernel of $P(1) \to R$ is $P_1 \subset P'$.
\end{proof}

Suppose $Q(1)$ is another normalized rank one projective $R[G](1)$ 
module with $Q_1 = Q(1)/(\sigma - \rho) = P_1$. 
Then $Q(1)$ differs from $P(1)$ because of a different 
isomorphism $\phi': P_1/\eta{P_1} \cong R/\eta{R}$. 
It follows that $\phi = u\phi'$ where $u \in (R/\eta{R})^*$. 
From \ref{fiber} we have 

\begin{lemma}
$Q(1) \cong P(1)$ over $R[G](1)$ if and only 
if $u$ is in the image of $R^*$.
\end{lemma}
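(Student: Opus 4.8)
The plan is to translate the statement into Milnor patching for the pullback square that defines $R[G](1)$ and then finish with a short computation with units. Recall that in the normalized situation $P(1)$ is given by the diagram
$$\begin{matrix}
P(1)&\longrightarrow&R\cr
\downarrow&&\downarrow\cr
P_1&\buildrel{\phi}\over\longrightarrow&R/\eta{R}\cr
\end{matrix}$$
whose right vertical arrow is the reduction $R\to R/\eta R$, and $Q(1)$ is given by the same square with $\phi$ replaced by $\phi'$ and the same lower-left corner $Q_1=P_1$. By the patching description of modules over a pullback ring (this is the mechanism behind Theorem~\ref{fiber} and the discussion following it, and it needs only that $R\to R/\eta R$ be surjective), an $R[G](1)$-module isomorphism $P(1)\cong Q(1)$ compatible with the corner modules amounts to a pair of $R$-module automorphisms $g_1$ of $P_1$ and $g_2$ of $R$ making the square
$$\begin{matrix}
P_1/\eta P_1&\buildrel{\phi}\over\longrightarrow&R/\eta R\cr
\bar g_1\downarrow&&\downarrow\bar g_2\cr
P_1/\eta P_1&\buildrel{\phi'}\over\longrightarrow&R/\eta R\cr
\end{matrix}$$
commute, where $\bar g_i$ denotes reduction of $g_i$ modulo $\eta$; and a general $R[G](1)$-isomorphism $P(1)\cong Q(1)$ automatically induces such corner automorphisms since it must carry the $(\sigma-\rho)$- and $(\sigma-1)$-quotients to each other.

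Next I would identify the two automorphism groups. Since $P_1$ is a rank one projective $R$-module, $\End_R(P_1)\cong R$, so $g_1$ is multiplication by a unit $v_1\in R^*$; likewise $\End_R(R)=R$ makes $g_2$ multiplication by some $v_2\in R^*$. Hence $\bar g_1$ and $\bar g_2$ are multiplication by $\bar v_1,\bar v_2\in(R/\eta R)^*$, and, scalars being central, the commuting square becomes the scalar identity $\phi'=\bar v_2\bar v_1^{-1}\phi$ in $\Hom(P_1/\eta P_1,R/\eta R)$. Combining this with the hypothesis $\phi=u\phi'$ and cancelling the isomorphism $\phi'$ gives $u=\bar v_1\bar v_2^{-1}$. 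As $v_1,v_2$ range over $R^*$ and the image of the reduction map $R^*\to(R/\eta R)^*$ is a subgroup, the set of achievable $u$ is precisely that image. Conversely, if $u=\bar v$ with $v\in R^*$, taking $g_1$ to be multiplication by $v$ and $g_2=\mathrm{id}$ visibly satisfies $\phi'=\bar v^{-1}\phi$; so in that case $P(1)\cong Q(1)$. This yields the lemma: $Q(1)\cong P(1)$ over $R[G](1)$ if and only if $u$ lies in the image of $R^*$.

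The one point requiring care, rather than real difficulty, is keeping the patching dictionary honest: one must check that ``normalized'' genuinely pins the upper-right corner to $R\to R/\eta R$, so that $\phi$ is the only remaining datum, and that the automorphisms of the two corner modules allowed in the patching equivalence are exactly scalar multiplications by $R^*$ — this last point is where the rank-one projectivity of $P_1$ is used, through $\End_R(P_1)\cong R$. After that the argument is the one-line unit computation above. I would also remark in passing that this is exactly the action of $(R/\eta R)^*/M$ on normalized modules mentioned after Theorem~\ref{fiber}, with $M$ the subgroup generated by the images of the unit groups of the corners, which here collapses to the single image of $R^*$.
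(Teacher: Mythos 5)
Your proof is correct and takes essentially the same route as the paper intends: the paper offers no explicit argument beyond citing Theorem~\ref{fiber} together with the surrounding discussion of the action of $R_3^*/M$ on normalized pullback modules, and your write-up is a faithful unpacking of exactly that Milnor--Mayer--Vietoris patching mechanism (a module over the pullback ring is a triple $(M_1,M_2,\theta)$, an isomorphism is a pair of automorphisms of the corners making the glued square commute, and for rank-one projectives those automorphism groups collapse to $R^*$). The only point you should perhaps flag as implicit is the one you already note, namely that any $R[G](1)$-linear isomorphism $P(1)\cong Q(1)$ commutes with $\sigma$ and hence descends to the $(\sigma-1)$- and $(\sigma-\rho)$-quotients, which is precisely what puts you back in the patching dictionary.
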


Set $R' = R(1/\eta)$ and $M' = M \otimes_R R'$ for any $R[G]$ module $M$. 
It follows that $P(1)' \cong P_1' \oplus R'$ and so 
$P(1) \subset P_1' \oplus R'$. 

We can be more precise. 
We saw in \ref{generation} that we can view 
$R \subset P(1)$ and $P_1 \subset P(1)$ 
as $(\sigma - 1)P(1)$ and $(\sigma - \rho)P(1)$ 
respectively. Moreover, $P(1)/(P_1 \oplus R) \cong R/\eta$. 
Thus $R \oplus P_1 \subset P(1) \subset 
(1/\eta)R \oplus (1/\eta)P_1$ and $\eta{P(1)} \subset R \oplus P_1$. 

We form the tensor algebra $T' = R'[P_1']$ and view 
$P(1) \subset T'$ using the inclusion $P(1) \subset (1/\eta)R \oplus 
(1/\eta)P_1$. We set $R[P(1)] \subset T'$ 
to be the subring generated over $R$ by $P(1)$. 
Note that $P(1)^G = (\sigma - \rho)P(1)$ is being identified 
with $R \subset R[P(1)]$. 

Set $T'' =  
R[P_1] = R \oplus P_1 \oplus P_1^2 \oplus \cdots$ 
and we view $T'' \subset R[P(1)] \subset T'$ using the inclusion 
$R \oplus P_1 \subset P(1)$.    
The definition of $R[P(1)]$ 
implies that there is an action by $\sigma$ on $R[P(1)]$. 
Note that by \ref{generation} $R[P(1)]$ is generated as an algebra over $T''$ by 
any $x$ whose image generates $P(1)/(R \oplus P_1)$ and in particular for 
any such $x$ which can be written $(\theta - 1)/\eta$ for some $\theta \in P_1$. 
Define $P(r) \subset R[P(1)]$ to be the submodule generated by 
all $r$ fold products of elements in $P(1)$. 
Thus $R[P(1)]$ is the sum of all the $P(r)$'s, 
$P(r) \subset (1/\eta^r)R \oplus \cdots \oplus (1/\eta^r)P_1^r$, 
and $P(r)/(R \oplus P_1 \oplus \cdots \oplus P_1^r)$ is annihilated 
by $\eta^r$. 

We are particularly interested in $P(p)$ which is preserved by 
$\sigma$ and more precisely we are interested in the invariant module $P(p)^G$. Note that $\sigma$ acts trivially on $P_1^p$.  
Since $T'^G = R'[{P'_1}^p]$, it is clear that 
$$R \oplus P_1^p \subset P(p)^G \subset (1/\eta^p)R \oplus (1/\eta^p)P_1^p.$$ 
Also, $P(p)/(R \oplus \cdots \oplus P_1^p)$ is spanned by 
the images of $1,x,\ldots,x^p$ as an $R/\eta^p{R}$ module 
where $x = (\theta - 1)/\eta$ is as above. 
Said differently, we can set $M = R + Rx + \cdots + Rx^p$ 
and note that $M + (R \oplus \cdots \oplus P_1^p) = P(p)$. 
Clearly $M$ is the direct sum of the $Rx^i \cong R$. 
Since $\sigma(x) = \rho{x} + 1$, it is also clear that $\sigma(M) = M$. 
Furthermore, since $\sigma$ is the identity on $R \oplus P_1^p$, $P(p)^G$ 
is generated by $R \oplus P_1^p$ and the image of $M^G$. 

In fact, let $R[z]$ be the polynomial ring and define 
$\sigma$ on $R[z]$ by $\sigma(z) = \rho{z} + 1$. 
Then $M$ is isomorphic to the degree less than or equal to 
$p$ part of $R[z]$. Now $R[z]^G = R[z^p + g(z)]$ so 
$M^G$ is generated over $R$ by $R$ and $N_G(x) = x^p + g(x)$. 
Thus: 

\begin{lemma}\label{pgeneration}
Suppose $x \in P(1)$ satisfies $\sigma(x) = \rho{x} + 1$. 
Then $P(p)^G$ is generated as an $R$ module by 
$P_1^p$, $R$, and $N_G(x) = x^p + g(x)$. 
Furthermore, 
$P(p)^G/(R + P_1^p) \cong R/\eta^pR$ where $N_G(x)$ 
maps to $1$. 
\end{lemma}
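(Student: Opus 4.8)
The plan is to take the first assertion essentially for free from the setup and to concentrate on the annihilator computation underlying the second assertion.

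For the first assertion, the discussion immediately preceding the lemma records two facts: that $P(p)^G$ is generated as an $R$-module by $R \oplus P_1^p$ together with the image of $M^G$ --- this rests on $P(p) = (R \oplus P_1 \oplus \cdots \oplus P_1^p) + M$ and on $\sigma$ acting trivially on the extreme graded pieces $R = P_1^0$ and $P_1^p$ --- and that, via the isomorphism $M \cong (R[z])_{\le p}$ and $(R[z])^G = R[z^p + g(z)]$, the module $M^G$ is generated over $R$ by $R$ and $N_G(x) = x^p + g(x)$. Combining these gives $P(p)^G = R + P_1^p + R\,N_G(x)$; one checks that $N_G(x)$ does lie in $P(p)^G$, being $\sigma$-fixed (directly, $(1 + \eta x)^p = 1 + \eta^p N_G(x)$ is $\sigma$-fixed since $\sigma(1 + \eta x) = \rho(1 + \eta x)$ and $\rho^p = 1$) and lying in $P(p)$ (since $x^p$ is a $p$-fold product of elements of $P(1)$ and $g(x)$ a polynomial in $x$ of degree $< p$, and $1 \in P(1)$ so that $P(k) \subseteq P(p)$ for $k \le p$).

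The second assertion is where the work is. From $P(p)^G = R + P_1^p + R\,N_G(x)$, the quotient $P(p)^G/(R + P_1^p)$ is cyclic, generated by the class of $N_G(x)$, hence isomorphic to $R/I$ with $I = \{c \in R : c\,N_G(x) \in R + P_1^p\}$, and I must show $I = \eta^p R$. I would argue inside the graded ring $T' = R'[P_1']$, $R' = R(1/\eta)$, on whose degree-$i$ piece $\sigma$ acts by $\rho^i$. First pin down $\theta := 1 + \eta x$: it lies in $\eta P(1) + R \subseteq R \oplus P_1$ and satisfies $\sigma(\theta) = \rho\theta$, so comparing the degree-$0$ components (on which $\sigma$ is trivial) forces the degree-$0$ part $\theta_0$ of $\theta$ to obey $\eta\theta_0 = 0$; since $\eta$ is a non-zero-divisor in $R$, $\theta_0 = 0$ and $\theta$ is homogeneous of degree $1$, i.e. $\theta \in P_1$. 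Then $\theta^p = (1 + \eta x)^p = 1 + \eta^p N_G(x)$, so $N_G(x) = \eta^{-p}(\theta^p - 1)$ is an element of $T'$ supported only in degrees $0$ and $p$, with degree-$0$ component $-\eta^{-p}$ and degree-$p$ component $\eta^{-p}\theta^p$. Now $\eta^p N_G(x) = \theta^p - 1 \in P_1^p + R \subseteq R + P_1^p$, so $\eta^p R \subseteq I$; conversely, if $c\,N_G(x) \in R + P_1^p$, then since $R + P_1^p$ meets the degree-$0$ piece $R'$ of $T'$ exactly in $R$, the degree-$0$ component $-c\eta^{-p}$ of $c\,N_G(x)$ must lie in $R$, and as $\eta$ is a non-zero-divisor this forces $c \in \eta^p R$. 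Hence $I = \eta^p R$, and the resulting isomorphism $P(p)^G/(R + P_1^p) \cong R/\eta^p R$ sends $N_G(x)$ to the class of $1$.

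The crux --- and the step I expect to be fussiest --- is this last annihilator identification: a priori $N_G(x) = x^p + g(x)$ has nonzero homogeneous components in every degree $0, 1, \ldots, p$, and it is only the collapse $\theta^p - 1 = \eta^p N_G(x)$, together with the verification that $\theta$ is genuinely homogeneous of degree $1$ and that the degree-$0$ part of $R + P_1^p$ is $R$ and not $R'$, that makes the exact power $\eta^p$ emerge. Everything else --- the module decomposition of $P(p)$, the triviality of the $G$-action on $R$ and $P_1^p$, and the invariant computation $(R[z])^G = R[z^p + g(z)]$ --- is already in hand from the preamble.
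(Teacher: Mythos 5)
Your proposal is correct, and it follows the same route the paper itself gestures at: the lemma in the paper has no displayed proof, just a ``Thus:'' following the preamble, so the entire content is that preamble discussion. Your first paragraph simply restates what the paper already records --- that $P(p)^G$ is generated by $R$, $P_1^p$, and the image of $M^G$, and that $M^G$ is generated over $R$ by $1$ and $N_G(x)$ via $M \cong (R[z])_{\le p}$, $R[z]^G = R[z^p + g(z)]$ --- so nothing new is needed there. Your verification that $\theta = 1+\eta x \in P_1$ via the degree-zero component and $\eta$ a non-zero-divisor is a small redundancy: the paper has already noted explicitly that any such $x$ can be written $(\theta-1)/\eta$ with $\theta \in P_1$, and indeed $\theta = (\sigma-1)(x) \in (\sigma-1)P(1) = P_1$ directly.

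The genuinely valuable part of your write-up is the annihilator computation, which the paper leaves implicit. The preamble does show $\eta^p P(p)^G \subset R \oplus P_1^p$, which gives the inclusion $\eta^p R \subseteq I$, but it does not explicitly rule out a larger annihilator. Your argument --- working in the graded ring $T' = R'[P_1']$, observing that $N_G(x) = \eta^{-p}(\theta^p - 1)$ is concentrated in degrees $0$ and $p$ with degree-zero component $-\eta^{-p}$, and then reading off that $c\,N_G(x) \in R + P_1^p$ forces $-c\eta^{-p} \in R$, hence $c \in \eta^p R$ --- is exactly the clean way to pin down $I = \eta^p R$. This matches the structure the paper sets up and supplies the one step it skips.
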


Said another way, 
we have the fiber diagram (of abelian groups so far):
$$\begin{matrix}
P(p)^G&\longrightarrow&R\cr
\downarrow&&\downarrow\cr
P_1^p&\buildrel{\phi^p}\over\longrightarrow&R/\eta^p{R}\cr
\end{matrix}$$ 
where we explain $\phi^p$ and this diagram further below. 

We need to explore how $P(p)^G$ is derived from 
$P(1)$. Restricting ourselves to normalized $P(1)$'s, 
$P(p)^G$ is determined by a choice of $P_1 \in \Pic(R)$ 
and a $\phi: P_1 \to R/\eta{R}$ inducing an 
isomorphism $P_1/\eta{P_1} \cong R/\eta{R}$, but we need to 
explore the details. To do this, we next note some facts particular to our mixed characteristic 
situation. 

\begin{proposition}
Suppose $P$ is a projective module over $R$ with 
symmetric power $S^p(P)$ and let $\Phi_P: P^p \to 
S^p(P)$ be the canonical surjection. Let $D \subset S^p(P)$ 
be the $R$ submodule generated by all elements 
of the form $\Phi(x \otimes \cdots \otimes x)$. 
Then $S^p(P)/D$ is annihilated by a power of 
$(p-1)!$. 
\end{proposition}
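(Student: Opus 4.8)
The plan is to reduce to a universal situation and then compute a single polynomial identity. First I would observe that the question is local on $\Spec(R)$ and compatible with base change, so I may assume $P = R^n$ is free with basis $e_1,\ldots,e_n$; then $S^p(P)$ is free with basis the monomials $e^{\mathbf a} = e_1^{a_1}\cdots e_n^{a_n}$ of total degree $p$, and $D$ is the $R$-span of the images $\Phi_P(x^{\otimes p})$ for $x = \sum c_i e_i$, i.e.\ of the ``power vectors'' $\big(\sum_i c_i e_i\big)^p = \sum_{|\mathbf a| = p} \binom{p}{\mathbf a} \mathbf c^{\mathbf a}\, e^{\mathbf a}$, where $\binom{p}{\mathbf a}$ is the multinomial coefficient. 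So I must show: the cokernel of the map from the free module on these power vectors to $S^p(R^n)$ is annihilated by a power of $(p-1)!$. Crucially this is now a statement over $\Z$ (it suffices to prove it for $R = \Z[c_i^{(j)}]$, the universal coefficient ring, since cokernel formation commutes with $\otimes_\Z R$), so I may work with $R = \Z$ and it becomes: the subgroup of $\Z^{\binom{n+p-1}{p}}$ spanned by all integer power vectors has index dividing a power of $(p-1)!$.

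Next I would pin down which monomials are ``close to'' already being in $D$. The key point is the multinomial coefficient $\binom{p}{\mathbf a}$: if $\mathbf a$ has all parts $< p$ (equivalently $\mathbf a \neq p\cdot e_i$) and $p$ is prime, then $\binom{p}{\mathbf a}$ is divisible by $p$ but that is not what we want; rather, the relevant arithmetic is that $\binom{p}{\mathbf a}$ divides $p!$ and, after dividing out the single prime $p$ that may occur, what is left divides $(p-1)!$. More usefully, I would argue by a descending induction / polarization argument: the classical polarization identity expresses $p!\, e_1 e_2 \cdots e_p$ (the squarefree monomial, in $p$ distinct variables) as an alternating sum of $p$-th powers $\big(\sum_{i \in T} e_i\big)^p$ over subsets $T$; and a suitable variant handles monomials with repeated variables. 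Thus $p!\cdot S^p(P) \subset D$ over $\Z$, so $S^p(P)/D$ is annihilated by $p!$. To improve $p!$ to a power of $(p-1)!$, I would handle the prime $p$ separately: reducing mod $p$, the Frobenius shows $x^p \equiv (\text{sum of } x_i^p) $ is additive, so modulo $p$ the module $D$ is exactly the span of $e_i^p$, and $S^p(P)/D \otimes \F_p$ is spanned by the non-$p$-th-power monomials — but I can instead note $\binom{p}{\mathbf a}/p$ for $\mathbf a$ with all parts $<p$ is a product of factors each $\le p-1$, hence divides $(p-1)!$, which gives that $p$ times a polarization-type expression already lands one in $D$ up to $(p-1)!$-torsion. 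Iterating across the finitely many monomials and combining the two cases yields annihilation by $\big((p-1)!\big)^N$ for some $N$ (one can take $N$ bounded by the number of degree-$p$ monomials, or more carefully by $n-1$).

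The main obstacle I anticipate is the bookkeeping in the second step: getting the \emph{exact} prime-by-prime divisibility so that the answer is a power of $(p-1)!$ and not merely a power of $p!$. Concretely, for a monomial $e^{\mathbf a}$ with $\mathbf a \ne p e_i$, one wants to write $\big((p-1)!\big)^{?}\, e^{\mathbf a}$ as a $\Z$-combination of power vectors, and the cleanest route is: (i) show $p!\, e^{\mathbf a} \in D$ by polarization; (ii) show separately that $\F_p \otimes (S^p(P)/D)$ is annihilated already by $(p-1)!$ — indeed mod $p$ the power vectors span all of $\bigoplus_i \F_p e_i^p$ plus everything hit by the mod-$p$ reductions, and the ``missing'' monomials are killed by the $(p-1)!$ coming from the non-$p$ part of the relevant multinomial coefficients. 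Then (i) and (ii) together, via the standard lemma that a finitely generated abelian group killed by $p\cdot m$ and with $\F_p$-reduction killed by $m$ is killed by $m$ times something coprime-to-$p$-free of the right size, give the power of $(p-1)!$. This last gluing lemma, and making its exponent explicit, is where I expect to spend the real effort; everything else is the polarization identity plus faithfully-flat descent to the universal ring.
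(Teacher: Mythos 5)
Your first phase is correct and follows a genuinely different route than the paper's: you reduce to $P$ free, then to the universal case over $\Z$, and invoke the polarization identity to get $p!\,S^p(P)\subset D$; the paper instead localizes at the multiplicative set generated by $(p-1)!$ and inverts a Vandermonde system. Both reductions to the free case are fine, and your polarization step is a clean way to isolate the integral content. The trouble is your second phase, and your own Frobenius observation is precisely what dooms it. Take $P=\Z^n$ with $n\geq 2$ and reduce modulo $p$: every generator of $D$ reduces to $(\sum_i\bar{c}_i e_i)^p=\sum_i\bar{c}_i^p e_i^p$, so the image of $D$ in $S^p(P)/pS^p(P)$ is exactly the $(\Z/p\Z)$-span of $e_1^p,\ldots,e_n^p$, which has codimension $\binom{n+p-1}{p}-n>0$. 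Hence $(S^p(P)/D)\otimes\Z/p\Z\neq 0$, i.e.\ $S^p(P)/D$ has honest $p$-torsion. Since $(p-1)!$ is prime to $p$, no power of $(p-1)!$ can annihilate it, and the ``gluing lemma'' you invoke cannot hold, because its hypotheses would force $(S^p(P)/D)\otimes\Z/p\Z=0$, the very thing you have disproved. Concretely, for $p=2$ and $P=\Z^2$ one finds $D=\Z e_1^2+\Z e_2^2+2\Z e_1e_2$ and $S^2(P)/D\cong\Z/2\Z$, which is not killed by any power of $(2-1)!=1$.

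So the proposition as stated is false for $P$ of rank $\geq 2$; what your polarization argument does prove is annihilation by a power of $p!$, which is the correct bound. The paper's own proof has the parallel gap at the same spot: in the induction step from one variable to two it expands $(v_1+tv_2)^p$ and the binomial coefficients $c_i=\binom{p}{i}$ for $0<i<p$ are divisible by $p$, hence \emph{not} invertible after inverting only $(p-1)!$, so the concluding division ``$(1/c_i)w_i\in D$'' is not legitimate. None of this affects the paper's actual use of the proposition in Theorem~\ref{ppowerhomo}, because there $P=I$ is a rank-one projective: locally $I_M\cong R_M$ has a generator $e$, so $e^p$ generates $S^p(I)_M$ and lies in $D_M$, giving $D_M=S^p(I)_M$ for every maximal ideal $M$ and therefore $D=S^p(I)$ outright, with no hypothesis on $(p-1)!$ needed at all.
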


\begin{proof} 
If $Q$ is another projective $R$ module and 
$Q = Q' \oplus P$ for some $Q'$ then there is a split 
surjection $\phi: S^p(Q) \to S^p(P)$ since 
$S^p(Q) \cong S^p(P) \oplus (Q' \otimes S^{p-1}(P)) 
\oplus (S^2(Q') \otimes S^{p-2}(P)) \oplus \cdots 
\oplus S^p(Q')$. Moreover, if $D_Q \subset S^p(Q)$ 
is generated by all $\Phi_Q(x \otimes \cdots \otimes x)$, 
then $D_Q$ maps onto $D$. It follows that it suffices to 
prove the proposition for $Q$, and hence we can assume 
$P$ is a free module over $R$. 

Secondly, let $(p-1)!$ also designate the image 
of this integer in $R$. If this element in nilpotent 
in $R$, the result is automatic. If not, 
set $R'$ be the localization of $R$ at the multiplicative set 
generated by $(p-1)!$. It 
suffices to prove the result for $R'$. 
That is, we may assume $(p-1)!$ is invertible in 
$R$ and then show that $D = S^p(P)$. 

If $v_i \in P$ and $\sum_i a_i = p$ 
then we set $v_1^{a_1}\ldots{v_r^{a_r}}$ 
to be $\Phi_P(v_1 \otimes \cdots \otimes v_r)$ 
where $v_i$ appears $a_i$ times. Clearly 
the monomials $v_1^{a_1}\ldots{v_r^{a_r}}$ 
span $S^p(P)$ and we must show they all 
belong to $D$. Obviously $v^p \in D$ for any 
$v \in P$. We assume by induction that 
$v_1^{a_1}\ldots{v_r}^{a_r} \in D$ for any $a_i$ and 
any $v_i \in D$. 
Now $v_1^{a_1}\ldots(v_r + tv_{r+1})^{a_r}) = 
\sum_i c_it^iv_1^{a_1}\ldots{v_r}^{a_r-i}v_{r+1}^i \allowbreak \in D$ where 
$c_i$ is the appropriate binomial coefficient 
and clearly invertible in $R$. 
We set $w_i = c_iv_1^{a_1}\ldots{v_r}^{a_r-i}v_{r+1}^i$.  
Then $\sum_i t^iw_i \in D$ for any choice of $t \in R$. 
If we choose $t = 0,\ldots,p-1$ successively 
then the associated matrix is the Vandermonde 
matrix with determinant $\pm \prod (i-j)$ 
which is invertible in $R$. Thus $w_i \in 
D$ and hence $(1/c_i)w_i \in D$ which proves the 
result by induction. 
\end{proof}

\begin{theorem}\label{ppowerhomo} 
Suppose $I \in \Pic(R)$ is provided with a surjection  
$\phi: I \to R/\eta$. 
Then there is a unique $\phi_p: I^p \to R/\eta^p$ 
such that $\phi(x)^p \in R /\eta^p$ is well defined and 
$\phi_p(x \otimes \cdots \otimes x) = 
\phi(x)^p$ modulo $\eta^p$.  
\end{theorem}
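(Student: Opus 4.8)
\noindent
The plan is to prove two assertions: that for $x\in I$ the element $\phi(x)^p\in R/\eta^p$ is well defined, meaning independent of the lift used to compute it; and that the rule $x^{\otimes p}\mapsto\phi(x)^p$ extends uniquely to an $R$-linear map $\phi_p\colon I^{\otimes p}\to R/\eta^p$. First I would dispose of well-definedness. Given $x\in I$, choose a lift $\tilde x\in R$ of $\phi(x)\in R/\eta$; any other lift is $\tilde x+\eta r$ with $r\in R$, and $(\tilde x+\eta r)^p-\tilde x^{\,p}=\sum_{k=1}^{p}\binom{p}{k}\tilde x^{\,p-k}\eta^{k}r^{k}$. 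The $k=p$ term is divisible by $\eta^p$, and for $1\le k\le p-1$ the coefficient $\binom{p}{k}$ is divisible by $p$, which equals $u\eta^{p-1}$ for a unit $u\in\Z[\rho]^*$ (Lemma~\ref{total} in case A; the relation $p=-\eta$ in case C), so the $k$-th term is divisible by $\eta^{p-1+k}\subseteq\eta^p R$. Hence $\tilde x^{\,p}\bmod\eta^p$ depends only on $\phi(x)$, and we write it $\phi(x)^p$; the same computation shows it is unchanged under passage to a localization of $R$.

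Next I record that $(p-1)!$ is a unit in $R/\eta^p R$: since $p=u\eta^{p-1}$, the ideal $\eta R/\eta^p R$ is nilpotent, so units are detected in the quotient $R/\eta R=R\otimes_{\Z[\rho]}F_p$, where $1,2,\dots,p-1$ are all invertible. Because $I$ is rank one projective, the canonical surjection $I^{\otimes p}\to S^p(I)$ is an isomorphism (it is so locally on $\Spec R$, where both sides are free of rank one), so the Proposition just proved, applied with $P=I$, gives that $I^{\otimes p}/D$ is annihilated by a power of $(p-1)!$, where $D\subseteq I^{\otimes p}$ is the submodule generated by the diagonal tensors $x^{\otimes p}$. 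Uniqueness of $\phi_p$ follows at once: if $\phi_p$ and $\phi_p'$ both send every $x^{\otimes p}$ to $\phi(x)^p$, then $\phi_p-\phi_p'$ factors through $I^{\otimes p}/D$, hence through a module killed by a power of $(p-1)!$; but $(p-1)!$ is invertible in the target $R/\eta^p$, so $\phi_p=\phi_p'$.

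For existence I would construct $\phi_p$ locally and glue. Cover $\Spec R$ by finitely many basic opens $\Spec R_f$ over which $I$ is free, say $I_f=R_f e$, and let $\phi_p^{(f)}\colon I_f^{\otimes p}\to R_f/\eta^p$ be the $R_f$-linear map with $\phi_p^{(f)}(e^{\otimes p})=\phi(e)^p$. If $x=re$ with $r\in R_f$ and $y\in R_f$ lifts $\phi(e)$, then $ry$ lifts $\phi(re)=r\phi(e)$, whence $\phi_p^{(f)}(x^{\otimes p})=r^p\phi(e)^p=(ry)^p\bmod\eta^p=\phi(x)^p$. On an overlap $\Spec R_{fg}$ the module $I$ is again free of rank one, and two $R_{fg}$-linear maps $I_{fg}^{\otimes p}\to R_{fg}/\eta^p$ agreeing on a single generator of $I_{fg}^{\otimes p}$ coincide; since both restrictions send every $x^{\otimes p}$ to $\phi(x)^p$, they agree. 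Thus the $\phi_p^{(f)}$ glue to a global $R$-linear $\phi_p\colon I^{\otimes p}\to R/\eta^p$, and $\phi_p(x^{\otimes p})=\phi(x)^p$, since this can be checked locally.

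The one real obstacle is that the diagonal tensors $x^{\otimes p}$ do not generate $I^{\otimes p}$ over $R$ — only after reducing modulo $\eta^p$, or on opens where $I$ becomes free — so one cannot simply decree $\phi_p(x^{\otimes p})=\phi(x)^p$ and be finished. Overcoming this is exactly where the preceding Proposition on symmetric powers enters, via the invertibility of $(p-1)!$ modulo $\eta^p$, and it is also what forces the local-to-global argument in the existence step in place of a one-line definition.
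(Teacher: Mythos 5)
Your proof is correct, and its ingredients coincide with the paper's: the binomial computation showing $\tilde x^p\bmod\eta^p$ depends only on $\phi(x)$, the invertibility of $1,\ldots,p-1$ in $R/\eta^p$, and the preceding Proposition on symmetric powers to control $I^{\otimes p}/D$. What you do differently, and profitably, is to separate existence from uniqueness. The paper's closing sentence appeals to the Proposition for the claim that $\phi_p$ ``extends uniquely'' to $I^p$, but that Proposition directly controls only uniqueness: knowing that $I^{\otimes p}/D$ is $(p-1)!$-torsion does not by itself show that the rule $x^{\otimes p}\mapsto\phi(x)^p$ is consistent with the $R$-linear relations among diagonal tensors inside $D$. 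You correctly identify this as the real obstacle and supply existence by a local-to-global argument: construct $\phi_p$ on a trivializing affine cover by extending $R_f$-linearly from $e^{\otimes p}\mapsto\phi(e)^p$, verify the characterizing identity on every $x^{\otimes p}$, note that the two candidate maps agree on overlaps because $I_{fg}^{\otimes p}$ remains free, and glue. Uniqueness then falls to the Proposition together with the invertibility of $(p-1)!$, exactly as in the paper. The net effect is the same theorem with the same tools, but your write-up is more explicit about the existence half of the paper's terse final step.
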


\begin{proof} 
$I$ embeds in $\prod_M I_M$ 
where the product is over all maximal ideals of $R$. 
Since $I_M = R_M$, the symmetric group $S_p$ 
acts trivially on $I^p$ and we can identify 
$I^p$ with $S^p(I)$. If $x = x' + y\eta$ for 
$y \in I$ then $x \otimes \cdots \otimes x = 
(x' \otimes \cdots \otimes x') + z\eta^p$ and so 
$\phi(x)^p \in R/\eta^p$ is well defined and the 
definition of $\phi_p$ makes sense. 
Thus $\phi^p$ is well defined 
on the image of all the $x \otimes \cdots \otimes x$ 
in $S^p(I)$. Since $p^2(R/\eta^p) = 0$, 
all the $1,\ldots,p-1$ act invertibly 
on $R/\eta^p$. Thus by the above result, 
$\phi^p$ extends uniquely to $I^p$. 
\end{proof} 

This is 
the $\phi^p$ that appears in the pull back diagram for 
$P(p)^G$. Our construction of $P(p)^G$ assumed $P(1)$ 
was normalized, which implied that 
the right side of the diagram defining $P(p)^G$ can be 
fixed to be $R \to \eta^p{R}$. That is, we will say 
$P(p)^G$ is {\bf normalized}. 
It is clear that $P(p)^G$ is a rank one projective module over 
$R *_p R$ which we define by the fiber diagram of rings: 
$$\begin{matrix}
R *_p R&\longrightarrow&R\cr
\downarrow&&\downarrow\cr
R&\longrightarrow&R/\eta^p{R}\cr
\end{matrix}$$ 
Since $P(p)^G$ is automatically 
normalized when $P(1)$ is, it is completely defined as an $R *_p R$ 
module by $\phi^p: P_1/\eta^pP_1 \cong R/\eta^pR$.

Thus in the normalized case the map $P(1) \to P(p)^G$ can be boiled 
down to $\phi \to \phi^p$ where 
$\phi: P_1/\eta{P_1} \cong R/\eta{R}$ and 
$\phi^p: P_1^p/\eta^p{P_1^p} \cong R/\eta^pR$. 

We phrase the above remark in slightly different 
language. Looking at inverses, $\phi$ is determined 
by a choice of generator $\bar \theta \in P_1/\eta{P_1}$ 
and $\phi^p$ by a generator of $P_1^p/\eta^pP_1^p$. 
The map $\phi \to \phi^p$ translates into 
the observation that if $\bar \theta$ generates $P_1/\eta{P_1}$ 
then $\bar \theta^p$ is well defined and generates 
$P_1^P/\eta^pP_1^p$. 
 
Suppose we assume $P_1^p \cong R$ and $\bar \theta$ is as above. 
Then $P_1^p  = r{R}$ and $\bar r = \bar \theta^p{\bar u}$ 
for some $\bar u \in (R/\eta^pR)^*$. Using \ref{fiber} we have:

\begin{lemma} 
$P(p)^G \cong R *_p R$ if and only if $P_1^p \cong R$ 
and if $\bar u \in (R/\eta^p)^*$ is the unit defined 
above, then $\bar u$ is in the image of $R^*$. 
\end{lemma}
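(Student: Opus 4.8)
The plan is to read both implications off the Milnor--patching description already set up. Recall that $R *_p R$ is by definition the fibre product of $R$ and $R$ over $R/\eta^p R$ along two copies of the canonical surjection $q\colon R \to R/\eta^p R$, and that for a normalized $P(1)$ the module $P(p)^G$ is the fibre product of the trivial corner $R$ (over the upper-right copy of $R$) and $P_1^p$ (over the lower-left copy of $R$), glued over $R/\eta^p R$ by $x \mapsto \phi^p(\bar x)$. I would work entirely through the concrete pull-back proposition of the fibre-product section --- the one producing, for a projective $R *_p R$-module $V$ with $V \otimes R_i \cong R_i$, an exact sequence $0 \to V \to R \oplus R \to R/\eta^p R \to 0$ with $h(r_1,r_2) = \psi_1(r_1) - u\psi_2(r_2)$ and a unit $u \in (R/\eta^p R)^*$ well defined modulo $\phi_1(R^*)\phi_2(R^*)$. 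Here $\phi_1 = \phi_2 = q$, so the ambiguity subgroup is just $q(R^*) = \Im(R^*)$, $V$ is free exactly when $u$ becomes trivial in the quotient, and the unit attached to $R *_p R$ itself is $\bar 1$. This is preferable to invoking the long exact sequence of Theorem~\ref{fiber}, since the concrete version is purely formal --- it only uses that $V$ is flat, obtained by tensoring the defining sequence $0 \to R *_p R \to R \oplus R \to R/\eta^p R \to 0$ by $V$ --- and so sidesteps any question about Picard groups of localizations of $R *_p R$.

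For the forward direction, suppose $P(p)^G \cong R *_p R$ as $R *_p R$-modules. Tensoring along the lower-left projection $R *_p R \to R$ gives, by the patching identification recorded after Theorem~\ref{fiber}, $P_1^p \cong (R *_p R)\otimes_{R *_p R} R = R$, which is the first asserted condition; in particular $V = P(p)^G$ now satisfies the hypotheses of the concrete proposition. The isomorphism $P(p)^G \cong R *_p R$ forces the unit attached to $P(p)^G$ to coincide, modulo $\Im(R^*)$, with the one attached to $R *_p R$, namely $\bar 1$. It remains to identify that unit with the $\bar u$ of the statement: choosing a free generator $r$ of $P_1^p$ and writing $\bar r = \bar u\,\bar\theta^p$ in $P_1^p/\eta^p P_1^p$, one has $\phi^p(\bar r) = \bar u\,\phi^p(\bar\theta^p) = \bar u$ (using $\phi^p(\bar\theta^p)=1$ from Theorem~\ref{ppowerhomo}), so under the trivialization $P_1^p \cong R$ sending $r \mapsto 1$ the gluing map $x \mapsto \phi^p(\bar x)$ becomes multiplication by $\bar u$ followed by reduction. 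Hence $\bar u \in \Im(R^*)$.

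For the converse, assume $P_1^p \cong R$ and $\bar u = q(v)$ for some $v \in R^*$. By the computation just made, $P(p)^G$ is isomorphic as an $R *_p R$-module to $\{(a,b) \in R \oplus R : \bar a = \bar u\,\bar b \text{ in } R/\eta^p R\}$, with $(s,t) \in R *_p R$ acting coordinatewise. The map $(a,b) \mapsto (a,vb)$ is a bijection onto $R *_p R = \{(a,c) : \bar a = \bar c\}$, with inverse $(a,c) \mapsto (a, v^{-1}c)$, and it is $R *_p R$-linear because $t(vb) = v(tb)$. Therefore $P(p)^G \cong R *_p R$.

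I expect the only real friction to be the bookkeeping: keeping straight which of the two projections $R *_p R \to R$ recovers $P_1^p$ and which recovers the trivial corner, and verifying that after the chosen trivialization of $P_1^p$ the gluing datum is honestly ``multiply by $\bar u$''. Everything else is a direct appeal to the fibre-product machinery of the previous section together with Theorem~\ref{ppowerhomo}, where the passage $\phi \mapsto \phi^p$ (equivalently $\bar\theta \mapsto \bar\theta^p$) was established.
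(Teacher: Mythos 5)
Your proof is correct and is essentially the argument the paper intends: it just invokes Theorem~\ref{fiber} without detail, and you have filled that in by running the concrete pull-back proposition (the exact sequence $0 \to V \to R \oplus R \to R/\eta^p R \to 0$ with its unit well defined modulo $\phi_1(R^*)\phi_2(R^*)$) and correctly identifying that unit with the $\bar u$ coming from $\bar r = \bar\theta^p\bar u$ via $\phi^p(\bar\theta^p)=1$. The direct verification that $(a,b)\mapsto(a,vb)$ is an $R *_p R$-linear bijection in the converse direction is a clean check that the concrete version avoids appealing to the long exact sequence at all.
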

 
The above condition arises because:

\begin{theorem}
Assume $S/R$ is $G$ Galois of degree $p$. 
Set $P(1) = S(1)$. Then $P(p)^G \cong 
R *_p R$ as an $R *_p R$ module. 
\end{theorem}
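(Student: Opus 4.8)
The plan is to reduce the claim to the two conditions in the Lemma immediately preceding it: that $P_1^p \cong R$ as an $R$-module, and that the comparison unit $\bar u \in (R/\eta^p R)^*$ lies in the image of $R^*$. First I would identify $P_1$. Writing $P(1) = S(1) \cong S/(\sigma-1)(\sigma-\rho)S$ via Proposition~\ref{projexact}, a further quotient by $\sigma-\rho$ gives $P_1 = P(1)/(\sigma-\rho)P(1) \cong S/(\sigma-\rho)S \cong S_\rho$ by Corollary~\ref{invertibleu}; that corollary also records that $S_\rho$ is rank-one projective over $R$ and that multiplication in $S$ induces an isomorphism $\mu\colon S_\rho^{\,p} \cong R$ (a product of $p$ elements of $S_\rho$ is $\sigma$-fixed, hence lies in $S^G = R$). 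Thus $P_1^p \cong R$, which is the first condition. Along the way I would note $P_0 = S(1)/(\sigma-1)S(1) \cong S/(\sigma-1)S \cong \tr(\sigma)S = S^G = R$, so that $S(1)$ is canonically normalized and the datum $\phi$ and the unit $\bar u$ are defined.

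For the second condition I would produce an explicit Kummer-type element. By Lemma~\ref{generation} there is $x \in P(1) = S(1)$ with $\sigma(x) - \rho x = 1$; set $\alpha = 1 + \eta x \in S$. A short check gives $\sigma(\alpha) = \rho\alpha$, so $\alpha \in S_\rho = P_1$, and moreover $x$ maps to $(\sigma-1)x = \alpha$ under $P(1) \to P_1$ and to $(\sigma-\rho)x = 1$ under $P(1) \to P_0 = R$; hence under the normalization $\phi(\bar\alpha) = \bar 1$, i.e.\ the distinguished generator $\bar\theta = \phi^{-1}(1)$ of $P_1/\eta P_1$ is the class of $\alpha$. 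Now the defining identity $(1 + Z\eta)^p = 1 + (g(Z) + Z^p)\eta^p$ yields $\alpha^p = 1 + a\eta^p$ with $a = x^p + g(x) = N_G(x) \in S^G = R$ (Lemma~\ref{pgeneration}); in particular $\alpha^p \in R$ and $\alpha^p \equiv 1 \pmod{\eta^p}$. Taking the $R$-generator $r = \mu^{-1}(1)$ of $P_1^p$, the identity $\mu(\alpha^{\otimes p}) = \alpha^p = \mu(\alpha^p r)$ together with injectivity of $\mu$ forces $\alpha^{\otimes p} = \alpha^p r$ in $P_1^p$; reducing mod $\eta^p P_1^p$ and using $\alpha^p \equiv 1$ gives $\bar\theta^{\,p} = \bar r$, so $\bar u = 1$. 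By the preceding Lemma this finishes the proof.

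The step I expect to be the real obstacle is the one in the second paragraph: reading off the normalization datum $\phi$ of $S(1)$ from the $R[G]$-module structure of $S$ — concretely, that $\bar\theta$ is represented by $\alpha = 1 + \eta x$ — and then recognizing that the multiplicative trivialization $\mu$ of $P_1^p$ and the $p$-th power map $\phi^p$ of Theorem~\ref{ppowerhomo} differ precisely by the reduction mod $\eta^p$ of the element $\alpha^p = 1 + a\eta^p$ of $R$, which is congruent to $1$. The remaining identifications ($S_\rho = (\sigma-\rho)^*S = (\sigma-1)S(1)$, $\tr(\sigma)S = S^G$, and the quotient maps $P(1) \to P_0, P_1$) are routine consequences of Propositions~\ref{exact} and~\ref{projexact}.
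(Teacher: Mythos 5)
Your proof is correct and follows essentially the same route as the paper: reduce to the two conditions of the preceding Lemma (identifying $P_1 = S_\rho$ and invoking Corollary~\ref{invertibleu} for $P_1^p \cong R$ via multiplication, then checking the unit $\bar u$). Where the paper compresses the unit check into the remark that the map $R \cong P_1^p \to R/\eta^p R$ is identified with the canonical quotient because the multiplication $S_\rho^p \to R$ is surjective, you make the same point explicit by producing $\alpha = 1 + \eta x$, showing $\bar\theta$ is the class of $\alpha$, and using $\alpha^p = 1 + a\eta^p \equiv 1 \pmod{\eta^p}$ to read off $\bar u = 1$.
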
 

\begin{proof} 
When $S(1) = P(1)$ then $S_1 = P_1 = \{s \in S | \sigma(s) = \rho{s} \}$ 
so $S_1 = S_{\rho}$ in our previous notation. 
We know $S_1^p \cong R$ via the multiplication map in 
$S$. Thus we need to know that we can identify 
$R \cong P_1^p \to R/\eta^p{R}$ with the 
canonical $R \to R/\eta^p{R}$ up to a unit of $R$. 
But this is just that fact that the multiplication 
map $S_1^p \to R$ is surjective.
\end{proof}

The purpose of the above discussion of $P(p)^G$ 
was that it appears in the construction of a 
general degree $p$ Galois 
extension in mixed characteristic as follows. Given a normalized 
rank projective $R[G](1)$ module $P(1)$, set $P_1 = 
P(1)/(\sigma - \rho)P(1)$. Summarizing the above discussion, 
we have $T'' = R[P_1] \subset R[P(1)] \subset T' = R'[P_1']$ 
and $R[P(1)]$ is generated over $T''$ by $x = (\theta - 1)/\eta$ 
for some $\theta \in P_1$. Also, $T''^G = R[P_1^p] \subset R[P(p)^G] 
\subset R'[P_1']$ and $R[P(p)^G]$ is generated 
over $T''^G$ by $\psi = (\theta^p - 1)/\eta^p$. 

\begin{theorem}\label{generalp} 
Let $P(1)$ be a normalized rank one projective over $R[G](1)$. 
Assume 
$P(p)^G \cong R *_p R$ and so $P(p)^G$ is generated by 
$t = (\psi - 1)/\eta^p$ for $\psi \in P_1^p$ a generator. 
Then $R[P(p)^G] = R[t]$. If $S = R[t](1/\psi) \subset T = 
R[P(1)](1/\psi)$ then $T/S$ is Galois with group $G$. 
\end{theorem}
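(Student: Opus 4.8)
\medskip

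\noindent\textbf{Proof proposal.} The first assertion, $R[P(p)^G]=R[t]$, is the quick part. Choosing $x\in P(1)$ with $\sigma(x)=\rho x+1$, one has $\theta:=1+\eta x\in P_1$ and $\psi=\theta^p$, so the defining identity $(1+\eta x)^p=1+(g(x)+x^p)\eta^p$ gives $t=(\psi-1)/\eta^p=x^p+g(x)=N_G(x)$. By Lemma~\ref{pgeneration}, $P(p)^G$ is generated as an $R$-module by $R$, $P_1^p$ and $t$; as $P_1^p$ is free of rank one on $\psi$ and $\psi=1+\eta^p t\in R[t]$, all of these lie in $R[t]$, while conversely $t\in P(p)^G$ by construction. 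Hence $R[P(p)^G]=R[t]$, so in particular $S=R[t](1/\psi)$ makes sense and sits inside $T=R[P(1)](1/\psi)$.

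For the Galois statement the plan is to reduce, by faithfully flat descent, to the split case, where the whole construction collapses to Proposition~\ref{prop:1.1}. First I would apply Lemma~\ref{local} --- whose proof establishes $\Pic(R[G](i))=0$ for $R$ local --- and the corollary following it to the rank-one projective $R[G](1)$-module $P(1)$, producing a faithfully flat $R$-algebra $\tilde R$ with $P(1)\otimes_{R[G](1)}\tilde R[G](1)\cong\tilde R[G](1)$, still normalized. The key bookkeeping is that $P(1)\mapsto(R[P(1)],\psi,t,S,T)$ commutes with this base change: since $R[P(1)]$ is by definition the $R$-subalgebra of the tensor algebra $R'[P_1']$ ($R'=R(1/\eta)$) generated by $P(1)$, flatness of $\tilde R/R$ makes $R[P(1)]\otimes_R\tilde R\to R'[P_1']\otimes_R\tilde R$ injective with image the $\tilde R$-subalgebra generated by $\widetilde{P(1)}:=P(1)\otimes_R\tilde R$; flatness also commutes with the formation of the $G$-invariants $P(p)^G$ and with localization at $\psi$. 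Consequently $T\otimes_R\tilde R\cong\tilde T:=\tilde R[\widetilde{P(1)}](1/\tilde\psi)$ (the analogous construction over $\tilde R$), $S\otimes_R\tilde R\cong\tilde S$, and $\tilde S=S\otimes_R\tilde R$ is faithfully flat over $S$.

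In the split case $\widetilde{P(1)}=\tilde R[G](1)$, Lemma~\ref{generation} gives $\tilde R[G](1)=\tilde R\oplus\tilde R x$ with $\sigma(x)=\rho x+1$, whence $\tilde R[\widetilde{P(1)}]=\tilde R[x]$ is a polynomial ring, $\tilde\psi=(1+\eta x)^p=1+\eta^p\tilde t$ with $\tilde t=x^p+g(x)=N_G(x)$, and $\tilde S=\tilde R[\tilde t](1/\tilde\psi)$, $\tilde T=\tilde R[x](1/\tilde\psi)=\tilde S[Z]/(Z^p+g(Z)-\tilde t)$ with $1+\tilde t\,\eta^p=\tilde\psi$ invertible in $\tilde S$. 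Applying the last assertion of Proposition~\ref{prop:1.1} to the $\Z[\rho]$-algebra $\tilde S$ and the homomorphism $\Z[\rho][a]\to\tilde S$, $a\mapsto\tilde t$, shows $\tilde T/\tilde S$ is Galois with group $G$. Since being $G$-Galois descends along the faithfully flat extension $\tilde S/S$ and $T\otimes_S\tilde S\cong\tilde T$ --- with $T/S$ finitely presented because $\tilde T/\tilde S$ is free of rank $p$ --- it follows that $T/S$ is Galois with group $G$.

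The main obstacle is the reduction itself: verifying cleanly that $\tilde R$ trivializes $P(1)$ as a \emph{normalized} $R[G](1)$-module and that forming $R[P(1)]$, passing to $G$-invariants, and inverting $\psi$ all commute with the flat base change. Once that is done the split case is little more than a citation of Proposition~\ref{prop:1.1}. One can also see directly over $R$ why the result should hold: $x$ is a root of $Z^p+g(Z)-t$, and $\sigma^i(x)-\sigma^j(x)=\epsilon_{i-j}\rho^j\theta$ is a unit of $T$ because $\epsilon_{i-j}\in\Z[\rho]^*$ by Lemma~\ref{total} and $\theta^p=\psi$ has been inverted, so $T$ is separable over $S$; but checking that $T$ is finite projective of rank $p$ over $S$ with $T^\sigma=S$ when $P_1$ is nontrivial in $\Pic(R)$ is precisely what forces one back to the faithfully flat reduction, so that is the efficient route.
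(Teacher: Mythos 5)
Your proposal is correct and follows the paper's own strategy: reduce to the case where $P(1)$ is trivial over $R[G](1)$ by faithfully flat descent (the paper phrases this as "By descent, it suffices to prove this when $R$ is local," which via Lemma~\ref{local} has the same effect as your invocation of the corollary), then verify the split case by an explicit computation and an appeal to Proposition~\ref{prop:1.1}. Your attention to the bookkeeping — that forming $R[P(1)]$ inside $R'[P_1']$, passing to $G$-invariants, and localizing at $\psi$ all commute with the flat extension $\tilde R/R$ — is exactly what the paper leaves implicit, and your closing remark that a direct verification stalls when $P_1$ is nontrivial in $\Pic(R)$ correctly diagnoses why the descent is the efficient route. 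One small slip in your first paragraph: the identity $\psi=\theta^p$ for $\theta=1+\eta x$ is only guaranteed after the base change that trivializes $P_1$; for a general normalized $P(1)$ with nontrivial $P_1$, the element $\theta$ need not generate $P_1$, so $\theta^p$ need not be the given generator $\psi$ of $P_1^p$. You do not actually need this: $\psi=1+\eta^p t$ is definitional, and since $t$ is assumed to generate $P(p)^G$ over $R*_pR$, its image in $P(p)^G/(R\oplus P_1^p)\cong R/\eta^pR$ is a unit, whence $P(p)^G=R+P_1^p+Rt\subset R[t]$ directly, without routing through $N_G(x)$.
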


\begin{proof}
All is clear except that $T/S$ is Galois. 
By descent, it suffices to prove this when $R$ is local 
which implies $P(1)$ is trivial and hence we can choose $x = (\theta - 1)/\eta$ 
which generates $P(1)$ such that $\theta$ generates $P_1 \cong R$. 
Then $x^p + g(x) = t$ where $\theta^p = (1 + x\theta)^p = 1 + t\theta^p$. 
Since $1 + t\theta^p$ clearly generates $P_1^p$ we have that $\theta^p$ 
and hence $\theta$ are invertible in $T$ and $S$. Clearly $T$ is generated 
over $S$ by $x$ and hence $T = S[Z]/(Z^p + g(Z) - t)$.
\end{proof} 

Note that, in the above construction, the ring $S$ defined above is 
independent of the choice of $\psi$. 
That is, if $t' = (\psi' - 1)/\eta^p$ also maps to 1 in $R/\eta^p$, 
and $\psi'$ also generates $P_1^p$, then $\psi' = u\psi$ 
for $u \in R^*$ and $u$ is congruent to 1 modulo $\eta^p$. 
Writing $u = 1  + v\eta^p$ we have $t' = ((1 + v\eta^p)\psi - 1)\eta^p = 
t + v\psi$ making it clear that $R[t] = R[t']$ (actually 
already clear since both are $R[P(1)]$), and, of course, 
$R[t](1/\psi) = R[t](1/(u\psi)) = R[t'](1/\psi')$. 

The above construction defines a degree $p$ cyclic 
over $S = R[t](1/1+t\eta^p)$. Thus $T$ is a module over 
$S[G]$ and we can form 
$T(1) = T \otimes_{S[G]} S[G](1)$. 

\begin{lemma}\label{degreepmodule} 
$T(1) \cong P(1) \otimes_{R[G](1)} S[G](1)$.
\end{lemma}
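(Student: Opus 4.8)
The plan is to compare the two rank-one projective $S[G](1)$-modules through the fiber-product description of $S[G](1)$; the evident map $P(1)\otimes_R S\to T(1)$ induced by $P(1)\subseteq T$ is \emph{not} surjective, so a more structural argument is needed. First, $T/S$ is $G$ Galois by Theorem~\ref{generalp}, so $T\in\Pic(S[G])$ by Lemma~\ref{proj} and $T(1)=T\otimes_{S[G]}S[G](1)$ is rank one projective over $S[G](1)$; likewise for $P(1)\otimes_{R[G](1)}S[G](1)$. Both modules are normalized: for the second this is inherited from $P(1)$, and for the first the trace gives a canonical identification $T(1)/(\sigma-1)T(1)=T/(\sigma-1)T\cong (1+\sigma+\cdots+\sigma^{p-1})T=T^{\sigma}=S$ (Proposition~\ref{projexact}). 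Writing $S[G](1)$ as the fiber product $S\times_{S/\eta S}S$ (legs $\sigma\mapsto\rho$ and $\sigma\mapsto 1$), Theorem~\ref{fiber} reduces the problem to matching, first, the $(\sigma-\rho)$-quotients of the two modules in $\Pic(S)$, and then the gluing isomorphisms of those quotients mod $\eta$ with $S/\eta S$.

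For the $(\sigma-\rho)$-quotients: the inclusion $P(1)\subseteq R[P(1)]\subseteq T$ is $R[G]$-linear, and since $(\sigma-\rho)(\sigma-1)$ kills $P(1)$ the submodule $(\sigma-1)P(1)$ of $T$ lies in $T/(\sigma-\rho)T$, which by Proposition~\ref{projexact} we identify with $T_{\rho}:=\{u\in T:\sigma u=\rho u\}$; combined with $(\sigma-1)P(1)\cong P_1:=P(1)/(\sigma-\rho)P(1)$ (the kernel of $\sigma-1$ on the rank-one projective $R[G](1)$-module $P(1)$ being $(\sigma-\rho)P(1)$, checked locally) this yields a natural $S$-linear map $\bar\lambda:P_1\otimes_R S\to T_{\rho}$. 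I would prove $\bar\lambda$ an isomorphism by base-changing to $R_{\mathfrak m}$: there $P(1)$ is free (Proposition~\ref{local}) on a generator $x$ with $\sigma x=\rho x+1$, the element $\theta:=1+\eta x$ generates $P_1$, is a unit of $T$ (as $\theta^p=\psi\in S^{*}$) and satisfies $\sigma\theta=\rho\theta$, and $T_{\rho}=S\theta$ (any $u\in T_{\rho}$ has $u\theta^{-1}\in T^{\sigma}=S$), so $\bar\lambda$ is the identity $S\theta\to S\theta$. Since the $(\sigma-\rho)$-quotient of $P(1)\otimes_{R[G](1)}S[G](1)$ is $P_1\otimes_R S$, the map $\bar\lambda$ identifies the two $(\sigma-\rho)$-quotients.

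The crux is matching the gluings under $\bar\lambda$. For $P(1)\otimes_{R[G](1)}S[G](1)$ the gluing is the base change of the isomorphism $\phi:P_1/\eta P_1\cong R/\eta R$ defining the normalized $P(1)$. For $T(1)$, given $\xi\in T_{\rho}$ lift it to $v\in T(1)$ with $(\sigma-\rho)^{*}(v)=\xi$ (possible since $T(1)\to T(1)/(\sigma-\rho)T(1)$ is onto); the gluing sends $\bar\xi$ to the class mod $\eta$ of the image of $v$ in $T(1)/(\sigma-1)T(1)=S$, namely $\tr(v)=(1+\sigma+\cdots+\sigma^{p-1})(v)$. Reducing mod $\eta$, where $\rho\equiv 1$ and $\sigma^{p}-1=(\sigma-1)^{p}$, one has $(\sigma-\rho)^{*}\equiv(\sigma^{p}-1)/(\sigma-1)=(\sigma-1)^{p-1}=\tr \pmod{\eta}$, whence $\overline{\tr(v)}=\tr(\bar v)=(\sigma-\rho)^{*}(\bar v)=\overline{(\sigma-\rho)^{*}(v)}=\bar\xi$: that is, the gluing of $T(1)$ reduces mod $\eta$ to the tautological identification $T_{\rho}/\eta T_{\rho}=(T/\eta T)_{\rho}\to(T/\eta T)_0=S/\eta S$. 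The same computation applied to $\theta$ and $\psi=\theta^{p}$ shows $\phi$ reduces to the matching tautological identification — here one uses that the normalization forces $\psi\equiv 1\pmod{\eta^{p}}$, hence $\theta^{p}\equiv 1$, hence (with $R/\eta R$ reduced, as in all our applications) $\phi(\theta)=1$; in general the two gluings differ only by an element of $1+\sqrt{0}\subseteq(R/\eta R)^{*}$, which over each local base $R_{\mathfrak m}$ lies in the image of $R_{\mathfrak m}^{*}$. By Theorem~\ref{fiber} this gives the isomorphism after base change to each $R_{\mathfrak m}$, and hence in general (specializing from the generic polynomial-ring situation). The main obstacle is this last paragraph — keeping the fiber-product normalizations straight and confirming that $\phi$ reduces to the tautological map mod $\eta$; the congruence $(\sigma-\rho)^{*}\equiv\tr\pmod{\eta}$ is the mechanism that makes it work.
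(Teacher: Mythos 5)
Your opening claim is wrong, and it sends the rest of the argument down an unnecessarily hard road. You assert that the evident map $P(1)\otimes_R S\to T(1)$ induced by $P(1)\subseteq T$ is not surjective and therefore needs a ``more structural'' argument via the fiber-product description of $S[G](1)$. In fact that map is exactly the map the paper uses, since $P(1)\otimes_{R[G](1)}S[G](1) = P(1)\otimes_{R[G](1)}\bigl(R[G](1)\otimes_R S\bigr) = P(1)\otimes_R S$, and the paper shows it \emph{is} surjective: both sides are rank-one projectives over $S[G](1)$, so surjectivity implies isomorphism, and surjectivity may be checked after faithfully flat base change to $R_{\mathfrak m}$. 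There $P(1)\cong R[G](1)$ has a generator $x$ with $\theta=1+x\eta$ a generator of $P_1$; since $\theta^p = 1+a\eta^p = \psi$ is a unit of $S$, $\theta$ is a unit of $T$ lying in $T_1$, forcing $T_1 = S\theta$ and hence that $x$ generates $T(1)$ over $S[G](1)$. That is the whole proof — four lines once one trusts descent for rank-one projectives.

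Because you reject that route, you are forced into the fiber-product comparison, and the difficulty you identify as ``the crux'' — reconciling the two gluing isomorphisms mod $\eta$ — is a difficulty of your own making. It is also where the argument genuinely breaks down as written. You reduce to showing two gluing units in $(R/\eta R)^*$ agree, observe they agree modulo nilpotents, and then claim the discrepancy ``lies in the image of $R_{\mathfrak m}^*$'' locally and finish by ``specializing from the generic polynomial-ring situation.'' None of that is justified: agreement modulo nilpotents is not agreement, the local liftability of the discrepancy does not by itself give an isomorphism of $S[G](1)$-modules over $R$ (one still needs to glue those local isomorphisms, which is precisely what the descent argument in the paper handles cleanly), and the appeal to specialization from a generic situation is not set up anywhere in your proof. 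There is also a small slip in the congruence $(\sigma-\rho)^*\equiv\tr\pmod{\eta}$: you write $(\sigma-\rho)^*=(\sigma^p-1)/(\sigma-1)$, but by definition $(\sigma-\rho)^*=(\sigma^p-1)/(\sigma-\rho)$; the congruence mod $\eta$ still holds since $\rho\equiv 1$, but the intermediate identity is false. The correct and much shorter path is the paper's: establish the natural map, note both sides are rank-one projective over $S[G](1)$, and verify surjectivity after localizing $R$.
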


\begin{proof}
Since, by definition, 
$T(1) = \{z \in T | (\sigma - \rho)(\sigma - 1)(z) = 0\}$ 
it is clear that $P(1) \subset T(1)$ and hence there 
is a map $\phi: P(1) \otimes_{R[G](1)} S[G](1) \to T(1)$. 
Since the domain and range of $\phi$ are both rank one 
projectives over $S[G](1)$, it suffices to prove $\phi$ 
surjective. By faithful flatness, we map assume 
$R$ local and hence $P(1) \cong R[G](1)$. That is, 
there is an $x \in P(1)$ with $\theta = 1 + x\eta$ a 
generator of $P_1$. It follows that $P(p)^G$ is 
generated over $R *_p R$ by $a = x^p + g(x)$ where 
$1 + a\eta^p$ generates $P_1^p$. Now $1 + a\eta^p = 
(1 + x\eta)^p$ and so $1 + x\eta$ is invertible in $T$. 
Since it is in $T_1$, $T_1 = S(1 + x\eta)$. 
Then it follows that $x$ generates $T(1)$ over 
$S[G](1)$, and $\phi$ is surjective. 
\end{proof}

If $\phi: R \to R'$ 
is any homomorphism, then $\phi$ extends 
so $\phi: S \to R'$ exactly when $r' = \phi(t)$ 
satisfies $1 + r'\eta^p \in R'^*$. Of course, when $\phi$ 
exists, it defines 
a $G$ Galois $T'/R'$ by $T' = T \otimes_{\phi} S$. 
The above lemma shows that 
$T'(1) \cong R'[G](1) \otimes_{R[G](1)} P(1)$. 

\begin{theorem}\label{specializedegreep}
Suppose $\phi: R \to R'$ and $U'/R'$ is cyclic Galois of degree $p$. 
Let $T/S$ and $P(1)$ be as above. 
Then there is homomorphism 
$\phi: S \to R'$ such that $U' \cong T \otimes_{\phi} R'$ 
if and only if $U'(1) \cong P(1) \otimes_{R[G](1)} R'[G](1)$.
\end{theorem}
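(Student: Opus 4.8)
\bigskip
\noindent{\it Proof proposal.} The two implications should be handled separately; the forward one is formal, and the content is in the converse.

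For $(\Rightarrow)$: suppose $\phi\colon S\to R'$ extends the given map $\phi_0\colon R\to R'$ and $U'\cong T\otimes_\phi R'$. The assignment $V\mapsto V(1)$ is base change along the ring surjection $A[G]\to A[G](1)$, so it commutes with base change of the ground ring; hence $U'(1)\cong(T\otimes_\phi R')(1)\cong T(1)\otimes_{S[G](1)}R'[G](1)$. Substituting Lemma~\ref{degreepmodule}, $T(1)\cong P(1)\otimes_{R[G](1)}S[G](1)$, and cancelling $S[G](1)$ gives $U'(1)\cong P(1)\otimes_{R[G](1)}R'[G](1)$, as wanted.

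For $(\Leftarrow)$: write $\overline{P(1)}=P(1)\otimes_{R[G](1)}R'[G](1)$, so by hypothesis $U'(1)\cong\overline{P(1)}$. Recall that extensions $\phi\colon S\to R'$ of $\phi_0$ correspond bijectively to elements $r'\in R'$ with $1+r'\eta^p\in R'^{*}$ (the value $\phi(t)$), and that by the computation in $(\Rightarrow)$ every specialization $T\otimes_\phi R'$ has $R'[G](1)$-module structure $\overline{P(1)}$. Now work in the group $\mathcal C$ of degree $p$ cyclic extensions of $R'$. By Proposition~\ref{moduleproduct}, $V\mapsto V(1)$ is multiplicative on $\mathcal C$, so the classes with trivial $(1)$-module form a subgroup $\mathcal N$, and — using Proposition~\ref{difference} to form quotients — the classes $V$ with $V(1)\cong\overline{P(1)}$ form a single coset $\mathcal T$ of $\mathcal N$; it is nonempty, containing $U'$ and also $T\otimes_{\phi_1}R'$ for the extension $\phi_1$ sending $t\mapsto0$. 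By the description of degree $p$ cyclics with trivial $R'[G]$-structure recalled from \cite{S2022} around Propositions~\ref{prop:1.1}--\ref{prop:1.2}, every class of $\mathcal N$ is represented by some $E_d:=R'[Z]/(Z^p+g(Z)-d)$ with $1+d\eta^p\in R'^{*}$. The plan is then to show that the set $\Sigma$ of specializations $T\otimes_\phi R'$, with $\phi$ ranging over extensions of $\phi_0$, is a nonempty $\mathcal N$-stable subset of $\mathcal T$; since $\mathcal T$ is a single $\mathcal N$-orbit this forces $\Sigma=\mathcal T\ni U'$, which is exactly the assertion.

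The one real step — and the main obstacle — is the $\mathcal N$-stability of $\Sigma$: for an extension $\phi$ with $\phi(t)=r$ and any $d$ with $1+d\eta^p\in R'^{*}$ one must show $(T\otimes_\phi R')\cdot E_d\cong T\otimes_{\phi'}R'$, where $\phi'$ is the extension with $\phi'(t)=r\oplus_p d$ (legitimate, since $1+(r\oplus_p d)\eta^p=(1+r\eta^p)(1+d\eta^p)\in R'^{*}$). I would prove this in the style of Proposition~\ref{prop:1.2}: pass to the generic setting where $\eta$ is a non-zero divisor, take in $T\otimes_\phi R'$ the element $x$ with $\sigma(x)=1\oplus x$ and $N_G(x)=x^p+g(x)=r$ supplied by the module structure (its existence comes from surjectivity of $P(1)\to P(1)/(\sigma-1)P(1)$, cf.\ Lemma~\ref{generation}), and a root $\theta_d$ of $Z^p+g(Z)-d$, and verify that $x\oplus\theta_d$ is fixed by $(\sigma,\sigma_d^{-1})$, that $\overline\sigma(x\oplus\theta_d)=1\oplus(x\oplus\theta_d)$, and that $N_G(x\oplus\theta_d)=r\oplus_p d$; by the uniqueness inherent in Theorem~\ref{generalp} and Lemma~\ref{degreepmodule} — a degree $p$ cyclic carrying module structure $\overline{P(1)}$ together with such an element $x$ is exactly the specialization $T\otimes_\phi R'$ with $\phi(t)=N_G(x)$ — this identifies $(T\otimes_\phi R')\cdot E_d$ with $T\otimes_{\phi'}R'$, and the identification descends from the generic case by specialization. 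The remaining verifications are routine manipulations with the operations of Section~2 and the $\oplus$-calculus.
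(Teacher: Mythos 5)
Your forward direction is the same as the paper's and is correct: base change commutes with the $V\mapsto V(1)$ functor and Lemma~\ref{degreepmodule} does the rest.

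Your converse direction, however, takes a genuinely different route, and the route as sketched has a circularity problem. The paper's proof is direct: given $U'(1)\cong P(1)\otimes_{R[G](1)}R'[G](1)=:Q(1)$, the inclusion $Q(1)=U'(1)\subset U'$ induces a $G$-equivariant $R'$-algebra map $R'[Q(1)]\to U'$; $G$-equivariance forces the invariant subring $R'[Q(p)^G]=R'[t]$ to land in $R'$, which \emph{is} the desired $\phi\colon S\to R'$ after checking that $\psi$ goes to a unit. No orbit argument is needed. By contrast, you set up the coset $\mathcal T$ of the subgroup $\mathcal N$ (a sound idea, and the multiplicativity of $V\mapsto V(1)$ via \ref{moduleproduct} is correctly invoked) and reduce everything to the $\mathcal N$-stability of the specialization set $\Sigma$, i.e.\ to the identity $(T\otimes_\phi R')\cdot E_d\cong T\otimes_{\phi'}R'$ with $\phi'(t)=\phi(t)\oplus_p d$.

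The problem is in how you propose to prove that identity. You produce an element $x\oplus\theta_d$ in $(T\otimes_\phi R')\cdot E_d$ with $\bar\sigma(x\oplus\theta_d)=1\oplus(x\oplus\theta_d)$ and compute $N_G(x\oplus\theta_d)=r\oplus_p d$, and then close by citing ``the uniqueness inherent in Theorem~\ref{generalp} and Lemma~\ref{degreepmodule}: a degree $p$ cyclic carrying module structure $\overline{P(1)}$ together with such an element $x$ \emph{is exactly} $T\otimes_\phi R'$ with $\phi(t)=N_G(x)$.'' But this ``uniqueness'' statement is precisely the converse direction of the theorem you are trying to prove: the hypothesis ``$V(1)\cong\overline{P(1)}$'' automatically supplies such an $x$ (surjectivity of $V(1)\to V_0=R'$), so the quoted statement already says that every cyclic with module structure $\overline{P(1)}$ is a specialization of $T$. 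Neither \ref{generalp} nor \ref{degreepmodule} asserts this; they go the other way (construction and its module structure). So the orbit machinery does no work: if you could justify the quoted uniqueness, you could apply it to $U'$ directly and skip $\mathcal N$-stability entirely. A secondary issue: the element $x$ with $\sigma(x)=1\oplus x$ is only determined modulo $P_1$, and $N_G(x)=x^p+g(x)$ genuinely depends on that choice, so ``$N_G(x)=r$'' is not a property of the module structure alone but of the particular $x$ coming from the generic construction; this needs to be said and tracked. To repair the argument you would have to either (a) prove $T_r\cdot E_d\cong T_{r\oplus_p d}$ by a direct isomorphism of extensions (descent from the case of trivial $P(1)$ via a faithfully flat base change, say), or (b) abandon the orbit argument and prove the characterization directly, which is what the paper does.
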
 

\begin{proof} 
We have shown one direction. It is clear that if 
$P(1)' = P(1) \otimes_{R[G](1)} R'$ and we use $P(1)'$ 
to build $T'/S'$ as above, then there is an $\phi: S \to S'$ 
such that 
and $T' = T \otimes_S S'$. That is, we may assume $R = R'$, and replace 
$U'$ by $U$. 
We set $P(1) = U(1)$ and we know $P(p)^G \cong R *_p R$. 
Since $U(1) = P(1) \subset U$, we have an induced $G$ morphism 
$\phi: R[P_1] \to U$. Since $\phi$ is a $G$ morphism, 
$\phi(R[P(p)^G]) \subset R$. If $t \in P(p)^G$ generates 
$P(p)^G$ we know $R[P(p)^G] = R[t]$ and $t = (\psi - 1)/\eta^p$ 
where $\psi$ generates $P_1^p$. Since $\psi$ is a unit, 
$\phi$ extends to $R[P(1)](1/(1 + t\eta^p)) \to S$ 
and the result is clear.
\end{proof}

We next relate these questions about the $R[G]$ 
module structure of $S$ to the issues around the separable 
polynomial $Z^p + g(Z) - a$ that we started with. 
Recall that a root of such a polynomials satisfies 
$\sigma(\alpha) = \rho\alpha + 1$ with $1 + \alpha\eta$ 
invertible. 

\begin{theorem}\label{degreepnormalalmost} 
Let $S/R$ be Galois with group $G = <\sigma>$ of prime order 
$p$. 
The following are equivalent. 
\begin{enumerate}
\item There is an an $\alpha \in S$ with 
$\sigma(\alpha) = \rho\alpha + 1$ and 
$1 + \alpha\eta$ invertible. \label{degree:1}

\item $S(1) \cong 
R[G](1)$ as $R[G]$ modules. \label{degree:2}
\end{enumerate}
\end{theorem}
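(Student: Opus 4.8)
The plan is to funnel the whole equivalence through one map: the $R[G]$-module homomorphism $\psi\colon R[G](1)\to S(1)$, $v\mapsto v\alpha$, determined by an element $\alpha$ normalized so that $\sigma(\alpha)=\rho\alpha+1$. First I would fix notation. As $\eta=\rho-1$ is a non-zero-divisor, $R[G]=R[T]/(T^p-1)$ and $R[G](1)=R[G]/((\sigma-1)(\sigma-\rho)R[G])=R[T]/((T-1)(T-\rho))$, which is free of rank $2$ over $R$ with basis $\{1,\sigma-1\}$, with $(\sigma-1)^2=\eta(\sigma-1)$, $\operatorname{ann}(\sigma-\rho)=R(\sigma-1)$ and $\operatorname{ann}(\sigma-1)=R(\sigma-\rho)$. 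By \ref{proj} and a rank count $S\in\Pic(R[G])$, so $S(1)=S\otimes_{R[G]}R[G](1)$ is a rank-one projective $R[G](1)$-module (rank $2$ over $R$), identified as in \ref{degreepmodule} with $\{s\in S:(\sigma-1)(\sigma-\rho)s=0\}$. The one technical input I would isolate at the outset is that $(\sigma-\rho)S(1)=R$: "$\subseteq$" holds since $(\sigma-1)((\sigma-\rho)s)=0$ forces $(\sigma-\rho)s\in S^G=R$, and for "$\supseteq$" I apply \ref{projexact} to $f(t)=t-\rho$, observing that $(\sigma-\rho)^*=(\sigma^p-1)/(\sigma-\rho)$ has $\sigma-1$ as a factor, hence annihilates $R$, so $R\subseteq\operatorname{im}(\sigma-\rho)$.

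For (\ref{degree:2})$\Rightarrow$(\ref{degree:1}): given an isomorphism $\psi\colon R[G](1)\xrightarrow{\ \sim\ }S(1)$, set $\alpha_0=\psi(1)$. Then $(\sigma-\rho)\alpha_0\in(\sigma-\rho)S(1)=R$, and since $\psi$ restricts to an $R$-isomorphism of the free rank-one modules $R(\sigma-\rho)=(\sigma-\rho)R[G](1)$ and $(\sigma-\rho)S(1)=R$, carrying $\sigma-\rho$ to $(\sigma-\rho)\alpha_0$, that element is a unit $c\in R^*$. Rescaling $\psi$ by $c^{-1}$, I may assume $\alpha:=\alpha_0$ satisfies $\sigma(\alpha)=\rho\alpha+1$, and then $(\sigma-1)\alpha=\rho\alpha+1-\alpha=1+\alpha\eta$. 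Since $\psi$ also carries the submodule $R(\sigma-1)=\operatorname{ann}_{R[G](1)}(\sigma-\rho)$ isomorphically onto $\{s\in S(1):(\sigma-\rho)s=0\}=S_\rho$, I get $S_\rho=R(1+\alpha\eta)$; by \ref{invertibleu} a free generator of $S_\rho$ is a unit of $S$, so $1+\alpha\eta\in S^*$ and (\ref{degree:1}) holds.

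For (\ref{degree:1})$\Rightarrow$(\ref{degree:2}): with $\alpha$ as in (\ref{degree:1}) and $\beta:=1+\alpha\eta\in S^*$, note $(\sigma-1)(\sigma-\rho)\alpha=0$, so $\alpha\in S(1)$ and $v\mapsto v\alpha$ defines $\psi\colon R[G](1)\to S(1)$ with image $R\alpha+R(\sigma-1)\alpha=R\alpha+R\beta$. I must show this image is all of $S(1)$. One route uses \ref{generation}: $S(1)$ is generated over $R$ by $S_\rho$, $R$, and $\alpha$; here $S_\rho=R\beta$ (if $\sigma s=\rho s$ then $s\beta^{-1}$ is $\sigma$-fixed, hence in $S^G=R$) and $\beta\in R+R\alpha$, so $S(1)=R+R\alpha=R[G](1)\alpha$. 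A self-contained alternative reduces modulo each maximal ideal $M\subset R$ and applies Nakayama to the cokernel of $\psi$: over $k=R/M$ both sides are $2$-dimensional, so it suffices that $\bar\alpha,\bar\beta$ be $k$-independent in $\bar S=S\otimes_R k$, which is a short computation — if $c\bar\alpha+d\bar\beta=0$ then $(c+d\bar\eta)\bar\alpha=-d$, and applying $\sigma$ (fixing $c,d,\bar\eta$) with $\sigma(\bar\alpha)=\rho\bar\alpha+1$ forces $c=0$, whence $d\bar\beta=0$ and $d=0$ since $\bar\beta$ is a unit. Either way $\psi$ is a surjection of rank-one projective $R[G](1)$-modules, hence splits with kernel projective of rank $0$; so $\psi$ is an isomorphism and $S(1)\cong R[G](1)$.

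The main obstacle I anticipate is the pair of rigidity facts: that $(\sigma-\rho)S(1)=R$ exactly (not merely a free rank-one submodule of $R$), which is what makes the scalar $c$ above a unit and allows normalizing $\alpha$; and the surjectivity in (\ref{degree:1})$\Rightarrow$(\ref{degree:2}), which forces one either to borrow the normalized rank-one module formalism behind \ref{generation} or to run the reduction-mod-$M$ argument with care. By comparison the heart of the matter — that "$1+\alpha\eta\in S^*$" is precisely the assertion that $S_\rho$ is free on the generator $1+\alpha\eta$, via \ref{invertibleu} — is a clean restatement once the dictionary between $S(1)$ and $R[G](1)$ is in place.
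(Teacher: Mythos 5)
Your proof is correct and follows the same route as the paper's: in both directions it pivots on the identification of a generator of $S(1)$ with an $\alpha$ satisfying $\sigma(\alpha)=\rho\alpha+1$, and it invokes \ref{invertibleu} to convert "$1+\alpha\eta$ generates $S_\rho$" into "$1+\alpha\eta$ is a unit." The only difference is one of detail: where the paper simply asserts that $(\sigma-\rho)S(1)=R$ and that such an $\alpha$ generates $S(1)$, you supply the verifications (via \ref{projexact} for the former, and via \ref{generation} or the reduction-mod-$M$/Nakayama computation for the latter), which is a faithful filling-in of the paper's terse argument rather than a different approach.
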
 

\begin{proof} 
Assume \ref{degree:2}. 
Let $\alpha \in S(1)$ generate over 
$R[G]$. Since $S(1) \to S(0) = R$ 
is onto, we can assume $\alpha$ maps to 
$1$. That is, $(\sigma - \rho)(\alpha) = 
1$ or $\sigma(\alpha) = \rho\alpha + 1$. 
Since $(\sigma - 1): S(1) \to S_1 \cong R$ 
is onto, $(\sigma - 1)(\alpha) = 
\eta\alpha + 1$ must generate 
$S_1$ over $R$. By \ref{invertibleu}, 
this implies $1 + \eta\alpha$ is invertible. 

Conversely, if $(\sigma - \rho)\alpha 
= 1$ and $1 + \eta\alpha$ is invertible 
then $\alpha$ generates $S(1)$ 
which implies $S(1) \cong R[G](1)$. 
\end{proof} 

Suppose, as above, that $S(1) \cong R[G](1)$. 
That is, suppose there is an $\alpha \in S(1)$ 
such that $\sigma(\alpha) = \rho\alpha + 1$ and 
$1 + \eta\alpha \in S$ is invertible. We claim: 

\begin{proposition}\label{generate} 
For all $0 \leq i \leq p-1$,
$S(i)$ is the span over $R$ of $1,\ldots,\alpha^i$.  Also, 
$S(i) \cong R[G](i)$. 
\end{proposition}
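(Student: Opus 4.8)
The plan is to first identify $S$ with the concrete ring $R[\alpha]$, then get the span statement by an explicit filtration argument, and finally deduce the module isomorphism by climbing the fiber--product tower of \ref{exact}. To begin, I would check that $S = R[\alpha]$, free over $R$ on $1,\alpha,\dots,\alpha^{p-1}$: putting $a = \alpha^p + g(\alpha) = N_G(\alpha)$, the element $1 + a\eta^p = (1+\eta\alpha)^p$ is a unit by hypothesis, so \ref{prop:1.1} makes $R[Z]/(Z^p+g(Z)-a)$ a $G$ Galois extension of $R$, and $Z\mapsto\alpha$ is a $G$-equivariant $R$-algebra map (both sides carry $\sigma$ acting as $\rho(\ )+1$), hence an isomorphism of $G$ Galois extensions of $R$. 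I will also use, from \ref{projexact}, that with $f_i(t) = (t-\rho^{i+1})\cdots(t-\rho^{p-1})$ and $h_i(t) = (t^p-1)/f_i(t) = (t-1)(t-\rho)\cdots(t-\rho^i)$ one has $S(i) = f_i(\sigma)S = \{\,s\in S\mid h_i(\sigma)s=0\,\}$, while $R[G](i) = f_i(\sigma)R[G]\cong R[G]/h_i(\sigma)R[G]$ is $R$-free of rank $i+1$.

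For the span statement let $M_k = R + R\alpha + \cdots + R\alpha^k\subseteq S$. Since $\sigma(\alpha^k) = (\rho\alpha+1)^k$ is a polynomial in $\alpha$ of degree $\le k$ with leading coefficient $\rho^k$, each $M_k$ is $\sigma$-stable and $(\sigma-\rho^k)M_k\subseteq M_{k-1}$; hence $h_i(\sigma)M_i = (\sigma-\rho^i)(\sigma-\rho^{i-1})\cdots(\sigma-1)M_i = 0$, the factors peeling off one $M_k$ at a time and ending at $(\sigma-1)M_0 = (\sigma-1)R = 0$, so $M_i\subseteq S(i)$. Conversely $S(i) = f_i(\sigma)S$ is $R$-spanned by the $f_i(\sigma)\alpha^k$ for $0\le k\le p-1$; for $k\le i$ clearly $f_i(\sigma)\alpha^k\in M_k\subseteq M_i$, and for $i+1\le k\le p-1$ I would write $f_i(\sigma) = \big[\prod_{j=k+1}^{p-1}(\sigma-\rho^j)\big]\big[\prod_{j=i+1}^{k}(\sigma-\rho^j)\big]$ and apply the second block one factor at a time: $(\sigma-\rho^k)\alpha^k\in M_{k-1}$, then $(\sigma-\rho^{k-1})M_{k-1}\subseteq M_{k-2}$ (the $\alpha^{k-1}$-coefficient cancels), and so on through all $k-i$ factors, landing in $M_i$, after which the first block preserves $M_i$. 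Thus $S(i)\subseteq M_i$, and therefore $S(i) = M_i$ is $R$-free on $1,\dots,\alpha^i$.

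For the isomorphism $S(i)\cong R[G](i)$ I would induct on $i$, the case $i=1$ being the hypothesis and $i=0$ being $S(0)=\tr S = R = R[G](0)$. By \ref{proj} and \ref{projexact}, $S(i) = f_i(\sigma)S\cong S/h_i(\sigma)S$ is a rank one projective over $R[G](i)\cong R[G]/h_i(\sigma)R[G]$, so it suffices to kill its class. I would compare the fiber square presenting $R[G](i)$ as the pullback of $R[G](i-1)\to(R/\eta^{i-1})^{(i-1)}\leftarrow R^{(i-1)}$ with the analogous square for $S(i)$: one leg is $S(i-1)$, trivial by induction, and the other is $S/(\sigma-\rho^{i-1})S\cong S_\rho^{\otimes(i-1)}$, which is trivial because $S_\rho\cong R$ — indeed $1+\eta\alpha$ is a unit of $S$ lying in $S_\rho$, so $S_\rho = R(1+\eta\alpha)\cong R$ (compare \ref{invertibleu}), and likewise $(1+\eta\alpha)^j$ freely generates $S_{\rho^j}$. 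The exact sequence of \ref{fiber} then identifies the class of $S(i)$ with the image of some unit of $R/\eta^{i-1}$, and $S(i)\cong R[G](i)$ reduces to showing this unit lifts to $R^*$.

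The main obstacle is precisely this last point: verifying that the gluing datum of $S(i)$ — which one must read off from the presentation $S=R[\alpha]$ together with the compatible free generators $(1+\eta\alpha)^j$ of the eigen-submodules $S_{\rho^j}$ — descends to a unit of $R$, so that the fiber square for $S(i)$ is isomorphic to the one for $R[G](i)$. Everything else (the two filtration inclusions for the span, and the inductive assembly of the tower) is routine bookkeeping once $S=R[\alpha]$ and $S_\rho\cong R$ are in hand.
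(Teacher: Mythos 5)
Your argument for the span statement is correct and is a reasonable alternative to the paper's: where the paper establishes $S(i)=R\alpha^{i}+S(i-1)$ by computing $(\sigma-\rho^{i})(R\alpha^{i}+S(i-1))$ and identifying the kernel as $R(1+\eta\alpha)^{i}$, you prove the two inclusions $M_{i}\subseteq S(i)$ and $S(i)=f_{i}(\sigma)S\subseteq M_{i}$ directly via the filtration $(\sigma-\rho^{k})M_{k}\subseteq M_{k-1}$. Both routes lean on the same facts (\ref{projexact} plus freeness of $S$ on $1,\dots,\alpha^{p-1}$), and yours is if anything slightly cleaner for this half.

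However, the second half — $S(i)\cong R[G](i)$ — is where the actual content of the proposition lies, and you have not proved it. You reduce, via the fiber--product description of $R[G](i)$ and \ref{fiber}, to the assertion that the gluing unit $\bar u\in(R/\eta^{i}R)^{*}$ comparing the two legs ($S(i-1)\cong R[G](i-1)$ by induction, $S_{\rho^{i}}\cong R$ via $(1+\eta\alpha)^{i}$) lies in the image of $R^{*}\cdot(R[G](i-1))^{*}$, and then you explicitly flag this as ``the main obstacle'' that you have not resolved. But that is precisely the point at issue: a priori both legs being trivial says only that the class of $S(i)$ lies in the image of $(R/\eta^{i}R)^{*}/\mathrm{im}(R^{*}\oplus R[G](i-1)^{*})$ inside $\Pic(R[G](i))$, and this group is generally nontrivial, so triviality of the legs alone proves nothing. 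The paper does not leave this to bookkeeping; it constructs a specific candidate generator $\gamma_{i}\in\alpha S(i-1)$ by inverting the matrix $A_{i}$ of $(\sigma-\rho^{i})|_{\alpha S(i-1)}$ in the $\alpha$-power basis, observes that $A_{i}$ and its inverse have entries in $\Z[\rho]$ with unit determinant, and then verifies that the $\alpha^{i}$-coefficient of $\gamma_{i}$ is a unit of $\Z[\rho]$ by specializing to the split extension $\Z[\rho][Z]/(Z^{p}+g(Z))$, where $\gamma_{i}$ can be identified with the explicit normal-basis partial traces $\beta_{i}$. Without some argument of this kind — an explicit generator, or an explicit identification of $\bar u$ and a lift — your proof is incomplete at its crux.
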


\begin{proof} 
We prove the first statement by induction. We know 
$S(i) = \{x \in S | (\sigma - \rho^i)(x) \in S(i-1)\}$. 
We have $(\sigma - \rho^i)(\alpha^i) = 
(\rho\alpha + 1)^i - (\rho^i)\alpha^i$ and this is 
in the span of $1,\ldots,\alpha^{i-1}$ and thus is in 
$S(i-1)$ by induction. It follows that $\alpha^i \in S(i)$. 

We claim $(\sigma - \rho^i)(R\alpha^i + S(i-1)) = 
S(i-1)$. Since $S(i-1)/(\sigma - \rho^i)S(i-1) 
\cong (R/\eta^i)^{(i)}$, it suffices to show that 
the image of $(\sigma - \rho^i)\alpha^i$ is not in 
$\eta{R}/\eta^iR$. Since $S(i-2) = (\sigma - \rho^{i-1})S(i-1)$, 
and if $z = (\sigma - \rho^{i-1})z'$, then the 
image of $z$ is the image of $(\rho^i - \rho^{i-1})z'$ 
which is in $\eta{R}/\eta^{i}R$. That is, 
all the $\alpha^j$ for $j < i-1$ map to $\eta{R}/\eta^{i}R$. 
However, the $\alpha^i$ coefficient of 
$(\sigma - \rho^i)(\alpha^i)$ is $i\rho^{i-1}$ which 
does not map to $\eta{R}/\eta^i{R}$ because $\alpha^{i-1}$ 
does not. This proves the claim. 

The kernel of the map 
$$S(i) \buildrel{\sigma - \rho^i}\over\longrightarrow S(i-1)$$ 
is $J = \{x \in S | \sigma(x) = \rho^ix\}$. Since 
$(1 + \alpha\eta)^i$ is a unit and is an element of $J$ 
it generates $J$. But $(1 + \alpha\eta)^i \in R\alpha^i + S(i-1)$. 
This proves $R\alpha^i + S(i-1) = S(i)$. 

We also prove the second statement by induction but 
employing a more   
extended argument. 
We assume $S(i-1) \cong R[G](i-1)$ via a choice of 
generator $\gamma_{i-1} \in \alpha^{i-1} + S(i-2)$. 
Also, $S(i)/(\sigma - \rho^i)S(i) \cong R(i)$ 
and we saw above that $(\sigma - \rho^i)S(i) = S(i-1)$
but we need to be explicit about the choices of 
generators. 

\begin{lemma}
We have that $(\sigma - 1)\ldots(\sigma - \rho^{i-1})(\alpha^i)
= u(1 + \eta\alpha)^i$ where $u \in \Z[\rho]$ is a unit. 
\end{lemma}

\begin{proof}
Since $(\sigma - 1)\ldots(\sigma - \rho^{i-1})(S(i-1)) = 0$, 
we can focus on the leading term of 
$(\sigma - 1)\ldots(\sigma - \rho^{i-1})(\alpha^i)$ 
which is $(\rho^i - 1)(\rho^i - \rho)\ldots(\rho^i - \rho^{i-1}) = 
u\eta^i$ for $u \in \Z[\rho]$ a unit. Comparing this to the 
leading coefficient of $(1 + \eta\alpha)^i$ yields the result.
\end{proof} 

To continue with the proof of the second statement 
of \ref{generate}, we analyze the map $(\sigma- \rho^i): S(i) 
\to S(i-1)$ a bit further. Let $\alpha{S(i-1)} \subset S(i)$ 
be the $R$ span of $\alpha^i,\ldots,\alpha$. Note that 
$R(1 + \eta\alpha)^i \, \cap \, \alpha{S(i-1)} = 0$ 
and $R(1 + \alpha\eta)^i + \alpha{S(i-1)} = S(i)$. 
It follows that if $T_i$ is the restriction of 
$(\sigma - \rho^i)$ to $\alpha{S(i-1)}$ then 
$T_i$ is an isomorphism. If we use the basis 
$\alpha^i,\ldots,\alpha$ for $\alpha{S(i-1)}$ 
and $\alpha^{i-1},\ldots,1$ for $S(i-1)$ then the matrix 
associated to $T_i$, call it $A_i$, has entries from 
$\Z[\rho]$ and unit determinant. 

We define $\gamma_i$ inductively as 
$T_i^{-1}(\gamma_{i-1})$. Note that $\gamma_1 = \alpha \in 
S(1)$ and $\gamma_i \in \alpha{S(i-1)}$. 
That is, $\gamma_i$ is the unique element of 
$S(i)$ such that $(\sigma - \rho^i)\gamma_i = \gamma_{i-1}$ 
and $\gamma_i \in \alpha{S(i-1)}$. We claim the following, 
which will finish the proof of \ref{generate}.   

\begin{proposition}\label{gammagenerates}
$\gamma_i$ generates $S(i)$ over $R[G](i)$. 
\end{proposition}

\begin{proof} We have assumed by induction that 
$\gamma_{i-1}$
generates $S(i-1)$. It suffices to show that 
the coefficient of $\alpha^i$ in $\gamma_i$ is a unit. 
Now by definition $\gamma_i$ is in the $\Z[\rho]$ 
span of $\alpha^i,\ldots,\alpha$.  Since the inverse 
of all the matrices $A_j$ have entries in $\Z[\rho]$, 
we will show this coefficient is a unit in $\Z[\rho]$. 
Note this is a question purely about these 
matrices over $\Z[\rho]$. Thus we need only prove this 
fact for a specific Galois extension of $\Z[\rho]$. Also, if 
$\gamma \in \alpha{S(i-1)}$ generates $S(i)$ the coefficient 
of $\alpha^i$ must, conversely, be a unit because 
this coefficient is the image of $\alpha^i$ in 
$S(i)/S(i-1)$. 

The obviously easiest extension to understand 
is the split extension of $\Z[\rho]$. 
That is, we consider $T = \Z[\rho][Z]/(Z^p + g(Z))$. 
If $\alpha$ is a root of $Z^p + g(Z)$, then 
$(1 + \eta\alpha)^p = 1$ and so 
$\alpha = (1,1+\rho,\ldots,1 + \cdots + \rho^{p-2},0)$ 
as an element of $T = \Z[\rho] \oplus \cdots \oplus \Z[\rho]$.

Now $\beta = (1,\ldots,0) \in T$ clearly 
generates $T$ over $R[G]$. Set
$\beta_i = (\sigma - \rho^{p-1})(\sigma - \rho^{p-2})\ldots(\sigma - \rho^{i+1})(\beta)$ so  $\beta = \beta_{p-1}$ and 
$(\sigma - \rho^i)\beta_i = \beta_{i-1}$. 
Since $(t - \rho^{p-1})\ldots(t - \rho^{i+1})$ has degree 
$p - 1 - i$, $\beta_i$ has its last $i$ entries equal to $0$.  
In particular, $\beta_i$, for $i > 0$, has a 0 
in the last entry. Also $\beta_0 = \tr{\beta} = 
(1,1,\ldots,1)$ which is the element $1 \in \Z[\rho]$. 

Let $\gamma_i \in \alpha{T(i-1)}$ be defined for this extension. 
Since $\alpha{T(i-1)}$ is the span of $\alpha^i,\ldots,\alpha$, 
for $i > 0$, $\gamma_i$ has last entry $0$. 

\begin{lemma} 
For this extension, $\gamma_i = \beta_i$.
\end{lemma}

\begin{proof}
It is easy to check that $\gamma_0 = 1 = \beta_0$ and 
$\gamma_1 = \alpha = \beta_1$. Assuming the result for 
$i-1$, we have $(\sigma - \rho^i)(\beta_i - \gamma_i) = 0$ 
so $\beta_i - \gamma_i = x(1 + \alpha\eta)^i$ for some 
$x \in \Z[\rho]$. We see that the last entry of 
$x(1 + \alpha\eta)^i$ is $x$, and so $x = 0$. 
\end{proof}

Since $\beta_i$ generates $T(i)$, its $\alpha^i$ coefficient
must be a unit of $\Z[\rho]$. 
\end{proof}
We have proven \ref{gammagenerates} and \ref{generate}
\end{proof} 

We now can extend \ref{degreepnormalalmost}. 

\begin{theorem}\label{degreepnormal} 
Let $S/R$ be Galois with group $G = <\sigma>$ of prime order 
$p$. 
The following are equivalent. 
\begin{enumerate}
\item There is an an $\alpha \in S$ with 
$\sigma(\alpha) = \rho\alpha + 1$ and 
$1 + \alpha\eta$\linebreak invertible. \label{pnormal:1}
\item $S(1) \cong R[G](1)$ as $R[G]$ modules. \label{pnormal:2}
\item $S \cong R[G]$ as $R[G]$ modules. \label{pnormal:3}
\end{enumerate}
\end{theorem}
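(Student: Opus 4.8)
The plan is to build on Theorem~\ref{degreepnormalalmost}, which already records the equivalence of~(\ref{pnormal:1}) and~(\ref{pnormal:2}); the only genuinely new content is tying in~(\ref{pnormal:3}). I would arrange the argument as a cycle: quote $(\ref{pnormal:1}) \Leftrightarrow (\ref{pnormal:2})$ from~\ref{degreepnormalalmost}, then prove $(\ref{pnormal:3}) \Rightarrow (\ref{pnormal:2})$ and $(\ref{pnormal:2}) \Rightarrow (\ref{pnormal:3})$.

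For $(\ref{pnormal:3}) \Rightarrow (\ref{pnormal:2})$ (the easy half): by Lemma~\ref{proj}, $S$ is projective over $R[G]$, so the exactness statements of~\ref{projexact} apply and identify $S(1)$ with the kernel of $(\sigma - \rho)(\sigma - 1)$ acting on $S$, equivalently with $S \otimes_{R[G]} R[G](1)$, where $R[G](1) = [(\sigma - \rho)(\sigma - 1)]^{*}R[G]$ is attached functorially to $R[G]$. Hence an $R[G]$-module isomorphism $S \cong R[G]$ carries this kernel onto the corresponding kernel in $R[G]$, i.e. $S(1) \cong R[G](1)$.

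For $(\ref{pnormal:2}) \Rightarrow (\ref{pnormal:3})$: I would first invoke Theorem~\ref{degreepnormalalmost} to extract from~(\ref{pnormal:2}) an element $\alpha \in S$ with $\sigma(\alpha) = \rho\alpha + 1$ and $1 + \alpha\eta$ invertible. That element is exactly the input Proposition~\ref{generate} needs, so \ref{generate} gives $S(i) \cong R[G](i)$ for every $0 \le i \le p-1$. Taking $i = p-1$ and using that $S(p-1) = S$ and $R[G](p-1) = R[G]$ yields $S \cong R[G]$, which is~(\ref{pnormal:3}).

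I do not anticipate a real obstacle; the substance is carried by \ref{degreepnormalalmost} and, above all, by Proposition~\ref{generate} (whose engine in turn is the explicit analysis of the maps $(\sigma - \rho^{i}) : S(i) \to S(i-1)$ and the unit-coefficient computation for the split extension). The one point deserving a sentence of care is the identification $S(i) = S \otimes_{R[G]} R[G](i)$ together with its naturality in $R[G]$, so that~(\ref{pnormal:3}) forces~(\ref{pnormal:2}) exactly and not merely up to a Picard twist; this is immediate once $R[G](i)$ is written as $f_{i}(\sigma)R[G]$ and one tensors the exact sequences of~\ref{projexact} with the projective module $S$.
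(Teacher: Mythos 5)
Your proposal is correct and matches the paper's intent: the paper presents Theorem~\ref{degreepnormal} as an immediate consequence of Theorem~\ref{degreepnormalalmost} and Proposition~\ref{generate} without writing out a separate argument, and you have reconstructed exactly that cycle, with $(\ref{pnormal:2}) \Rightarrow (\ref{pnormal:3})$ via \ref{generate} at $i = p-1$ and $(\ref{pnormal:3}) \Rightarrow (\ref{pnormal:2})$ via the functoriality of $M \mapsto M(1)$ from \ref{projexact}.
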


With this tool we consider the specialization process 
of \ref{specializedegreep} above a bit more. 
Assume we have a $P(1) \in \Pic(R[G](1))$ with 
$P(p)^G \cong R *_p R$ and hence a $G$ Galois 
extension $T/S$ where $S = R[t](1/(1 + t\eta^p)$. 
This defines $T_0/R$ using $\phi_0: S \to R$ 
defined by $\phi(t) = 0$. Note that this is not split if 
$P(1)$ is not trivial. If $1 + a\eta^p \in R^*$ 
then there is also a $\phi_a: S \to R$ defined by 
$\phi_a(t) = a$ and hence a $T_a/R$ which is also 
$G$ Galois. The question we need to ask is the 
relationship between $T_0$ and $T_a$ or what can we say about 
$D = T_aT_0^{-1}$ which is also $G$ Galois over $R$.

\begin{corollary}\label{differencedegreep}
$D \cong R[Z]/(Z^p + g(Z) - a)$ with Galois 
group generated by $\sigma$ where $\sigma(x) = \rho{x} + 1$ 
and $x$ is the image of $Z$. That is, $D \cong R[G]$ as an 
$R[G]$ module. 
\end{corollary}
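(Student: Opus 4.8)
The plan is to prove the Corollary in three stages: first show that $D$ is free of rank one as an $R[G]$ module, then produce the defining polynomial, and finally identify its constant term with $a$.

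\textbf{Step 1 (module structure).} Since $D = T_aT_0^{-1}$, we have $T_a \cong D\cdot T_0$ as $G$ Galois extensions, so Proposition \ref{moduleproduct} (with $H=G$ cyclic of order $p$) gives an isomorphism of $R[G]$ modules $T_a \cong D\otimes_{R[G]}T_0$. Tensoring over $R[G]$ with $R[G](1)$ yields $T_a(1)\cong D(1)\otimes_{R[G](1)}T_0(1)$. By Lemma \ref{degreepmodule} and the specialization discussion preceding Theorem \ref{specializedegreep} — noting that both $\phi_a$ and $\phi_0$ restrict to the identity of $R$ — we get $T_a(1)\cong P(1)\cong T_0(1)$ as $R[G](1)$ modules. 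As $R[G](1)$ is a commutative ring, its Picard group is a group, so cancelling $P(1)$ gives $D(1)\cong R[G](1)$. Theorem \ref{degreepnormal} then shows $D\cong R[G]$ and provides $\alpha\in D$ with $\sigma(\alpha)=\rho\alpha+1$ and $1+\alpha\eta\in D^*$.

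\textbf{Step 2 (the polynomial up to equivalence).} Because $\sigma$ permutes the $\sigma^i(\alpha)$ and $Z^p+g(Z)-(\alpha^p+g(\alpha)) = \prod_i(Z-\sigma^i(\alpha))$ (as in the text before Proposition \ref{prop:1.1}), the element $a':=\alpha^p+g(\alpha)$ lies in $D^\sigma=R$ and $1+a'\eta^p=(1+\alpha\eta)^p\in R^*$. The $R$-algebra map $R[Z]/(Z^p+g(Z)-a')\to D$ sending $Z$ to $\alpha$ is a morphism of $G$ Galois extensions, hence an isomorphism, so $D\cong R[Z]/(Z^p+g(Z)-a')$ with the stated Galois action. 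It remains to identify $a'$ with $a$.

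\textbf{Step 3 (identifying the constant — the main obstacle).} I would first reduce to $R$ local by faithful flatness: then $P(1)$ is free (Proposition \ref{local}), $T/S$ is the generic extension $S[Z]/(Z^p+g(Z)-t)$ of Theorem \ref{generalp}, and $T_0=R[Z]/(Z^p+g(Z))$ is \emph{split}, since $1+0\cdot\eta^p=1$ is a unit and the roots $\sigma^i(0)=\rho^{i-1}+\cdots+1$ are pairwise distinct; hence $T_0^{-1}$ is split too, and by the lemma that the product of an extension with the split extension $\Ind_{\{1\}}^G(R)$ recovers that extension, $D=T_aT_0^{-1}\cong T_a=R[Z]/(Z^p+g(Z)-a)$. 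Thus locally $a'$ is equivalent to $a$ in the sense of Proposition \ref{prop:1.2}. The delicate point — and where I expect the real work to be — is globalizing this: one must show the single global $a'$ from Step 2 can be brought to $a$ after replacing $\alpha$ by $\alpha\oplus z$ for a $\sigma$-fixed $z\in R$ with $1+z\eta\in R^*$ (which changes $a'$ to $a'\oplus_p(g(z)+z^p)$ by Proposition \ref{prop:1.2}); equivalently, that the trivial-module-structure cyclic $D\cdot(R[Z]/(Z^p+g(Z)-a))^{-1}$, which is locally split, is globally split. I would do this by tracking the rank-one submodules $(T_a)_\rho\subset T_a$ and $(T_0)_{\rho^{-1}}\subset T_0$: their tensor product sits inside $T_a\otimes_R T_0$ as a canonically free rank-one $R$-module that coincides with $D_\rho$, and computing the $p$-th power map $D_\rho^{\otimes p}\to D^\sigma=R$ through the relation $\theta_0^{\,p}=1$ in $T_0$ shows that a natural generator is sent to $1+a\eta^p$ — exhibiting $a$ itself as an admissible constant for $D$ and hence giving $D\cong R[Z]/(Z^p+g(Z)-a)$, with $\sigma(x)=\rho x+1$ on the image $x$ of $Z$.
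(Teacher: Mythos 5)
Your Steps 1 and 2 reproduce the paper's argument, though you spell it out more carefully than the one-sentence proof in the text: the paper simply invokes Proposition~\ref{moduleproduct} to read $T_0D=T_a$ as an identity in $\Pic(R[G])$, notes $T_0$ and $T_a$ have the same class there (both being specializations of the same versal $T/S$, so by Lemma~\ref{degreepmodule} both $T_0(1)$ and $T_a(1)$ recover $P(1)$), and concludes $D$ is trivial in $\Pic(R[G])$, whence $D\cong R[G]$. Your route through $(\cdot)(1)$ and $\Pic(R[G](1))$, ending with Theorem~\ref{degreepnormal}, is the same in substance and perhaps a bit more transparent. Step 2 is then routine.

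Where you diverge is Step 3, and there you have misread what is being claimed. The ``$a$'' in the corollary is a generic label, not the specific $a$ fixed in the sentence ``If $1+a\eta^p\in R^*$ \dots'' two paragraphs earlier. Two pieces of internal evidence make this clear: the corollary's own gloss ``That is, $D\cong R[G]$ as an $R[G]$ module'' presents the polynomial description as a \emph{restatement}, not a strengthening, of triviality of the Galois module structure; and the paper immediately points out that the corollary is a special case of Theorem~\ref{extenddifference}, whose conclusion is written out as ``$D=R[Z]/(Z^p+g(Z)-a)$ \emph{for some} $a$.'' The content of the corollary is exactly the existence of some Artin--Schreier-type presentation with trivial $R[G]$ structure, which your Steps 1 and 2 already deliver. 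Your Step 3, attempting to pin the constant down to the specific $a$ used to define $T_a$, addresses a claim the paper does not make and does not prove; you correctly flag it as the hard part and it remains a sketch (the passage from the local computation via $\theta_0^p=1$ to a global identification of generators of $D_\rho^{\otimes p}$ is not carried out). Since that stronger statement is not the corollary, this does not constitute a gap in your proof of what the paper actually asserts — it is extra, unfinished work on a side question.
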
 

The summary of the above result is that if $T$ and $T_a$ 
have the same Galois module structure they differ by an extension 
with trivial Galois module structure. The proof is 
trivial since ``Galois module structure" also means 
``element of $\Pic(R[G])$". If $T_0D = T_a$ then by 
\ref{moduleproduct} we have $T_0D = T_a$ in the Picard 
group $\Pic(R[G])$ and so $D$ is the unit in this Picard group 
and hence $D \cong R[G]$ as $R[G]$ modules. 

Note that the above result is also a special case of \ref{extenddifference}, 
which has an independent proof. 

From Theorem~\ref{fiber}, we can extract the exact sequence 
$R^* \oplus R^* \to (R/\eta{R})^* \to \Pic(R[C](1)) 
\to \Pic(R) \oplus \Pic(R)$.  
In this paper we will frequently specialize to the case 
$R = \Z[\rho][x_1,\ldots,x_t]_Q$ where $Q \subset  1 + \eta^pM$ and 
$M \subset \Z[x_1,\ldots,x_t]$ is generated 
by the $x_i$'s. 
Then $R/\eta{R} = F_p[x_1,\ldots,x_n]$ so $(R/\eta{R})^* = 
F_p^*$ and $F_p^*$ is in the image of $\Z[\rho]^* \subset R^*$.  
Thus in this special case $\Pic(R[C](1)) \to \Pic(R) \oplus 
\Pic(R)$ is injective. Moreover the element $S(1)$ we care about 
automatically has $S(1)/(\sigma - 1)S(1) \cong R$ and so we see that 
$P(1) \in \Pic(R[C](1))$ is trivial if and only if $P_1 \in 
\Pic(R)$ is trivial. However, in this special case $\Pic(R) = 
\Pic(\Z[\rho])$ and this is not trivial but is concentrated 
in the constants. From this we can see why our generic degree 
$p$ cyclic Galois $S/R$ turns out to be simple. 

\begin{lemma}
Suppose $R = \Z[\rho][x_1,\ldots,x_n](1/s)$ is as above 
and $S/R$ is cyclic degree $p$ Galois. Let $M = (x_1,\ldots,x_n)R$. 
Then $S/R$ is simple (i.e. $S \cong R[G])$) if $S/MS$ is the 
split Galois extension of $R/M$.
\end{lemma}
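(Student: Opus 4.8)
The plan is to reduce the claim about $S/R$ being simple (i.e. $S \cong R[G]$) to the question of whether the element $S(1) \in \Pic(R[G](1))$ is trivial, using Theorem~\ref{degreepnormal}, and then to analyze triviality of $S(1)$ via the fiber-product exact sequence of Theorem~\ref{fiber} specialized to the ring $R = \Z[\rho][x_1,\ldots,x_n](1/s)$. By Theorem~\ref{degreepnormal} it suffices to show $S(1) \cong R[G](1)$ as $R[G]$ modules, and by the discussion immediately preceding the lemma, for this special $R$ we have the injection $\Pic(R[G](1)) \hookrightarrow \Pic(R) \oplus \Pic(R)$, under which $S(1)$ maps to $(S_1, R)$ where $S_1 = S(1)/(\sigma-1)S(1) \cong R$ automatically (since $S(1) \to S(0)=R$ is onto). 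Hence $S(1)$ is trivial in $\Pic(R[G](1))$ if and only if $P_1 := S(1)/(\sigma - \rho)S(1) = S_\rho$ is trivial in $\Pic(R)$.

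Next I would pin down $\Pic(R)$. Because $R = \Z[\rho][x_1,\ldots,x_n](1/s)$ with $s \in 1 + \eta^p M$, localization at $s$ changes nothing Zariski-locally away from a set disjoint from $M$, and in particular $\Pic(R) \cong \Pic(\Z[\rho][x_1,\ldots,x_n]) \cong \Pic(\Z[\rho])$ by homotopy invariance of $\Pic$ for the polynomial extension of the regular ring $\Z[\rho]$ (and the fact that inverting $s$ cannot create new line bundles since $s$ is a unit congruent to $1$ mod $M$). So $\Pic(R)$ is "concentrated in the constants": the composite $\Pic(\Z[\rho]) \to \Pic(R) \to \Pic(R/M) = \Pic(F_p[x_1,\ldots,x_n]) = 0$ is not relevant — what matters is that the reduction map $\Pic(R) \to \Pic(R/M)$ has the same source as $\Pic(\Z[\rho]) \to \Pic(F_p) = 0$, but more usefully, the restriction of a line bundle on $R$ to the closed subscheme $\Spec(R/M) \cong \Spec F_p[x_1,\ldots,x_n]$, followed by further restriction to $\Spec F_p$, is an isomorphism $\Pic(R) \xrightarrow{\sim} \Pic(\Z[\rho])$. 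Therefore $P_1 \in \Pic(R)$ is trivial if and only if its image $\overline{P_1} := P_1 \otimes_R R/M = \bar S_\rho$ is trivial in $\Pic(\Z[\rho])$, i.e. after base change all the way down to $\Spec F_p$ (which has trivial Picard group) — so in fact $P_1$ is trivial as soon as its reduction mod $M$ is free.

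Finally I would invoke the hypothesis: $S/MS$ is the split Galois extension of $R/M = F_p[x_1,\ldots,x_n]$. For the split extension $\bar S = \Ind_{\{1\}}^G(R/M)$, the submodule $\bar S_\rho = (\sigma - \rho)^* \bar S$ is visibly free of rank one over $R/M$ — indeed $\bar S \cong (R/M)[G]$ and $(R/M)[G]_\rho = (R/M)\cdot(\sum_i \rho^{-i}\sigma^i)$ is free (here one uses that $\eta$ is invertible mod $M$ so Kummer theory applies, or just Corollary~\ref{invertibleu}). Hence $\overline{P_1} = \bar S_\rho$ is free over $R/M$, so $P_1$ is free over $R$ by the previous paragraph, so $S(1) \cong R[G](1)$, so $S \cong R[G]$ by Theorem~\ref{degreepnormal}. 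The main obstacle I anticipate is making the identification $\Pic(R) \cong \Pic(\Z[\rho])$ and, crucially, the claim that triviality of a line bundle on $R$ can be detected after restriction to the closed fiber $\Spec(R/M)$ — this needs that $M$ is contained in the Jacobson-type radical behavior of the localization, or more cleanly, that $R \to R/M$ together with the section $\Z[\rho] \to R$ splitting it gives $\Pic(R)$ as a retract of $\Pic(R/M)$-relative data; I would handle it by noting $\Z[\rho] \to R \to R/M \to$ (further quotient to $F_p$) and using that the outer composite $\Pic(\Z[\rho]) \to \Pic(F_p) = 0$ factors through $\Pic(R)$, combined with $\Pic(\Z[\rho]) \xrightarrow{\sim} \Pic(R)$, to force $\Pic(R) \to \Pic(R/M)$ to be injective — the only subtlety is verifying the first isomorphism, which is standard regularity/homotopy invariance for the regular Noetherian ring $\Z[\rho]$.
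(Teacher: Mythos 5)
Your overall plan is the right one and matches the paper's: reduce to triviality of $S(1)$ via Theorem~\ref{degreepnormal}, pass through the injection $\Pic(R[G](1)) \hookrightarrow \Pic(R) \oplus \Pic(R)$ so that only $P_1 = S_{\rho}$ matters, and then detect triviality of $P_1$ after reduction mod $M$. But the middle step — why $\Pic(R) \to \Pic(R/M)$ is injective — is where your write-up goes wrong, and the errors trace to a repeated conflation of $R/M$ with $R/\eta R$. Since $s \in 1 + \eta^p M$, one has $s \equiv 1 \pmod M$, so $R/M = \Z[\rho]$, not $F_p[x_1,\ldots,x_n]$ (that is $R/\eta R$). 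Consequently the assertion ``$\Spec(R/M) \cong \Spec F_p[x_1,\ldots,x_n]$'' is false, and so is the parenthetical ``$\eta$ is invertible mod $M$'' — $\eta = \rho - 1$ is a non-unit prime of $\Z[\rho] = R/M$ lying over $p$. (Your direct computation that $(R/M)[G]_{\rho}$ is free of rank one is fine and does not need that claim, and \ref{invertibleu} also covers it.)

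The more serious problem is the argument you propose for injectivity of $\Pic(R) \to \Pic(R/M)$: knowing that the zero map $\Pic(\Z[\rho]) \to \Pic(F_p) = 0$ factors through $\Pic(R)$, together with $\Pic(\Z[\rho]) \cong \Pic(R)$, tells you nothing about $\Pic(R) \to \Pic(R/M)$; it is a non sequitur. The correct (and shorter) argument, which the paper's one-line proof records, is the retraction you briefly gesture at and then abandon: the ring maps $\Z[\rho] \hookrightarrow R \twoheadrightarrow R/M = \Z[\rho]$ compose to the identity, hence $\Pic(\Z[\rho]) \to \Pic(R) \to \Pic(R/M) = \Pic(\Z[\rho])$ is the identity. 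Since the first arrow is already known to be an isomorphism (homotopy invariance over the regular ring $\Z[\rho]$ plus the fact that localizing at $s$ does not change $\Pic$), the second arrow $\Pic(R) \to \Pic(R/M)$ is forced to be an isomorphism, and in particular injective. With that fix in place, the rest of your argument (reading off triviality of $\bar P_1 = \bar S_{\rho}$ from the hypothesis that $S/MS$ is split, hence triviality of $P_1$, hence $S(1) \cong R[G](1)$, hence $S \cong R[G]$) is correct and is the same route as the paper.
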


\begin{proof}
The composition 
$\Pic(\Z[\rho]) \to \Pic(R) \to \Pic(R/M)$ is an isomorphism. 
\end{proof}

\section{Albert's Criterion}

We have now developed some theory about degree $p$ extensions, 
and we want to use it to construct extensions of degree $p^n$, 
as before in a mixed characteristic setting. 
Like in the previous section, we only care about the order 
$p$ cyclic subgroup of $R^*$ for our algebras $R$. 
That is, we confine ourselves to cases A and C, meaning that 
$\rho$ is a primitive $p$ root of one including $\rho = -1$ 
when $p = 2$ and then $\Z[\rho] = \Z$. 

Our approach 
will be to assume the base has $\rho$, 
and work one degree $p$ cyclic at a time. 
Of course to accomplish this we need some understanding 
of the so called extension problem - when can one extend 
a degree $p^n$ cyclic to one of degree $p^{n+1}$? 
When the base is a field, one can use the classic Albert 
criterion. We require such a criterion for commutative rings.

We next review the result of Albert. 
Suppose $F$ has characteristic unequal to a prime $p$ and contains 
a primitive $p$ root of one $\rho$. Let 
$K/F$ be a cyclic Galois extension of degree $p^r$. Then 
$K/F$ extends to $L/F$ cyclic of degree $p^{r+1}$ if and only if 
$\rho$ is a norm from $K$, which is equivalent to the cyclic crossed 
product $\Delta(K/F,\rho)$ being trivial in the Brauer group. 

Our more general result is a bit more complicated. 
To derive it, we start with some general well known observations about Azumaya algebras. 
If $B'/R'$ is an Azumaya algebra 
we say an $B'$ module $Q'$ 
is a {\bf splitting module} if the module action of 
$B'$ on $Q'$ induces $B' \cong \End_{R'}(Q')$. 

\begin{lemma}\label{splittingmodule}
Suppose $B'/R'$ is a split Azumaya algebra. 
Then there is a bijection between splitting 
modules for $B'$ and elements of $\Pic(R')$. 
\end{lemma}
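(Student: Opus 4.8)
The plan is to realize the asserted bijection through the Morita equivalence attached to a splitting of $B'$. Since $B'/R'$ is split we may fix a faithfully projective $R'$-module $P'$ together with an identification $B' = \End_{R'}(P')$. Then $P'$ itself is a splitting $B'$-module, and, being a finitely generated projective faithful $R'$-module, $P'$ is an $R'$-progenerator; hence $P' \otimes_{R'} (-)$ and $\Hom_{B'}(P',-)$ are mutually inverse equivalences between $R'$-mod and $B'$-mod, and I would use these as the backbone of the argument.

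First I would construct the map from $\Pic(R')$ to the set of isomorphism classes of splitting modules of $B'$. Given $L \in \Pic(R')$, set $Q'_L = P' \otimes_{R'} L$, regarded as a $B'$-module through the first factor. To see $Q'_L$ splits $B'$ one invokes the canonical isomorphism $\End_{R'}(P' \otimes_{R'} L) \cong \End_{R'}(P') \otimes_{R'} \End_{R'}(L)$, valid because $P'$ and $L$ are finitely generated projective, together with $\End_{R'}(L) \cong R'$ since $L$ is invertible; under these identifications the structure map $B' \to \End_{R'}(Q'_L)$ becomes the identity inclusion $B' \to B' \otimes_{R'} R'$, so it is an isomorphism. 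The assignment $L \mapsto Q'_L$ clearly respects isomorphism.

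Next I would produce the inverse assignment. Given a splitting module $Q'$, put $L(Q') = \Hom_{B'}(P',Q')$, viewed as an $R'$-module. The counit of the Morita equivalence gives a natural $B'$-isomorphism $P' \otimes_{R'} L(Q') \cong Q'$, and the unit gives $L(Q'_L) \cong L$, so the two assignments are mutually inverse on isomorphism classes as soon as one knows $L(Q') \in \Pic(R')$. For that, I would feed the isomorphism $P' \otimes_{R'} L(Q') \cong Q'$ into the endomorphism-ring computation above: the splitting hypothesis $B' \cong \End_{R'}(Q')$ then says that $B' \otimes_{R'} R' \to B' \otimes_{R'} \End_{R'}(L(Q'))$ is an isomorphism, and since $B'$ is faithfully flat over $R'$ this forces $\End_{R'}(L(Q')) \cong R'$ via the natural map; a finitely generated projective module with this property has rank one at every prime, so $L(Q') \in \Pic(R')$.

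The step I expect to demand the most care is checking that $L(Q')$ is finitely generated projective over $R'$ to begin with (so that the rank argument applies) and that all the identifications above are compatible. I would dispatch this by faithfully flat descent: it suffices to verify the claims after base change to each localization $R'_{\mathfrak p}$, where $B'$ becomes a matrix ring $M_n(R'_{\mathfrak p})$, every splitting module is free of rank $n$, and $L(Q')$ becomes $\Hom_{M_n(R'_{\mathfrak p})}((R'_{\mathfrak p})^n,(R'_{\mathfrak p})^n) \cong R'_{\mathfrak p}$; since $P'$ is finitely presented, the formation of $L(-)$ commutes with this base change, so the local picture globalizes. Tracking isomorphisms through the two constructions then yields the bijection between $\Pic(R')$ and the set of splitting modules of $B'$.
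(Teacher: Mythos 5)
Your proposal is correct and takes essentially the same route as the paper: both fix a splitting $B'\cong\End_{R'}(P')$, pass through the Morita equivalence $M\mapsto P'\otimes_{R'}M$ between $R'$-modules and $B'$-modules, and identify the splitting modules as the objects corresponding under this equivalence to rank-one projectives. The paper compresses the rank check to one sentence; you unpack it via the endomorphism computation and localization, but the underlying argument is the same.
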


\begin{proof} 
If $B' \cong \End_{R'}(P')$ then by the 
Morita theorems (e.g., \cite[p.~16--23]{DI}) the map 
$M \to P' \otimes_R M$ is an equivalence of 
the category of $R$ modules and left $B'$ modules. 
Checking ranks yields the lemma, 
\end{proof} 

We require a few more standard facts about splitting 
modules of Azumaya algebras. The key point is the 
appearance of elements of $\Pic$. 

\begin{lemma}
Suppose $A/R$ is a degree $n$ Azumaya algebra with 
$R \subset S \subset A$ such that $S$ is commutative, 
$A$ is projective as an $S$ module using the left action, 
and $S/R$ has degree $n$. If $P$ is a splitting module 
for $A$, then $P$ is a rank one projective module 
over $S$. 
\end{lemma}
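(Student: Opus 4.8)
The plan is to reduce to the case where $R$ is local, identify $A$ with a matrix algebra, and exploit the decomposition of $M_n(R)$ into its columns as a left module over itself.

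First I would dispose of finite generation and localize. As a splitting module $P$ is a finitely generated projective faithful $R$-module with $A\cong\End_R(P)$, so $P$ is finitely generated over $R\subseteq S$, and it suffices to show that $P_{\mathfrak q}$ is free of rank one over $S_{\mathfrak q}$ for each prime $\mathfrak q$ of $S$. Fixing such a $\mathfrak q$ and setting $\mathfrak p=\mathfrak q\cap R$, I replace $R,S,A,P$ by $R_{\mathfrak p}$, $S\otimes_R R_{\mathfrak p}$, $A\otimes_R R_{\mathfrak p}$, $P\otimes_R R_{\mathfrak p}$; all hypotheses persist, $R$ becomes local with maximal ideal $\mathfrak m$, and $\mathfrak q$ becomes a maximal ideal of $\bar S:=S/\mathfrak m S$. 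Now $S$ is $R$-free of rank $n$, and $P$, being finitely generated projective faithful over the local ring $R$ with $\End_R(P)\cong A$, is free of rank $n$ (its rank squared equals the rank $n^2$ of $A$). Fix identifications $P=R^n$ and $A=\End_R(R^n)=M_n(R)$.

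Then $A$, as a left module over itself, decomposes as $\bigoplus_{j=1}^n AE_{jj}$, each summand being isomorphic as a left $A$-module to $P=R^n$; restricting the left $A$-action to $S\subseteq A$ gives an isomorphism of $S$-modules $A\cong P^{\oplus n}$, where $S$ acts on $A$ by left multiplication and on each copy of $P$ by the given structure (through $S\hookrightarrow A=\End_R(P)$). Since $A$ is projective over $S$ by hypothesis, its direct summand $P$ is projective over $S$; moreover $P$ is faithful over $S$, because $S\hookrightarrow A$ is injective and acts on $P$ through this embedding. Writing $\rho$ for the rank function of the finitely generated projective $S$-module $P$, faithfulness forces $\rho\ge 1$ everywhere, and the whole problem reduces to proving $\rho(\mathfrak q)=1$.

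For this last step I would reduce modulo $\mathfrak m$: with $k=R/\mathfrak m$ one gets $\bar A:=A/\mathfrak m A\cong M_n(k)$, $\bar P:=P/\mathfrak m P\cong k^n$, and $\bar S$ is an Artinian $k$-algebra with $\dim_k\bar S=n$; base change of $A\cong P^{\oplus n}$ along $R\to k$ yields $\bar A\cong\bar P^{\oplus n}$ as $\bar S$-modules. Decomposing $\bar S=\prod_{\mathfrak r}\bar S_{\mathfrak r}$ over its finitely many maximal ideals, the projective module $\bar P$ is free over each local ring $\bar S_{\mathfrak r}$ of rank $\rho(\mathfrak r)$ (the rank of $P$ at $\mathfrak r\in\Spec S$), whence
$$n^2=\dim_k\bar A=\sum_{\mathfrak r} n\,\rho(\mathfrak r)\,\dim_k\bar S_{\mathfrak r},\qquad n=\dim_k\bar S=\sum_{\mathfrak r}\dim_k\bar S_{\mathfrak r}.$$
Dividing the first identity by $n$ and subtracting the second gives $\sum_{\mathfrak r}(\rho(\mathfrak r)-1)\dim_k\bar S_{\mathfrak r}=0$; since every term is nonnegative and each $\dim_k\bar S_{\mathfrak r}\ge 1$, we conclude $\rho\equiv 1$ on the maximal ideals of $\bar S$, in particular $\rho(\mathfrak q)=1$. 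Hence $P_{\mathfrak q}$ is free of rank one over $S_{\mathfrak q}$, and $P$ is a rank one projective $S$-module. The routine parts are the localization and the column splitting; the step I expect to demand the most care is this last count, which must accommodate $\bar S$ possibly non-reduced and $\Spec S$ disconnected — precisely what the weighted identity is arranged to absorb. (One could instead try to prove $A\otimes_R S\cong\End_S(A)$ directly, which would pin down $\rank_S A=n$ at once, but the elementary count above avoids any separability hypothesis on $S/R$.)
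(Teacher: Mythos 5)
Your proof is correct and takes essentially the same approach as the paper: after localizing, your column decomposition $A = \bigoplus_j A E_{jj} \cong P^{\oplus n}$ is the local form of the paper's global isomorphism $A \cong P \otimes_R P^{*}$ with $S$ acting on the $P$ factor, and in both cases projectivity of $P$ over $S$ follows by exhibiting it as a direct summand of the $S$-projective $A$. Your weighted dimension count modulo $\mathfrak m$ is a more careful execution of the paper's terse ``checking degrees,'' correctly handling the possibility that $\Spec S$ is disconnected or $\bar S$ non-reduced, which the paper leaves implicit.
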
 

\begin{proof} 
The inclusion $S \subset A$ makes $P$ an $S$ module. 
As an $R$ module, $\End_R(P) \cong P \otimes_R P^*$ where 
$P^* = \Hom_R(P,R)$. In fact, if $x \in P$ and $f \in P^*$ 
the image of $x \otimes f$ is the map $a(y) = xf(y)$. 
If $s \in S$, then $sa(y) = s(xf(y)) = s(x)f(y)$. 
That is, $S$ acts on $A = P \otimes_R P^*$ via its action 
on $P$. Since $A$ is projective over $S$, and $P^*$ 
is faithfully flat over $R$, it follows that 
$P$ is projective over $S$. Checking degrees shows 
that $P$ is rank one.
\end{proof}

To begin our study of the commutative ring 
case, we fix some notation. $S/R$ will be a 
cyclic Galois extension of degree $p^r$ 
and, when it exists, $T/S/R$ will be such that $T/R$ is cyclic 
Galois of degree $p^{r+1}$. 
We have $1 \subset C \subset G$ where 
$G$ is the Galois group of $T/R$, 
$C$ that of $T/S$ and $G/C$ the Galois group 
of $S/R$, so $C$ has order $p$, $G/C$ has order 
$p^r$ and $G$ has order $p^{r+1}$. 
We fix $\sigma \in G$ to be a generator and 
then $\tau = \sigma^{p^r}$ is a generator 
of $C$. 

Since $R \subset S$ 
we can view the group algebras $R[C] \subset S[C]$. Since 
$R$ contains $\rho$, we have the description of $R[C]$ 
and $S[C]$ as iterated pull backs. In particular, 
we have $S[C](1) = ((\tau - \rho)(\tau - 1))^*S[C] 
\cong S[C]/(\tau - \rho)(\tau - 1)S[C]$ and similarly for 
$R[C](1)$. The second description of $S[C](1)$ makes it a ring. 
We view $S[C](1)$ as a ring extension of $ R[C](1)$ 
as usual. Let $\tilde\rho \in R[C](1) \subset S[C](1)$ be the image of $\tau$. 

In our generalization of Albert's result 
it will be useful to 
define the Azumaya algebra $A = \Delta(S[C]/R[C],\tau)$, where 
$\tau \in C \subset R[C]$. Then define 
$A(1) = A \otimes_{R[C]} R[C](1) = \Delta(S[C](1)/R[C](1),\tilde\rho)$. 
Since $A$ contains an element $\sigma$ such that $\sigma$ gives the Galois 
action on $S$ via conjugation and $\sigma^{p^r} = \tau$, 
we can also describe $A$ as a 
twisted group ring 
$S^*[G]$ where $G$ has an action on $S$ given by $G \to G/C = \Gal(S/R)$. 

It is useful to explicitly state the above 
splitting modules results as they apply 
to $A(1)$. 

\begin{lemma}
Suppose $P(1)$ is a splitting module for 
$A(1)$. Then $P(1)$ is a 
projective $A(1)$ module and a rank one 
projective $S[C](1)$ module. 
Conversely, if $P(1)$ is a rank one 
projective $S[C](1)$ module which extends 
to an $A(1)$ module, then $A(1) \cong 
\End_{R[C](1)}(P(1))$. 
\end{lemma}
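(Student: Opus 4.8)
The plan is to deduce both directions from the two general splitting-module facts already established: Lemma~\ref{splittingmodule} (splitting modules for a split Azumaya algebra correspond to $\Pic$ of the base) and the unnamed lemma preceding it (a splitting module for a degree $n$ Azumaya $A$ containing a commutative $S$ with $S/R$ of degree $n$ and $A$ projective over $S$ is a rank one projective $S$-module). First I would record the structural facts about $A(1)$ that let those lemmas apply: $A(1) = \Delta(S[C](1)/R[C](1),\tilde\rho)$ is Azumaya over $R[C](1)$, and its degree equals the degree of the commutative subring $S[C](1)$ over $R[C](1)$ — indeed $S[C](1)/R[C](1)$ is obtained by base change from the Galois extension $S/R$ of degree $p^r$, so both the degree of $A(1)$ and the rank of $S[C](1)$ over $R[C](1)$ are $p^r$. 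One also needs that $A(1)$ is projective (even free) as a left module over $S[C](1)$; this is immediate from the twisted group ring description $A \cong S^*[G]$, which exhibits $A$ as free of rank $p$ over $S[C]$, and projectivity is preserved under the base change $\otimes_{R[C]}R[C](1)$.

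With those facts in hand the forward direction is a direct citation: if $P(1)$ is a splitting module for $A(1)$ then $P(1)$ is a projective $A(1)$-module (a finitely generated faithful module over an Azumaya algebra inducing the isomorphism $A(1)\cong\End_{R[C](1)}(P(1))$ is projective, which is part of the Morita picture invoked in Lemma~\ref{splittingmodule}), and then the unnamed preceding lemma, applied with $A=A(1)$, $S=S[C](1)$, $R=R[C](1)$, $n=p^r$, says precisely that $P(1)$ is a rank one projective $S[C](1)$-module.

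For the converse, suppose $P(1)$ is a rank one projective $S[C](1)$-module that extends to an $A(1)$-module; I must show the module action gives $A(1)\cong\End_{R[C](1)}(P(1))$. The clean way is descent: the statement is local on $\Spec(R[C](1))$, so I would pass to the case $R[C](1)$ local. There $P(1)$, being rank one projective over $S[C](1)$, is free of rank one over $S[C](1)$, hence as an $R[C](1)$-module it is projective of rank $p^r = \deg A(1)$. A faithful module over an Azumaya algebra $B$ over a local ring, whose $B$-module structure makes it free of the right rank over the base, is automatically a splitting module — concretely, after a faithfully flat extension $A(1)$ splits, $A(1)\cong\End(Q)$ for some $Q$ of rank $p^r$ by Lemma~\ref{splittingmodule}, and any $A(1)$-module that is $R[C](1)$-projective of rank $p^r$ must be $\cong Q$ by the Morita equivalence (checking ranks), so it is a splitting module. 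The main obstacle is the bookkeeping that guarantees the module action is actually an isomorphism onto $\End_{R[C](1)}(P(1))$ rather than merely a homomorphism — i.e. that faithfulness plus the correct rank forces the splitting property. That is exactly the content packaged into the Morita statement cited in Lemma~\ref{splittingmodule}, so once I have reduced to the local (hence split) case and matched ranks, there is nothing left to do; no genuinely new argument beyond invoking the two earlier lemmas and standard Azumaya/Morita theory is required.
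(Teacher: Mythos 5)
Your proposal is correct and matches the paper's (essentially unstated) approach: the paper presents this lemma as a direct specialization of Lemma~\ref{splittingmodule} and the unnamed lemma just before it, and you have correctly identified that the only work is to verify the hypotheses of those lemmas for $A(1)$ and to supply the converse, which the earlier lemmas do not quite cover. One small slip: $A \cong S^*[G]$ is free of rank $p^{r+1}$ over $S$, and since $S[C]$ has rank $p$ over $S$, $A$ is free of rank $p^r$ over $S[C]$ (you wrote rank $p$); this does not affect the argument, since only projectivity is needed for base change to $A(1)$ over $S[C](1)$.
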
 

The above makes it clear that the modules 
$P(1)$ we are interested in are just rank 
one $S[C](1)$ modules with a ``compatible 
action" by $G$. 

We look at $A(1)$ in a different way for a moment. 
Define $A_0 = A(1)/(\tau - 1)A(1)$, 
$A_1 = A(1)/(\tau - \rho)A(1)$ and 
$A_2 = A(1)/(\tau-1,\tau-\rho)A(1)$.  
Let $\bar S = S/\eta{S}$ and $\bar R = R/\eta{R}$. Then 
$A_0 \cong \Delta(S/R,1)$.  Since $S/R$ 
is Galois, $\End(S/R,1) \cong \End_R(S)$. 
Moreover,  
$A_1 = \Delta(S/R,\tilde \rho)$ and $A_2 = \Delta(\bar S/\bar R,1) \cong 
\End_{\bar R}(\bar S)$. Of course, 
there is a fiber diagram as below. 
$$\begin{matrix}
A(1)&\longrightarrow&A_0\cr
\downarrow&&\downarrow\cr
A_1&\longrightarrow&A_2\cr
\end{matrix}$$

If $P(1)$ is a splitting module for $A(1)$, 
we set 
$P_1 = P(1)/(\tau- \rho)P(1)$, $P_0 = P(1)/(\tau - 1)P(1)$, 
and $P_2 = P_1/\eta{P_1} = P_0/\eta{P_0}$. 
Then there is a fiber diagram 
$$\begin{matrix}
P(1)&\longrightarrow&P_0\cr
\downarrow&&\downarrow\cr
P_1&\longrightarrow&P_2\cr
\end{matrix}$$ 
where $P_i$ is a splitting module for $A_i$
and the morphisms in the above diagram 
are $A(1)$ module morphisms. 
Note that $C$ acts trivially on $P_0$ (and 
$P_2$). On the other hand, $\sigma^{p^r} = \tau$ acts on $P_1$ 
by multiplication by $\rho$.

Conversely, suppose $P_1$ and $P_0$ 
are splitting modules for $A_1$ and $A_0$ 
which makes them $A(1)$ modules. 
Suppose $P_1/\eta{P_1} \cong P_0/\eta{P_0}$ 
as $A(1)$ and hence $A_2$ modules. If 
$P(1)$ is defined by the above fiber diagram, 
then clearly $P(1)$ is a splitting module 
for $A(1)$. Note that if $P(1)$ is a splitting module 
for $A(1)$, then $P_0$ and $P_1$ are projective rank one 
$S$ modules. 

By \ref{splittingmodule} we are free 
to alter $P(1)$ by a rank one module 
from $R[C](1)$. We use this to ``normalize" 
$P(1)$ as follows. 

We have $A_0 = A(1)/(\tau - 1)A(1) = 
\Delta(S/R,1) \cong \End_R(P_0) \cong \End_R(S)$. 
Thus there is an $I \in \Pic(R)$ such that 
$P_0 \cong I \otimes_R S$, the isomorphism 
preserving the $G/C$ action or equivalently the isomorphism 
being an $A(1)$ module map. $P_0/(\tau - \rho){P_0} = 
I/\eta{I} \otimes S/\eta{S}$. If we define $R[C]_1 = 
R[C](1)/(\tau - \rho)R[C](1) \cong R$, then 
$I$ is an $R[C]_1$ module.  Define $J$ by the fiber diagram: 
$$\begin{matrix}
J&\longrightarrow&I\cr
\downarrow&&\downarrow\cr
I&\longrightarrow&I/\eta{I}.\cr
\end{matrix}$$ 
It is clear that $J \in \Pic(R[C](1))$ and we can 
replace $P(1)$ by 
$$P(1) \otimes_{R[C](1)} J^{\circ}$$ 
and still have $A(1) \cong \End_{R[C](1)}(P(1))$ 
but {\bf normalized} so that $P_0 \to P_2$ 
is $S \to S/\eta{S}$. 

A normalized $P(1)$ is determined by $\psi: P_1 \to S/\eta{S}$ 
preserving the $G$ action and inducing $\psi: P_1/\eta{P_1} \cong S/\eta{S}$. 
However, $P_1/\eta{P_1}$ has a trivial action by $C$ and so the 
isomorphism $\psi$ is given by $\phi: (P_1/\eta{P_1})^{G/C} \cong R/\eta{R}$. 

\begin{proposition}
Suppose $S/R$, $G \subset C$, and $A(1)$ are as above. 
If $A(1) \cong \End_{R[C](1)}(P(1))$, then we can 
choose $P(1)$ to be normalized by which we mean that $P_0 \to P_2$ is $S \to S/\eta{S}$. 
A normalized 
$P(1)$ is determined by the isomorphism 
$\phi: (P_1/\eta{P_1})^{G/C} \cong R/\eta{R}$. 
If $P(1)'$ is another normalized $A(1)$ 
module with $P'_1 = P(1)'/(\tau - \rho)P(1)' = P_1$, 
and $P(1)'$ is defined by $\phi': (P_1/\eta{P_1})^{G/C}  \cong R/\eta{R}$, 
then $\phi' = u\phi$ for a unit $u \in (R/\eta{R})^*$. 
We have $P(1)' \cong P(1)$ as $A(1)$ 
modules, if and only if 
$u$ is in the image of $R^*$. 
\end{proposition}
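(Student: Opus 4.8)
The plan is to reduce everything to the fiber-product description of $P(1)$ already in hand and then quote Theorem~\ref{fiber}. First I would fix the normalization: by the discussion preceding the statement, every $A(1)$ module splitting $A(1)$ can be replaced by one with $P_0 \to P_2$ equal to the canonical $S \to S/\eta S$, so both $P(1)$ and $P(1)'$ are built from the same fiber diagram
$$\begin{matrix}
P(1)&\longrightarrow&P_0\cr
\downarrow&&\downarrow\cr
P_1&\longrightarrow&P_2\cr
\end{matrix}$$
with $P_0 \cong I \otimes_R S$ for a fixed $I \in \Pic(R)$ (determined by $A_0 \cong \End_R(P_0)$), with $P_0 \to P_2$ canonical, and — by the hypothesis $P_1' = P_1$ — with the same lower-left corner $P_1$. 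Hence the only freedom in reconstructing $P(1)$ from these corners is the gluing isomorphism along $P_2 = P_1/\eta P_1$, which (since $C$ acts trivially there) is exactly an isomorphism $\psi : P_1/\eta P_1 \cong S/\eta S$ compatible with the $G/C$-action, equivalently $\phi : (P_1/\eta P_1)^{G/C} \cong R/\eta R$.

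Next I would establish the claim $\phi' = u\phi$ with $u \in (R/\eta R)^*$. This is formal: both $\phi$ and $\phi'$ are isomorphisms of the same rank-one $R/\eta R$-module $(P_1/\eta P_1)^{G/C}$ onto $R/\eta R$, so $\phi' \circ \phi^{-1}$ is an automorphism of the $R/\eta R$-module $R/\eta R$, i.e. multiplication by a unit $u \in (R/\eta R)^*$. (One should note $(P_1/\eta P_1)^{G/C}$ is indeed invertible over $R/\eta R$ because $P_1$ is a rank-one projective $S$-module, as recorded above, and taking $G/C$-invariants of a rank-one $\bar S$-module over the Galois extension $\bar S/\bar R$ yields a rank-one $\bar R$-module.)

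For the final equivalence I would apply Theorem~\ref{fiber} to the fiber square defining $R[C](1)$, in the concrete form given in Section~2: altering the gluing map $f$ of a pull-back diagram by a unit $\bar u$ of the corner ring $R_3 = R/\eta R$ changes $P(1)$ to an isomorphic module precisely when $\bar u$ lies in the subgroup generated by the images of $R_1^* = R^*$ and $R_2^*$. Here $R_2$ is the copy of $S$ sitting as $P_0$'s endomorphism-corner, but since $P_0$ and its gluing are held fixed in the normalized picture, the only available alteration comes from $R_1^* = R^*$ acting through $R^* \to (R/\eta R)^*$; so $P(1)' \cong P(1)$ over $A(1)$ iff $u$ is in the image of $R^*$. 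The one point needing a little care — and the main obstacle — is checking that an $A(1)$-module isomorphism, not merely an $S[C](1)$-module isomorphism, is what Theorem~\ref{fiber} produces: one must verify that the extra rank-one $R[C](1)$-module twist used in \ref{splittingmodule} to pass between splitting modules is automatically $G$-equivariant, i.e. that the normalization has rigidified $P_0$ enough that no further $G$-incompatible twist intervenes. I expect this to follow from the fact that $I \in \Pic(R)$ is already pinned down by $A_0$ and that $C$ acts trivially on $P_0, P_2$, so the gluing unit $u$ carries all the remaining data; but it is the step I would write out most carefully.
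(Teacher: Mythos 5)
Your first two paragraphs are fine and match what the paper takes as given from the discussion preceding the statement. The gap is in the final step, and you have correctly flagged it as the sticking point — but flagging it is not the same as closing it.

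Two concrete problems with the last paragraph. First, you are not consistent about which fiber square you are invoking: you name "the fiber square defining $R[C](1)$" (whose two off-diagonal corners are both $R$, giving $R_1^* = R_2^* = R^*$), yet a sentence later you say "$R_2$ is the copy of $S$," which is the fiber square for $S[C](1)$. Neither is the right object as stated: $P(1)$ is a rank-one projective over $S[C](1)$, not $R[C](1)$, so Theorem~\ref{fiber} applied to the $R[C](1)$ square does not speak to $P(1)$ at all; and the $S[C](1)$ square only controls isomorphism as $S[C](1)$-modules, which would let you twist by any unit in $S^*$ and give a weaker (wrong) conclusion. Second, the actual content of the proposition — that the ambient group is $R^*$ and not $S^*$ — is exactly the piece you defer ("I expect this to follow... but it is the step I would write out most carefully"). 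As written, the argument could equally well conclude that $u$ must lie in the image of $S^*$, which is not the theorem.

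The paper closes this gap by a direct argument that bypasses Theorem~\ref{fiber} entirely: an $A(1)$-module isomorphism $P(1)\cong P(1)'$ induces $S$-linear isomorphisms $\psi_0:P_0\to P_0'$ and $\psi_1:P_1\to P_1'=P_1$, which (being $S$-linear automorphisms of rank-one $S$-modules) are multiplication by units $u_0,u_1\in S^*$; but being $A(1)$-morphisms they must also commute with the element $\sigma$ of $A$ encoding the $G$-action, and this forces $u_i\in S^{G/C}=R$, hence $u_i\in R^*$. The normalization condition $\psi\circ\psi_0=\psi$ then pins $u_0\equiv 1$ modulo $\eta$, and $\phi'=\phi\circ\psi_1$ identifies $u$ with the image of $u_1\in R^*$. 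This is the $G$-equivariance you sensed was needed; the cleanest route is not through the Mayer--Vietoris sequence but through the observation that $\End_{A_i}(P_i)=R$, so automorphisms of the corners are already rigid without any extra argument.
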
 

\begin{proof} 
Most of this is clear or noted above. 
Let $\psi: S \to S/\eta{S}$ be the standard map. 
If $P(1) \cong P(1)'$ as $A(1)$ modules, 
then it is given by $\psi_0: S \cong S$ and 
$\psi_1: P_1 \cong P_1$ such that $\psi \circ \psi_0 = \psi$ 
and $\phi' = \phi \circ \psi_1$. Now each $\psi_i$ is defined 
by multiplication by a unit $u_i \in S^*$. 
The fact that the $\psi_i$ are $A(1)$ morphisms means they 
preserve the $G$ action and so $u_i \in R^*$. 
It follows that $u_1$ is a preimage of $u$. 
\end{proof} 

\begin{corollary}
There is a one to one correspondence between normalized 
splitting modules $P(1)$ for $A(1)$ and the pairs 
$(P_1,\phi)$ where $P_1$ is a rank one projective $S$ module with 
compatible $G$ action (i.e. an $A_1$ module) and isomorphism 
$\phi: (P_1/\eta{P_1})^{G/C} \cong R/\eta{R}$. 
Two isomorphisms $\phi$ and $\phi'$ for the same $P_1$ always have relation 
$\phi' = u\phi$ for $u \in (R/\eta)^*$ and yield 
isomorphic $P(1)$'s if and only if $u$ is in the image of 
$R^*$.  
\end{corollary}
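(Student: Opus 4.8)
The plan is to read this corollary off the preceding proposition together with the fiber-diagram description of $A(1)$, so that essentially all that remains is to package the two directions of the bijection cleanly.

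First I would set up the forward map. Given a normalized splitting module $P(1)$ for $A(1)$, put $P_1 = P(1)/(\tau - \rho)P(1)$. By the discussion and lemmas above, $P_1$ is a splitting module for $A_1 = \Delta(S/R,\tilde\rho)$, hence a rank one projective $S$-module whose $A_1$-structure amounts exactly to a compatible $G$-action. Normalization of $P(1)$ says that $P_0 = P(1)/(\tau - 1)P(1)$ is $S$ with its standard $A_0 \cong \End_R(S)$-structure and that $P_0 \to P_2 = P_1/\eta P_1$ is the canonical map $S \to \bar S$; since $C$ acts trivially on $P_2$, the resulting isomorphism $P_1/\eta P_1 \cong \bar S$ of $A_2 \cong \End_{\bar R}(\bar S)$-modules is, by \ref{splittingmodule} applied to the Galois extension $\bar S/\bar R$, determined by an isomorphism $\phi\colon (P_1/\eta P_1)^{G/C} \cong R/\eta R$. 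This assigns to $P(1)$ the pair $(P_1,\phi)$.

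Next I would construct the inverse. Given such a pair, take $P_0 = S$ with its standard $A_0$-module structure and $P_2 = \bar S$; the isomorphism $\phi$, after extending scalars along $R/\eta R \to \bar S$ and using that $C$ acts trivially, yields an $A_2$-module isomorphism $P_1/\eta P_1 \cong P_0/\eta P_0$. Defining $P(1)$ by the fiber square of $A(1)$-modules with these two legs, the paragraph preceding the proposition (via the fiber diagram for $A(1)$ and \ref{fiber}) shows $P(1)$ is a splitting module for $A(1)$, and it is normalized by construction. I would then verify the two passages are mutually inverse: starting from a normalized $P(1)$ the reconstruction fits the same fiber square with the same gluing data, hence is $A(1)$-isomorphic to $P(1)$; conversely, passing from $(P_1,\phi)$ to $P(1)$ and back returns $P(1)/(\tau-\rho)P(1) = P_1$ literally and recovers $\phi$ from the normalization.

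The two remaining assertions are then formal: any two isomorphisms $\phi,\phi'\colon (P_1/\eta P_1)^{G/C}\cong R/\eta R$ of the rank one free $R/\eta R$-module differ by a unit $u\in(R/\eta R)^*$, so $\phi'=u\phi$; and $(P_1,\phi)$ and $(P_1,\phi')$ produce $A(1)$-isomorphic splitting modules precisely when $u$ lies in the image of $R^*$, which is the last clause of the preceding proposition. The step I expect to be the main obstacle is exactly the one I am entitled to quote rather than reprove: that gluing a splitting module for $A_0$ to a splitting module for $A_1$ over an honest $A_2$-module isomorphism gives $A(1)\cong\End_{R[C](1)}(P(1))$. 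What genuinely needs checking here is only routine — the dictionary between "$A_1$-module that is rank one over $S$" and "rank one projective $S$-module with compatible $G$-action", and the invariants-versus-whole-module correspondence for the split algebra $A_2\cong\End_{\bar R}(\bar S)$.
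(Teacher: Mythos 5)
Your proposal is correct and follows the same route the paper implicitly takes: the corollary is really a repackaging of the preceding proposition together with the fiber-square description of $A(1)$-modules, and you have reconstructed both directions of the bijection, the mutual-inverse check, and the two formal unit statements exactly as intended. One small imprecision: the step passing from the $A_2$-module isomorphism $P_1/\eta P_1\cong\bar S$ to the $\bar R$-module isomorphism $\phi$ on $G/C$-invariants is better attributed to Galois descent, or to the Morita equivalence between $\Delta(\bar S/\bar R, G/C)$-modules and $\bar R$-modules, rather than to \ref{splittingmodule} itself, which only records the resulting bijection on isomorphism classes of splitting modules rather than the full equivalence of categories that you need.
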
  

Because it appears in the diagram, we defined 
``normalized" as a property of the morphism 
$P_0 \to P_2$. But it is really a property 
of $P_0$. 

\begin{lemma}\label{shortnormalized}
If $P(1)$ is a splitting module for $A(1)$ 
then $P(1)$ is normalized if and only if 
$P_0 \cong S$ as $A(1)$ modules or equivalently 
that $P_0^{G/C} \cong R$. 
\end{lemma}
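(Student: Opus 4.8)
The forward implication is immediate from the definition: if $P(1)$ is normalized then, by construction, the leg $P_0 \to P_2$ of the fiber diagram defining $P(1)$ is the standard quotient $S \to S/\eta{S}$, so in particular $P_0 \cong S$ as $A(1)$ modules. Thus the plan is to prove the reverse implication together with the equivalence ``$P_0 \cong S$ as $A(1)$ modules $\iff P_0^{G/C} \cong R$'', and for both of these the decisive step is a computation of $P_0^{G/C}$.

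First I would recall, as noted just before the statement, that $P_0$ is a splitting module for $A_0 = A(1)/(\tau-1)A(1) \cong \Delta(S/R,1) \cong \End_R(S)$, which is split Azumaya; hence by \ref{splittingmodule} there is an $I \in \Pic(R)$ with $P_0 \cong I \otimes_R S$, the isomorphism being one of $A(1)$ modules (equivalently of $A_0$ modules, since $C$ acts trivially on $P_0$), where $G/C = \Gal(S/R)$ acts on $I \otimes_R S$ through the Galois action on the second factor and trivially on $I$. Since $I$ is projective, hence flat, over $R$, and $G/C$ invariants are the kernel of a map between finite direct sums of copies of the module, forming $G/C$ invariants commutes with $I \otimes_R (-)$; therefore $P_0^{G/C} \cong I \otimes_R S^{G/C} = I \otimes_R R \cong I$. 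Consequently $P_0^{G/C} \cong R$ exactly when $I$ is trivial in $\Pic(R)$, which happens exactly when $P_0 \cong I \otimes_R S \cong S$ as $A(1)$ modules; this gives the stated equivalence.

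It then remains to show that $P_0 \cong S$ as $A(1)$ modules forces $P(1)$ to be normalized. Given an $A(1)$ module isomorphism $\alpha \colon P_0 \cong S$, reduction modulo $\eta$ yields $\bar\alpha \colon P_2 = P_0/\eta{P_0} \cong S/\eta{S}$ sitting in a commutative square with $\alpha$ and the two quotient maps; using $\alpha$ and $\bar\alpha$ to identify $P_0$ with $S$ and $P_2$ with $S/\eta{S}$ turns the leg $P_0 \to P_2$ into $S \to S/\eta{S}$, which is precisely the normalization condition. The one bookkeeping point is that $P_2$ also receives the map from $P_1$, so the new identification $P_2 \cong S/\eta{S}$ forces a compatible change in the isomorphism $\phi\colon (P_1/\eta{P_1})^{G/C} \cong R/\eta{R}$ pinning down $P(1)$; this is harmless, since a normalized $P(1)$ is in any case only determined by $\phi$ up to a unit of $(R/\eta{R})^*$, and no $A(1)$ module data is altered. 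I expect this tracking of the single copy of $P_2$ shared by the two legs, and the insistence that all isomorphisms be $A(1)$ equivariant, to be the only (minor) obstacle; the substantive content is the flat base change identity $P_0^{G/C} \cong I$.
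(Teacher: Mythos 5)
Your proof is correct and takes essentially the same route as the paper's. The paper's argument is terser — it reads the equivalence of ``$P_0 \cong S$ as $A(1)$ modules'' and ``$P_0^{G/C} \cong R$'' directly off the fact that the $A(1)$ action on $P_0$ amounts to an $S$ module structure with semilinear $G/C$ action (a descent datum), and it makes explicit the one point you leave implicit, namely that the fiber-diagram leg $P_0 \to P_2$ is the natural quotient $P_0 \to P_0/(\tau-\rho)P_0 = P_0/\eta P_0$ because $\tau$ acts trivially on $P_0$, so that an $A(1)$-equivariant identification $P_0 \cong S$ automatically identifies this leg with $S \to S/\eta S$; your flat base change computation $P_0^{G/C} \cong I$ is a clean way to spell out the paper's assertion.
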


\begin{proof} 
Since $C$ acts trivially on $P_0$ the $A(1)$ 
module action amounts to the $S$ action and the 
action of $G/C$. Thus the second and third 
statement are equivalent. Obviously if $P(1)$ is normalized 
we have $P_0 \cong S$ as $A(1)$ modules. 
Conversely, the morphism $P_0 \to P_2$ is just the 
natural map $P_0 \to P_0/(\tau - \rho)P_0 = P_0/\eta{P_0}$ 
and if $P_0 \cong S$ preserving the $G/C$ action, 
$P_0 \to P_2$ is $S \to S/\eta{S}$.
\end{proof}

Let us note now that all the above makes sense even when 
$\eta$ is a unit of $R$ (recall that $R$ is just a flat algebra 
over $\Z[\rho]$ and could contain $1/\eta$). 
Of course if $\eta$ is invertible then $R/\eta{R} = 0$ etc., 
and so all fiber diagrams are trivial. That is, 
$S[C](1) = S \oplus S$, $R[C](1) = R \oplus R$, and $P(1) = 
P_1 \oplus P_0$. This pattern will continue. 

With $S$ as the base ring we review and augment the definition of the 
general $C$ Galois extension from \ref{generalp}. 
Let $P(1)$ be a projective module over $A(1)$ which is a rank one 
projective module over 
$S[C](1)$ as above. We may assume 
$P(1)$ is normalized and so $P_0 \to P_2$ 
is $S \to S/\eta{S}$. Thus $P(1)$ is defined 
by $\phi: (P_1/\eta{P_1})^{G/C} \cong R/{\eta}R$. 
 
We first form $U' = S'[P_1']$ 
where $S' = S(1/\eta)$ and $P_1' = P_1 \otimes_S S'$. 
Recall that $S \oplus P_1 \subset P(1) \subset (1/\eta)S \oplus (1/\eta)P_1 \subset
S'[P_1'] = U'$. Of course $P(p) \subset U'$ is generated by $p$ fold products 
of elements of $P(1)$ and so $P(p)$ inherits a $\sigma$ action making 
it a $G$ module. Thus $P(p)^C$ is a 
$S *_p S$ module and a $G/C$ module. 
As we observed in \ref{ppowerhomo}, 
$\phi$ induces the $\phi^p: P(p)^C/\eta^pP(p)^C \cong 
S/{\eta^p}S$ that defines $P(p)^C$. 
Checking the definition makes it clear that 
$\phi^p$ is also a $G/C$ morphism. 

We note: 

\begin{lemma}
$S *_p S$ is a $G/C$ Galois extension of $R *_p R$.
\end{lemma}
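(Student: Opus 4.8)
The claim is that $S *_p S$ is $G/C$ Galois over $R *_p R$, where recall $S *_p S$ is defined by the fiber square
$$\begin{matrix}
S *_p S&\longrightarrow&S\cr
\downarrow&&\downarrow\cr
S&\longrightarrow&S/\eta^p{S}\cr
\end{matrix}$$
and similarly for $R *_p R$, and $S/R$ is cyclic $G/C$ Galois of degree $p^r$. The natural strategy is to realize $S *_p S$ as a fiber product of Galois extensions over the corresponding fiber product of base rings, and then check that being Galois is preserved by such a pullback. The key observation is that each corner of the square for $S *_p S$ sits over the corresponding corner of the square for $R *_p R$: $S/R$ is $G/C$ Galois (the two $S$'s), and $(S/\eta^pS)\big/(R/\eta^p R)$ is $G/C$ Galois as well, since $\eta^p R$ is an ideal of $R$, $S \otimes_R (R/\eta^p R) = S/\eta^p S$, and Galois extensions base-change along any ring map. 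So I would first record that $G/C$ acts on each of the four rings in the $S$-square compatibly with the maps, hence acts on $S *_p S$ with $(S *_p S)^{G/C} = R *_p R$ (taking invariants commutes with the finite limit defining the fiber product).

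Next I would verify the two conditions characterizing Galois extensions that the paper has been using: that $S *_p S$ is finitely generated projective as an $R *_p R$ module, and that the canonical map $(S *_p S) \otimes_{R *_p R} (S *_p S) \to \bigoplus_{g \in G/C} (S *_p S)$ (with $g$-component $x \otimes y \mapsto x\,g(y)$) is an isomorphism. For projectivity, I would use \ref{fiber} / \ref{idealfiber}-style reasoning: the modules $S$, $S$, $S/\eta^p S$ over $R$, $R$, $R/\eta^p R$ are each finitely generated projective of rank $p^r$, and a pullback of projective modules along a surjection, where the two maps to the bottom corner agree up to an isomorphism that lifts (which it does here — both maps $S \to S/\eta^p S$ are the canonical reduction), is again projective of the same rank. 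Alternatively, and perhaps more cleanly, one can appeal to descent: $R *_p R$ is faithfully flat-locally just a product, and one checks the Galois condition after a faithfully flat base change that splits the fiber product.

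For the tensor-product condition, I would argue that $(S *_p S) \otimes_{R *_p R} (S *_p S)$ is itself computed as the fiber product over the square with corners $S \otimes_R S$, $S \otimes_R S$, $(S/\eta^p S) \otimes_{R/\eta^p R} (S/\eta^p S)$ — using that tensor product commutes with the relevant finite limits here because all the modules in sight are projective — and similarly $\bigoplus_{g} (S *_p S)$ is the fiber product of $\bigoplus_g S$, $\bigoplus_g S$, $\bigoplus_g (S/\eta^p S)$. Then the map in question is induced cornerwise by the isomorphisms $S \otimes_R S \cong \bigoplus_{g \in G/C} S$ and $(S/\eta^p S) \otimes_{R/\eta^p R}(S/\eta^p S) \cong \bigoplus_{g \in G/C}(S/\eta^p S)$ coming from $S/R$ being $G/C$ Galois, and these are manifestly compatible with the reduction maps, so they glue to an isomorphism of fiber products. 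Hence $S *_p S / R *_p R$ is $G/C$ Galois.

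\textbf{Main obstacle.} The delicate point is the interchange of tensor (or $\Hom$/invariants) with the fiber product: $- \otimes -$ does not commute with pullbacks in general. Here it works because every module involved is finitely generated projective over the relevant base and the maps to the bottom corner are surjective, so that the Mayer--Vietoris type sequence of \ref{fiber} degenerates appropriately; I would want to state this as a small lemma (``a pullback square of rings with one surjective leg, with flat/projective modules on the other three corners, pulls back to a pullback square after tensoring with a flat module") and cite or prove it once, then apply it three times. An alternative that sidesteps this entirely: reduce to the local (or strictly Henselian) case, where $S \cong R[G/C]$ as in \ref{proj}, so $S *_p S \cong (R *_p R)[G/C]$ and the result is immediate; then use faithfully flat descent of the Galois property, which the paper has already invoked. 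I expect the descent route to be the shortest rigorous path, so I would likely present that as the main argument and relegate the fiber-product bookkeeping to a remark.
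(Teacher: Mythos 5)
Your route is genuinely different from the paper's, and considerably longer. The paper's proof is essentially a one-liner: writing $I = tR[t]$ and $J = (t-\eta^p)R[t]$, it identifies $R *_p R \cong R[t]/(I\cap J)$ via \ref{idealfiber}, and then simply tensors this presentation with $S$ over $R$. Since $S$ is flat over $R$, intersections of ideals are preserved and one gets $(R *_p R)\otimes_R S \cong S[t]/(tS[t]\cap (t-\eta^p)S[t]) \cong S*_pS$. In other words, $S*_pS$ is literally the scalar extension of $S/R$ along the structure map $R \to R*_pR$, and the lemma follows from stability of $G/C$ Galois extensions under arbitrary base change. The observation you flag as the ``delicate point'' --- commuting tensor products with the fiber product --- is exactly what this presentation resolves: you do not need a Mayer--Vietoris-type gluing lemma for projective modules, only that $-\otimes_R S$ is exact, which is free because $S/R$ is projective.

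Your alternative (localize so $S\cong R[G/C]$ by the normal basis theorem, deduce $S*_pS \cong (R*_pR)[G/C]$, and conclude) is sound and is really a disguised version of the same base-change fact, applied locally. But the parenthetical claim that ``$R*_pR$ is faithfully flat-locally just a product'' is wrong and would not support a descent argument: when $\eta$ is a non-unit, the fiber product $R*_pR \cong R[t]/(t^2-\eta^p t)$ is not flat over $R\times R$ and does not become a product after any faithfully flat cover. The correct reduction is the more pedestrian one you also mention: localization at primes of $R$ is flat, hence commutes with the defining fiber product, and the Galois condition is local. If you present this version, drop the ``faithfully flat'' language; but the paper's presentation-plus-flat-base-change argument is cleaner still and worth knowing.
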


\begin{proof}
We express $S *_p S$ in a different way. 
Consider the polynomial ring $R[t]$ with ideals 
$I = tR[t]$ and $J = (t - \eta^p)R[t]$. 
Then $I + J = \eta^pR[t] + tR[t]$ while 
$R/I = R$ and $R/J = R$. It follows from 
\ref{idealfiber} that $R *_p R = R[t]/(I \cap J)$. 
Tensoring $R[t]/(I \cap J)$ by $S$ proves the result.
\end{proof} 

In the previous section we used $R[G](1)$ rank one 
projectives to construct cyclic degree $p$ extensions. 
Here we use $A(1)$ modules $P(1)$, which are rank one projectives 
over $S[C](1)$, to construct 
extensions of $S$ cyclic over $R$. 
Since $P(p)$ has a $G$ action, $P(p)^C$ has a 
a $G/C$ action which is easily seen to be semilinear 
and hence a descent action. That is, 
$P(p)^{G}$ is a rank one projective as a module 
over $R *_p R$. For the moment assume that $P(p)^G \cong R *_p R$ 
in order construct our extension of $S/R$. 
Note that $P(p)^G \cong R *_p R$
means that $P(p)^G$  is generated by an element 
$t = (1/\eta^p)(\psi - 1)$ where $\psi$ generates $(P_1^p)^G$ over $R$. 
We set $V = S[P(p)^C](1/\psi) \subset U = S[P(1)](1/\psi)$ so 
$U/V$ is $C$ Galois and both rings have an inherited $\sigma$ action. 

Since $t$ is $G$ fixed, and $V = S[t](1/(1 + t\eta^p))$ 
has a $G/C$ action extending the one on $S$, and we have 
$V^{G/C} = R[t](1/(1 + t\eta^p)) = R'$. That is, $V \cong 
S \otimes_R R[t](1/(1 + t\eta^p))$. Since $U/V$ 
is $G$ Galois and $V/R'$ is $G/C$ Galois we have 
by \ref{galoistower} that $U/R'$ is $G$ Galois. 
Now we set $T_0 = U/tU$ so  
$T_0/S/R$ is $G$ Galois. 
We have shown: 

\begin{theorem}\label{albert1}
Suppose $S/R$ is $G/C$ Galois, where $C \subset G$ are cyclic groups 
of order $p$ and $p^{r+1}$ respectively. Suppose $P(1)$ 
is a normalized projective splitting $A(1)$ module which is rank one 
over $S[C](1)$, and assume 
$P(p)^G \cong R *_p R$. Then $S/R$ extends to a 
$G$ Galois extension $T/R$. 
\end{theorem}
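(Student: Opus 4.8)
The plan is to follow the blueprint already laid out in the paragraph preceding the statement, turning the chain of constructions into an honest proof. First I would form $U' = S'[P_1']$ where $S' = S(1/\eta)$, and recall from the discussion of \ref{generalp} (now carried out over the base $S$ rather than $R$) that $S \oplus P_1 \subset P(1) \subset (1/\eta)S \oplus (1/\eta)P_1 \subset U'$, so that inside $U'$ one has the $G$-stable subalgebra $S[P(1)]$ and the $G$-stable submodule $P(p) \subset S[P(1)]$ generated by $p$-fold products of elements of $P(1)$. Taking $C$-invariants produces $P(p)^C$, a rank one projective $S *_p S$ module with a compatible $G/C$-action; by \ref{ppowerhomo} the isomorphism $\phi\colon (P_1/\eta P_1)^{G/C} \cong R/\eta R$ defining the normalized $P(1)$ induces $\phi^p\colon P(p)^C/\eta^p P(p)^C \cong S/\eta^p S$ defining $P(p)^C$ as an $S *_p S$ module, and one checks directly from the construction that $\phi^p$ is a $G/C$-morphism, so $P(p)^G$ makes sense as a rank one projective $R *_p R$ module (using that $S *_p S / R *_p R$ is $G/C$ Galois, which is the lemma just proved).

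Next I would use the hypothesis $P(p)^G \cong R *_p R$ to pick a generator $t = (\psi - 1)/\eta^p$ of $P(p)^G$, where $\psi$ generates $(P_1^p)^G$ over $R$; then $\psi$ is a unit (it generates a rank one summand mapping onto a unit mod $\eta^p$, exactly as in \ref{generalp}), so inverting $\psi$ is harmless. Set $V = S[P(p)^C](1/\psi) = S[t](1/(1 + t\eta^p))$ and $U = S[P(1)](1/\psi)$. The key structural claim is then: $U/V$ is $C$ Galois. As in the proof of \ref{generalp}, this reduces by faithfully flat descent to the case where the base is local, so $P(1)$ is trivial as an $S[C](1)$ module and one can choose $x = (\theta - 1)/\eta$ generating $P(1)$ with $\theta$ generating $P_1 \cong S$; then $\theta^p = (1 + x\eta)^p = 1 + t\eta^p$, which is invertible in $U$ (being a generator of $P_1^p$), so $\theta$ and hence $x$ are invertible, $U = V[Z]/(Z^p + g(Z) - t)$, and this is $C$ Galois because $t$ lands in $(1 + t\eta^p) \in V^*$ — precisely the criterion of \ref{prop:1.1} applied over $V$.

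Having $U/V$ $C$ Galois, I would observe that $t$ is $G$-fixed, so the $G/C$-action on $S$ extends to $V = S[t](1/(1+t\eta^p))$ with $V^{G/C} = R[t](1/(1+t\eta^p)) =: R'$, i.e. $V \cong S \otimes_R R'$ is $G/C$ Galois over $R'$. Now \ref{galoistower} applies to the tower $U \supset V \supset R'$: $U/R'$ is $G$ Galois. Finally I specialize along $R' \to R$, $t \mapsto 0$ (legitimate since $1 + 0\cdot\eta^p = 1 \in R^*$), setting $T_0 = U/tU = U \otimes_{R'} R$; base change of a $G$ Galois extension along $R' \to R$ is again $G$ Galois, so $T_0/S/R$ is $G$ Galois and extends $S/R$, which is the assertion.

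The step I expect to require the most care — the "main obstacle" — is verifying that $U/V$ is $C$ Galois, and in particular that the local reduction is legitimate: one must confirm that the formation of $P(p)$, $P(p)^C$, $U$, and $V$ all commute with faithfully flat base change on $R$ (so that triviality of $P(1)$ locally really does put one in the situation $U = V[Z]/(Z^p + g(Z) - t)$), and that $\psi = 1 + t\eta^p$ is genuinely a unit in $U$ rather than merely a nonzerodivisor. Once these compatibilities are in hand, everything else is an assembly of results already established (\ref{prop:1.1}, \ref{ppowerhomo}, \ref{galoistower}) together with the routine fact that $G$ Galois extensions are stable under base change.
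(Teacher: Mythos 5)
Your proposal is correct and follows essentially the same route as the paper: the paper's proof of Theorem \ref{albert1} is precisely the construction in the paragraph preceding the statement (form $U = S[P(1)](1/\psi)$ and $V = S[P(p)^C](1/\psi)$, note $U/V$ is $C$ Galois and $V/R'$ is $G/C$ Galois, apply \ref{galoistower}, then specialize $t \mapsto 0$). The only thing you add is the explicit justification by faithfully flat descent that $U/V$ is $C$ Galois, which the paper leaves implicit by analogy with \ref{generalp}; this is a correct and welcome clarification.
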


Of course the above construction gives other 
extensions, since we can define $f: R[t](1/1 + t\eta^p) \to R$ 
with $f(t) = a$ whenever $1 + a\eta^p$ is invertible. 
Let the resulting extension be $T_a/S/R$. After we prove a 
converse to the above result, we will describe all 
the extensions that $U$ realizes. 

We require a useful criterion for the 
triviality of $P(p)^G$. 
Suppose $P(1)$ is a normalized splitting module 
for $A(1)$. Then $P_1 \in \Pic(S)$. The module 
$P_1^p$ has a trivial $C$ action and so $(P_1^p)^{G/C} 
\in \Pic(R)$. Also, $P_1/\eta{P_1}$ has a trivial 
$C$ action and so $(P_1/\eta{P_1})^{G/C} \in \Pic(R/\eta{R})$. 
If $P(1)$ is a normalized splitting module, 
$(P_1/{\eta}P_1)^{G/C} \cong R/\eta{R}$ and so there is a 
given generator $\bar \theta \in P_1/\eta{P_1}$ fixed by 
$G/C$. If 
$(P_1^p)^{G/C} \cong R$, this isomorphism defines a 
generator $\psi \in P_1^p$ fixed by $G/C$. Now 
$\bar \theta$ induces a generator $(\bar \theta)^p$ 
for $P_1^p/\eta^pP_1^p$ and $\psi$ maps to a generator 
$\bar \psi \in P_1^p/\eta{P_1^p}$ so $\bar \psi = 
(\bar \theta)^p\bar u$ where $\bar u \in (R/\eta^pR)^*$. 

\begin{proposition}\label{pictrivial}
Suppose $P(1)$ is a normalized splitting module for 
$A(1)$, inducing a $G/C$ fixed generator 
$\bar \theta \in P_1/\eta{P_1}$.  
$P(p)^G \cong R *_p R$ if and only if $(P_1^p)^G \cong 
R$ and for the associated generator $\psi$ we have 
$\bar \psi = (\bar \theta)^p\bar u$ where $\bar u$ is in the image of 
$R^*$.  
\end{proposition}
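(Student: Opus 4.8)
The plan is to mirror the pure degree $p$ argument (as codified in Lemma~\ref{fiber} and the discussion of $P(p)^G$ in the previous section) but now carry the extra $G/C$ descent action along at every step. The object $P(p)^G$ is a rank one projective over $R *_p R$, and $R *_p R$ is defined by a fiber diagram of rings with surjective bottom arrows whose terms are $R$, $R$, and $R/\eta^pR$. Since for any maximal ideal $M$ of $R *_p R$ the localizations at the three corners are local rings, Theorem~\ref{fiber} applies and gives the exact sequence
$$R^* \oplus R^* \to (R/\eta^pR)^* \to \Pic(R *_p R) \to \Pic(R) \oplus \Pic(R).$$
So the class of $P(p)^G$ in $\Pic(R *_p R)$ is trivial if and only if its images $(P_1^p)^{G/C}$ and $R$ in $\Pic(R) \oplus \Pic(R)$ are trivial (the second always is, by normalization), \emph{and} the unit in $(R/\eta^pR)^*$ that glues the two trivializations together lies in the image of $R^*$. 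The whole proof is the identification of that gluing unit with $\bar u$.

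First I would record the concrete description of $P(p)^G$ coming from the paragraphs preceding \ref{ppowerhomo}: writing $P_0 \to P_2$ as $S \to S/\eta^pS$ by normalization, $P(p)^G$ is the pullback of $R \to R/\eta^pR \leftarrow (P_1^p)^{G/C}$, where the right-hand map is $\phi^p$ restricted to $G/C$-invariants. Taking $G/C$-invariants commutes with the fiber product here because $S *_p S \to R *_p R$ is $G/C$ Galois (the lemma just proved) and $S/R$ Galois makes invariants exact on the relevant modules; this reduction to the $R$-level diagram is the first genuine step. Next, assuming $(P_1^p)^{G/C} \cong R$, fix the generator $\psi$ this isomorphism provides; then $\phi^p$ sends $\psi$ to a unit of $R/\eta^pR$, and by the very definition of $\phi^p$ in \ref{ppowerhomo} (namely $\phi_p(x \otimes \cdots \otimes x) = \phi(x)^p$) that unit is exactly $(\bar\theta)^p$ times the comparison unit — i.e.\ it is $\bar u$ in the notation of the paragraph introducing the proposition. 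So the pullback $P(p)^G$ is isomorphic to $R *_p R$ precisely when we can adjust $\psi$ by an element of $R^*$ to make $\phi^p(\psi) = 1$, which is the statement that $\bar u$ lies in the image of $R^*$.

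The remaining direction — triviality of $P(p)^G$ forces $(P_1^p)^{G/C}$ trivial — is immediate from the exact sequence, since $(P_1^p)^{G/C}$ is the image of $[P(p)^G]$ under $\Pic(R *_p R) \to \Pic(R)$. I would also note the degenerate case where $\eta$ is a unit of $R$: then $R/\eta^pR = 0$, both the $R *_p R$ diagram and the pullback defining $P(p)^G$ split, and the statement reads ``$P(p)^G \cong R \oplus R$ iff $(P_1^p)^{G/C} \cong R$'', with the unit condition vacuous — consistent with the general claim. The main obstacle I anticipate is the bookkeeping in the first step: verifying cleanly that passing to $G/C$-invariants turns the $S$-level fiber diagram for $P(p)^C$ into the $R$-level fiber diagram for $P(p)^G$, and that this invariant module is still rank one projective over $R *_p R$ rather than merely over its image. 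This is where one must use that $S *_p S / R *_p R$ is $G/C$ Galois together with the fact that a splitting module's components $P_0, P_1$ are genuine rank one projectives over $S$; everything else is the formal pullback calculus already set up in Section~3 and the explicit formula for $\phi^p$.
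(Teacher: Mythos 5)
Your proposal is correct and follows essentially the same line of reasoning as the paper, which does not give a separate proof but builds the proposition out of the immediately preceding discussion: $P(p)^G$ is a rank-one projective over $R *_p R$ by the semilinear $G/C$-descent remark, it is normalized on one factor, and the Mayer--Vietoris sequence of Theorem~\ref{fiber} for the fiber square defining $R *_p R$ reduces triviality to the condition that $(P_1^p)^{G/C}\cong R$ and that the gluing unit $\bar u\in (R/\eta^pR)^*$ lifts to $R^*$. The descent step you flagged as the main obstacle is handled in the paper exactly as you anticipate (``$P(p)^C$ has a $G/C$ action which is easily seen to be semilinear and hence a descent action''), so there is no gap.
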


To prove the converse of \ref{albert1} we start with:  

\begin{proposition}
Suppose $S/R$ extends to a $G$ Galois extension $T/R$. 
Then $T$ is a projective $R[C]$, $S[C]$, and $A$ module. 
\end{proposition}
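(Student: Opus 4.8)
The plan is to reduce everything to the fact, already recorded in Lemma~\ref{proj}, that a Galois extension is projective over its group ring, together with the tower structure $T \supset S \supset R$. First I would recall that $T/R$ is $G$ Galois and $C \subset G$ is the subgroup with $T^C = S$, so $T/S$ is $C$ Galois and $S/R$ is $G/C$ Galois. By Lemma~\ref{proj} applied to the $C$ Galois extension $T/S$, the module $T$ is projective over $S[C]$. Since $S$ is finitely generated projective (in fact free, since $S/R$ is $G/C$ Galois and one may work locally) as an $R[C]$ module -- here $C$ acts trivially on $S$, so $S$ as an $R[C]$ module is just $S \otimes_R R[C]$, which is projective over $R[C]$ because $S$ is projective over $R$ -- and since $S[C] = S \otimes_R R[C]$ is projective as an $R[C]$ module, transitivity of projectivity gives that $T$ is projective over $R[C]$.

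Next I would handle the $A$ module structure. Recall $A = \Delta(S[C]/R[C],\tau)$, which the text also identifies with the twisted group ring $S_*[G]$ where $G$ acts on $S$ through $G \to G/C = \Gal(S/R)$. The element $\sigma \in A$ implementing the Galois action satisfies $\sigma^{p^r} = \tau$, and conjugation by $\sigma$ on $S$ gives the $G/C$ action. Since $T/R$ is $G$ Galois, $T$ is a module over $\Delta(T/R,G) \cong \End_R(T)$, and the subring $A = S_*[G] \subset \Delta(T/R,G)$ (using $S \subset T$ and the same $G$) makes $T$ an $A$ module. To see $T$ is projective over $A$: $A$ is Azumaya over $R[C]$ (it is a twisted group ring of a Galois extension), $T$ is projective over $R[C]$ by the previous paragraph, and a module over an Azumaya algebra that is projective over the center is projective over the Azumaya algebra -- this is standard (an Azumaya algebra is a progenerator, and one checks via Morita theory, or locally where $A \cong \End_{R[C]}(P)$ and projective $A$ modules correspond to projective $R[C]$ modules). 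Finally $T$ is projective over $S[C]$ since $S[C] \subset A$ with $A$ projective (even free, up to a twist, locally) as a left $S[C]$ module, so transitivity applies again; alternatively this is just the first paragraph.

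The main obstacle, such as it is, will be the bookkeeping around the $A$ module structure: one must check carefully that the left $A = S_*[G]$ action on $T$ is well defined (i.e. that the twisted multiplication relations $\sigma t = \sigma(t)\sigma$ are respected by left multiplication on $T$ composed with the $G$ action on $T$ restricted through $G$, noting $\tau = \sigma^{p^r}$ acts on $T$ but $\tau$ fixes $S$ so the relations internal to $S[C]$ are fine), and that ``projective over the center implies projective over the Azumaya algebra'' is invoked correctly -- this is where I would cite the Morita equivalence from \cite[p.~16--23]{DI}, exactly as in Lemma~\ref{splittingmodule}. Everything else is a routine application of faithfully flat descent to reduce to the local case plus transitivity of projectivity through the tower $R[C] \subset S[C] \subset A$.
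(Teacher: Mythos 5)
Your proof is correct and reaches the same three conclusions, but by a somewhat different route than the paper. The paper's argument is: $T$ is projective over $R[G]$ (Lemma~\ref{proj} applied to $T/R$), hence projective over $R[C]$ since $R[G]$ is free over $R[C]$; then both $S[C]$ and $A$ are \emph{separable} over $R[C]$, and a module over a separable $R[C]$-algebra that is projective over $R[C]$ is projective over that algebra (the DeMeyer--Ingraham fact cited elsewhere at \cite[p.~48]{DI}), handling both remaining cases in one stroke. You instead begin at the top of the tower: $T$ projective over $S[C]$ directly from Lemma~\ref{proj} applied to $T/S$, then descend to $R[C]$ by transitivity through $S[C]$, then ascend to $A$ separately using that $A$ is Azumaya over $R[C]$ (Morita / the same separability fact). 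Both routes are valid; the paper's is more economical because the single separability argument covers $S[C]$ and $A$ uniformly, whereas you treat them by two different mechanisms.

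One small wrinkle worth flagging: you write that ``$C$ acts trivially on $S$, so $S$ as an $R[C]$ module is just $S \otimes_R R[C]$, which is projective over $R[C]$.'' That identification is wrong --- $S$ with trivial $C$-action has $R[C]$-rank $[S:R]/|C|$, whereas $S \otimes_R R[C]$ has $R[C]$-rank $[S:R]$; and in fact $S$ with trivial $C$-action is generally \emph{not} projective over $R[C]$ (e.g.\ $R$ with trivial $C$-action is not projective over $R[C]$ when $|C|$ is not invertible). But this claim is never actually used: the transitivity step you invoke correctly runs through $S[C] = S \otimes_R R[C]$ being projective over $R[C]$, which is true because $S$ is finitely generated projective over $R$. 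So the stray sentence is a harmless misstatement that should simply be deleted; the proof stands.
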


\begin{proof}
Since $S[C]$ and $A$ are separable over $R[C]$, it suffices 
to prove $T$ is projective over $R[C]$. But this is clear 
as it is projective over $R[G]$. 
\end{proof} 

Suppose $T \supset S \supset R$ is such that 
$T/R$ is $G$ Galois and $T^C = S$. Let 
$T(1) = T/(\tau - \rho)(\tau - 1)T \cong 
((\tau - \rho)(\tau - 1))^*T \subset T$ 
which is a projective rank one 
$S[C](1)$ module and so we can form $T(p)^C$ as above. 
Let $T_0 = T/(\tau - 1)T$ and $T_1 
= T/(\tau - \rho)T$ as usual. We need to investigate 
the properties of $T(1)$. 

\begin{lemma}
$T_0 \cong S$. $T(p)^C$ has a $G/C$ action and 
$T(p)^G \cong R *_p R$. 
\end{lemma}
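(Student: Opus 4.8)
The plan is to prove the three assertions in turn, reusing the machinery already set up for the degree $p$ case when $R$ was the base ring, but now with $S$ playing the role of the base and keeping track of the compatible $G/C$ action throughout. First I would handle $T_0 \cong S$. Since $T/R$ is $G$ Galois and $T^C = S$, restricting the situation to $R[C] \subset S[C]$ we see that $T$, as an $S[C]$ module, is rank one projective (as just noted, $T$ is projective over $R[C]$, $S[C]$, and $A$, and a rank count gives rank one over $S[C]$). The quotient $T_0 = T/(\tau - 1)T = T \otimes_{S[C]} S[C]/(\tau - 1)S[C] = T \otimes_{S[C]} S = T_C$, the $C$-coinvariants, which for a Galois extension is canonically $T^C = S$ (the trace/averaging argument, exactly as in the proof that $S/IS \cong R$ via descent, or directly: $\tau$ has order $p$, $T/S$ is $C$ Galois, so $\tr_C: T \to S$ is surjective and identifies $T_C$ with $S$). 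Moreover this isomorphism respects the residual $G/C$ action since everything in sight is $G$-equivariant, so $T_0 \cong S$ as an $A(1)$ module, which is the statement needed. In particular $T(1)$ is a \emph{normalized} splitting module for $A(1)$ by \ref{shortnormalized}.

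Next, $T(p)^C$ inherits a $G/C$ action: $T(1) \subset T$ is $G$-stable because $(\tau - \rho)(\tau - 1)$ is a central element of $R[C] \subset A$ and $G$ normalizes $C$ (indeed $\tau = \sigma^{p^r}$ is central in $G$ since $G$ is cyclic), so $\sigma$ carries the kernel of $(\tau-\rho)(\tau-1)$ on $T$ to itself; hence $P(p) \subset S'[P_1']$ is $G$-stable and $P(p)^C$ carries a $G/C$ action, which as in the text preceding \ref{albert1} is semilinear over the $G/C$ Galois extension $S *_p S / R *_p R$, hence a descent datum, making $P(p)^C = T(p)^C$ a rank one projective $R *_p R$ module, i.e. $T(p)^G \in \Pic(R *_p R)$.

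The main obstacle — and the heart of the lemma — is showing $T(p)^G \cong R *_p R$, i.e. that this Picard element is trivial. Here I would invoke \ref{pictrivial}: it suffices to show $(T_1^p)^{G/C} \cong R$ (where $T_1 = T/(\tau - \rho)T$), and that the resulting generator $\psi \in T_1^p$ satisfies $\bar\psi = \bar\theta^p \bar u$ with $\bar u$ in the image of $R^*$, where $\bar\theta$ is the normalized generator of $T_1/\eta T_1$ coming from $T_0 \cong S$. The key input is Corollary~\ref{invertibleu} applied to the $C$ Galois extension $T/S$: it gives $T_1 = T_\rho = \{t \in T \mid \tau(t) = \rho t\}$, this is a rank one projective $S$ module of order dividing $p$ in $\Pic(S)$, and crucially the multiplication map $T_\rho^p \to S$ is an \emph{isomorphism onto $S$}. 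So $T_1^p \cong S$ canonically via multiplication in $T$, compatibly with the $G/C$ action (which acts on $T$ by ring automorphisms), hence $(T_1^p)^{G/C} \cong S^{G/C} = R$. For the second condition: fix $x \in T(1)$ mapping to the normalized generator $1 \in T_0 \cong S$, so $\sigma^{p^r}(x) = \rho x$ — wait, more precisely $x$ should satisfy the analogue of $\sigma(\alpha) = \rho\alpha + 1$ from \ref{degreepnormalalmost}, namely $(\tau - \rho)x = 1$, and by the argument there $1 + \eta x$ is a unit of $T$ generating $T_1$; then $(1 + \eta x)^p \in T_1^p$ is a unit generating $T_1^p = S$, and modulo $\eta^p$ it is $\bar\theta^p$ times a unit congruent to $1$, so $\bar u = 1$ is certainly in the image of $R^*$. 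The only genuinely delicate point is checking that the element $x$ can be chosen $G/C$-compatibly, i.e. that $T(1)$ really is, as an $A(1)$ module, generated over $A_1$ by such an $x$ with $(\tau-\rho)x$ equal to the distinguished $1 \in S = T_0$; this follows from the normalization ($T_0 \cong S$ as $A(1)$ modules) exactly as in \ref{degreepnormalalmost}, since $(\tau - \rho): T(1) \to T_0$ is the surjection of $A(1)$ modules realizing $T(1) \to T(1)/(\tau-\rho)T(1) = T_0$ and we may lift the generator $1 \in S$. Assembling these, \ref{pictrivial} gives $T(p)^G \cong R *_p R$, completing the proof.
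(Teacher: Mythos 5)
Your overall strategy tracks the paper's: both reduce to the degree-$p$ machinery, both identify $T_1^p$ with $S$ via the multiplication map coming from \ref{invertibleu}, and both observe that this identification is $G/C$-equivariant. Your proofs of $T_0 \cong S$ and of the $G/C$-action on $T(p)^C$ are fine.

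The problem is in how you verify the unit condition of \ref{pictrivial}. You pick $x \in T(1)$ with $(\tau - \rho)x = 1$ and then assert, citing \ref{degreepnormalalmost}, that $1 + \eta x$ is a unit of $T$ generating $T_1$. But \ref{degreepnormalalmost} is an \emph{equivalence}: an $x$ with $(\tau - \rho)x = 1$ and $1 + \eta x$ invertible exists if and only if $T(1) \cong S[C](1)$ as $S[C]$-modules, and nothing in the hypotheses guarantees this. From projectivity you only know $T(1) \in \Pic(S[C](1))$, with order dividing $p$ by \ref{picorder}; it need not be trivial. Surjectivity of $(\tau - \rho): T(1) \to T_0$ does give \emph{some} $x$ lifting $1$, but the resulting $\theta = 1 + \eta x$ need not be a unit nor a generator of $T_1$, so your computation of $\bar\psi \equiv \bar\theta^p$ modulo $\eta^p$ does not go through.

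The paper sidesteps this entirely. It applies the degree-$p$ theorem already in hand — that $T(p)^C \cong S *_p S$ for any $C$-Galois $T/S$ of degree $p$, with no hypothesis on the class of $T(1)$ in $\Pic(S[C])$ — then observes that the defining isomorphism is realized by the multiplication map $T_1^p \to S$, hence is $G/C$-equivariant since $G/C$ acts by ring automorphisms of $T$, and finally takes $G/C$-invariants to conclude $T(p)^G \cong (S *_p S)^{G/C} = R *_p R$. Replacing your paragraph about $x$ with this equivariance argument closes the gap and matches the rest of your proof.
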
 

\begin{proof} 
The process of forming $T(p)^C$ preserves the 
$G$ actions and so $G/C$ acts on $T(p)^C$. 
We know $T(p)^C \cong S *_p S$ and since the 
map $(T_1)^p \to S$ is just multiplication the above 
isomorphism preserves $G/C$ actions. 
\end{proof} 

Note that in the above argument 
$T_1^p = S\psi$ and the generator of $T(p)^G$, 
$x = (\psi - 1)/\eta^p \in S^G = R$. 

We can now give our extension of Albert's criterion. 
The proof of all the pieces are above. 

\begin{theorem}\label{albert2}
Suppose $G \supset C$ are cyclic groups of order $p^{r+1}$, $p$ 
respectively. Let 
$S/R$ be a $G/C$ Galois extension. Then the following are equivalent. 
\begin{enumerate}
\item $S/R$ extends to a $G$ Galois extension $T/R$. \label{albert2:1}

\item $A(1) = \Delta(S[C](1)/R[C](1),\tilde\rho)$ is a split Azumaya 
algebra and there is a choice of normalized splitting module $P(1)$ with \linebreak
$P(p)^G \cong R *_p R$. \label{albert2:2}
\end{enumerate}
\end{theorem}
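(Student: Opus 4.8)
The plan is to prove Theorem~\ref{albert2} as a cycle of implications, though since there are only two statements this amounts to checking both directions, most of which has already been assembled in the preceding discussion. For the direction \ref{albert2:2} $\Rightarrow$ \ref{albert2:1}, I would simply invoke Theorem~\ref{albert1}: the hypothesis of \ref{albert2:2} is precisely the hypothesis of \ref{albert1} (namely that $A(1)$ is split, witnessed by a normalized splitting module $P(1)$ which is rank one projective over $S[C](1)$, with $P(p)^G \cong R *_p R$), and the conclusion is exactly that $S/R$ extends to a $G$ Galois extension $T/R$. So this direction is already done.

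For \ref{albert2:1} $\Rightarrow$ \ref{albert2:2}, I would assemble the pieces proved in the run-up to the theorem. First, assuming $S/R$ extends to $T/R$ which is $G$ Galois with $T^C = S$, the Proposition just above shows $T$ is a projective $R[C]$, $S[C]$, and $A$ module. Then, following the discussion, set $T(1) = T/(\tau-\rho)(\tau-1)T \cong ((\tau-\rho)(\tau-1))^*T \subset T$; this is a rank one projective $S[C](1)$ module. Because $T$ is projective over $A = \Delta(S[C]/R[C],\tau)$, the module $T(1)$ is a projective $A(1)$ module which is rank one over $S[C](1)$, so it is a splitting module for $A(1)$, and in particular $A(1)$ is split. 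By \ref{shortnormalized}, since $T_0 = T/(\tau-1)T \cong S$ as $A(1)$ modules (the Lemma just above establishes $T_0 \cong S$), the module $T(1)$ is normalized — or if one prefers, replace $T(1)$ by a normalization via a rank one $R[C](1)$ twist as in \ref{splittingmodule}. Finally the Lemma above gives $T(p)^G \cong R *_p R$, with the generator $x = (\psi - 1)/\eta^p$ lying in $S^G = R$ and $T_1^p = S\psi$. Thus $P(1) = T(1)$ is the required normalized splitting module, establishing \ref{albert2:2}.

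The main obstacle — really the only substantive content — is making sure that the chain $T \rightsquigarrow T(1) \rightsquigarrow T(p)^C \rightsquigarrow T(p)^G$ genuinely produces $R *_p R$ and not merely some rank one projective over it, and that all the $G$-actions are tracked correctly through each step. The key leverage point is that the multiplication map $T_1^p \to S$ is surjective (indeed an isomorphism onto $S\psi$), and that $\psi$, being $G$-fixed, forces the defining unit $\bar u$ in $\bar\psi = \bar\theta^p \bar u$ to be the image of a unit of $R$; this is exactly the criterion of \ref{pictrivial}. I would make this explicit by noting that $T(p)^C \cong S *_p S$ as computed in the degree-$p$ theory, and that because the relevant isomorphisms (normal basis for $T_0$, multiplication on $T_1^p$) are $G/C$-equivariant, taking $G/C$-invariants of $S *_p S \cong S \otimes_R (R *_p R)$ yields $T(p)^G \cong R *_p R$. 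Everything else is bookkeeping already carried out in the excerpt.
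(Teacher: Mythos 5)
Your proposal is correct and follows exactly the same route the paper takes: direction (\ref{albert2:2})~$\Rightarrow$~(\ref{albert2:1}) is Theorem~\ref{albert1}, and direction (\ref{albert2:1})~$\Rightarrow$~(\ref{albert2:2}) is assembled from the Proposition showing $T$ is projective over $A$, the Lemma giving $T_0 \cong S$ and $T(p)^G \cong R *_p R$, and Lemma~\ref{shortnormalized} for normalization. The paper's own proof consists of the single sentence ``The proof of all the pieces are above,'' and your write-up is a faithful and slightly more explicit unpacking of that assembly.
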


Because of the above result and \ref{pictrivial}, we think of the ``generalized" 
Albert criterion as having three obstructions. The first is the 
Brauer group criterion that $\Delta(S[C](1)/R[C](1),\tilde \rho)$ 
be split. The second is that one can choose the normalized 
splitting module $P(1)$ such that $(P_1^p)^{G/C} \cong R$. 
The third part is the requirement that a certain unit of $(R/\eta^pR)^*$ 
lifts to $R^*$. 

Our proof of \ref{albert1} involved constructing 
the $G$ Galois $U/V/R'$ where $R' = R[t](1/(1 + t\eta^p))$ 
and then specializing it. We next want to characterize all 
the specializations of $U$. That is, we want to characterize 
all the $T_a = U \otimes_{\phi} R''$ where $\phi: R' 
\to R''$ is defined by $\phi(t) = a$. To that end, 
let $Q(1) = P(1) \otimes_R R'$ which is an $A(1)' = A(1) \otimes_R R'$ 
splitting module. Form $U(1) = U/(\tau - \rho)(\tau- 1)U$. 
Note that by convention we extend scalars by tensoring on the right, but $A(1)' = R' \otimes_R A(1)$ also and since 
$P(1)$ is a left $A(1)$ module this order works a bit better. 
Similarly $Q(1) = R' \otimes_R P(1)$. 

\begin{lemma}
As a $A(1)'$ module, $U(1) \cong Q(1)$. 
\end{lemma}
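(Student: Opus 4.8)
The plan is to exhibit an explicit $A(1)'$-module isomorphism $Q(1) \to U(1)$ and then, as has become routine in this paper, deduce that it is an isomorphism by a rank count. First I would recall the construction: $U = S[P(1)](1/\psi)$ is the $G$ Galois extension of $R' = R[t](1/(1+t\eta^p))$ built in \ref{albert1} from the normalized splitting module $P(1)$, and by definition $P(1) \subset U$ via the inclusion $P(1) \subset (1/\eta)S \oplus (1/\eta)P_1 \subset S'[P_1']$. Since $R' \supset R$, tensoring gives $Q(1) = R' \otimes_R P(1) \subset R' \otimes_R U = U$ (this last identification because $U$ is already an $R'$-algebra and $P(1)$ sits inside it $R$-linearly, compatibly with the $R' = R[t](\ldots)$ structure). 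The key point to verify is that $Q(1)$ lands inside $U(1) = U/(\tau-\rho)(\tau-1)U \cong ((\tau-\rho)(\tau-1))^*U \subset U$; this follows exactly as in the proof of \ref{degreepmodule}, since every element of $P(1)$ is annihilated by $(\tau-\rho)(\tau-1)$ once we remember that $\tau = \sigma^{p^r}$ acts trivially on $S$ and on $P_1^p$ and by $\rho$ on $P_1$, so that on $P(1)$ the operator $(\sigma^{p^r}-\rho)(\sigma^{p^r}-1)$ kills the $S$-summand and the $P_1$-summand of the localized decomposition, hence kills $P(1)$. This gives a canonical $A(1)'$-module map $\phi\colon Q(1) \to U(1)$.

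Next I would check $\phi$ is surjective, and here I would reduce to the local case by faithfully flat descent over $R$ (equivalently over $S$), exactly as in \ref{degreepmodule}: when $R$ is local, $P(1) \cong R[G](1)$ — or more precisely, since $S/R$ is $G/C$ Galois and we may also localize $S$, we have $P(1) \cong S[C](1)$ — so there is an element $x \in P(1)$ with $\theta = 1 + x\eta$ generating $P_1$, and then $1 + x^p\eta^p = (1+x\eta)^p = 1 + a\eta^p$ shows $\theta$ is invertible in $U$, so $P_1 = S\theta$ inside $U$; one then argues that $x$ generates $U(1)$ over $S[C](1)'$, so $\phi$ is onto. Finally, domain and range of $\phi$ are both rank one projective modules over $S[C](1)' = R' \otimes_R S[C](1)$ — the range because $U(1)$ is a splitting module for the split Azumaya algebra $A(1)'$, hence rank one over $S[C](1)'$ by the splitting-module lemma; the domain because $P(1)$ is rank one over $S[C](1)$ and we base-changed. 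A surjection of rank one projectives of the same rank over a commutative ring is an isomorphism, so $\phi$ is an isomorphism, which is the claim.

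The main obstacle, I expect, is the first verification: making precise that $Q(1)$, defined abstractly as $R' \otimes_R P(1)$, is naturally realized inside $U(1)$ as the $R'$-span of $P(1)$, and that this realization is compatible with the $A(1)'$-action on both sides. This is the same subtlety handled in \ref{degreepmodule} and \ref{specializedegreep} — one must track that $P(1) \subset U$ is a $G$-equivariant $R$-submodule annihilated by $(\tau-\rho)(\tau-1)$, that $t \in R'$ acts on $U$ through the ring structure, and that the resulting $R'$-module map $Q(1) \to U(1)$ is the intended one — but it is bookkeeping rather than a genuinely new difficulty. Once that is in place, surjectivity in the local case and the rank count are entirely parallel to the proof of \ref{degreepmodule}.
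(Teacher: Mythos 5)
Your proposal is correct and takes essentially the same approach as the paper: both construct the natural map $Q(1) = R' \otimes_R P(1) = A(1)' \otimes_{A(1)} P(1) \to U(1)$ induced by the inclusion $P(1) \subset U(1)$, prove surjectivity (the paper citing \ref{degreepmodule} directly to get $A(1)'P(1) = U(1)$, you re-deriving the same fact by descent to the local case as in \ref{degreepmodule}'s proof), and then conclude isomorphism by noting domain and range are both splitting modules for $A(1)'$, i.e.\ rank one projective over $S[C](1)'$. The only cosmetic difference is that you re-prove the generation step rather than quoting the earlier lemma; the paper's proof also contains a typo in the source of the map $\phi$ which your version correctly sorts out.
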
 

\begin{proof}
In particular, $U/V$ is the $C$ Galois extension 
defined by $P(1)$, and so by \ref{degreepmodule}, 
$V[C](1)P(1) = U(1)$. Since $P(1)$ is an $S[C](1)$ 
module $S[C](1)P(1) = U(1)$ and we have $VP(1) = U(1)$. 
Since $V \subset A(1)'$ we have $A(1)'P(1) = U(1)$ 
so the induced $A(1)'$ morphism $\phi: R' \otimes_R P(1) = 
A(1)' \otimes_{A(1)} P(1) \to Q(1)$ is a surjection. 
Since the domain and range are splitting modules 
for $A(1)'$ we have that $\phi$ is an isomorphism. 
\end{proof}

We can state our theorem. We give the result for a fixed 
base ring $R$ for simplicity, but the generalization 
to an arbitrary $R$ algebra is trivial. 

\begin{theorem}\label{specializeanydegree}
Let $S/R$ be $G/C$ Galois as above and let $P(1)$ 
be a splitting module for $A(1) = 
\Delta(S[C](1)/R[C](1),\tilde\rho)$ such that $P(p)^G 
\cong R *_p R$. Define $U/V/R'$ as above. 
If $T/S/R$ is $G$ Galois there is a $\phi: R' \to R$ 
such that $T \cong U \otimes_{\phi} R$ if and only if 
$T(1) \cong P(1)$ as left $A(1)$ modules. 
\end{theorem}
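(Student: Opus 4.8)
The plan is to prove both implications by leveraging the constructions already in place. One direction is essentially done: if $T \cong U \otimes_\phi R$ for some $\phi : R' \to R$, then applying the exact functor $(-)(1) = (-)/(\tau - \rho)(\tau-1)(-)$ and the identification $U(1) \cong Q(1) = R' \otimes_R P(1)$ from the lemma just above, we get $T(1) \cong R \otimes_{R', \phi} U(1) \cong R \otimes_{R', \phi} (R' \otimes_R P(1)) \cong P(1)$ as left $A(1)$-modules, where I use that extension of scalars along $\phi$ commutes with the $A(1)$-module structure (here $A(1)' = R' \otimes_R A(1)$ and $A(1) \to A(1)'$ followed by $A(1)' \to A(1)$ via $\phi$ is the identity on $A(1)$). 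So first I would write this out carefully, checking that the isomorphism is one of $A(1)$-modules and not merely of $S[C](1)$-modules.

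For the converse, suppose $T/S/R$ is $G$ Galois and $T(1) \cong P(1)$ as left $A(1)$-modules. The strategy is to produce $\phi$ directly from the module data, mimicking the proof of \ref{specializedegreep}. First I would normalize: since $T(1) \cong P(1)$ and $P(1)$ was chosen normalized, $T(1)$ is normalized too, so $T_0 \cong S$ and the generator situation for $T(p)^G$ from the preceding lemma applies, giving $T_1^p = S\psi$ with $x = (\psi - 1)/\eta^p \in S^G = R$. Now I would build the comparison map: the inclusion $P(1) \cong T(1) \subset T$ induces a $G$-equivariant $S$-algebra map from $S[P_1]$ (the tensor algebra, localized appropriately) into $T$, and because this map is $G$-equivariant it sends $S[P(p)^C]$ into $T^C = S$ and in fact sends $R[P(p)^G] = R[t]$ into $R$. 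Concretely $t \mapsto x = a$ for some $a \in R$, and since $\psi$ is a unit in $T$ (being a unit times a power of the unit $1 + \eta\theta$, where $\theta$ generates $T_1$ after descent to the local case), the element $1 + a\eta^p$ is invertible in $R$, so $\phi : R' = R[t](1/(1+t\eta^p)) \to R$ with $\phi(t) = a$ is well defined. Then $U \otimes_\phi R$ and $T$ are both $G$ Galois extensions of $R$ containing $S$, built from the same module data, so the induced surjection $U \otimes_\phi R \to T$ is an isomorphism by comparing ranks (both are projective $R$-modules of rank $|G|$, and a surjection of such is an isomorphism).

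The main obstacle, I expect, is the equivariance bookkeeping in the converse: one must check that the map $S[P_1] \to T$ really is a map of $G$-algebras (not just $C$- or $S$-linear), that it descends correctly through the localization to identify $R[t](1/(1+t\eta^p))$ with $V^{G/C}$, and that the resulting $\phi$ is compatible with the twisted group ring structure so that $U \otimes_\phi R$ carries the correct $G$-action matching $T$. Since $A(1)$ encodes both the $S[C](1)$-structure and the $G$-action via conjugation by $\sigma$, an isomorphism of $A(1)$-modules $T(1) \cong P(1)$ is exactly what guarantees this compatibility, so the proof reduces to unwinding definitions — as in \ref{specializedegreep}, where the analogous step was called "clear." I would therefore keep the argument short: reduce to $R$ local (by faithfully flat descent) so that $P(1) \cong S[C](1)$ and everything becomes the concrete polynomial computation $T_1 = S(1 + \eta\theta)$, $1 + a\eta^p = (1 + \eta\theta)^p$, exactly as in \ref{degreepmodule}, and then globalize.
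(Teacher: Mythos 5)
Your proposal takes essentially the same route as the paper's proof: the forward direction follows directly from the preceding lemma identifying $U(1)$ with $Q(1)$, and the converse extends the $A(1)$-module isomorphism $T(1)\cong P(1)$ to a $G$-equivariant $S$-algebra morphism $S[P(1)]\to T$, observes that it carries $R[P(p)^G]=R[t]$ into $R$ and hence produces the specialization point $a=\phi(t)$, and concludes using the invertibility of $\psi$. The one notational slip is writing $S[P_1]$ where you mean $S[P(1)]$ (the ring generated by $P(1)$, not just the tensor algebra on $P_1$), but the content is right; your closing rank-comparison step plays the role of the paper's ``and the result is clear.''
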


\begin{proof} 
The lemma shows that the specializations of $U(1)$ 
are all isomorphic to $P(1)$ as $A(1)$ modules. 
For the converse, an isomorphism $\phi: T(1) \cong P(1)$ 
extends to an $S$ algebra morphism $\phi: S[P(1)] \to T$ 
which maps $P(1) \to T(1) \subset T$ just as in \ref{specializedegreep}. 
It is also clear 
that $\phi$ preserves the $G$ action and 
$\phi(R[P(p)^G]) = R$. If $t \in P(p)^G$ generates 
$P(p)^G$ we know $R[P(p)^G] = R[t]$ and $t = (\psi - 1)/\eta^p$ 
where $\psi$ generates $P_1^p$. Since $\psi$ is a unit, 
$\phi$ extends to $R[P(1)](1/(1 + t\eta^p)) \to S$ 
and the result is clear. 
\end{proof} 

If $T/S/R$ and $T'/S/R$ are two $G$ Galois extensions 
extending $S/R$ and are isomorphic as $A$ modules, the above result shows that they 
are both specializations of the $U/V/R'$ defined 
above. They are also related in the following way. 

\begin{theorem}\label{extenddifference} 
Suppose $T/S/R$ and $T'/S/R$ are both $G$ Galois 
and $T \cong T'$ as modules over $A = \Delta(S[C]/R[C],\tau)$. 
Then $T' = TD$ where $D/R$ is cyclic of degree $p$ and 
$D \cong R[C]$ as $R[C]$ modules. That is, 
$D = R[Z]/(Z^p + g(Z) - a)$ for some $a$. 
\end{theorem}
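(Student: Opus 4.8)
The plan is to identify $D$ with a class in $\Pic(R[C])$ and show that class is trivial, using the splitting--module dictionary for the Azumaya algebra $A$. To start, \ref{difference} already hands us a cyclic degree-$p$ extension $D/R$ with $T'=TD$ (written $DT$ there; the product operation is commutative). Fixing an identification $\Gal(D/R)\cong C$, Lemma \ref{proj} makes $D$ a rank one projective $R[C]$-module, so $D\in\Pic(R[C])$. By \ref{degreepnormal} the whole remaining content of the theorem is that this class is trivial: once $D\cong R[C]$ as $R[C]$-modules, that same theorem rewrites $D$ as $R[Z]/(Z^p+g(Z)-a)$ for a suitable $a$ with $1+a\eta^p\in R^*$ (via the relation $(1+Z\eta)^p=1+(g(Z)+Z^p)\eta^p$).

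Next I would record the Azumaya picture. The algebra $A=\Delta(S[C]/R[C],\tau)$ is the twisted group ring $S^*[G]$, and $T$ is a module over it (the Galois action of $G$ on $T$ together with multiplication by $S=T^C$, compatible because $G$ acts on $S$ through $G/C$). Since $T/R$ is $G$ Galois, $\Delta(T/R,G)\cong\End_R(T)$; and because $G$ is abelian, the centralizer of $R[C]$ inside $\Delta(T/R,G)$ consists exactly of the elements $\sum_{g\in G}t_g g$ with all $t_g\in T^C=S$, i.e. it equals $S^*[G]=A$. Hence $A=\End_{R[C]}(T)$: $A$ is split over $R[C]$, $T$ is a splitting module, and — realizing $A$ as $\End_{R[C]}(T)$ — \ref{splittingmodule} identifies the splitting modules of $A$ with $\Pic(R[C])$ through $L\mapsto T\otimes_{R[C]}L$, with $R[C]$ corresponding to $T$ itself.

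The crux, and the step I expect to demand the most care, is to see that the isomorphism $T'=TD\cong T\otimes_{R[C]}D$ supplied by \ref{moduleproduct} is one of $A$-modules (with $A$ acting through the left factor $T$), not merely of $R[C]$-modules. I would check this directly on the map: it is the descent of $\Psi\colon T\otimes_R D\to T'$, $\Psi(t\otimes d)=\sum_{i=0}^{p-1}\tau^i(t)\otimes\sigma_D^{-i}(d)$, with $\tau=\sigma^{p^r}$ a generator of $C$ and $\sigma_D$ a generator of $\Gal(D/R)$. Since $\tau$ fixes $S$, $\Psi$ is $S$-linear; and since the generator $\sigma$ of $\Gal(T'/R)$ acts on $T'=(T\otimes_R D)^{\langle(\tau,\sigma_D^{-1})\rangle}$ via $\sigma\otimes\mathrm{id}$ and commutes with $\tau$, one gets $\Psi(\sigma(t)\otimes d)=\sigma(\Psi(t\otimes d))$. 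As $A$ is generated over $R$ by $S$ and $G$, this makes $\Psi$ an $A$-module map. Consequently, in the parametrization above, $T'$ corresponds to the class $D$; the hypothesis $T\cong T'$ as $A$-modules — which says $T'$ corresponds to the unit $R[C]$ — forces $D\cong R[C]$ as $R[C]$-modules, and \ref{degreepnormal} then puts $D$ into the asserted polynomial form.

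A more computational alternative would bypass this bookkeeping: $T\cong T'$ as $A$-modules gives $T(1)\cong T'(1)$ as $A(1)$-modules, so by \ref{specializeanydegree} both $T$ and $T'$ arise as specializations of one universal extension $U/V/R'$, and $T'T^{-1}$ can be read off from the two specializing homomorphisms $R'\to R$. I would nonetheless prefer the splitting--module argument above, since it is shorter and keeps the $R[C]$-module structure explicit.
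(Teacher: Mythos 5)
Your proof is correct, and it takes a genuinely different route from the paper's. The paper's argument is concrete and short: it unwinds $D$ as $W^N$ with $W = T\otimes_S T'$, substitutes $T\otimes_S T'\cong T\otimes_S T$ (using the $A$-module hypothesis), writes $T\otimes_S T\cong\bigoplus_{i}Te_i$ via the idempotents $e_i$ satisfying $(t\otimes 1-1\otimes\tau^i(t))e_i=0$, observes that the diagonal $N$ fixes each $e_i$, and reads off $W^N\cong\bigoplus_i Re_i\cong R[C]$ with the $C$-action permuting the $e_i$. Your argument instead upgrades the framework already in place for the Albert criterion: you identify $A$ with $\End_{R[C]}(T)$ (by locating $A=S^*[G]$ as the centralizer of $R[C]$ inside $\Delta(T/R,G)\cong\End_R(T)$), invoke \ref{splittingmodule} to parametrize splitting modules by $\Pic(R[C])$ through $L\mapsto T\otimes_{R[C]}L$, and then show that $T'$ sits at the class $[D]$ by verifying that the descent map $\Psi$ of \ref{moduleproduct} is not merely $R[C]$-linear but $A$-linear. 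You are right to flag that last verification as the real content: it is the one point that is not formal, and your check — $\Psi$ commutes with $S$ because $\tau$ fixes $S$, and with $\sigma$ because $\sigma\otimes\mathrm{id}$ commutes with $\gamma=\tau\otimes\sigma_D^{-1}$ — is exactly what is needed. The trade-off is that the paper's proof is self-contained and computational, while yours makes transparent the bijection between extensions $T'\supset S$ and elements of $\Pic(R[C])$, which dovetails with how splitting modules are used throughout the Albert-criterion section. Your secondary suggestion via \ref{specializeanydegree} would also work and is the mechanism the paper uses to deduce \ref{differencedegreep}, but it is indeed more roundabout than either of the other two arguments. One very small caveat: passing from $D\cong R[C]$ to the polynomial presentation $D=R[Z]/(Z^p+g(Z)-a)$ uses \ref{degreepnormal} together with the fact, established in \ref{prop:1.1} and \ref{generate}, that $R[\alpha]=D$ for the normalized generator $\alpha$; you compress this into one clause, but the pieces are all there.
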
 

Of course by \ref{difference} $T' = TD$ for 
$D/R$ cyclic of degree $p$. The point is the 
trivial Galois structure of $R$. 

\begin{proof} 
We recall the proof of \ref{difference}. 
We start with $T \otimes_R T'$ which is Galois 
with group $G \oplus G$ over $R$. In this extension 
we have $S \otimes_R S$ and this later ring has an idempotent 
$e$ such that $e(S \otimes_R S) \cong S$ and 
$(s \otimes 1 - 1 \otimes s)e = 0$. 
The subgroup $H \subset G \oplus G$ fixing $e$ is the subset 
$\{(g,g') \in G \oplus G | gC = g'C\}$. By \ref{induced}, 
$W = e(T \otimes_R T')$ is $H$ Galois over $R$. 
Checking ranks we have $W \cong T \otimes_S T'$ as $A$ 
modules. 
Inside 
$H$ is the diagonal subgroup $N = \{(g,g)\}$ 
and $D = W^N$. 

Now we use the assumption that $T \cong T'$ as $A$ modules. 
Then $T \otimes_S T' \cong T \otimes_S T$ as $A$ modules, 
and $T \otimes_S T \cong \sum_C T$ where the right hand side 
is spanned over $T$ by the unique idempotents $e_i$ where 
$e_i(T \otimes_S T) \cong T$ and 
$(t \otimes 1 - 1 \otimes \tau^i)e_i = 0$. It is now 
clear that $(g,g)$ fixes the $e_i$ and so $W^N \cong 
\oplus_i R \cong R[C]$ as $R[C]$ modules.
\end{proof}

For completeness sake, we record the converse of the above 
result. 

\begin{lemma} 
Suppose $D \cong R[C]$ as $R[C]$ modules. 
Then $DT$ and $T$ are isomorphic as 
$A = \Delta(S[C]/R[C],\tau)$ modules. 
\end{lemma}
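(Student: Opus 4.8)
The plan is to reduce the statement to the explicit form of the product operation on cyclic extensions. Write $DT$ as the product over $R$ of the degree $p$ cyclic $D/R$ and the degree $p^{r+1}$ cyclic $T/R$, viewing $C_D=\langle\delta\rangle=\Gal(D/R)$ as the order $p$ subgroup $C=\langle\sigma^{p^r}\rangle$ of $G$. By \ref{topproduct} the extension $DT/S/R$ is cyclic with $DT/S$ the product, over $S$, of $T/S$ and $(D\otimes_R S)/S$; in particular $DT$ is a $G$ Galois extension of $S$, so $A=\Delta(S[C]/R[C],\tau)$ acts on $DT$ by the same recipe ($S$ acting by multiplication, $G$ acting through its Galois action, with $\tau=\sigma^{p^r}$) as it does on $T$. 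Now \ref{moduleproduct} already supplies an isomorphism $DT\cong T\otimes_{R[C]}D$ of $R[C]$-modules, where $R[C]$ acts on the first factor through $\tau$ and on $D$ through $\delta$. The crux of the argument, which goes a little beyond the statement of \ref{moduleproduct}, is that the isomorphism constructed there is in fact $A$-linear.

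To see this I would revisit the proof of \ref{moduleproduct}: with $\gamma=\sigma^{p^r}\otimes\delta^{-1}$ one has $DT=(T\otimes_R D)^{\gamma}$, and the isomorphism is $\bar\Psi\colon T\otimes_{R[C]}D\to DT$ induced by $t\otimes d\mapsto \tr_{\gamma}(t\otimes d)=\sum_{i=0}^{p-1}\sigma^{ip^r}(t)\otimes\delta^{-i}(d)$. Make $A$ act on $T\otimes_{R[C]}D$ through the $T$-factor. Then $\sigma\otimes 1$ commutes with $\gamma$ (as $G$ is abelian), hence with $\tr_{\gamma}$, so $\bar\Psi$ intertwines the action of $\sigma$ on the $T$-factor with the Galois generator of $DT$; and multiplication by any $s\in S=T^{C}$ commutes with $\gamma$ (since $s$ is fixed by $\sigma^{p^r}$), hence with $\tr_{\gamma}$, so $\bar\Psi$ is $S$-linear. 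The $S$-action and the action of the Galois generator together generate $A$, and the order $p$ subgroup of $G$ recovers the $S[C]$-structure, so this exhibits $\bar\Psi$ as an isomorphism $T\otimes_{R[C]}D\cong DT$ of $A$-modules, with $A$ acting on $T\otimes_{R[C]}D$ entirely through the $T$-factor.

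Finally I would feed in the hypothesis. An $R[C]$-module isomorphism $D\cong R[C]$, tensored with the identity on the $T$-factor, gives an $A$-module isomorphism $T\otimes_{R[C]}D\cong T\otimes_{R[C]}R[C]$, and the canonical isomorphism $T\otimes_{R[C]}R[C]\cong T$ is again $A$-linear because it acts as the identity on the $T$-factor (here one uses once more that $S=T^{C}$ is fixed by $\tau$ and that $\sigma$ commutes with $\tau$). Composing with $\bar\Psi$ yields $DT\cong T$ as $A$-modules. The only real obstacle beyond routine bookkeeping is the upgrade from $R[C]$-linearity to full $A$-linearity of the map $\bar\Psi$; once that observation is made, triviality of the $R[C]$-module $D$ finishes the proof immediately. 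A minor point to get right is the identification of $C_D$ with $C\subset G$ and the verification that the product over $R$ restricts correctly to the subring $S$, both of which are exactly what \ref{topproduct} provides.
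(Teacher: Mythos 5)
Your argument is correct and matches the paper's proof in essentials: both rest on the observation that the $R[C]$-isomorphism $DT\cong T\otimes_{R[C]}D$ from Proposition~\ref{moduleproduct} is actually $A$-linear because the $S$-action and the $\sigma$-action on $DT=(D\otimes_R T)^{\gamma}$ factor through the $T$-coordinate, after which $D\cong R[C]$ collapses $T\otimes_{R[C]}D$ to $T$. (One small slip: ``$DT$ is a $G$ Galois extension of $S$'' should read ``of $R$''; $DT/S$ is only $C$ Galois.)
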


\begin{proof} 
We saw in \ref{moduleproduct} that 
$DT \cong R[C] \otimes_{R[C]} T \cong T$ as $R[C]$ 
modules. Let $\psi: D \cong R[C]$ be a choice of module 
isomorphism. Then the module isomorphism above 
is given by $\phi(\psi \otimes t) = \phi(\psi)(t)$. 
The proof of \ref{moduleproduct} 
relies on the fact that $DT = (D \otimes_R T)^{\gamma}$ 
where $\gamma = \tau \otimes \sigma^{-p^r}$. 
It follows that $S$ and $\sigma$ act on $DT$ by acting 
on $T$. Moreover, $S$ and $R[C]$ commute and so we have that 
$\phi$ is an $A$ module isomorphism. 
\end{proof} 

The above results are partly just explanatory. 
In a future section, 
we will have two extensions of a $G/C$ Galois $S/R$ 
and we will make them equal by modifying by $C$ Galois 
$D/R$ which have $D \cong R[C]$ as $R[C]$ modules. 
Because of the above results 
we will know that $D \cong R[C]$ must be true. 

The above machinery becomes more useful to us when the 
ground ring $R$ is special. In fact, we will be able to 
focus on a specific ground ring which is a localization 
of $\Z[\rho][x]$. But first, we turn to the case where 
$\eta$ is invertible in $R$ and show how to construct cyclic 
extensions in that case.

\section{Inverting \protect{\boldmath{$\eta$}}} 

In this section, $R$ will always be a flat algebra over 
$\Z[\mu]' = \Z[\mu](1/\eta)$ or $\Z[i]' = \Z[i](1/\eta)$. 
That is, we will confine ourselves to case A and B, 
with the additional assumption that $\eta$ is invertible. 
Our goal will be to understand cyclic Galois 
extensions with this simplifying assumption. 
To make parallel arguments easier, we will resubscript 
our rings. Recall that when $p$ is odd (case A) $\mu_m$ was defined 
to be the root of unity with $\mu_m^m = \mu$. That is, 
$\mu_m$ is a primitive $p^{m+1}$ root of unity. 
On the other hand, when $p = 2$ (case B), $\mu_m^m = i$ and so 
$\mu_m$ is a $2^{m+2}$ root of unity. In this 
section we define $V_m = R \otimes_{\Z[\mu]} \Z[\mu_{m-1}]$ 
for $p > 2$ and $V_m = R \otimes_{\Z[i]} \Z[\mu_{m-2}]$ 
when $p = 2$. The point is that for any $p$ $V_m$ is generated 
by a primitive $p^m$ root of one. 
 
Let $\Q(\mu_{m-1})/\Q(\mu)$ have Galois group $C$ in case A but 
let $C$ be the Galois group of $\Q(\mu_{m-2})/\Q(i)$ in case B.  
Thus $C$ is generated by $\tau$ where $\tau(\mu_s) = \mu_s^r$ 
where $r = p+1$ in case A and $r = 5$ in case B. 
Let $t$ be the order of $r$ and so the order $\tau$ in both cases. 
That is, $t = p^{m-1}$ for $p > 2$ 
but $t = 2^{m-2}$ when $p = 2$. 

In \cite{S1981} we defined 
$$M_{\tau}(z) = z^{r^{t - 1}}\tau(z)^{r^{t - 2}}\ldots\tau^{t - 1}(z)$$ 
so $\tau(M_{\tau}(z)) = (M_{\tau}(z))^r/z^{r^{t} - 1}$. 
Here $r^{t} - 1 = kp^{m}$ and   
$k$ is prime to $p$. 

Let $R$ and $V_m$ be as above. Note that $\tau$ 
defines an action on $V_m$ since it acts on $\Z[\mu_{m-1}](1/\eta)$ 
or $\Z[\mu_{m-2}](1/\eta)$ in cases A or B respectively. 
It is clear that $V^{\tau} = R$. 
Let $z \in V^*$ and set $T' = V(M_{\tau}(z)^{1/p^m}) = 
V[Z]/(Z^{p^m} - M_{\tau}(z))$. 
If $\theta = M_{\tau}(z)^{1/p^{m}}$ is the image of $Z$, define 
$\tau(\theta) = \theta^r/z^k$ and note that 
$\tau^{t}(\theta) = \theta^{r^t}/M_{\tau}(z^k) = 
\theta(\theta^{k^{p^{m}}}/M_{\tau}(z)^k) = \theta$. 
It follows that $\tau$ has order $t$ on $T'$. 
Let $\delta$ be the generator of the Galois group 
of $T'/V$ so that $\delta(\theta) = \mu_{m-1}\theta$ or 
$\mu_{m-2}\theta$ respectively. 

\begin{lemma}\label{taualone}
$T'/R$ is Galois with group $<\delta> \oplus <\tau>$ 
where $<\delta>$ is the Galois group of $T'/V$. 
If $T = T'^{\tau}$, then $T/R$ is Galois with group 
$<\delta>$. 
\end{lemma}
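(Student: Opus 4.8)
The plan is to verify the two assertions in order, using the twisted group ring criterion for Galois extensions recalled before \ref{galoistower}, together with \ref{galoistower} itself. First I would check that $T'/V$ is $\langle\delta\rangle$ Galois: since $\eta$ is invertible in $R$ (hence in $V_m$) and $M_\tau(z)$ is a unit of $V_m$, the extension $V_m[Z]/(Z^{p^m}-M_\tau(z))$ is a Kummer extension of degree $p^m$, so it is cyclic Galois with group generated by $\delta$, $\delta(\theta)=\mu_{m-1}\theta$ (or $\mu_{m-2}\theta$ in case B). The key relation is that $\theta^{p^m}=M_\tau(z)\in V_m^*$, so $\theta$ is a unit and the discriminant is a unit; the fixed ring is $V_m$ by the standard Kummer argument.

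Next I would verify that the formula $\tau(\theta)=\theta^r/z^k$ gives a well-defined ring automorphism of $T'$ extending the action of $\tau$ on $V_m$, and that it has order exactly $t$. Well-definedness amounts to checking $\tau(\theta)^{p^m}=\tau(M_\tau(z))$, i.e. $(\theta^r/z^k)^{p^m}=M_\tau(z)^r/z^{r^t-1}$, which follows from $\theta^{p^m}=M_\tau(z)$ and $r^t-1=kp^m$ — exactly the identity displayed just before the lemma, $\tau(M_\tau(z))=M_\tau(z)^r/z^{r^t-1}$. The order computation $\tau^t(\theta)=\theta$ is the calculation already carried out in the text: $\tau^t(\theta)=\theta^{r^t}/M_\tau(z^k)$ and $\theta^{r^t}=\theta\cdot\theta^{kp^m}=\theta\cdot M_\tau(z)^k$, so the ratio is $\theta$. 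Thus $\tau$ and $\delta$ commute (both are determined by their effect on $\theta$ and on $V_m$, and one checks $\tau\delta(\theta)=\tau(\mu_{m-1}\theta)=\mu_{m-1}^r\tau(\theta)=\delta^r\tau(\theta)$, so they generate an abelian group; more precisely $\langle\delta\rangle$ is normal and $\tau$ normalizes it). Since $\tau$ acts trivially on no proper part and $V_m^\tau=R$, the group generated is $\langle\delta\rangle\rtimes\langle\tau\rangle$, but because $\tau$ acts on $\langle\delta\rangle$ through $\delta\mapsto\delta^r$ this is not a direct product in general — so here I must be careful: the lemma as stated claims a direct sum $\langle\delta\rangle\oplus\langle\tau\rangle$, and I would confirm this is meant as the group acting on $T'$ with $T'/R$ Galois of the correct rank $p^m\cdot t$, checking rank via $[T':V_m]=p^m$ and $[V_m:R]=t$. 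Then \ref{galoistower} applied to $T'/V_m/R$ with $N'=\langle\delta\rangle$ gives that $T'/R$ is Galois with this group.

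For the second statement, $T=T'^\tau$: apply the Galois correspondence for commutative Galois extensions (or directly \ref{galoistower} again). Since $\langle\tau\rangle$ is a subgroup of the Galois group of $T'/R$ with fixed ring $T$, and the quotient is $\langle\delta\rangle$, we get $T/R$ is $\langle\delta\rangle$ Galois provided $T$ is finitely generated projective over $R$ — which holds because $T'$ is projective over $R$ and $T$ is a direct summand (via the trace idempotent, as $\langle\tau\rangle$ has order invertible... wait, $t$ is a power of $p$, so I cannot divide by $|\tau|$ naively). The cleaner route: $T'/T$ is $\langle\tau\rangle$ Galois by the Galois correspondence (subgroups correspond to intermediate Galois extensions when the big extension is Galois over the base), hence $T$ is a direct summand of $T'$ as a $T$-module, so $T$ is projective over $R$; then $T'=T'$ is $\langle\delta\rangle\oplus\langle\tau\rangle$ Galois over $R$ and $T=T'^{\langle\tau\rangle}$, so $T/R$ is $(\langle\delta\rangle\oplus\langle\tau\rangle)/\langle\tau\rangle\cong\langle\delta\rangle$ Galois. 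The main obstacle I anticipate is pinning down precisely the group structure (direct product versus semidirect product) in the first assertion and reconciling it with the $\delta\mapsto\delta^r$ action of $\tau$; once the group and its order are correct, both halves follow formally from \ref{galoistower} and the standard Galois correspondence, with no hard computation beyond the two identities already displayed in the text.
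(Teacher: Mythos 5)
Your overall plan is the paper's: verify $T'/V_m$ is Kummer Galois with group $\langle\delta\rangle$, check that $\tau$ extends to an order-$t$ automorphism, establish the group structure, and then cite \ref{galoistower}. But the crucial step — that $\tau$ and $\delta$ commute, so the group really is the direct sum $\langle\delta\rangle\oplus\langle\tau\rangle$ — is where your computation goes wrong.

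You write $\tau\delta(\theta)=\tau(\mu_{m-1}\theta)=\mu_{m-1}^r\tau(\theta)=\delta^r\tau(\theta)$, and from this you conjecture a semidirect product relation $\tau\delta\tau^{-1}=\delta^r$, which you then try to reconcile with the stated direct sum by a rank count. The last equality is the error: $\delta$ does not scale $\tau(\theta)$ by $\mu_{m-1}$. Since $\tau(\theta)=\theta^r/z^k$ and $\delta$ fixes $z\in V_m$ while sending $\theta\mapsto\mu_{m-1}\theta$, one has $\delta(\tau(\theta))=(\mu_{m-1}\theta)^r/z^k=\mu_{m-1}^r\tau(\theta)$ — that is, $\delta$ already multiplies $\tau(\theta)$ by $\mu_{m-1}^r$, not by $\mu_{m-1}$. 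So $\tau\delta(\theta)=\mu_{m-1}^r\tau(\theta)=\delta\tau(\theta)$, and the two automorphisms commute on the nose. This is exactly the computation the paper carries out, and it is essential: your rank check only pins down the order of the group, not whether it is $\langle\delta\rangle\oplus\langle\tau\rangle$ or a nontrivial semidirect product, and \ref{galoistower} needs the actual group to be known. With the commutativity in hand, the direct sum structure is immediate (since $\delta$ is the identity on $V_m$ and $\tau$ is nontrivial on $V_m$, the two cyclic subgroups intersect trivially), and the second half of the lemma then follows by taking $\langle\tau\rangle$-invariants as you indicate. So the gap is a single dropped line of computation, but it is the load-bearing line of the proof, and as written your argument never actually establishes the claimed group structure.
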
 

\begin{proof} 
Since $\eta$ is invertible in $R$, $T'/V_m$ is cyclic Galois 
of degree $p^{m}$. When $p$ is odd, 
$\tau(\delta(\theta)) = \tau(\mu_{m-1}\theta) = \mu_{m-1}^r\tau(\theta) = 
\mu_{m-1}^r\theta^r/z^k = \delta(\tau(\theta))$. Thus $\tau$ and 
$\delta$ commute. When $p = 2$ we get the same result for basically the 
same reason. 
Since $\delta$ acts trivially on $V_m \subset T'$, 
it is clear that $\delta$ and $\tau$ generate the group 
$<\delta> \oplus <\tau>$ of automorphisms of $T'$. 
By \ref{galoistower} it suffices to prove that $T'/V$ and $V/R$ 
are Galois. Since we have inverted $\eta$, the second is 
easy and the first is just Kummer theory. 
\end{proof} 

In \cite{S1981} we showed that if $F$ is a field 
and $\rho \in F$ or $i \in F$, then all 
cyclic $L/F$ look like the extension constructed in 
\ref{taualone}. In fact, this holds whenever 
$T \otimes_R V_m/V_m$ is Kummer as the 
as we are about to see.

\begin{proposition}\label{universal}
Suppose $R$ is as above, a faithfully flat $Z[\mu](1/\eta)$ 
or $\Z[i](1/\eta)$ algebra. 
Again set $V_m = R \otimes_{\Z[\mu](1/\eta)} \Z[\mu_{m-1}]$ or 
$R \otimes_{\Z[i](1/\eta)} \Z[\mu_{m-2}]$ 
in cases A and B respectively. 
Assume $T/R$ is a cyclic Galois extension of degree 
$p^m$ with group $G = <\delta>$ and $T' = T \otimes_R V_m$ is Kummer 
over $V_m$. Then there is an $x \in V_m^*$ 
such that $T' = R_m[Z]/(Z^{p^m} - M_{\tau}(x))$ and 
$T = T'^{\tau}$ as in $\ref{taualone}$. 
\end{proposition}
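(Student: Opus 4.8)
The plan is to reduce the statement to the Kummer theory of $T'/V_m$ together with the cocycle-untwisting trick of \cite{S1981}. First I would observe that since $T' = T \otimes_R V_m$ is Kummer over $V_m$ of degree $p^m$, there is some $y \in V_m^*$ with $T' = V_m[Z]/(Z^{p^m} - y)$; write $\theta_0$ for the image of $Z$, so $\delta(\theta_0) = \mu_{m-1}\theta_0$ in case A (resp.\ $\mu_{m-2}\theta_0$ in case B). The automorphism $\tau$ of $V_m/R$ extends to $T'$ because $T'/R$ is Galois (indeed $T'/R$ is Galois with group $\langle\delta\rangle\oplus\langle\tau\rangle$, arguing as in \ref{galoistower} from the fact that $V_m/R$ is Galois after inverting $\eta$ and $T'/V_m$ is Kummer). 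Since $\tau$ commutes with $\delta$ — because $T^{\langle\tau\rangle}$-extension structure forces the $\langle\delta\rangle$-action to be $R$-linear, hence $\tau$-equivariant — $\tau(\theta_0)$ must again be an eigenvector for $\delta$ with eigenvalue $\tau(\mu_{m-1}) = \mu_{m-1}^{r}$. Therefore $\tau(\theta_0) = c\,\theta_0^{r}$ for a unique $c \in V_m^*$, since $\theta_0^{r}$ is the only (up to $V_m^*$) element of $T'$ on which $\delta$ acts by $\mu_{m-1}^{r}$.

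Next I would determine the constraint on $c$. Applying $\tau$ repeatedly and using $\tau^{t}(\theta_0) = \theta_0$ (where $t$ is the order of $\tau$, namely $p^{m-1}$ in case A and $2^{m-2}$ in case B) gives a relation of the form $N_{\tau}(c)$ (a suitable twisted norm) equal to $\theta_0$-power expressions that reduce, using $r^{t} \equiv 1 \pmod{p^m}$, to the statement that $c$ is a twisted coboundary: concretely $\prod_{j=0}^{t-1}\tau^{j}(c)^{r^{t-1-j}}$ must equal $y^{(r^{t}-1)/p^m}$ up to the appropriate power, which is exactly the condition that lets us write $c = \tau(x)/x^{?}$-type expression. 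The cleanest route is to invoke directly the computation from \cite{S1981}: $M_{\tau}(z)$ was defined precisely so that $\tau(M_{\tau}(z)) = M_{\tau}(z)^{r}/z^{r^{t}-1}$, and $r^{t}-1 = kp^m$ with $k$ prime to $p$. So I would look for $x \in V_m^*$ with $\theta = \theta_0 \cdot (\text{correction by a power of }x)$ such that the new generator $\theta$ of $T'$ over $V_m$ satisfies $\theta^{p^m} = M_{\tau}(x)$ and $\tau(\theta) = \theta^{r}/x^{k}$; matching this against $\tau(\theta_0) = c\theta_0^{r}$ determines $x$ from $c$ by solving a single equation in $V_m^*$, solvable because $V_m^*$ is $p^m$-divisible enough after inverting $\eta$ — more precisely because the obstruction lies in a group that vanishes by Hilbert 90 / Kummer theory for the cyclic extension $V_m/R$, exactly the content of the surjectivity statements in \cite{S1981}.

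Once such an $x$ is found, $T' \cong V_m[Z]/(Z^{p^m} - M_{\tau}(x))$ by sending $Z \mapsto \theta$, and by construction this isomorphism intertwines both $\delta$ and $\tau$ with the actions prescribed in \ref{taualone}. Taking $\tau$-fixed rings and using that $\tau$-invariants commute with the flat base change $R \to V_m$ (the fixed ring of $\tau$ on $T'$ is $T$ since $V_m^{\tau} = R$ and $T' = T \otimes_R V_m$), we get $T = T'^{\tau}$ as claimed. The main obstacle I expect is the middle step: showing that the cocycle $c$ attached to an \emph{arbitrary} Kummer structure can be normalized into the special shape $\tau(\theta)=\theta^{r}/x^{k}$ — i.e.\ that every cyclic $T/R$ (once Kummer over $V_m$) actually arises from the $M_{\tau}$ construction and not just a twisted version of it. This is where I would lean hardest on \cite{S1981}, whose main theorem is precisely that over fields this normalization is always possible, and then note that the argument there only used Kummer theory of $V_m/R$ plus the divisibility $r^t - 1 = kp^m$, both of which are available here; the flatness and faithful flatness hypotheses on $R$ let us descend the field-case conclusion.
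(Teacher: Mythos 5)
Your setup is right and matches the paper up through the identification of the ``cocycle'': you correctly observe that $\tau(\theta_0) = c\,\theta_0^{r}$ for some $c \in V_m^*$ (the paper calls this $y$), and that iterating $\tau$ and using $\tau^t = 1$ forces the relation $\theta_0^{\,r^t-1} = M_\tau(c)^{-1}$, i.e.\ $x^k = M_\tau(c)^{-1}$ where $x = \theta_0^{p^m}$ is the Kummer element and $r^t - 1 = kp^m$. Up to that point you are in step with the paper.

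The gap is in how you finish, and it is not a small one. You frame the remaining step as ``normalize the cocycle $c$ into the shape $\tau(\theta) = \theta^r/x^k$,'' and propose to solve an equation in $V_m^*$ for the correction factor, justified by ``$V_m^*$ is $p^m$-divisible enough'' or ``Hilbert 90 for $V_m/R$'' or ``descending the field-case conclusion by faithful flatness.'' None of those tools is actually available here in the form you need: $V_m^*$ is not $p^m$-divisible in general; $H^1(\langle\tau\rangle, V_m^*)$ is the kernel of $\Pic(R)\to\Pic(V_m)$ and need not vanish over a ring; and one cannot ``descend a structure theorem from the generic fiber'' by flatness --- that is precisely the kind of move the whole paper is designed to avoid. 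What the paper actually does, and what you should do, requires no cohomology vanishing at all: from $x^k = M_\tau(c)^{-1}$, use that $k$ is prime to $p$ and hence a unit modulo $p^m$, i.e.\ choose $k'$ with $kk' = 1 + sp^m$. Raising to the $k'$-th power gives $x^{1+sp^m} = M_\tau(c^{-k'})$, so $x = M_\tau(c^{-k'})\cdot x^{-sp^m}$ differs from a value of $M_\tau$ by a $p^m$-th power. Since modifying the Kummer element $x$ by a $p^m$-th power does not change $V_m[Z]/(Z^{p^m}-x)$ (and an easy computation, which the paper leaves implicit, shows the induced $\tau$-action is the one prescribed in \ref{taualone}), one may replace $x$ by $M_\tau(c^{-k'})$, which is the desired conclusion. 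The arithmetic identity $kk' = 1 + sp^m$, not Hilbert 90 or descent, is the engine; you mention ``the divisibility $r^t-1 = kp^m$'' in passing but never actually use the invertibility of $k$ mod $p^m$, which is the crux.
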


\begin{proof} 
This proof is really the same as the proof in \cite{S1981}, so we just quickly 
go through it when $p$ is odd to demonstrate it works in this more general 
setting. By assumption, there is a $\theta \in T'^*$ 
such that $\delta(\theta) = \mu_{m-1}\theta$. Clearly the actions 
of $\tau$ and $\delta$ commute, so $\delta(\tau(\theta)) = 
\tau(\rho_{m-1}\theta) = \rho_{m-1}^r\tau(\theta)$. Thus $\tau(\theta) = 
\theta^ry$ for $y \in V_m^*$. If $t$ is the order of $\tau$, 
we compute that $\theta = \tau^t(\theta) = 
\theta^{r^t}y^{r^{t-1}}\tau(y)^{r^{t-2}}\ldots\tau^{t-1}(y)$. 
Now $r^t - 1 = p^mk$ where $k$ is prime to $p$, so there is a 
$k'$ with $kk' = 1 + sp^m$. If $x = \theta^{p^m}$ we have   
$x^k = M_{\tau}(y)^{-1}$ or $x = M_{\tau}(y^{-k})x^{-sp^m}$ 
and modifying $x$ by a $p^m$ power we are done.
\end{proof} 

There is a version of the above construction in Case C, where 
the cyclotomic Galois group is not cyclic, but it only defines 
some of the cyclic extensions and we do not need it. 

It turns out that our $M_{\tau}(z)$  
construction on cyclics are related to the 
corestriction. 

\begin{theorem}
The cyclic constructed in \ref{taualone} 
is also $\Cor_{V_m/R}(S/V_m)$ where $S = V_m[Z]/(Z^{p^m} - z^s)$ 
for $s$ prime to $p$. 
\end{theorem}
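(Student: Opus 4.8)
The plan is to compute $\Cor_{V_m/R}(S/V_m)$ essentially by hand: base change to $V_m$, where the primitive $p^m$ root of one $\mu_{m-1}$ is available so that every cyclic degree $p^m$ extension is Kummer, compute the corestriction there explicitly, recognize the answer — with its $\delta$ and $\tau$ actions — as exactly the $T'$ of \ref{taualone}, and then pass back to $\tau$-invariants.

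First I would check that $V_m/R$ is $C$ Galois. It is $R\otimes_{\Z[\mu]'}\Z[\mu_{m-1}]'$ (or the case B analogue), and since $\Z[\mu_{m-1}]$ is unramified over $\Z[\mu]$ away from $p$ and $\eta$ (hence $p$) is inverted, $V_m/R$ is \'etale with $V_m^\tau = R$. So \ref{corestriction} (through \ref{galoistower}) applies: $\Cor_{V_m/R}(S/V_m) = W^{\hat N}$ is $G$ Galois over $R$, where $W = \bigotimes_{V_m,\,i}{}^{\tau^i}(S)$ carries the $\hat H = (\bigoplus_i G)\rtimes C$ action, $\hat N = N\rtimes C$, and $N = \ker\big(\bigoplus_i G\to G\big)$ is the kernel of the product map. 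Since $W^{\hat N} = (W^N)^C$ and $V_m/R$ is $C$ Galois, descending along $V_m/R$ replaces $W^{\hat N}$ by $W^N$, which over $V_m$ is generated by the $p^m$-th roots $w_0,\dots,w_{t-1}$ arising from the slots ${}^{\tau^i}(S)$. Here I have to keep careful track of two intertwined twists: in the slot ${}^{\tau^i}(S)$ the $V_m$-structure is precomposed with $\tau^i$, so the scalar $\mu_{m-1}$ there becomes the element $\mu_{m-1}^{r^{-i}}$ of $W$, the generator $\delta_i$ of the $i$-th copy of $G$ scales $w_i$ by $\mu_{m-1}^{r^{-i}}$, and $w_i^{p^m} = \tau^{-i}(z)^s$, while the $C$-action permutes the $w_i$ cyclically and $\tau$-semilinearly over $V_m$.

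Granting this, a monomial $\prod_i w_i^{c_i}$ is $N$-invariant exactly when $(r^{-i}c_i)_i$ is constant modulo $p^m$, i.e.\ $c_i\equiv\lambda r^i$; choosing the representative $\Theta = w_0^{r^{t-1}}\prod_{i=1}^{t-1}w_i^{r^{i-1}}$ and reducing by the $w_i^{p^m}\in V_m$ gives $W^N = V_m[\Theta]/(\Theta^{p^m} - M_\tau(z^s))$, with $\delta(\Theta) = \mu_{m-1}\Theta$. A direct computation of $\tau(\Theta)$, in which the surviving factor collapses by $r^t - 1 = kp^m$ and $w_0^{p^m} = z^s$, gives $\tau(\Theta) = \Theta^r/(z^s)^k$. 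These are precisely the relations by which \ref{taualone} builds $T'$ from the element $z^s$; since $\Cor_{V_m/R}(S/V_m) = (W^N)^\tau$, it follows that $\Cor_{V_m/R}(S/V_m)$ is the cyclic extension of \ref{taualone} attached to $z^s$.

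Finally I would check that the cyclic extension of \ref{taualone} attached to $z^s$ is the same as the one attached to $z$; this is where $\gcd(s,p) = 1$ enters. Write $ss' = 1 + cp^m$ and, inside $T'_{z^s} = V_m[\theta]/(\theta^{p^m} - M_\tau(z)^s)$, put $\xi = \theta^{s'}M_\tau(z)^{-c}$. Since $\theta$ and $M_\tau(z)$ are units and $\gcd(s',p) = 1$, $\xi$ is a unit generating $T'_{z^s}$ over $V_m$. Then $\xi^{p^m} = M_\tau(z)^{ss'}\big(M_\tau(z)^{-c}\big)^{p^m} = M_\tau(z)$, and $\delta^s(\xi) = \mu_{m-1}^{ss'}\xi = \mu_{m-1}\xi$; and substituting $\tau(\theta) = \theta^r/(z^s)^k$ together with the identity $\tau(M_\tau(z)) = M_\tau(z)^r/z^{r^t-1}$ recorded after the definition of $M_\tau$, the $M_\tau(z)$-powers and a stray factor $z^{kcp^m}$ cancel, leaving $\tau(\xi) = \xi^r/z^k$. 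Thus the data $(T'_{z^s},\tau,\delta^s,\xi)$ is isomorphic to the data $(T'_z,\tau,\delta,\theta)$ of \ref{taualone} for $z$, so taking $\tau$-invariants identifies the two cyclic extensions of $R$, which is the assertion.

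The hard part will be the twist bookkeeping in the middle of this argument: correctly handling the interaction between the $\tau^i$-twist of the $V_m$-structure on the $i$-th tensor slot and the $\tau$-semilinear cyclic permutation of the slots — this is exactly what produces $M_\tau$ rather than the plain norm $N_\tau$. Everything downstream of that is a bounded computation.
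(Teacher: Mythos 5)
Your plan follows the paper's own proof strategy exactly: build $W=\bigotimes_i \tau^i(S)$, find the $N$-invariant Kummer monomial, recognize the resulting $G$- and $C$-actions on $W^N$ as those of \ref{taualone}, and take $\tau$-invariants. The paper's proof takes the monomial $\Theta = M_\tau(\theta)=\prod_i w_i^{r^{t-1-i}}$, checks $\delta_{\tau^i}(\Theta)=\mu_{m-1}^{r^{t-1}}\Theta$ is independent of $i$ so $\Theta$ is $N$-invariant, and notes that $r^{t-1}$ being prime to $p$ makes $\Theta$ a Kummer generator of $W^N$. Two comments on your version.

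First, there is a concrete bookkeeping slip in the middle that you correctly flagged as ``the hard part.'' Under your own sign conventions (where $\delta_i$ scales $w_i$ by $\mu_{m-1}^{r^{-i}}$, so that $N$-invariance means $r^{-i}c_i$ constant, i.e.\ $c_i\equiv\lambda r^i$), the monomial you wrote down, $\Theta = w_0^{r^{t-1}}\prod_{i\geq1}w_i^{r^{i-1}}$, corresponds to $\lambda=r^{t-1}$, and therefore satisfies $\delta(\Theta)=\mu_{m-1}^{r^{t-1}}\Theta$, \emph{not} $\mu_{m-1}\Theta$ as you asserted. If you want $\delta(\Theta)=\mu_{m-1}\Theta$ you must normalize to $\lambda=1$, i.e.\ take $\Theta = \prod_{i=0}^{t-1}w_i^{r^i}$; this is still $N$-invariant, still has $\Theta^{p^m}=M_\tau(z^s)$, and still satisfies $\tau(\Theta)=\Theta^r/(z^s)^k$. (The two choices generate the same ring since they differ by a power prime to $p$, but the relation $\delta(\Theta)=\mu_{m-1}\Theta$ as written is false for your $\Theta$. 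Note the paper's $M_\tau(\theta)$ likewise gives eigenvalue $\mu_{m-1}^{r^{t-1}}$, and the paper simply declares this a ``Kummer element generating $W^N$'' without renormalizing the generator of $G$ — so the looseness you have is the same looseness the paper has.)

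Second, your final paragraph — the explicit isomorphism $T'_{z^s}\cong T'_z$ via $\xi=\theta^{s'}M_\tau(z)^{-c}$, using $ss'=1+cp^m$ and the identity $\tau(M_\tau(z))=M_\tau(z)^r/z^{r^t-1}$ — is a genuine and worthwhile addition. The paper's proof produces $W^N=V_m[\Theta]$ with $\Theta^{p^m}=M_\tau(z^s)=M_\tau(z)^s$, but leaves entirely implicit the fact that the $\tau$-invariant subring does not depend on the choice of $s$ prime to $p$, which is exactly what the ``is also'' in the theorem statement requires. You supply that step explicitly and correctly; it strengthens the argument beyond what the paper writes down.
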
 

\begin{proof}
With $S$ as given let $\theta \in S^*$ be the 
image of $Z$. We follow the construction of the corestriction 
from \ref{corestriction}. Let $G$  be the Galois group of $S/V_m$ 
and $\delta \in G$ such that $\delta(\theta) = \mu_{m-1}\theta$ or 
$\mu_{m-2}\theta$ in cases A and B respectively. We stick to case 
A below as B is pretty much identical. 
To construct the corestriction of $S$ we first consider 
the twist $g(S)$ for any $g \in C$. Pretty clearly, as an algebra,  
$g(S) = V_m[Z']/(Z'^{p^m} - g(z))$ and we can set $g(\theta)$ 
to be the image of $Z'$. The canonical generator of 
the Galois group of $g(S)/V_m$ is $\delta_g$ where 
$\delta_gg = g\delta$, so $\delta_{\tau}(\tau(\theta)) = 
\tau(\delta_{\tau}(\theta)) = \tau(\mu_{m-1}\theta) = 
\mu_{m-1}^r\tau(\theta)$. 

Now the prescription for defining the corestriction requires 
we form $W = \otimes_{g \in C} g(S)$ which has Galois group 
$(G \oplus \cdots \oplus G) \rtimes C$. Note that we are identifying 
the Galois group of $g(S)/V_m$ with $G$ by identifying $\delta$ 
and $\delta_g$, so that the action of $C$ is just the shift. 
Next we form $W^N$, where $N$ is the kernel of the product map 
$G \oplus \cdots \oplus G \to G$. Note that 
$N$ is generated by all $\delta\delta_g^{-1}$ for all $g \in C$ 
and is preserved by $\tau$. 

Consider $\Theta = M_{\tau}(\theta)$. Note that 
as elements of $G \oplus \cdots \oplus G$, $\delta$ only 
acts on $\theta$ and not on any other $C$ conjugates, and 
$\delta_g$ only acts on $g(\theta)$ and not on any other 
$C$ conjugates. Since $\theta$ appears in $M_{\tau}(\theta)$ 
as $\theta^{r^{t-1}}$, it follows that $\delta(\Theta) = \rho_{m-1}^s\Theta$ 
where $s = r^{t-1}$. Since $\tau(\theta)$ appears in 
$M_{\tau}(\theta))$ as $\tau(\theta)^{r^{t-2}}$, 
$\delta_{\tau}(\Theta) = (\rho_{m-1}^r)^{r^{t-2}}\Theta = \rho_{m-1}^s\Theta$.  
It follows that $\delta\delta_{\tau}^{-1}(\Theta) = \Theta$.  Arguing similarly 
all of $N$ fixes $\Theta$. Since 
$s$ is prime to $p$ this is a Kummer element generating 
$W^N$ and we are done.
\end{proof}

There is an easy consequence of the above theorem 
when we combine it with \ref{functorial}. 
The proof is merely the observation that the corestriction 
of a split extension is split. 

\begin{corollary}\label{splitprime}
Let $M \subset R$ be an ideal. 
In $\ref{taualone}$, if $z$ maps to $1 \in R_m/NR_m$ 
then $T/R$ constructed has the property that 
$T/NT$ is split over $R/N$. 
\end{corollary}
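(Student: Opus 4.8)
\textbf{Proof proposal for Corollary \ref{splitprime}.}

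The plan is to reduce the claim to the functoriality of the corestriction and the trivial observation that the corestriction of a split cyclic extension is split. First I would quotient everything by $N$: applying the ring homomorphism $R \to R/N$ induces $V_m \to V_m/NV_m = R_m/NR_m$ (compatibly with the $C$-action, since $\tau$ acts through $\Z[\mu_{m-1}]$), and by the previous theorem the extension $T/R$ built in \ref{taualone} from $z$ is $\Cor_{V_m/R}(S/V_m)$, where $S = V_m[Z]/(Z^{p^m} - z^s)$ with $s$ prime to $p$. Now invoke \ref{functorial}: since $V_m/R$ is $C$ Galois (this is exactly the hypothesis of the inverting-$\eta$ setting, $\eta$ being invertible), we get
$$\Cor_{V_m/R}(S/V_m) \otimes_R R/N \cong \Cor_{(V_m/NV_m)/(R/N)}\bigl((S \otimes R/N)/(V_m/NV_m)\bigr).$$
So $T/NT$ is the corestriction, down to $R/N$, of the extension $(S \otimes_{V_m} V_m/NV_m)/(V_m/NV_m)$.

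The second step is to identify that latter extension as split. By construction $S \otimes_{V_m} V_m/NV_m = (V_m/NV_m)[Z]/(Z^{p^m} - \bar z^s)$ where $\bar z$ is the image of $z$ in $V_m/NV_m = R_m/NR_m$. By hypothesis $z$ maps to $1$ in $R_m/NR_m$, so $\bar z^s = 1$ and this extension is $(V_m/NV_m)[Z]/(Z^{p^m} - 1)$. Since $R/N$, hence $V_m/NV_m$, contains the primitive $p^m$ root of one $\mu_{m-1}$ (resp.\ $\mu_{m-2}$), the polynomial $Z^{p^m} - 1$ factors completely into distinct linear factors $\prod_i (Z - \mu_{m-1}^i)$, so $(V_m/NV_m)[Z]/(Z^{p^m}-1) \cong \bigoplus_i V_m/NV_m$ is the split $G$ Galois extension of $V_m/NV_m$.

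Finally, the corestriction of a split extension is split: in the construction of \ref{corestriction}, if $S' = \bigoplus_{g\in G} D e_g$ is split over $D$ with $\delta(e_j) = e_{j+1}$, then (exactly as in the split case analyzed inside the proof of \ref{degree2cor}) $W = \otimes_{\eta\in C}\eta(S')$ has a $D$-basis of idempotents $f_k = \sum e_{i_1}\otimes\cdots$ with the product indices summing to $k$, these are $N$-invariant and $C$-invariant, and hence $W^{\hat N}$ is a free $R$-module spanned by idempotents permuted cyclically by $G$ — i.e.\ split. So $T/NT$ is split over $R/N$, as claimed. I do not expect a genuine obstacle here; the only point requiring care is checking that \ref{functorial} applies, which needs $V_m/R$ to be $C$ Galois — true because $\eta$ is invertible throughout this section — and that the base change of $S$ is computed correctly, which is immediate since $S$ is presented by an explicit polynomial over $V_m$.
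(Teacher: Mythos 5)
Your argument is correct and follows exactly the paper's intended route: the paper proves this corollary in two sentences by invoking \ref{functorial} together with the observation that the corestriction of a split extension is split, and your proposal simply spells out those same two steps in detail (base change to $R/N$ via functoriality, then identify $(V_m/NV_m)[Z]/(Z^{p^m}-1)$ as split using the primitive $p^m$ root of one and the invertibility of $p$, then note the corestriction construction preserves splitness via the idempotent analysis as in the split case of \ref{degree2cor}).
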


The above description of cyclics allows us to say something 
about when degree $p$ extensions can be enlarged to degree 
$p^m$ extensions. That is, we have another version of Albert's 
criterion only valid when $\eta$ is invertible. 
Let $N_{\tau}(z) = 
\prod_{g \in C} g(z)$ when $C = <\tau>$. Recall 
that $R$ is a $\Z[\mu]$ algebra. 

\begin{theorem}\label{invert}
Suppose $T/R$ is Galois with cyclic group of order $p^{m}$. 
If $T/S/R$ is such that $S/R$ is Galois of degree $p$, 
then $S = R(N_{\tau}(z)^{1/p})$ for some $z \in T^*$.  
Conversely, if $p$ is odd or $R$ contains $i$, then 
$S = R(N_{\tau}(z)^{1/p})$ extends to $T/S/R$ cyclic of degree 
$p^m$ constructed via \ref{taualone}.   
\end{theorem}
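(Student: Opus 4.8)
The plan is to break the statement into its two directions and handle them using the structure of cyclic extensions with $\eta$ invertible, as developed via \ref{universal} and \ref{taualone}. For the forward direction, start with $T/R$ cyclic of degree $p^m$ with group $G = \langle\delta\rangle$, and $T/S/R$ with $S/R$ of degree $p$. By \ref{universal} applied over $V_m$ (using that $T' = T\otimes_R V_m$ is Kummer over $V_m$ since $V_m$ contains a primitive $p^m$ root of one), we may write $T' = V_m[Z]/(Z^{p^m} - M_{\tau}(x))$ for some $x\in V_m^*$, with $\theta$ the image of $Z$ satisfying $\delta(\theta) = \mu_{m-1}\theta$ (resp.\ $\mu_{m-2}\theta$), $T = T'^{\tau}$, and $\tau(\theta) = \theta^r z^{-k}$ type relations. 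Then $S$ is the fixed ring of the index-$p$ subgroup of $G$, i.e.\ $S = T^{\delta^p}$. Over $V_m$, the subring $S\otimes_R V_m$ is Kummer of degree $p$, generated by $w = \theta^{p^{m-1}}$, which satisfies $\delta(w) = \mu_{m-1}^{p^{m-1}}w = \rho\, w$ where $\rho$ is a primitive $p$-th root of one. The point is to identify $w^p = x^{p^{m-1}}$ (up to the $M_\tau$ twist) as $N_{\tau}(z)$ for a suitable $z\in T^*$: indeed $M_{\tau}(x)^{p^{m-1}/p^m}$-type manipulation, combined with the relation $r^t - 1 = kp^m$, should produce $w^p$ as a norm $N_{\tau}(z)$ of an element $z$ that actually lies in $T^*$ (not just $T'^*$), because the $\tau$-twisting in $M_\tau$ collapses under the $p^{m-1}$ power. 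The main work here is the bookkeeping: tracking which powers of $\mu_{m-1}$, $\mu$, and $\rho$ appear, and verifying the element one extracts is genuinely defined over $T$ rather than merely over $T' = T\otimes_R V_m$.

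For the converse, assume $p$ is odd or $i\in R$ (so we are in Case A or Case B and the cyclotomic Galois group $C$ is cyclic), and suppose $S = R(N_{\tau}(z)^{1/p})$ for some $z\in T^*$. I would construct the degree $p^m$ extension directly via \ref{taualone}: set $T_1 = R(M_{\tau}(z)^{1/p^m})$ with the prescribed $\tau$-action, so $T_1/R$ is cyclic Galois of degree $p^m$ by \ref{taualone}. Then I must check that $T_1$ contains $S$ as the degree-$p$ subextension, i.e.\ that $(T_1)^{\delta^p} \cong S$ over $R$. This reduces to the computation that $M_{\tau}(z)^{p^{m-1}}$ and $N_{\tau}(z)$ differ by a $p$-th power in $R^*$ — which is exactly the defining property $\tau(M_{\tau}(z)) = M_{\tau}(z)^r / z^{r^t-1}$ raised to the appropriate power, together with $r^t \equiv 1 \pmod{p^m}$, giving $M_\tau(z)^{p^{m-1}} \equiv \prod_{g} g(z)^{(\text{unit})} = N_\tau(z)^{(\text{unit prime to }p)}$ modulo $p$-th powers. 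Choosing $z$ (or replacing $z$ by $z^{k'}$ with $kk'\equiv 1$) to absorb the prime-to-$p$ exponent, one gets $(T_1)^{\delta^p} = R(N_\tau(z)^{1/p}) = S$.

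The hard part will be the converse's verification that the degree-$p$ sub-extension of the \ref{taualone} construction is exactly the given $S$, rather than some twist of it; this is where the exponent arithmetic $r^t - 1 = kp^m$ with $k$ prime to $p$ does the real work, and where the hypothesis "$p$ odd or $i\in R$" is essential — it guarantees $C$ is cyclic so that $M_\tau$ and \ref{taualone} are available in the first place, and that every relevant unit obstruction (a $p$-th power in $(R/\text{something})^*$ lifting to $R^*$) is trivial because $\eta$ is invertible and hence all the fiber-product corrections of the earlier sections collapse. I expect the forward direction to be comparatively routine given \ref{universal}, with essentially all the subtlety concentrated in making the norm element descend to $T$.
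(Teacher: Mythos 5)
Your forward direction has the right starting point (\ref{universal} gives $T' = V_m[Z]/(Z^{p^m}-M_\tau(z))$ with $T=T'^\tau$), but you do not identify the correct generator of $S$ over $R$ or address the actual obstruction. The element $w = \theta^{p^{m-1}} = M_\tau(z)^{1/p}$ generates $S\otimes_R V_m$ over $V_m$, but $w$ is not $\tau$-fixed, so it lives only in $T'$ and not in $T$. The element you need is $\beta = N_\tau(z)^{1/p}$ (these differ by $w/\beta \in V_m$ because all powers of $r$ are $\equiv 1\pmod p$, so $M_\tau(z) = N_\tau(z)\cdot w'^p$ for some $w'\in V_m$), and the whole content of the forward direction is proving $\tau(\beta)=\beta$. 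Since $N_\tau(z)$ is $\tau$-fixed, one only knows a priori that $\tau(\beta)=\rho^e\beta$ for some $p$-th root of one $\rho^e$; the paper computes $\rho^e$ explicitly as a multiplicative expression in $z$ and its $\tau$-conjugates, then invokes a genericity argument (treating $z$ as a formal unit so its $\tau$-conjugates generate a free abelian group, which is torsion-free, forcing $\rho^e=1$). Your sketch gestures at ``bookkeeping of powers of $\mu_{m-1}$, $\mu$, $\rho$'' and at identifying $w^p$ with $N_\tau(z)$, neither of which is the right computation, and you nowhere confront the possibility that $\tau$ could multiply your generator by a nontrivial root of one.

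You also invert where the difficulty lies. You call the converse ``the hard part'' and say you expect the forward direction to be ``comparatively routine''; in fact the converse is a one-line observation once the forward direction is proven: given $S=R(N_\tau(z)^{1/p})$, build $T$ from $z$ by \ref{taualone}, and the forward direction then identifies the degree-$p$ subextension of $T$ as $R(N_\tau(z)^{1/p})=S$. Your separate verification that ``$M_\tau(z)^{p^{m-1}}$ and $N_\tau(z)$ differ by a $p$-th power'' is a statement over $V_m$, not over $R$, and therefore does not by itself pin down the degree-$p$ subextension over $R$; that descent is exactly the forward direction's genericity argument, which you have not supplied.
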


\begin{proof}
By \ref{universal}, $T/R$ is constructed 
as in \ref{taualone}. 
All powers of $r$ are congruent to 1 modulo $p$, 
so $M_{\tau}(z) = N_{\tau}(z)w^p$ for some $w$. 
Let $\beta = N_{\tau}(z)^{1/p}$. In this argument $\rho$ 
will be a primitive $p$ root of one so $\rho = -1$ when 
$p = 2$.  
We have $\delta(\beta) = \rho\beta$. Since $N_{\tau}(z)$ 
is $\tau$ fixed, we have $\tau(\beta) = \rho^e\beta$ for some $e$. Our goal is to show that $\rho^e = 1$. 
We defined $\tau$ 
by setting 
$$\tau(M_{\tau}(z)^{1/p^{m}}) = M_{\tau}(z)^{r/p^{m}}/z^k$$ 
where $r^{t} = 1 + kp^{m}$. Take the 
$p^{m-1}$ power of this equation to get 
$$\tau(M_{\tau}(z)^{1/p}) = 
\tau(N_{\tau}(z)^{1/p}w(z)) = \rho^eN_{\tau}(z)^{1/p}\tau(w(z)) = $$
$$M_{\tau}(z)^{r/p}/z^{kp^{m-1}} = (N_{\tau}(z)w(z)^p)^{(1+p)/p}z^{-kp^m} = $$
$$N_{\tau}(z)^{1/p}N_{\tau}(z)w(z)^{1+p}z^{-kp^{m-1}}$$ 
and cancelling 
and isolating we have 
$$\rho^e = N_{\tau}(z)w(z)^{1+p}z^{-kp^{m-1}}\tau(w(z))^{-1}.$$ 
Since $z$ is arbitrary the right side is really an expression in the multiplicative 
free abelian group with basis the $\tau$ conjugates of $z$, 
and so the right side must then be the trivial element of the free group. We have $\rho^e = 1$. 
 
To prove the converses we simple observe that if 
$S = R(N_{\tau}(z)^{1/p})$ 
then we use $z$ to construct $T/S/R$ as in \ref{taualone}. 
\end{proof}

Now we turn to applying the above machinery in a special case. 

\section{Special R} 

In the next section we will also be restricted to cases 
A and B. That is, $R$ will be an algebra (in this case 
a specific algebra) over $\Z[\mu]$ where $\mu$ is $i$, the square root of -1,  
in case B and the primitive $p$ root of one in case A when 
$p > 2$. Recall that $\eta = \mu - 1$ is a prime element 
of $\Z[\mu]$  
totally ramified over $p$. On the other hand we will be applying 
the generalized Albert criterion from \ref{albert2} and in this case 
we need $\rho = \mu$ when $p > 2$ but $\rho = -1$ 
when $p = 2$. That is, $\rho$ is always a primitive $p$ 
root of one. 
The specific class of $R$'s we will consider are  
$R = \Z[\mu][x]_Q$ where $Q$ is a multiplicatively 
closed subset of $1 + \eta{M}$ and $M$ is the ideal generated 
by $x$. Note that this $R$ satisfies the assumptions 
of the first section and so the results 
there apply. 
Let $\bar R = R/M$ and $\hat R = R/\eta{R}$. 
The next five results will describe some consequences of this choice.

\begin{lemma}\label{details}
$\hat R = F_p[x]$. 

$\bar R = \Z[\mu]$ 

$\Pic(\hat R) = 0$

$\Pic(R) = \Pic(\Z[\mu])$ 

$R^* \to (\hat R)^*$ is surjective. 
\end{lemma}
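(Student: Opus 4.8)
The plan is to deduce the surjectivity of $R^* \to (\hat R)^*$ from the first assertion of this lemma together with the last sentence of Lemma~\ref{total}. So I would first dispose of the identification $\hat R = R/\eta R \cong F_p[x]$: since $\Z[\mu]/\eta\Z[\mu] = F_p$, reduction modulo $\eta$ turns $\Z[\mu][x]$ into $F_p[x]$, and every element of $Q \subseteq 1 + \eta M$ is congruent to $1$ modulo $\eta$, so localizing at $Q$ has no effect after this reduction. In particular $(\hat R)^* = (F_p[x])^* = F_p^*$, because $F_p[x]$ is a polynomial ring over a field and its units are exactly the nonzero constants.

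Next I would observe that the composite $\Z[\mu] \to R \to \hat R$ is, under the identification $\hat R = F_p[x]$, nothing but the standard quotient $\Z[\mu] \to \Z[\mu]/\eta\Z[\mu] = F_p$ followed by the inclusion $F_p \hookrightarrow F_p[x]$; this is visible directly from the computation of $\hat R$ just made (equivalently, it follows from the flatness of $R$ over $\Z[\mu]$, which gives $\eta R \cap \Z[\mu] = \eta\Z[\mu]$). Since $\Z[\mu] \to R$ is a ring homomorphism it carries $\Z[\mu]^*$ into $R^*$, so the composite $\Z[\mu]^* \to R^* \to (\hat R)^* = F_p^*$ is precisely the reduction map $\Z[\mu]^* \to F_p^*$. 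By the last sentence of Lemma~\ref{total} this map is surjective, and hence $R^* \to (\hat R)^*$ is surjective as well.

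I do not expect a genuine obstacle here: once $\hat R = F_p[x]$ is in hand the argument is a one-line diagram chase. The only point needing a word of care is the compatibility of the identifications, namely that the constant subfield $F_p \subset \hat R = F_p[x]$ is really the image of $\Z[\mu]$ under reduction modulo $\eta$, which is immediate from the explicit description of $\hat R$. (The other four clauses of the lemma are equally routine, and I would prove them first so that the computation $\hat R = F_p[x]$ is available: $\bar R = \Z[\mu]$ since setting $x = 0$ kills $M$ and sends $Q$ to $1$; $\Pic(\hat R) = \Pic(F_p[x]) = 0$; and $\Pic(R) = \Pic(\Z[\mu])$ from homotopy invariance of $\Pic$ for the regular ring $\Z[\mu]$ together with the retraction $R \to \bar R = \Z[\mu]$.)
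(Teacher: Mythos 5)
Your proof is correct and follows essentially the same route as the paper's: both reduce the computations to $\Z[\mu][x]$ using the fact that $Q \subset 1 + \eta M$ is congruent to $1$ modulo both $\eta$ and $M$, both use homotopy invariance of $\Pic$ together with the retraction $R \to \bar R = \Z[\mu]$ for the Picard group statement, and both reduce the surjectivity of $R^* \to (\hat R)^*$ to the surjectivity of $\Z[\mu]^* \to F_p^*$ from Lemma~\ref{total}. Your write-up is somewhat more explicit about the compatibility of the identifications, but the underlying argument is identical.
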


\begin{proof} 
This is all obvious for $\Z[\mu][x]$. 
Since $Q \subset 1 + \eta{M}$, the localization does not 
change the quotients by $M$ and $\eta$. 
$\Pic(\Z[\mu]) \to \Pic(\Z[\mu][x]) \to \Pic(R)$ is 
a composition of surjections   
and following by the quotient to $\Z[\mu] = \bar R$ shows the 
natural map is also injective. The last fact relies on the 
fact that $\Z[\mu]^* \to F_p^*$ is surjective \ref{total}.
\end{proof} 

The surjectivity of $R^* \to (\hat R)^*$ 
shows: 

\begin{proposition}\label{specialdegreep}
If $C$ is cyclic of degree $p$,
$\Pic(R[C](1)) \cong \Pic(R) \oplus \Pic(R)$. 
If $D/R$ is $C$ Galois, and $D/MD$ is split over $R/M$, 
then $D \cong R[C]$ as $R[C]$ modules. 
\end{proposition}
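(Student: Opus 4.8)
The first statement is immediate from Theorem \ref{fiber}: for $C$ cyclic of order $p$, the ring $R[C](1)$ sits in the fiber diagram whose remaining three corners are $R$, $R$, and $R/\eta R$, with the map $R^*\to(R/\eta R)^*$ surjective by \ref{details}. Since $R$ is a localized polynomial ring over $\Z[\mu]$, its localizations at maximal ideals have trivial Picard group after tensoring with the fiber-product corners (the $R_M\otimes R_i$ are local or products of local rings), so \ref{fiber} applies and gives the exact sequence
$$R^*\oplus R^*\longrightarrow (R/\eta R)^*\longrightarrow \Pic(R[C](1))\longrightarrow \Pic(R)\oplus\Pic(R)\longrightarrow \Pic(R/\eta R).$$
The surjectivity of $R^*\to(R/\eta R)^*$ kills the connecting map out of the units, so $\Pic(R[C](1))\hookrightarrow\Pic(R)\oplus\Pic(R)$, and since $\Pic(R/\eta R)=\Pic(F_p[x])=0$ the map is onto; hence $\Pic(R[C](1))\cong\Pic(R)\oplus\Pic(R)$.

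For the second statement, recall from \ref{proj} and the discussion following \ref{picorder} that $D$ is a rank one projective $R[C]$ module, hence an element of $\Pic(R[C])$. I would first pass from $R[C]$ to $R[C](1)$: since $D$ is $C$ Galois, $D$ has trivial Tate cohomology (this is \ref{projexact}), so $D\cong R[C]$ as $R[C]$ modules if and only if $D(1)=D\otimes_{R[C]}R[C](1)\cong R[C](1)$ — indeed by \ref{generate}/\ref{degreepnormal} (taking $S=D$, noting $D$ Galois of degree $p$) triviality of $D(1)$ is equivalent to triviality of $D$ as an $R[C]$ module. So it suffices to show $D(1)$ is the trivial rank one projective over $R[C](1)$. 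Now use the injection just established: $D(1)$ is trivial iff its images in $\Pic(R)\oplus\Pic(R)$, namely $D_1=D(1)/(\sigma-\rho)D(1)=D_\rho$ and $D_0=D(1)/(\sigma-1)D(1)\cong R$, are both trivial; the second is automatic, and the first is $D_\rho\in\Pic(R)$, which by \ref{invertibleu} has order dividing $p$.

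The key point is then that $\Pic(R)\cong\Pic(\Z[\mu])$ via reduction modulo $M$ (this is \ref{details}), so to check $D_\rho$ is trivial it is enough to check the reduction $\bar D_\rho\in\Pic(R/M)=\Pic(\Z[\mu])$ is trivial. But by hypothesis $D/MD$ is the split $C$ Galois extension of $R/M$, so $D/MD\cong (R/M)[C]$ as $(R/M)[C]$ modules, whence $(D/MD)_\rho\cong (R/M)[C]/(\sigma-\rho)\cong R/M$ is free — i.e. $\bar D_\rho$ is trivial, hence $D_\rho$ is trivial, hence $D(1)$ is trivial, hence $D\cong R[C]$ as $R[C]$ modules. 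The only mild obstacle is bookkeeping: making sure the reduction-mod-$M$ map really is compatible with the functor $D\mapsto D_\rho=D(1)/(\sigma-\rho)D(1)$ and with the isomorphism $\Pic(R)\cong\Pic(\Z[\mu])$ of \ref{details}, but this is formal since all the operations ($\otimes_R R/M$, passing to $R[C](1)$, quotienting by $\sigma-\rho$) commute with base change along $R\to R/M$.
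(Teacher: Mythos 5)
Your proof is correct and follows the same route as the paper: the first statement via the exact sequence of Theorem~\ref{fiber} together with $\Pic(\hat R)=0$ and surjectivity of $R^*\to(\hat R)^*$ from Lemma~\ref{details}, the second by reducing to $D(1)\cong R[C](1)$ via Theorem~\ref{degreepnormal} and then checking $D_1\in\Pic(R)$ trivial by the isomorphism $\Pic(R)\cong\Pic(\Z[\mu])=\Pic(R/M)$ and the splitness of $D/MD$. The only cosmetic remark is that the appeal to trivial Tate cohomology and \ref{projexact} is a detour; the equivalence between triviality of $D$ and of $D(1)$ is exactly Theorem~\ref{degreepnormal} and needs no separate cohomological justification.
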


\begin{proof}
From \ref{fiber} there is an exact sequence 
$R^* \to (\hat R)^* \to \Pic(R[C](1)) \to \Pic(R) \oplus \Pic(R) 
\to \Pic(\hat R)$ and so the first statement is clear. 

To prove the last statement, we saw in \ref{degreepnormal} 
that it suffices to show $D(1) \cong R[C](1)$ or by the first statement, 
$D_i \cong R$ for $i = 0,1$. 
$D_0 \cong R$ is automatic. 
$D_1$ is the image of some $I \in \Pic(\Z[\mu])$ 
and $D_1/MD_1 \cong I$. 
Since $(D/M)(1) \cong \bar R[C](1)$, $D_1/MD_1 \cong \Z[\mu]$. 
\end{proof} 

We need to relate the properties of $R$ and $\bar R$. 
Almost identical arguments show: 

\begin{lemma}\label{picbar}
$\Pic(\bar R[C](1)) \cong \Pic(\Z[\mu]) \oplus \Pic(\Z[\mu])$. 
The natural map $\Pic(R[C](1)) \to \Pic(\bar R[C](1))$ 
is an isomorphism. 
\end{lemma}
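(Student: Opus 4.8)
The plan is to deduce Lemma~\ref{picbar} from Theorem~\ref{fiber} applied to both $R[C](1)$ and $\bar R[C](1)$, exactly as in the proofs of \ref{specialdegreep} and \ref{local}, and then to compare the two resulting exact sequences via the quotient map $R \to \bar R = \Z[\mu]$. First I would record that both $R$ and $\bar R = \Z[\mu]$ are $\Z[\rho]$-algebras in which $\eta = \rho - 1$ is a non-zero-divisor, so the pullback description $R[C](1) = ((\sigma - \rho)(\sigma - 1))^*R[C] \cong R[C]/(\sigma-\rho)(\sigma-1)R[C]$ of \ref{exact} (and the ensuing fiber diagram with corners $R$, $R$, $R/\eta R$) is available for each, and likewise Zariski-locally on $\Spec$ the relevant Picard groups vanish by \ref{local}. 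Applying \ref{fiber} to the fiber square defining $R[C](1)$ gives the exact sequence
$$R^* \to (\hat R)^* \to \Pic(R[C](1)) \to \Pic(R) \oplus \Pic(R) \to \Pic(\hat R),$$
and since $R^* \to (\hat R)^*$ is surjective and $\Pic(\hat R) = 0$ by \ref{details}, this yields $\Pic(R[C](1)) \cong \Pic(R) \oplus \Pic(R)$, which is the first assertion once we recall $\Pic(R) = \Pic(\Z[\mu])$. The identical argument over $\bar R = \Z[\mu]$, using that $\Z[\mu]^* \to F_p^*$ is surjective and $\Pic(F_p) = 0$, gives $\Pic(\bar R[C](1)) \cong \Pic(\Z[\mu]) \oplus \Pic(\Z[\mu])$.

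Next I would promote this to a statement about the natural map. The quotient $R \to \bar R$ sends the fiber square for $R[C](1)$ to the fiber square for $\bar R[C](1)$ compatibly (both have corners of the shape $\ast,\ast,\ast/\eta$, and $R/\eta R = F_p[x] \to F_p = \Z[\mu]/\eta\Z[\mu]$ is the obvious quotient), so the two four-term exact sequences fit into a commutative ladder. On the outer terms the vertical maps are $\Pic(R) \oplus \Pic(R) \to \Pic(\Z[\mu]) \oplus \Pic(\Z[\mu])$ and $\Pic(\hat R) \to \Pic(F_p)$; the former is an isomorphism because $\Pic(R) = \Pic(\Z[\mu])$ already via the splitting $\Z[\mu] \to R \to \bar R = \Z[\mu]$ from \ref{details}, and the latter is the zero map between zero groups. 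Since the cokernels $(\hat R)^*/\mathrm{im}(R^*)$ and $(F_p)^*/\mathrm{im}(\Z[\mu]^*)$ are both trivial, a short diagram chase (or the five lemma applied to the identifications $\Pic(R[C](1)) \cong \Pic(R)^{\oplus 2}$, $\Pic(\bar R[C](1)) \cong \Pic(\Z[\mu])^{\oplus 2}$ together with the commuting square) shows $\Pic(R[C](1)) \to \Pic(\bar R[C](1))$ is an isomorphism.

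I do not expect a serious obstacle here; the content is entirely in checking that the fiber square for $R[C](1)$ really does map to that for $\bar R[C](1)$ with the claimed corners, which is immediate from the pullback presentation in Section~4 and from \ref{details}. The one point needing a word of care is the identification $\Pic(R) \cong \Pic(\Z[\mu])$ being compatible with the $R \to \bar R$ map, i.e.\ that the composite $\Pic(\Z[\mu]) \to \Pic(R) \to \Pic(\bar R)$ is the identity, which is exactly what the argument in \ref{details} gives since $\bar R = \Z[\mu]$ and the structure map $\Z[\mu] \to R$ followed by $R \to \bar R$ is the identity on $\Z[\mu]$. With that in hand the ladder collapses and the isomorphism follows; the proof will be short enough to write as ``Almost identical arguments to \ref{specialdegreep}, using \ref{fiber}, \ref{details}, and naturality of the fiber diagram under $R \to \bar R$.''
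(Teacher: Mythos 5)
Your proposal is correct and is essentially the paper's own argument: the paper's proof reads simply ``Almost identical arguments show,'' meaning one reruns the exact sequence from Theorem~\ref{fiber} for $\bar R[C](1)$ just as in Proposition~\ref{specialdegreep} and then compares via naturality. The only thing you spell out in more detail than the paper is the diagram chase establishing that the natural map is an isomorphism, which is implicit in the identifications $\Pic(R[C](1)) \cong \Pic(R)^{\oplus 2}$, $\Pic(\bar R[C](1)) \cong \Pic(\Z[\mu])^{\oplus 2}$ together with $\Pic(R) \cong \Pic(\Z[\mu])$ from Lemma~\ref{details}.
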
 

Similarly, 

\begin{lemma}
$\Pic(R *_p R) \to \Pic(R) \oplus \Pic(R)$ 
is surjective, as is $\Pic(\Z[\mu] *_p \Z[\mu]) 
\to \Pic(\Z[\mu]) \oplus \Pic(\Z[\mu])$. 
\end{lemma}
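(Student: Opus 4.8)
The plan is to recognize that $R *_p R$ is itself built from a fiber diagram to which Theorem~\ref{fiber} applies, and then read off surjectivity of the Picard map from the resulting long exact sequence. Recall that $R *_p R$ is defined by the pullback
$$\begin{matrix}
R *_p R&\longrightarrow&R\cr
\downarrow&&\downarrow\cr
R&\longrightarrow&R/\eta^p{R}\cr
\end{matrix}$$
with both maps to $R/\eta^pR$ being the canonical quotient, hence surjective. First I would check the local triviality hypothesis of \ref{fiber}: for each maximal ideal $\mathfrak{M}$ of $R *_p R$, the localizations of the three corner rings $R$, $R$, $R/\eta^pR$ have trivial Picard group. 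Since $R = \Z[\mu][x]_Q$, its localizations at maximal ideals are regular local rings, hence have trivial Picard group; and $R/\eta^pR$ is a quotient of $F_p[x]$ by a nilpotent ideal (namely $\eta = 0$ there so $R/\eta^pR$ is local-ish — more precisely its localizations are quotients of localizations of $F_p[x]$, which are principal ideal domains, so again trivial Picard group). Thus \ref{fiber} yields the exact sequence
$$\Pic(R *_p R) \to \Pic(R) \oplus \Pic(R) \to \Pic(R/\eta^pR).$$

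Next I would argue that the map $\Pic(R) \oplus \Pic(R) \to \Pic(R/\eta^pR)$ is actually zero, which forces $\Pic(R *_p R) \to \Pic(R) \oplus \Pic(R)$ to be surjective. The point is that $\Pic(R/\eta^pR) = \Pic(F_p[x]/(\text{something}))$; since $\eta$ maps to $0$ in $\hat R = F_p[x]$, we have $R/\eta^pR$ has underlying reduced ring $F_p[x]$ and the kernel of $R/\eta^pR \to F_p[x]$ is a nilpotent ideal. A nilpotent ideal does not change the Picard group, so $\Pic(R/\eta^pR) \cong \Pic(F_p[x]) = \Pic(\hat R) = 0$ by \ref{details}. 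Hence the target of the connecting-type map is zero and the surjectivity of $\Pic(R *_p R) \to \Pic(R)\oplus\Pic(R)$ follows immediately from exactness. The identical argument with $\Z[\mu]$ in place of $R$ gives the second assertion: $\Z[\mu] *_p \Z[\mu]$ is the pullback of $\Z[\mu] \to \Z[\mu]/\eta^p\Z[\mu] \leftarrow \Z[\mu]$, the localizations of $\Z[\mu]$ at maximal ideals are discrete valuation rings (trivial Picard group), and $\Z[\mu]/\eta^p\Z[\mu]$ is a local Artinian ring, so \ref{fiber} applies and $\Pic(\Z[\mu]/\eta^p\Z[\mu]) = 0$ kills the obstruction.

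The main obstacle I anticipate is the careful verification of the local Picard-triviality hypothesis in \ref{fiber} — specifically making sure that "for each maximal ideal $\mathfrak{M}$ of $R *_p R$, $(R *_p R)_{\mathfrak{M}} \otimes_{R *_p R} R_i$ has trivial Picard group" is genuinely satisfied, since $R *_p R$ is not a domain and its localizations require a moment's thought. One clean way around this is to observe that $R *_p R$ is a finite $R$-algebra (free of rank $2$ over $R$ as one sees from the description $R *_p R = R[t]/(t(t-\eta^p))$, using \ref{idealfiber}), so its maximal ideals lie over maximal ideals of $R$, and localizing at a maximal ideal $\mathfrak{M}$ of $R *_p R$ factors through localizing $R$ at the maximal ideal $\mathfrak{m}$ below it; then $R_{\mathfrak{m}} *_p R_{\mathfrak{m}}$ has the two components $R_{\mathfrak{m}}$ (local, trivial Picard) and $R_{\mathfrak{m}}/\eta^p R_{\mathfrak{m}}$ (local, trivial Picard), so the hypothesis holds. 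With that in hand the rest is the short exact-sequence chase described above.
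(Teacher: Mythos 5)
Your proposal is correct and takes essentially the same approach as the paper: apply Theorem~\ref{fiber} to the defining fiber square of $R *_p R$, conclude that the cokernel of $\Pic(R *_p R) \to \Pic(R)\oplus\Pic(R)$ embeds in $\Pic(R/\eta^p R)$, and then note $\Pic(R/\eta^p R) \cong \Pic(R/\eta R) = 0$ since $\eta R/\eta^p R$ is nilpotent. The paper's proof is a single line; your verification of the local Picard-triviality hypothesis via $R *_p R \cong R[t]/(t(t-\eta^p))$ is correct and a useful elaboration of what the paper leaves implicit.
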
 

\begin{proof} 
In the first case, the cokernel is a subgroup 
of $\Pic(R/\eta^pR) \cong \Pic(R/\eta{R}) = 0$. 
The next statement has a parallel proof. 
\end{proof} 

Finally we need some results relating the 
Brauer groups of $R$ and $R'$ etc. 

\begin{lemma}\label{brauer}
$\Br(R[C](1)) \to \Br(R) \oplus \Br(R)$ is injective. 
$\Br(R) \to \Br(R')$ is injective. 
$\Br(R[C](1)) \to \Br(R'[C](1))$ is injective. 
\end{lemma}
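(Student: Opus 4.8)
The plan is to treat the three injectivity claims in sequence, the first and third being formal consequences of the exact sequence of \ref{fiber} together with one standard Brauer-group fact needed for the middle one.

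For the first claim I would use the exact sequence of \ref{fiber} attached to the pullback square defining $R[C](1)$: its corners are $R[C](1)$, $R$, $R$ and $\hat R = R/\eta{R}$, the two maps $R[C](1)\to R$ being evaluation of the generator $\tau$ at $1$ and at $\rho$, and both maps $R \to \hat R$ the standard quotient. This is the same square used in \ref{specialdegreep}, and its hypotheses for \ref{fiber} hold because $R$ and $\hat R$ are quotient rings of $R[C](1)$, hence have locally trivial Picard group on $\Spec(R[C](1))$. The tail of that sequence reads $\Pic(\hat R) \to \Br(R[C](1)) \to \Br(R)\oplus\Br(R)$, and $\Pic(\hat R) = \Pic(F_p[x]) = 0$ by \ref{details}; so the kernel of the second map, being a quotient of $\Pic(\hat R)$, vanishes.

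For the second claim I would invoke one external fact. Since $R = \Z[\mu][x]_Q$ is a localization of the regular domain $\Z[\mu][x]$, it is itself a regular integral domain; write $K$ for its fraction field. For a regular integral domain the map from its Brauer group to that of its fraction field is injective (Auslander--Goldman; see \cite{DI}). As $R \subseteq R' = R(1/\eta) \subseteq K$, the map $\Br(R) \to \Br(K)$ factors through $\Br(R')$, forcing $\Br(R) \to \Br(R')$ to be injective. I expect this to be the only genuinely non-formal step, the main effort being to cite the cleanest form of that injectivity.

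For the third claim I would combine the first two by base change. Because $\eta \in R'^{*}$, one has $R'[C](1) = R[C](1)\otimes_R R' \cong R' \oplus R'$, so $\Br(R'[C](1)) \cong \Br(R')\oplus\Br(R')$. The square with top edge $\Br(R[C](1)) \to \Br(R)\oplus\Br(R)$, bottom edge the isomorphism $\Br(R'[C](1)) \cong \Br(R')\oplus\Br(R')$, and vertical edges the evident base-change maps, commutes; its top edge is injective by the first claim and its right edge is a pair of copies of $\Br(R)\to\Br(R')$, injective by the second. Hence the left vertical map $\Br(R[C](1)) \to \Br(R'[C](1))$ is injective, completing the argument.
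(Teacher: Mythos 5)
Your proof is correct and follows essentially the same route as the paper: the first claim from the pullback exact sequence of \ref{fiber} together with $\Pic(\hat R)=0$, the second from regularity of $R$ and injectivity of $\Br(R)\to\Br(F)$ for the common fraction field $F$, and the third from $R'[C](1)\cong R'\oplus R'$ plus the first two. You merely spell out the details (the tail of the exact sequence and the commuting square) that the paper leaves as ``clear'' and ``obvious.''
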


\begin{proof} The first statement follows from 
\ref{fiber} and the fact that $\Pic(R/\hat R) = 0$. 
Since $R$ is regular, $\Br(R) \to \Br(F)$ is 
injective where $F$ is the field of fractions 
of $R$ and $R'$. The second statement is clear. 
Since $R'[C](1) = R' \oplus R'$, the third statement 
is obvious. 
\end{proof}

The structure of the arguments in the rest of this section 
will be as follows. Our ultimate goal is to construct, 
for any $p^m$, a cyclic $T/R$ of degree $p^m$ 
so that $\hat T = T/\eta{T}$ is a very general extension of 
$\hat R = F_p[x]$. In fact, all that matters 
is that a small degree piece of $\hat T/\hat R$ 
is very general. Let $T/U/R$ 
be such that $U/R$ has degree $p$. 
Then we require that $U/\eta{U} = \hat U = \hat R[Z]/(Z^p - Z - x)$. 

Once this first $U$ is constructed, we will construct $T$ 
one cyclic at a time. That is, we will assume $S/R$ has been 
constructed of degree $p^{r-1}$ (with degree $p$ part as needed) 
and solve the embedding problem 
to construct $T$ of degree $p^r$. In fact it is convenient 
to make the argument in the opposite order. 
We first assume $U/R$ has been constructed, and turn 
to constructing $U$ at the end of the section. 
Given this assumption, we will assume 
by induction that $S/R$ has been constructed of degree $p^{r-1}$ 
and Galois group $G/C$ and proceed to construct the $G$ 
Galois $T/R$ of degree $p^r$. 
Of course, we will use \ref{albert2} to solve this 
embedding problem. Recall that the criterion in \ref{albert2} 
has Brauer group, Picard group, and unit components. 
More specifically, let us describe these obstructions: 
\begin{enumerate}[I.]
\item $A(1) = \Delta(S[C](1)/R[C](1),G,\tilde \rho)$ needs to split. \label{itm:I}

\item For some $P(1)$, a normalized splitting module for $A(1)$, 
we need $(P_1^p)^{G/C} \cong R$. \label{itm:II}

\item Given I) and II), there is a unit $\tilde u \in (R/\eta{R})^*$ 
that needs to be in the image of $R^*$. \label{itm:III}
\end{enumerate}

Starting with \ref{itm:I}, \ref{brauer} shows that it suffices 
to check that $A(1)$ is split by $R'[C](1) = R' \oplus R'$. 
Since $A_0 \cong \End_R(S)$ is automatically split, 
it suffices to check that $A_1$ is split by $R'$. 
We will achieve this by prearranging that if $S' = S \otimes_R R'$, 
then $S'/R'$ extends to a bigger cyclic. 

Assuming that $A(1)$ is split, we have a splitting module and 
hence a normalized splitting module $P(1)$. 
To deal with \ref{itm:II}, suppose 
$S/MS$ is a split extension 
of $R/M = \Z[\mu]$. Thus 
$P_1/MP_1 \cong I_1 \oplus \cdots \oplus I_{p^{r-1}}$ 
where $I_i \in \Pic(\Z[\mu])$. Moreover, 
$P_1$ and hence $P/MP_1$ has an action by $\sigma$ such that 
$\sigma^{p^{r-1}} = \rho$ and $\sigma$ cyclically permutes the 
direct summands of $S/M$. It follows that we can take all the 
$I_i = I$ and $\sigma$ acts by $\sigma(a_1,\ldots,a_{p^{r-1}}) = 
(\rho{a_{p^{r-1}}},a_1,a_2,\ldots,a_{p^{r-1}-1})$. 

Since $C$ acts trivially on $P_1^p$, we have $P_1^p \cong J \otimes_R S$ 
where $J \in \Pic(R) = \Pic(\Z[\mu])$. Tensoring by $\bar R$ 
we have $J \cong I^p$ in $\Pic(\Z[\mu])$. We can also view 
$I$ as an element of $\Pic(R)$. 

\begin{proposition}
Suppose $A(1)$ is split and $\bar S$ is a split extension 
of $\bar R$. Then there is a choice of normalized splitting 
module $P(1)$ such that $(P_1^p)^{G/C} \cong R$. 
\end{proposition}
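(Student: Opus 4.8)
The goal is to show that when $A(1)$ is split and $\bar S/\bar R$ is split, obstruction II vanishes for a suitable choice of normalized splitting module $P(1)$. By the corollary following \ref{shortnormalized} (and \ref{pictrivial}), the normalized splitting modules for $A(1)$ are parametrized by pairs $(P_1,\phi)$, and the freedom we have is exactly to twist $P(1)$ by an element of $\Pic(R[C](1))$, which by \ref{specialdegreep} is $\Pic(R)\oplus\Pic(R) = \Pic(\Z[\mu])\oplus\Pic(\Z[\mu])$. So the plan is: first pin down $P_1^p$ as an $A_1 = \Delta(S/R,\tilde\rho)$ module (equivalently, as a rank one $S$ module with compatible $G$ action), identify the relevant class in $\Pic(R)=\Pic(\Z[\mu])$, and then kill that class by an appropriate twist that keeps $P(1)$ normalized.

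**Key steps.** First I would record, as in the paragraph preceding the proposition, that since $C$ acts trivially on $P_1^p$ we may write $P_1^p \cong J\otimes_R S$ for a unique $J\in\Pic(R)=\Pic(\Z[\mu])$, and that $(P_1^p)^{G/C}\cong J$ as $R$ modules (the $G/C$ invariants of $J\otimes_R S$ recover $J$ because $S/R$ is $G/C$ Galois). So obstruction II is precisely the assertion $J\cong R$ in $\Pic(\Z[\mu])$. Next, using that $\bar S$ is split, I would compute $J$ modulo $M$: the argument in the preamble shows $P_1/MP_1\cong I^{\oplus p^{r-1}}$ with $\sigma$ permuting the summands and $\sigma^{p^{r-1}}$ acting by $\rho$, whence $(P_1^p)/M(P_1^p)\cong I^p\otimes_{\Z[\mu]}(\bar R[C]/\cdots)$ has $G/C$-invariants $\cong I^p$; since reduction mod $M$ induces an isomorphism $\Pic(R)\xrightarrow{\sim}\Pic(\Z[\mu])$ by \ref{details}, this gives $J\cong I^p$ in $\Pic(\Z[\mu])$. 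Now the twist: replace $P(1)$ by $P(1)\otimes_{R[C](1)} K^{\circ}$ where $K\in\Pic(R[C](1))$ is chosen (via the identification $\Pic(R[C](1))\cong\Pic(R)\oplus\Pic(R)$) to have $K_1\cong I^{-1}$ (as an $R$ module) and $K_0\cong R$. This is exactly the kind of modification of a splitting module allowed by \ref{splittingmodule}; because $K_0\cong R$ as $A(1)$ modules, the twisted module is still normalized by \ref{shortnormalized}. Under this twist $P_1$ is replaced by $P_1\otimes_S (I^{-1}\otimes_R S)$, so $(P_1^p)^{G/C}$ is replaced by $J\otimes_R I^{-p}\cong I^p\otimes_{\Z[\mu]} I^{-p}\cong R$, which is what we want.

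**Main obstacle.** The routine parts are the $\Pic$ bookkeeping; the one point requiring care is checking that the twisting element $K\in\Pic(R[C](1))$ with prescribed $(K_1,K_0)=(I^{-1},R)$ actually exists and that twisting by it preserves normalization. Existence is \ref{specialdegreep} (the map $\Pic(R[C](1))\to\Pic(R)\oplus\Pic(R)$ is an isomorphism here), and I would build $K$ explicitly via the fiber diagram defining $R[C](1)$ as in the construction of $J$ just before \ref{albert1}: take the pullback of $I^{-1}\to I^{-1}/\eta I^{-1}\leftarrow R$, using an identification $R/\eta R\cong I^{-1}/\eta I^{-1}$ that exists because $R^*\to(\hat R)^*$ is surjective (\ref{details}), which is what makes $\Pic(R[C](1))\to\Pic(R)\oplus\Pic(R)$ injective and lets us realize the desired pair. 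Normalization is then immediate from \ref{shortnormalized} since $K_0\cong R$ as $A(1)$ modules. The only genuinely substantive input beyond formal nonsense is the computation $J\equiv I^p \pmod M$, which rests on the explicit description of $P_1/MP_1$ in the split case together with $S/R$ being Galois so that $(-)^{G/C}$ is exact on the relevant modules; I would spell that out but expect no difficulty there.
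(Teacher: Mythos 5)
Your approach is essentially the same as the paper's: use the preamble's computation $J\cong I^p$ (where $P_1^p\cong J\otimes_R S$) and twist $P(1)$ by a $\Pic(R[C](1))$ element built from $I$ via a fiber product whose $0$-component is $R$ (so normalization survives by \ref{shortnormalized}) and whose $1$-component cancels $I^p$ after taking $p$th powers. Two cosmetic points to clean up: with $K_1\cong I^{-1}$ you should twist by $K$, not $K^{\circ}$ (the paper sets $I(1)_1\cong I$ and twists by $I(1)^{\circ}$, which amounts to the same thing; your subsequent line $P_1\mapsto P_1\otimes_S(I^{-1}\otimes_R S)$ shows you intended this), and the identification $I^{-1}/\eta I^{-1}\cong R/\eta R$ needed to form the fiber product comes from $\Pic(\hat R)=\Pic(F_p[x])=0$, not from surjectivity of $R^*\to\hat R^*$ (the latter governs uniqueness of the lift in $\Pic(R[C](1))$, not existence of the identification).
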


\begin{proof}
Since $\Pic(Z[\mu]/\eta{\Z[\mu]}) = \Pic(F_p) = 0$, we know $I/\eta{I} 
\cong \Z[\mu]/\eta{Z[\mu]}$. Thus the fiber diagram 
$$\begin{matrix}
I(1)&\longrightarrow&R\cr
\downarrow&&\downarrow\cr
I&\longrightarrow&R/\eta{R}\cr
\end{matrix}$$ 
defines an element $I(1) \in \Pic(\Z[\mu][C](1))$ 
which we can extend to $I(1) \in \Pic(R[C](1))$. 
If $P(1)' = P(1) \otimes_{R[C](1)} I(1)^{\circ}$, 
then $P(1)'$ is our desired normalized splitting module. 
\end{proof} 

Thus assuming $\bar S$ split allows us to handle II). 

We have shown that we can remove two of the three obstructions 
to extending $S/R$ in certain circumstances. We finally must deal 
with the unit $\bar u \in (R/\eta^pR)^*$. 
Here we need to remove the obstruction by localizing. 
That is, we invert a preimage of $\bar u$. It seems 
difficult to derive detailed information about the 
$\bar u$ that arise, so we invert everything relevant. 
That is, from now on $R$ is {\bf very special}, meaning 
that is has the form 
$\Z[\mu][x]_Q$ where $Q$ is the full 
multiplicatively closed set $1 + \eta{x}\Z[\mu][x]$. 
Let $M = xR$ as usual. To deal with III), we assemble 
some properties of these very special $R$. 

\begin{lemma}\label{unitsspecial}
Suppose $R$ is very special, as above. Then, 
\begin{enumerate}
\item $1 + \eta{M} \subset R^*$. \label{units:1}

\item Every element of $R^*$ has the form $u(1 + ax\eta)/(1 + bx\eta)$ 
where $a,b \in \Z[\mu][x]$ and $u \in \Z[\mu]^*$. \label{units:2}

\item The cokernel of $R^* \to (R/\eta^p(R))^*$ is the 
cokernel of 
$\Z[\mu]^* \to (\Z[\mu]/\eta^p\Z[\mu])^*$. \label{units:3}
\end{enumerate}
\end{lemma}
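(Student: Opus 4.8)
The three parts are increasingly substantive, so I would take them in order. For part~\ref{units:1}, the point is purely the definition of very special: $Q = 1 + \eta{x}\Z[\mu][x]$ is exactly the multiplicative set we inverted, so every element of $1 + \eta{x}\Z[\mu][x]$ is a unit in $R$; since $M = xR$, an element of $1 + \eta{M}$ has the form $1 + \eta{x}r$ with $r \in R$, and writing $r = g/q$ with $g \in \Z[\mu][x]$, $q \in Q$ gives $1 + \eta{x}r = (q + \eta{x}g)/q$; the numerator lies in $1 + \eta{x}\Z[\mu][x] = Q$, hence is a unit, and $q$ is a unit, so $1 + \eta{M} \subset R^*$.

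For part~\ref{units:2}, I would use that $\Z[\mu][x]$ is a UFD (it is a polynomial ring over the Dedekind — in fact PID when $\Pic(\Z[\mu])$ is handled carefully, but actually $\Z[\mu]$ need not be a PID; however $\Z[\mu][x]$ is still a Krull domain and for the units argument what matters is just that $R^* $ is generated by $\Z[\mu][x]^*$ together with the inverted set $Q$). Concretely, any $w \in R^*$ can be written $w = f/q$ with $f \in \Z[\mu][x]$, $q \in Q$, and $w^{-1} = f'/q'$ similarly, so $ff' = qq' \in Q$. Thus $f$ divides an element of $1 + \eta{x}\Z[\mu][x]$ in $\Z[\mu][x]$. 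Reducing modulo $\eta$, $f \bmod \eta$ divides $1$ in $F_p[x]$, so $f \bmod \eta \in F_p^*$, i.e. $f \equiv c \pmod{\eta}$ for a nonzero constant $c$; lifting $c$ to $u \in \Z[\mu]^*$ (possible by \ref{total}, which gives $\Z[\mu]^* \to F_p^*$ surjective) we get $f = u(1 + \eta{x}a)$ for some $a \in \Z[\mu][x]$, using that $f - u \in \eta\Z[\mu][x]$ and moreover $f - u \in x\Z[\mu][x]$ after adjusting $u$ by the constant term — here one checks the constant term of $f$ is a unit of $\Z[\mu]$ congruent to $c$, and absorb it into $u$. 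Doing the same to $q' $ (which lies in $Q$, so already $q' = 1 + \eta{x}b$) yields $w = f/q = u(1 + \eta{x}a)/(1 + \eta{x}b)$, which is the claimed form; the main fiddly point is justifying that the constant term of $f$ is itself a unit in $\Z[\mu]$, which follows since $f$ divides $qq' \in 1 + \eta x\Z[\mu][x]$ and evaluating at $x = 0$ shows the constant term of $f$ divides $1$ in $\Z[\mu]$.

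For part~\ref{units:3}, I would use that $R/\eta^p R \cong (\Z[\mu]/\eta^p\Z[\mu])[x]$ (this is immediate from $R = \Z[\mu][x]_Q$ with $Q \subset 1 + \eta M$, since reduction mod $\eta^p$ kills the localization: every element of $Q$ becomes a unit already in $(\Z[\mu]/\eta^p)[x]$ as it is $\equiv 1 \bmod \eta$, hence $\equiv$ a unit, so no localization is needed). Then $(R/\eta^p R)^* = ((\Z[\mu]/\eta^p\Z[\mu])[x])^*$, and the units of a polynomial ring over $\Z[\mu]/\eta^p\Z[\mu]$ are computed from the nilpotent-extension structure: $\eta(\Z[\mu]/\eta^p)$ is nilpotent, so a unit of $(\Z[\mu]/\eta^p)[x]$ is a unit constant plus a polynomial with nilpotent coefficients; but by part~\ref{units:2} (or directly) the image of $R^*$ already contains all such — indeed the images of $u \in \Z[\mu]^*$ give the unit constants mod $\eta^p$, and $(1 + \eta x a)/(1 + \eta x b)$ maps onto the "$1$ plus nilpotent" part. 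The upshot is that the cokernel of $R^* \to (R/\eta^p R)^*$ equals the cokernel of $\Z[\mu]^* \to (\Z[\mu]/\eta^p\Z[\mu])^*$, since the only obstruction to lifting a unit mod $\eta^p$ is its "constant term mod $\eta$" which lives in $(\Z[\mu]/\eta^p)^*$ and the rest lifts.

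The main obstacle I expect is part~\ref{units:2}: pinning down exactly why a unit of $R$ has the stated shape requires being careful about the unit group of the polynomial ring $\Z[\mu][x]$ when $\Z[\mu]$ is not a PID (it has units only $\Z[\mu]^*$ since $\Z[\mu]$ is reduced, which is the fact I would cite), and then tracking the localization. Parts~\ref{units:1} and \ref{units:3} are then bookkeeping consequences, with \ref{units:3} reducing cleanly once one observes $R/\eta^p R = (\Z[\mu]/\eta^p\Z[\mu])[x]$ and that the image of $R^*$ surjects onto the "$1 + $ nilpotent" subgroup.
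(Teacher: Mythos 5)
Your treatment of part~\ref{units:1} is essentially the paper's: clear a denominator from $1 + \eta x r$ and absorb the $Q$-element into a single numerator in $1 + \eta x\Z[\mu][x]$. Part~\ref{units:2} takes a genuinely different route. The paper reduces a unit $v \in R^*$ simultaneously modulo $M$ and modulo $\eta R$, observes $\bar v^{-1}v$ maps to $1$ in both, and uses $xR \cap \eta R = x\eta R$ (distinct primes) to land in $1 + x\eta R$, then clears a denominator. You instead work inside $\Z[\mu][x]$: write $w = f/q$, $w^{-1} = f'/q'$, conclude $ff' = qq' \in Q$, reduce mod $\eta$ to force $\bar f \in F_p^*$, and set $x = 0$ to force $f(0) \in \Z[\mu]^*$; then $u = f(0)$ gives $u^{-1}f - 1 \in x\Z[\mu][x] \cap \eta\Z[\mu][x] = x\eta\Z[\mu][x]$. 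Both are correct. Your worry about $\Z[\mu]$ failing to be a PID is a red herring: nothing in your argument needs unique factorization, only that $x\Z[\mu][x]$ and $\eta\Z[\mu][x]$ are distinct primes (so their intersection is their product), and your divisibility and constant-term observations avoid factorization entirely.

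Part~\ref{units:3} also differs in route: the paper runs an $\eta$-adic induction, while you invoke $R/\eta^p R \cong (\Z[\mu]/\eta^p\Z[\mu])[x]$ and the standard fact that a unit of $A[x]$ (with $A$ not reduced) is a unit constant times $1$ plus a polynomial with nilpotent coefficients. That yields a clean surjectivity statement, since the $1 + \eta x w$ factor lifts through $Q \subset R^*$. Two points need repair, though. First, your sentence that the images of $u \in \Z[\mu]^*$ ``give the unit constants mod $\eta^p$'' is false --- $\Z[\mu]^* \to (\Z[\mu]/\eta^p\Z[\mu])^*$ is precisely what is \emph{not} surjective, which is the whole point --- and it contradicts the very next sentence where you assert the cokernel is nontrivial. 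Second, you only argue surjectivity of the comparison map on cokernels; the injectivity direction (a constant $a_0 \in (\Z[\mu]/\eta^p\Z[\mu])^*$ lying in the image of $R^*$ already comes from $\Z[\mu]^*$) is not addressed. Your framework makes it a one-liner: take a preimage $v \in R^*$ of $a_0$ and reduce mod $(x)$ to get $v(0) \in \Z[\mu]^*$ mapping to $a_0$, which is exactly the paper's ``go modulo $M$'' step. So the gap is small and closable with the tools you already set up, but as written the paragraph does not establish the equality of cokernels.
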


\begin{proof} 
Any element of the form
$u = 1 + \eta{f}$, for $f \in M$, has the form 
$1 + \eta{f'/s}$ where $s \in Q$ and $f' \in x\Z[\mu][x]$. 
This equals $(1/s)(s + \eta{f'})$ 
and $s = 1 + \eta{g}$ for $g \in x\Z[\mu][x]$ so 
$u = (1/s)(1 + \eta({f'+g})) \in R^*$. This shows 
\ref{units:1}. 

As for \ref{units:2}, let $u \in R^*$ map to $\hat u \in F_p[x]^* = F_p^*$ 
and $\bar u \in \Z[\mu]^*$. Clearly $\bar u$ maps to 
$\hat u \in F_p^*$. Thus $\bar u^{-1}u$ maps to $1$ in 
$\bar R$ and $\hat R$. Since $\eta{R}$ and $x{R}$ are distinct prime 
ideals, $xR \cap \eta{R} = x\eta{R}$ and so $\bar u^{-1}u \in 1 + x\eta{R}$. 
Thus we can assume $u \in 1 + x\eta{b}$ for $b \in R$.  
Write $b = f/z$ where $z = 1 + x\eta{c}$, $f,c \in \Z[\mu][x]$ 
and just as above 
we have $u = (1 + x\eta(c + f))/(1 + x\eta{c})$.

To prove \ref{units:3} we first prove the cokernel of $\Z[\mu]^* \to (\Z[\mu]/\eta^p\Z[\mu])^*$ 
maps onto the cokernel of $R^* \to (R/\eta^p(R))^*$. 
Suppose $z \in (R/\eta^p{R})^*$. Then the image of 
$z$ in $R/\eta{R}$ lies in $F_p^*$. Choose $u \in \Z[\mu]^*$ 
mapping onto this element of $F_p^*$, and reduce to the 
case that $z = 1 + \eta{y}$ for $y \in R$. Write 
$y = \bar y + f$ for $\bar y \in \Z[\mu]$ and 
$f \in M_Q$. Thus modulo $\eta^2$ we have $z \equiv (1 + \bar y\eta)(1 + f\eta)$ 
and the second factor is in $R^*$. Thus we have reduced to the case 
$z = 1 + y\eta^2$ and we proceed by induction. 
This proves the surjectivity. 
If $1 + \eta{c}$ is in the image of $R^*$, we can go modulo 
$M$ and conclude that it is in the image of $\Z[\mu]^*$.
\end{proof} 

From \ref{fiber} we have exact sequences 
$\Z[\mu]^* \to (\Z[\mu]/\eta^p\Z[\mu])^* \to \Pic(\Z[\mu] *_p \Z[\mu]) \to 
\Pic(\Z[\mu]) \oplus \Pic(\Z[\mu])$ and similarly 
for $R *_p R$. 
We now have from \ref{details} that : 

\begin{corollary}\label{veryspecial}
With $R$ very special as above, 
the natural map $\Pic(\Z[\mu] *_p \Z[\mu]) 
\to \Pic(R *_p R)$ is an isomorphism. 
With $\bar R = R/M = \Z[\mu]$, the natural map 
$\Pic(R *_p R) \to \Pic(\bar R *_p \bar R)$ 
is an isomorphism. 
\end{corollary}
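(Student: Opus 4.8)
The plan is to combine the fiber-product exact sequence from Theorem \ref{fiber} with the unit and Picard-group facts collected in \ref{details} and \ref{unitsspecial}. Recall that $\Z[\mu] *_p \Z[\mu]$ and $R *_p R$ are each defined by a fiber diagram of the shape
\[
\begin{matrix}
A *_p A&\longrightarrow&A\cr
\downarrow&&\downarrow\cr
A&\longrightarrow&A/\eta^p A\cr
\end{matrix}
\]
with the two maps to $A/\eta^p A$ both the canonical quotient and hence surjective. For such a diagram, each $A_M \otimes_A A_i = A_M$ has trivial Picard group, so Theorem \ref{fiber} applies and gives a long exact sequence
\[
A^* \to A^* \oplus A^* \to (A/\eta^p A)^* \to \Pic(A *_p A) \to \Pic(A) \oplus \Pic(A) \to \Pic(A/\eta^p A).
\]
The rightmost term vanishes for both $A = \Z[\mu]$ and $A = R$: indeed $\Z[\mu]/\eta^p\Z[\mu]$ and $R/\eta^p R$ are Artinian local (resp. a localization of $F_p[x]$), so their Picard groups are $0$; for $R$ this is noted implicitly in \ref{details} via $\Pic(\hat R) = 0$ and the fact that $R/\eta^p R$ and $\hat R = R/\eta R$ have the same Picard group (same underlying reduced-ish local structure; more precisely $\eta^p R/\eta R$ is nilpotent). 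So in each case we get a short exact sequence
\[
0 \to \operatorname{coker}\bigl(A^* \to (A/\eta^p A)^*\bigr) \to \Pic(A *_p A) \to \ker\bigl(\Pic(A)^2 \to \Pic(A/\eta^p A)\bigr) = \Pic(A)^2 \to 0.
\]

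Now I would compare the two sequences, for $A = \Z[\mu]$ and $A = R$, via the natural map $\Z[\mu] \to R$, which is compatible with all the structure maps and hence induces a morphism of the two fiber diagrams and of the two exact sequences. On the left-hand terms: \ref{unitsspecial}\ref{units:3} says precisely that $\operatorname{coker}(\Z[\mu]^* \to (\Z[\mu]/\eta^p\Z[\mu])^*) \to \operatorname{coker}(R^* \to (R/\eta^p R)^*)$ is an isomorphism. On the right-hand terms: \ref{details} gives $\Pic(R) = \Pic(\Z[\mu])$, so $\Pic(\Z[\mu])^2 \to \Pic(R)^2$ is an isomorphism. By the five lemma, the middle map $\Pic(\Z[\mu] *_p \Z[\mu]) \to \Pic(R *_p R)$ is an isomorphism, which is the first assertion.

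For the second assertion, apply the same argument to the map $R \to \bar R = R/M = \Z[\mu]$ (which is a retraction of $\Z[\mu] \to R$, hence certainly compatible with all structure). The composite $\Z[\mu] *_p \Z[\mu] \to R *_p R \to \bar R *_p \bar R = \Z[\mu] *_p \Z[\mu]$ is the identity, so $\Pic(R *_p R) \to \Pic(\bar R *_p \bar R)$ is a split surjection; combined with the first assertion (that $\Pic(\Z[\mu] *_p \Z[\mu]) \to \Pic(R *_p R)$ is an isomorphism), it follows that $\Pic(R *_p R) \to \Pic(\bar R *_p \bar R)$ is an isomorphism as well. The only step needing a little care — and the place I expect the mild friction — is justifying that $\Pic(R/\eta^p R) = 0$, i.e. that passing from $R/\eta R = F_p[x]$ up the nilpotent thickening to $R/\eta^p R$ does not create line bundles; this is standard (a nilpotent ideal does not change $\Pic$, since $1 + (\text{nilpotent})$ consists of units and one lifts idempotent-free generators), but it is the one input not literally quoted in \ref{details}, so I would state it explicitly. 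Everything else is bookkeeping with the two exact sequences of Theorem \ref{fiber} and the five lemma.
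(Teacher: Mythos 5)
Your proof is correct and follows essentially the same route the paper intends: the paper itself simply quotes the two exact sequences of Theorem \ref{fiber} for $\Z[\mu] *_p \Z[\mu]$ and $R *_p R$ together with \ref{details} and \ref{unitsspecial}\ref{units:3}, and leaves the comparison to the reader, which is precisely the five-lemma argument you spell out. The one detail you flag — that $\Pic(R/\eta^p R)=0$ requires the nilpotent-ideal-invariance of $\Pic$ to reduce to $\Pic(\hat R)=0$ — is indeed left implicit in the paper, and your retraction argument for the second assertion is a clean shortcut that avoids running the comparison a second time.
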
 

The above \ref{veryspecial} allows us to deduce 
properties of splitting modules over $R$ 
from properties of splitting modules over $\bar R$. 

Our ultimate argument will involve splitting 
modules over $R$, $\bar R$, and $R'$. For this 
reason we return to investigating the relationship 
between constructions and functors over $R$ and 
$R' = R(1/\eta)$. We begin with a very general fact.  

\begin{proposition}
Suppose $V$ is a regular domain and $\eta \in V$ 
is a prime element. Set $V = V(1/\eta)$. Then the natural map 
$\Pic(V) \to \Pic(V')$ is an isomorphism. 
\end{proposition}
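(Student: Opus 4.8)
The statement to prove is: if $V$ is a regular domain with a prime element $\eta$, and $V' = V(1/\eta)$, then the natural map $\Pic(V) \to \Pic(V')$ is an isomorphism. The plan is to use the standard exact sequence relating the Picard group (or divisor class group, which coincides with $\Pic$ for a regular domain) of a ring and its localization, together with the fact that inverting $\eta$ kills exactly the height-one primes lying over $\eta V$. Concretely, since $V$ is regular it is locally factorial, hence $\Pic(V) = \Cl(V)$, the divisor class group, and likewise $\Pic(V') = \Cl(V')$. I would first reduce to this divisor-theoretic picture, citing regularity.

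First I would recall the localization exact sequence for class groups: if $V$ is a Krull domain (in particular a regular domain) and $W$ is the multiplicative set generated by $\eta$, then there is an exact sequence
$$\bigoplus_{\mathfrak{p}} \Z \longrightarrow \Cl(V) \longrightarrow \Cl(V[1/\eta]) \longrightarrow 0,$$
where $\mathfrak{p}$ runs over the height-one primes of $V$ containing $\eta$, and the first map sends the generator at $\mathfrak{p}$ to the class of the divisor $\mathfrak{p}$. Because $\eta$ is a prime element, $\eta V$ is itself a height-one prime, and it is the only height-one prime containing $\eta$ (any height-one prime containing $\eta$ contains $\eta V$, hence equals it). So the left-hand sum is a single copy of $\Z$, generated by the class of the divisor $\eta V$. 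But $\eta V$ is principal, so that class is $0$ in $\Cl(V)$; hence the first map is zero, and therefore $\Cl(V) \to \Cl(V[1/\eta])$ is an isomorphism. Translating back through $\Pic = \Cl$ gives the result.

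The main obstacle is simply making sure the hypotheses are exactly what is needed to invoke the localization sequence and the identification $\Pic = \Cl$: one wants $V$ Noetherian and regular (or at least locally factorial and Krull) so that rank-one reflexive modules are invertible and the divisor-class machinery applies, and one wants $\eta$ to be a genuine prime element (not merely irreducible) so that $\eta V$ is prime of height one. Both are given. An alternative, more elementary route — which I would mention as a remark but not carry out — is to argue directly: given $P' \in \Pic(V')$, take its reflexive hull of $P' \cap \big(\text{a rank-one } V\text{-lattice}\big)$ inside $P' \otimes_{V'} \mathrm{Frac}(V)$ to descend it to a reflexive, hence (by regularity) projective, rank-one $V$-module $P$ with $P \otimes_V V' \cong P'$, giving surjectivity; and injectivity follows since a rank-one projective $P$ with $P(1/\eta)$ free is, after clearing the denominator $\eta^n$ of a generator and using that $\eta V$ is prime so $P$ is $\eta$-torsion-free, already free over $V$ at the prime $\eta V$ and everywhere else unchanged. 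Either way the content is that the only new relation introduced by inverting $\eta$ is the principal divisor $(\eta)$, which was already trivial.
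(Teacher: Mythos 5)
Your argument is correct and coincides with the paper's: the paper likewise invokes (citing Matsumura) the identification $\Pic(V) = \Cl(V)$ for a regular domain, observes that $\Cl(V) \to \Cl(V')$ is surjective with kernel generated by the class of the inverted prime $\eta V$, and concludes that since $\eta V$ is principal this kernel is trivial. Your write-up simply makes the localization exact sequence explicit; no substantive difference.
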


\begin{proof} 
The first statement follows immediately from the well known fact, 
in \cite[p.~167]{M} for example, that for $V$ the Picard group 
is the class group $C(V)$, and since $\eta{V}$ is principal, 
the surjection $C(V) \to C(V')$ has trivial kernel. 
\end{proof} 

We will need a description of the units of $R'$. 

\begin{lemma}\label{primeunits}
Every unit of $R'$ is of the form $u(1 + ax\eta)/(1 + bx\eta)$ 
where $a,b \in \Z[\mu][x]$ and $u \in (\Z[\mu](1/\eta))^*$. 
\end{lemma}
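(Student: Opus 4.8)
The plan is to reduce the statement to Lemma~\ref{unitsspecial}(\ref{units:2}) by first establishing the soft fact that the unit group of $R'=R(1/\eta)$ is generated by $R^{*}$ together with $\eta$. So the first step is simply to record that $R$ is a (Noetherian, regular) domain and, by Lemma~\ref{details}, $R/\eta R\cong F_{p}[x]$ is a domain; hence $\eta R$ is a prime ideal and $\eta$ is a prime element of $R$. For $g\in R$, $g\neq 0$, write $v(g)$ for the largest power of $\eta$ dividing $g$ in $R$ (this is finite since $\bigcap_{n}\eta^{n}R=0$).

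The second step is the factorization of a unit. Since $R'=R[1/\eta]$, any $w\in R'$ can be written $w=g/\eta^{n}$ with $g\in R$, $n\ge 0$, and likewise $w^{-1}=h/\eta^{m}$. If $w\in (R')^{*}$, then clearing denominators in $ww^{-1}=1$ and cancelling (valid because $R$ is a domain) gives $gh=\eta^{n+m}$ in $R$. Writing $g=\eta^{e}g'$ and $h=\eta^{f}h'$ with $e=v(g)$, $f=v(h)$ and $g',h'$ not divisible by $\eta$, we obtain $\eta^{e+f}g'h'=\eta^{n+m}$, so $g'h'=\eta^{\,n+m-e-f}$; since $\eta$ is prime and divides neither $g'$ nor $h'$, the exponent must be $0$, whence $g'h'=1$ and $g',h'\in R^{*}$. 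Therefore $w=\eta^{\,e-n}g'$ with $e-n\in\Z$ and $g'\in R^{*}$. (Equivalently, this is the assertion that the valuation map $(R')^{*}\to\Z$ at $\eta$ is surjective with kernel $R^{*}$, which splits because $\eta R$ is principal.)

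The last step is to invoke Lemma~\ref{unitsspecial}(\ref{units:2}) for $g'\in R^{*}$: it has the form $g'=u(1+ax\eta)/(1+bx\eta)$ with $a,b\in\Z[\mu][x]$ and $u\in\Z[\mu]^{*}$. Then $w=(\eta^{\,e-n}u)(1+ax\eta)/(1+bx\eta)$, and $\eta^{\,e-n}u\in(\Z[\mu](1/\eta))^{*}$, which is exactly the claimed form.

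There is no real obstacle here; the single point deserving a moment's care is that dividing out the $\eta$-part of $g$ leaves a genuine unit of $R$ rather than merely a unit of the local ring $R_{\eta R}$, and this is forced by the \emph{equality} $gh=\eta^{n+m}$ (not just a divisibility) together with the primality of $\eta$. If one wishes to avoid even that, the same conclusion ($(R')^{*}=R^{*}\langle\eta\rangle$) drops out of the units part of the localization (units--Picard) exact sequence for $R\subset R[1/\eta]$, since $\eta R$ is a principal height-one prime.
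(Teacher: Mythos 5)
Your proof is correct and follows essentially the same route as the paper: the paper simply asserts that since $\eta$ is a prime element of $R$, every unit of $R'$ has the form $\eta^{m}v$ with $v\in R^{*}$, and then cites Lemma~\ref{unitsspecial}(\ref{units:2}). You have supplied the (standard) details of that factorization; nothing is missing or different in substance.
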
 

\begin{proof}
By \ref{unitsspecial} the units of $R$ have this 
form where $u \in (\Z[\mu])^*$. Since $\eta$ is a prime element 
of $R$, every unit of $R'$ has the form $\eta^mv$ 
where $v$ is a unit of $R$. 
\end{proof}

As another corollary, we have: 

\begin{corollary}\label{picprime}
The natural map $\Pic(R *_p R) \to \Pic(R' *_p R')$ 
has kernel $K$ which is the cokernel of $\Z[\mu]^* 
\to (\Z[\mu]/\eta^p\Z[\mu])^*$. 
The natural map $\Pic(R[C](1)) \to \Pic(R'[C](1))$ 
is an isomorphism. 
\end{corollary}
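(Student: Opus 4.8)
The plan is to read both statements off the exact sequences furnished by \ref{fiber} for the defining fiber squares, using that inverting $\eta$ collapses the bottom‑right corner to zero.

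First I would record the relevant sequences. Applying \ref{fiber} to the square defining $R *_p R$ gives an exact sequence $R^*\oplus R^* \to (R/\eta^pR)^* \to \Pic(R *_p R) \to \Pic(R)\oplus\Pic(R) \to \Pic(R/\eta^pR)$. Both structure maps $R\to R/\eta^pR$ are the same quotient, so the image of $R^*\oplus R^*$ on the left equals the image of the single map $R^* \to (R/\eta^pR)^*$; and $\Pic(R/\eta^pR)=\Pic(R/\eta R)=\Pic(F_p[x])=0$ by \ref{details} (a nilpotent thickening does not change $\Pic$), so $\Pic(R *_p R)\to\Pic(R)\oplus\Pic(R)$ is onto. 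Hence
$$0 \to K_0 \to \Pic(R *_p R) \to \Pic(R)\oplus\Pic(R) \to 0,\qquad K_0 := \operatorname{coker}\bigl(R^* \to (R/\eta^pR)^*\bigr).$$
The identical argument for $R[C](1)$ (fiber square with corners $R,R,R/\eta R$) yields $0 \to K_1 \to \Pic(R[C](1)) \to \Pic(R)\oplus\Pic(R) \to 0$ with $K_1 = \operatorname{coker}(R^*\to(R/\eta R)^*) = 0$ by \ref{details}, which reproves \ref{specialdegreep}. On the other side, since $\eta\in R'^*$ the corner $R'/\eta^pR' = R'/\eta R' = 0$, so $R' *_p R' = R'\times R'$ and $R'[C](1)=R'\times R'$, whence $\Pic(R' *_p R') = \Pic(R'[C](1)) = \Pic(R')\oplus\Pic(R')$.

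Next I would run the comparison. The map $R\to R'$ induces $R *_p R\to R' *_p R'$ and $R[C](1)\to R'[C](1)$, and composing the two projections of a fiber product with $R\to R'$ shows that the induced maps on $\Pic$ are compatible with the projection maps to $\Pic(R)\oplus\Pic(R)$ and to $\Pic(R')\oplus\Pic(R')$, the latter projection being the identity. Since $R=\Z[\mu][x]_Q$ is a regular domain in which $\eta$ is prime, the localization map $\Pic(R)\to\Pic(R')$ is an isomorphism (the general fact proved above for regular domains), hence so is $\Pic(R)\oplus\Pic(R)\to\Pic(R')\oplus\Pic(R')$. A short diagram chase (five lemma) on
$$\begin{matrix} 0 & \to & K_0 & \to & \Pic(R *_p R) & \to & \Pic(R)\oplus\Pic(R) & \to & 0\cr & & \downarrow & & \downarrow & & \downarrow\wr & & \cr 0 & \to & 0 & \to & \Pic(R' *_p R') & \to & \Pic(R')\oplus\Pic(R') & \to & 0\cr \end{matrix}$$
then identifies $\ker\bigl(\Pic(R *_p R)\to\Pic(R' *_p R')\bigr)$ with the image of $K_0$, and since $K_0\hookrightarrow\Pic(R *_p R)$ this kernel is isomorphic to $K_0$. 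For the group‑ring statement the analogous diagram has $K_1=0$ in the lower left, so $\Pic(R[C](1))\to\Pic(R'[C](1))$ is directly an isomorphism.

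Finally I would identify $K_0=\operatorname{coker}(R^*\to(R/\eta^pR)^*)$ with $\operatorname{coker}(\Z[\mu]^*\to(\Z[\mu]/\eta^p\Z[\mu])^*)$, which is precisely \ref{unitsspecial}, part \ref{units:3}. The one place needing care is the bookkeeping for the \ref{fiber} sequence — in particular that the left‑hand term is the cokernel of the single quotient $R^*\to(R/\eta^pR)^*$ rather than of the two‑variable map, and that the comparison square commutes on the nose — but both are routine, and everything else reduces directly to results already established, so I anticipate no genuine obstacle.
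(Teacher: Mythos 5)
Your proof is correct and follows essentially the same route as the paper's: read the kernel off the \ref{fiber} exact sequence for $R *_p R$, use that $R' *_p R' = R' \oplus R'$ so the comparison map factors through $\Pic(R) \oplus \Pic(R) \cong \Pic(R') \oplus \Pic(R')$, and then invoke \ref{unitsspecial}~(\ref{units:3}) to identify the cokernel of $R^* \to (R/\eta^pR)^*$ with that of $\Z[\mu]^* \to (\Z[\mu]/\eta^p\Z[\mu])^*$. You make the diagram chase explicit where the paper merely observes that the kernel coincides with the kernel of $\Pic(R*_pR)\to\Pic(R)\oplus\Pic(R)$, but the underlying argument is the same.
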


\begin{proof} 
First of all, $R' = R(1/\eta) = \Z[\mu]'[x]_Q$ 
where $\Z[\mu]' = \Z[\mu](1/\eta)$. Thus $\Pic(R') = 
\Pic(\Z[\mu]') = \Pic(\Z[\mu]) = \Pic(R)$. 
The kernel we want is the kernel of 
$\Pic(\Z[\mu] *_p \Z[\mu]) \to \Pic(\Z[\mu]) \oplus \Pic(\Z[\mu)$ 
which is exactly $K$ described above. 
The second statement is obvious as $\Pic(R[C](1)) \cong 
\Pic(R) \oplus \Pic(R)$. 
\end{proof}

Combining the above observations with the assumption that 
$R$ is very special does, in fact,  
allow us to deal with the obstruction III). 
In detail, assume $A(1)$ and hence $A(1)' = A(1) \otimes_R R'$ and 
$\bar A(1) = A(1)/MA(1)$ 
are split. Let ${\cal S}$, ${\cal S}'$ and $\bar{\cal S}$ be the set of 
isomorphism classes of splitting modules of 
$A(1)$, $A(1)'$ and $\bar A(1)$ respectively. 
Inside these sets we define ${\cal N}$, ${\cal N}'$, 
and $\bar {\cal N}$ to be the subset of ${\cal S}$, 
${\cal S}'$ and $\bar {\cal S}$ respectively 
of normalized splitting modules. Note that with 
\ref{shortnormalized} in mind, $P(1)' \in {\cal S}'$ is 
normalized if and only if $P_0' = P(1)'/(\tau - 1)P(1)'$ 
satisfies $(P_0')^{G/C} \cong R'$ and $\bar P(1) \in 
\bar {\cal S}$ is in $\bar {\cal N}$ if and only if 
$\bar P(1)_0 = \bar P(1)/(\tau - 1)\bar P(1)$ satisfies 
$(\bar P(1)_0)^{G/C} \cong \bar R = \Z[\mu]$. 

With \ref{pictrivial} in mind, we define 
${\cal R} \subset {\cal N}$ to be the subset of normalized 
splitting modules $P(1)$ such that $(P_1^p)^{G/C} 
\cong R$ and similarly for ${\cal R}' \subset {\cal N}'$ 
and $\bar {\cal R} \subset \bar {\cal N}$. 
Finally we define ${\cal Q} \subset {\cal R}$ 
to be the $P(1)$ with $P(p)^G \cong R *_p R$
and $\bar {\cal Q} \subset \bar {\cal R}$, ${\cal Q}' \subset 
{\cal R}'$ similarly. 

\begin{theorem}\label{nonemptymodM}
Let $R$ be very special and $S$, $A(1)$ as above. 
Assume $A(1)$ is split and $S/MS$ is split 
over $R/M$. Then $P(1) \to \bar P(1) = 
P(1)/MP(1)$ defines a one to one correspondence 
between ${\cal Q}$ and $\bar {\cal Q}$. 
In particular, ${\cal Q}$ is non empty.
\end{theorem}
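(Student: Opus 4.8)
The plan is to establish the one-to-one correspondence between $\mathcal{Q}$ and $\bar{\mathcal{Q}}$ by systematically tracking the three obstructions (Brauer, Picard, unit) through the reduction map $P(1) \mapsto \bar P(1) = P(1)/MP(1)$, using the structural isomorphisms assembled in \ref{details}, \ref{picbar}, \ref{veryspecial}, and \ref{picprime}. First I would note that since $A(1)$ is split, the set $\mathcal{S}$ of splitting modules is a torsor under $\Pic(R[C](1))$ by \ref{splittingmodule}, and likewise $\bar{\mathcal{S}}$ is a torsor under $\Pic(\bar R[C](1))$; by \ref{picbar} the map $\Pic(R[C](1)) \to \Pic(\bar R[C](1))$ is an isomorphism. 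So the reduction $\mathcal{S} \to \bar{\mathcal{S}}$ is a bijection once we check it is equivariant and that $\bar A(1)$ is split, which follows since $A_0 \cong \End_R(S)$ reduces to $\End_{\bar R}(\bar S)$ and the remaining component $A_1$ reduces to a split algebra because $\bar S/\bar R$ is split.

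Next I would cut down to $\mathcal{N} \to \bar{\mathcal{N}}$: by \ref{shortnormalized}, $P(1)$ is normalized iff $P_0^{G/C} \cong R$, and $\bar P(1)$ is normalized iff $(\bar P(1)_0)^{G/C} \cong \bar R = \Z[\mu]$. Since $P_0 \cong I \otimes_R S$ for $I \in \Pic(R) = \Pic(\Z[\mu])$ and this identification is compatible with reduction mod $M$, $P_0^{G/C} \cong R$ iff $I$ is trivial iff $\bar P_0^{G/C} \cong \Z[\mu]$; so the bijection restricts to $\mathcal{N} \to \bar{\mathcal{N}}$. Then I would pass to $\mathcal{R} \to \bar{\mathcal{R}}$: membership in $\mathcal{R}$ is the condition $(P_1^p)^{G/C} \cong R$, and writing $P_1^p \cong J \otimes_R S$ with $J \in \Pic(R) = \Pic(\Z[\mu])$, the element $J$ is literally unchanged under reduction mod $M$ (as $\Pic(R) \to \Pic(\bar R)$ is the identity on $\Pic(\Z[\mu])$), so $\mathcal{R} \to \bar{\mathcal{R}}$ is a bijection.

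The final and most delicate step is the descent from $\mathcal{R}$ to $\mathcal{Q}$: membership in $\mathcal{Q}$ requires $P(p)^G \cong R *_p R$, which by \ref{pictrivial} amounts to $(P_1^p)^{G/C} \cong R$ (already guaranteed in $\mathcal{R}$) together with the condition that a certain unit $\bar u \in (R/\eta^p R)^*$ lies in the image of $R^*$. This is exactly where ``very special'' enters. By \ref{unitsspecial}(\ref{units:3}), the cokernel of $R^* \to (R/\eta^p R)^*$ equals the cokernel of $\Z[\mu]^* \to (\Z[\mu]/\eta^p\Z[\mu])^*$, and this same cokernel controls $\bar u$ for $\bar R = \Z[\mu]$ as well; equivalently, \ref{veryspecial} says $\Pic(R *_p R) \to \Pic(\bar R *_p \bar R)$ is an isomorphism. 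So the obstruction $P(p)^G \cong R *_p R$ holds iff $\bar P(p)^{G/C} \cong \bar R *_p \bar R$, giving the bijection $\mathcal{Q} \to \bar{\mathcal{Q}}$. The hard part will be verifying that the unit $\bar u$ attached to $P(1)$ really does reduce to the unit $\bar u$ attached to $\bar P(1)$ — i.e.\ that the whole construction of $P(p)^G$ from $P(1)$ in \ref{pictrivial} is compatible with $-\otimes_R \bar R$ — and that the identifications $P_0 \cong I \otimes_R S$, $P_1^p \cong J \otimes_R S$ are natural enough that ``trivial over $R$'' and ``trivial over $\bar R$'' genuinely coincide rather than merely corresponding. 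For non-emptiness of $\mathcal{Q}$: since $A(1)$ is split, $\mathcal{S}$ is non-empty, hence so is $\bar{\mathcal{S}}$; over $\bar R = \Z[\mu]$ one directly exhibits an element of $\bar{\mathcal{Q}}$ (the relevant Picard groups over $\Z[\mu] *_p \Z[\mu]$ and the relevant unit cokernel being small enough to normalize away, using $\Pic(\Z[\mu]/\eta\Z[\mu]) = \Pic(F_p) = 0$), and then the bijection transports it back to a member of $\mathcal{Q}$.
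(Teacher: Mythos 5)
Your chain of bijections $\mathcal{S} \to \bar{\mathcal{S}} \supset \mathcal{N} \to \bar{\mathcal{N}} \supset \mathcal{R} \to \bar{\mathcal{R}} \supset \mathcal{Q} \to \bar{\mathcal{Q}}$ is the paper's route: \ref{splittingmodule} and \ref{picbar} give the first bijection, injectivity of $\Pic(R) \to \Pic(\bar R)$ cuts down to $\mathcal{N}$ and then $\mathcal{R}$, and \ref{veryspecial} (the isomorphism $\Pic(R *_p R) \cong \Pic(\bar R *_p \bar R)$) handles the final descent to $\mathcal{Q}$. One minor wrinkle: your verification that $\bar A(1)$ is split via the fiber components $A_0$, $A_1$ is superfluous and a little off the mark, since $A(1) \cong \End_{R[C](1)}(P(1))$ reduces modulo $M$ directly to $\End_{\bar R[C](1)}(\bar P(1))$.

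The genuine gap is in your non-emptiness argument. You assert that over $\bar R = \Z[\mu]$ ``the relevant Picard groups over $\Z[\mu] *_p \Z[\mu]$ and the relevant unit cokernel [are] small enough to normalize away,'' but this is not so: $\Pic(\Z[\mu])$ is the class group of the cyclotomic ring, which need not be trivial, and the cokernel of $\Z[\mu]^* \to (\Z[\mu]/\eta^p\Z[\mu])^*$ is in general nontrivial as well. Citing $\Pic(F_p) = 0$ does not bridge this, and the three obstructions do not vanish for a generic splitting module of $\bar A(1)$. What actually forces $\bar{\mathcal{Q}}$ to be non-empty is the hypothesis that $\bar S/\bar R$ is split: a split extension trivially extends to a split $G$ Galois extension $\bar T/\bar R$ (the induced extension), and the converse direction of Theorem~\ref{albert2} applied to that $\bar T$ produces a concrete $\bar P(1) \in \bar{\mathcal{Q}}$. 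Without appealing to that split extension, you have not shown that any splitting module of $\bar A(1)$ satisfies all three conditions simultaneously.
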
 

Note that \ref{nonemptymodM} (which is as yet unproven) says 
that an extension 
$T/S/R$ exists but not that $T/MT$ is split. This we handle 
next. 

\begin{corollary}\label{Texists}
In the situation of \ref{nonemptymodM}, there is a 
$T$ such that $T/MT$ is split. 
\end{corollary}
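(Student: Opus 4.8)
The plan is to descend to $\bar R := R/M = \Z[\mu]$, identify the split $G$ Galois extension among the extensions that the Albert machinery of \ref{albert1}--\ref{specializeanydegree} produces over $\bar R$, and then lift both the chosen splitting module and the specialization parameter back to $R$. To set this up: since by hypothesis $\bar S := S/MS$ is split over $\bar R$, the split $G$ Galois extension $W := \Ind_{\{1\}}^G(\bar R)$ has $W^C \cong \Ind_{\{1\}}^{G/C}(\bar R) \cong \bar S$ (by \ref{inducedfixed}), so, fixing such an isomorphism, $W$ is a $G$ Galois extension of $\bar R$ extending $\bar S$. Applying the (unlabelled) lemma just before \ref{albert2} to $W$ gives $W(1)_0 \cong \bar S$ and $W(p)^G \cong \bar R *_p \bar R$, and since $\bar S^{G/C} = \bar R$, Lemma \ref{shortnormalized} shows $\bar P(1) := W(1)$ is a normalized splitting module for $\bar A(1)$; thus $\bar P(1) \in \bar{\cal Q}$. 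By \ref{specializeanydegree} applied over $\bar R$, $W$ is a specialization of the universal extension built from $\bar P(1)$: writing that extension as $\bar U/\bar V/\bar R[t](1/(1+t\eta^p))$, we get $W \cong \bar U \otimes_{\bar\phi} \bar R$ with $\bar\phi(t) = \bar a$ for some $\bar a \in \bar R$ satisfying $1 + \bar a\eta^p \in \bar R^*$.

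Next I would lift. By \ref{nonemptymodM} the reduction map ${\cal Q} \to \bar{\cal Q}$ is a bijection, so there is $P(1) \in {\cal Q}$ with $P(1)/MP(1) \cong \bar P(1)$; since $P(p)^G \cong R *_p R$, \ref{albert1} produces the universal extension $U/V/\tilde R$ over $R$ built from $P(1)$ (with $\tilde R = R[t](1/(1+t\eta^p))$), and it extends $S/R$. One must check that forming this universal extension commutes with $-\otimes_R\bar R$: every module occurring in the construction of \ref{albert1} ($P(1)$, $P_1^p$, its $G/C$-invariants $\cong R$, and the relevant tensor algebras and their $G$-invariants) is rank-one projective over the appropriate base, so, although $R \to \bar R$ is not flat, the natural surjections onto the reductions are forced to be isomorphisms, and the compatibility of the ${\cal Q} \leftrightarrow \bar{\cal Q}$ correspondence with the constructed extensions — which is exactly what \ref{nonemptymodM} records — identifies $U \otimes_R \bar R$ with $\bar U$, compatibly with the variable $t$ (so $\tilde R \otimes_R \bar R \cong \bar R[t](1/(1+t\eta^p))$). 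Now choose $a \in R$ lifting $\bar a$; taking $a = \bar a$ as a constant in $\Z[\mu] \subset R$ gives $1 + a\eta^p = 1 + \bar a\eta^p \in \Z[\mu]^* \subset R^*$, so the specialization $\phi_a \colon \tilde R \to R$, $\phi_a(t) = a$, is defined. Set $T := U \otimes_{\phi_a} R$. By \ref{albert1} and the remark following it, $T/S/R$ is $G$ Galois, and
$$T \otimes_R \bar R \;\cong\; U \otimes_{\tilde R,\,\phi_a} \bar R \;\cong\; \bar U \otimes_{\bar R[t](1/(1+t\eta^p)),\,\bar\phi} \bar R \;\cong\; W,$$
i.e. $T/MT$ is the split extension of $R/M$, as required.

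The only step demanding real care is the base-change compatibility invoked in the second paragraph: reduction modulo $M$ is not flat, so one cannot simply commute it past the fixed-point and tensor-algebra constructions; the point is that all the modules built in \ref{albert1} are rank-one projective, so any surjection between their reductions is automatically an isomorphism, and this is precisely the bookkeeping already carried out in the proof of \ref{nonemptymodM}. Everything else is a routine assembly of \ref{albert1}, \ref{specializeanydegree}, \ref{nonemptymodM}, \ref{shortnormalized} and \ref{inducedfixed}, together with the elementary fact that constants from $\Z[\mu]^*$ are units of the very special ring $R$ (which is why $\bar a$ can be lifted without destroying invertibility of $1 + a\eta^p$).
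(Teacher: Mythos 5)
Your proof takes a genuinely different and considerably more elaborate route than the paper's, and it hinges on a base-change compatibility that you assert but do not establish; that assertion is the gap.

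The paper's own proof is three lines: take any $T$ produced by Theorem~\ref{nonemptymodM}; since $S/MS$ is split, the $G$ Galois extension $T/MT$ of $\bar R$ is of the form $\Ind_C^G(\bar D/\bar R)$ for some degree $p$ cyclic $\bar D/\bar R$ (choose an idempotent of $S/MS$ and apply Proposition~\ref{induced}); because $\bar R = \Z[\mu]$ sits inside $R$, one lifts $\bar D$ literally to $D = \bar D \otimes_{\bar R} R$; then $TD^{-1}$ is still a $G$ Galois extension of $S/R$ by Lemma~\ref{topproduct}, and reducing modulo $M$ one gets $\Ind_C^G(\bar D)\cdot \bar D^{-1}$, which is split. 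No universal extension, no base-change of the construction, no uniqueness.

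Your argument instead lifts the splitting module $\bar P(1)$ of the split $W$ to $P(1)\in {\cal Q}$, builds $U/V/\tilde R$ from $P(1)$ over $R$, specializes at a constant lift $a$ of $\bar a$, and then needs $U\otimes_R \bar R \cong \bar U$ (compatibly with the variable $t$) to conclude $T\otimes_R \bar R\cong W$. The trouble is that reduction mod $M$ is not flat, and the construction of $U$ from $P(1)$ involves $C$-invariants, graded pieces $P(r)$, and subrings of a localized tensor algebra — none of which are automatically exact under $-\otimes_R \bar R$. Without the identification $U \otimes_R \bar R \cong \bar U$, all one gets from the module-level information (namely $(U\otimes_R\bar R)(1)\cong \bar U(1)$) is, via Theorem~\ref{extenddifference}, that $U\otimes_R\bar R = \bar U E$ for some degree $p$ cyclic $E/\bar{\tilde R}$ with $E\cong \bar{\tilde R}[C]$ as $\bar{\tilde R}[C]$ modules — and then $T/MT = W\cdot (E\otimes_{\bar\phi_a}\bar R)$, which need not be split. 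Your final paragraph dismisses this concern by saying the compatibility is ``precisely the bookkeeping already carried out in the proof of \ref{nonemptymodM}''; this is not accurate — that proof establishes only that ${\cal Q}\to\bar{\cal Q}$ is a bijection of sets of isomorphism classes of splitting modules, and says nothing about commuting the universal-extension construction with the non-flat base change $R\to\bar R$. As written, the step is a gap, and it is precisely the step the paper's argument was designed to avoid.
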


\begin{proof}
Since $S/MS$ is split, $T/MT \cong \Ind_H^G(\bar D/\bar R)$ 
where $\bar D/\bar R$ is cyclic of degree $p$. 
Now we can view $\bar R = \Z[\mu] \subset R$ and so 
we can set $D = \bar D \otimes_{\bar R} R$. Now replace 
$T$ with $TD^{-1}$ which is split modulo $M$. 
\end{proof}

The astute reader will notice that we have not claimed 
that ${\cal Q} \to {\cal Q}'$ is bijective and that 
is because it is not bijective. Transferring to a question 
about Picard groups, the kernel of $\Pic(R *_p R) \to 
\Pic(R' *_p R') = \Pic(R' \oplus R')$ is the 
kernel of $\Pic(R *_p R) \to \Pic(R) \oplus \Pic(R)$ 
which is the cokernel of $R^* \to (R/\eta^p)^*$ 
which is nontrivial. We will solve this issue by only 
looking at extensions which are split modulo $M$. 
We return to the proof of \ref{nonemptymodM}. 

\begin{proof} 
By \ref{splittingmodule}, 
\ref{picprime}, and \ref{picbar} 
the natural maps ${\cal S} \to {\cal S}'$ and 
${\cal S} \to \bar {\cal S}$ are bijections. 
Since $\Pic(R) \to \Pic(R')$ and $\Pic(R) \to 
\Pic(\bar R)$ are injections, this implies that 
the induced maps ${\cal N} \to {\cal N}'$, 
${\cal R} \to {\cal R}'$, ${\cal N} \to \bar {\cal N}$ 
and ${\cal R} \to \bar {\cal R}$ are all bijections.  
Since $\Pic(R *_p R) \cong 
\Pic(R/M *_p R/M)$, ${\cal Q}\to \bar {\cal Q}$ is also a 
bijection. 

As for the last statement, since $(S/M)/(R/M)$ is split 
it extends to a split cyclic of degree $p^{s+1}$ 
and so $\bar {\cal Q}$ is non empty.
\end{proof} 

We should remind the reader that, by \ref{albert1}, 
different choices of $P(1) \in {\cal Q}$ yield 
different solutions $T/S/R$ because $T(1) \cong 
P(1)$. Moreover, even if we fix $P(1)$, different 
specializations of our versal extension yield non-isomorphic 
solutions. However, in our situation, we next show there is only one 
possible $P(1)$. 

Assume  
$\bar T = T/MT$ is split over $\bar R = R/M = \Z[\mu]$. 
Of course, for such a $T$ we must have $\bar T \cong 
\bar S[C]$ and so $\bar T(1) \cong \bar S[C](1)$. 
Since $\bar S$ is split over $\bar R$, 
the map $\bar R[C](1) \to \bar S[C](1)$ is injective 
and there is a unique $\bar P(1)$ which yields the 
split $\bar T$. 

\begin{corollary}\label{unique} 
Suppose $S/M$ is split as above. 
There is one and only one $P(1) \in {\cal Q}$ 
which yields a $T/S/R$ where $T/MT$ is split. 
\end{corollary}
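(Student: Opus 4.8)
The plan is to obtain this as a bookkeeping consequence of three things already in hand: Theorem~\ref{nonemptymodM} (the bijection ${\cal Q}\to\bar{\cal Q}$ given by reduction modulo $M$), Corollary~\ref{Texists} (existence of a $T$ with $T/MT$ split), and the remark made just before the statement, namely that over $\bar R=\Z[\mu]$ a split $\bar T$ forces $\bar T(1)\cong\bar S[C](1)$, so that there is exactly one normalized splitting module over $\bar R$ yielding the split $\bar T$.

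\emph{Existence.} By Corollary~\ref{Texists} there is a $G$ Galois $T/S/R$ with $T/MT$ split over $\bar R$. Set $P(1)=T(1)$. The lemma preceding Theorem~\ref{albert2} shows $T_0\cong S$ and $T(p)^G\cong R*_pR$, so by \ref{shortnormalized} $P(1)$ is a normalized splitting module for $A(1)$ with $P(1)\in{\cal Q}$, and by Theorem~\ref{specializeanydegree} (together with the lemma immediately before it, which identifies $U(1)$ with $P(1)\otimes_RR'$) the extension $T$ is a specialization of the versal extension attached to $P(1)$. Thus some $P(1)\in{\cal Q}$ yields a $T$ with $T/MT$ split.

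\emph{Uniqueness.} Let $P(1)\in{\cal Q}$ be any element yielding such a $T$. Since $T$ is a specialization of the versal extension built from $P(1)$, Theorem~\ref{specializeanydegree} gives $T(1)\cong P(1)$ as $A(1)$ modules. Reduction modulo $M$ commutes with the passage $(\,\cdot\,)(1)=(\,\cdot\,)/(\tau-\rho)(\tau-1)(\,\cdot\,)$ since both are quotients, so
$$\bar P(1):=P(1)/MP(1)\cong T(1)/MT(1)\cong (T/MT)(1).$$
As $T/MT$ is split over $\bar R$, the remark before the statement gives $(T/MT)(1)\cong\bar S[C](1)$; hence $\bar P(1)\cong\bar S[C](1)$, a fixed isomorphism class in $\bar{\cal Q}$ that does not depend on the chosen $P(1)$. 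By Theorem~\ref{nonemptymodM} the map $P(1)\mapsto P(1)/MP(1)$ is a bijection ${\cal Q}\to\bar{\cal Q}$, in particular injective on isomorphism classes, so $P(1)$ is determined up to isomorphism. Together with the existence step this proves the corollary.

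The only point needing care is the middle of the uniqueness step: that $(\,\cdot\,)(1)$ behaves well under base change to $\bar R$ on the Galois extension $T$, and that "$T/MT$ split'' forces $(T/MT)(1)\cong\bar S[C](1)$. The former is immediate from the quotient description above; the latter is exactly the observation recorded before the statement (using that $\bar S\to\bar S[C](1)$ is the regular module because $\bar S/\bar R$ is split). So no genuinely new difficulty arises — the corollary is essentially a repackaging of \ref{nonemptymodM} and \ref{Texists} once one tracks the reduction $T\mapsto T/MT$ through the correspondence $P(1)\leftrightarrow T$.
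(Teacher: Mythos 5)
Your proof is correct and takes the same approach as the paper's. The paper's own justification is the short paragraph immediately preceding the corollary (a split $\bar T$ forces $\bar T(1)\cong\bar S[C](1)$, and the bijection ${\cal Q}\to\bar{\cal Q}$ of Theorem~\ref{nonemptymodM} then pins $P(1)$ down), and you have simply filled in the bookkeeping—existence via Corollary~\ref{Texists}, the identification $P(1)\cong T(1)$ from Theorem~\ref{specializeanydegree}, and the compatibility $\bar P(1)\cong T(1)/MT(1)\cong (T/MT)(1)$—without introducing any genuinely different idea.
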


Of course, by (\ref{albert1}), $P(1)$ yields other 
$T/S/R$ some of which may not split modulo $M$. 

Demanding that $T$ splits fixes $P(1)$. 
The situation over $R'$ is similar, as follows. 
Consider ${\cal Q}'$ defined above. 
Just as for ${\cal Q}$ we have: 

\begin{proposition} 
Suppose $A(1)$ above is split. 
There is a one and only one $P(1)' \in {\cal Q}'$ which 
yields a $T'/S'/R'$ with $T'/MT'$ split. 
\end{proposition}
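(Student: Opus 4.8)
The plan is to transplant the argument for the characteristic-$0$-style analogue \ref{unique} to the ring $R'$, taking advantage of the collapse of all the fibre products that occurs once $\eta$ is inverted. First I would record the setup: we are in the very special situation, so $S/MS$ is split over $R/M$, and hence $S'/M'S'$ is split over $\bar R' := R'/M' = \Z[\mu]'$, where $M' = MR'$. Because $\eta \in R'^*$ we have $R'[C](1) = R' \oplus R'$ and $R' *_p R' = R' \oplus R'$ and $R'/\eta^pR' = 0$, so the third (unit) obstruction in \ref{albert2} is vacuous over $R'$, and likewise over $\bar R'$. I will write $\bar{\cal S}'$, $\bar{\cal N}'$, $\bar{\cal R}'$, $\bar{\cal Q}'$ for the evident analogues over $\bar R'$ of ${\cal S}'$, ${\cal N}'$, ${\cal R}'$, ${\cal Q}'$.

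Next I would reproduce, over $R'$, the chain of bijections from the proof of \ref{nonemptymodM}. By \ref{splittingmodule} the sets ${\cal S}'$ and $\bar{\cal S}'$ are in bijection with $\Pic(R'[C](1))$ and $\Pic(\bar R'[C](1))$ respectively, and since $R' = \Z[\mu]'[x]_Q$ one has $\Pic(R') = \Pic(\Z[\mu]') = \Pic(\bar R')$ (as in the proof of \ref{picprime}); as $\eta$ is a unit this identifies $\Pic(R'[C](1)) = \Pic(R') \oplus \Pic(R')$ with $\Pic(\bar R') \oplus \Pic(\bar R') = \Pic(\bar R'[C](1))$, and similarly identifies $\Pic(R' *_p R') = \Pic(R') \oplus \Pic(R')$ with $\Pic(\bar R' *_p \bar R')$. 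Consequently reduction modulo $M'$ restricts to bijections ${\cal N}' \to \bar{\cal N}'$, ${\cal R}' \to \bar{\cal R}'$ and ${\cal Q}' \to \bar{\cal Q}'$, since normalization, the condition $(P_1^p)^{G/C} \cong R'$, and the condition $P(p)^G \cong R' *_p R'$ are each detected after applying one of these $\Pic$-isomorphisms. This reduces the whole question to $\bar R'$.

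Over $\bar R'$ I would argue exactly as in the paragraph preceding \ref{unique}: if $T'/S'/R'$ is $G$ Galois with $\bar T' := T'/M'T'$ split over $\bar R'$, then $\bar T' \cong \bar S'[C]$ and hence $\bar T'(1) \cong \bar S'[C](1)$; since $\bar S'$ is split the map $\bar R'[C](1) \to \bar S'[C](1)$ is injective, so $\bar S'[C](1)$ is the unique element of $\bar{\cal Q}'$ arising as $\bar T'(1)$ for a split $\bar T'$. Call it $\bar P(1)'$, and let $P(1)' \in {\cal Q}'$ be its unique preimage under the bijection above. For the existence half I would run the $R'$-version of \ref{Texists}: a specialization $T'$ of the versal extension built from $P(1)'$ satisfies $T'/M'T' \cong \Ind_H^G(\bar D'/\bar R')$ for a cyclic $\bar D'/\bar R'$ of degree $p$, and setting $D' = \bar D' \otimes_{\bar R'} R'$ (using $\bar R' \subset R'$) the extension $T'(D')^{-1}$ is split modulo $M'$; by the lemma following \ref{extenddifference} it is isomorphic to $T'$ as an $A(1)'$-module (since $D' \cong R'[C]$ as $R'[C]$-modules), hence by \ref{specializeanydegree} it too is a specialization of the versal extension from $P(1)'$. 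For uniqueness, if $P(1)'_1,P(1)'_2 \in {\cal Q}'$ both yield extensions split modulo $M'$, then by \ref{specializeanydegree} the normalization of $T'_i(1)$ equals $P(1)'_i$, so reducing modulo $M'$ gives $\bar T'_i(1) \cong \bar S'[C](1) = \bar P(1)'$ for $i = 1,2$; thus $P(1)'_1$ and $P(1)'_2$ have equal image under the bijection ${\cal Q}' \to \bar{\cal Q}'$, and therefore $P(1)'_1 \cong P(1)'_2$.

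The step I expect to need the most care is the second one: checking that reduction modulo $M'$ really does carry ${\cal Q}'$ bijectively onto $\bar{\cal Q}'$, i.e. that normalization and the two triviality conditions are compatible with the $\Pic$-isomorphisms above. Everything else is a routine transcription of \ref{unique}, \ref{nonemptymodM} and \ref{Texists}, and is in fact easier in the present setting because inverting $\eta$ turns all the fibre products into direct sums and removes the unit obstruction altogether.
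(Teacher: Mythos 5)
Your proof is correct and takes essentially the same route as the paper's: reduce to $\bar R' = \Z[\mu]'$ via the Picard isomorphisms collapsing all the fibre products once $\eta$ is inverted, pin down the unique element of $\bar{\cal Q}'$ as $\bar S'[C](1)$ coming from the split $\bar T'$, and establish existence by the \ref{Texists}-style modification $T'(D')^{-1}$. The main differences are cosmetic: you spell out the bijections ${\cal S}' \to \bar{\cal S}'$ down to ${\cal Q}' \to \bar{\cal Q}'$ rather than just citing the Picard isomorphisms, and you add the (valid but not strictly needed for bare existence) verification that $T'(D')^{-1}$ still has the $A(1)'$-module structure $P(1)'$, whereas the paper instead invokes injectivity of $\Pic(R'[C](1)) \to \Pic(S'[C](1))$ to pin down uniqueness — two equivalent ways of phrasing the same step.
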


\begin{proof} 
That $T'$ exists follows by almost the same argument as we used in 
\ref{Texists}. 

As for the uniqueness of $P(1)'$, of course $R' *_p R' = R' \oplus R'$. 
Let $\bar R' = R'/MR' = \Z[\mu]'$. 
$\Pic(R' \oplus R') = \Pic(R' *_p R') \cong 
\Pic(\bar R' *_p \bar R') = \Pic(\bar R' \oplus \bar R')$ and 
$\Pic(R' \oplus R') = \Pic(R'[C](1)) \cong \Pic(\bar R'[C](1)) 
= \Pic(\bar R' \oplus \bar R')$.  Finally, $\Pic(R'[C](1)) \to 
\Pic(S'[C](1))$ is injective since $\Pic(R'[C](1)) = \Pic(Z[\mu]') \oplus 
\Pic(\Z[\mu]')$ and $S'[C](1)/MS'[C](1)$ is a split extension 
of $\bar R'[C](1)$. 
\end{proof} 

In summary, 
we have: 

\begin{theorem}
Suppose $S/R$ is cyclic Galois of degree $p^{r-1}$ 
and $S' = S \otimes_R R'$ extends to a degree $p^r$ 
cyclic extension $T'/R'$ with $T'/MT'$ split over 
$R'/MR'$. Further assume 
$S/MS$ is the split extension of $\bar R$. 
Then $S/R$ extends to a cyclic degree 
$p^r$ extension $T/R$ with $T/MT$ split over $\bar R$. 
 
\end{theorem}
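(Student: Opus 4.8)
The plan is to verify the hypotheses of the generalized Albert criterion, Theorem~\ref{albert2}, for $S/R$ and the tower $C\subset G$ (with $|C|=p$, $|G|=p^r$). Since $R$ is very special, essentially all the needed apparatus is already in place, and the argument amounts to clearing, in turn, the three obstructions I), II), III) catalogued just before Theorem~\ref{nonemptymodM} and then correcting $T$ modulo $M$.

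First I would clear obstruction I), that $A(1)=\Delta(S[C](1)/R[C](1),G,\tilde\rho)$ splits. The assumption that $S'=S\otimes_R R'$ extends to a degree $p^r$ cyclic $T'/R'$ gives, by the implication $(1)\Rightarrow(2)$ of Theorem~\ref{albert2} applied over $R'$, that $A(1)'=A(1)\otimes_R R'$ is a split Azumaya algebra. By Lemma~\ref{brauer} the map $\Br(R[C](1))\to\Br(R'[C](1))$ is injective, so $A(1)$ itself splits. (Concretely, only $A_1$ need be split over $R'$, as $A_0\cong\End_R(S)$ is automatically split.) Observe that this step uses only the existence of $T'/R'$; the splitness of $T'/MT'$ is stronger than needed here, and is present so the statement can drive the induction in the main lifting theorems.

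With $A(1)$ split and $S/MS$ split over $\bar R$, obstruction II) holds by the earlier Proposition which, under exactly these hypotheses, produces a normalized splitting module $P(1)$ with $(P_1^p)^{G/C}\cong R$. Obstruction III) — that a certain unit of $(R/\eta^pR)^*$ lift to $R^*$ — is precisely what Theorem~\ref{nonemptymodM} resolves: under the same hypotheses it shows the set $\mathcal Q$ of normalized splitting modules with $P(p)^G\cong R*_pR$ is nonempty (via the bijection $P(1)\mapsto P(1)/MP(1)$ onto $\bar{\mathcal Q}$, which is nonempty because the split $\bar S/\bar R$ extends to a split cyclic of degree $p^r$). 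Choosing $P(1)\in\mathcal Q$ and applying Theorem~\ref{albert1} yields a $G$ Galois extension $T/R$ of degree $p^r$ extending $S/R$, and Corollary~\ref{Texists} then supplies such a $T$ with $T/MT$ split over $\bar R$, completing the proof.

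The step I expect to carry the real weight is obstruction III); but its difficulty is already absorbed, on the one hand into the hypothesis that $R$ is very special — so that $R^*\to(R/\eta^pR)^*$ has the same cokernel as $\Z[\mu]^*\to(\Z[\mu]/\eta^p\Z[\mu])^*$, by Lemma~\ref{unitsspecial} and Corollary~\ref{veryspecial} — and on the other into the Picard-group comparisons among $R$, $\bar R$, $R'$ inside the proof of Theorem~\ref{nonemptymodM}. Thus the only genuinely new deduction in the present proof is the descent of the splitting of $A(1)'$ to $A(1)$ in obstruction I), the remainder being the bookkeeping of which prior result supplies which obstruction.
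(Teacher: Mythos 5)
Your proposal is correct and follows essentially the same route as the paper: descend the splitting of $A(1)'$ to $A(1)$ via the Brauer-group injection of Lemma~\ref{brauer}, feed $A(1)$ split together with $S/MS$ split into Theorem~\ref{nonemptymodM} to make ${\cal Q}$ nonempty, and then invoke Theorem~\ref{albert1} and Corollary~\ref{Texists} to produce $T$ with $T/MT$ split. The only cosmetic difference is that the paper additionally records (via Corollary~\ref{unique}) that the resulting $P(1)$ is unique and that $T'(1)\cong T(1)\otimes_R R'$, a refinement it will need for Proposition~\ref{inductionstep} but which is not required for the existence statement you were asked to prove.
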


\begin{proof}
Since $A(1)' = A(1) \otimes_R R'$ is an obstruction 
to extending $S'$, it is split and so $A(1)$ 
is split and by \ref{nonemptymodM} there is a unique $P(1) \in {\cal Q}$ 
yielding $T$ with $T/MT$ split. Since $T'$ is split modulo $M$ it defines 
a $P(1)' \in {\cal Q}'$ and all the uniqueness implies 
$T'(1) \cong T(1) \otimes_R R'$ as $S'[C]$ modules. 
\end{proof}
 
The above theorem is only the first step in our induction argument. 
By that theorem, we have that $T$ exists, $T/MT$ is split, 
and $T(1) \otimes_R R' \cong T'(1)$ as $S'[C]$ modules. However, 
$T \otimes_R R'$ may not be isomorphic to 
$T'$ as Galois extensions. Of course, since they share $S'$, 
$T \otimes_R R' = D'T'$ where $D'/R'$ is cyclic of degree $p$. By \ref{extenddifference}, 
$D' \cong R'[C]$ as $R'[C]$ modules. 

To remove the above difficulty, we start by revising our set up a bit. We call this our induction assumption: 

\bigskip

INDUCTION ASSUMPTION: 

Let $S/R$ be degree $p^{r-1}$ cyclic, and $T/S/R$ cyclic of degree $p^{r}$ 
as constructed above. Set $S' = S \otimes_R R'$ and assume $S'/R'$ 
extends to a bigger cyclic extension $T''/S'/R'$ of degree $p^m$ 
where $m \geq r$. We define $T''/T'/S'/R'$ 
such that $T'/S'/R'$ has degree $p^{r}$
\bigskip

The point about $p^m$ is that this is the 
ultimate size of the extension of $R$ we are constructing after 
multiple degree $p$ steps. 

\begin{proposition}\label{inductionstep}
We can modify $T/S/R$ and $T''/T'/S'/R'$ 
so that the induction assumption still holds 
but $T(1/\eta) = T'$. 
\end{proposition}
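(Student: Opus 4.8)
The plan is to use the discrepancy between $T\otimes_R R'$ and $T'$ to correct $T$ at the base level by a degree $p$ cyclic extension coming from $R$, using the machinery of \ref{extenddifference} and \ref{difference} together with the strong rigidity results (\ref{nonemptymodM}, \ref{unique}, \ref{Texists}) about splitting modules over very special $R$. First I would record, as in the discussion immediately preceding the statement, that since $T\otimes_R R'$ and $T'$ both sit over $S'$ and both have degree $p^r$, \ref{difference} gives $T\otimes_R R' = D'T'$ for some cyclic degree $p$ extension $D'/R'$; moreover, since by construction both $T\otimes_R R'$ and (after the adjustment of \ref{Texists}) $T'$ are split modulo $M$, \ref{splitdifference} shows $D'/MD'$ is split over $R'/MR'$, and then by \ref{extenddifference} (or directly \ref{specialdegreep} applied over $R'$) $D'\cong R'[C]$ as $R'[C]$ modules. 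So $D'$ is, up to module structure, "harmless."

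The key step is to descend $D'$ to $R$. Since $D'/R'$ is cyclic of degree $p$ with $D'\cong R'[C]$ as $R'[C]$ modules, \ref{degreepnormal} produces an $\alpha\in D'$ with $\sigma(\alpha)=\rho\alpha+1$ and $1+\alpha\eta$ invertible in $D'$, equivalently $D'\cong R'[Z]/(Z^p+g(Z)-a')$ for some $a'\in R'$ with $1+a'\eta^p\in R'^*$ — but over $R'$ the element $\eta$ is a unit, so every such extension is isomorphic to $R'[Z]/(Z^p-(1+a'\eta^p))$, i.e. a Kummer extension split by a $p$-th root. What I actually need is a degree $p$ cyclic $D/R$ with $D\cong R[C]$ as $R[C]$ modules, $D/MD$ split over $\bar R$, and $D\otimes_R R'\cong D'$ as Galois extensions of $R'$; then replacing $T$ by $TD$ (legitimate by \ref{difference}/\ref{moduleproduct}, and \ref{specialdegreep} keeps $TD$ split modulo $M$ and $TD\cong R[G]$-type module structure preserved appropriately) gives $(TD)\otimes_R R' = D'^{-1}(T\otimes_R R')\otimes\cdots$ — more precisely $TD\otimes_R R' = (T\otimes_R R')(D\otimes_R R') = D'T'(D')^{-1} = T'$, as desired, and the induction assumption (degree, the tower $T''/T'/S'/R'$, splitness mod $M$) is untouched since we only altered the degree $p$ layer over $R'$ trivially.

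So the real content is: realize the prescribed $D'$ as the base change of a $D/R$. Here I would invoke the very-special structure of $R$. Since $R$ is very special, $\Pic(R[C](1))\to\Pic(R'[C](1))$ is an isomorphism (\ref{picprime}) and $\Pic(R[C](1))\cong\Pic(R)\oplus\Pic(R)\cong\Pic(\Z[\mu])\oplus\Pic(\Z[\mu])$ (\ref{specialdegreep}); combined with \ref{galoistower}-style functoriality, this means the $R[C]$-module class of $D'$ descends. But I want to descend $D'$ as a \emph{Galois extension}, not just as a module. The clean way: $D'$ being split modulo $M$ means $\bar D' = D'/MD'$ is the split extension of $\bar R' = \Z[\mu]'$; but the split degree $p$ cyclic is defined over $\Z$, hence over $\Z[\mu]\subset R$, so exactly as in the proof of \ref{Texists} I can set $D_0 = \Ind^{C}_{1}(R)$ (the split degree $p$ cyclic over $R$) and then the only remaining ambiguity between $D'$ and $D_0\otimes_R R'$ is captured by a unit in $(R'/\eta^p R')^* = 0$ (since $\eta$ is a unit in $R'$!) — in fact over $R'$ there is genuine freedom, so instead I take $a'\in R'$ defining $D'$, clear denominators: $a' = a/s$ with $a\in\Z[\mu][x]$, $s\in Q$, and since $1+a'\eta^p$ is a unit of $R'$ and $\eta$ is a unit there, I can modify $a'$ within its $\oplus_p$-class (\ref{prop:1.2}) by an element of the form $g(z)+z^p$ with $\phi(z)\in A'^*$ to arrange that $a$ already lies in $R$ with $1+a\eta^p$ invertible in $R$ (using \ref{unitsspecial}(\ref{units:1}): $1+\eta M\subset R^*$, after first killing the part of $a'$ not lying in $R$ by such a substitution, which is possible because modulo $M$ the extension is split). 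Then $D := R[Z]/(Z^p+g(Z)-a)$ works: it is cyclic degree $p$ (\ref{prop:1.1}), $D\cong R[C]$ as modules (\ref{degreepnormal}, since the defining polynomial is of the required shape), $D/MD$ is split since $a\in M$ after the normalization, and $D\otimes_R R'\cong D'$ by \ref{prop:1.2} since their defining parameters agree up to $\oplus_p$ by units. The main obstacle I expect is precisely this last descent: showing the chosen $a'\in R'$ can be normalized, within its isomorphism class of degree $p$ cyclics, to an $a\in R$ with $1+a\eta^p\in R^*$ and $a\in M$ — this is where one must carefully combine the $\oplus_p$ calculus of \ref{prop:1.2}, the unit description \ref{unitsspecial}, and the hypothesis that everything in sight is split modulo $M$.
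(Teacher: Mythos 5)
Your overall framing is right as far as it goes: $T\otimes_R R'$ and $T'$ differ by a degree $p$ cyclic $D'/R'$, which by \ref{extenddifference} and \ref{splitdifference} is module-trivial and split modulo $M$, and you want to cancel the discrepancy by modifying the tower. But the heart of the paper's proof is exactly the step you treat as a routine normalization, and that step in fact \emph{fails} in general — which is why the actual argument is much longer and has an entirely different second half.

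Concretely: the paper shows (via \ref{primeunits} and \ref{splitdifference}) that $D' = R'\bigl((1 + ax\eta^s)^{1/p}\bigr)$ with $a$ not divisible by $\eta$. When $s \geq p$ one has $1 + ax\eta^s = 1 + v\eta^p$ with $v\in M$, and only then does $D'$ descend to $D/R$ so that one may replace $T$ by $TD^{-1}$, exactly as you propose. But when $1\leq s \leq p-1$, $1 + ax\eta^s$ is a unit of $R'$ (because $\eta$ is inverted) without being in the image of $1 + \eta^p R$, and no amount of $\oplus_p$-massaging or \ref{unitsspecial} manipulation will put it there; the extension $D'$ genuinely does not descend to a cyclic over $R$. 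Your sentence ``clear denominators\dots to arrange that $a$ already lies in $R$ with $1+a\eta^p$ invertible in $R$'' is the gap: nothing in \ref{prop:1.2} or \ref{unitsspecial} lets you raise the $\eta$-valuation of the Kummer parameter from $s<p$ to $\geq p$ while staying in the same isomorphism class.

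The paper's resolution for $s<p$ is of a completely different character, and crucially it modifies $T'$ rather than $T$. This is legitimate only because of the induction hypothesis that $T'$ sits inside a degree $p^m$ extension $T''$: one passes to the completion $R_\eta$ (Lemmas \ref{completions}, \ref{canextend}), uses \ref{invert} to express $1 + a\eta^s$ as $z^p N_{m-r+1}(u)$, and then uses the explicit norm computations of Theorem \ref{normcomputations} to build an auxiliary cyclic $E'/R'$ (again extending to degree $p^{m-r+1}$) whose degree $p$ layer has Kummer parameter congruent to $1 + a\eta^s$ to higher $\eta$-order. Replacing $T'$ by $T'E'$ strictly increases $s$; iterating lands you in the $s\geq p$ case, where your descent-and-cancel step finally applies. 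So the norm estimates from Section 1, which play no role in your proposal, are essential, and the proposal cannot be repaired without importing them and the ``extend and push up the valuation'' loop.
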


\begin{proof}
Our proof strategy will be the following. 
We can modify $T/S$ by any degree $p$ Galois 
$D/R$ which splits modulo $M$ because by \ref{topproduct} 
$TD/R$ is part of the tower $T(D \otimes_R S)/S/R$. 
We can modify $T'/S'$ by any $D'/R'$ of degree $p$ 
which extends to a degree $p^{m-r}$ cyclic Galois that splits modulo 
$M$. This follows because if $V'/D'/R'$ has degree 
$p^{m-r}$ and $U'/T'/S'/R$ has degree $p^m$ 
then by \ref{topproduct} $V'T''/R$ has degree $p^m$ and we have 
the tower 
$T''V'/T'D'/S'/R'$ of degree $p^m$. 

By \ref{difference} there is a unique 
$D'/R'$ such that $T(1/\eta)D' = T'$ as extensions of $R'$. 
By \ref{extenddifference} $D' \cong R'[C]$ as $R'[C]$ 
modules. Thus $D'$ contains a unit $\alpha$ such that 
$\tau(\alpha) = \rho\alpha$ and $v = \alpha^p$ 
is a unit of $R'$. By \ref{primeunits} $v$ 
has the form $u(1 + ax\eta)/(1 + bx\eta)$ 
for $u \in (\Z[\mu])^*$ and $a,b \in \Z[\mu][x]$. After altering 
by a $p$ power we can assume $v = u(1 + ax\eta)$. 
By \ref{splitdifference} $D'/MD'$ is split, so 
$u$ is a $p$ power.  That is, we can write 
$D' = R'((1 + ax\eta^s)^{1/p})$ where $a \in R$ is not divisible 
by $\eta$. 
If $s \geq p$ then we are done, because we can write 
$1 + ax\eta^s = 1 + v\eta^p$ and $D' = D \otimes_R R'$ 
where $D/R$ is cyclic Galois of degree $p$ defined 
by $Z^p + g(Z) - v$. Since $v \in M$, $D/R$ splits modulo $M$. 
We can modify $T$ by $D^{-1}$ to get $T(1/\eta) = T'$.  

To deal with the $s \leq p-1$ cases, 
we require some more information about $1 + ax\eta^s$. 
Assume $s \leq p-1$ and $a$ is not divisible by $\eta$. 
We derive our additional information by using 
completions. Let $R_{\eta}$ be the completion of the 
discrete valuation ring which is $R$  localized at the prime 
$\eta{R}$. Set $T_{\eta}/S_{\eta}/R_{\eta}$ to be 
the cyclic extension induced by tensoring with $R_{\eta}$. 
Of course $T_{\eta}$ and $S_{\eta}$ are direct sums of complete 
discrete valuation rings. In fact, they are themselves complete 
valuation rings. 

\begin{lemma}\label{completions}
Let $T/S/U/R$ be the above cyclic Galois extension 
where $U/R$ has degree $p$. Let $T_{\eta}/S_{\eta}/U_{\eta}/R_{\eta}$ be the result of tensoring by 
$R_{\eta}$. Then $U_{\eta}$ 
is a complete discrete valuation domain, 
as is $T_{\eta}$ and $S_{\eta}$. 
\end{lemma}

\begin{proof} 
Let 
$\hat U = U/\eta{U}$. 
By assumption $\hat U = \hat R[Z]/(Z^p - Z - x) = (F_p[x])[Z]/(Z^p - Z - x)$. 
Thus $U_{\eta}/\eta{U_{\eta}} = F_p(x)[Z]/(Z^p - Z - x)$ which is 
clearly a field. That is, $\eta$ stays prime in $U_{\eta}$ and 
$U_{\eta}$ is a complete discrete valuation domain. In general, 
$T_{\eta}/R_{\eta}$ would have the form $\Ind_H^G(T''/R_{\eta})$ 
for $T''$ a domain 
but since $U_{\eta}$ is a domain, by \ref{induced} we have 
$H = G$ and $T_{\eta}$ is a complete discrete valuation domain.
\end{proof}

The advantage of going to the completion is that we have: 

\begin{proposition}\label{canextend}
$T_{\eta}/R_{\eta}$ extends to a cyclic Galois extension 
$W/T_{\eta}/R_{\eta}$ of degree $p^m$. 
\end{proposition}

\begin{proof}
The standard theory of Galois extensions of discrete valuation 
rings (e.g., \cite[p.~54]{Se}) shows that there is a one to one 
correspondence between Galois extensions of the residue field 
of $R_{\eta}$, namely, $R_{\eta}/\eta{R_{\eta}} = F_p(x)$, and unramified 
Galois extensions of the field of fractions of $R_{\eta}$, 
which is the same as 
Galois extensions of $R_{\eta}$. In characteristic $p$ 
all cyclics extend, so we lift an extension of 
$\hat T_{\eta}/\hat R_{\eta}$ and we are done.
\end{proof} 

If we form $R'_{\eta} = R' \otimes_R R_{\eta} = R_{\eta}(1/\eta)$, 
this is the field of fractions of $R_{\eta}$ and we write it as 
$K$. For any $n$ we can set $K_{n} = 
R' \otimes_{\Z[\mu]'} \Z[\mu_{n-1}]'$ when $p > 2$ 
and $R' \otimes_{\Z[\mu]'} \Z[\mu_{n-2}]'$ when $p = 2$. 
The point is, again, that $K_n$ is generated over $K$ 
by adjoining a primitive $p^n$ root of unity. 
$K_n$ is the field of fractions 
of the complete discrete valuation ring 
$W_n = R_{\eta} \otimes_{\Z[\mu]} \Z[\mu_{n-1}]$ 
or $R_{\eta} \otimes_{\Z[\mu]} \Z[\mu_{n-2}]$ 
when $p = 2$. 
Since the prime $\eta_{n} = 
\mu_n - 1$ is totally ramified over $\eta$, 
$W_n$ is a complete discrete valuation domain as claimed 
and $K_n$ is a field. 
Let $N_{n}: K_{n}^* \to K^*$ be the norm map. 

Now continuing with the proof of \ref{inductionstep}, 
we notice that if $W/R_{\eta}$ is as in \ref{canextend}, 
then $W'_{\eta} = W \otimes_{R} K$ and $T_{\eta}'' = T'' \otimes_{R'} K$ 
have a common subextension $S'_{\eta}$ of degree 
$p^{r-1}$ and so $W'_{\eta}D_1' = T''_{\eta}$ where 
$D_1'/K$ is cyclic of degree $p^{m-r+1}$. 
Looking at degree $p^r$ subextensions over $K$, 
we have that $D'_{\eta} = D' \otimes_{R'} K$ 
is the degree $p$ subextension of $D_1'/K$. 
That is, $D'_{\eta}$ extends to a cyclic of degree 
$p^{m-r+1}$. By \ref{invert}, in $K^*$, 
we have $(1 + a\eta^s) = z^pN_{m-r+1}(u)$ for 
$z \in K^*$ and $u \in K_{m-r+1}^*$. 
Since the left side is a unit, and using the total ramification 
again, $u$ must have $\eta_{m-r+1}$ valuation a multiple 
of $p$. Since $N_{m-r+1}(\eta_{m-r+1}) = \eta$, 
we can alter $z$ by a power of $\eta$ and assume 
$u$ and $z$ are both units in $W_{m-r+1}$ and $R_{\eta}$ 
respectively. These rings have the 
same residue fields (total ramification again), 
and the norm map is the $p^{m-r+1}$ power map in the residue 
field. Thus there is a $y \in R_{\eta}^*$ such that 
$N_{m-r+1}(u)$ is congruent to $y^{p^{m-r+1}}$ 
modulo $\eta$ and replacing $u$ by $u/y$ 
we can assume $u$ maps to $1$ module $\eta_{m-r+1}$. 
It follows that $z^p$ and hence $z$ maps to 1 
modulo $\eta$. All together, we have $z = 1 + h\eta$ and 
$u = 1 + b\eta_{m-r+1}^k$ for $b \in W_{m-r+1}$ for some 
$k \geq 1$. Finally $z^p = 1 + h\eta^p$. 
Thus $N_{m-r-1}(u)$ and $1 + a\eta^s$ are congruent 
modulo $\eta^p$ and we are assuming $\eta$ does not divide 
$a$ and $s \leq p - 1$. By \ref{normcomputations}
we have $k = s$.  

Quoting \ref{normcomputations}, $a$ is congruent to 
$b^{p^{m-r+1}}$ or $b^{p^{m-r+1}} - b^{p^{m-r}}$ 
Let $\hat b$ be the image of $b$ is the residue 
field $F_p(x)$. 
Since $a \in R$, the image $\hat a \in F_p[x]$. 
Since $F_p[x]$ is integrally closed, $\hat b \in F_p[x]$. 
Choose $b' \in R$ a preimage of $\hat b$. 
Let $E'/R'$ be defined as in \ref{taualone}.  
That is, $E'/R'$ is the $\tau$ invariant subring of $R'((M_{\tau}(1 + b'\eta_m^s)^{1/p^{m-r+1}})$. Let $E_1'/R'$ be the degree 
$p$ subextension, so $E_1' = R'(N_{m-r+1}(1 + b'\eta_m^s)^{1/p})$ 
and $N_m(1 + b'\eta_m^s)$ is congruent to $1 + b\eta^s$ 
modulo $\eta^{s+1}$. 
Replace $T'/R'$ by $T'E'/R$, and note that $T'E' \supset S'$ 
and the degree $p$ subextension of $T'E/S'$ 
is $T_1'E_1' = T(1/\eta)D'E_1'$. $(D'E_1')/S'$ is 
defined by $S'((1 + b''\eta^{s'})^{1/p})$ where $s' > s$ and we proceed 
by induction until we can assume $s \geq p$ which finishes 
\ref{inductionstep}
\end{proof}

With \ref{inductionstep} proven, we have shown by induction that: 

\begin{theorem}\label{allothersteps}
Let $R$ be as in Case A or B and very special. 
Suppose $U/R$ is cyclic of degree $p$ 
with $\hat U = F_p[x][Z]/(Z^p - Z - x)$. 
Assume further that $U' = U(1/\eta)$ extends to a
degree $p^m$ cyclic extension of $R' = R(1/\eta)$. 
Then $U/R$ extends to a degree $p^m$ extension 
of $R$. 
\end{theorem}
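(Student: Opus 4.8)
The plan is to prove Theorem~\ref{allothersteps} by a straightforward induction on $m$, building the tower one degree $p$ step at a time, using the previously developed machinery to discharge each of the three Albert obstructions. So suppose we have built a cyclic $S/R$ of degree $p^{r-1}$ (with $r-1 \geq 1$, the base case $r-1=1$ being the given $U/R$) which is split modulo $M$, and such that $S' = S \otimes_R R'$ sits inside the assumed degree $p^m$ extension of $R'$. The goal of a single inductive step is to extend $S/R$ to a degree $p^r$ cyclic $T/R$, again split modulo $M$, and with $T(1/\eta)$ again sitting inside the degree $p^m$ extension of $R'$. Iterating $m-1$ times starting from $U/R$ then yields the theorem.

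For a single step, I would invoke the generalized Albert criterion \ref{albert2} applied to $S/R$ and $C \subset G$ with $|C| = p$, $|G| = p^r$. The three obstructions are: (I) splitness of $A(1) = \Delta(S[C](1)/R[C](1),\tilde\rho)$; (II) the existence of a normalized splitting module $P(1)$ with $(P_1^p)^{G/C} \cong R$; and (III) liftability to $R^*$ of a certain unit $\bar u \in (R/\eta^pR)^*$. Obstruction (I) is handled by \ref{brauer}: it suffices to check $A(1)$ splits after inverting $\eta$, i.e.\ that $A(1)' = A(1) \otimes_R R'$ splits; but $A(1)'$ is an obstruction to extending $S'/R'$, which by hypothesis does extend (inside the degree $p^m$ tower), so $A(1)'$ is split. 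Obstruction (II) is handled by the proposition just before \ref{nonemptymodM}: since $S/MS$ is the split extension of $\bar R = \Z[\mu]$, one builds the needed $I(1) \in \Pic(R[C](1))$ from the triviality of $\Pic(F_p)$ and twists $P(1)$ accordingly. Obstruction (III) is exactly where being \emph{very special} is used: by \ref{nonemptymodM} (with $S/MS$ split) the set ${\cal Q}$ of normalized splitting modules with $P(p)^G \cong R *_p R$ is in bijection with $\bar{\cal Q}$, and $\bar{\cal Q}$ is nonempty because the split $\bar S/\bar R$ visibly extends. Hence ${\cal Q} \neq \emptyset$, so \ref{albert2} produces $T/S/R$ of degree $p^r$, and by \ref{Texists} we may choose $T$ with $T/MT$ split.

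The remaining issue — and I expect this to be the real content of the step, already packaged for us — is matching up with the $R'$-side tower: a priori $T(1/\eta)$ and the degree $p^r$ layer $T'$ of the assumed $R'$-tower differ, and one needs them to agree (as extensions of $R'$, or at least compatibly enough that the induction on $m$ can continue). This is precisely \ref{inductionstep}: after setting up the ``induction assumption'' there (with the $R'$-extension of degree $p^m \geq p^r$), we may modify $T/S/R$ by a degree $p$ cyclic $D/R$ split modulo $M$, and correspondingly modify the $R'$-tower, so that afterward $T(1/\eta) = T'$. The mechanism is: $T(1/\eta)D' = T'$ for a unique degree $p$ cyclic $D'/R'$ by \ref{difference}, $D' \cong R'[C]$ as $R'[C]$ modules by \ref{extenddifference}, so $D' = R'((1+ax\eta^s)^{1/p})$ with $\eta \nmid a$; if $s \geq p$ this $D'$ descends to an honest $D/R$ via $Z^p + g(Z) - v$ and we are done, and if $s \leq p-1$ one passes to the $\eta$-adic completion $R_\eta$, uses \ref{canextend} and \ref{invert} to write $1 + a\eta^s$ as $z^p N_{m-r+1}(u)$ in $K^* = R_\eta(1/\eta)^*$, applies the norm computations \ref{normcomputations} to pin down $k = s$ and to identify $\hat a$ as a $p^{m-r+1}$-th (or $p^{m-r+1}$/$p^{m-r}$-mixed) power in $F_p(x)$ — which, since $F_p[x]$ is integrally closed, comes from $F_p[x]$ — lift to $b' \in R$, and use the \ref{taualone} construction $E'/R'$ to shave $D'$ down to a correction of strictly larger valuation $s' > s$, iterating until $s \geq p$.

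Thus the proof is: induct on $m$; the base case is the hypothesis $U/R$; each inductive step runs the generalized Albert criterion \ref{albert2} (obstruction I via \ref{brauer} and the $R'$-hypothesis; obstruction II via the split-mod-$M$ hypothesis and $\Pic(F_p) = 0$; obstruction III via very-specialness and \ref{nonemptymodM}), produces $T/S/R$ with $T/MT$ split via \ref{Texists}, and then \ref{inductionstep} realigns $T(1/\eta)$ with the layer of the $R'$-tower so the induction hypotheses are restored. The main obstacle — though it is essentially pre-packaged as \ref{inductionstep} — is the descent of the correction cyclic $D'$ from $R'$ to $R$ in the low-valuation range $s \leq p-1$, which is exactly what the completion/norm-computation/integral-closure argument resolves; everything else is a bookkeeping assembly of the already-proven lemmas. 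One small point to make explicit in writing up the induction is that the property ``$\hat U = F_p[x][Z]/(Z^p - Z - x)$'' of the bottom degree $p$ layer is preserved throughout (the modifications only change things by extensions split modulo $M$, hence split modulo the larger ideal $\eta R$ is \emph{not} what we want — rather one checks the bottom layer $U$ itself is never altered, since all corrections happen at higher levels of the tower or are absorbed into the degree $p$ pieces multiplied ``at the top'' in the sense of \ref{topproduct}).
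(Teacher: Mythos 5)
Your proposal is correct and follows essentially the same route the paper takes: an induction one degree‐$p$ layer at a time, discharging obstructions I, II, III via \ref{brauer}, the split‐mod‐$M$ hypothesis plus $\Pic(F_p)=0$, and the very‐special bijection ${\cal Q}\cong\bar{\cal Q}$ of \ref{nonemptymodM} respectively, then using \ref{Texists} to choose $T$ split mod $M$ and \ref{inductionstep} to realign $T(1/\eta)$ with the $R'$‐tower; this is exactly how the paper packages the argument, which it summarizes with ``with \ref{inductionstep} proven, we have shown by induction that\dots''. Your final parenthetical about the bottom layer being preserved is a bit garbled in phrasing but lands on the right observation: the corrections by $D$ and $D'$ multiply ``at the top'' in the sense of \ref{topproduct} and so never alter $U$ or its reduction $\hat U$.
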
 

The above is the second step of our ultimate theorem. 
The first step is to construct the $U/R$ assumed there, 
which follows. In order 
to assume as little as possible, we are NOT assuming 
$R$ is very special but only localizing as needed. 

\begin{proposition}\label{firststep} 
Let $\hat U = \hat R[Z]/(Z^p - Z - x)$. Suppose  $m \geq 2$. 
Then there is an $s \in \Z[\mu][x]$ and cyclic Galois degree $p$ extension $U/R$
where $R = \Z[\mu][x](1/s)$ is such that 

1) $s \in 1 + \eta{x}R$.  

2) $U/\eta{U} \cong \hat U$ as a Galois extension of $\hat R$. 

3) $U'/R' = U(1/\eta)/R(1/\eta)$ extends to a cyclic Galois extension $T''/U'/R'$ of degree 
$p^m$ and $T''/MT''$ is split where $M$ is generated by $x$. 

\end{proposition}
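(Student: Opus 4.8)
The plan is to build $U/R$ directly from the degree $p$ machinery of Section 3, then ``pull back'' the ability to enlarge $U'$ over $R'$ from the inverting-$\eta$ machinery of Section 6. First I would work over the polynomial ring $\Z[\mu][x]$ and set $a = x$, so that the Artin--Schreier reduction $Z^p - Z - x$ is exactly $Z^p + g(Z) - x$ modulo $\eta$, by the construction of $g$ at the start of Section 3. The extension $T_x/R_x$ of \ref{prop:1.1} with $R_x = \Z[\mu][x](1/(1+x\eta^p))$ is cyclic degree $p$ Galois, and $T_x/\eta T_x = F_p[x][Z]/(Z^p - Z - x) = \hat U$. Taking $s_0 = 1 + x\eta^p$ gives a first candidate; note $s_0 \in 1 + \eta x \Z[\mu][x]$, so condition 1) is met for this factor, and condition 2) holds by construction. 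So the real content is condition 3): I must arrange, possibly after inverting more elements $s$ (all of which I must keep inside $1 + \eta x R$), that $U'/R'$ extends to a degree $p^m$ cyclic that is split modulo $M$.

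For condition 3) the idea is to produce the degree $p^m$ extension over $R'$ explicitly using the $M_\tau$ construction of \ref{taualone}. Over $R' = R(1/\eta)$ the element $\eta$ is invertible, so $R'$ is a flat $\Z[\mu]' = \Z[\mu](1/\eta)$ algebra (resp.\ $\Z[i]'$) and Section 6 applies. By \ref{invert}, $U'/R'$ extends to a degree $p^m$ cyclic exactly when the Kummer generator of $U'$ can be written as $N_\tau(z)^{1/p}$ for some $z \in V_m^*$, where $V_m = R' \otimes_{\Z[\mu]'} \Z[\mu_{m-1}]'$ (resp.\ the $p=2$ version). Here I get to \emph{choose} the degree $p$ extension $U/R$, so I will instead run the construction backwards: pick $z \in \Z[\mu][x]$ with $z \equiv 1$ modulo $M$ and with $N_\tau(z)$ not divisible by $\eta$, form $w = N_\tau(z)^{1/p}$ over $R'$, and let $U'$ be $R'(w)$. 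By \ref{taualone} this $U'$ sits inside a degree $p^m$ cyclic $T''/U'/R'$ (built via $M_\tau(z)$), and by \ref{splitprime} (applied with the ideal $M$, since $z \equiv 1$ mod $M$) the extension $T''/MT''$ is split over $R'/MR'$. Now I translate back down to $R$: modulo $\eta$ the norm $N_\tau$ is the $(p-1)$-fold product of automorphisms that all reduce to the identity, hence $N_\tau(z) \equiv z^{p-1} \pmod \eta$... more usefully, I want the reduction $N_\tau(z) \bmod \eta^p$ to match $1 + x\eta^p$ type data; I would use \ref{normcomputations} to compute $N_\tau$ of the relevant $1 + (\text{stuff})\eta_m^r$ expressions and choose $z$ so that $N_\tau(z)$, as an element of $\Z[\mu][x]$ localized appropriately, equals $1 + x\eta^p$ up to a $p$-th power times a unit congruent to $1$ modulo $M$. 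This forces $U = R(N_\tau(z)^{1/p}\text{-descent})$ to have $\hat U = F_p[x][Z]/(Z^p - Z - x)$, giving 2), while 1) holds because the localizing element is a product of expressions in $1 + \eta x R$.

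The main obstacle, I expect, is the matching computation in the last step: ensuring simultaneously that (a) the degree $p$ extension $U/R$ one extracts has residue extension precisely $Z^p - Z - x$ and not $Z^p - Z - x^p$ or some twist (the lifts are genuinely distinct in mixed characteristic, as the remark after \ref{prop:1.2} stresses), and (b) the generator over $R'$ really is a norm $N_\tau(z)$ with $z$ congruent to $1$ modulo $M$, so that \ref{splitprime} delivers the split-mod-$M$ conclusion. This is where \ref{normcomputations} parts \ref{itm:1}--\ref{itm:3} do the work: they let me read off, modulo successive powers of $\eta$, what $N_\tau(1 + b\eta_m^r)$ looks like, and I would pick $r = p$ and $b$ a unit multiple of $x$ so that the norm is congruent to $1$ modulo $\eta^p$ with the correct $\eta^p$-coefficient. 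The bookkeeping splits into the $p > 2$ (Case A) and $p = 2$ (Case B) subcases exactly as \ref{normcomputations} does, but the structure is the same, and everything stays inside the localization of $\Z[\mu][x]$ at a multiplicative set contained in $1 + \eta x R$, which is what conditions 1) through 3) demand.
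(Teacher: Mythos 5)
Your overall strategy matches the paper's: pick $z \in V_m''$ congruent to $1$ mod $M$, use $M_\tau(z)$ and \ref{taualone} to build the degree $p^m$ cyclic $T''$ over $R'$, invoke \ref{splitprime} for the split-mod-$M$ conclusion, and let $s = N_\tau(z)$ be the localizing element so that $U' = R'(s^{1/p})$ while $U$ is built downstairs via \ref{prop:1.1}. But two points in your plan don't quite land.

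First, you say you would take ``$r = p$,'' i.e.\ $z = 1 + b\eta_m^p$. For $p > 2$, part \ref{itm:1} of \ref{normcomputations} only tells you $N_\tau(1 + b\eta_m^p) \equiv 1 \pmod{\eta^p}$; it gives you no handle on the $\eta^p$-coefficient, which is exactly what you need to determine $\hat U$. The paper instead takes $z = 1 + x\eta_m^{p+1}$ (i.e.\ $r = p+1$) for $p > 2$, because part \ref{itm:5} of \ref{normcomputations} then pins down the $\eta^p$-coefficient of $N_\tau(z)$ as $-x^{p^{m-1}}$ up to higher order. For $p = 2$ the paper uses $z = 1 + x\eta_m^2$, where part \ref{itm:1} does control the $\eta^2$-coefficient. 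So the index you want is $p+1$ (resp.\ $2$), not $p$; the cases are chosen to be precisely the ones \ref{normcomputations} was engineered to handle.

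Second, you flag as a ``main obstacle'' the requirement that the residue extension be \emph{exactly} $Z^p - Z - x$ rather than $Z^p - Z - x^{p^k}$, and you propose to engineer $z$ to force $b \equiv x \pmod\eta$. This is chasing a constraint the proposition does not impose. Condition 2) asks only that $U/\eta U \cong \hat U$ \emph{as a Galois extension of} $\hat R$, and over $\hat R = F_p[x]$ the Artin--Schreier extensions defined by $Z^p - Z - a$ and $Z^p - Z - a^p$ are isomorphic (if $\theta^p - \theta = a$ then $\theta^p$ satisfies $Z^p - Z - a^p$, and the Galois action $\theta \mapsto \theta + 1$ carries over). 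The paper simply accepts that the norm computation produces $\hat U = F_p[x][Z]/(Z^p - Z - x^{p^{n'}})$ and then notes this is isomorphic to $F_p[x][Z]/(Z^p - Z - x)$. Your remark after \ref{prop:1.2} is about distinct \emph{lifts} to mixed characteristic, not about the residue extensions themselves; it does not create an obstruction here. Dropping the attempt to force $b \equiv x$ removes the ``matching computation'' difficulty you were worried about and makes the choice $z = 1 + x\eta_m^{p+1}$ (resp.\ $1 + x\eta_m^2$) work immediately.
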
 

Note that the extension we are creating has trivial Galois 
module structure, and this is not surprising by \ref{specialdegreep}. 

\begin{proof} 
Let $V_m'' = \Z[\mu_{m-1}][x]$ when $p > 2$ and 
$V_m'' = \Z[\mu_{m-2}][x]$ when $p = 2$. As before, 
the point is that $V_m''$ is generated by a primitive 
$p^m$ root of one. Let $\tau$ be such that $\tau(\rho_{s}) = \rho_{s}^{r}$ 
where $r = p + 1$ when $p > 2$ and $r = 5$ when $p = 2$. 
Let $t$ be the order of $r$ modulo $p^m$ which we write as 
$p^n$ in order to cover both cases. 
Clearly $\tau$ acts on $V_m''$ and $(V_m'')^{\tau} = \Z[\mu][x]$. 
Let $N_{\tau}:V_m'' \to \Z[\mu][x]$ be the norm map. 
If $p > 2$ set $z = 1 + x\eta_m^{p+1}$. If $p = 2$ 
set $z = 1 + x\eta_m^2$. If $s = N_{\tau}(z)$ then by  \ref{normcomputations} 
we have $s = 1 + b\eta^p$ where $b$ is congruent to 
$x^{p^{n-1}}$ modulo $\eta$ if $p > 2$ and $x^{2^{n}}$ 
if $p = 2$. To cover both cases let $n' = n - 1$ or $n' = n$ 
respectively. Set 
$R = \Z[\mu][x](1/s)$, $V_m = V_m(1/s)$, $R' = 
R(1/\eta)$ and $V_m' = V_m(1/\eta)$. 
We defined 

$$M_{\tau}(z) = \tau^{t - 1}(z)(\tau^{t - 2}(z))^r\ldots
z^{r^{t-1}}$$ 
and set $W = V'_{m-1}(M_{\tau}(z)^{1/p^m})$.  
Then by \ref{invert} $\tau$ extends to $W$ and we set 
$T' = W^{\tau}$ is which is cyclic 
over $R'$ of degree $p^m$. By \ref{splitprime} $T'/MT'$ is split. 
Let $T'/U'/R'$ be such that 
$U'/R'$ is cyclic of degree $p$. Then by \ref{invert} $U' =  
R'(N_{\tau}(z)^{1/p}) = R'(s^{1/p})$. 

Let $U = R[Z]/(Z^p + g(Z) - b)$ which is cyclic 
Galois over $R$ by \ref{prop:1.1}. Also $U(1/\eta) = U'$ 
by the same proposition. 
Then $\hat U = F_p[x][Z]/(Z^p - Z - x^{p^{n'}})$ which is 
isomorphic to $F_p[x][Z]/(Z^p - Z - x)$. 
\end{proof}

We have both pieces \ref{firststep} and 
\ref{allothersteps} and so we have proven (remember $\Z[\mu] = 
\Z[\rho]$ if $p > 2$ but $\Z[\mu] = \Z[i]$ when $p = 2$): 

\begin{theorem}\label{cyclicpartwithmu} 
Let $R = \Z[\mu][x]_Q$ be very special as above, 
$p$ a prime, and $m \geq 1$. There is a 
cyclic Galois extension $T/U/R$ of degree $p^m$ 
such that $U/R$ has degree $p$ and $\hat U = 
U/\eta{U} = \hat R[Z]/(Z^p - Z - x)$. 
\end{theorem}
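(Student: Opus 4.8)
The plan is to assemble the statement from the two halves already developed in this section, namely \ref{firststep} and \ref{allothersteps}, with one preliminary localization step to pass from the ``partially special'' ring produced by \ref{firststep} to the ``very special'' ring that hypotheses of \ref{allothersteps} require. First I would invoke \ref{firststep} with the given $m$ (noting that the case $m=1$ is trivial since we may simply take $T=U$, so assume $m\ge 2$): this produces some $s\in\Z[\mu][x]$ lying in $1+\eta x\Z[\mu][x]$ and a degree $p$ cyclic Galois $U_0/R_0$ over $R_0=\Z[\mu][x](1/s)$ with $U_0/\eta U_0\cong \hat R_0[Z]/(Z^p-Z-x)$, and such that $U_0' = U_0(1/\eta)$ extends to a degree $p^m$ cyclic extension of $R_0'=R_0(1/\eta)$ which is split modulo $M=xR_0$.

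The bridging step is to base change everything up to the very special ring $R=\Z[\mu][x]_Q$ with $Q = 1+\eta x\Z[\mu][x]$. Since $s\in 1+\eta x\Z[\mu][x]\subset Q$, the localization map $R_0\to R$ is well defined, so I can set $U = U_0\otimes_{R_0} R$, which is again degree $p$ cyclic Galois over $R$. The reductions modulo $\eta$ and modulo $M$ are unchanged by further localizing at elements of $1+\eta x\Z[\mu][x]$ (this is exactly the observation used in the proof of \ref{details}), so $U/\eta U\cong \hat R[Z]/(Z^p-Z-x)$ still holds, verifying the Artin--Schreier normalization we need. Likewise $U' = U(1/\eta) = U_0'\otimes_{R_0'}R'$, so the extension of $U_0'$ to a degree $p^m$ cyclic over $R_0'$ base changes to a degree $p^m$ cyclic extension $T''/U'/R'$ over $R'=R(1/\eta)$ (one could also simply re-derive this over $R'$ directly: $R'$ is very special with $\eta$ inverted, and the construction of \ref{firststep} runs verbatim there). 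Thus the hypotheses of \ref{allothersteps} are met: $R$ is very special of Case A or B, $U/R$ is degree $p$ cyclic with $\hat U = F_p[x][Z]/(Z^p-Z-x)$, and $U'$ extends to a degree $p^m$ cyclic over $R'$.

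Applying \ref{allothersteps} now yields a degree $p^m$ cyclic Galois extension $T/R$ extending $U/R$; since $U/R$ sits inside $T/R$ as the fixed ring of the order $p^{m-1}$ subgroup of the cyclic Galois group, we obtain precisely the tower $T/U/R$ with $U/R$ of degree $p$ and $\hat U = U/\eta U = \hat R[Z]/(Z^p-Z-x)$, which is the assertion of \ref{cyclicpartwithmu}. The only point requiring care — and the one I would expect to be the main obstacle to a clean write-up — is making sure that the extension of $U'$ to degree $p^m$ produced over the intermediate ring $R_0'$ genuinely survives the localization to $R'$ as a \emph{cyclic} extension with the split-modulo-$M$ property intact, rather than merely as an étale algebra; but this is immediate from functoriality of the Galois condition under base change (stated in the ``Operations'' section) together with the fact, used repeatedly above, that reduction mod $M$ commutes with localizing at $1+\eta M$. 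Everything else is bookkeeping: once $R$ is very special and the two input hypotheses of \ref{allothersteps} hold, the theorem is a direct quotation.
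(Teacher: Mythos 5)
Your proof is correct and matches the paper's approach exactly: the paper's argument for Theorem~\ref{cyclicpartwithmu} consists precisely of the one-line remark that \ref{firststep} and \ref{allothersteps} together give the result. Your write-up simply makes explicit the bookkeeping the paper elides --- namely, that \ref{firststep} only produces $U$ over the partial localization $\Z[\mu][x](1/s)$, so one must base change along $\Z[\mu][x](1/s)\to\Z[\mu][x]_Q$ (legitimate since $s\in Q$) before the hypotheses of \ref{allothersteps} literally apply, and that the Artin--Schreier form mod $\eta$ and the split-mod-$M$ property of the extension over $R'$ are preserved under this localization; you also dispose of $m=1$ separately, which the paper leaves implicit.
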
 

It seems time to switch from cyclic Galois extensions 
to cyclic algebras, which we do in the next section. 

\section{Almost Cyclic Algebras}

It is very important to note that in this section, $R$ is not assumed 
to be a $\Z[\rho]$ or $\Z[\mu]$ algebra. 
That is, we are not assuming any of the cases A,B,C. 

If $S/R$ is $G$ Galois then $\phi: S \otimes_R S \cong \oplus_{g \in G} S$ 
where the $g$ component map is $\phi_g(s \otimes s') = sg(s')$. 
A module over $S \otimes_R S$ can also be thought of as 
an $S - S$ bimodule. The special structure of $S \otimes_R S$ 
allows us to make a few observations about these bimodules. 

\begin{proposition}\label{bimodule}
Let $S/R$ be $G$ Galois and $M$ a module over $S \otimes_R S$. 
Then $S \otimes_R S$ has idempotents $e_g$, for $g \in G$, 
such that $\sum_g e_g = 1$, and $e_ge_{g'} = 0$ 
when $g \not= g' \in G$. $(S \otimes_R S)(1 - e_g)$ 
is generated as an $S \otimes_R S$ ideal by all 
$g(s) \otimes 1 - 1 \otimes s$ for all $s \in S$. 
If $M$ is an $S \otimes_R S$ module, then 
$e_gM = \{x \in M | (g(s) \otimes 1)m = (1 \otimes s)m $ all $ s \in S\}$. 
\end{proposition}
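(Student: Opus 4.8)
The plan is to exploit the explicit decomposition $\phi\colon S\otimes_R S\xrightarrow{\sim}\bigoplus_{g\in G}S$ with $g$-component $\phi_g(s\otimes s')=sg(s')$, which is part of the hypothesis that $S/R$ is $G$ Galois. First I would let $e_g\in S\otimes_R S$ be the element mapping under $\phi$ to the tuple that is $1\in S$ in the $g$-slot and $0$ elsewhere. These are manifestly a complete set of orthogonal idempotents: $\sum_g e_g=1$ and $e_ge_{g'}=0$ for $g\neq g'$, since the corresponding statements hold componentwise in $\bigoplus_g S$. So the first two assertions are immediate from $\phi$ being a ring isomorphism.

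Next I would identify the ideal $(S\otimes_R S)(1-e_g)$. Under $\phi$, the ideal generated by $1-e_g$ is $\bigoplus_{g'\neq g}S$, i.e.\ the kernel of the projection $\phi_g\colon S\otimes_R S\to S$. So it suffices to show that $\ker\phi_g$ is generated as an $S\otimes_R S$-ideal by the elements $g(s)\otimes 1-1\otimes s$ for $s\in S$. One inclusion is trivial: $\phi_g(g(s)\otimes 1-1\otimes s)=g(s)\cdot 1-1\cdot g(s)=0$. For the reverse inclusion I would argue that the quotient of $S\otimes_R S$ by the ideal $J$ generated by those elements is already isomorphic to $S$ via the map induced by $\phi_g$. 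Indeed, modulo $J$ we have $g(s)\otimes 1\equiv 1\otimes s$, and since $g$ is an automorphism of $S$ every element $s'\otimes 1$ is congruent to $1\otimes g^{-1}(s')$; hence $(S\otimes_R S)/J$ is generated by $1\otimes S$, so the surjection $S=1\otimes S\to (S\otimes_R S)/J$ composed with $\phi_g$ is the identity on $S$, forcing $J=\ker\phi_g$. (Alternatively one can note $S\otimes_R S$ is $S$-free on the $e_g$ after base change, reducing to the split case, but the direct argument above is cleaner.)

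Finally, for the description of $e_gM$: since $e_g$ is idempotent, $e_gM=\{m\in M\mid (1-e_g)m=0\}=\{m\in M\mid (S\otimes_R S)(1-e_g)\text{ annihilates }m\}$, and by the previous paragraph $(S\otimes_R S)(1-e_g)$ is generated by the $g(s)\otimes 1-1\otimes s$; so $(1-e_g)m=0$ iff $\bigl(g(s)\otimes 1-1\otimes s\bigr)m=0$ for all $s\in S$, which is exactly the claimed equality $e_gM=\{m\in M\mid (g(s)\otimes 1)m=(1\otimes s)m\ \forall s\in S\}$. The only step requiring genuine care is the middle one — pinning down the generators of $(S\otimes_R S)(1-e_g)$ — and the potential subtlety there is making sure the congruence argument in $(S\otimes_R S)/J$ really does collapse it onto $S$ rather than onto some larger quotient; invoking that $\phi$ is an isomorphism (so $S\otimes_R S$ has exactly $|G|$ ring-theoretic factors through its primitive idempotents, away from $\Spec$ complications) closes that gap, and since we already know $\phi_g$ surjects with kernel of the right "size" after localization, the inclusion $J\subseteq\ker\phi_g$ together with $(S\otimes_R S)/J$ being generated by $1\otimes S$ suffices.
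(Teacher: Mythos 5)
Your proof is correct and follows the paper's strategy for the endpoints (defining $e_g$ as the preimage under $\phi$ of the standard idempotents, and reading off $e_gM$ as the annihilator of $1-e_g$), but your middle step — identifying the generators of $(S\otimes_R S)(1-e_g)$ — takes a different route from the paper. The paper first establishes the $g=1$ case via the well-known description of the kernel of the multiplication map $\mu=\phi_1$ (the separating idempotent $e_1$), and then transports it to general $g$ by applying the ring automorphism $g\otimes 1$ of $S\otimes_R S$, noting $(g\otimes 1)(e_1)=e_g$ and $(g\otimes 1)(s\otimes 1-1\otimes s)=g(s)\otimes 1-1\otimes s$. You instead work directly with each $g$: you observe that modulo the ideal $J$ generated by the $g(s)\otimes 1-1\otimes s$, the quotient is generated by $1\otimes S$, and you use $\phi_g$ to split the resulting surjection $S\to(S\otimes_R S)/J$, forcing $J=\ker\phi_g$. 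Both arguments are sound; the paper's is a one-line reduction to a standard fact, while yours is self-contained. One small slip worth flagging: with your choice of $1\otimes S$ as generators, the composite $S\to(S\otimes_R S)/J\xrightarrow{\bar\phi_g}S$ sends $s$ to $\phi_g(1\otimes s)=g(s)$, so it is the automorphism $g$, not the identity; your conclusion survives because $g$ is a bijection, but the cleaner statement is to use $S\otimes 1$ as generators (equally valid, since $s'\otimes 1\equiv 1\otimes g^{-1}(s')\bmod J$), for which the composite $s'\mapsto s'\otimes 1\mapsto\phi_g(s'\otimes 1)=s'$ really is the identity. The closing remark about primitive idempotents and $\Spec$ is unnecessary once the direct isomorphism is in hand.
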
 

Let $M_g = e_gM$ above. Viewing $M$ as a bimodule, 
$M_g = \{m \in M | ms = g(s)m $ all $ s \in S \}$, and, 
of course, $e_{g'}M_g = 0$ if $g' \not= g$. 

\begin{proof} 
That $\phi$ is an isomorphism can also 
be expressed by saying that 
$S \otimes_R S = \sum_{g \in G}e_gS$ where 
$e_g$ is an idempotent, $\phi_g(e_g) = 1$ 
and $\phi_{g'}(e_g) = 0$ if $g' \not= g$. 
Note that $e_1$ is just the so called separating 
idempotent since $\phi_1 = \mu$ is the multiplication 
map $S \otimes_R S \to S$. It is straightforward 
that the kernel of $\mu$ is generated as an $S \otimes_R S$ 
module by the set $\{s \otimes 1 - 1 \otimes s | s \in S\}$. 
Of course, $1 - e_1$ generates the kernel of $\phi_1$. 
If $e_1 = \sum_i s_i \otimes t_i$, then 
$e_1$ is uniquely defined by the relations $\sum_i s_it_i = 1$ 
and if $g \not= 1$, $\sum_i s_ig(t_i) = 0$. 
Since a similar characterization holds for $e_g$, 
we have $(g \otimes 1)(e_1) = e_g$ and 
$(S \otimes_R S)(1 - e_g)$ is generated by 
$\{g(s) \otimes 1 - 1 \otimes s | s \in S \}$. 

If $M$ is a bimodule, $M_g = \{m \in M | ms = g(s)m $ all $ s \in S\}$ 
is just the annihilator of all the $g(s) \otimes 1 - 1 \otimes s$ 
which generates the ideal $(S \otimes S)(1 - e_g)$. 
Thus $M_g$ is the annihilator of $1 - e_g$ which is just $e_gM$. 
\end{proof} 

If $M$ is an $S - S$ bimodule, and $M = M_g$, we say 
that $M$ is a $1,g$ bimodule. When $M$ is a $1,g$ bimodule, 
it has the same rank as a left $S$ module and as a right 
$S$ module and so we can talk about its rank unambiguously.

\begin{proposition} \label{injective} 
Let $R$ be a commutative Noetherian domain with fraction field $F$. 
Suppose $A/R$ is an algebra containing the commutative 
subalgebra $S \supset R$ such that $A$, $S$ are finitely generated 
as modules over $R$. We view $A$ as a $S - S$ bimodule 
via left and right multiplication. Suppose $I$, $I'$ are 
sub-bimodules with $II' = S$ and $I'I \subset S$. 

a) Then $I$ is projective 
as a right $S$ module and $I'$ is projective 
as a left $S$ module. 

b) Suppose further that $S/R$ is $G$ Galois 
of degree $n$ 
and $I$, $I'$ are $1,g$ and $1,g^{-1}$ 
bimodules respectively of rank $n$ over $R$. 
Then the multiplication map 
$I \otimes_S I' \to S$ is an isomorphism. 
If $J \supset I$ and $J' \supset I'$ are also 
rank one 
$1,g$ and $1,g^{-1}$ sub-bimodules with 
$JJ' = S$ and $J'J \subset S$ then $I = J$ and $I' = J'$. 
\end{proposition}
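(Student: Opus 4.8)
\textbf{Proof proposal for Proposition~\ref{injective}.}

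The plan is to prove the three assertions in part (b) essentially in the order stated, using part (a) as a black box together with the bimodule structure theory of \ref{bimodule}. First I would set up the basic ranks: by part (a), $I$ is a rank $n/n = 1$ projective right $S$ module and $I'$ is a rank one projective left $S$ module; here I am using that a $1,g$ bimodule of rank $n$ over $R$ has rank $1$ as a one-sided $S$ module, since $S/R$ has degree $n$. Next, to show $I \otimes_S I' \to S$ is an isomorphism, I would pass to the fraction field $F$ and invoke faithful flatness of $S/R$ together with descent: after tensoring up to a faithfully flat extension of $R$ (or just localizing at each maximal ideal of $R$), we may assume $S$ is a split $G$-Galois extension, so $S \cong \prod_{h\in G} R$ with idempotents $f_h$. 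In that situation every $1,g$ sub-bimodule of $A$ that is rank one over $S$ on each side is, concretely, a ``graph'' object $I = \bigoplus_h f_h A f_{gh}$ restricted to a line in each slot, and the multiplication pairing $I \otimes_S I' \to S$ becomes the obvious componentwise pairing of rank one projectives; since $II' = S$ forces surjectivity and both sides are projective of the same rank $1$ over $S$, surjectivity implies isomorphism. I expect the cleanest route is: $I\otimes_S I' \to S$ is surjective because $II' = S$, its source and target are rank one projective $S$ modules, hence a surjection of rank one projectives over a commutative ring is an isomorphism — no splitting needed at all. So this step reduces to the standard fact that a surjective endomorphism-type map of finitely generated projectives of equal rank over a Noetherian ring is injective, applied after identifying $I\otimes_S I'$ as rank one projective via part (a).

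For the uniqueness statement, given $J \supseteq I$ and $J' \supseteq I'$ with $JJ' = S$, $J'J \subseteq S$, the idea is to multiply inclusions against the ``inverse'' modules. From $I \otimes_S I' \xrightarrow{\sim} S$ we get that $I$ is invertible as a right $S$ module with inverse $I'$ (and symmetrically). Then $J \supseteq I$ gives, upon right-multiplying by $I'$, the chain $S = JJ' \supseteq$ something, but more directly: $J \cdot I' \supseteq I \cdot I' = S$, while $J \cdot I' \subseteq J \cdot J' = S$ once we know $I' \subseteq J'$ — so I would first establish $I' \subseteq J'$ symmetrically and $I \subseteq J$, then deduce $JI' = S = IJ'$ sandwiched, and finally tensor $J \subseteq$ back: from $JI' = S$ and $I \otimes_S I' \cong S$ we recover $J = J(I'I) = (JI')I = SI = I$ using $I'I = S$, which we obtain by the symmetric version of the isomorphism statement (swapping the roles of $I,I'$, $g,g^{-1}$). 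Here I must be a little careful because the hypothesis only gives $I'I \subseteq S$ a priori, not equality; but applying the already-proven part ``$I\otimes_S I'\to S$ is an isomorphism'' with $I,I'$ interchanged (the hypotheses are symmetric under $g \leftrightarrow g^{-1}$) yields $I' \otimes_S I \cong S$, hence $I'I = S$ as well.

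The main obstacle I anticipate is the bookkeeping around one-sided vs. two-sided module structures — making sure that ``$I$ is a rank one projective right $S$ module with inverse $I'$'' is used consistently, and that the associativity manipulations like $J(I'I) = (JI')I$ are legitimate inside the ambient algebra $A$ (they are, since everything sits inside $A$ and multiplication there is associative, but one should check that the bimodule structure on $I\otimes_S I'$ matches the multiplication map into $A$). A secondary subtlety is that $A$ need not be projective or even torsion-free a priori, but since $R$ is a Noetherian domain and $A$, $S$ are finitely generated $R$-modules with $S$ a domain containing $R$, the relevant sub-bimodules $I, I'$ embed in $A$ and their one-sided projectivity from part (a) gives enough torsion-freeness to run the arguments over $F$ when needed. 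None of these steps is computationally heavy; the proof is a chain of inclusions and the ``surjective map of equal-rank projectives is iso'' principle.
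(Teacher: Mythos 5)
Your treatment of the isomorphism statement (surjectivity from $II' = S$, source and target rank one projective over $S$, surjection of equal-rank projectives is an isomorphism) is essentially the paper's argument, which phrases it as: the surjection splits, so the kernel is a rank $0$ projective, hence zero. So far so good. For the uniqueness statement, however, you take a genuinely different route — ideal arithmetic inside $A$ rather than the paper's chain of tensor maps — and that route has a gap as written.

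The gap is in your justification of $I'I = S$. You say the hypotheses are ``symmetric under $g \leftrightarrow g^{-1}$,'' so you can apply the isomorphism result with $I$ and $I'$ interchanged. They are not symmetric: the hypothesis is $II' = S$ together with only $I'I \subseteq S$. Applying the isomorphism argument to $(I',I)$ requires surjectivity of $I' \otimes_S I \to S$, i.e.\ $I'I = S$, which is precisely what you are trying to prove — so the appeal to symmetry is circular. (Nor can you appeal to commutativity of $\otimes_S$, since $I \otimes_S I'$ and $I' \otimes_S I$ use different one-sided $S$-structures on $I$ and $I'$, twisted by $g$.) Without $I'I = S$, your computation $J = J(I'I) = (JI')I = SI = I$ does not close: if $I'I$ were a proper idempotent ideal $eS$, you would only get $J\cdot eS = I \subseteq J$, not $J = I$. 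The fact $I'I = S$ is true, but it needs an actual argument, e.g.: from $II' = S$ and associativity, $I\cdot(I'I) = (II')I = I$, so tensoring the inclusion $I'I \hookrightarrow S$ with the right $S$-module $I$ gives a surjection; since $I$ is faithfully flat over $S$ (rank one projective by part (a)), this forces $I'I = S$. Once that is supplied, your ideal-arithmetic proof of $J = I$ (and symmetrically $J' = I'$) goes through and is a legitimate alternative to the paper's argument, which instead looks at the chain $I \otimes_S I' \to I \otimes_S J' \to J \otimes_S J' \to S$ of injections with iso composite and then descends the equalities $I = J$, $I' = J'$ by faithful flatness.
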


\begin{proof} 
Let $a_i \in I$ and $b_i \in I'$ be such that 
$\sum_i a_ib_i = 1$. Define $f_i: I \to S$ by setting 
$f(a) = b_ia$ which is clearly a right $S$ module morphism. 
For any $a \in I$ we have $a = (\sum_i a_ib_i)a = \sum_i a_if_i(a)$ 
which shows, by the dual basis lemma, that $I$ is projective 
as a right $S$ module. Similarly, $I'$ is projective as a 
left $S$ module. 

As for part b), we have both $I$ and $I'$ are faithful 
over $S$ (since $II' = S$) and hence rank one. 
The surjection $I \otimes_S I' \to S$ must split 
with kernel of rank $0$ and hence the kernel must be $0$. 
We have natural maps $I \otimes_S I' \to I \otimes_S J' \to 
J \otimes_S J' \to S$ which are all isomorphisms and so 
$I = J$ and $I' = J'$ by faithful flatness. 
\end{proof}

If $A/R$ is Azumaya and $S \subset A$ is such that 
$S/R$ is $G$ Galois, then $A$ is an $S - S$ bimodule 
by left and right multiplication. We use \ref{bimodule} 
to explore this bimodule structure. 

\begin{proposition}\label{azumayabimodule}
If $R \subset S \subset A$ are as above, then 
$A$ is a projective left $S$ module and projective 
$S \otimes_R S$ module. If $g \in G$, then 
$A_g = \{a \in A | as = g(s)a$ all $s \in S\}$ 
is projective over $S$ and $S \otimes_R S$. 
If $A$ has degree $n$ and $S/R$ has degree $n$, then 
$A$ is a rank one $S \otimes_R S$ projective and 
$A_g$ is projective rank one over $S$. 
\end{proposition}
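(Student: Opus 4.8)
The plan is to leverage the bimodule decomposition from Proposition~\ref{bimodule} together with the general projectivity result of Proposition~\ref{injective}. First I would observe that since $A/R$ is Azumaya, $A^{\mathrm{op}} \otimes_R A \cong \End_R(A)$, and so $A$ is a faithful projective module over $A^{\mathrm{op}} \otimes_R A$, hence in particular a projective module over its commutative subalgebra $S \otimes_R S$ (embedded via $s \otimes s' \mapsto$ left multiplication by $s$, right multiplication by $s'$). Projectivity over $S \otimes_R S$ immediately gives projectivity as a left $S$ module (by restriction along $S \hookrightarrow S \otimes_R S$, $s \mapsto s \otimes 1$, since $S \otimes_R S$ is projective as a left $S$ module because $S/R$ is Galois hence $S/R$ is projective).

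Next, using Proposition~\ref{bimodule}, write $1 = \sum_{g \in G} e_g$ with the $e_g$ orthogonal idempotents in $S \otimes_R S$, so that $A = \bigoplus_{g \in G} A_g$ where $A_g = e_g A = \{a \in A \mid as = g(s)a \text{ for all } s \in S\}$. Each $A_g$ is a direct summand of the projective $S \otimes_R S$ module $A$, hence is itself projective over $S \otimes_R S$, and therefore projective as a left $S$ module (and, by the symmetric argument using $s \mapsto 1 \otimes s$, as a right $S$ module). This handles all statements except the rank claims.

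For the rank statements, assume now $A$ has degree $n$ and $S/R$ has degree $n$, so $A$ is free of rank $n^2$ over $R$ locally and $S$ is projective of rank $n$ over $R$. Since $A$ is a splitting module for the split (after faithfully flat base change, or directly since $S \subset A$ splits $A$) situation, I would argue that $A$ is a rank one projective $S \otimes_R S$ module by checking ranks after a faithfully flat base change that trivializes everything: $A \otimes_R A^{\mathrm{op}} \cong \End_R(A)$ and the $S \otimes_R S$ module structure makes $A$ a module over a commutative ring which, after base change, is $\prod_{g} S$; counting $R$-ranks, $A$ has $R$-rank $n^2 = n \cdot \mathrm{rk}_R(S)$, and since $A = \bigoplus_g A_g$ with $|G| = n$ summands permuted transitively (or at least each isomorphic after base change), each $A_g$ has $R$-rank $n$, i.e.\ rank one as an $S$ module. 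More cleanly, I would invoke Proposition~\ref{injective}(b): since $A$ is Azumaya with $S$ a maximal commutative subalgebra of the right dimension, $A_1 = S$ (the centralizer of $S$ in $A$ equals $S$ by the double centralizer theorem), and for each $g$ the bimodule $A_g$ together with $A_{g^{-1}}$ satisfies $A_g A_{g^{-1}} \subseteq A_1 = S$ and in fact $A_g A_{g^{-1}} = S$ because $A_g \ne 0$ for all $g$ (this nonvanishing holds after the faithfully flat base change to the split case $A \cong \Delta(S/R, G)$ where $A_g = Sg \ne 0$) — so Proposition~\ref{injective}(b) yields that $A_g$ is rank one projective over $S$ and that $A_g \otimes_S A_{g^{-1}} \cong S$.

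The main obstacle I anticipate is the nonvanishing of each $A_g$ and the identification $A_1 = S$: these are where the Azumaya hypothesis is genuinely used, and the cleanest route is to reduce by faithful flatness to the split twisted group ring $A \cong \Delta(S/R, G) = S_*[G]$, where $A_g = Sg$ is visibly free of rank one over $S$ and $A_1 = S$; then descend these facts back to $A$. Everything else is formal bookkeeping with idempotents and the dual basis lemma, already packaged in the cited propositions.
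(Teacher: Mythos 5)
Your proof has a gap at the very first step, and it is worth naming precisely because it is a gap one could easily overlook. You assert that $A$ being a projective module over $A^{\mathrm{op}} \otimes_R A$ ``hence in particular'' makes it a projective module over the subalgebra $S \otimes_R S$. This implication is not automatic. Projectivity does restrict down a ring inclusion $\Lambda \subset \Gamma$ \emph{provided} $\Gamma$ is itself projective as a $\Lambda$-module (so that free $\Gamma$-modules are projective $\Lambda$-modules). Here that would require $A^{\mathrm{op}} \otimes_R A$ to be projective over $S \otimes_R S$, which in turn amounts to $A$ being projective over $S$ --- precisely what you are trying to establish. You do supply exactly this kind of justification for the \emph{second} restriction step ($S \otimes_R S$ down to $S$), citing that $S \otimes_R S$ is projective over $S$; but the analogous justification for the first step is missing and cannot be supplied without circularity. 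Separability of $A^e$ over $S \otimes_R S$ (which does hold) runs in the wrong direction here: separability lets you \emph{upgrade} relative projectivity to absolute projectivity, not descend projectivity to a subring.

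The correct and much shorter route, which is what the paper uses, is the separability criterion from \cite[p.~48]{DI}: if $\Lambda$ is a separable $R$-algebra and $M$ is a $\Lambda$-module that is projective over $R$, then $M$ is projective over $\Lambda$. Apply this with $\Lambda = S$ and $\Lambda = S \otimes_R S$ (both separable over $R$ since $S/R$ is Galois) and $M = A$ (projective over $R$ since $A$ is Azumaya). The Azumaya hypothesis enters only through $R$-projectivity of $A$; the $A^e$-module structure is a red herring for this part. Once that is fixed, the rest of your argument is sound and in the same spirit as the paper's: $A_g = e_gA$ is a direct summand, hence projective, and the rank-one statements follow by counting $R$-ranks after a faithfully flat base change to the split twisted group ring $\Delta(S/R,G)$, where $A_g = Sg$ is visibly free of rank one. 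Your appeal to Proposition~\ref{injective}(b) via $A_g A_{g^{-1}} = S$ also works, but the bare rank count (which is all the paper does) already suffices and avoids having to verify $A_g A_{g^{-1}} = S$ directly.
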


\begin{proof} 
Since $S$ and $S \otimes_R S$ are separable over $R$, 
and $A$ is projective over $R$, 
the first statement follows from \cite[p.~48]{DI}. 
Since $A_g = e_gA$ is a direct summand of $A$, 
$A_g$ is projective over $S$ and $S \otimes_R S$. 
Checking ranks over $R$ 
yields the rest of the proposition.
\end{proof}

We next take a second look at a familiar object. 
Let $R$ be a commutative ring of characteristic $p$, and $a,b \in R$. 
Let $A = (a,b)$ be the algebra generated over $R$  
by $x,y$ subject to the relations 
$x^p = a$, $y^p = b$, and $[x,y] = xy - yx = 1$. 
Note that $R$ is the center of $A$. 
The algebra $(a,b)$ has an $R$ basis the monomials $x^iy^j$ for $0 \leq i,j \leq p-1$. 
Moreover, as shown in \cite[p.~44]{KOS}, $(a,b)$ is always 
Azumaya over $R$ of degree $p$. In addition, 
the exponent $p$ part of the Brauer group 
is generated by Brauer classes of the 
form $(a,b)$. 

Let us look at these ``differential crossed 
product" algebras a bit more. Consider 
the element $\alpha = xy$. Then 
$y\alpha = (yx)y = (\alpha - 1)y$. 
Note that $x\alpha = x(yx + 1) = (xy)x + x = 
(\alpha + 1)x$. If $a$, or $b$ were invertible 
then $x$ or $y$ respectively would be 
invertible and  
$(a,b)$ would be a cyclic 
algebra. However, in any case, 
$y(\alpha^p - \alpha) = ((\alpha - 1)^p - (\alpha - 1))y = (\alpha^p - \alpha)y $. In a similar 
way $x$ commutes with $\alpha^p - \alpha$. 
Thus $\alpha^p - \alpha \in R$. We compute that 
$\alpha^p - \alpha = n(\alpha) = n(x)n(y) = ab$ 
where $n:A \to R$ is the reduced norm. Let 
$S = R[\alpha] \subset A$. Then 
$S$ is the image of $S' = R[t]/(t^p - t - ab)$ 
and we have the map $\phi: S' \to S \subset A$. 
It is clear that $1, \alpha, \ldots, \alpha^{p-1}$ is a basis for $S$ over $R$ 
since it is a subset of a basis for $A/R$. 
Thus $\phi$ is injective and $S/R$ 
is Galois. 

\begin{lemma} 
The centralizer of $S$ in $A$ is $S$. 
\end{lemma}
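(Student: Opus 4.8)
The plan is to show that the centralizer $C_A(S)$ of $S = R[\alpha]$ in $A = (a,b)$ equals $S$ itself. Since $A$ is Azumaya of degree $p$ and $S/R$ is a commutative Galois extension of degree $p$ sitting inside $A$, $S$ is a maximal commutative subalgebra and $C_A(S)$ should collapse onto $S$; the point is to establish this over an arbitrary commutative ring $R$ of characteristic $p$, not just a field.

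First I would recall that, by \ref{azumayabimodule}, $A$ decomposes as an $S\otimes_R S$-module into the pieces $A_g = \{a\in A\mid as = g(s)a\text{ for all }s\in S\}$, each projective of rank one over $S$, as $g$ ranges over $G = \Gal(S/R) = \langle\sigma\rangle$. The centralizer $C_A(S)$ is precisely $A_1$, since $a\in A$ commutes with $S$ iff $as = 1(s)a$ for all $s\in S$. So the task reduces to proving $A_1 = S$. We have $S\subseteq A_1$ automatically (as $S$ is commutative), and both are rank-one projective over $S$ by \ref{azumayabimodule}; hence it suffices to show the inclusion $S\hookrightarrow A_1$ is an isomorphism, which by faithful flatness (descent) we may check after localizing $R$ at each maximal ideal, so we may assume $R$ is local. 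Over a local ring the projectives are free, so we reduce to showing that the free rank-one $S$-module $A_1$ is generated by $1$, equivalently that $A_1 = S\cdot 1$.

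For the local case I would use the explicit basis: $A$ is free over $R$ with basis $x^iy^j$, $0\le i,j\le p-1$, and $S = R[\alpha]$ with $\alpha = xy$ has basis $1,\alpha,\dots,\alpha^{p-1}$. The relations $x\alpha = (\alpha+1)x$ and $y\alpha = (\alpha-1)y$ (derived in the text) show how $\sigma$ acts: conjugation by $x$ sends $\alpha\mapsto\alpha+1$ (this is the generator $\sigma$ of $G$ up to reindexing, since $\alpha^p - \alpha = ab\in R$ means $\alpha$ satisfies an Artin--Schreier polynomial and $\alpha\mapsto\alpha+1$ generates the Galois action). Then an element $\xi = \sum_{i,j} c_{ij}x^iy^j \in A$ lies in $A_1$ iff $\xi\alpha = \alpha\xi$; expanding using $x^i\alpha = (\alpha + i)x^i$ and $y^j\alpha = (\alpha - j)y^j$ gives $x^iy^j\alpha = (\alpha + i - j)x^iy^j$, so $\xi\alpha - \alpha\xi = \sum_{i,j}(i-j)c_{ij}x^iy^j$, which vanishes iff $c_{ij} = 0$ whenever $i\not\equiv j\pmod p$, i.e. whenever $i\ne j$ in the range $0\le i,j\le p-1$. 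Thus $A_1$ has $R$-basis $\{(xy)^k = \text{terms}\}$... more precisely $A_1$ is spanned over $R$ by $x^0y^0, x^1y^1,\dots,x^{p-1}y^{p-1}$. It remains to identify $R$-span of $\{x^iy^i : 0\le i\le p-1\}$ with $S = R[\alpha]$: since $\alpha^k = (xy)^k$ expands as $x^ky^k$ plus lower-order terms $x^\ell y^\ell$ with $\ell < k$ (using $yx = \alpha - 1$ to push the $x$'s left), the change-of-basis matrix from $\{1,\alpha,\dots,\alpha^{p-1}\}$ to $\{1, xy, \dots, x^{p-1}y^{p-1}\}$ is unitriangular, hence invertible over $R$; therefore $A_1 = S$.

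The main obstacle is the last identification: one must verify cleanly that $x^iy^i$ lies in $S$ for each $i$, or equivalently that the monomials $\alpha^i$ expressed in the PBW-type basis $x^\ell y^\ell$ form an invertible linear system. I expect this to follow by a straightforward induction on $i$ using $yx = xy - 1 = \alpha - 1$ to normal-order, but it is the one place a genuine (if routine) computation is needed; everything else is bookkeeping with \ref{azumayabimodule} and descent. An alternative, perhaps cleaner, route avoiding bases: note $S\subseteq A_1$ with both rank-one projective over $S$, and observe that $A_1$ is a ring (it is $e_1 A e_1$ in the sense that $A_1\cdot A_1\subseteq A_1$ because products of $1,1$-bimodule elements are $1,1$-bimodule elements), a commutative ring extension of $S$ that is finite projective of rank one — such an extension must equal $S$, since a rank-one projective $S$-algebra containing $S$ is $S$ itself (the structure map $S\to A_1$ is a split injection of $S$-modules of the same rank, hence an isomorphism). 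I would likely present this module-theoretic argument as the clean proof and relegate the basis computation to a remark.
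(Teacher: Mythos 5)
Your main argument is correct and is essentially the paper's proof: the observation that $x^iy^j\alpha-\alpha x^iy^j=(i-j)x^iy^j$ is exactly the paper's ``$x^iy^j\in A_{\sigma^{i-j}}$'' phrased in the idempotent/bimodule language of \ref{bimodule}, and the unitriangular change of basis from $\{x^iy^i\}$ to $\{\alpha^i\}$ (equivalently, $x^iy^i=\prod_{k=0}^{i-1}(\alpha-k)$) supplies the step the paper leaves implicit. The localization to $R$ local is harmless but unnecessary, since the $x^iy^j$ are already a global $R$-basis.

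The ``alternative, perhaps cleaner'' module-theoretic route that you say you would prefer to present as the actual proof contains a gap. The parenthetical justification --- ``the structure map $S\to A_1$ is a split injection of $S$-modules of the same rank, hence an isomorphism'' --- is not correct as stated: the inclusion $S\hookrightarrow A_1$ is not automatically split, and an injection of rank-one projective $S$-modules need not be surjective (compare $2\mathbb{Z}\subset\mathbb{Z}$). What actually forces $A_1=S$ is that $A_1$ is a commutative \emph{ring} extension of $S$ which is rank-one projective as an $S$-module, and this needs to be used: after localizing at a prime of $S$, $A_1=Su$ for some $u$; since $1\in Su$ we have $1=su$, so $u\in A_1^*$; and $u^2\in Su$ gives $u^2=s'u$, whence $u=s'\in S$ and $A_1=S$ locally, hence globally. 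With this repair your alternative is a valid and arguably cleaner proof, but as written its key step is wrong, and if you ``relegate the basis computation to a remark'' you are resting the lemma on the flawed step.
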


\begin{proof} 
Let $G = <\sigma>$ be the Galois group of $S/R$, 
where $\sigma(\alpha) = \alpha + 1$. 
In the notation of \ref{bimodule}, the centralizer 
of $S$ in $A$ is $A_1 = e_1A$. Since 
$x^iy^j \in A_{\sigma^{i-j}}$, 
$e_1x^iy^j = 0$ if $i$ is not congruent to $j$ 
mod $p$. Since $e_1S = S$, $S = e_1A$ is the centralizer of $S$. 
\end{proof} 

We will show that the above algebra $(a,b)$ is ``almost cyclic", a concept we will 
work to define. To justify the definition, we continue to 
study the above algebras. 
Let $J_{\sigma} \subset A$ be the 
$\{a \in A | as = \sigma(s)a$ all $s \in S\}$, 
which we wrote as $A_{\sigma}$ above. 
We note that $J_{\sigma} \supset J_{\sigma}' = 
Sx + Sy^{p-1}$. In a similar way, $J_{\sigma^{-1}} 
\supset J_{\sigma^{-1}}' = Sx^{p-1} + Sy$. We claim: 

\begin{proposition}\label{differential}
$J_{\sigma}$ and $J_{\sigma^{-1}}$ are rank 
one projectives over $S$. 
$J_{\sigma}'J_{\sigma^{-1}}' = J_{\sigma}J_{\sigma^{-1}} = 
S$. $J_{\sigma}^p = S$. Finally $J_{\sigma} = J_{\sigma}'$ and $J_{\sigma^{-1}}' = 
J_{\sigma^{-1}}$. 
\end{proposition}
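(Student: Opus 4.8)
\textbf{Proof proposal for Proposition~\ref{differential}.}

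The plan is to set up $B = (a,b)$ and use the machinery of \ref{injective} and \ref{azumayabimodule}, together with the concrete bases $J_\sigma' = Sx + Sy^{p-1}$ and $J_{\sigma^{-1}}' = Sx^{p-1} + Sy$. First I would record the structural facts already available: $S = R[\alpha] \subset B$ is $G$ Galois of degree $p$ with $\sigma(\alpha) = \alpha + 1$, $B$ has degree $p$, and by \ref{azumayabimodule} each $J_{\sigma^i} = B_{\sigma^i}$ is a rank one projective $S$-module (and rank one projective over $S \otimes_R S$), which already establishes the first sentence of the proposition. The key computational input is that in $B$ one has $x\alpha = (\alpha+1)x$ and $y\alpha = (\alpha - 1)y$, so $x \in J_\sigma$ and $y \in J_{\sigma^{-1}}$; also $x^{p-1} \in J_{\sigma^{-1}}$ since $x^{p-1}\alpha = (\alpha + p - 1)x^{p-1} = (\alpha - 1)x^{p-1}$ in characteristic $p$, and likewise $y^{p-1} \in J_\sigma$. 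Thus $J_\sigma' \subseteq J_\sigma$ and $J_{\sigma^{-1}}' \subseteq J_{\sigma^{-1}}$ are genuine sub-bimodules.

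Next I would compute the product $J_\sigma' J_{\sigma^{-1}}'$ directly. Expanding $(Sx + Sy^{p-1})(Sx^{p-1} + Sy)$ gives $S x\cdot x^{p-1} + S x\cdot y + S y^{p-1}\cdot x^{p-1} + S y^{p-1}\cdot y$; the cross terms $x\cdot x^{p-1} = x^p = a$ and $y^{p-1}\cdot y = y^p = b$ land in $S$ (indeed in $R$), while $xy$ and $y^{p-1}x^{p-1}$ land in $S$ as well since $xy = \alpha$ and $y^{p-1}x^{p-1}$ is a polynomial in $\alpha$ (both lie in $B_1 = S$ by the centralizer computation, being products of a $\sigma$-term with a $\sigma^{-1}$-term). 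So $J_\sigma' J_{\sigma^{-1}}' \subseteq S$. To see equality with $S$, I would exhibit $1$ in the product: for instance, if $a$ is a unit then $a^{-1}x^{p-1}\cdot x$ works, but since $a$ need not be a unit, the honest argument is that $xy - yx = 1$ gives $x\cdot y - y^{p-1}\cdot(y^{p-2}\cdots)$ — cleaner is to observe $x y = \alpha$ and $y^{p-1}\cdot(\text{something in }S)\cdot x^{p-1}$; actually the right move is: $J_\sigma' J_{\sigma^{-1}}'$ is a sub-$S$-bimodule of $S$ containing $xy = \alpha$ and containing $a = x^p$, and containing $yx = xy - 1 = \alpha - 1$ via $y^{p-1}\cdot y^{?}$... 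Rather than fuss, I would instead argue via ranks and localization: $J_\sigma' J_{\sigma^{-1}}'$ is an $S$-submodule of $S$, and after inverting $a$ (resp.\ $b$) the element $x$ (resp.\ $y$) becomes a unit of $B$ so $J_\sigma'$ contains a unit of $B$ lying in $J_\sigma$, forcing $J_\sigma' = J_\sigma$ and the product to be $S$ over $R[1/a]$ and over $R[1/b]$; since $a$ and $b$ generate the unit ideal is false in general, but $(a,b)$ Azumaya forces $J_\sigma'$ to be locally free of rank one, and a rank-one submodule of the rank-one module $J_\sigma$ that is all of $J_\sigma$ after inverting either $a$ or $b$ — here I would note that actually $J_\sigma' = J_\sigma$ can be checked on the level of $R$-module ranks: $J_\sigma'$ has $R$-rank $p$ (as $x, xy, \ldots, x\alpha^{p-1}, y^{p-1}, \ldots$ span it), equal to $\mathrm{rank}_R J_\sigma$, and $J_\sigma / J_\sigma'$ is a finitely generated torsion module killed after inverting $a$; symmetry in $b$ via $y^{p-1}$ kills it after inverting $b$ too.

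The cleanest route, and the one I would actually commit to, is to apply part~b) of \ref{injective} directly: once I show $J_\sigma' J_{\sigma^{-1}}' = S$ (product equals $S$, not merely contained in it) and $J_{\sigma^{-1}}' J_\sigma' \subseteq S$, with $J_\sigma', J_{\sigma^{-1}}'$ being $1,\sigma$ and $1,\sigma^{-1}$ bimodules of $R$-rank $p = n$, the proposition of \ref{injective}b) gives that $J_\sigma' \otimes_S J_{\sigma^{-1}}' \to S$ is an isomorphism and, comparing with $J_\sigma \supseteq J_\sigma'$, $J_{\sigma^{-1}} \supseteq J_{\sigma^{-1}}'$ satisfying the same hypotheses (here using $J_\sigma J_{\sigma^{-1}} = S$ which follows from $A_\sigma A_{\sigma^{-1}} = A_1 = S$ for Azumaya $A$ of equal degree), we conclude $J_\sigma = J_\sigma'$ and $J_{\sigma^{-1}} = J_{\sigma^{-1}}'$. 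Then $J_\sigma^p = S$ follows from \ref{picorder} applied to... more precisely from the fact that $\prod_{i} A_{\sigma^i} $ telescopes, or directly: $J_\sigma^p \subseteq B_{\sigma^p} = B_1 = S$, it is a rank-one projective $S$-submodule of $S$, hence invertible, and it contains $x^p = a$; the inverse $(J_\sigma)^{-1} = J_{\sigma^{-1}}$ gives $J_\sigma^p \cdot J_{\sigma^{-1}}^p = S$, and since $J_\sigma^p \subseteq S$ is an invertible ideal with $J_{\sigma^{-1}}^p = J_\sigma^{-p} \subseteq S$ also an ideal, multiplying forces both to equal $S$. \textbf{The main obstacle} I anticipate is pinning down the equality $J_\sigma' J_{\sigma^{-1}}' = S$ (as opposed to $\subseteq S$) without assuming $a$ or $b$ invertible — the resolution is to verify it locally after inverting $a$ and, separately, after inverting $b$, using that $(a,b)$ being Azumaya means the ideal generated by the "content" of these products is the unit ideal, or more robustly to just exhibit $1 = $ an explicit $S$-combination using $xy - yx = 1$ rewritten inside $J_\sigma' J_{\sigma^{-1}}'$; I would spend the bulk of the write-up there and treat the rest as formal consequences of \ref{injective} and \ref{azumayabimodule}.
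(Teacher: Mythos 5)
Your framework is correct and matches the paper's strategy: Proposition~\ref{azumayabimodule} gives the rank--one projectivity, the inclusions $J_\sigma' \subseteq J_\sigma$ and $J_{\sigma^{-1}}' \subseteq J_{\sigma^{-1}}$ are established exactly as you do, the product $J_\sigma' J_{\sigma^{-1}}'$ is correctly seen to be an $S$-submodule of $S$ containing $a = x\cdot x^{p-1}$, $b = y^{p-1}\cdot y$, $\alpha = xy$, and $\adj(\alpha) = y^{p-1}x^{p-1} = ab\alpha^{-1}$, and once one knows $J_\sigma' J_{\sigma^{-1}}' = S$ on the nose, part b) of Lemma~\ref{injective} yields $J_\sigma = J_\sigma'$, $J_{\sigma^{-1}} = J_{\sigma^{-1}}'$, and then $J_\sigma^p = S$ drops out formally.

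The genuine gap is the central equality $J_\sigma' J_{\sigma^{-1}}' = S$, which you circle around without closing. Inverting $a$ (resp.\ $b$) makes $x$ (resp.\ $y$) a unit and settles matters over $R[1/a]$ and $R[1/b]$, but this leaves untouched the primes containing both $a$ and $b$ --- in the generic case $R = F_p[a,b]$ this is the maximal ideal $(a,b)$, precisely the locus where $(a,b)$ fails to be a classical cyclic algebra and where the statement has real content. The suggestion to ``exhibit $1$ via $xy - yx = 1$'' does not work as stated, since $yx$ lies in $J_{\sigma^{-1}}'J_\sigma'$, not in $J_\sigma'J_{\sigma^{-1}}'$; and the appeal to ``$(a,b)$ Azumaya forces the content to be the unit ideal'' is vague and essentially circular. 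What the paper actually does is check the smaller ideal $L = Sa + S\alpha + S\adj(\alpha) \subseteq J_\sigma' J_{\sigma^{-1}}'$ at every maximal ideal $M \subset R$: if $a \notin M$ there is nothing to do, and if $a \in M$ then modulo $M$ one has $\alpha^p - \alpha = ab \equiv 0$, so $S/MS$ is the split étale algebra, $\alpha$ may be identified with $(0,1,\ldots,p-1)$, and $\adj(\alpha) \equiv ((p-1)!,0,\ldots,0)$ --- hence $\alpha + \adj(\alpha)$ is a unit of $S/MS$. That structural observation about $\alpha$ and $\adj(\alpha)$ having complementary supports over primes containing $a$ is exactly the ingredient your write-up is missing, and it is what the rest of the paper repeatedly exploits (it is the definition of ``$a,\alpha$ suitable'').
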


\begin{proof}  
By \ref{azumayabimodule} $J_{\sigma}$ and $J_{\sigma^{-1}}$ 
are rank one projective over $S$.  
$J_{\sigma} \supset Sx + Sy^{p-1} = J_{\sigma}'$ 
and $J_{\sigma^{-1}} \supset Sx^{p-1} + Sy = J_{\sigma^{-1}}'$. 
It is convenient to work generically and set $R$ to be the 
polynomial ring $F_p[a,b]$ and view $A \subset \Delta(K/F,a)$ 
where $F = F_p(a,b)$ and $K = S \otimes _R F$. 
Proving the equalities for this generic $A$ will imply 
them for all. 

We can write $J_{\sigma}' = I_{\sigma}x$ 
where $I_{\sigma} = S + S(\alpha^{-1}bx) \subset K$ 
since $\alpha{y^{p-1}} = xb$. Similarly $J_{\sigma^{-1}}' = 
I_{\sigma^{-1}}x^{-1}$ where $I_{\sigma^{-1}} = 
Sa + S\sigma^{-1}(\alpha)$ since $y = x^{-1}\alpha = 
\sigma^{-1}(\alpha)x^{-1}$ and we note $I_{\sigma^{-1}} \subset S$. 
Now $J_{\sigma}J_{\sigma^{-1}} = I_{\sigma}xI_{\sigma^{-1}}x^{-1} = 
I_{\sigma}\sigma(I_{\sigma^{-1}}) = 
(S + S\alpha^{-1}b)(Sa + S\alpha) = 
Sa + Sb + S(ab/\alpha) + S\alpha$. 
It will be useful to note that since $ab$ is the norm of $\alpha$, 
$ab/\alpha = \adj(\alpha) = \prod_{i=1}^{p-1} \sigma^i(\alpha)$ 
is the adjoint of $\alpha$. 
Set $L$ to be the apparently smaller ideal 
$Sa + S\alpha + S\adj(\alpha)$. 

\begin{lemma}
We have $L = S$. 
$J_{\sigma}'J_{\sigma^{-1}}' = J_{\sigma}J_{\sigma^{-1}} = S$.  
In addition, $J_{\sigma} = J_{\sigma}'$ and 
$J_{\sigma^{-1}}' = J_{\sigma^{-1}}$. 
\end{lemma}

\begin{proof}
Let $M \subset R$ be a maximal ideal. If $a \notin M$ 
then, of course, $L + M = S$. If $a \in M$, then in 
$A/MA$ we have $\alpha^p - \alpha = 0$ and so 
$S/MS = R/M \oplus \cdots \oplus R/M$ is split and 
$\alpha$ can be identified with $(0,1,\ldots,p-1)$. 
Of course, $\adj(\alpha) = ((p-1)!,0,\ldots,0)$ 
and $\alpha + \adj(\alpha)$ maps to a unit of $S/M$ implying 
that $L + M = S$ again. This shows $L = S$. 
Since $L \subset J_{\sigma}'J_{\sigma^{-1}}' \subset 
J_{\sigma}J_{\sigma^{-1}} \subset S$, all these inclusions 
are equalities. Then Lemma~\ref{injective} shows  
the rest. 
\end{proof} 

We have shown \ref{differential}. 
\end{proof} 

It will be quite important that $L = S$ and that we need 
not use $b$. Also, one should note that all we needed about 
$\alpha$ was that in $A/M$ its rank was either $p$ or $p-1$, 
and this meant that $\alpha$ and $\adj(\alpha)$ generate 
all of $S$. 

There is one more point of view that is useful. 
Since $xy = \alpha$, we can conclude that in any $A/M$, 
it is also true that $x$ and $y$ have rank at least $p-1$. 
If we split $A/M \subset \End_F(V)$, 
then $\bar S = (S/M)F$ can be identified with diagonal 
matrices. If $v_i \in V$ , $0 \leq i \leq p-1$, are eigenvectors for $\bar S$ and thus a 
basis for $V$, the rank restrictions imply that we can assume 
$x(v_i) = v_{i+1}$ but $x(v_{p-1})$ might be $0$. 
Since $xy$ has rank no less than $p - 1$, it follows that 
$y(v_i) = f_iv_{i-1}$ with $f_i \not= 0$ for $i > 0$ 
but $y(v_0)$ might be $0$. In matrix language, $x$ 
has nonzero entries in the superdiagonal and nowhere else 
except possibly the lower left corner. $y$ is subdiagonal 
plus upper right corner possibly. The point is that 
independent of the corners, the diagonal matrices 
plus such super and sub diagonal matrices generate all matrices.  

The picture of the above description is: 

\[
\begin{pmatrix}
\star & \diamond & & & && b \\
\bullet & \star & \diamond & & & &   \\
 & \bullet &   & & & &  \\
&   & \raisebox{.2in}{\rotatebox{-19}{$\ddots$}}&\raisebox{.2in}{\rotatebox{-19}{$\ddots$}}  &  \raisebox{.2in}{\rotatebox{-19}{$\ddots$}}& \\
&  &  &&&  \diamond &  \\
& & & & \bullet & \star & \diamond \\
a & & & && \bullet & \star
\end{pmatrix}
\] 
where $\diamond$ plus $a$ is $x$ and $\bullet$ plus $b$ is 
$y$.  

Let's see that, in general, $J_{\sigma}$ above 
is not principal. We assume $R = F_p[a,b]$ 
as above. Suppose $sx + ty^{p-1}$ generated $J_{\sigma}$ 
over $S$ for fixed $s,t \in S$. Let $f(a,b)$ be the reduced norm  
$N_{A/R}(sx + ty^{p-1})$. Since this is not contained 
in any maximal ideal, it is a unit and thus must be an element 
$d \in F_p$.  
Now $y^{p-1} = y^{-1}b = x\alpha^{-1}{b} = \sigma(\alpha)^{-1}bx$ 
and so $sx + ty^{p-1} = (s + t\sigma(\alpha)^{-1}b)x = 
[(s(\alpha + 1) + tb)/(\alpha + 1)]x$ and taking reduced norms 
we have $N_{A/R}(s(\alpha + 1) + tb)/b = d$. 
This implies $\Delta(S/R,bd)$ is split, clearly false. 

With the above discussion of $(a,b)$, we can move toward 
defining a general almost cyclic algebra. 
We begin with a result showing how we can use $J_{\sigma}$ to 
prove an algebra is Azumaya. Suppose 
$R$ is a domain with field of fractions $F$ and 
$A/R$ is an order such that $A \otimes_R F$ is central simple over 
$F$. Assume $R \subset S \subset A$ 
are such that $S/R$ is $G = <\sigma>$ cyclic Galois of degree $n$. Further assume 
the centralizer, $C(S)$ of $S$ in $A$ is $S$ and $J_{\sigma}$ 
is as above. 

\begin{proposition} \label{azumaya}
The following are equivalent. 
\begin{enumerate}[a.]
\item $A/R$ is Azumaya.\label{itm:a}

\item In $A$, $J_{\sigma}J_{\sigma^{-1}} = S$. \label{itm:b}

\item In $A$, $J_{\sigma}^p = S$.\label{itm:c}
\end{enumerate}
\end{proposition}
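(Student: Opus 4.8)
The plan is to prove the cycle of implications $(\ref{itm:a}) \Rightarrow (\ref{itm:c}) \Rightarrow (\ref{itm:b}) \Rightarrow (\ref{itm:a})$, exploiting the fact that $A \otimes_R F$ is already known to be central simple, so being Azumaya is a statement about good behavior at the finite set of height one primes (equivalently, about $A$ being reflexive and maximal, or about $A/MA$ being central simple over $R/M$ for each maximal $M$). First I would set $B = J_\sigma J_{\sigma^{-1}}$, which by \ref{injective}(a) applied with $I = J_\sigma$, $I' = J_{\sigma^{-1}}$ is an $S$-sub-bimodule of $S$ contained in the center $S$ itself, hence an ideal of $S$; similarly $J_\sigma^p$ is an ideal of $S$ (using that products of $1,\sigma$ bimodules land in the $1,\sigma^p$ component, and noting that after restriction-corestriction along $S/R$ the $p$-th power is forced into $S$). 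The equivalence of \ref{itm:b} and \ref{itm:c} over $F$ is classical — in the central simple algebra $A \otimes_R F$, $J_\sigma \otimes_R F$ is a $1,\sigma$ bimodule of rank one over the separable $F$-algebra $S \otimes_R F$, and one checks $(J_\sigma \otimes F)^p = S \otimes F$ and $(J_\sigma \otimes F)(J_{\sigma^{-1}} \otimes F) = S \otimes F$ both hold (the crossed-product structure). So over $F$ all three conditions hold, and the content is entirely at the bad primes.

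\textbf{Key steps.} For $(\ref{itm:a}) \Rightarrow (\ref{itm:c})$ and $(\ref{itm:a}) \Rightarrow (\ref{itm:b})$: if $A/R$ is Azumaya, then $S \subset A$ is a maximal commutative separable subalgebra (here I would use that $C(S) = S$ and $S/R$ Galois of degree $n$ with $A$ of degree $n$, so $A \cong \Delta(S/R, \text{something})$ locally, or more directly invoke \ref{azumayabimodule} to see $A_g = J_g$ is rank one projective over $S$ for every $g$). Then $J_\sigma J_{\sigma^{-1}} \subseteq S$ is a rank one projective $S$-submodule of $S$, i.e. an invertible ideal, and since it becomes all of $S \otimes F$ after inverting, and $S$ is (the relevant localizations are) a domain or at least the inclusion is an equality of reflexive modules, it equals $S$; similarly $J_\sigma^p = S$. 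For the crucial implication $(\ref{itm:b}) \Rightarrow (\ref{itm:a})$ (or $(\ref{itm:c}) \Rightarrow (\ref{itm:a})$), I would argue that $J_\sigma^{i} J_\sigma^{j} \subseteq S$-bimodule multiplication together with $J_\sigma J_{\sigma^{-1}} = S$ forces each $A_{\sigma^i} = J_\sigma^i$ to be rank one projective over $S$ and the multiplication maps $J_\sigma^i \otimes_S J_\sigma^j \to J_\sigma^{i+j}$ (indices mod $p^{\text{ord}}$, and using $J_\sigma^p = S$ which follows from \ref{itm:b} via restriction-corestriction on $\Pic(S)$ exactly as in \ref{picorder}) to be isomorphisms; this exhibits $A = \bigoplus_{i} J_\sigma^i$ as a (generalized) crossed product of the Galois extension $S/R$, hence Azumaya over $R$ — here one invokes the standard fact that a crossed product of a Galois extension is Azumaya, or checks the Azumaya condition $A \otimes_R A^{op} \cong \End_R(A)$ directly using the decomposition and faithful flatness of $S/R$.

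\textbf{Main obstacle.} The delicate point is getting from the mere ideal equality $J_\sigma J_{\sigma^{-1}} = S$ to the statement that $A$ is \emph{spanned} by $S$ and the powers of $J_\sigma$, i.e. that $A = \bigoplus_{i=0}^{n-1} A_{\sigma^i}$ with each summand rank one over $S$; a priori \ref{bimodule} only gives $A = \bigoplus_g e_g A$ as $S \otimes_R S$-modules with $e_g A = A_{\sigma^g}$, and \ref{azumayabimodule} gives rank one \emph{only once we already know $A$ is Azumaya}. So I must instead use \ref{injective}(b): from $J_\sigma J_{\sigma^{-1}} = S$ and $J_{\sigma^{-1}} J_\sigma \subseteq S$ I get $J_\sigma, J_{\sigma^{-1}}$ projective rank one over $S$ with $J_\sigma \otimes_S J_{\sigma^{-1}} \cong S$, and then bootstrap: $J_\sigma^i$ is rank one projective, the decomposition $A = \bigoplus_i A_{\sigma^i}$ already holds as $S$-bimodules from \ref{bimodule}, and each $A_{\sigma^i} \supseteq J_\sigma^i$ with both rank one over $S$ forces equality after checking containment generically (where central simplicity makes $A \otimes F$ a genuine crossed product) and then at each bad prime using that $A_{\sigma^i}/J_\sigma^i$ is a torsion $S$-module that vanishes over $F$ and whose support would contradict $J_\sigma^i \otimes J_\sigma^{-i} = S$. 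I expect the bookkeeping with the indices modulo the order of $\sigma$ (which is $n$, not $p$, when $n > p$ — note the algebra is degree $n$ here, and $J_\sigma^p = S$ is a \emph{separate} input coming from \ref{itm:c} or derived from \ref{itm:b}) to be the fussiest part, but no single step should be deep given \ref{injective}, \ref{bimodule}, \ref{azumayabimodule}, and \ref{picorder}.
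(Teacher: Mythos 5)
Your argument for $(\ref{itm:a}) \Rightarrow (\ref{itm:b}), (\ref{itm:c})$ has a genuine gap. You claim that since $J_\sigma J_{\sigma^{-1}} \subseteq S$ is a rank-one projective (invertible) ideal which becomes all of $S \otimes_R F$ after inverting, it must equal $S$, appealing to reflexivity or to the localizations being domains. That inference is false: $2\Z \subsetneq \Z$ is a proper invertible ideal that is generically full, and in a DVR $\pi S \subsetneq S$ gives the same phenomenon; reflexivity of both sides of an inclusion that is generically an equality does not force it to be an actual equality. What the paper does instead is localize at a maximal ideal $M$ of $R$ (so $\Pic(S)=1$ and $J_\sigma = Su$, $J_{\sigma^{-1}} = Sv$), and then reduce modulo $M$: because $A/MA$ is projective over $(S/MS)\otimes(S/MS)$, the map $J_\sigma \to \bar J_\sigma$ is surjective, and $A/MA$ is a classical cyclic crossed product over the field $R/M$, so the generator $\bar u$ of $\bar J_\sigma = (S/MS)\bar u$ is forced to be a unit; by Nakayama $u$ is then a unit of $A$, and $Suv = S$ and $Su^n = S$ follow at once. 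The reduction modulo $M$ is the essential step your write-up omits; ``generically full'' alone does not close the gap.

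For the converse $(\ref{itm:b})$ or $(\ref{itm:c}) \Rightarrow (\ref{itm:a})$ you propose a global argument: use \ref{injective} to get each $J_\sigma^i$ rank one projective, then show $J_\sigma^i = J_{\sigma^i}$ (again via \ref{injective}(b), after checking $J_\sigma^i J_{\sigma^{-1}}^i = S$ by induction), so $A = \bigoplus_i J_{\sigma^i}$ is a generalized crossed product with isomorphism multiplication maps, and invoke the Azumaya property of such algebras. This is workable, and you correctly flag the fiddly point; but the paper's route is shorter. After the same localization, \ref{injective} gives $J_\sigma = Su$; the hypothesis $Suv = S$ or $Su^n = S$ forces $u$ to be a unit of $A$ (a one-sided inverse in an order inside a central simple algebra is two-sided); then $A' = S[u] = \Delta(S/R,\sigma,u^n)$ is an Azumaya cyclic subalgebra of $A$ of full degree $n$, and since $C_A(S) = S$ forces $C_A(A') = S^\sigma = R$, the double-centralizer decomposition $A \cong A' \otimes_R C_A(A') = A'$ finishes. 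You are also right to flag the $p$ versus $n$ issue: $J_\sigma^p \subseteq S = J_1$ only when $\sigma^p = 1$, so the statement is implicitly in the case $n = p$.
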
 

\begin{proof} 
All three of the statements are equivalent to the same statements 
after localizing at all maximal ideals of $R$. Thus we may assume 
$R$ is local and $S$ is semilocal. Hence we may assume 
$\Pic(S) = 1$. 

Assume $A/R$ is Azumaya. It must have degree $n$. 
We saw above that $J_{\sigma}$ 
has rank one over $S$ and so $J_{\sigma} = Su$ and 
$J_{\sigma^{-1}} = Sv$. If 
$M \subset R$ is the maximal ideal then $A/MA$ 
is projective over $S/MS \otimes S/MS$ and so 
if $\bar J_{\sigma} = 
\{\bar a \in A/MA | \bar a\bar s = \sigma(\bar s)\bar a\ $ all $ \bar s \in S/MS\}$ then $J_{\sigma} 
\to \bar J_{\sigma}$ 
is surjective. $A/MA$ must be a classical 
cyclic algebra and so $u$ and $v$ have invertible 
images in $A/MA$. It follows that $u,v$ are both invertible, 
implying 
$Suv = SuSv = S$ and $uv$ is invertible in $S$. 
Similarly, $(Su)^p = Su^p$ and $u^p$ is invertible 
in $S$. This proves \ref{itm:b} and~\ref{itm:c}. 

Next assume \ref{itm:b} or \ref{itm:c}. If $I = J_{\sigma}$, and 
$I' = J_{\sigma^{-1}}$ or $I' = J_{\sigma}^{p-1}$ we can conclude 
from lemma~\ref{injective}  
that $J_{\sigma}$ is rank one projective and so 
$J_{\sigma} = Su$. Similarly, assuming b), $J_{\sigma^{-1}} = Sv$. 
Now $S = SuSv = Suv$ or 
$S = (Su)^p = Su^p$. Thus assuming \ref{itm:b} or \ref{itm:c} we can conclude that 
$u$ is invertible. 
We have that 
$A$ contains $A' = \Delta(S/R,\sigma,a)$ for some invertible $a$. 
Thus $A \cong A' \otimes_R B$ for $B$ the centralizer of $A'$ 
and since $S$ is maximal we have $B = R$. 
\end{proof}

The above motivates the definition of an almost cyclic algebra 
as follows. Let $S/R$ be $G = <\sigma>$ Galois of degree 
$n$. Let $J \in \Pic(S)$ which we identify with an invertible ideal 
in $S$. Then $\tau(J)$ is the $\tau$ twisted 
element of $\Pic(S)$. That is, 
$\tau(J) \cong J \otimes_{\tau^{-1}} S$. 

Let $S[t,\sigma]$ be the formal twisted 
polynomial ring such that $ts = \sigma(s)t$. 
Let $T = S \oplus Jt \oplus (Jt)^2 \oplus \cdots $ 
be the subring. Note that $(Jt)^m = 
J\sigma(J)\cdots\sigma^{m-1}(J)t^m$ and 
$J\sigma(J)\cdots\sigma^{m-1}(J) \cong J \otimes_S \sigma(J) \cdots 
\otimes_S \sigma^{m-1}(J)$. 
Note also that $(Jt)^m$ can be thought of as the $S - S$ bimodule 
or $S \otimes_R S$ module which is 
$J\sigma(J)\cdots\sigma^{m-1}(J)$ as a left $S$ module but the 
right $S$ action is twisted by $\sigma^m$. 
In particular, $(Jt)^n = 
SN(J)t^n$ where 
$$N(J) = (J \otimes \sigma(J) \otimes \cdots \otimes \sigma^{n-1}(J))^G$$ 
is the norm on the Picard group $N: \Pic(S) \to \Pic(R)$. 

We now add the assumption $N(J) \cong R$ which means that there is 
a $G$ preserving isomorphism $\phi: (Jt)^n \cong S$. 
Note that $\phi$ is not unique but is unique up to an element 
of $R^*$. 

Define $\Delta(S/R,\sigma,J,\phi) = T/I$ where 
$I$ is the ideal generated by all elements $x - \phi(x)$ 
for all $x \in (Jt)^n$. Clearly $\Delta(S/R,\sigma,J,\phi) = 
S \oplus J_1 \oplus \cdots \oplus J_{n-1}$ 
where $J_i \cong (Jt)^i$ as an $S$ bimodule as above. 
Using \ref{azumaya} we have:  

\begin{lemma}
$\Delta(S/R,\sigma,J,\phi)$ is Azumaya over $R$.
\end{lemma}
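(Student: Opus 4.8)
The plan is to verify the hypotheses of Proposition~\ref{azumaya} for the algebra $A = \Delta(S/R,\sigma,J,\phi)$, working over a domain and then reducing to that case. First I would observe that it suffices to prove $A/R$ is Azumaya after a faithfully flat base change, so by a standard argument I may assume $R$ is a domain (indeed, replace $R$ by a polynomial ring mapping onto it, or pass to the generic situation where $J$, $S$ and $\phi$ are ``universal''). With $R$ a domain and $F$ its fraction field, one checks that $A \otimes_R F$ is a cyclic crossed product $\Delta(S \otimes_R F/F,\sigma,u)$ for a suitable $u \in F^*$: since $J$ becomes free of rank one over $S \otimes_R F$ (as $\Pic$ of a semilocal ring, or here a product of fields, is trivial), write $J \otimes_R F = (S\otimes_R F)\beta t$ and then $(\beta t)^n = u$ for some $u$, forced to be in $F^*$ because $N(J) \cong R$ makes it $\phi$-identified with a unit; hence $A \otimes_R F$ is central simple over $F$.

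Next I would identify the centralizer of $S$ in $A$ and the bimodules $J_\sigma$. By construction $A = S \oplus J_1 \oplus \cdots \oplus J_{n-1}$ with $J_i \cong (Jt)^i$ as an $S$--$S$ bimodule in which the right action is twisted by $\sigma^i$; so $J_i \subseteq A_{\sigma^i}$ in the notation of \ref{bimodule}, and since the $A_{\sigma^i}$ are the $e_{\sigma^i}$-components and $\sum J_i = A$ with each $J_i$ landing in a distinct component, we get $A_{\sigma^i} = J_i$ on the nose. In particular the centralizer $C(S) = A_1 = J_0 = S$, and $J_\sigma = J_1 = Jt$, $J_{\sigma^{-1}} = J_{n-1}$. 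The hypotheses of Proposition~\ref{azumaya} are therefore in place.

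Finally I would verify condition (\ref{itm:c}) of \ref{azumaya}, namely $J_\sigma^p = S$ — or rather the analogue $J_\sigma^{\,n} = S$ (here $n = |G|$; note the proposition as stated handles the prime-degree case, so I would either invoke the obvious generalization to arbitrary $n$, whose proof is identical, or reduce via the tower of subextensions). But $J_\sigma^{\,n} = (Jt)^n = S N(J) t^n$, and this is exactly the bimodule $(Jt)^n$ which, by the defining ideal $I$ generated by $x - \phi(x)$, is identified with $S$ via the $G$-preserving isomorphism $\phi$. Thus in $A$ the product $J_\sigma^{\,n}$ equals $S$, and \ref{azumaya} gives that $A$ is Azumaya over $R$. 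The main obstacle I anticipate is bookkeeping: making precise that $J_i$ is genuinely the full $\sigma^i$-eigenbimodule $A_{\sigma^i}$ (so that $C(S) = S$ and $J_\sigma = Jt$ exactly, not just up to inclusion), and handling the passage from the prime-degree statement of \ref{azumaya} to general $n$ — both are routine but need care since \ref{azumaya} is only stated for cyclic $S/R$ of degree $n$ with the cyclic generator $\sigma$, which is in fact exactly our setup, so this last point may dissolve on a closer reading.
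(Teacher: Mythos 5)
Your proposal is correct and is exactly what the paper intends by ``Using~\ref{azumaya} we have\dots'' (the paper supplies no further argument, so you are simply filling in the details of that citation). The two ``obstacles'' you flag both dissolve: $A = \oplus_i J_i$ with $J_i \subseteq A_{\sigma^i}$, and $A = \oplus_g A_g$ automatically because $A$ is a module over $S\otimes_R S = \oplus_g e_g(S\otimes_R S)$, so $J_i = A_{\sigma^i}$ on the nose, giving $C(S)=S$ and $J_\sigma = Jt$; and the ``$p$'' in condition~(\ref{itm:c}) of Proposition~\ref{azumaya} is merely the letter the proposition's proof uses for the degree $n$ of $S/R$ --- the statement and proof of \ref{azumaya} are already written for an arbitrary cyclic $G$ of order $n$, so there is nothing to generalize. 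The only place where you could tighten things is the reduction to a domain: rather than invoking a somewhat vague universal construction, it is cleaner to observe that Azumaya-ness is local, and after localizing at any prime of $R$ the module $J$ becomes free over the semilocal ring $S$; the defining relation $(Jt)^n = S$ then forces the parameter to be a unit, so $\Delta(S/R,\sigma,J,\phi)$ becomes a literal cyclic crossed product $\Delta(S/R,\sigma,u)$ with $u\in R^*$, which is Azumaya outright --- this sidesteps the domain hypothesis in \ref{azumaya} and \ref{injective} entirely, though verifying $J_\sigma^n = S$ and quoting \ref{azumaya} as you do is equally valid once one is over a domain.
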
 

The above is what we will call an {\bf almost cyclic} algebra, 
and it is very clear that our differential crossed 
product is such an algebra. This will be even clearer 
after the next lemma. However, we first give another degree $p$ 
almost cyclic algebra. This construction appeared 
in the preprint \cite{S2022}, but needs to be repeated here. 
For convenience, we make the construction 
generically and the general case is then just all 
specializations. 

Let $R$ be the polynomial $\Z[\rho]$ algebra $\Z[\rho][a,b](1/(1 + ab\eta^p)$. 
Let $F$ be the field of fractions of 
$R$. Let $B/F$ be the symbol algebra $(1 + ab\eta^p,a)$ 
generated by $\beta$, $x$ where $x^p = a$, $\beta^p  = 
1 + ab\eta^p$ and $x\beta = \rho\beta{x}$. 
Set $\alpha = (\beta - 1)/\eta$ so 
$\alpha^p + g(\alpha) = ab$ and $y = x^{-1}\alpha$ 
so $y^p = a^{-1}(ab) = b$ and $xy = \alpha$. Note that when 
$p = 2$, $x$ has norm $-a$ and $\alpha$ 
has norm $-ab$ so this is true in that case also. 

Further note that $x\alpha = 
(\rho\alpha + 1)x$ and $\alpha{y} = y(\rho\alpha + 1)$. 
Let $A \subset B$ 
be $R$ submodule spanned by $\{ x^iy^j | 0 \leq i,j \leq p-1\}$. 
Since $yx = x^{-1}\alpha{x} = \rho^{-1}\alpha - \rho^{-1}$, 
it is clear that $A$ is an order and $S = R[\alpha]$ 
is cyclic Galois over $R$. We denote this algebra 
$(a,b)_{\rho}$. 

\begin{lemma}\label{almostrho} 
$(a,b)$ and $(a,b)_{\rho}$ are almost cyclic with respect to $R[xy]/R$ 
and hence Azumaya over $R$. 
\end{lemma}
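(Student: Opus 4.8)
The plan is to verify, for each of the two algebras $(a,b)$ and $(a,b)_{\rho}$, the hypotheses of Proposition~\ref{azumaya} relative to the subring $S = R[\alpha]$ with $\alpha = xy$, where $G = \langle\sigma\rangle$ acts by $\sigma(\alpha)=\alpha+1$ in characteristic $p$ and by $\sigma(\alpha)=\rho\alpha+1$ in the $\rho$-case. The $(a,b)$ case is essentially already done: the centralizer of $S$ in $A$ was computed to be $S$ in the lemma preceding \ref{differential}, and \ref{differential} established $J_{\sigma}^p = S$; so part~\ref{itm:c} of \ref{azumaya} applies and $(a,b)$ is Azumaya and almost cyclic with respect to $R[xy]/R$. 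The remaining work is therefore to run the same argument for $(a,b)_{\rho}$, which by construction sits inside the symbol algebra $B = (1+ab\eta^p, a)$ over the fraction field $F$ of $R = \Z[\rho][a,b](1/(1+ab\eta^p))$.

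First I would record the bimodule decomposition: since $x\alpha = (\rho\alpha+1)x$ and $\alpha y = y(\rho\alpha+1)$, one has $x^iy^j \in A_{\sigma^{i-j}}$ in the notation of \ref{bimodule}, exactly as in the characteristic $p$ case but with $\sigma$ the $\rho$-twisted translation. Hence $e_1 x^i y^j = 0$ unless $i\equiv j \pmod p$, and since $e_1 S = S$ we get that the centralizer $C(S) = A_1 = e_1 A$ equals $S$. Next I would produce the ideals $J_{\sigma} \supset J_{\sigma}' = Sx + Sy^{p-1}$ and $J_{\sigma^{-1}} \supset J_{\sigma^{-1}}' = Sx^{p-1} + Sy$, working generically inside $B$ where $x$ is invertible. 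Writing $J_{\sigma}' = I_{\sigma}x$ and $J_{\sigma^{-1}}' = I_{\sigma^{-1}}x^{-1}$ with $I_{\sigma}, I_{\sigma^{-1}} \subset S$ (using $y^{p-1} = y^{-1}b$ and $y = \sigma^{-1}(\alpha)x^{-1}$), the product $J_{\sigma}J_{\sigma^{-1}} = I_{\sigma}\sigma(I_{\sigma^{-1}})$ becomes an explicit ideal of $S$ containing $Sa + S\alpha + S\adj(\alpha)$, where $ab$ is the norm of $\alpha$ and $\adj(\alpha) = ab/\alpha = \prod_{i=1}^{p-1}\sigma^i(\alpha)$.

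The heart of the argument — and the step I expect to be the main obstacle — is showing $L := Sa + S\alpha + S\adj(\alpha) = S$, the $\rho$-analogue of the lemma inside the proof of \ref{differential}. I would check this locally at each maximal ideal $M$ of $R$: if $a\notin M$ then $L+M=S$ trivially; if $a\in M$, then modulo $M$ the algebra degenerates so that $\alpha^p + g(\alpha) \equiv ab \equiv 0$, hence $Z^p + g(Z)$ has a root and $S/MS$ splits as a product of copies of $R/M$ with $\alpha$ identified with the tuple of roots of $Z^p + g(Z)$ (i.e. $(0, 1, 1+\rho, \dots)$ up to relabelling). One root of $Z^p + g(Z)$ is $0$, at which coordinate $\adj(\alpha)$ is a unit (product of the nonzero roots, a unit in $\Z[\rho]$ by the computation in \ref{total}), while at every other coordinate $\alpha$ itself is a unit; so $\alpha + \adj(\alpha)$ has all coordinates invertible, forcing $L+M=S$. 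This gives $L=S$, hence all the inclusions $L \subset J_{\sigma}'J_{\sigma^{-1}}' \subset J_{\sigma}J_{\sigma^{-1}} \subset S$ are equalities, in particular $J_{\sigma}J_{\sigma^{-1}} = S$; by Lemma~\ref{injective} the $J$'s are rank one projective and $J_{\sigma}=J_{\sigma}'$, $J_{\sigma^{-1}}=J_{\sigma^{-1}}'$. Then Proposition~\ref{azumaya}, part~\ref{itm:b}, shows $(a,b)_{\rho}$ is Azumaya over $R$, and since we have exhibited $A = S \oplus J_{\sigma} \oplus \cdots \oplus J_{\sigma}^{p-1}$ with $N(J_{\sigma}) \cong R$ this is precisely the structure of an almost cyclic algebra $\Delta(S/R, \sigma, J_{\sigma}, \phi)$ with respect to $R[xy]/R$. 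The general (non-generic) case follows by specialization from $R = \Z[\rho][a,b](1/(1+ab\eta^p))$, completing the proof.
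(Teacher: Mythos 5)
Your proposal is correct and follows essentially the same route as the paper: reduce to the generic $(a,b)_\rho$, compute $J_\sigma J_{\sigma^{-1}} \supset L = Sa + S\alpha + S\adj(\alpha)$, and show $L = S$ by checking at maximal ideals via the splitting of $S/MS$ and the fact that the nonzero roots of $Z^p + g(Z)$ are units $\epsilon_i$ of $\Z[\rho]$. You make explicit a few steps the paper leaves implicit (re-verifying $C(S)=S$, the $I_\sigma x$ factorization carried over from the proof of \ref{differential}, and the final citations of \ref{injective} and \ref{azumaya}), but the argument is the same.
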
 

\begin{proof} 
We perform the proof in the $(a,b)_{\rho}$ 
because the other case is basically done already.  
Let $J_{\sigma} = Sx + Sy^{p-1}$ and 
$J_{\sigma^{-1}} = Sx^{p-1} + Sy$. 
Note that $y^{p-1}x^{p-1} = by^{-1}ax^{-1} = 
ab(\alpha)^{-1} = \adj(\alpha)$. 
Then $J_{\sigma}J_{\sigma^{-1}} = 
(Sx + Sy^{p-1})(Sx^{p-1} + Sy) = 
Sa + Sb + S\alpha + S\adj(\alpha)$. 
Set $L = Sa + S\alpha + S\adj(\alpha)$. 
If $M \subset R$ is maximal, and $a \notin M$, 
then clearly $L + MS = S$. If $a \in M$ then modulo $M$ 
$\alpha$ is a root of $Z^p + g(Z) = 0$ 
and so $S/MS$ is split and $\alpha = 
(0,1,1 + \rho,\ldots,1 + \cdots + \rho^{p-2})$.  
We have $\adj{\alpha} = (\gamma,0,\ldots,0)$ 
where $\gamma$ is the product of the nonzero 
entries of $\alpha$. Clearly $S\alpha + S\adj(\alpha)  + 
MS = S$ in this case also so $L = S$.
\end{proof} 

In the above result we showed that the generic $(a,b)_{\rho}$ 
was almost cyclic. If $R'$ is an arbitrary $\Z[\rho]$ algebra 
and $a',b' \in R'$ and $1 + a'b'\eta^p \in {R'}^*$ 
then there is a $\phi: R \to R'$ defined by setting 
$\phi(a) = a'$ and $\phi(b) = b'$ and we define 
$(a',b')_{\rho} = (a,b)_{\rho} \otimes_{\phi} R'$. 
The above description shows: 

\begin{corollary} 
$(a',b')_{\rho}$ is almost cyclic. 
\end{corollary}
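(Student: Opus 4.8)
The statement to prove is the final corollary: for an arbitrary $\Z[\rho]$ algebra $R'$ with $a',b' \in R'$ and $1 + a'b'\eta^p \in {R'}^*$, the algebra $(a',b')_{\rho}$ is almost cyclic.

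\medskip

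The plan is to reduce everything to the generic case already handled in Lemma~\ref{almostrho}. First I would recall the setup: $R = \Z[\rho][a,b](1/(1+ab\eta^p))$ carries the generic algebra $(a,b)_{\rho}$, which contains the cyclic Galois extension $S = R[\alpha]/R$ (with $\alpha = xy$) and the sub-bimodules $J_{\sigma} = Sx + Sy^{p-1}$, $J_{\sigma^{-1}} = Sx^{p-1} + Sy$, and we showed there that $J_{\sigma}J_{\sigma^{-1}} = S$ inside $(a,b)_{\rho}$, which by Proposition~\ref{azumaya} makes it Azumaya and exhibits it as $\Delta(S/R,\sigma,J,\phi)$ for the appropriate $J \in \Pic(S)$ and normalizing isomorphism $\phi: (Jt)^p \cong S$. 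In other words, $(a,b)_{\rho}$ is almost cyclic with respect to $R[xy]/R$.

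\medskip

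Next I would note that the homomorphism $\phi: R \to R'$ sending $a \mapsto a'$, $b \mapsto b'$ is well-defined precisely because $1 + a'b'\eta^p$ is invertible in $R'$, so that the element $1+ab\eta^p$ we inverted maps to a unit; this is exactly the definition $(a',b')_{\rho} = (a,b)_{\rho} \otimes_{\phi} R'$ given just before the corollary. Since all the structure of an almost cyclic algebra is given by finitely many module generators and relations over $R$, base change by $\phi$ carries it along: $S' := S \otimes_R R' = R'[\alpha]$ is cyclic $G$ Galois over $R'$ (base change preserves Galois extensions), $\alpha' = x'y'$ still generates it, and $S' \subset (a',b')_{\rho}$ with $1,\alpha',\dots,\alpha'^{p-1}$ part of the standard $R'$-basis $\{x'^iy'^j\}$. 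I would then let $J'_{\sigma} = S'x' + S'y'^{p-1}$ and $J'_{\sigma^{-1}} = S'x'^{p-1} + S'y'$ be the images of $J_{\sigma}, J_{\sigma^{-1}}$; since these are generated over $S$ by finitely many explicit elements and the identity $J_{\sigma}J_{\sigma^{-1}} = S$ is an equality of $S$-submodules of $(a,b)_{\rho}$, applying $\phi$ (right-exactness of tensor, or just tracking generators through the explicit product computation) gives $J'_{\sigma}J'_{\sigma^{-1}} = S'$ inside $(a',b')_{\rho}$.

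\medskip

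Finally I would verify the remaining hypothesis of Proposition~\ref{azumaya}: that the centralizer of $S'$ in $(a',b')_{\rho}$ is $S'$ itself. The cleanest route is to repeat the bimodule argument from the paragraph before Proposition~\ref{differential}: each basis monomial $x'^iy'^j$ lies in the bimodule component $A_{\sigma^{i-j}}$, so the separating idempotent $e_1$ kills $x'^iy'^j$ unless $i \equiv j \pmod p$, whence the centralizer $e_1 A' = S'$. (Alternatively one can argue directly: base change of the identity $C(S) = S$, using that $S/R$ is separable so forming $C(S)$ commutes with base change — but the idempotent argument is self-contained.) With $S'/R'$ cyclic Galois of degree $p$, $C(S') = S'$, and $J'_{\sigma}J'_{\sigma^{-1}} = S'$, Proposition~\ref{azumaya} gives that $(a',b')_{\rho}$ is Azumaya and, running through the construction of $\Delta(S/R,\sigma,J,\phi)$, that it is the almost cyclic algebra attached to $S' = R'[x'y']$, $J' \in \Pic(S')$, and the base-changed normalizing map. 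The one step requiring a little care — the main obstacle — is making rigorous that ``$J_{\sigma}J_{\sigma^{-1}} = S$ survives base change'': since this is an equality of submodules of a possibly non-flat situation, I would handle it by noting that the inclusion $L := S'a' + S'\alpha' + S'\adj(\alpha') \subseteq J'_{\sigma}J'_{\sigma^{-1}} \subseteq S'$ holds by the same generator computation as in Lemma~\ref{almostrho}, and that $L = S'$ follows from the identical maximal-ideal argument applied to $R'$ (split $S'/M'S'$ when $a' \in M'$, use that $\alpha' + \adj(\alpha')$ is a unit mod $M'$), so no flatness is needed at all.
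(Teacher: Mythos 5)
Your proposal is correct and takes essentially the same route as the paper: specialize the generic almost cyclic structure of $(a,b)_{\rho}$ over $R = \Z[\rho][a,b](1/(1+ab\eta^p))$ along the map $\phi: R \to R'$, then re-verify $J'_{\sigma}J'_{\sigma^{-1}} = S'$ directly by the maximal-ideal argument so that no flatness hypothesis is needed. The paper itself gives no proof (it just says ``The above description shows''), so you have supplied the details it leaves implicit, and your care about why the identity $J_{\sigma}J_{\sigma^{-1}} = S$ survives base change --- re-running the generator computation with $L'$ rather than appealing to exactness --- is exactly the right way to close that gap.
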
 

Note that if $\phi(\eta) = 0$, $(a',b')_{\rho}$ is just 
$(a',b')$ and so the above corollary applies. 

For our applications, there will be special cases of 
almost cyclic algebras that will concern us. However, 
before we turn to that, we next give some properties of general almost cyclic 
algebras that will allow some 
perspective on the special cases. 
In particular, we will look at 
tensor products and powers of almost cyclic algebras. 
Complicating matters, there is an ambiguity here we will have to deal 
with. If $A/R$, and $B/R$ are Azumaya algebras with 
a maximal commutative subring $S/R$ (meaning $A$ and $B$ are projective 
over $S$ also), then $A \otimes_R B$ 
is split by $S$ and so there are Azumaya algebras $C$ 
Brauer equivalent to each $A \otimes B$ with 
$S/R$ as a maximal commutative subring. However, such a $C$  
is not, in general, unique and so to prove isomorphism results 
we need to specify $C$ more precisely. 
Similarly, if $m$ divides $n$ and $S/R$ is cyclic Galois of degree $n$, 
there is a $S'$ such that $S \supset S' \supset R$, and both $S/S'$ and $S'/R$ 
are cyclic Galois of degrees $m$ and $k = n/m$ respectively. 
Then if $A = \Delta(S/R,\sigma,I,\phi)$ we know that $A^m = A \otimes_R \cdots 
\otimes_R A$ is split by $S'$ but again there is no unique $C$ 
in this Brauer class with $S'$ maximal commutative. 
However, one can make a natural choice of $C$ 
in both cases as follows. 

Suppose 
$A = \Delta(S/R,\sigma,I,\phi)$ and $B = \Delta(S/R,\sigma,J,\psi)$ are almost cyclic 
algebras of degree $n$. 
The multiplication map $\psi: S \otimes_R S \to S$ defines an idempotent 
$e_S$ such that $\psi(e_S) = 1$ and $\psi$ restricts to an isomorphism 
$e_S(S \otimes_R S) \cong S$. 
Then $e_S(A \otimes_R B)e_S$ is an Azumaya algebra with maximal commutative 
subalgebra $S$. We will further describe $A$ below. 

In the second case, suppose $R \subset S' \subset S$ are as above. 
We need to specify an Azumaya algebra in the Brauer class of 
$A^m$ with $S'$ maximal commutative, and here the task is a bit 
more complicated. 
First there is 
a homomorphism $\phi': S^m = S \otimes_R \cdots \otimes_R S 
\to S \otimes_{S'} \cdots \otimes_{S'} S = S_{S'}^m$. 
To $\phi'$ there is an associated idempotent $e' \in S^m$ 
such that $\phi'$ induces $e'S^m \cong S_{S'}^m$. 
Note that $A' = e'A^me'$ contains $S^m_{S'} \supset S' \supset R$ 
as a maximal commutative subalgebra because $S^m_{S'} = e'S^m = e'S^me$.  
If $A'' \subset A'$ is the centralizer of $S'$, then 
$A'' = B \otimes_{S'} \cdots \otimes_{S'} B = B_{S'}^m$ 
where $B$ is the centralizer of $S'$ in $A$. In particular, 
$B/S'$ has degree $m$ and has $S$ as a maximal commutative 
subalgebra. 

We recall from \cite[p.~33]{S1999} that: 

\begin{proposition}\label{exterior}
Suppose $C/T$ is Azumaya of degree $m$. Then 
$C^m = C \otimes_T \cdots \otimes_T C$ has an idempotent 
$f$ such that $fC^Mf \cong C$ and if $C \otimes_T T' \cong \End_{T'}(V')$ 
then $f$ is the idempotent associated to $V'^m \to \bigwedge^n V'$.
\end{proposition}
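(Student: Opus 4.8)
The plan is to construct $f$ as the projector onto the top exterior power of a splitting module of $C^m$ and then to read off the corner $fC^mf$ from that description. First I would reduce to the split case: by faithfully flat descent it is enough to produce $f$ as an element of $C^m$ over $T$ and then to verify its properties after a faithfully flat base change $T'/T$ splitting $C$, say $C\otimes_T T'\cong\End_{T'}(V')$ with $V'$ a rank $m$ projective, so that $C^m\otimes_T T'\cong\End_{T'}\big((V')^{\otimes m}\big)$. The structural fact I would lean on is that $C^m$ carries a natural homomorphism $S_m\to (C^m)^{\times}$, $\sigma\mapsto u_\sigma$, where $u_\sigma$ implements the permutation of the $m$ tensor factors of $C^m$; for a transposition this is the element of a factor $C\otimes_T C$ implementing the switch $c\otimes c'\mapsto c'\otimes c$ (the Goldman element), and the general $u_\sigma$ are assembled from these.

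When $m!$ is invertible in $T$ the argument is immediate. I would set
$$f=\frac1{m!}\sum_{\sigma\in S_m}\operatorname{sgn}(\sigma)\,u_\sigma\in C^m ,$$
which is an idempotent, and note that after base change to $T'$ it is exactly the projector of $(V')^{\otimes m}$ onto its sign--isotypic summand, namely $\bigwedge^m V'$. Since $\bigwedge^m V'$ is a rank one projective $T'$--module, $fC^mf\otimes_T T'\cong\End_{T'}\big(\bigwedge^m V'\big)\cong T'$; hence $fC^mf$ is a degree one Azumaya $T$--algebra and therefore $fC^mf\cong T$. As $f$ is a full idempotent this in particular records $[C^m]=m[C]=0$ in $\Br(T)$, and by construction $f$ is the idempotent attached to the natural surjection $(V')^{\otimes m}\to\bigwedge^m V'$, as in the statement.

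The step I expect to be the genuine obstacle is discarding the hypothesis that $m!$ be invertible, for then the antisymmetriser is no longer idempotent. My plan there is to work Zariski--locally on $T$: when $C=M_m(T)$ and $V=T^m$ has basis $e_1,\dots,e_m$, the ``determinant tensor'' $\Omega=\sum_{\sigma\in S_m}\operatorname{sgn}(\sigma)\,e_{\sigma(1)}\otimes\cdots\otimes e_{\sigma(m)}\in V^{\otimes m}$ has all coordinates in $\{0,\pm1\}$, so it is unimodular and $T\Omega$ is a rank one direct summand of $V^{\otimes m}$; and a change of basis by $g\in\GL_m(T)$ multiplies $\Omega$ by $\det(g)$, so $T\Omega$ does not depend on the basis. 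Thus the local summands glue to a canonical rank one direct summand $L\subseteq (V')^{\otimes m}$, canonically identified with $\bigwedge^m V'$, and I would take $f$ to be a projector of $(V')^{\otimes m}$ onto $L$. The delicate and essential point is that such a projector can be chosen compatibly with the descent datum carried by $C^m$, although there is no canonical complement of $L$ once $m!$ is not invertible; any two admissible choices are equivalent idempotents and hence yield isomorphic corners, and each corner is $\End_T(L)\cong T$ by the base--change computation above. Alternatively one may reduce to the universal case, where the base ring is a domain of characteristic zero in which $m!$ is a non--zero--divisor, or simply invoke the argument of \cite{S1999}; by any of these routes one obtains the idempotent $f$ with $fC^mf\cong T$ and $f$ the exterior--power idempotent of the statement.
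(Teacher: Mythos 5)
First, note that the paper itself offers no proof of this proposition; it is quoted from \cite[p.~33]{S1999}, so you are supplying an argument the paper omits. Also note that the statement as printed contains typos ($fC^Mf\cong C$ should be $fC^mf\cong T$, and $\bigwedge^n$ should be $\bigwedge^m$): the way the proposition is used later ($fA''f=S'$, the base ring) confirms the intended conclusion, and you have correctly proved the intended statement, not the literal one. Your treatment of the case where $m!$ is invertible is complete and correct: the antisymmetrizer built from the Goldman-type elements $u_\sigma$ is a canonical idempotent of $C^{\otimes m}$ defined already over $T$, it becomes the projector onto the sign component $\bigwedge^m V'$ after splitting, and the corner is Azumaya of degree one, hence $T$.

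The gap is exactly where you flag it, and it is not closed by what you write. When $m!$ is not invertible, the line $L=T'\Omega$ is indeed a canonical direct summand of $(V')^{\otimes m}$ over $T'$, but the module $V'$ and hence $(V')^{\otimes m}$ do not descend to $T$ (that is precisely why $C$ is nonsplit), so there is no module over $T$ inside which ``the local summands glue,'' and a projector onto $L$ chosen over $T'$ has no reason to be fixed by the descent datum. This failure is genuine: a projector onto $L$ that is equivariant under all of $\GL(V')$ does not exist in general (already for $m=2$ in characteristic $2$, the commutant of $\GL(V')$ in $\End(V'^{\otimes 2})$ is spanned by $1$ and the flip $\tau$, and $a+b\tau$ projecting onto $T'\Omega$ forces $a=1/2$), so the idempotent you want over $T$ cannot be obtained by ``choosing compatibly'' over $T'$. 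The repair is to descend not the idempotent but the one-sided ideal it generates: the set of endomorphisms of $(V')^{\otimes m}$ with image contained in $L$ is a canonical, $\GL$-stable, finitely generated right ideal of $C^{\otimes m}\otimes_T T'$, hence descends to a right ideal $\mathcal R_0\subseteq C^{\otimes m}$; since $C^{\otimes m}/\mathcal R_0$ is finitely presented and becomes projective after the faithfully flat extension $T\to T'$, it is projective, so $\mathcal R_0$ is a direct summand and is generated by an idempotent $f\in C^{\otimes m}$; any two idempotents generating the same one-sided ideal have isomorphic corners, and the corner is computed after base change to be $\End_{T'}(L)\cong T'$, whence $fC^{\otimes m}f\cong T$. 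Your fallback of ``reducing to a characteristic-zero base where $m!$ is a non-zero-divisor'' does not rescue the direct approach either, since the antisymmetrizer needs $m!$ invertible, not merely regular, to be idempotent. With the ideal-descent step inserted, your argument becomes a complete proof.
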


Let $f \in A''$ be the idempotent 
guaranteed by \ref{exterior} so $fA''f = S'$. 
Set $e_{m,S'} = f = e'f = fe'$. 
Note that $e_{m,S'}A^me_{m,S'} = fA'f$. Clearly $S' \subset fA'f$ 
and the centralizer of $S'$ in $C = fA'f$ is $fA''f = S'$ and 
$C$ is projective over $S'$. 

\begin{theorem}\label{operationsoncyclics} 
Let $A = \Delta(S/R,\sigma,I,\phi)$ and 
$B = \Delta(S/R,\sigma,J,\psi)$ 
be almost cyclic algebras and let $mk = n$. Then: 
\begin{enumerate}
\item $e_S(A \otimes_R B))e_S \cong \Delta(S/R,\sigma,IJ,\phi \otimes \psi)$, \label{item:es}

\item $C = e_{m,S'}A^me_{m,S'} \cong \Delta(S'/R,\sigma,N_{S/S'}(J),\phi)$ 
where $\phi$ makes sense because $N_{S'/R}(N_{S/S'}(J)) = 
N_{S/R}(J)$.\label{item:c}
\end{enumerate} 
\end{theorem}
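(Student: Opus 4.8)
The plan is to prove the two isomorphisms by identifying both sides as subrings of a common twisted-polynomial construction, using the explicit bimodule descriptions already in hand. Recall that $\Delta(S/R,\sigma,I,\phi) = S \oplus I_1 \oplus \cdots \oplus I_{n-1}$ where $I_j \cong (It)^j$ as $S$-bimodules, and similarly for $B$ with ideals $J_j$. For part \ref{item:es}, I would first unravel the idempotent $e_S$: since $\psi: S \otimes_R S \to S$ is multiplication and $e_S$ is the separating idempotent, $e_S(A \otimes_R B)e_S$ consists of the bimodule pieces where the ``left $S$'' and ``right $S$'' actions from the two factors are forced to agree via $e_S$. Concretely $A \otimes_R B = \bigoplus_{j,j'} I_j \otimes_R J_{j'}$, and cutting by $e_S$ on both sides identifies the $S \otimes_R S$ action so that $e_S(I_j \otimes_R J_{j'})e_S$ becomes $I_j \otimes_S J_{j'}$, which is a $1,\sigma^{j+j'}$ bimodule. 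Collecting the pieces with $j + j' \equiv i$ and using Proposition \ref{exact}/\ref{projexact} (or directly the bimodule arithmetic) gives exactly $(IJ\,t)^i$ as the $i$-th graded piece, and the relation defining the quotient becomes $\phi \otimes \psi$ on $(IJt)^n$. So the main work is bookkeeping: checking that the multiplication on $e_S(A \otimes_R B)e_S$ matches the twisted multiplication in $\Delta(S/R,\sigma,IJ,\phi \otimes \psi)$, and that the norm condition $N(IJ) \cong R$ is exactly $N(I) \otimes N(J)$ under $\phi \otimes \psi$.

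For part \ref{item:c}, the strategy is to apply Proposition \ref{exterior} to the centralizer $B$ of $S'$ inside $A$. First I would verify that $B = \Delta(S/S',\sigma^k, I', \phi')$ for the appropriate ideal and isomorphism --- this follows because $A \supset S$ with $S/S'$ cyclic of degree $m$ (generated by $\sigma^k$), the centralizer of $S$ in $A$ is $S$ (a consequence of $A$ being rank one over $S \otimes_R S$), and the bimodule pieces $A_{\sigma^{jk}}$ restricted to $S/S'$ reassemble into the twisted polynomial form. Then $B/S'$ is Azumaya of degree $m$, so Proposition \ref{exterior} applies: $e_{m,S'} A^m e_{m,S'} = f A' f$ is Brauer-equivalent to $B^m$ cut down to degree... wait, more precisely $fA''f = S'$ means the centralizer collapses, leaving $C = e_{m,S'}A^m e_{m,S'}$ an Azumaya $S'$-algebra of degree $k = n/m$ with $S'$ as a maximal commutative subring sitting inside the larger $S^m_{S'}$. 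The graded pieces of $C$ are then the $S'$-bimodules $(N_{S/S'}(I)\,t)^i$ for $0 \le i < k$, because the idempotent $e'$ forces the $m$-fold tensor of $I$ over $R$ down to $I \otimes_S \sigma(I) \otimes_S \cdots$, i.e. the norm $N_{S/S'}$, and $f$ picks out the antisymmetric/norm part consistently. The defining relation descends to $\phi$ via the compatibility $N_{S'/R}(N_{S/S'}(I)) = N_{S/R}(I)$, which makes the quotient well-defined.

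The main obstacle I anticipate is the bookkeeping around the two idempotents $e'$ and $f$ in part \ref{item:c}, and in particular showing that the graded piece in degree $i$ of $e_{m,S'}A^m e_{m,S'}$ is genuinely $N_{S/S'}(I)\sigma'(N_{S/S'}(I))\cdots$ and not some twist of it. The subtlety is that $A^m = \bigoplus_{(j_1,\ldots,j_m)} I_{j_1} \otimes_R \cdots \otimes_R I_{j_m}$, and cutting by $e' = e_{m,S'}$'s ``product part'' collapses the $R$-tensors over $S'$ only in the way dictated by the map $\phi': S^m \to S^m_{S'}$, while $f$ further restricts to the image of $\bigwedge$. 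I would handle this by first passing to the case where $I$ and $I' = N_{S/S'}(I)$ are free (localizing, since all the asserted isomorphisms are local), reducing to the classical statement about powers of cyclic algebras $\Delta(S/R,\sigma,a)^m$ being Brauer-equivalent to $\Delta(S'/R,\sigma, N_{S/S'}(a))$ with the $e_{m,S'}$ giving the canonical representative, and then noting the ideal-theoretic statement glues. A secondary, minor obstacle is making the isomorphisms of part \ref{item:es} respect the $G$-action used to define $\phi \otimes \psi$, but since $e_S$ is $G$-invariant (it is the image of the separating idempotent under the diagonal $S \otimes_R S$ action) this should be automatic.

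Throughout, I would lean on Proposition \ref{injective} and \ref{azumayabimodule} to know the relevant bimodule pieces are rank-one projective, so that checking an isomorphism after localization (where everything is free and the computation is the classical one) suffices --- this is the device that converts the potentially messy global bimodule algebra into the well-known cyclic-algebra identities.
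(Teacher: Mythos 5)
Your plan is essentially the paper's own argument: for part \ref{item:c} the paper also reduces to the split/free case by faithfully flat base change, works in $\End_R(V)$ with $V \cong S$, and uses the wedge-power description of the idempotent from Proposition~\ref{exterior} to show $e_{m,S'}J_{\sigma}^m e_{m,S'}$ is the appropriate $1,\bar\sigma$ bimodule, finishing with the identification $\bigwedge_{S'}^m J \cong N_{S/S'}(J)$; part~\ref{item:es} is dismissed as straightforward in both. One small inaccuracy in a peripheral aside: the classical power relation for a genuine cyclic algebra $\Delta(S/R,\sigma,a)$ with $a \in R$ is that $e_{m,S'}\Delta(S/R,\sigma,a)^m e_{m,S'} \cong \Delta(S'/R,\bar\sigma,a)$, not $\Delta(S'/R,\bar\sigma,N_{S/S'}(a))$ --- in that case $J = S$ is trivial in $\Pic(S)$, so $N_{S/S'}(J) = S'$ is also trivial, and the scalar $a$ is carried unchanged by $\phi$ through the identification $N_{S'/R}(N_{S/S'}(J)) = N_{S/R}(J)$; this does not affect your argument, which correctly works with the Picard datum and $\phi$ separately, but you should not conflate $N_{S/S'}$ applied to the ideal with a norm applied to the parameter.
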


\begin{proof}
The proof of \ref{item:es} is straightforward, so we will only give 
the argument for \ref{item:c}. Let $G = <\sigma>$ be the Galois group 
of $S/R$ and $G/C$ be the Galois group of $S' = S^C$ over $R$ 
where $C = <\sigma^{m/n}>$. 
If $\eta \in G$ let $\bar \eta = \eta{C} \in G/C$. 
For $\bar \eta \in G/C$, let $K_{\bar \eta} \subset C$ 
be $\{ x \in C | xs' = \bar \eta{s'}x $ all $s' \in S' \}$. 

We claim: 

\begin{lemma} 
$e_{m,S'}J_{\eta}^me_{m,S'} = K_{\bar \eta}$. 
\end{lemma}

\begin{proof} 
First of all we only give the argument when 
$\eta = \sigma$ as it contains all the essentials. 
Furthermore, to prove this fact we can extend scalars and assume 
$S = R \oplus \cdots \oplus R$ is split, and  
$A \cong \End_R(V)$ where $V \cong S$. 
Then $V$ has an $R$ basis $v_0,\ldots,v_{n-1}$ 
of eigenvectors for $S$. If $e_i\in S$ is the idempotent 
associated to $v_i$, we can assume $\sigma(e_i) = e_{i+1}$ 
and $\sigma(e_{n-1}) = e_0$. If $u \in J_{\sigma}$ 
then $u(e_i) = u_{i+1}e_{i+1}$ for $u_{i+1} \in R$ 
and $u^n = \prod_i u_i$. 

Now $S' \subset S$ is the subring 
such that if $s' \in S'$ is written $s' = \sum r_ie_i$, 
then $r_i = r_j$ whenever $k = n/m$ divides $i-j$. 
We set $w_i = v_{ik} + \cdots + v_{ik + k-1}$ and note that 
the $w_0,\ldots,w_{m-1}$ form an $S'$ basis for $V$. 

The idempotent $e_{m,S'} \in A^m = \End_R(V^m)$ 
corresponds to an $R$ direct summand of 
$V^m = V \otimes_R \cdots \otimes_R V$. 
In fact applying $e'$ and then $f$, this direct summand 
arises from the natural map $V^m \to \bigwedge^m_{S'} V$. 
Now $\bigwedge^m_{S'} V$ is the free $S'$ module 
of rank one with generator $w_0 \wedge \cdots \wedge w_{m-1}$. 
Thus in this special case $C = e_{m,S'}A^me_{m,S'} = 
\End_R(S'x_0)$. 

For $i = 0,\ldots,k-1$, let $f_i = 
\sum_{j=0}^{m-1} e_{i + jk} \in S'$ be the idempotents, 
so that the $f_i$ span $S'$ over $R$. Note that 
$\sigma(f_i) = f_{i+1}$ and $\sigma(f_{k-1}) = f_0$.
The $x_i = f_i(w_0 \wedge \cdots \wedge w_{m-1})$ span 
$\bigwedge^m_{S'} V$ over $R$. 
We have $x_i = v_i \wedge v_{i+k} \wedge \cdots \wedge v_{i + (m-1)k}$. 
 
Suppose $z_0,\ldots z_{m-1} \in J_{\sigma}$ 
and define $z_{i,j}$ by $z_i(v_j) = z_{i,j+1}v_{j+1}$. 
Then direct computation shows that 
$e_{m,S'}(z_0 \otimes \cdots \otimes z_{m-1})e_{m,S'}(x_j) = 
(\prod_i\prod_l z_{i,j + 1 + l(m - 1)})x_{j+1}$ 
and $e_{m,S'}(z_0 \otimes \cdots \otimes z_{m-1})e_{m,S'}(x_{m-1}) = 
(\prod_i\prod_l z_{i,l(m - 1)})x_{0}$. 
This shows $e_{m,S'}J_{\eta}^me_{m,S'} \subset K_{\bar \eta}$ 
in this case. 

To prove 
equality we may take a faithfully flat extension 
and thereby assume $J_{\sigma} = Su$ for $u$ a unit.  
A further faithfully flat extension allows us to assume 
$u^n = 1$. Changing the $v_i$ if necessary, we can assume 
$u(v_i) = v_{i+1}$ and $u(v_{m-1}) = u_0$. 
The computations above show that if 
$v = e_{m,S'}(u\otimes\cdots\otimes{u})e_{m,S'}$ then 
$v(x_i) = x_{i+1}$ and $v(x_{k-1}) = x_0$. 
Clearly $K_{\sigma} = S'v$ and the lemma is proven. 
\end{proof}

We now return to the proof of part 2) of the theorem. 
We saw in the proof of the lemma above that 
after a faithfully flat extension $K_{\sigma} = 
S'v$ and the parallel argument for $\sigma^{-1}$ 
makes it clear that $K_{\sigma}K_{\sigma^{-1}} = 
S'$. We need $e_{m,S'}J_{\sigma}^me_{m,S'} \cong 
N_{S/S'}(J)$ as $S'$ modules which is the observation
that $\bigwedge_{S'}^m(J) \cong N_{S/S'}(J)$. 
\end{proof}

Picard group elements can be difficult to work 
with and difficult to ``lift". However, there is 
a special case of almost cyclic algebras that is much easier. 
Suppose $S/R$ is a cyclic Galois extension of degree 
$n$ with $R$ a domain and $a \not= 0$. 
Set $F = q(R)$ and $K = S \otimes_R F$. Form the 
central simple algebra $B = \Delta(K/F,\sigma,a)$ 
where $a \in R$. Suppose $B$ contains $x,y$ 
such that $x^n = a$, $y^n = b$, $xs = \sigma(s)x$ and 
$sy = y\sigma(s)$ for all $s \in S$. Further assume 
$\alpha = xy \in S$ is such that 
the ideal $Sa + S\alpha + S\adj(\alpha) + Sb = S$. 
Let $A = \Delta(S/R,a,\alpha,b)$ be the subalgebra 
generated by $S$, $x$ and $y$.

\begin{proposition} $A$  
is almost cyclic and Azumaya over $R$.
\end{proposition}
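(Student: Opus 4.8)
The plan is to reduce to the generic case and then apply Proposition~\ref{azumaya}. First I would pass to the generic situation: let $R_0 = R[a,b,\ldots]$ be a polynomial ring over a base (e.g.\ $\Z$ or $\Z[\rho]$) whose field of fractions $F_0$ carries the central simple algebra $B_0 = \Delta(K_0/F_0,\sigma,a)$ containing the generic $x,y,\alpha = xy$, and let $A_0 = \Delta(S_0/R_0,a,\alpha,b)$ be the corresponding order. Since the defining relations $x^n = a$, $y^n = b$, $xs = \sigma(s)x$, $sy = y\sigma(s)$ are preserved under any specialization $R_0 \to R$, the algebra $A$ of the statement is $A_0 \otimes_{R_0} R$ (up to the natural identifications), so it suffices to prove the result for $A_0$; then Azumaya-ness and the ``almost cyclic'' structure descend. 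Actually, because the argument below only uses the hypothesis $Sa + S\alpha + S\adj(\alpha) + Sb = S$ localized at maximal ideals, one can also argue directly over $R$ without the generic reduction — I would present whichever is cleaner, but the generic reduction has the advantage of letting me assume $R$ is a regular domain with $B = A \otimes_R F$ already central simple.

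Next I would identify $A$ as an almost cyclic algebra. Set $J_\sigma = Sx + Sy^{n-1}$ and $J_{\sigma^{-1}} = Sx^{n-1} + Sy$ inside $A$; as in the proofs of \ref{differential} and \ref{almostrho}, these are the bimodule components $A_\sigma$ and $A_{\sigma^{-1}}$ in the sense of \ref{bimodule} and \ref{azumayabimodule}, hence are rank one projective over $S$. The key computation is $y^{n-1}x^{n-1} = b y^{-1} a x^{-1} = ab\,\alpha^{-1} = \adj(\alpha)$ (working in $B$), so that
\[
J_\sigma J_{\sigma^{-1}} = (Sx + Sy^{n-1})(Sx^{n-1} + Sy) = Sa + Sb + S\alpha + S\adj(\alpha).
\]
By hypothesis this ideal equals $S$. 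Since $J_\sigma J_{\sigma^{-1}} \subset S$ always and now equals $S$, Proposition~\ref{azumaya} (equivalence of \ref{itm:a} and \ref{itm:b}, using that the centralizer $C(S) = S$, which holds because $x^iy^j \in A_{\sigma^{i-j}}$ and $e_1$ kills those with $i \not\equiv j$, exactly as in the lemma preceding \ref{differential}) gives that $A/R$ is Azumaya. One small point to check is that $C(S) = S$: this needs that $1,\ldots,\alpha^{n-1}$ spans $S$, i.e.\ that $S = R[\alpha]$ has degree $n$, which follows since $\{x^iy^j\}$ is an $R$-basis of $A$ and $\alpha = xy$ together with the relation $y^nx^n = $ (unit or non-unit multiple of) $\adj(\alpha)$-type expressions shows $\alpha$ generates a subring of rank $n$.

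Finally, for the ``almost cyclic'' assertion I would invoke Proposition~\ref{injective}(b): with $I = J_\sigma$, $I' = J_{\sigma^{-1}}$, rank-$n$ $1,\sigma$- and $1,\sigma^{-1}$-sub-bimodules with $II' = S$ and $I'I \subset S$, the multiplication $J_\sigma \otimes_S J_{\sigma^{-1}} \to S$ is an isomorphism and the $J_\sigma$ are uniquely determined. Then $T = S \oplus J_\sigma \oplus J_\sigma^2 \oplus \cdots \oplus J_\sigma^{n-1}$ with $(J_\sigma)^n \cong S$ (via the reduced-norm / multiplication identification, which is $G$-equivariant) exhibits $A \cong \Delta(S/R,\sigma,J_\sigma,\phi)$ for the induced $\phi$, which is precisely the definition of an almost cyclic algebra. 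The main obstacle I anticipate is the bookkeeping in the identity $J_\sigma J_{\sigma^{-1}} = Sa + Sb + S\alpha + S\adj(\alpha)$ — specifically verifying $y^{n-1} = \adj(\alpha)\,x^{-(n-1)} \cdot(\text{unit})$ type rewritings in the division algebra $B$ and checking they land back in $A$ — but this is exactly parallel to the degree $p$ computations already carried out in \ref{differential} and \ref{almostrho}, so it should go through with care rather than with a new idea.
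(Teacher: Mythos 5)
Your argument matches the paper's: both form $J_{\sigma} = Sx + Sy^{n-1}$ and $J_{\sigma^{-1}} = Sx^{n-1} + Sy$, compute $J_{\sigma}J_{\sigma^{-1}} = Sa + Sb + S\alpha + S\adj(\alpha) = S$, and then apply Lemma~\ref{injective} together with Proposition~\ref{azumaya}; the preliminary generic reduction you sketch is optional, as you yourself note, and the paper dispenses with it and works directly over $R$. The one place your write-up is imprecise is the verification that $C(S) = S$: you invoke ``$\{x^iy^j\}$ is an $R$-basis of $A$'' and ``$S = R[\alpha]$ has degree $n$,'' but in this generality $S$ is a given cyclic Galois extension that merely contains $\alpha = xy$, not $R[\alpha]$; what the paper actually proves (in a short lemma embedded in this very proof) is that $A$ is spanned over $R$ by the $Sx^i$ and $Sy^j$ for $0 \le i,j \le n-1$, and that spanning fact both shows $A$ is a finitely generated order and, since $Sx^i \subset A_{\sigma^i}$ and $Sy^j \subset A_{\sigma^{-j}}$ for $i,j \not\equiv 0$, gives $C(S) = e_1A = S$ with no need for $S = R[\alpha]$ or for a monomial basis.
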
  

\begin{proof} 
We have really already some of this argument in pieces, 
but we will gather here some key details. 

First, we verify specifically that $A$ is an order. 

\begin{lemma}
$A$ is spanned as a $R$ module by the $Sx^i$ and $Sy^j$ 
for $0 \leq i,j \leq n-1$. 
\end{lemma}

\begin{proof}
Note that $x^iS = Sx^i$ and $y^jS = Sy^j$. 
Also, 
$$x^iy^i = \sigma^{i-1}(\alpha)\cdots\alpha \in S.$$  
Thus $Sx^iy^j \subseteq Sx^{i-j}$ if $i \geq j$ and 
$x^iSy^j \subseteq Sy^{j-i}$ if $j \geq i$. 
Of course $(Sx^i)x^j = Sx^{i+j}$ if $i + j < n$ 
and $Sx^ix^j \subseteq Sx^{i+j-n}$ if $i + j \geq n$, 
and similarly for $Sy^iy^j$. Thus if $A'$ is the $R$ submodule 
generated by the $Sx^i$ and $Sy^j$ then $A'$ is finitely 
generated and closed under multiplication.
\end{proof}

Returning to the whole argument, 
let $J_{\sigma}' = Sx + Sy^{n-1}$ and $J_{\sigma^{-1}}' = 
Sx^{n-1} + Sy$ which are contained in $J_{\sigma} = 
\{z \in A | zs = \sigma(s)z\}$ and $J_{\sigma^{-1}} = 
\{z \in A | sz = z\sigma(s)\}$ respectively and are both bimodules. 
Then $J_{\sigma}' = I_{\sigma}x$ where $I_{\sigma} = 
S + S\alpha^{-1}b$ and $J_{\sigma^{-1}}' = I_{\sigma^{-1}}x^{-1}$ 
where $I_{\sigma^{-1}} = Sa + S\sigma^{-1}(\alpha)$. 
Of course $J_{\sigma}'J_{\sigma^{-1}}' = I_{\sigma}\sigma(I_{\sigma^{-1}}) = (S + S\alpha^{-1}b)(Sa + S\alpha) = 
Sa + Sb + \alpha + \adj(\alpha) = S$ which implies 
$J_{\sigma}J_{\sigma^{-1}} = S$. By Lemma~\ref{injective} 
and Proposition~\ref{azumaya} we are done. 
\end{proof} 

If $a$ is zero but $b \not= 0$ one can make the above 
construction but reversing the roles of $x$ and $y$. 
If $a = b = 0$, then the above argument does not work. 
Thus we need: 

\begin{lemma}
Suppose $S/R$ is cyclic Galois of degree $n$ with generator 
$\sigma$. Suppose $\alpha \in S$ has rank $n-1$. 
Let $B$ be the free $R$ algebra generated by 
$S$, $x$ and $y$ subject to $x^n = 0$, $y^n = 0$, 
$xy = \alpha$, and $yx = \sigma^{-1}(\alpha)$, 
$xs = \sigma(s)x$ and $sy = y\sigma(s)$ for all 
$s \in S$. 
Then $B$ is Azumaya of degree $n$, split, and almost cyclic. 
\end{lemma}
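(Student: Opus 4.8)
The plan is to reduce the claim to an instance of the almost cyclic machinery already developed, with the slight wrinkle that here $a=b=0$ so $x$ and $y$ are nilpotent and neither serves as a Kummer-type generator. I would first check that $B$ is an order, i.e. a finitely generated projective $R$-module: exactly as in the preceding lemma, $x^iS=Sx^i$, $y^jS=Sy^j$, and $x^iy^i=\sigma^{i-1}(\alpha)\cdots\alpha\in S$, so the $R$-submodule spanned by the $Sx^i$ and $Sy^j$ ($0\le i,j\le n-1$) is closed under multiplication and is all of $B$; counting shows $B$ is free of rank $n^2$ over $R$, hence $S\subset B$ is a maximal commutative subalgebra and $B$ is projective over $S$ on the left. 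Then I would identify the centralizer $C(S)$: writing $x^iy^j\in A_{\sigma^{i-j}}$ and using that $S/R$ is Galois so $e_1S=S$, the centralizer is $e_1B=S$, exactly as in the $(a,b)$ case.

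Next I would run the Azumaya criterion \ref{azumaya}. This requires a domain with fraction field $F$ so that $B\otimes_R F$ is central simple, so I would first do the generic case: take $R$ to be a suitable polynomial ring (or, better, specialize from the $(a,b)_\rho$-type construction of \ref{almostrho} by sending the relevant parameters to $0$), prove the result there, and deduce the general statement by base change $\phi\colon R\to R'$, since $B$ is defined by universal relations and $B\otimes_\phi R'$ has the same presentation. In the generic case, set $J_\sigma'=Sx+Sy^{n-1}$ and $J_{\sigma^{-1}}'=Sx^{n-1}+Sy$, contained in $J_\sigma$ and $J_{\sigma^{-1}}$ respectively; since $x$ and $y$ are not invertible I cannot write $y^{n-1}=y^{-1}b$, but I would instead argue locally at each maximal ideal $M\subset R$. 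The point, already flagged in the matrix discussion, is that $\alpha$ has rank $n-1$, so $x$ and $y$ each have rank $\ge n-1$ in $B/MB$; choosing an eigenbasis $v_0,\dots,v_{n-1}$ for $\bar S$ we may arrange $x(v_i)=v_{i+1}$ for $i<n-1$ (with $x(v_{n-1})=0$) and $y(v_i)=f_iv_{i-1}$ with $f_i$ a unit for $i>0$ (with $y(v_0)=0$). Then the diagonal matrices $\bar S$ together with the superdiagonal part of $x$ and the subdiagonal part of $y$ already generate $\mathrm{End}_{R/M}(V)$, so $B/MB$ is Azumaya; equivalently $J_\sigma J_{\sigma^{-1}}+MS=S$ for every $M$, whence $J_\sigma J_{\sigma^{-1}}=S$, and \ref{injective} plus \ref{azumaya} give that $B$ is Azumaya of degree $n$.

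Finally, almost cyclicity: having $J_\sigma J_{\sigma^{-1}}=S$ with $J_\sigma$ rank one projective over $S$ (by \ref{azumayabimodule}), $B$ is of the form $\Delta(S/R,\sigma,J_\sigma,\phi)$ for the isomorphism $\phi\colon(J_\sigma t)^n\cong S$ coming from $x^n=0$ — more precisely $B=S\oplus J_\sigma\oplus\cdots\oplus J_\sigma^{\,n-1}$ as $S$-bimodules — so $B$ is almost cyclic by definition. Splitness is immediate once $B$ is Azumaya with the maximal commutative subring $S$: since $S/R$ is Galois, $S$ splits $B$, i.e. $B\otimes_R S\cong\mathrm{End}_S(\text{something})$; alternatively, the generic $B$ arises by specializing $(a,b)_\rho$ at $a=b=0$, and one checks directly in the split model above that $B\cong\mathrm{End}_R(V)$ with $V\cong S$, so $[B]=0$ in $\Br(R)$. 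I expect the main obstacle to be the rank/nilpotency bookkeeping in the local computation — making precise that ``rank $\alpha=n-1$'' forces the stated normal form for $x$ and $y$ and that the corner entries are irrelevant to generating the full matrix algebra — since that is exactly the place where the $a=b=0$ case departs from the cyclic-algebra template and cannot borrow the $y^{n-1}=y^{-1}b$ trick.
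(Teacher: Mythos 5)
The paper's proof does not reach for Lemma~\ref{azumaya} at all, and this is the key place where your plan diverges and runs into trouble. The paper begins by observing that ``$\alpha$ has rank $n-1$'' already forces $S/R$ to be \emph{globally} split: $S/\alpha S$ is locally free of rank one and separable over $R$, so $\Spec(S/\alpha S)\subset\Spec(S)$ is a component isomorphic to $\Spec(R)$, and the $G$-translates of the corresponding idempotent split $S\cong R\oplus\cdots\oplus R$. Writing $\alpha=(a_1,\dots,a_n)$, the rank-$(n-1)$ condition forces the vanishing loci of the $a_i$ to be disjoint, so $R\cong\prod_i R/a_iR$; this decomposes the problem into pieces where (after relabeling) $\alpha=(0,a_2,\dots,a_n)$ with each $a_i$ a unit. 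Everything after that is an explicit computation with the idempotents $e_i$: $e_iBe_i=Re_i$, the centralizer of $S$ is $S$, and $J_i=Sx^i+Sy^{n-i}$ has $J_1(Sx^{n-1}+Sy)=S\alpha+S\!\adj(\alpha)=S$, giving a rank-one projective $J_\sigma$ with $J_\sigma J_{\sigma^{-1}}=S$. From this the Azumaya, almost cyclic, and split conclusions are immediate, with no domain hypothesis and no generic model.

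By contrast, your plan to reduce to a domain and then apply \ref{azumaya} via a ``generic'' case and specialization is underdetermined. You would need a generic triple $(R,S,\alpha)$ with $S/R$ cyclic Galois and $\alpha$ of rank exactly $n-1$, and it is not clear what polynomial ring parametrizes that. Specializing $(a,b)_\rho$ at $a=b=0$ only produces one very particular $\alpha$ (a root of $Z^p+g(Z)$), not an arbitrary rank-$(n-1)$ element, so it cannot cover the general statement by base change. Your alternative route — check $J_\sigma J_{\sigma^{-1}}+MS=S$ at every maximal ideal and conclude $J_\sigma J_{\sigma^{-1}}=S$ by Nakayama — is fine as far as it goes, but you then need a domain-free replacement for \ref{azumaya}; the paper gets around this by having an explicit $\prod R/a_iR$ decomposition in hand. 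Separately, your first argument for splitness (``since $S/R$ is Galois, $S$ splits $B$'') only shows $[B]$ dies in $\Br(S)$, not that $[B]=0$ in $\Br(R)$; your second argument (exhibiting $B\cong\End_R(V)$) is the right one, but it depends on precisely the global split structure of $S$ that you do not establish. So the two places you flagged as ``main obstacles'' are exactly where the argument needs the paper's global split observation, which your proposal omits.
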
 

\begin{proof}
We saw above that $B$ is spanned as an $S$ module by 
the $x^i$ and $y^j$. Since $\alpha$ has rank $n-1$, 
$S = R \oplus \cdots \oplus R$ is split. 
Write $\alpha = (a_1,\ldots,a_n)$ so $a_1\ldots{a_n} = 0$ 
and 
the max spec of $R$ is the disjoint union 
of the sets of maximal ideals ${\cal M}_i = \{M | a_i \in M\}$. Since $\alpha$ has rank $n-1$, 
${\cal M}_i \cap {\cal M}_j = \emptyset$ when 
$i \not= j$. 
It follows that $a_i$ and $a_{i+1}\ldots{a_n}$ generate the 
unit ideal and hence that $R \to \oplus R/a_iR$ 
is surjective. The kernel is $Ra_1 \cap \cdots \cap Ra_n$. 
However, set of $s_i = a_1\ldots{a_{i-1}}a_{i+1}\ldots{a_n}$ 
generate the unit ideal and hence this kernel is $0$. 

It suffices to prove the result for each $R/a_iR$. 
Thus we may assume $\alpha = (0,a_2,\ldots,a_n)$ where the 
$a_i$ are units. Since $\adj(\alpha) = (a_2\ldots{a_n},0,\ldots,0)$ 
it follows that $xy$ and $yx$ generate the unit ideal of $S$. 
Thus if $bx = 0$ and $by = 0$ then $b = 0$. 

Let $e_i = (0,\ldots,0,1,0 \ldots,0)$ with the $1$ 
is the $i$ place. Then $B$ is the direct sum of the $e_iBe_j$. 
Also, $e_i(Sx^k)e_i = Se_ie_{i+k}x^k = 0$ if $k > 0$. 
Similarly $e_iSy^ke_i = 0$. It follows that $e_iBe_i = e_iSe_i = 
Re_i$. 

Suppose $b$ centralizes $S$. Write $b = \sum_{i,j}b_{ij}$ 
where $b_{ij} \in e_iBe_j$. Then $e_kb = \sum_j b_{kj}$ 
and $be_k = \sum_j b_{jk}$ so it $b$ centralizes $S$ we have 
$b_{ij} = 0$ for $i \not= j$ and so $b \in \sum_i e_iBe_i = 
Re_i = S$. 

If $J_1 = Sx + Sy^{n-1}$ and $K = Sx^{n-1} + Sy$ then 
$JK = S$ and $KJ \subset S$ so $J$ is projective over $S$, 
and arguing similarly $J_i = Sx_i + Sy^{n-i}$ is projective over 
$S$. 

\end{proof} 

Note that the above construction is really a very special 
case of an almost cyclic algebra. The Picard group 
element is generated by 2 elements, which is quite special. 
In fact, if $I$ is generated by two elements then 
there is no reason to believe $I^r$ is generated 
by 2 elements, so if we change $\sigma$ the algebra 
is no longer special. Similarly if $S \supset S' \supset R$ 
is a tower of cyclic extensions, and $I \in \Pic(S)$, then 
$N_{S/S'}(I)$ need no longer be 2 generated.  Thus the power operation 
of \ref{operationsoncyclics} does not preserve this specialness. 
However, the special case does encompass $(a,b)$ and $(a,b)_{\rho}$.

Recall that in the cases of $(a,b)$ and $(a,b)_{\rho}$ 
we had the stronger property 
$Sa + S\alpha + S\adj(\alpha) = S$, 
and it pays to make this a definition. 
If $a \in R$ we say a $G = <\sigma>$ Galois extension is 
$a -$ {\bf split} if $S/aS \cong R/aR \oplus \cdots \oplus R/aR$ 
is split. 

\begin{lemma} 
Suppose $S/R$ is $G = <\sigma>$ Galois and $a -$
split. Then there is an $\alpha \in S$ and an almost cyclic algebra 
$\Delta(S/R,a,\alpha,b)$. If $R$ is a regular domain, 
the Brauer class of $\Delta(S/R,a,\alpha,b)$ only depends on 
$S/R$ and $a$. 
\end{lemma}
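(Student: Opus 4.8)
The first assertion is essentially a repackaging of what has already been done: since $S/R$ is $a$-split, $S/aS\cong R/aR\oplus\cdots\oplus R/aR$, so inside this split algebra there is a natural element playing the role of the Kummer-type element. I would take $\alpha\in S$ to be any lift of the cyclically-shifted idempotent data, i.e. an element whose image in $S/aS$ has the shape $(0,1,1+\rho,\dots)$-type pattern (or more simply, any $\alpha$ whose reduction modulo $aS$ has rank $n-1$ and is cyclically permuted by $\sigma$ up to the appropriate additive shift). Then one checks $Sa+S\alpha+S\adj(\alpha)=S$ by localizing: at a maximal ideal $M$ not containing $a$ this is immediate, and at $M\ni a$ one reduces mod $aS$, where $S/aS$ is split and $\alpha+\adj(\alpha)$ becomes a unit exactly as in the proofs of \ref{almostrho} and \ref{differential}. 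With that ideal condition in hand, form $B=\Delta(K/F,\sigma,a)$ over the fraction field $F=q(R)$, locate $x,y$ with $xy=\alpha$ and $y^n=b$ (for a suitable $b$, e.g.\ $b=N(\alpha)/a$ when $a\neq 0$, and $b=0$ in the degenerate case handled by the preceding lemma), and let $A=\Delta(S/R,a,\alpha,b)$; the preceding proposition then gives that $A$ is almost cyclic and Azumaya.

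For the second assertion — that when $R$ is a regular domain the Brauer class depends only on $S/R$ and $a$ — the plan is to compare two choices $\Delta(S/R,a,\alpha,b)$ and $\Delta(S/R,a,\alpha',b')$ coming from (possibly) different auxiliary data. Both are Azumaya of degree $n$ with $S$ as a maximal commutative subalgebra, hence both are split by $S$; so their difference $[\,\Delta(S/R,a,\alpha,b)\,]-[\,\Delta(S/R,a,\alpha',b')\,]$ lies in the kernel of $\Br(R)\to\Br(S)$, which by the exact sequence for the split-by-$S$ situation is the image of a class built from a cyclic (almost cyclic of the simplest type) algebra, equivalently is represented by some $\Delta(S/R,\sigma,J,\phi)$ with $J$ trivial over the fraction field. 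Because $R$ is regular, $\Br(R)\hookrightarrow\Br(F)$ (as used already in \ref{brauer}), so it suffices to check equality of Brauer classes over $F$. Over $F$ everything becomes Kummer/cyclic theory: $\Delta(K/F,\sigma,a)$ depends only on $K/F$ and the class of $a$ in $F^*/N_{K/F}(K^*)$, and the two constructions produce the \emph{same} $a$ and the same $K/F=S\otimes_R F$, so they are isomorphic over $F$ and therefore Brauer-equivalent over $R$.

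The main obstacle, as I see it, is the passage from ``equal over $F$'' back to ``equal in $\Br(R)$'' in a way that does not secretly use more than regularity — specifically one must be sure the relevant class lives in $\Br(R)$ and not merely $\Br(F)$, and that the injectivity $\Br(R)\hookrightarrow\Br(F)$ applies. This is exactly where regularity of $R$ is essential: for a regular domain the Brauer group injects into that of the fraction field (Auslander–Goldman), and this is the hypothesis the lemma isolates. A secondary nuisance is bookkeeping about the element $b$: different almost cyclic presentations attach different $b$'s, and one has to observe that $b$ is not extra data but is determined (up to norms, and up to the degenerate $a=b=0$ case) by $a$, $\alpha$, and the requirement $\alpha=xy$ with $x^n=a$; since $\alpha$ itself is only well-defined up to the ambiguities that wash out over $F$, the cleanest route is to argue entirely at the level of Brauer classes over $F$ rather than tracking $(\alpha,b)$ integrally. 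I would also remark that the degenerate case $a=0$ (and $a=b=0$) is already covered by the two lemmas immediately preceding the statement, so the Brauer-class uniqueness argument only needs the generic $a\neq0$ case plus the observation that the split algebra is the zero class.
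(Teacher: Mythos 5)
Your proposal is correct and takes essentially the same route as the paper: choose a preimage $\alpha$ of a rank-$(n-1)$ element $\bar\alpha=(0,r_2,\ldots,r_n)$ with $r_i\in(R/aR)^*$, verify $Sa+S\alpha+S\adj(\alpha)=S$ locally, invoke the preceding propositions for the Azumaya property, and use $\Br(R)\hookrightarrow\Br(F)$ for regular $R$ together with the fact that $\Delta(S/R,a,\alpha,b)\otimes_R F\cong\Delta(K/F,\sigma,a)$ to get uniqueness of the Brauer class. One small caution: the ``$(0,1,1+\rho,\dots)$'' and ``cyclically permuted by $\sigma$ up to an additive shift'' phrasings import the $\Z[\rho]$-algebra structure from an earlier section, which this section explicitly does \emph{not} assume, and they over-constrain $\bar\alpha$ beyond what is needed; your fallback characterization (rank $n-1$ with the nonzero entries units) is what the paper actually uses.
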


\begin{proof} 
If $S/aS = R/aR \oplus \cdots \oplus R/aR$ then we can 
choose $\bar \alpha = (0,r_2,\ldots,r_n)$ where all the $r_i$ 
are units of $R/aR$. Thus $\bar \alpha$ and $\adj(\bar \alpha)$ generate 
the unit ideal in $S/aS$. Let $\alpha \in S$ be a preimage 
of $\bar \alpha$. Since $n(\bar \alpha) = 0 \in R/aR$, 
$n(\alpha) = ab$ for some $b$. It is clear that 
$Sa + S\alpha + S\adj(\alpha) = S$ and so $\Delta(S/R,a,\alpha,b)$ 
is Azumaya. If $F$ is the field of fractions of $R$, 
then the image of this almost cyclic algebra is 
$\Delta(S/R,a)$ and $\Br(R) \to \Br(F)$ is injective.
\end{proof}  

If $a \in R$ and $\alpha \in S$ are such that $N_{S/R}(\alpha)$ 
is divisible by $a$ and $Sa + S\alpha + S\adj(\alpha) = S$ 
we say that $a,\alpha$ are {\bf suitable} in $S$. 
The algebra $\Delta(S/R,a,\alpha,b)$ we call an almost 
cyclic algebra of the {\bf special sort}. 
Of course, the differential crossed product $(a,b)$ 
has $a,\alpha$ suitable where $\alpha^p - \alpha = ab$.  

We want to use almost cyclic algebras to solve two questions. 
The first involves $p$ divisibility. In \cite[p.~37]{KOS} 
it was shown that 
for any $R$ of characteristic $p$, $\Br(R)$ is $p$ 
divisible and the $p$ torsion part of $\Br(R)$ 
is generated by the classes of algebras of the 
form $(a,b)$. We would, however, like to give an explicit 
algebra such that the class $[A]$ satisfies $p^n[A] = (a,b)$. 
To manage this we can restrict to the case of the polynomial 
ring $R = F_p[a,b]$. To find $A$ as above we first need to 
solve a slightly modified cyclic embedding problem. 

Suppose $S'/R'$ is cyclic Galois of degree $p^m$ and $a -$ split. 
Assume $R'$ is of characteristic $p$. 
We want to find $T'/R'$ such that $T' \supset S' \supset R'$ 
is cyclic Galois of degree $n+1$ and is also $a -$ split. 
This is not difficult using the approach and results 
of \cite{S1981} as follows. 

\begin{theorem}
Suppose $R$ is a commutative ring of characteristic $p$ 
and $S/R$ is cyclic Galois of degree $p^m$. Assume 
$a \in R$ is such that $S/R$ is $a -$ split. Then 
there is an cyclic Galois extension $T/R$ of degree $p^{m+1}$ 
such that $S \subset T$ is the induced subextension and 
$T$ is $a -$ split. 
\end{theorem}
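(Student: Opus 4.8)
The plan is to use the explicit Witt vector / Artin--Schreier--Witt description of cyclic $p$-extensions in characteristic $p$, exactly as in \cite{S1981}, but to track the extra $a-$split condition along the way. Recall that a cyclic Galois extension $S/R$ of degree $p^m$ in characteristic $p$ is described by a Witt vector $\underline{w} = (w_0, w_1, \ldots, w_{m-1}) \in W_m(R)$ via the equation $\wp(\underline{x}) = \underline{w}$, where $\wp = F - \mathrm{id}$ is the Artin--Schreier--Witt operator on $W_m$, and the Galois action is $\sigma(\underline{x}) = \underline{x} + \underline{1}$. Extending $S/R$ to a degree $p^{m+1}$ cyclic extension $T/R$ with $S$ as the induced subextension amounts to choosing a lift $\underline{w}' = (w_0, \ldots, w_{m-1}, w_m) \in W_{m+1}(R)$; \emph{any} choice of $w_m \in R$ produces such a $T/R$, since the obstruction in characteristic $p$ always vanishes. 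So the only real content is: among all such lifts, can we choose $w_m$ so that $T/R$ is $a-$split?

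First I would reduce to understanding the $a-$split condition Witt-theoretically. The extension $S/R$ is $a-$split means $S/aS \cong (R/aR)^{p^m}$, i.e. $S \otimes_R (R/aR)$ is the split cyclic extension of $R/aR$. In Witt terms, this says precisely that the image $\overline{\underline{w}} \in W_m(R/aR)$ of $\underline{w}$ lies in the image of $\wp$, i.e. $\overline{\underline{w}} = \wp(\underline{y})$ for some $\underline{y} \in W_m(R/aR)$ (since split means the defining Witt vector is a coboundary). Given this, the task becomes: find $w_m \in R$ such that the lifted vector $\overline{\underline{w}'} = (\overline{w_0}, \ldots, \overline{w_{m-1}}, \overline{w_m}) \in W_{m+1}(R/aR)$ is again in the image of $\wp$ on $W_{m+1}(R/aR)$. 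Now $\underline{y} \in W_m(R/aR)$ lifts (non-uniquely) to a vector $\underline{y}' = (y_0, \ldots, y_{m-1}, z) \in W_{m+1}(R/aR)$ for any choice of last coordinate $z$; computing $\wp(\underline{y}')$, its first $m$ coordinates agree with $\overline{\underline{w}}$ by construction, and its last coordinate is some element $\gamma(z) \in R/aR$ depending (affinely, via the Witt addition and Frobenius polynomials) on $z$ and on the lower $y_i$'s. Concretely $\gamma(z) = z^p - z + (\text{a fixed element of } R/aR \text{ determined by } y_0,\ldots,y_{m-1})$. Since $R/aR$ need not be a field, $z \mapsto z^p - z$ need not be surjective, so we cannot simply solve $\gamma(z) = \overline{w_m}$ for an arbitrary preassigned $\overline{w_m}$ — but we are \emph{free to choose} $w_m$, so I would instead choose $z \in R/aR$ arbitrarily (say $z = 0$), set $\overline{w_m} := \gamma(0)$, and then lift $\overline{w_m}$ to any $w_m \in R$. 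This produces $\underline{w}' \in W_{m+1}(R)$, hence $T/R$ of degree $p^{m+1}$ containing $S$, and by construction $\overline{\underline{w}'} = \wp(\underline{y}')$ in $W_{m+1}(R/aR)$, so $T/R$ is $a-$split. That $S$ is the induced subextension of $T$ (rather than some twist) is automatic from the truncation map $W_{m+1} \to W_m$ being compatible with $\wp$ and with the Galois actions.

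The step I expect to be the main obstacle — or at least the one requiring care — is making the ``fixed element of $R/aR$ determined by $y_0, \ldots, y_{m-1}$'' statement precise, i.e. verifying that the last coordinate of $\wp(\underline{y}')$ really has the form $z^p - z + c$ with $c$ independent of $z$ and lying in $R/aR$. This is a direct computation with the universal Witt addition polynomials $S_m$ and the ghost-component formalism: in $\wp(\underline{y}') = F(\underline{y}') - \underline{y}'$, the $m$-th coordinate of $F(\underline{y}')$ is $z^p$ plus a polynomial in $y_0, \ldots, y_{m-1}$ (not involving $z$), and the $m$-th coordinate of the Witt \emph{subtraction} introduces the $-z$ term plus, again, lower-index contributions; the key point is that $z$ appears only through its own Frobenius $z^p$ in the top ghost component, so all cross terms involving $z$ and the $y_i$ are forced to cancel. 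One cleanly avoids re-deriving this by citing the treatment in \cite{S1981}, where exactly this Witt-vector machinery for cyclic $p$-extensions of commutative rings is developed; the only genuinely new ingredient here is the trivial observation that reduction modulo $a$ commutes with $\wp$ and with truncation, so an $a-$split extension stays $a-$split under a suitably chosen lift. I would also remark that the freedom in choosing $w_m$ is exactly the freedom of choosing the final coordinate $z$ of $\underline{y}'$ modulo the image of $\wp$, which is why no obstruction arises — in contrast to the mixed-characteristic situation treated earlier in the paper, where the analogous lifting does carry obstructions.
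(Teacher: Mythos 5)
Your proof is correct, and it does in fact reach the same conclusion the paper does, but along a genuinely different path that is worth contrasting with the paper's own argument.

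The paper's proof never touches Witt vectors directly. Instead it quotes \cite{S1981} only for the \emph{existence} of some lift $T'/S/R$, then uses the machinery already developed in this paper — Proposition~\ref{difference} (any two lifts of $S$ differ by a degree~$p$ cyclic of $R$) together with the ``top product'' operation — to build the versal family $U/R[x]$ obtained by multiplying $T'$ by the generic Artin--Schreier cyclic $R[x][y]/(y^p-y-x)$ at the top. The $a$-split lift is then found by choosing the image of $x$ in $R/aR$ so that $U$ specializes to the split extension of $S/aS$, and lifting that image to $R$. Your argument replaces this internal machinery with classical Artin--Schreier--Witt theory: the extension $S/R$ is $\underline{w}\in W_m(R)$ modulo $\wp$, you note the $a$-split condition is exactly that $\overline{\underline{w}}\in\wp\,W_m(R/aR)$, lift the witness $\underline{y}$ to $W_{m+1}(R/aR)$, and read off the required last Witt coordinate. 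Both proofs hinge on the same pivot — lifts of $S$ form a torsor under degree~$p$ cyclics of $R$, a single Artin--Schreier parameter, which can be solved for mod~$a$ and then lifted arbitrarily to $R$ because there is no further obstruction in characteristic~$p$ — but the paper's route stays inside the paper's own Section~2 operations formalism, while yours imports the parallel Witt-vector formalism wholesale.

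Two small remarks on your own worries. First, the point you flag as ``the main obstacle,'' namely that the last coordinate of $\wp(\underline{y}')$ has the form $z^p-z+c$ with $c$ independent of $z$, is in fact clean: since $\underline{y}'-(y_0,\dots,y_{m-1},0)=(0,\dots,0,z)$ (the Witt addition polynomials degenerate when all but the last coordinate of one summand vanish), additivity of $\wp$ gives $\wp(\underline{y}')-\wp(y_0,\dots,y_{m-1},0)=\wp(0,\dots,0,z)=(0,\dots,0,z^p-z)$, which is exactly the claim. But second, you do not actually need this formula at all: you only ever use $\gamma(0)$, and the freedom to vary $z$ never enters. It would be enough to say ``lift $\underline{y}$ to $W_{m+1}(R/aR)$ with last coordinate $0$, take its $\wp$, read off the last coordinate, and lift that to $R$.'' The one ingredient you should be prepared to justify a bit more carefully is the statement that degree $p^m$ cyclic Galois extensions of an arbitrary commutative $\mathbf{F}_p$-algebra $R$ are classified by $W_m(R)/\wp W_m(R)$; this uses that $H^1$ of the quasi-coherent Witt sheaves vanishes on the affine scheme $\Spec R$, which is standard but is more than the field case that Witt's original theorem handles.
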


\begin{proof} 
From \cite{S1981} we know that $T'/S/R$ exists extending $S/R$ 
and cyclic of degree $p^{m+1}$. Any two choices of $T'$ 
differ by a cyclic degree extension of $R$. 
That is, if we set $R' = R[x]$, $S' = R'[y]/(y^p - y - x)$, 
there is a degree $p^{m+1}$ cyclic extension $U/R'$ 
with $U \subset T'[x] \otimes_R' S'$ such that any 
extension $T'' \supset S \supset R$ which is 
cyclic degree $p^{m+1}$ can be written $U \otimes_{\phi} R$ 
for some $\phi: R' = R[x] \to R$. In particular, 
there is a $\phi':R[x] \to (R/aR)[x] \to R/aR$ 
where $U \otimes_{\phi'} R/aR$ is the split extension 
of $S/aS$. By taking a preimage of $\phi'(x)$, 
there is a $\phi: R[x] \to R$ such that 
the composition $R[x] \to R \to R/aR$ is $\phi'$. 
Set $T = U \otimes_{\phi} R$.
\end{proof}

Now we have: 

\begin{theorem}\label{pthroot}
Suppose $R$ is a commutative regular domain 
of characteristic $p$ and $S/R$ is cyclic of degree 
$p^r$. Let $A = \Delta(S/R,a,\alpha,b)$ be such that 
$a,\alpha$ are suitable in $S$. Suppose there is 
a degree $p^{m}$ $T/S/R$ which is $a -$ split. Then there is an $\alpha' \in T$ 
with $a,\alpha'$ suitable such that for $b' = n(\alpha')/a$, 
$B = \Delta(T/R,a,\alpha',b')$ satisfies 
$p^{m-r}[B] = [A]$ in $\Br(R)$. 
\end{theorem}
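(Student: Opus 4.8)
\textbf{Proof plan for Theorem \ref{pthroot}.}
The plan is to work generically and then specialize. First I would reduce to the universal situation $R = F_p[a,b]$: since $\Br(R)\to\Br(F)$ is injective for a regular domain $R$ with fraction field $F$, and since both $[A]$ and $[B]$ are defined by the almost cyclic data in a way compatible with specialization along any $\phi\colon R\to R'$, it suffices to prove the identity $p^{m-r}[B]=[A]$ after base change to the generic polynomial ring. With $R=F_p[a,b]$ (or a localization thereof, if needed to make the relevant extensions $a$-split, which the preceding theorem guarantees) we have $T\supset S\supset R$ cyclic of degrees $p^m$ and $p^r$, with $G=\langle\sigma\rangle$ the Galois group of $T/R$ and $C=\langle\sigma^{p^r}\rangle$ the subgroup of order $p^{m-r}$ fixing $S$. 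The key structural fact I will invoke is Theorem~\ref{operationsoncyclics}(2): for an almost cyclic algebra $\Delta(T/R,\sigma,J,\phi)$ of degree $p^m$, the $p^{m-r}$-power $e_{m-r,S}\,\Delta^{p^{m-r}}\,e_{m-r,S}$ is isomorphic to $\Delta(S/R,\sigma,N_{T/S}(J),\phi)$. So the heart of the argument is to identify the Picard-group norm $N_{T/S}(J_{\sigma,B})$ of the bimodule attached to $B$ with (a class isomorphic to) the bimodule attached to $A$, and to match up the normalizations $\phi$.

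Next I would make the bimodule $J$ for $B$ explicit. By construction $B=\Delta(T/R,a,\alpha',b')$ with $\alpha'=xy$, $x^{p^m}=a$, $y^{p^m}=b'$, and $a,\alpha'$ suitable, so by the analysis of the ``special sort'' almost cyclic algebras, $J_{\sigma}=Tx=I_{\sigma}x$ with $I_{\sigma}=T+T\alpha'^{-1}b'$, and this is a rank one projective $T$-module (the class is $\adj(\alpha')/a$ up to principal factors, exactly as in the $(a,b)$ computation). Applying Theorem~\ref{operationsoncyclics}(2) with the tower $T/S/R$, the $p^{m-r}$-fold power of $B$ is Brauer equivalent to $\Delta(S/R,\sigma,N_{T/S}(I_{\sigma}x),\phi')$. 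I then need to show this last almost cyclic algebra over $R$ with maximal subring $S$ is $\Delta(S/R,a,\alpha,b)=A$ up to Brauer equivalence; since $\Br(R)\hookrightarrow\Br(F)$, it is enough to check that the image in $\Br(F)$ of $\Delta(S/R,\sigma,N_{T/S}(I_\sigma x))$ equals $[\Delta(K_S/F,\sigma,a)]=[A\otimes_R F]$, where $K_S=S\otimes_R F$. This is a clean crossed-product / corestriction computation: over $F$ the symbol $B\otimes F=\Delta(K_T/F,\sigma,a)$ and its $p^{m-r}$ power restricted to $K_S$ is $\Delta(K_S/F,\sigma,a^{p^{m-r}})$ — but the normalization via $\phi'$ twists this back, and the point is precisely that $\phi'$ is chosen so that the norm-bimodule identification produces the factor $a$ rather than $a^{p^{m-r}}$. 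Here $b'=n_{T/R}(\alpha')/a$ plays the role of recording the correct norm so that $N_{T/S}(I_\sigma x)$ has the right class; I would verify this by the same ``$L=S$'' style maximal-ideal-by-maximal-ideal argument used in \ref{almostrho} to confirm $a,\alpha'$ are suitable and that the norm bimodule is generated in the expected way.

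The main obstacle, and the step I would spend the most care on, is the bookkeeping in Theorem~\ref{operationsoncyclics}(2) connecting the $\phi$'s: the idempotent $e_{m-r,S}$ comes from the exterior-power construction of Proposition~\ref{exterior}, and tracking how the normalization isomorphism $\phi$ for $B$ induces the normalization for the power algebra — i.e.\ checking that $N_{S/R}(N_{T/S}(J))=N_{T/R}(J)\cong R$ is respected compatibly so that the resulting $\Delta(S/R,\sigma,N_{T/S}(J),\phi)$ genuinely has $a$ (not $a^{p^{m-r}}$) as its ``norm of $x$'' parameter — requires being careful about which power of $a$ appears. Concretely, in $B$ we have $x^{p^m}=a$, and inside the power algebra the relevant element is $x\otimes\cdots\otimes x$ whose $p^r$-th power involves $a^{p^{m-r}}$, but the exterior-power idempotent and the bimodule norm absorb exactly the factor $a^{p^{m-r}-1}$, leaving $a$; this is the linear-algebra content of the lemma inside \ref{operationsoncyclics} and I would make it explicit in the split case $S=R\oplus\cdots\oplus R$ and then descend by faithfully flat base change, exactly as that lemma's proof does. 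Once that identification is in place, $p^{m-r}[B]=p^{m-r}[\Delta(T/R,a,\alpha',b')]=[e_{m-r,S}B^{p^{m-r}}e_{m-r,S}]=[\Delta(S/R,a,\alpha,b)]=[A]$, completing the proof.
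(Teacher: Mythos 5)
Your proposal is correct in substance but takes a substantially more roundabout route than the paper does, and in the end the extra machinery cancels out. The paper's proof is essentially a one-liner: choose $\alpha'\in T$ lifting an element of $T/aT$ of the form $(0,r_2,\dots,r_{p^m})$ with the $r_i$ units (the $a$-split hypothesis makes this possible, and this is exactly what makes $a,\alpha'$ suitable), form $B=\Delta(T/R,a,\alpha',b')$, and then observe that both $p^{m-r}[B]$ and $[A]$ lie in $\Br(R)$ and have the same image in $\Br(F)$, because over $F$ they become the classical cyclic algebras $\Delta(K_T/F,\sigma,a)$ and $\Delta(K_S/F,\bar\sigma,a)$, for which the standard cohomological identity $[T:S]\cdot[\Delta(K_T/F,\sigma,a)]=[\Delta(K_S/F,\bar\sigma,a)]$ holds. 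Since $R$ is regular, $\Br(R)\hookrightarrow\Br(F)$, and that finishes it.

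You instead pass through Theorem~\ref{operationsoncyclics}(2) to produce an explicit almost cyclic model $\Delta(S/R,\sigma,N_{T/S}(J),\phi)$ for $p^{m-r}[B]$, and then compare \emph{that} with $A$ in $\Br(F)$. This is not wrong, but notice what happens: your final step is still a comparison in $\Br(F)$ using injectivity of $\Br(R)\to\Br(F)$ — and once you commit to finishing that way, the entire excursion through the power/exterior-idempotent computation becomes logically superfluous. You could delete it and go straight from $p^{m-r}[B]$ and $[A]$ to their images in $\Br(F)$. Relatedly, your concern about whether the norm parameter comes out as $a$ or $a^{p^{m-r}}$ is a real bookkeeping issue \emph{if} you insist on identifying $p^{m-r}[B]$ as an explicit almost cyclic algebra with $S$ maximal, but it evaporates under the direct fraction-field check: there the identity is the clean $k\cdot(\chi_{K_T}\cup(a))=\chi_{K_S}\cup(a)$ with no extra power of $a$ appearing, precisely because the ``power'' on one side and the ``subfield'' on the other absorb each other in cohomology. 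Also, the opening reduction to $R=F_p[a,b]$ is never used and is unnecessary; the argument works for the given $R$ as stated. What your approach does buy, which the paper's does not, is an explicit almost cyclic algebra realizing $p^{m-r}[B]$ — that is more structural information than the theorem requires, but it is of independent interest and is indeed the content of Theorem~\ref{operationsoncyclics}(2). For proving the stated Brauer identity, the shorter route is preferable. Finally, you should be a bit more explicit about the suitability of $\alpha'$: it follows directly from the $a$-split hypothesis by lifting a split element with exactly one zero coordinate, and this is the one part of the theorem where a genuine choice has to be made; it deserves a sentence rather than a passing reference to the ``$L=S$'' argument.
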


\begin{proof} 
Let $T \supset S \supset R$ be $a - $ split. 
Choose $\alpha' \in T$ a preimage of an element 
$\bar \alpha' \in T/aT$ of the form 
$(0,r_2,\ldots,r_{p^{m+1}})$ where all the $r_i$ are 
units. Then $p^m[B] = [A]$ because this is true 
in $\Br(F)$ where $F$ is the field of fractions of $R$.
\end{proof}

The assumptions above on $R$ may seem restrictive but when 
applied to $R = F_p[a,b]$ we see that any degree 
$p$ $(a,b)$ is Brauer equivalent to $p^m[B]$ 
for some almost cyclic degree $p^{m+1}$ degree algebra 
$B$. In detail, 

\begin{corollary}
Suppose $R'$ is a commutative ring of characteristic $p$, 
$a',b' \in R'$ and $n \geq 1$. Then there is an explicit 
almost cyclic Azumaya $B'/R'$ such that in $\Br(R')$ 
we have $p^n[B'] = [(a',b')]$. 
\end{corollary}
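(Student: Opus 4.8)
The plan is to deduce the general statement from Theorem~\ref{pthroot} by the usual specialization argument that has been used repeatedly in this paper. First I would set up the generic case: let $R = F_p[a,b]$, which is a regular domain of characteristic $p$, and let $S/R$ be a cyclic Galois extension of degree $p$ that is $a$--split with $a,\alpha$ suitable --- for instance, take $S = R[Z]/(Z^p - Z - ab)$ with $\alpha$ the image of $Z$, so that $n(\alpha) = ab$ and $S$ is visibly $a$--split (after localizing away from $a$ it is Kummer-like; more directly one checks $S/aS$ splits because $Z^p - Z - ab \equiv Z^p - Z \pmod a$). Then $A = \Delta(S/R, a, \alpha, b) = (a,b)$ is our target algebra in the generic situation.

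Next I would apply the preceding theorem on $a$--split towers: since $R$ is regular of characteristic $p$ and $S/R$ is cyclic of degree $p$ and $a$--split, one can extend $S$ step by step to a cyclic $T/S/R$ of degree $p^{n+1}$ which is still $a$--split (apply the theorem ``Suppose $R$ is a commutative ring of characteristic $p$ \ldots $T$ is $a$--split'' a total of $n$ times). Now Theorem~\ref{pthroot}, applied with $r=1$ and $m = n+1$, produces $\alpha' \in T$ with $a,\alpha'$ suitable and $B = \Delta(T/R,a,\alpha',b')$, where $b' = n(\alpha')/a$, satisfying $p^{n}[B] = [A] = [(a,b)]$ in $\Br(R)$. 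This $B$ is almost cyclic of the special sort and is Azumaya by the results on almost cyclic algebras of the special sort. This settles the corollary over the specific ring $R = F_p[a,b]$.

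Finally I would specialize. Given an arbitrary commutative $R'$ of characteristic $p$ and $a', b' \in R'$, there is a unique ring homomorphism $\phi: F_p[a,b] \to R'$ with $\phi(a) = a'$, $\phi(b) = b'$. Set $B' = B \otimes_{\phi} R'$ and $T'/S'/R'$ the corresponding base changes of $T/S/R$. Base change of Azumaya algebras preserves the property of being Azumaya, and base change along $\phi$ commutes with tensor powers, so $p^n[B'] = [B \otimes_\phi R']^{(p^n)} = [(B^{\otimes p^n}) \otimes_\phi R']$; since $p^n[B] = [(a,b)]$ in $\Br(F_p[a,b])$, applying $\phi$ gives $p^n[B'] = [(a',b')]$ in $\Br(R')$. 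One should also note that the almost cyclic structure base changes: $B' = \Delta(T'/R', a', \alpha'', b'')$ where $\alpha'' $ is the image of $\alpha'$, and the ``suitable'' relation $Sa + S\alpha' + S\adj(\alpha') = S$ is preserved under the surjection, so $B'$ is again almost cyclic. The only mild subtlety --- and the place I expect to spend the most care --- is verifying that the degree $p$ generic $S/R$ really is $a$--split so that the tower construction gets started; once that is in hand everything else is formal base change plus an invocation of Theorem~\ref{pthroot}. I would therefore write the proof essentially as: construct the $a$--split tower over $F_p[a,b]$, invoke Theorem~\ref{pthroot} to get $B$ with $p^n[B] = [(a,b)]$, and specialize.
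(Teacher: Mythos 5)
Your proposal is correct and follows the same route as the paper: instantiate the generic case over $R = F_p[a,b]$, build the $a$-split tower, apply Theorem~\ref{pthroot} to get $B$ with $p^n[B]=[(a,b)]$, and specialize via $\phi\colon F_p[a,b]\to R'$, $\phi(a)=a'$, $\phi(b)=b'$. The extra detail you supply (the explicit choice $S=R[Z]/(Z^p-Z-ab)$ and verification that $a,\alpha$ are suitable) is exactly what the paper's terse ``by the above theorem'' relies upon.
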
 

\begin{proof}
Let $\phi: R = F_p[a,b] \to R'$ be defined by $\phi(a) = a'$ 
and $\phi(b) = b'$. By the above theorem, there is an 
Azumaya $B/R$ such that $p^n[B] = [(a,b)]$ in $\Br(R)$. 
Set $B' = B \otimes_{\phi} R'$. 
\end{proof}

We are finally in a position to prove our surjectivity 
results. 

\section{Cyclic Galois and Brauer Surjectivity}

In the end, our goal is to lift structures from characteristic $p$ 
to characteristic 0. The results, and hence the forthcoming argument, 
are a bit different when $p > 2$ and $p = 2$. 
In both cases the idea will be to build our structures 
over special rings using the corestriction and then  
observe that the general results follow.
For the moment we restrict to case A and B, so 
when $p > 2$ we defined $\rho = \mu$ to be a 
primitive $p$ root of one but when $p = 2$, 
$\mu = i$ a square root of $-1$. 
In both cases we perform our corestriction 
over the following rings.  
In a previous section we built cyclic extensions 
over $R_{\mu} = \Z[\mu][x]_Q$ where $Q = 1 + x\eta\Z[\mu][x]$. 
We use the corestriction argument 
to build them over $R = \Z[x]_{Q_0}$ where $Q_0 = 1 + px\Z[x]$. 
We first must observe: 

\begin{lemma}\label{finite}
$R_{\mu} = R \otimes_{\Z} \Z[\mu]$ 
\end{lemma}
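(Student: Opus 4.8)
\textbf{Proof plan for Lemma~\ref{finite}.} The plan is to unwind the definitions of both rings and exhibit a natural isomorphism. Recall $R = \Z[x]_{Q_0}$ where $Q_0 = 1 + px\Z[x]$, and $R_\mu = \Z[\mu][x]_Q$ where $Q = 1 + x\eta\Z[\mu][x]$. First I would form $R \otimes_\Z \Z[\mu] = \Z[x]_{Q_0} \otimes_\Z \Z[\mu]$ and observe that tensoring a localization with a flat (indeed free) $\Z$-algebra gives the localization of the tensor product: thus $R \otimes_\Z \Z[\mu] = \Z[\mu][x]_{Q_0'}$ where $Q_0'$ is the image of $Q_0$, i.e. the multiplicative set $1 + px\Z[\mu][x]$ inside $\Z[\mu][x]$. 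So the content of the lemma is that localizing $\Z[\mu][x]$ at $1 + px\Z[\mu][x]$ is the same as localizing at $Q = 1 + x\eta\Z[\mu][x]$.

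The key step is to compare these two multiplicative sets. Since $p = u\eta^{p-1}$ for a unit $u \in \Z[\mu]^*$ when $p > 2$ (by Lemma~\ref{total}), and $\eta^2 = -2i = (\text{unit})\cdot p$ when $p = 2$ (again Lemma~\ref{total}, case B with $\mu = i$), in every relevant case $p \in \eta^{p-1}\Z[\mu]^*$, so $p\Z[\mu][x] \subseteq \eta\Z[\mu][x]$. Hence $1 + px\Z[\mu][x] \subseteq 1 + x\eta\Z[\mu][x] = Q$, which shows $Q_0'$ maps into $Q$ and gives a natural map $R \otimes_\Z \Z[\mu] \to R_\mu$. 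For the reverse inclusion at the level of units, I would argue that although $Q$ is strictly larger than $Q_0'$ as a subset of $\Z[\mu][x]$, every element of $Q$ already becomes a unit in $R \otimes_\Z \Z[\mu] = \Z[\mu][x]_{Q_0'}$. Indeed $\eta$ is a unit in $\Z[\mu]$ only after adjoining $1/\eta$, which we are \emph{not} doing here; so the right approach is: an element $1 + x\eta b$ of $Q$ (with $b \in \Z[\mu][x]$) need not be invertible in $\Z[\mu][x]_{Q_0'}$ in general. This signals that I should re-examine the claim: the statement $R_\mu = R \otimes_\Z \Z[\mu]$ must mean that the two localizations genuinely agree, so I expect the intended reading is that $Q$ and $Q_0'$ generate the same saturated multiplicative set, which will follow once one checks that $\Z[\mu][x]/(x\eta) \to \Z[\mu][x]/(px)$ considerations force the prime ideals missed by the two localizations to coincide.

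Concretely, the cleanest route is to compare the prime spectra: $\Z[\mu][x]_{Q_0'}$ is obtained from $\Z[\mu][x]$ by inverting all primes $\mathfrak p$ with $\mathfrak p + (px) = \Z[\mu][x]$, equivalently those $\mathfrak p$ not contained in any maximal ideal containing $px$, i.e. inverting everything outside $V(x) \cup V(p) = V(x) \cup V(\eta)$ (using $V(p) = V(\eta)$ in $\Z[\mu][x]$, since $p$ and $\eta$ have the same radical there by Lemma~\ref{total}). Likewise $Q = 1 + x\eta\Z[\mu][x]$ inverts everything outside $V(x\eta) = V(x) \cup V(\eta)$. Thus both localizations invert precisely the complement of $V(x)\cup V(\eta)$, so the induced map $R \otimes_\Z \Z[\mu] \to R_\mu$ is an isomorphism. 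I would phrase the argument through the monoids of denominators directly rather than through $\Spec$ if that reads more cleanly, but the spectral picture is what makes the equality transparent.

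The main obstacle, and the part needing genuine care, is precisely the identification $V(p) = V(\eta)$ inside $\Z[\mu][x]$ together with the bookkeeping that $1 + px\Z[\mu][x]$ and $1 + x\eta\Z[\mu][x]$ have equal saturations; this rests on Lemma~\ref{total} (that $\eta$ is totally ramified over $p$, so $p$ and $\eta^{p-1}$ differ by a unit) and on the fact that $x$ and $\eta$ generate distinct primes so that $x\eta\Z[\mu][x] = x\Z[\mu][x] \cap \eta\Z[\mu][x]$. Everything else — the commutation of tensor with localization, flatness of $\Z[\mu]$ over $\Z$ — is routine.
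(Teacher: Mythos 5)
Your proposal takes a genuinely different route from the paper's, comparing saturations of multiplicative sets via prime spectra rather than by a direct norm computation. The paper's proof is shorter and more elementary: for $q = 1 + x\eta f \in Q$, it observes that $\sigma(\eta)$ is a unit multiple of $\eta$, so the full Galois orbit of $q$ stays in $Q$; the norm $N_\sigma(q) \in \Z[x]$ then lies in $Q \cap \Z[x]$, which one computes to be $Q_0 = 1 + px\Z[x]$ using $x\eta\Z[\mu][x] \cap \Z[x] = px\Z[x]$; since $q$ divides $N_\sigma(q)$ in $\Z[\mu][x]$, $q$ is already invertible in $\Z[\mu][x]_{Q_0}$. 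Your spectral approach reaches the same conclusion by identifying which primes of $\Z[\mu][x]$ survive the two localizations, which ultimately rests on the same arithmetic facts (total ramification of $\eta$ over $p$, and the intersection computation), but filtered through going-up and a comparison of saturations.

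Two slips in your version need repair. First, the image of $Q_0 = 1 + px\Z[x]$ in $\Z[\mu][x]$ is the set $1 + px\Z[x]$, \emph{not} $1 + px\Z[\mu][x]$; so $R \otimes_\Z \Z[\mu]$ is the localization at the former, and the equality with the localization at $1 + px\Z[\mu][x]$ is itself a nontrivial step (true, but it requires an argument — either the norm trick once more, or the observation that both multiplicative sets meet exactly the same primes, which in turn needs the lying-over/going-up bookkeeping for $\Z[x] \subset \Z[\mu][x]$). Second, when localizing at $1 + I$ the primes that survive are those $\mathfrak{p}$ with $\mathfrak{p} + I \ne A$, i.e. those contained in some maximal ideal of $V(I)$; this is not the same set as $V(I)$ (for instance $(0)$ survives but is not in $V(I)$), so the phrase ``inverting everything outside $V(x) \cup V(\eta)$'' mischaracterizes the saturation. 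The ultimate conclusion you draw — that the two localizations of $\Z[\mu][x]$ have the same saturation because $V(px) = V(x\eta)$ — is correct and does follow from the (correct) description of surviving primes, so the idea is sound; but as written the argument contains a false identification and a false characterization, and both need to be replaced before the proof is complete. Given the amount of repair required, the paper's one-line norm computation is the more economical path.
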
 

\begin{proof} 
What this lemma really says is that $R_{\mu} = 
\Z[\mu][x]_{Q_0}$ meaning that any element of  
$Q$ is invertible in $\Z[\mu][x]_{Q_0}$. 
Let $\sigma$ generate the Galois group of 
$\Q(\mu)/\Q$ so when $p > 2$, $\sigma(\mu) = \mu^r$ 
for $r$ mapping to a generator of $(\Z/p\Z)^*$. 
When $p = 2$, $\sigma(i) = -i$. In both cases we set 
$\sigma(x) = x$ so $\sigma$ acts on $\Z[\mu][x]$. 
Then $\sigma(\eta) = u\eta$ where $u \in \Z[\mu]^*$ 
so $\sigma(x\eta\Z[\mu][x]) = x\eta\Z[\mu][x]$. 
Thus if $1 + x\eta{f} \in Q$, $\sigma^i(1 + x\eta{f}) 
\in Q$ and and so the norm $N_{\sigma}(1 + x\eta{f}) 
\in Q$. Since $x\eta\Z[\mu][x] \cap \Z[x] = px\Z[x]$, 
we have $N_{\sigma}(1 + x\eta{f}) \in Q_0$ proving the lemma.
\end{proof}

The point of the above result is that we can use the 
corestriction defined in \ref{corrho} and \ref{degree2cor} 
for $R_{\mu}/R$. 

We now consider the $p > 2$ and $p = 2$ cases separately, 
starting with the first. 
Consider the $p^m$ extension $T_{\rho}/R_{\rho}$ 
extension constructed in \ref{cyclicpartwithmu}. 
Recall that $T_{\rho}$ is $x$ split, and if $U_{\rho} \subset T_{\rho}$ 
is the degree $p$ subextension, then $U_{\rho}/\eta{U}_{\rho} 
= F_p[Z]/(Z^p - Z - x)$. Also, $Q = 1 + \eta{x}\Z[\rho][x]$. 

Let $r(p-1) - 1$ be divisible by $p^m$ and let $V_{\rho}/R_{\rho}$ 
be the $r$ power of $T_{\rho}/R_{\rho}$. Let 
$T/R$ the the corestricted extension of $V_{\rho}/R_{\rho}$ 
from \ref{corrho}. By \ref{splitprime}, $T/R$ is $x$ split. 
By \ref{goodmodp} $T/pT$ is the $r(p-1)$ power of $T_{\rho}/R_{\rho}$ 
and so $T/pT \cong T_{\rho}/\eta{T}_{\rho}$ as Galois 
extensions of $\hat R = F_p[x]$ by \ref{torsion}. We have: 

\begin{proposition}\label{oddlift} 
Let $p > 2$ be prime. 
$T/R$ is a degree $p^n$ Galois extension such that 
$T/pT \cong T_{\rho}/\eta{T}_{\rho}$ as Galois extensions of 
$\hat R$. If $S/R \subset T/R$ is the degree $p$ subextension, 
then $S/pS \cong \hat R[Z]/(Z^p - Z - x)$ as Galois extensions 
of $\hat R$. 
\end{proposition}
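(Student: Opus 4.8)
The statement of Proposition~\ref{oddlift} essentially assembles pieces that have already been set up, so the plan is to chase through the chain of constructions keeping careful track of what happens modulo $p$ and modulo the prime $\eta$. First I would recall the concrete data: $T_{\rho}/R_{\rho}$ is the degree $p^m$ cyclic extension of \ref{cyclicpartwithmu}, it is $x$-split, and its degree $p$ subextension $U_{\rho}$ satisfies $U_{\rho}/\eta U_{\rho} \cong F_p[x][Z]/(Z^p - Z - x)$. Choose $r$ with $r(p-1)-1 \equiv 0 \pmod{p^m}$, set $V_{\rho} = T_{\rho}^{r}$ (the $r$-th power in the group of degree $p^m$ cyclics over $R_{\rho}$), and let $T = \Cor_{R_{\rho}/R}(V_{\rho}/R_{\rho})$ via \ref{corrho}. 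The first task is to confirm $T/R$ is genuinely $G$ Galois of degree $p^m$ (hence ``$p^n$'' in the statement, taking $n = m$): this is exactly the conclusion of \ref{corrho}\eqref{corrho:4}, whose hypotheses are met because $R_{\rho} = R \otimes_{\Z} \Z[\rho]$ by \ref{finite}, $R$ is a regular dimension $2$ faithful $\Z$-domain of the required form (from the ``very special'' setup), and $R/pR = F_p[x]$ is Dedekind.

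Next I would handle the reduction modulo $p$. By \ref{goodmodp}, applied with $D = R_{\rho}$, $m = p-1$ (the degree of $R_{\rho}/R$), and $T_1 = V_{\rho}$, we get $\widehat{T} = T/pT \cong (\widehat{V_{\rho}})^{p-1}$ as cyclic Galois extensions of $\widehat{R} = F_p[x]$, where $\widehat{V_{\rho}} = V_{\rho}/\eta V_{\rho}$. Now $\widehat{V_{\rho}} = (\widehat{T_{\rho}})^{r}$ since reduction mod $\eta$ is a group homomorphism on cyclic extensions, so $\widehat{T} \cong (\widehat{T_{\rho}})^{r(p-1)}$. Since $r(p-1) \equiv 1 \pmod{p^m}$ and $\widehat{T_{\rho}}/\widehat{R}$ has degree $p^m$, \ref{torsion} gives $(\widehat{T_{\rho}})^{r(p-1)} \cong \widehat{T_{\rho}}$ as Galois extensions of $\widehat{R}$ (the group of degree $p^m$ cyclics has exponent dividing $p^m$). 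This yields $T/pT \cong T_{\rho}/\eta T_{\rho}$, the first assertion.

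For the second assertion, let $S/R \subset T/R$ be the degree $p$ subextension, i.e. $S = T^{G_0}$ for the unique subgroup $G_0 \subset G$ of index $p$. Reduction mod $p$ commutes with taking fixed rings under a subgroup (the relevant fiber product / idempotent decomposition descends), so $S/pS$ is the degree $p$ subextension of $T/pT \cong T_{\rho}/\eta T_{\rho}$; that is, $S/pS \cong U_{\rho}/\eta U_{\rho} \cong \widehat{R}[Z]/(Z^p - Z - x)$ as Galois extensions of $\widehat{R}$. One should be a little careful that ``degree $p$ subextension'' is well-defined and preserved: since $G$ is cyclic there is a unique such subgroup, and the isomorphism $T/pT \cong T_{\rho}/\eta T_{\rho}$ of cyclic Galois extensions (with specified generators) carries the index-$p$ subgroup to the index-$p$ subgroup, hence the subextensions match. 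The main obstacle is bookkeeping rather than any deep point: one must verify that all the functorial identifications — corestriction mod $p$ via \ref{goodmodp}, the power operation commuting with reduction, the subextension commuting with reduction — are compatible with the chosen generators of the Galois groups, so that the isomorphisms are isomorphisms \emph{of cyclic Galois extensions} and not merely of rings. Given the machinery in the ``Operations'' section (\ref{torsion}, \ref{inducedfixed}, \ref{goodmodp}) this compatibility is available; the proof is then just the concatenation sketched above.
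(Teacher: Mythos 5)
Your argument is correct and follows essentially the same route the paper takes (the paper in fact writes out this very chain of reductions in the paragraph immediately preceding the proposition, rather than in a separate proof environment): apply \ref{corrho} via \ref{finite} to get that $T/R$ is $G$-Galois, use \ref{goodmodp} to identify $T/pT$ with the $r(p-1)$-st power of $T_\rho/\eta T_\rho$, apply \ref{torsion} to conclude $T/pT\cong T_\rho/\eta T_\rho$, and then read off the degree-$p$ subextension. Your remark that $n=m$ and the extra care about subextensions commuting with reduction mod $p$ are sound and just make explicit what the paper leaves implicit.
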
 

It turns out that using the special lift detailed 
above we get a general lift using a special property 
of the ground rings. This next result details this special 
property. 

\begin{lemma}\label{lifthomop}
Suppose $R = \Z[x_1,\ldots,x_n]_Q$ where 
$Q \subset 1 + p{M}$ and $M$ is the ideal of 
$\Z[x_1,\ldots,x_n]$ generated by the $x_i$'s. 
Suppose $S$ is a commutative ring with 
$1 + p{S} \subset S^*$. Let $\hat S = S/p{S}$ 
and $\psi: S \to \hat S$ be the canonical map. Assume  
$\hat \phi: R \to \hat S$ is a ring morphism. 
Then there is $\phi: R \to S$ such that $\psi \circ \phi = 
\hat \phi$. 
\end{lemma}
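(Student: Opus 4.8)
\textbf{Proof proposal for Lemma~\ref{lifthomop}.} The plan is to lift $\hat\phi$ one variable at a time, reducing the statement to the single assertion that the value $\hat\phi(x_i)\in\hat S$ can be lifted to an element of $S$ whose image stays in the range where the localization $R\to S$ makes sense — namely, so that every element of $Q$ maps to a unit of $S$. First I would note that $R=\Z[x_1,\dots,x_n]_Q$ is generated, as a localization of a polynomial ring, by the classes of the $x_i$, so a ring homomorphism $R\to S$ is the same as a choice of elements $s_1,\dots,s_n\in S$ together with the requirement that the induced map $\Z[x_1,\dots,x_n]\to S$ sends each element of $Q$ to $S^*$. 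Since $Q\subset 1+pM$, any element $q\in Q$ has the form $1+p\,g(x_1,\dots,x_n)$ with $g$ in the ideal generated by the $x_i$; hence its image under $x_i\mapsto s_i$ is $1+p\,g(s_1,\dots,s_n)\in 1+pS\subset S^*$ by hypothesis. So in fact \emph{any} choice of the $s_i$ yields a well-defined $\phi:R\to S$; there is no compatibility obstruction at all on the localization side.

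Given that observation, the only thing to arrange is $\psi\circ\phi=\hat\phi$, i.e. $\psi(s_i)=\hat\phi(x_i)$ for each $i$. Since $\psi:S\to\hat S=S/pS$ is surjective, for each $i$ we simply choose $s_i\in S$ with $\psi(s_i)=\hat\phi(x_i)$, and define $\phi$ by $x_i\mapsto s_i$. By the previous paragraph $\phi$ is a well-defined ring homomorphism $R\to S$, and by construction the two ring homomorphisms $\psi\circ\phi$ and $\hat\phi$ from $\Z[x_1,\dots,x_n]_Q$ to $\hat S$ agree on the generators $x_i$, hence agree. This completes the argument.

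I expect there is essentially no hard step here: the content is entirely the remark that the localizing set $Q$ lies in $1+pM$, so its images automatically land in $1+pS$, which is invertible in $S$ by assumption — this is exactly why the hypothesis ``$1+pS\subset S^*$'' (rather than something weaker) is needed, and it is the same mechanism as in Lemma~\ref{details} and the discussion of very special $R$. The only point requiring a line of care is checking that a homomorphism out of a localization $\Z[\underline x]_Q$ is uniquely determined by, and can be freely prescribed by, the images of the $x_i$ once those images make $Q$ invertible; this is the universal property of localization applied to the polynomial ring, and I would state it as such rather than reprove it.
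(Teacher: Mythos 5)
Your proof is correct and follows essentially the same route as the paper's: lift the images $\hat\phi(x_i)$ to arbitrary preimages $s_i\in S$, observe that any $q\in Q\subset 1+pM$ then maps to $1+pS\subset S^*$, and invoke the universal property of localization to extend the resulting map $\Z[x_1,\dots,x_n]\to S$ over $R$. You spell out a bit more explicitly that no compatibility condition constrains the choice of preimages, but that observation is already implicit in the paper's one-line check, so the two arguments are substantially identical.
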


\begin{proof} 
Suppose $\hat a_i = \hat \phi(x_i)$. Let $a_i \in S$ be a preimage of 
$\hat a_i$. Define $\phi: \Z[x_1,\ldots,x_n] \to S$ 
by setting $\phi(x_i) = a_i$. If $q = 1 + \eta{f} \in Q'$, 
then $\phi(q) = 1 + \eta(\phi(f)) \in S^*$ and so 
$\phi$ extends uniquely to $R$.
\end{proof}

The construction in \ref{oddlift} was not versal but 
it is not difficult to use it and make a versal construction. 

Let $G$ be the cyclic group of order $p^n$. 
Form $F_p[x_0,\ldots,x_{n-1}]$. For each $x_i$, let 
$R_i = \Z[x_i]_{Q_i}$ where $Q_i = 1 + p{M_i}$ and $M_i$ 
is generated by $x_i$. By \ref{oddlift} there is a cyclic 
degree $p^{n-i}$ extension $T_i/R_i$ such that 
if $S_i/R_i$ is the degree $p$ subextension, 
$\hat S_i = S_i/M_iS_i = \hat R_i[Z]/(Z^p - Z - x_i)$. 
Let $R = R_0 \otimes_{\Z} \otimes R_1 \cdots \otimes_{\Z} 
R_{n-1}$ which is $\Z[x_0,\ldots,x_{n-1}]_Q$ 
for $Q = Q_0\ldots{Q_{n-1}}$. Note that 
$\hat R = R/p{R} = F_p[x_0,\ldots,x_{n-1}]$. 
Moreover if the $T_i$ are viewed as cyclic extensions of 
$R$, we can set $T = T_0\ldots{T_{n-1}}$. 
If $\hat T = T/\eta{T}$ and similarly for the $T_i$, 
we have $\hat T = (\hat T_0)\ldots(\hat T_{n-1})$. 

\begin{theorem}\label{versal}
$T/R$ is versal for characteristic $p$ Galois extensions 
of degree $p^n$. That is, if $pR' = 0$ and $T'/R'$ 
is cyclic Galois of degree $p^n$, there is a 
$\phi: R \to R'$ such that $T' \cong T \otimes_{\phi} R'$ 
as Galois extensions. 
\end{theorem}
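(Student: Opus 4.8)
The plan is to reduce an arbitrary cyclic Galois extension $T'/R'$ of degree $p^n$ over a ring $R'$ with $pR'=0$ to the standard Artin--Schreier--Witt presentation, and then use that presentation to build the specializing homomorphism $\phi\colon R\to R'$ one ``coordinate'' $x_i$ at a time. The key structural fact is that over the characteristic $p$ ring $R'$ a cyclic extension of degree $p^n$ is captured by a length-$n$ Witt vector $(a_0,a_1,\dots,a_{n-1})$ (or, if one prefers to avoid Witt vectors, by an inductive sequence of Artin--Schreier steps): by the theory recalled in the introduction and \cite{S1981}, every such $T'/R'$ arises from a tower $T'=T'_0\supset T'_1\supset\cdots\supset T'_n=R'$ where, writing $S'_i$ for the degree $p$ subextension sitting over $T'_{i+1}$, the extension $S'_i/T'_{i+1}$ is the Artin--Schreier extension $Z^p-Z-\bar a_i$ for some $\bar a_i$. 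In our versal family $T/R$ the ring $R$ is exactly $\Z[x_0,\dots,x_{n-1}]_Q$ and, by construction, $\hat R=F_p[x_0,\dots,x_{n-1}]$ with $\hat S_i=\hat R_i[Z]/(Z^p-Z-x_i)$, so $x_i$ should be sent to (a lift of) $\bar a_i$.

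First I would make precise the claim that $\hat T=(\hat T_0)\cdots(\hat T_{n-1})$ is versal \emph{as a characteristic $p$ extension}: over $\hat R=F_p[x_0,\dots,x_{n-1}]$ the extension $\hat T$ is built by the ``product at the top'' operation of \ref{topproduct}, with $\hat T_i$ the degree $p^{n-i}$ extension whose bottom Artin--Schreier step uses the variable $x_i$. Because each $\hat T_i$ was obtained in \ref{oddlift} as a lift of the generic degree $p^{n-i}$ cyclic over $\Z[x_i]_{Q_i}$, and because the operations on cyclic extensions studied in Section~2 (in particular \ref{topproduct} and \ref{torsion}) match the inductive structure of Artin--Schreier--Witt theory modulo $p$, given any $\hat T'/\hat R'$ cyclic of degree $p^n$ with defining data $(\bar a_0,\dots,\bar a_{n-1})$ I would define $\hat\phi\colon \hat R\to\hat R'$ by $\hat\phi(x_i)=\bar a_i$ and check by induction on $n$ that $\hat T\otimes_{\hat\phi}\hat R'\cong\hat T'$. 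The inductive step is exactly the uniqueness of the Artin--Schreier extension with a given parameter, combined with \ref{difference}/\ref{splitdifference} to see that the two towers agree once their bottom steps and their quotients agree.

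Next, to pass from $\hat\phi$ to the required $\phi\colon R\to R'$ (with $pR'=0$, so $R'=\hat R'$), I would simply invoke \ref{lifthomop}: $R=\Z[x_0,\dots,x_{n-1}]_Q$ with $Q\subset 1+pM$, and $1+pR'=1+0=\{1\}\subset R'^{*}$ trivially, so any ring map $\hat R\to R'$ lifts to $R\to R'$ --- indeed here $\hat R=R/pR$ equals $R'$-target's residue ring only formally, but since $pR'=0$ the hypothesis of \ref{lifthomop} is automatic and $\phi$ is obtained by choosing preimages $a_i\in R'$ of $\bar a_i$ (there is nothing to choose, $a_i=\bar a_i$) and noting that every element of $Q$ maps into $1+pR'=\{1\}$, hence is a unit. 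Then $T\otimes_\phi R'$ is a cyclic degree $p^n$ extension of $R'$ which reduces modulo the (zero) ideal $pR'$ to $T'$, so $T\otimes_\phi R'\cong T'$ by faithfully flat descent (here trivial since $pR'=0$) once one checks the Galois module / parameter data coincides, which is what the previous paragraph arranged.

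The main obstacle I expect is the first paragraph's bookkeeping: verifying that the ``product at the top'' construction of $\hat T$ out of the $\hat T_i$ really does realize \emph{every} Witt-vector parameter, i.e. that the generic extensions $\hat T_i/\hat R_i$ are individually versal for degree $p^{n-i}$ cyclics \emph{compatibly with} the tower structure, not just individually. Concretely one must show that the degree $p$ subextension data, the degree $p^2$ subextension data, etc., of an arbitrary $\hat T'$ can be matched simultaneously; this is where one leans on \ref{oddlift} (which guarantees $\hat S_i=\hat R_i[Z]/(Z^p-Z-x_i)$, the fully generic bottom step) together with the inductive description of $p^n$-cyclics in \cite{S1981}, and on \ref{torsion}/\ref{difference} to control the ambiguity at each level. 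Once that compatibility is nailed down the rest is formal.
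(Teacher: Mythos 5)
Your overall strategy --- induct on $n$, match the bottom degree $p$ subextension, and use \ref{difference}/\ref{splitdifference} to control the rest --- is indeed the strategy of the paper. However, your ``Artin--Schreier--Witt'' framing contains a concrete confusion that would need to be fixed. You describe the defining data $(\bar a_0,\dots,\bar a_{n-1})$ by saying the extension $S'_i/T'_{i+1}$ is Artin--Schreier with parameter $\bar a_i$; but that puts $\bar a_i \in T'_{i+1}$, an intermediate ring of the tower, \emph{not} in $R'$. You then want to define $\hat\phi\colon\hat R\to R'$ by $\hat\phi(x_i)=\bar a_i$, which only makes sense if $\bar a_i\in R'$. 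These two things are incompatible as written. (The genuine Witt-vector coordinates of a cyclic $p^n$-extension do live in $R'$, but they are not identified with tower-layer Artin--Schreier parameters in the simple way you state; the higher Witt coordinates enter through the Witt sum/product formulas.)

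The paper sidesteps this entirely by not trying to read off all the coordinates from a tower at once. It builds the specializing homomorphism one variable at a time: write $T'_1 = R'[Z]/(Z^p-Z-a)$ for the degree $p$ subextension of $T'$, put $\phi_0(x_0)=a$, and then use \ref{difference} to write $T'=(T_0\otimes_{\phi_0}R')D$ for a degree-$p^{n-1}$ cyclic $D/R'$; the inductive hypothesis (applied to $S=T_1\cdots T_{n-1}$ over $R_1\otimes\cdots\otimes R_{n-1}$) produces $\phi'$ with $S\otimes_{\phi'}R'\cong D$, and $\phi=\phi_0\otimes\phi'$ works because the product operation commutes with base change. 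At every stage the new parameter being matched lives in $R'$, because it is the bottom Artin--Schreier parameter of a cyclic extension \emph{of $R'$} (first $T'$, then the difference $D$, and so on), not a layer of the original tower. You correctly identify \ref{difference} as the key tool, and you correctly observe that \ref{lifthomop} is vacuous here since $pR'=0$; the remaining gap is just the incorrect identification of what the $x_i$ should map to, and replacing your ``tower layer parameter'' description by the ``iterated difference'' description closes it.
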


\begin{proof}
Write $T'_n \supset T'_{n-1} \supset \ldots T'_1 \supset R'$ 
where $T'_i/R'$ is cyclic Galois of degree $p^i$. Then 
$T'_1 = R'[Z]/(Z^p - Z - a)$ so if we define 
$\phi_0(x_0) = a$ then $T'' = T \otimes_{\phi_0} R'$ has the same 
degree $p$ subextension as $T'$. That is, $T'' = T'S'$ where $S'/R'$ 
is cyclic of degree $p^{n-1}$. By induction there is 
a $\phi': R_1 \otimes \cdots R_{n-1} \to R'$ such that if 
$S = T_1\ldots{T_{n-1}}$ then $S \otimes_{\phi'} R' \cong 
S'$. Let $\phi = \phi_0 \otimes \phi'$. 
\end{proof}

With the above and \ref{lifthomop} we have: 

\begin{theorem}\label{liftwithrho} 
Let $p > 2$. 
Suppose $S$ is a commutative ring, $1 + pS \subset S^*$, 
and $\hat S = S/p{S}$. 
If $\hat T'/\hat S$ is a degree $p^n$ cyclic Galois extension, 
there is a degree $p^n$ cyclic Galois $T'/S$ such that $T' \otimes_S \hat S 
\cong \hat T$ as Galois extensions. 
\end{theorem}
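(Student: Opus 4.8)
\textbf{Proof plan for Theorem \ref{liftwithrho}.}

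The plan is to reduce the general lifting statement to the versal construction of Theorem \ref{versal} together with the homomorphism-lifting property of Lemma \ref{lifthomop}. Concretely, let $\hat{T}'/\hat{S}$ be a given cyclic Galois extension of degree $p^n$ over $\hat{S} = S/pS$. Since $\hat{S}$ has characteristic $p$, Theorem \ref{versal} applies: there is a ring homomorphism $\hat\phi \colon R \to \hat{S}$, where $R = \Z[x_0,\ldots,x_{n-1}]_Q$ is the versal base ring constructed before \ref{versal}, such that $T \otimes_{\hat\phi} \hat{S} \cong \hat{T}'$ as Galois extensions of $\hat{S}$. Here $T/R$ is the versal degree $p^n$ cyclic extension, which in particular is a genuine (characteristic $0$) ring extension.

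Next I would invoke Lemma \ref{lifthomop}. The ring $R$ has exactly the shape required there: $R = \Z[x_0,\ldots,x_{n-1}]_Q$ with $Q \subseteq 1 + pM$ for $M$ the ideal generated by the $x_i$ (indeed $Q = Q_0\cdots Q_{n-1}$ with each $Q_i \subseteq 1 + pM_i$). By hypothesis $1 + pS \subseteq S^*$, so Lemma \ref{lifthomop} gives a ring homomorphism $\phi \colon R \to S$ lifting $\hat\phi$, i.e. $\psi \circ \phi = \hat\phi$ where $\psi \colon S \to \hat{S}$ is the quotient map. Then set $T' = T \otimes_\phi S$. Since $T/R$ is Galois of group $G$ (the cyclic group of order $p^n$) and Galois extensions are stable under arbitrary base change, $T'/S$ is cyclic Galois of degree $p^n$. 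It remains to check that $T'$ reduces correctly: because $\phi$ reduces mod $p$ to $\hat\phi$ and $T$ reduces mod $\eta = p$ (in Case C, or mod $p$ generally) to the versal mod-$p$ extension, we get
$$T' \otimes_S \hat{S} \cong (T \otimes_\phi S) \otimes_S \hat{S} \cong T \otimes_{\hat\phi} \hat{S} \cong \hat{T}'$$
as Galois extensions of $\hat{S}$, which is exactly the assertion.

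The genuinely substantive input is Theorem \ref{versal}, whose proof rests on \ref{oddlift} (the special corestriction lift over $\Z[x]_{Q_0}$), which in turn rests on the long chain \ref{cyclicpartwithmu}, \ref{corrho}, \ref{goodmodp}, \ref{torsion} — but all of that is already established in the excerpt, so here it may simply be quoted. The only mildly delicate point in the present argument is compatibility of all the identifications: one must make sure that the isomorphism $T \otimes_{\hat\phi}\hat S \cong \hat T'$ respects the $G$-actions (it does, by the statement of \ref{versal}), and that base-changing $T'/S$ along $S \to \hat S$ genuinely produces $T\otimes_{\hat\phi}\hat S$ rather than merely something abstractly isomorphic — this is just associativity of tensor product together with $\psi\circ\phi = \hat\phi$. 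So I expect no real obstacle: the theorem is a clean corollary of \ref{versal} and \ref{lifthomop}, and the proof is essentially the two-line diagram chase above. The one thing to be careful about is that $p > 2$ is used only through \ref{versal} (via \ref{oddlift}), which is precisely where the $p=2$ case would fail, matching the remark in the introduction that a separate statement (\ref{2lift}) is needed for $p=2$.
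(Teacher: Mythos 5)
Your proposal is correct and follows exactly the same route as the paper's own (very terse) proof: specialize the versal extension $T/R$ of Theorem \ref{versal} along $\hat\phi \colon R \to \hat S$, lift $\hat\phi$ to $\phi \colon R \to S$ via Lemma \ref{lifthomop} using $1 + pS \subset S^*$, and set $T' = T \otimes_\phi S$. The small aside about ``mod $\eta = p$ in Case C'' is a little muddled (in Case C the prime is $\eta = -2$, and the versal ring $R$ is a $\Z$-algebra with no $\eta$ in sight; the reduction here is simply modulo $p$), but this does not affect the argument, which is otherwise the paper's two-line diagram chase filled out with the standard remarks on base change of Galois extensions.
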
 

\begin{proof}
By the above there is a $\hat \phi: R \to \hat S$ such that $T \otimes_{\hat \phi} \hat S\cong \hat T$. Let $\phi: R \to S$ be as in \ref{lifthomop} and set $T' = T \otimes_{\phi} S$. 
\end{proof} 

We know turn to the $p = 2$ version of \ref{oddlift}. 
Here our results are not as a strong because 
arbitrary degree $2^m$ extensions cannot be lifted, 
this being the heart of Wang's counterexample to Grunwald's 
theorem. However, something is true that is not too different. 
Recall that $R_{\mu} = \Z[i][x]_Q$ where 
$Q = 1 + x\eta\Z[i][x]$. Let 
$T_{\mu}/R_{\mu}$ be the degree $2^m$ cyclic Galois extension 
constructed in \ref{cyclicpartwithmu} with group $G$. 
Thus $\hat T_{\mu}/\hat R_{\mu}$ is degree $2^m$ cyclic extension 
of $F_2[x]$ whose degree 2 part is $F_2[x][Z]/(Z^2 - Z - x)$. 
Let $R = \Z[x]_{Q_0}$ where $Q_0 = 1 + 2x\Z[x]$. 
By \ref{finite} $R_{\mu} \cong R \otimes_{\Z} \Z[i]$. 
Thus by \ref{degree2cor} 
we can define $T/R = \Cor_{R/R_0}(T_{\mu}/R_{\mu})$. 

\begin{theorem}\label{2lift}
$\hat T = T/2T = \Ind_H^G(\hat S/\hat R)$ 
where $H \subset G$ are cyclic groups of order 
$2^{m-1}$ and $2^m$ respectively, and $\hat S/\hat R$ 
is $H$ Galois. Furthermore, if $\hat S/\hat U/\hat R$ 
is such that $\hat U/\hat R$ has degree 2, 
$\hat U = \hat R[Z]/(Z^2 - Z - x)$ and $T_0/R_0$ 
is $x$ split. 
\end{theorem}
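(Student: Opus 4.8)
\textbf{Proof proposal for Theorem \ref{2lift}.}

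The plan is to run the same corestriction-over-$\mathbb{Z}[i]$ analysis that produced the odd-$p$ lift in \ref{oddlift}, but now being careful that corestriction of a degree $2^m$ cyclic across the degree $2$ extension $R_{\mu}/R$ does not return a full degree $2^m$ cyclic but rather an induced one. First I would invoke \ref{finite} to identify $R_\mu \cong R \otimes_{\mathbb{Z}} \mathbb{Z}[i]$, so that the hypotheses of \ref{degree2cor} are met: $R$ is a regular dimension $2$ domain of characteristic $0$, $D = R_\mu = R[Z]/(Z^2+1)$, $2$ is a prime element of $R$ (since $Q_0 = 1 + 2x\mathbb{Z}[x]$ does not touch the prime $2$), and $T_\mu/R_\mu$ is $G$ Galois with $G$ cyclic of order $2^m$. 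Hence $T = \Cor_{R_\mu/R}(T_\mu/R_\mu)$ is defined and is a $G$ Galois extension of $R$ by \ref{degree2cor}.

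Next I would compute $\hat T = T/2T$ using \ref{goodmodp}, whose hypotheses hold here because $\hat R = R/2R = F_2[x]$ is a Dedekind domain and $R_\mu/R$ has the totally ramified prime $\eta = i-1$ with $\hat{R}_\mu = R_\mu/\eta R_\mu = \hat R$. That proposition gives $\hat T \cong (\hat T_\mu)^m$ as cyclic Galois extensions of $\hat R$, where here the ``degree'' $m$ of $D/R$ is $2$, so $\hat T \cong (\hat T_\mu)^2$. Now $\hat T_\mu/\hat R$ is the degree $2^m$ cyclic extension of $F_2[x]$ from \ref{cyclicpartwithmu}, and by \ref{torsion} its square is $\Ind_K^G(\hat S/\hat R)$ where $\hat S = (\hat T_\mu)^{\sigma^{2^{m-1}/2^{m-1}}}$... more precisely, squaring a degree $2^m$ cyclic lands in the induced extensions: $(\hat T_\mu)^2 = \Ind_H^G(\hat S/\hat R)$ with $H \subset G$ the subgroup of index $2$, hence cyclic of order $2^{m-1}$, and $\hat S = (\hat T_\mu)^{\sigma^{2}}$... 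I will state it as: $H$ is the order $2^{m-1}$ subgroup, $\hat S/\hat R$ is $H$ Galois, and $\hat S$ is identified via \ref{torsion} with the $\sigma^2$-invariant subring of $\hat T_\mu$, equivalently the degree $2^{m-1}$ subextension of $\hat T_\mu/\hat R$. This gives the first sentence of the theorem.

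For the second sentence I would trace through the degree $2$ layer. By \ref{cyclicpartwithmu}, the degree $p$ (here degree $2$) subextension $\hat U_\mu$ of $\hat T_\mu/\hat R$ satisfies $\hat U_\mu = \hat R[Z]/(Z^2 - Z - x)$. Since $\hat S$ is the degree $2^{m-1}$ subextension of $\hat T_\mu$, its own degree $2$ subextension is again $\hat U_\mu$; and because $\hat T = \Ind_H^G(\hat S/\hat R)$, the degree $2$ subextension $\hat U$ of $\hat T/\hat R$ is the degree $2$ subextension of $\hat S/\hat R$, namely $\hat U = \hat R[Z]/(Z^2 - Z - x)$. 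Finally, the claim that $T_0/R_0$ — which is the specialization $x \mapsto 0$, i.e. $T \otimes_R R/xR$ — is $x$-split follows from \ref{splitprime}: the generic cyclic $T_\mu/R_\mu$ was built in \ref{firststep} and \ref{cyclicpartwithmu} from a norm $M_\tau(z)$ with $z \equiv 1$ modulo $x$, so $T_\mu/xT_\mu$ is split over $R_\mu/xR_\mu$; corestriction of a split extension is split by \ref{functorial} (the corestriction of a split extension is split), hence $T/xT$ is split over $R/xR$, which is exactly the statement that $T$ is $x$ split at the prime $x$.

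The main obstacle I anticipate is bookkeeping the identification $(\hat T_\mu)^2 \cong \Ind_H^G(\hat S/\hat R)$ cleanly: one must check that squaring in the group of degree $2^m$ cyclics, as worked out concretely in \ref{torsion}, really does produce the induced extension with the stated $\hat S$, and that the degree $2$ subextension survives this operation unchanged. The potential subtlety is that \ref{torsion} describes $T^m$ for $m \mid n$ as $\Ind_K^G(S/R)$ with $S$ of degree $n/m$ — so with $n = 2^m$ and exponent $2$ we get $\hat S$ of degree $2^{m-1}$ and $K = H$ of order $2^{m-1}$ embedded in $G$ as the index-$2$ subgroup — and one must reconcile the two roles $2^{m-1}$ plays, as the order of $H$ and as the degree of $\hat S$, which is automatic here since $|G| = 2^m$. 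Everything else is a routine application of the cited propositions.
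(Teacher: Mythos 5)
Your proposal is correct in its essential route, which matches the paper's: reduce $T/2T$ to the computation of $\Cor_{R_\mu/R}(T_\mu)$ modulo $\eta$, identify the result as the square $(\hat T_\mu)^2$, and then invoke \ref{torsion} to recognize this square as $\Ind_H^G(\hat S/\hat R)$ with $\hat S$ the degree $2^{m-1}$ subextension. The paper's proof unrolls this by hand via the factorization $R \to R_\mu \to R/2R$ and the explicit description $T\otimes_R R_\mu = (T_\mu \otimes_{R_\mu} \sigma(T_\mu))^N$; you instead cite \ref{goodmodp} for $\hat T \cong (\hat T_\mu)^2$, which is slightly cleaner and perfectly legitimate since its hypotheses ($\hat R = F_2[x]$ Dedekind, $\eta$ totally ramified over $2$, $R_\mu/R$ of degree $2$) are satisfied. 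You also supply a concrete argument for the second sentence of the theorem, which the paper's proof of \ref{2lift} leaves implicit.

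There is, however, one genuine error in a step you added, though it is extraneous to the theorem. You write that because $\hat T = \Ind_H^G(\hat S/\hat R)$, ``the degree $2$ subextension $\hat U$ of $\hat T/\hat R$ is the degree $2$ subextension of $\hat S/\hat R$.'' This is false. By \ref{inducedfixed}, if $N \subset G$ is the unique subgroup of index $2$, then $N = H$ (cyclic groups have a unique subgroup of each index), so $H \cap N = H$, $H/(H \cap N) = 1$, $\hat S^{H \cap N} = \hat S^H = \hat R$, and therefore
$\hat T^N = \Ind_1^{G/N}(\hat R) \cong \hat R \oplus \hat R$,
the split quadratic extension. The theorem only claims that the degree $2$ subextension of $\hat S$ (note the tower $\hat S/\hat U/\hat R$ in the statement, not $\hat T/\hat U/\hat R$) is $\hat R[Z]/(Z^2 - Z - x)$, and that you do establish correctly from the construction in \ref{cyclicpartwithmu}. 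Simply delete the sentence identifying the degree $2$ subextension of $\hat T$ with that of $\hat S$; the rest stands.
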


\begin{proof}
The ring homomorphism $R \to R/2R = F_2[x]$ 
factors as $R \to R_{\mu} \to R_{\mu}/\eta{R_{\mu}} = R/2R$ 
and so 
$T/2T \cong (T \otimes_{R} R_{\mu})/\eta{(T \otimes_{R} R_{\mu})}$ 
as extensions of $F_2[x]$. 
By the definition of the corestriction, 
$T \otimes_{R} R_{\mu} = (T_{\mu} \otimes_{R_{\mu}} \sigma(T_{\mu}))^N/R$ 
where $N = \{(g,g^{-1}) | g \in G \} \subset G \oplus G$. 
Now $\sigma(T_{\mu})/\eta\sigma(T_{\mu}) \cong T_{\mu}/\eta{T_{\mu}}$ 
so 
$$((T_{\mu} \otimes_{R_{\mu}} \sigma(T_{\mu}))^N)/\eta(T_{\mu} \otimes_{R_{\mu}} \sigma(T_{\mu}))^N 
\cong (\hat T_{\mu} \otimes_{\hat R_{\mu}} \hat T)^N$$ 
which is 
$\Ind_H^G(\hat S/\hat R)$ where $\hat S/\hat R$ is the degree 
$2^{m-1}$ part of $\hat T_{\mu}/\hat R_{\mu}$.
\end{proof} 

We are not providing a $p = 2$ version of \ref{versal} 
because using the above construction and the 
strategy from \ref{versal} we get  
an extension that is too big in our estimation. 

The next results we want are lifting results for the Brauer group. 
Here we start with the following facts from \cite{KOS}. 
Recall that if $p\hat S = 0$ and $a,b \in \hat S$ 
we defined the degree $p$ Azumaya algebra $(a,b)$ over $\hat S$ as generated 
by $x,y$ subject to $x^p = a$, $y^p = b$, and $xy - yx = 1$. The facts from \cite[p.~46 and~p.~37]{KOS} are: 

\begin{theorem}
The $p$ torsion part of the Brauer group $\Br(\hat S)$ is generated 
by Azumaya algebras of the form $(a,b)$. Furthermore, 
$\Br(\hat S)$ is $p$ divisible. 
\end{theorem}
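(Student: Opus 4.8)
The plan is to derive both assertions from the classical description of the $p$-primary Brauer group in characteristic $p$ (Teichm\"uller's theorem and its Witt-vector refinement), after first matching the algebras $(a,b)$ with the usual cyclic symbols. Recall the structure of $(a,b)$ worked out above: with $\alpha = xy$ one has $\alpha^p - \alpha = ab$, the ring $\hat S[\alpha]$ splits $(a,b)$, and (compare the proof of \ref{azumaya} and \ref{almostrho}) when $a$ is a unit the equality $J_\sigma = \hat S[\alpha]x$ gives $(a,b) \cong \Delta(\hat S[\alpha]/\hat S,\sigma,a)$, where $\sigma(\alpha) = \alpha + 1$; that is, $[(a,b)]$ is the cyclic degree-$p$ symbol attached to the Artin--Schreier class of $ab$ together with the unit $a$. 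Conversely, given $c \in \hat S$ and $d \in \hat S^*$, the cyclic symbol built from the Artin--Schreier extension $\alpha^p - \alpha = c$ and from $d$ is exactly $[(d, cd^{-1})]$, since in that algebra $\alpha = xy$ with $x^p = d$ and $xy - yx = 1$, forcing $y^p = cd^{-1}$. Hence the classes $[(a,b)]$ are precisely the cyclic degree-$p$ symbols of $\hat S$, and the first assertion becomes the statement that these symbols generate $\Br(\hat S)[p]$.

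For a field $k$ that last statement is elementary: surjectivity of the symbol map onto $\Br(k)[p]$ follows from the identification of $\Br(k)[p]$ with a quotient of $\Omega^1_k$ (Artin--Schreier in degree one) together with the fact that $\Omega^1_k$ is spanned by logarithmic forms. To pass to an arbitrary commutative $F_p$-algebra $\hat S$, I would first reduce to the finitely generated case (an Azumaya algebra is finitely presented, so every class of $\Br(\hat S)$ is induced from a finitely generated $F_p$-subalgebra), and then glue: the symbol map is surjective on stalks, and a Zariski-local patching argument promotes stalkwise factorizations into symbols to a global factorization. This patching is the real content of the cited pages of \cite{KOS}, and I would follow their descent-theoretic argument rather than re-derive it.

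For $p$-divisibility, the prime-to-$p$ torsion of $\Br(\hat S)$ is divisible by $p$ for trivial reasons, so it suffices to treat a class of $p$-power exponent; by the generation result (and its extension to $\Br(\hat S)[p^n]$ via cyclic algebras of degree $p^n$, proved the same way) it is enough to show that a single cyclic symbol of degree $p^n$ is divisible by $p$. Let $[c,d)$ with $d \in \hat S^*$ be built from the cyclic degree-$p^n$ extension $S_1/\hat S$ with chosen generator $\bar{\sigma}$. Over a characteristic-$p$ ring $S_1/\hat S$ always lifts to a cyclic degree-$p^{n+1}$ extension $S_2/\hat S$ (the truncation map of Witt vectors $W_{n+1}(\hat S) \twoheadrightarrow W_n(\hat S)$ makes $H^1_{\mathrm{et}}(\hat S,\Z/p^{n+1}\Z) \to H^1_{\mathrm{et}}(\hat S,\Z/p^n\Z)$ surjective; this is the characteristic-$p$ input already used in \cite{S1981} and in \ref{pthroot}), and $S_1$ is then the subextension of $S_2$ fixed by the order-$p$ subgroup. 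Restriction--corestriction for cyclic algebras (the degree-$p^n$ analogue of \ref{operationsoncyclics}) gives $p\cdot[\Delta(S_2/\hat S,\sigma_2,d)] = [\Delta(S_1/\hat S,\bar{\sigma},d)] = [c,d)$, so $[c,d)$ is divisible by $p$; applying this to the finitely many symbols occurring in a given Brauer class shows $\Br(\hat S) = p\,\Br(\hat S)$.

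The main obstacle is the generation statement in full ring-theoretic generality. Matching the presentation of $(a,b)$ to a cyclic symbol is routine, and once generation is available $p$-divisibility is a clean consequence of the surjectivity of Witt-vector truncations together with restriction--corestriction; but proving that the degree-$p^n$ symbols span $\Br(\hat S)[p^n]$ for an arbitrary (possibly non-Noetherian, non-regular) $\hat S$ does not reduce formally to the field case and requires the descent and patching machinery of \cite{KOS}.
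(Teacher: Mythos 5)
The paper does not actually prove this theorem: it is presented as a \emph{quoted fact} from Knus--Ojanguren--Saltman, with explicit page references (``The facts from \cite[p.~46 and~p.~37]{KOS} are\ldots''). So there is no internal proof to compare your sketch against; the paper's ``approach'' is to cite. Your proposal ultimately does the same thing for the hard step --- you write that the patching ``is the real content of the cited pages of \cite{KOS}, and I would follow their descent-theoretic argument rather than re-derive it'' --- so in that sense the two are aligned. But there is a genuine error in the framing of your sketch.

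The claim ``Hence the classes $[(a,b)]$ are precisely the cyclic degree-$p$ symbols of $\hat S$'' is false over a general commutative ring. Your matching $(a,b) \cong \Delta(\hat S[\alpha]/\hat S,\sigma,a)$ presupposes $a \in \hat S^*$, and the converse direction also only produces pairs $(d,cd^{-1})$ with $d$ a unit. But $(a,b)$ is defined --- and is Azumaya --- for arbitrary $a,b$, in particular for elements that are neither units nor nilpotent, and in that range $(a,b)$ is not a cyclic algebra: $J_\sigma$ is a non-principal rank-one projective over $\hat S[\alpha]$ (the paper makes exactly this point after Proposition~\ref{differential}, and this is the motivation for the ``almost cyclic'' notion). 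The whole force of the KOS generation theorem is that one must allow these non-cyclic differential crossed products; reducing them to cyclic symbols, even after a field-and-glue argument, is not a routine matching but is essentially equivalent to what needs to be proved. Likewise your $p$-divisibility argument is stated for cyclic symbols $[c,d)$ with $d \in \hat S^*$, and does not by itself treat the non-unit $(a,b)$ classes; extending it requires either the almost-cyclic machinery (as the paper does later in \ref{pthroot}) or a further appeal to the descent computations in KOS. So the outline is in the right spirit, but the identification on which it rests overstates what is elementary, and the key step is still being cited rather than proved --- which, to be fair, is what the paper itself does.
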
 

The theorem we are aiming for is: 

\begin{theorem}\label{braueronto}
Let $S$ be a commutative ring 
such that $1 + p{S} \subset S^*$. Set $\hat S = S/pS$. 
Then $\Br(S) \to \Br(\hat S)$ is surjective on the 
$p$ primary components. 
\end{theorem}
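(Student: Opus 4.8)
The plan is to reduce the general statement to the two explicit constructions already built in the paper: the lifting of degree $p^n$ cyclic Galois extensions (Theorems \ref{liftwithrho} and \ref{2lift}) and the almost cyclic $p$-power-root algebras that realize $p$-divisibility of the Brauer group in characteristic $p$. First I would recall from \cite{KOS} that the $p$-primary part of $\Br(\hat S)$ is generated by the classes $[(a,b)]$ and that $\Br(\hat S)$ is $p$-divisible; combined with the latter, any $p$-primary class $[\hat A]$ of exponent $p^r$ can be written, up to a sum of pieces handled by induction on $r$, in terms of classes $[(a,b)]$ and their $p^{r-1}$-th ``roots'' $[B]$ where $p^{r-1}[B] = [(a,b)]$, the algebra $B$ being an explicit almost cyclic algebra of the special sort provided by \ref{pthroot} and its corollary over $\hat S = S/pS$. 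So it suffices to lift (i) each $[(a,b)]$ and (ii) each such $B$ to a $p$-primary Brauer class over $S$.

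For step (i), the key point is that $(a,b) = (\hat a,\hat b)$ is the reduction mod $\eta$ of the algebra $(a,b)_\rho$ once we are over a ring containing $\rho$; the theorem quoted in the introduction (the $(a,b)_\rho$ result) gives $\Br(R) \to \Br(\hat R)$ surjective on elements of order $p$ whenever $1 + \eta R \subset R^*$. To descend from the ring-with-$\rho$ case to an arbitrary $S$ with $1 + pS \subset S^*$, I would run the same corestriction device used for cyclic extensions: work over $R_\mu = \Z[\mu][x]_Q$ versus $R = \Z[x]_{Q_0}$, build the lift over $R_\mu$, corestrict down via \ref{corrho}/\ref{degree2cor}, and then specialize using \ref{lifthomop} to pass to $S$. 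The functoriality of both the almost cyclic construction and the corestriction (\ref{functorial}, the corollary after \ref{almostrho}) makes the specialization step formal, while \ref{goodmodp} controls what happens mod $p$: corestricting from degree $m = [\Z[\mu]:\Z]$ replaces the reduction by its $m$-th power, and since $m$ is prime to $p$ this is invertible on the $p$-primary component, so we recover the desired class after adjusting by a prime-to-$p$ multiple.

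For step (ii), the algebra $B$ over $\hat S$ is $\Delta(\hat T/\hat S, a, \alpha, b)$ built from a cyclic Galois $\hat T/\hat S$ of degree $p^{m+1}$ that is $a$-split. Here I would first lift the cyclic extension: by \ref{liftwithrho} (for $p>2$) or by \ref{2lift} together with the induced-extension bookkeeping (for $p=2$) there is a cyclic $T/S$ with $T/pT \cong \hat T$, and one can arrange the $a$-splitting to lift by the same preimage-of-$\phi$ argument used in the proof that $a$-split extensions extend. Then the almost cyclic algebra $\Delta(T/S, a, \alpha', b')$ over $S$ reduces mod $p$ to $B$ (both map to the same central simple algebra over the fraction field, and $\Br(S) \to \Br(\mathrm{Frac}(S))$ is injective when $S$ is regular — which we may assume by specializing from a polynomial ring as in \ref{pthroot}). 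Its class is $p$-primary because it is split by a cyclic extension of $p$-power degree. Assembling (i) and (ii) and chasing the $p$-divisibility presentation of $[\hat A]$ then gives the surjectivity.

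The main obstacle I expect is the $p = 2$ case of step (ii): since arbitrary degree $2^m$ cyclics do not lift (Wang's example), the lift of $\hat T$ is only an \emph{induced} extension $\Ind_H^G$ of one degree smaller, so the almost cyclic algebra we can build over $S$ from it does not obviously reduce to the $B$ we want but to a closely related algebra. Handling this will require checking, as in the passage surrounding \ref{2lift} and \ref{goodmodp}, that the induced extension still carries enough of the $a$-split structure to produce an almost cyclic algebra whose class mod $2$ agrees with $[B]$ up to terms that are themselves liftable by induction on the exponent; this is where the bookkeeping is genuinely delicate rather than formal.
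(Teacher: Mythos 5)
Your overall architecture matches the paper's: reduce via the $p$-divisibility result of \cite{KOS} to producing, for each $r$, an Azumaya lift of some explicit almost cyclic $\hat B_r$ with $p^r[\hat B_r] = [(x,y)]$ over $F_p[x,y]$; build $\hat B_r$ by \ref{pthroot} on an $x$-split cyclic extension; lift that cyclic extension via the corestriction machinery of \ref{oddlift}/\ref{2lift}; and then transport the conclusion to an arbitrary $S$ with $1 + pS \subset S^*$ by the specialization lemma \ref{lifthomop}. Your final sentence about $p$-primarity (split by a degree-$p^r$ cyclic) is also the right sanity check. However there are two places where your plan diverges from, or falls short of, what the paper actually does.

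First, your step~(i) is both redundant and insufficiently grounded. Lifting a bare $[(a,b)]$ is exactly the $r=0$ (or $r=1$) instance of step~(ii), so there is no need to treat it separately. More importantly, you propose to ``build the lift over $R_\mu$, corestrict down via \ref{corrho}/\ref{degree2cor}'' — but those two results corestrict \emph{cyclic Galois extensions}, not Azumaya algebras. The paper never corestricts an algebra: the descent from $\Z[\mu]$ to $\Z$ happens entirely on the Galois side (in \ref{oddlift} and \ref{2lift}), after which the almost cyclic algebra is constructed directly over $\Z[x,y]_Q$ from the corestricted cyclic extension. You should drop step~(i), or at least recast it as the $r=1$ instance of step~(ii) so that the only corestrictions invoked are the ones the paper supplies.

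Second, and more seriously, your treatment of $p = 2$ has a genuine hole, which you candidly acknowledge. Since \ref{2lift} only lifts an induced extension $\Ind_H^G(\hat S/\hat R)$, you correctly observe that the lift over $S$ will not reduce mod $2$ to the $B$ built from a genuine degree $2^{r+1}$ cyclic; but your plan to absorb the discrepancy ``up to terms that are themselves liftable by induction on the exponent'' is not what works. The paper's resolution is a concrete algebraic device, not bookkeeping: for an almost cyclic $A' = \Delta(S'/R', x', \gamma', y')$ of degree $2^{m-1}$, the matrix algebra $M_2(A')$ is itself almost cyclic of the form $\Delta(T/R, x', \gamma, y')$ where $T = \Ind_H^G(S')$, with explicit $2\times 2$ matrices for $\alpha$, $\beta$, $\gamma$. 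Because $M_2(A')$ is Brauer-equivalent to $A'$, one obtains a representative of the \emph{same} class that is almost cyclic with respect to the induced extension — precisely the object that \ref{2lift} lets you lift. Without this $M_2$ construction, the $p = 2$ argument does not close; one cannot finesse it by induction on exponent, since the needed extension to a full degree $2^m$ cyclic simply does not exist over the relevant base (this is Wang's obstruction).
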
 

Our strategy for proving this result is embodied in the next result. 
Let $R = \Z[x,y]_Q$ where $Q$ is the multiplicative set 
$1 + pxy\Z[x,y]$. 
Then $\hat R = R/p{R} = F_p[x,y]$. 

\begin{proposition}
Let $\hat R = F_p[x,y]$ and let $(x,y)$ be the degree $p$ 
algebra. Suppose that for any $r$, we have an Azumaya $B_r/R$ 
with image $\hat B_r/\hat R$ such that $p^r[\hat B_r] = [(x,y)]$. 
Then Theorem~\ref{braueronto} is proven. 
\end{proposition}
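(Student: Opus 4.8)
The plan is to reduce the general surjectivity statement of Theorem~\ref{braueronto} to the concrete lifting data provided by the algebras $B_r/R$ over the universal base $R = \Z[x,y]_Q$, using the structure theorems from \cite{KOS}. First I would observe that since $\Br(\hat S)$ is $p$ primary torsion in the relevant component and is generated by classes $[(a,b)]$ for $a,b \in \hat S$, and since $\Br(\hat S)$ is $p$ divisible, any class of $p$ power order in $\Br(\hat S)$ can be written as a sum $\sum_j p^{r_j}[(a_j,b_j)]^{\pm 1}$ (absorbing inverses, which are again of the form $(a,b)$ up to sign conventions, or simply noting the subgroup generated is the whole $p$ primary part). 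Because $\Br(S) \to \Br(\hat S)$ is a group homomorphism, it suffices to lift each individual class $p^{r}[(a,b)]$, $a,b \in \hat S$, to $\Br(S)$.

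Next I would use the hypothesis. Given $a,b \in \hat S$, define $\hat\phi\colon \hat R = F_p[x,y] \to \hat S$ by $\hat\phi(x) = a$, $\hat\phi(y) = b$. By functoriality of the Brauer group and of the construction $(x,y)$, we have $\hat\phi_*[(x,y)] = [(a,b)]$ in $\Br(\hat S)$, hence $\hat\phi_*[\hat B_r] $ satisfies $p^r[\hat\phi_*\hat B_r] = [(a,b)]$. So if we can lift the ring map $\hat\phi$ to a ring map $\phi\colon R \to S$ compatible with reduction mod $p$, then $B_r \otimes_\phi S$ is an Azumaya $S$ algebra whose image in $\Br(\hat S)$ is $\hat\phi_*[\hat B_r]$, and $p^r[B_r \otimes_\phi S]$ maps to $[(a,b)]$. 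This exhibits $[(a,b)]$ (up to the $p^r$ multiple, which is irrelevant since we only need the image to generate) — more precisely it exhibits $p^r[B_r\otimes_\phi S] \in \Br(S)$ as a lift of $[(a,b)]$.

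The lifting of $\hat\phi$ is exactly the content of \ref{lifthomop} (applied with the two variables $x,y$ and $Q \subset 1 + p M$): since $1 + pS \subset S^*$, any $\hat\phi\colon R \to \hat S$ factors through some $\phi\colon R \to S$ with $\psi \circ \phi = \hat\phi$, because the elements of $Q$, being of the form $1 + p(\text{something})$, map to units of $S$. Thus $B_r \otimes_\phi S$ is defined and reduces mod $p$ to $\hat B_r \otimes_{\hat\phi} \hat S$. Finally I would assemble: every $p$ power order class in $\Br(\hat S)$ is a finite sum of classes $[(a_j,b_j)]$; each is hit by $p^{r_j}[B_{r_j} \otimes_{\phi_j} S]$ for suitable $r_j$ (take all $r_j$ equal to a common value $r$ if convenient, or just note each summand is individually in the image); summing the corresponding classes in $\Br(S)$ produces a preimage of the given class. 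Hence $\Br(S) \to \Br(\hat S)$ is surjective on $p$ primary components.

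I expect the only genuine subtlety — and it is minor — to be bookkeeping around the fact that we lift $p^r[(a,b)]$ rather than $[(a,b)]$ directly: one must remember that $p^r[B_r] \in \Br(S)$ is a perfectly good Brauer class and its image is the class we want, so there is no circularity. A second point to be careful about is that the subgroup of $\Br(\hat S)$ generated by all $[(a,b)]$ is the full $p$ torsion, and combined with $p$ divisibility this gives the full $p$ primary component as the subgroup generated by the $[(a,b)]$; this is precisely the cited combination of \cite[p.~46]{KOS} and \cite[p.~37]{KOS}. Everything else is functoriality of $\Br$, functoriality of the symbol-type construction $(x,y) \mapsto (a,b)$ under base change, and the explicit unit structure of $R$ that makes \ref{lifthomop} applicable.
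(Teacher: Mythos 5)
Your proof has a genuine gap, and it is precisely the gap the $B_r$'s with $r>0$ are designed to close. You assert that any class of $p$-power order in $\Br(\hat S)$ can be written as a sum $\sum_j p^{r_j}[(a_j,b_j)]$, and later as ``a finite sum of classes $[(a_j,b_j)]$.'' Both are false in general: each $[(a,b)]$ has order dividing $p$, so $p^{r}[(a,b)]=0$ for $r\geq 1$, and the subgroup generated by all $[(a_j,b_j)]$ (with or without nonnegative powers of $p$) is only the $p$-torsion subgroup $\Br(\hat S)[p]$, not the full $p$-primary component. Your argument therefore only lifts elements killed by $p$, which is a strictly weaker conclusion than Theorem~\ref{braueronto} when $\Br(\hat S)$ has elements of order $p^n$ with $n\geq 2$.

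Notice also that in your construction you only ever use the class $p^r[B_r\otimes_\phi S]$, which reduces to $[(a,b)]$. That collapses the role of $B_r$ for $r\geq1$ entirely --- you could have set $r=0$ throughout, so the hypothesis about $B_r$ for all $r$ is not actually being exploited. The point of $B_r$ is that $[\hat B_r]$ is a \emph{$p^r$-th root} of $[(x,y)]$, and one must lift $[\hat B_r\otimes \hat S]$ itself, not $p^r$ times it. The paper's argument proceeds by induction on the order of $\hat\beta$: if $\hat\beta$ has order $p^n$, then $p^{n-1}\hat\beta$ has order $p$ and so equals $\sum_i[(a_i,b_i)]$ by the KOS generation result. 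Lifting $\hat\phi_i$ to $\phi_i\colon R\to S$ and setting $A_i = B_{n-1}\otimes_{\phi_i} S$, one has $p^{n-1}[\hat A_i]=[(a_i,b_i)]$. Thus if $\beta'=\sum_i[A_i]\in\Br(S)$ and $\hat\beta'$ is its reduction, then $p^{n-1}(\hat\beta-\hat\beta')=0$, so $\hat\beta-\hat\beta'$ has strictly smaller order and is in the image by the inductive hypothesis; hence so is $\hat\beta=\hat\beta'+(\hat\beta-\hat\beta')$. To repair your proof you would need to replace the false decomposition claim with this inductive descent on the order, and replace $p^r[B_r\otimes_\phi S]$ by the classes $[B_{n-1}\otimes_{\phi_i} S]$ themselves.
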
 

\begin{proof}
Assume the givens in the proposition. Let $\hat \beta \in \Br(\hat S)$ 
have order $p^n$. Write $p^{n-1}\hat \beta$ as the product of 
classes of $[(a_i,b_i)]$ for $i = 1,\ldots,m$. 
Define $\hat \phi_i R \to \hat S$ by setting 
$\hat \phi_i(x) = a_i$ and $\hat \phi_i(y) = b_i$. Use \ref{lifthomop} 
to get $\phi_i R \to S$ so that the composition 

$$R \buildrel{\phi_i}\over\longrightarrow S \longrightarrow \hat S$$

is $\hat \phi_i$. 
Set $A_i= B_{n-1} \otimes_{\phi_i} S$. 
If $\beta'$ is the Brauer class $[A_1][A_2]\ldots[A_m]$, 
and $\hat \beta'$ is the image of $\beta'$ in $\Br(\hat S)$, 
then $p^{n-1}\hat \beta' = p^{n-1}\hat \beta$ or 
$p^{n-1}(\hat \beta - \hat \beta') = 0$. We are done by inducting on $n$. 
\end{proof}

\begin{proof}
We turn to the proof of the theorem. 
We have to separate the $p > 2$ and $p = 2$ cases. 
When $p > 2$, we construct the $B_r$'s needed in the proposition 
as follows. 
For $r = 0$ we set $B_0 = (x,y)_{\rho}$. We have an embedding 
$\iota: D' = \Z[z]_{Q'} \to \Z[x,y]_Q = R$ by setting $\iota(z) = xy$ 
where $Q'$ is $1 + pz\Z[z]$ and 
$\iota(Q') \subset Q = 1 + pxy\Z[x,y]$. 
Let $T'/D'$ be the $z$-split degree $p^{r+1}$ cyclic  extension 
constructed in \ref{oddlift} 
and $S'/D'$ its degree $p$ subextension, so 
if $S = S' \otimes_{D'} R$ satisfies $\hat S = S/pS = F_p[Z]/(Z^p - Z - xy)$. 
In $\hat A = (x,y)$ there is are $\alpha$, $\beta$ such that 
$\alpha\beta - \beta\alpha = 1$ and if $\gamma = \alpha\beta$ 
then $\gamma^p - \gamma = xy$. Thus we can view $\hat S \subset \hat A$ 
and $\hat A$ is the almost cyclic algebra 
$\Delta(\hat S/\hat R,x,\gamma,y)$.  
Since $T = T' \otimes_{D'} R$ is $z$ split it is $x$ split. By \ref{pthroot} 
there is a almost cyclic algebra $B_r = \Delta(T/R,x,\alpha',y')$  
with $p^r[\hat B_r] = [\hat A]$ as needed. 

So we assume $p = 2$. Let $S'/R'$ be $2^{m-1}$ cyclic 
Galois and $A' = \Delta(S'/R',x',\gamma',y')$ almost cyclic 
so $\alpha'^{2^{m-1}} = x'$, $\beta'^{2^{m-1}} = y'$, 
$\gamma' = \alpha'\beta' \in S'$ of rank at least $2^{m-1} - 1$. 
Now consider $A = M_2(A')$ and $T' = \Ind_H^G(S')$ 
where $G$ is cyclic of order $2^m$. We view 
$T' \subset A$ as being the diagonal matrices with 
entries from $S'$. 

\begin{proposition}
$A$ is an almost cyclic algebra of the 
form $$\Delta(T/R,x',\gamma,y').$$ 
\end{proposition}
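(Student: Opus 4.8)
The plan is to verify that $A = M_2(A')$ fits the definition of an almost cyclic algebra $\Delta(T/R,x',\gamma,y')$ by exhibiting the required data inside $A$: a cyclic Galois extension $T/R$ of degree $2^m$ sitting inside $A$ as a maximal commutative subalgebra, elements $x,y \in A$ with $x^{2^m}=x'$, $y^{2^m}=y'$, $xs=\sigma(s)x$, $sy=y\sigma(s)$ for the generator $\sigma$ of the Galois group $G$, and an element $\gamma = xy \in T$ which is ``suitable'' in the sense defined just before the statement, i.e. $T x' + T\gamma + T\adj(\gamma) + T y' = T$. Once this is in place, the earlier results (the construction of $\Delta(S/R,a,\alpha,b)$ from suitable data, and the fact that such algebras are Azumaya and almost cyclic) apply directly.

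First I would make the identifications explicit. Since $A' = \Delta(S'/R',x',\gamma',y')$ with $S'$ cyclic of degree $2^{m-1}$ over $R'$, and $T' = \Ind_H^G(S')$ with $G$ cyclic of order $2^m$ and $H$ of order $2^{m-1}$, the Galois correspondence gives $T' \supset S' \supset R'$ with $T'/S'$ of degree $2$. Let $\sigma$ generate $G$, so $\tau = \sigma^2$ generates $H = \Gal(T'/S')$... wait, rather $H$ has order $2^{m-1}$ so $\sigma^{2^{m-1}}$ generates the order-$2$ subgroup; I will fix the indexing so that $T'$ is the degree-$p$ extension sitting between. The $2\times 2$ matrix picture is designed so that the ``corner'' degrees of freedom in $M_2$ supply exactly the one extra factor of $p$ needed to pass from $S'$ (degree $2^{m-1}$) to $T'$ (degree $2^m$): the diagonal matrices with entries in $S'$, together with the permutation-type matrix swapping the two factors, generate a copy of $T' = \Ind_H^G(S')$, and this is a maximal commutative subalgebra of $A$ of the right degree. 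Inside $A'$ we have $\alpha',\beta'$ with $\alpha'^{2^{m-1}}=x'$, $\beta'^{2^{m-1}}=y'$; I would take $x \in A = M_2(A')$ to be the matrix combining $\alpha'$ on the diagonal with the off-diagonal swap (so that $x^{2^{m-1}}$ lands in $M_2$ as a matrix whose $2^{m-1}$-st... actually $2$nd power along the new direction gives $x^{2^m} = x'$ in the center), and dually for $y$. This is the matrix-corner construction used earlier for $(a,b)$ and $(a,b)_\rho$, now one level up.

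The key computation is to show $\gamma = xy$ lies in $T'$ and is suitable. Since $\alpha'\beta' = \gamma' \in S'$ has rank at least $2^{m-1}-1$ over $R'$, the product $xy$ will be, in the $M_2$ picture, a diagonal-type matrix whose entries are built from $\gamma'$ and its $\sigma$-conjugates; I expect $\gamma \in T'$ and that its rank over $R'$ is at least $2^m - 1$, because the rank defect of $\gamma'$ (at most one) does not accumulate under the induction/$M_2$ operation. Given that $\gamma$ has rank $\geq 2^m-1$ over $R$, the ideal $T x' + T\gamma + T\adj(\gamma) + Ty'$ equals $T$ by the same localization argument used in \ref{almostrho} and in the proof that $a-$split extensions yield suitable $\alpha$: localize at a maximal ideal $M$ of $R$; if $x' \notin M$ we are done; if $x' \in M$ then modulo $M$ the extension $T/R$ is $x'$-split (this is where I must use that $T$ is $x'$-split, which should be inherited from $T'$ being $x'$-split, itself inherited from the construction), so $T/M T$ splits and $\gamma$ becomes $(0,r_2,\ldots,r_{2^m})$ with $\adj(\gamma) = (r_2\cdots r_{2^m},0,\ldots,0)$, whence $\gamma + \adj(\gamma)$ is a unit mod $M$. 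Then \ref{azumaya} and \ref{injective} finish it, exactly as in the degree-$p$ cases.

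The main obstacle will be verifying that the $M_2$-corner construction genuinely produces $T' = \Ind_H^G(S')$ as a maximal commutative subalgebra with the commutation relations $xs = \sigma(s)x$ and $sy = y\sigma(s)$ holding for $\sigma$ a generator of the full cyclic group $G$ of order $2^m$ (not just the order-$2^{m-1}$ group acting on $S'$) — in other words, checking that the new generator built from the swap matrix and $\alpha'$ has order exactly $2^m$ on $T'$ and conjugates $S'$ correctly, and that $x^{2^m}$ is central and equals $x'$. This is the analogue, one degree higher, of the explicit matrix bookkeeping done earlier for $(a,b)$ (the ``$\star,\diamond,\bullet$ plus corners'' picture), and it is purely computational, but it is the step where a sign or an index error would be easy to make. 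Everything downstream — Azumaya-ness, the suitability of $\gamma$, the identification of the Brauer class — follows from the already-established machinery (\ref{azumaya}, \ref{injective}, \ref{pthroot}) once the data is correctly set up.
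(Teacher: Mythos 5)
Your plan matches the paper's proof: both exhibit explicit matrices in $A = M_2(A')$ realizing $T = \Ind_H^G(S')$ as the diagonal subalgebra and supplying elements $\alpha$, $\beta$, $\gamma = \alpha\beta$ with the required power, commutation, and rank properties, then invoke the almost-cyclic machinery. The paper's proof is terser (it does not spell out the localization argument for $T x' + T\gamma + T\adj(\gamma) + T y' = T$, which you correctly supply), so your write-up actually fills in a detail the paper leaves to the reader.

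The one point where you need to be careful is the exact form of $\alpha$. You describe $x$ as "combining $\alpha'$ on the diagonal with the off-diagonal swap," which, read literally as $\mathrm{diag}(\alpha',\alpha')$ times the swap, gives $\left(\begin{smallmatrix} 0 & \alpha' \\ \alpha' & 0 \end{smallmatrix}\right)$; squaring that yields $\mathrm{diag}(\alpha'^2,\alpha'^2)$, hence $x^{2^m} = (\alpha'^{2^{m-1}})^2 = x'^2$, not $x'$. The paper instead takes $\alpha = \left(\begin{smallmatrix} 0 & \alpha' \\ 1 & 0 \end{smallmatrix}\right)$ and $\beta = \left(\begin{smallmatrix} 0 & 1 \\ \beta' & 0 \end{smallmatrix}\right)$ — i.e.\ $\alpha'$ appears in exactly one corner, with a $1$ in the opposite corner — so that $\alpha^2 = \mathrm{diag}(\alpha',\alpha')$ and $\alpha^{2^m} = x'$, $\beta^{2^m} = y'$, and $\gamma = \alpha\beta = \mathrm{diag}(\gamma',1)$, which has rank $\geq 2^m - 1$ over $R'$ as you expect. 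You flag exactly this as the place an index error would creep in, and indeed that is the only thing that needs correcting; everything else in your proposal is sound.
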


\begin{proof}
Let $\gamma \in T$ be the element 
$$\begin{pmatrix}
\gamma'&0\\
0&1
\end{pmatrix}$$
so that $\gamma$ has the required rank. 
Suppose $\alpha'\beta' = \gamma'$ are the elements of 
$A'$ with $\alpha^2 = x'$ and $\beta'^2 = y'$, 
$\alpha's = \sigma(s)\alpha'$ and $s\beta' = \beta'\sigma(s)$.
Set 
$\alpha \in A$ to be
$$\begin{pmatrix}
0&\alpha'\\
1&0
\end{pmatrix}$$

and $\beta \in A$ to be 
$$\begin{pmatrix} 
0&1\\
\beta'&0
\end{pmatrix}.$$ 

We immediately check that $\alpha\beta = \gamma$ 
and $\alpha^4 = x'$, $\beta^4 = y'$. 
Moreover, for any $(s_1,s_2) \in T$, 
$\alpha(s_1,s_2) = (\sigma(s_2),s_1)\alpha$ and similarly 
for $\beta$. 
\end{proof}

In particular $M_2((x,y))$ is 
$\Delta(\Ind_H^G(\hat S/\hat R,	x,\alpha,y)$ where 
$\hat S = F_2[x][Z]/(Z^2 - Z - xy)$. Using the 
$x$ split lift from \ref{2lift} we proceed exactly as 
in the $p > 2$ case 
\end{proof}

\end{document}